\newtheorem{cor}[subsubsection]{Corollary}
\newtheorem{lem}[subsubsection]{Lemma}
\newtheorem{prop}[subsubsection]{Proposition}
\newtheorem{thmconstr}[subsubsection]{Theorem-Construction}
\newtheorem{propconstr}[subsubsection]{Proposition-Construction}
\newtheorem{thm}[subsubsection]{Theorem}
\theoremstyle{definition}
\theoremstyle{remark}
\newtheorem{rem}[subsubsection]{Remark}
\newcommand{\thmref}[1]{Theorem~\ref{#1}}
\newcommand{\secref}[1]{Sect.~\ref{#1}}
\newcommand{\lemref}[1]{Lemma~\ref{#1}}
\newcommand{\propref}[1]{Proposition~\ref{#1}}
\newcommand{\corref}[1]{Corollary~\ref{#1}}
\numberwithin{equation}{section}
\newcommand{\nc}{\newcommand}
\nc{\renc}{\renewcommand}
\nc{\ssec}{\subsection}
\nc{\sssec}{\subsubsection}
\nc{\on}{\operatorname}
\nc\ol{\overline}
\nc\wt{\widetilde}
\nc\tboxtimes{\wt{\boxtimes}}
\nc\tstar{\wt{\star}}
\nc{\alp}{a}
\nc{\ZZ}{{\mathbb Z}}
\nc{\NN}{{\mathbb N}}
\nc{\OO}{{\mathbb O}}
\renc{\SS}{{\mathbb S}}
\nc{\DD}{{\mathbb D}}
\nc{\GG}{{\mathbb G}}
\nc{\Fq}{{\mathbb F}_q}
\nc{\Fqb}{\ol{{\mathbb F}_q}}
\nc{\Ql}{\ol{{\mathbb Q}_\ell}}
\nc{\id}{\text{id}}
\nc\X{\mathcal X}
\nc{\Hom}{\on{Hom}}
\nc{\Lie}{\on{Lie}}
\nc{\Loc}{\on{Loc}}
\nc{\Pic}{\on{Pic}}
\nc{\Bun}{\on{Bun}}
\nc{\IC}{\on{IC}}
\nc{\Aut}{\on{Aut}}
\nc{\rk}{\on{rk}}
\nc{\Sh}{\on{Sh}}
\nc{\Perv}{\on{Perv}}
\nc{\pos}{{\on{pos}}}
\nc{\Conv}{\on{Conv}}
\nc{\Sph}{\on{Sph}}
\nc{\Sym}{\on{Sym}}
\nc{\BunBb}{\overline{\Bun}_B}
\nc{\BunNb}{\overline{\Bun}_N}
\nc{\BunTb}{\overline{\Bun}_T}
\nc{\BunBbm}{\overline{\Bun}_{B^-}}
\nc{\BunBbel}{\overline{\Bun}_{B,el}}
\nc{\BunBbmel}{\overline{\Bun}_{B^-,el}}
\nc{\Buno}{\overset{o}{\Bun}}
\nc{\BunPb}{{\overline{\Bun}_P}}
\nc{\BunBM}{\Bun_{B(M)}}
\nc{\BunBMb}{\overline{\Bun}_{B(M)}}
\nc{\BunPbw}{{\widetilde{\Bun}_P}}
\nc{\BunBP}{\widetilde{\Bun}_{B,P}}
\nc{\GUb}{\overline{G/U}}
\nc{\GUPb}{\overline{G/U(P)}}
\nc{\Hhom}{\underline{\on{Hom}}}
\nc\syminfty{\on{Sym}^{\infty}}
\nc\lal{\ol{\kappa_x}}
\nc\xl{\ol{x}}
\nc\thl{\ol{\theta}}
\nc\nul{\ol{\nu}}
\nc\mul{\ol{\mu}}
\nc{\oX}{\overset{o}{X}{}}
\nc{\hl}{\overset{\leftarrow}h{}}
\nc{\hr}{\overset{\rightarrow}h{}}
\nc{\M}{{\mathcal M}}
\nc{\N}{{\mathcal N}}
\nc{\F}{{\mathcal F}}
\nc{\D}{{\mathcal D}}
\nc{\Q}{{\mathcal Q}}
\nc{\Y}{{\mathcal Y}}
\nc{\G}{{\mathcal G}}
\nc{\E}{{\mathcal E}}
\nc{\CalC}{{\mathcal C}}
\nc\Dh{\widehat{\D}}
\nc{\C}{{\mathcal C}}
\nc{\K}{{\mathcal K}}
\renewcommand{\H}{{\mathcal H}}
\nc{\T}{{\mathcal T}}
\nc{\V}{{\mathcal V}}
\renc{\P}{{\mathcal P}}
\nc{\A}{{\mathcal A}}
\nc{\B}{{\mathcal B}}
\nc{\U}{{\mathcal U}}
\nc{\Gr}{{\on{Gr}}}
\nc{\frn}{{\check{\mathfrak u}(P)}}
\nc{\fC}{\mathfrak C}
\nc{\p}{\mathfrak p}
\nc{\q}{\mathfrak q}
\nc\f{{\mathfrak f}}
\nc{\qo}{{\mathfrak q}}
\nc{\po}{{\mathfrak p}}
\nc{\s}{{\mathfrak s}}
\nc\w{\text{w}}
\renewcommand{\mod}{{\on{-mod}}}
\nc\Spec{\on{Spec}}
\nc\Mod{\on{Mod}}
\nc{\tw}{\widetilde{\mathfrak t}}
\nc{\pw}{\widetilde{\mathfrak p}}
\nc{\qw}{\widetilde{\mathfrak q}}
\nc{\jw}{\widetilde j}
\nc{\grb}{\overline{\Gr}}
\nc{\I}{\mathcal I}
\nc{\kappach}{{\check\kappa_x}}
\nc{\Lambdach}{{\check\Lambda}{}}
\nc{\much}{{\check\mu}}
\nc{\omegach}{{\check\omega}}
\nc{\nuch}{{\check\nu}}
\nc{\etach}{{\check\eta}}
\nc{\alphach}{{\checka}}
\nc{\oblvtach}{{\check\oblvta}}
\nc{\pich}{{\check\pi}}
\nc{\ch}{{\check h}}
\nc{\Hb}{\overline{\H}}
\nc{\BA}{{\mathbb{A}}}
\nc{\BC}{{\mathbb{C}}}
\nc{\BF}{{\mathbb{F}}}
\nc{\BG}{{\mathbb{G}}}
\nc{\BM}{{\mathbb{M}}}
\nc{\BO}{{\mathbb{O}}}
\nc{\BD}{{\mathbb{D}}}
\nc{\BN}{{\mathbb{N}}}
\nc{\BP}{{\mathbb{P}}}
\nc{\BQ}{{\mathbb{Q}}}
\nc{\BR}{{\mathbb{R}}}
\nc{\BZ}{{\mathbb{Z}}}
\nc{\BS}{{\mathbb{S}}}
\nc{\CA}{{\mathcal{A}}}
\nc{\CB}{{\mathcal{B}}}
\nc{\CE}{{\mathcal{E}}}
\nc{\CF}{{\mathcal{F}}}
\nc{\CG}{{\mathcal{G}}}
\nc{\CH}{{\mathcal{H}}}
\nc{\CL}{{\mathcal{L}}}
\nc{\CC}{{\mathcal{C}}}
\nc{\CM}{{\mathcal{M}}}
\nc{\CN}{{\mathcal{N}}}
\nc{\CK}{{\mathcal{K}}}
\nc{\CO}{{\mathcal{O}}}
\nc{\CP}{{\mathcal{P}}}
\nc{\CQ}{{\mathcal{Q}}}
\nc{\CR}{{\mathcal{R}}}
\nc{\CS}{{\mathcal{S}}}
\nc{\CT}{{\mathcal{T}}}
\nc{\CU}{{\mathcal{U}}}
\nc{\CV}{{\mathcal{V}}}
\nc{\CW}{{\mathcal{W}}}
\nc{\CX}{{\mathcal{X}}}
\nc{\CY}{{\mathcal{Y}}}
\nc{\CZ}{{\mathcal{Z}}}
\nc{\CI}{{\mathcal{I}}}
\nc{\CJ}{{\mathcal{J}}}
\nc{\csM}{{\check{\mathcal A}}{}}
\nc{\oM}{{\overset{\circ}{\mathcal M}}{}}
\nc{\obM}{{\overset{\circ}{\mathbf M}}{}}
\nc{\oCA}{{\overset{\circ}{\mathcal A}}{}}
\nc{\obA}{{\overset{\circ}{\mathbf A}}{}}
\nc{\ooM}{{\overset{\circ}{M}}{}}
\nc{\osM}{{\overset{\circ}{\mathsf M}}{}}
\nc{\vM}{{\overset{\bullet}{\mathcal M}}{}}
\nc{\nM}{{\underset{\bullet}{\mathcal M}}{}}
\nc{\oD}{{\overset{\circ}{\mathcal D}}{}}
\nc{\obD}{{\overset{\circ}{\mathbf D}}{}}
\nc{\oA}{{\overset{\circ}{\mathbb A}}{}}
\nc{\op}{{\overset{\bullet}{\mathbf p}}{}}
\nc{\cp}{{\overset{\circ}{\mathbf p}}{}}
\nc{\oU}{{\overset{\bullet}{\mathcal U}}{}}
\nc{\oZ}{{\overset{\circ}{\mathcal Z}}{}}
\nc{\ofZ}{{\overset{\circ}{\mathfrak Z}}{}}
\nc{\oF}{{\overset{\circ}{\fF}}}
\nc{\fa}{{\mathfrak{a}}}
\nc{\fb}{{\mathfrak{b}}}
\nc{\fd}{{\mathfrak{d}}}
\nc{\ff}{{\mathfrak{f}}}
\nc{\fg}{{\mathfrak{g}}}
\nc{\fgl}{{\mathfrak{gl}}}
\nc{\fh}{{\mathfrak{h}}}
\nc{\fj}{{\mathfrak{j}}}
\nc{\fl}{{\mathfrak{l}}}
\nc{\fm}{{\mathfrak{m}}}
\nc{\fn}{{\mathfrak{n}}}
\nc{\fu}{{\mathfrak{u}}}
\nc{\fp}{{\mathfrak{p}}}
\nc{\fr}{{\mathfrak{r}}}
\nc{\fs}{{\mathfrak{s}}}
\nc{\ft}{{\mathfrak{t}}}
\nc{\fz}{{\mathfrak{z}}}
\nc{\fsl}{{\mathfrak{sl}}}
\nc{\hsl}{{\widehat{\mathfrak{sl}}}}
\nc{\hgl}{{\widehat{\mathfrak{gl}}}}
\nc{\hg}{{\widehat{\mathfrak{g}}}}
\nc{\chg}{{\widehat{\mathfrak{g}}}{}^\vee}
\nc{\hn}{{\widehat{\mathfrak{n}}}}
\nc{\chn}{{\widehat{\mathfrak{n}}}{}^\vee}
\nc{\fA}{{\mathfrak{A}}}
\nc{\fB}{{\mathfrak{B}}}
\nc{\fD}{{\mathfrak{D}}}
\nc{\fE}{{\mathfrak{E}}}
\nc{\fF}{{\mathfrak{F}}}
\nc{\fG}{{\mathfrak{G}}}
\nc{\fK}{{\mathfrak{K}}}
\nc{\fL}{{\mathfrak{L}}}
\nc{\fM}{{\mathfrak{M}}}
\nc{\fN}{{\mathfrak{N}}}
\nc{\fP}{{\mathfrak{P}}}
\nc{\fU}{{\mathfrak{U}}}
\nc{\fV}{{\mathfrak{V}}}
\nc{\fZ}{{\mathfrak{Z}}}
\nc{\bb}{{\mathbf{b}}}
\nc{\bc}{{\mathbf{c}}}
\nc{\bd}{{\mathbf{d}}}
\nc{\bbf}{{\mathbf{f}}}
\nc{\be}{{\mathbf{e}}}
\nc{\bi}{{\mathbf{i}}}
\nc{\bj}{{\mathbf{j}}}
\nc{\bn}{{\mathbf{n}}}
\nc{\bp}{{\mathbf{p}}}
\nc{\bq}{{\mathbf{q}}}
\nc{\bu}{{\mathbf{u}}}
\nc{\bv}{{\mathbf{v}}}
\nc{\bx}{{\mathbf{x}}}
\nc{\bs}{{\mathbf{s}}}
\nc{\by}{{\mathbf{y}}}
\nc{\bw}{{\mathbf{w}}}
\nc{\bA}{{\mathbf{A}}}
\nc{\bK}{{\mathbf{K}}}
\nc{\bB}{{\mathbf{B}}}
\nc{\bC}{{\mathbf{C}}}
\nc{\bG}{{\mathbf{G}}}
\nc{\bD}{{\mathbf{D}}}
\nc{\bE}{{\mathbf{E}}}
\nc{\bH}{{\mathbf{H}}}
\nc{\bI}{{\mathbf{I}}}
\nc{\bM}{{\mathbf{M}}}
\nc{\bN}{{\mathbf{N}}}
\nc{\bV}{{\mathbf{V}}}
\nc{\bW}{{\mathbf{W}}}
\nc{\bX}{{\mathbf{X}}}
\nc{\bZ}{{\mathbf{Z}}}
\nc{\bS}{{\mathbf{S}}}
\nc{\sA}{{\mathsf{A}}}
\nc{\sB}{{\mathsf{B}}}
\nc{\sC}{{\mathsf{C}}}
\nc{\sD}{{\mathsf{D}}}
\nc{\sF}{{\mathsf{F}}}
\nc{\sK}{{\mathsf{K}}}
\nc{\sM}{{\mathsf{M}}}
\nc{\sO}{{\mathsf{O}}}
\nc{\sW}{{\mathsf{W}}}
\nc{\sQ}{{\mathsf{Q}}}
\nc{\sP}{{\mathsf{P}}}
\nc{\sZ}{{\mathsf{Z}}}
\nc{\sfp}{{\mathsf{p}}}
\nc{\sr}{{\mathsf{r}}}
\nc{\bk}{{\mathsf{k}}}
\nc{\sg}{{\mathsf{g}}}
\nc{\sff}{{\mathsf{f}}}
\nc{\sfb}{{\mathsf{b}}}
\nc{\sfc}{{\mathsf{c}}}
\nc{\sd}{{\mathsf{d}}}
\nc{\BK}{{\bar{K}}}
\nc{\tA}{{\widetilde{\mathbf{A}}}}
\nc{\tB}{{\widetilde{\mathcal{B}}}}
\nc{\tg}{{\widetilde{\mathfrak{g}}}}
\nc{\tG}{{\widetilde{G}}}
\nc{\TM}{{\widetilde{\mathbb{M}}}{}}
\nc{\tO}{{\widetilde{\mathsf{O}}}{}}
\nc{\tU}{{\widetilde{\mathfrak{U}}}{}}
\nc{\TZ}{{\tilde{Z}}}
\nc{\tx}{{\tilde{x}}}
\nc{\tbv}{{\tilde{\bv}}}
\nc{\tfP}{{\widetilde{\mathfrak{P}}}{}}
\nc{\tz}{{\tilde{\zeta}}}
\nc{\tmu}{{\tilde{\mu}}}
\nc{\urho}{\underline{\pi}}
\nc{\uB}{\underline{B}}
\nc{\uC}{{\underline{\mathbb{C}}}}
\nc{\ui}{\underline{i}}
\nc{\uj}{\underline{j}}
\nc{\ofP}{{\overline{\mathfrak{P}}}}
\nc{\oB}{{\overline{\mathcal{B}}}}
\nc{\og}{{\overline{\mathfrak{g}}}}
\nc{\oI}{{\overline{I}}}
\nc{\eps}{\varepsilon}
\nc{\hrho}{{\hat{\pi}}}
\nc{\one}{{\mathbf{1}}}
\nc{\two}{{\mathbf{t}}}
\nc{\Rep}{{\mathop{\operatorname{\rm Rep}}}}
\nc{\Tot}{{\mathop{\operatorname{\rm Tot}}}}
\nc{\Ker}{{\mathop{\operatorname{\rm Ker}}}}
\nc{\Hilb}{{\mathop{\operatorname{\rm Hilb}}}}
\nc{\End}{{\mathop{\operatorname{\rm End}}}}
\nc{\Ext}{{\mathop{\operatorname{\rm Ext}}}}
\nc{\CHom}{{\mathop{\operatorname{{\mathcal{H}}\it om}}}}
\nc{\GL}{{\mathop{\operatorname{\rm GL}}}}
\nc{\gr}{{\mathop{\operatorname{\rm gr}}}}
\nc{\Id}{{\mathop{\operatorname{\rm Id}}}}
\nc{\de}{{\mathop{\operatorname{\rm def}}}}
\nc{\length}{{\mathop{\operatorname{\rm length}}}}
\nc{\supp}{{\mathop{\operatorname{\rm supp}}}}
\nc{\Cliff}{{\mathsf{Cliff}}}
\nc{\Fl}{\on{Fl}}
\nc{\Fib}{{\mathsf{Fib}}}
\nc{\Coh}{{\mathsf{Coh}}}
\nc{\QCoh}{{\mathsf{QCoh}}}
\nc{\IndCoh}{{\mathsf{IndCoh}}}
\nc{\FCoh}{{\mathsf{FCoh}}}
\nc{\reg}{{\text{\rm reg}}}
\nc{\cplus}{{\mathbf{C}_+}}
\nc{\cminus}{{\mathbf{C}_-}}
\nc{\cthree}{{\mathbf{C}_*}}
\nc{\Qbar}{{\bar{Q}}}
\nc\Eis{\on{Eis}}
\nc\Eisb{\ol\Eis{}}
\nc\Eisr{\on{Eis}^{rat}{}}
\nc\wh{\widehat}
\nc{\Def}{\on{Def_{\check{\fb}}(E)}}
\nc{\barZ}{\overline{Z}{}}
\nc{\barbarZ}{\overline{\barZ}{}}
\nc{\barpi}{\overline\iota}
\nc{\barbarpi}{\overline\barpi}
\nc{\barpip}{\overline\iota{}^+}
\nc{\barpim}{\overline\iota{}^-}
\nc{\fq}{\mathfrak q}
\nc{\fqb}{\ol{\fq}{}}
\nc{\fpb}{\ol{\fp}{}}
\nc{\fpr}{{\fp^{rat}}{}}
\nc{\fqr}{{\fq^{rat}}{}}
\nc{\hattimes}{\wh\otimes}
\nc{\bh}{{\bar{h}}}
\nc{\bOmega}{{\overline{\Omega(\check \fn)}}}
\nc{\seq}[1]{\stackrel{#1}{\sim}}
\nc{\cT}{{\check{T}}}
\nc{\cG}{{\check{G}}}
\nc{\cM}{{\check{M}}}
\nc{\cB}{{\check{B}}}
\nc{\ct}{{\check{\mathfrak t}}}
\nc{\cg}{{\check{\fg}}}
\nc{\cb}{{\check{\fb}}}
\nc{\cn}{{\check{\fn}}}
\nc{\cLambda}{{\check\Lambda}}
\nc{\cla}{{\check\kappa_x}}
\nc{\cmu}{{\check\mu}}
\nc{\cnu}{{\check\nu}}
\nc{\ceta}{{\check\eta}}
\nc{\DefbE}{{\on{Def}_{\cB}(E_\cT)}}
\nc{\imathb}{{\ol{\imath}}}
\nc{\rlr}{\overset{\longrightarrow}{\underset{\longrightarrow}\longleftarrow}}
\nc{\KG}{K\backslash G}
\nc{\comult}{{co\text{-}mult}}
\nc{\counit}{{co\text{-}unit}}
\nc{\uHom}{{\underline{\Maps}}}
\nc{\dgSch}{\on{Sch}}
\nc{\Sch}{\on{Sch}}
\nc{\affdgSch}{\on{Sch}^{\on{aff}}}
\nc{\affSch}{{}^{\on{cl}}\!\on{Sch^{\on{aff}}}}
\nc{\Groupoids}{\on{Grpd}}
\nc{\inftygroup}{\on{Spc}}
\nc{\inftyCat}{\infty\on{-Cat}}
\nc{\StinftyCat}{\inftyCat^{\on{St}}}
\nc{\MoninftyCat}{\infty\on{-Cat}^{\on{Mon}}}
\nc{\SymMoninftyCat}{\infty\on{-Cat}^{\on{SymMon}}}
\nc{\SymMonStinftyCat}{\on{DGCat}^{\on{SymMon}}}
\nc{\MonStinftyCat}{\on{DGCat}^{\on{Mon}}}
\nc{\inftystack}{\on{Stk}}
\nc{\inftystackalg}{Stk^{1\text{-}alg}}
\nc{\inftyprestack}{\on{PreStk}}
\nc{\inftydgnearstack}{\on{NearStk}}
\nc{\inftydgstack}{\on{Stk}}
\nc{\inftydgstackalg}{DGStk^{1\text{-}alg}}
\nc{\inftydgprestack}{\on{PreStk}}
\nc{\dgindSch}{\on{indSch}}
\nc{\indSch}{{}^{\on{cl}}\!\on{indSch}}
\nc{\infSch}{\on{infSch}}
\nc{\dr}{{\on{dR}}}
\nc{\mmod}{{\on{-}\!{\mathbf{mod}}}}
\nc{\starr}{\text{\dh}}
\nc{\Spectra}{\on{Spectra}}
\nc{\Crys}{\on{Crys}}
\nc{\oblv}{{\mathbf{oblv}}}
\nc{\ind}{{\mathbf{ind}}}
\nc{\CMaps}{{\mathcal Maps}}
\nc{\Maps}{\on{Maps}}
\nc{\bMaps}{\mathbf{Maps}}
\nc{\BMaps}{\ul{\on{Maps}}}
\nc{\Grid}{\on{Grid}}
\nc{\hGrid}{\on{Grid}^{\geq\,\on{dgnl}}}
\nc{\Diag}{\on{Diag}}
\nc{\bDelta}{\mathbf{\Delta}}
\nc{\tCateg}{(\infty\on{-2)-Cat}}
\nc{\ul}{\underline}
\nc{\Seg}{\on{Seq}}
\nc{\biSeg}{\on{bi-Seq}}
\nc{\triSeg}{\on{tri-Seq}}
\nc{\quadSeg}{\on{quad-Seq}}
\nc{\nSeg}{\on{n-Seq}}
\nc{\Segm}{\on{Seg}^{\on{mkd}}}
\nc{\fLm}{\fL^{\on{mkd}}}
\nc{\inftyCatm}{\inftyCat^{\on{mkd}}}
\nc{\Blocks}{\mathbf{Blocks}}
\nc{\Snakes}{\mathbf{Snakes}}
\nc{\bifL}{\on{bi-}\!\fL}
\nc{\Sets}{\on{Sets}}
\nc{\Ran}{\on{Ran}}
\nc{\Vect}{\on{Vect}}
\nc{\Shv}{\on{Shv}}
\nc{\unn}{\mathbf{union}}
\begin{document}


\vskip1cm

\title[The Atiyah-Bott formula for the cohomology of $\Bun_G$]
{The Atiyah-Bott formula for the cohomology \\ of the moduli space of bundles on a curve}

\author{D. Gaitsgory}



\date{\today}

\maketitle

\tableofcontents

\section*{Introduction}

\ssec{What is this text?}  \label{ss:what}

The present paper is a companion of \cite{Main Text}.  
\footnote{The contents of this paper are joint work with J.~Lurie, who chose not to sign it as an author. It is made public with his
consent, but the responsibility for any deficiency or undesired outcome of this paper lies entirely with the author.}
The goal of {\it loc.cit.} is to prove the Tamagawa number formula
for function fields, namely that the volume of the automorphic space of a semi-simple simply connected group, with respect 
to the Tamagawa measure, equals $1$. The present paper gives a different approach to some of the steps in the proof.

\sssec{}

Let us recall the strategy of the proof in \cite{Main Text}. We start with a (smooth, complete and geometrically connected)
curve $X$ over a finite field $\BF_q$ with the field of fractions $K$, and let $G_K$ be a semi-simple simply connected group
over $K$. We are interested in the quantity
$$\mu_{\on{Tam}}(G_K(\BA)/G_K(K)),$$
where $\BA$ is the ring of ad\`eles of $K$, and $\mu_{\on{Tam}}$ is the Tamagawa measure on $G_K(\BA)$.

\medskip

First, we choose an integral model $G$ of $G_K$ over $X$. This is a smooth group-scheme over $X$ with connected fibers.
Let $\Bun_G$ denote the moduli space of $G$-bundles on $X$; this is an algebraic stack locally of finite type and smooth over 
$\BF_q$.
We interpret the desired equality
$$\mu_{\on{Tam}}(G_K(\BA)/G_K(K))=1$$
as the following 
\begin{equation} \label{e:Tam}
|\Bun_G(\BF_q)|=q^{\dim(\Bun_G)}
\cdot \underset{x}\Pi\, \frac{|k_x|^{\dim(G_x)}}{|G(k_x)|}.
\end{equation}

In the above formula, $|\Bun_G(\BF_q)|$ is the (infinite) sum over the set of isomorphism classes of $G$-bundles on $X$:
$$\underset{\CP}\Sigma\, \frac{1}{|\on{Aut}(\CP)|}.$$

In the right-hand side, the product is over the set of closed points of $X$; for a point $x$ we denote by $k_x$ its residue field
and by $G_x$ the fiber of $G$ at this point. 

\medskip

Note that both sides in \eqref{e:Tam} are infinite expressions, so part of the statement is that both sides are well-defined 
(i.e., are convergent). 

\sssec{}

The next step is the Grothendieck-Lefschetz trace formula that says that the left-hand side in \eqref{e:Tam} equals 
\begin{equation} \label{e:trace comp}
\on{Tr}\left(\on{Frob},\on{H}^*_c(\Bun_G)\right).
\end{equation}

The Grothendieck-Lefschetz trace formula for \emph{quasi-compact} algebraic stacks follows easily from the case of 
varieties. However, $\Bun_G$ is not quasi-compact, so a separate argument is needed to justify it. In particular, one
needs to show that the expression in \eqref{e:trace comp} makes sense as a complex number. This is non-obvious
because $\on{H}^*_c(\Bun_G)$ lives in infinitely many cohomological degrees.

\sssec{}

Assuming the Grothendieck-Lefschetz trace formula for $\Bun_G$, and using Verdier duality, we rewrite the desired equality as
\begin{equation} \label{e:num prod prev}
\on{Tr}(\on{Frob}^{-1}, \on{H}^*(\Bun_G))=\underset{x}\Pi\, \frac{|k_x|^{\dim(G_x)}}{|G(k_x)|},
\end{equation}
where the factor of $q^{\dim(\Bun_G)}$ got subsumed in the dualizing sheaf on $\Bun_G$.

\medskip

Along with the previous steps mentioned above, the equality \eqref{e:num prod prev} is established in \cite{Main Text}. Our goal in this
paper is to give a different proof of \eqref{e:num prod prev}. In fact, there will be two points of difference, one big and the other small.

\medskip

The key step in both proofs is the \emph{cohomological product formula} for $\on{H}^*(\Bun_G)$, see \eqref{e:product formula prev}.  

\sssec{}

The small point of difference is how we derive the numerical identity \eqref{e:num prod prev} from the formula for $\on{H}^*(\Bun_G)$. 

\medskip

In \cite{Main Text} this is done by considering the filtration by powers of the maximal ideal on the algebra of cochains $\on{C}^*(\Bun_G)$.
Since we live in the world of higher algebra, such a filtration is not something naive, but rather a certain canonical construction in homotopy theory. 

\medskip

In this paper we derive \eqref{e:num prod prev} by a more pedestrian method: we show that from the \emph{cohomological product formula} 
\eqref{e:product formula prev} 
one can deduce a more explicit formula for $\on{H}^*(\Bun_G)$, called the Atiyah-Bott formula, see \eqref{e:AB prev}. Now, given the 
Atiyah-Bott formula, the derivation of \eqref{e:num prod prev} is a standard manipulation with L-functions, see \secref{s:num}. 

\medskip

So, in a sense, this part of the argument is more elementary in the present paper than in \cite{Main Text}. 

\sssec{} 

The big point of difference is how we prove the cohomological product formula \eqref{e:product formula prev}. This formula
is a local-to-global result, and in both \cite{Main Text} and the present paper we deduce it from another local-to-global result,
namely, the non-abelian Poincar\'e duality, \eqref{e:non-ab Poinc prev}  (we do not reprove the non-abelian Poincar\'e duality
here). 

\medskip

In both cases, the derivation
$$\text{Non-abelian Poincar\'e duality} \,\, \Rightarrow\,\, \text{Cohomological product formula}$$
morally amounts to performing Verdier duality on the \emph{Ran space} of $X$. 

\medskip

Now, roughly speaking, the above derivation in \cite{Main Text} does not explicitly use the words ``Verdier duality". Rather, it amounts
to a giant commutative diagram of complexes of $\BZ_\ell$-modules.  

\medskip

One can regard the bulk of the work in the present paper as breaking the giant diagram from \cite{Main Text} into a series of several 
conceptually defined steps. Our primary tool is the extensive use of \emph{sheaves}. I.e., most of the isomorphisms that we need to go through 
are isomorphisms between sheaves on some spaces (rather than complexes of $\BZ_\ell$-modules). See \secref{ss:summary}, where we
list the series of isomorphisms that eventually leads to the cohomological product formula. 

\medskip

However, there is a price that one needs to pay: for some of the most non-trivial steps, sheaves on varieties are not sufficient.
And neither are sheaves on \emph{prestacks} (the latter are geometric objects generalizing varieties, see \secref{ss:prestacks}).
What we will need is the notion of $\ell$-adic sheaf on a (contravarant) functor from the category of schemes to that of $\infty$-categories;
we will call the latter gadgets \emph{lax prestacks}. 

\ssec{The Ran space and the cohomological product formula}

Thus, the main thrust of this paper is the proof of the cohomological product formula. Here we shall explain what it says. To do
so we shall need the language of sheaves on prestacks, see \secref{s:prestacks}. 

\medskip

We change the notations from \secref{ss:what}, and we take $X$ to be a smooth and complete curve over an algebraically
closed ground field $k$. 

\sssec{}

Let $X$ be our curve. Both in \cite{Main Text} and in the present paper, one of our main tools is the Ran space of $X$. This is 
a prestack that classifies \emph{finite non-empty subsets} of points of $X$, see \secref{ss:Ran} for the precise definition.

\medskip

Given the group-scheme $G$ over $X$, we construct the following sheaf, denoted $\CB$, on $\Ran$. For a collection of (distinct)
points $x_1,...,x_n$, the !-fiber of $\CB$ at the point $\{x_1,...,x_n\}\in \Ran$ is 
\begin{equation} \label{e:shape of B}
\underset{i=1,...,n}\bigotimes \on{C}^*(BG_{x_i}),
\end{equation}
where $G_{x_i}$ is the fiber of $G$ at $x_i$, and $BG_{x_i}$ is its classifying stack. Here and in the sequel, $\on{C}^*(-)$ denotes the
algebra of cochains on a given space. 

\medskip

For every $x_1,...,x_n$ as above, we have a canonically defined restriction map
$$\Bun_G\to BG_{x_1}\times...\times BG_{x_n},$$
and the corresponding pullback map on cohomology.

\medskip

Integrating over the Ran space, we obtain a map
\begin{equation} \label{e:product formula prev}
\on{C}^*_c(\Ran,\CB)\to \on{C}^*(\Bun_G)
\end{equation}

The cohomological product formula says that the map \eqref{e:AB prev} is an isomorphism (provided that the generic fiber
of $G$ is semi-simple and simply-connected). 

\sssec{}  \label{sss:Euler}

The reader might wonder why we call the isomorphism \eqref{e:product formula prev} the ``cohomological product formula". 
The reason is that, given the expression for the !-fibers of $\CB$ in \eqref{e:shape of B}, one can think of $\on{C}^*_c(\Ran,\CB)$
as a kind of Euler product
\begin{equation} \label{e:Euler}
\underset{x\in X}\bigotimes\, \on{C}^*(BG_x),
\end{equation}
\emph{understood informally}.

\medskip

Note that when we compare the geometric picture to the classical one of automorphic functions, the latter is one categorical
level lower: so a vector space in geometry corresponds to a number in the classical theory. So, the product \eqref{e:Euler}
is a geometric analog of the special value of the corresponding L-function (rather than the restricted tensor product, akin
to one in the definition of automorphic representations).  

\medskip

A possible of justification for thinking of $\on{C}^*_c(\Ran,\CB)$ as the ``product" is the following. In the course of the proof of the 
numerical product formula \eqref{e:num prod prev} in \secref{s:num}, we will show that when working over the finite field, the trace 
of $\on{Frob}^{-1}$ on $\on{H}^*_c(\Ran,\CB)$ will actually turn out to be equal to the product of
$$\underset{x}\Pi\, \on{Tr}(\on{Frob}_x^{-1},\on{H}^*(BG_x)).$$

We should emphasize, however, that the proof of this identity is specific to our situation (we prove it by first rewriting 
$\on{C}_c^*(\Ran,\CB)$ via the Atiyah-Bott formula, see below), i.e., we use some additional structure on our $\CB$. 
It would be interesting to find a general statement applicable to $\on{C}^*_c(\Ran,\CF)$ for a general (\emph{factorizable}) 
sheaf $\CF$ on $\Ran$.

\sssec{}

The formula \eqref{e:product formula prev} embodies a local-to-global principle: the cohomology of $\Bun_G$ is 
assembled in a canonical way from the cohomologies of the classifying spaces of the fibers of $G$.   However,
it is not best adapted to explicit computations (recall that we need to compute the trace of Frobenius on $\on{H}^*(\Bun_G)$). 

\medskip

In \secref{s:AB} we will derive from the formula for $\on{C}^*(\Bun_G)$, given by \eqref{e:product formula prev}, another
expression for $\on{C}^*(\Bun_G)$, known as the Atiyah-Bott formula. This is literally the formula from \cite{AB} when
the ground field is that of complex numbers and $G$ is split. 

\medskip

Namely, let us assume that $G$ is such that it is reductive over some open $X'\subset X$, and has unipotent fibers at points
of $X-X'$. To the datum of $G$ one attaches a lisse sheaf $M$ over $X'$, whose !-fiber at $x\in X'$ has a basis consisting
of a set of homogeneous generators of $\on{H}^*(BG_x)$, i.e., the \emph{exponents} of $G_x$.

\medskip

In \secref{s:AB} we will show that the cohomological product formula implies the following (non-canonical) isomorphism:

\begin{equation} \label{e:AB prev}
\on{C}^*(\Bun_G) \simeq \Sym\left(\on{C}^*(X',M)\right)
\end{equation}

It is using this isomorphism that we will be able to calculate the trace of $\on{Frob}^{-1}$ on $\on{H}^*(\Bun_G)$. 

\ssec{Non-abelian Poincar\'e duality}

We shall now try to outline the proof of the cohomological product formula, i.e., of the fact that the map
\eqref{e:product formula prev} is an isomorphism. This is what the bulk of this paper is about. 

\sssec{}

Let $\Gr_{\Ran}$ be the Ran version of the affine Grassmannian of $G$. I.e., this is the prestack that classifies the data of
$$(\CP,I,\alpha),$$
where $\CP$ is a $G$-bundle on $X$, $I\in \Ran$ and $\alpha$ is a trivialization of $\CP$ on $X-I$.

\medskip

We have an evident forgetful map 
$$\Gr_{\Ran}\to \Bun_G,\quad (\CP,I,\alpha)\mapsto \CP.$$
The main geometric ingredient in the proof of the cohomological product formula
is the fact that this map induces an isomorphism on homology. This is proved in \cite{Main Text} under the name \emph{non-abelian Poincar\'e duality}.
We do not reprove this fact here. 

\medskip

So, we need to explain how the above isomorphism on homology
implies the cohomological product formula. Note that the map $\Gr_{\Ran}\to \Bun_G$ used above,
and the maps $$\Bun_G\to BG_{x_1}\times...\times BG_{x_n},$$
used in \eqref{e:product formula prev} are of (seemingly) different nature. 

\sssec{}

Consider the forgetful map 
$$\Gr_{\Ran}\to \Ran,\quad (\CP,I,\alpha)\mapsto I.$$

Let $\CA$ be the sheaf on $\Ran$ equal to the pushforward of the dualizing sheaf under the above map. Explicitly, the !-fiber of $\CA$
at a point $\{x_1,...,x_n\}\in \Ran$ is
$$\underset{i=1,...,n}\bigotimes \on{C}_*(\Gr_{x_i}),$$
where $\Gr_{x_i}$ denotes the affine Grassmannian of $G$ at $x_i$. 

\medskip

We can reformulate the non-abelian Poincar\'e duality as the fact that the map
\begin{equation}\label{e:non-ab Poinc prev}
\on{C}_c^*(\Ran,\CA)\to \on{C}_*(\Bun_G)
\end{equation}
is an isomorphism.  

\sssec{}  \label{sss:estim}

Comparing \eqref{e:product formula prev} with \eqref{e:non-ab Poinc prev}, we obtain that we need to construct an isomorphism
\begin{equation} \label{e:global duality prev}
\on{C}_c^*(\Ran,\CB)\to \on{C}_c^*(\Ran,\CA)^\vee,
\end{equation}
that makes the diagram
$$
\CD
\on{C}_c^*(\Ran,\CB)   @>>>   \on{C}_c^*(\Ran,\CA)^\vee  \\
@V{\text{\eqref{e:product formula prev}}}VV     @A{\sim}A{\text{\eqref{e:non-ab Poinc prev}}}A   \\
\on{C}^*(\Bun_G)   @>{\sim}>> \on{C}_*(\Bun_G)^\vee 
\endCD
$$
commute. We call the desired (but eventually, actual) isomorphism \eqref{e:global duality prev} the \emph{global duality} statement. 

\ssec{Verdier duality and the procedure of taking the units out}  \label{ss:take out prev}

\sssec{}

In order to construct \eqref{e:global duality prev}, it is very tempting to suppose that the sheaf $\CB$
is the Verdier dual of $\CA$, thereby inducing a duality on their global cohomologies.

\medskip

The first question is: what is Verdier duality on a geometric object such as $\Ran$? We do define what Verdier duality means in such a context
in Part II of the paper and we prove a theorem to the effect that for a sheaf on $\Ran$ \emph{satisfying certain cohomological estimates},
the (compactly supported) cohomology of its Verdier dual will map isomorphically to the dual of its (compactly supported) cohomology.

\medskip

The problem is that our $\CA$ does \emph{not} satisfy the above cohomological estimates. In fact, it violates them so badly that its
Verdier dual equals $0$. \footnote{In fact, the Verdier dual of $\CA$ is $0$ even for $G=\{1\}$, in which case $\CA$ equals the dualizing
sheaf on $\Ran$. Moreover, the ``constants" that come from the case $G=\{1\}$ is what causes the problem for any $G$, and the way 
to get around it consists of taking these constants out, see \secref{sss:out prev} below.} 
So, the desired duality on global cohomology happens for a reason (seemingly) different from Verdier duality.

\medskip

But it turns out that trying to push through the idea of Verdier duality is not a lost case: one needs to modify $\CA$ and $\CB$. The
corresponding modification is the procedure of \emph{taking the units out}. 

\sssec{}

We shall explain how taking the units out allows to implement Verdier duality by analogy with associative algebras. 
\footnote{In fact, this is more than just an analogy, as sheaves on the Ran space and associative algebras are part of a common paradigm.
Namely, in the context of topology with $X$ being the real line $\BR$, sheaves on its Ran space, equipped with a factorization structure, 
are equivalent to associative algebras.}.

\medskip

Recall that on the category of \emph{non-unital} associative algebras (say, over a ground field $\Lambda$) there is a canonically defined contravariant 
self-functor, the Koszul duality. Namely, it sends an algebra $A$ to 
$$\on{KD}(A):=\on{ker}\left(\CHom_A(\Lambda,\Lambda)\to \CHom_\Lambda(\Lambda,\Lambda)\right),$$ where $\Lambda$ is equipped with the \emph{trivial}
(i.e., zero) action of $A$, and where $\CHom_\Lambda(\Lambda,\Lambda)$ is (obviously) isomorphic to $\Lambda$. 

\medskip

The functor $\on{KD}$ is \emph{not} an equivalence. In fact, it sends many objects to $0$. For example, if $A$ was obtained from a \emph{unital}
associative algebra $A_{\on{untl}}$ by treating it as a non-unital algebra (i.e., by forgetting that it had a unit), then $\on{KD}(A)=0$. 

\medskip

Now, suppose that we start with a unital augmented algebra $A_{\on{untl,aug}}$ and take $A_{\on{red}}$ to be its augmentation ideal. Then it is a sensible procedure
to consider $B_{\on{red}}:=\on{KD}(A_{\on{red}})$ and then create a unital augmented algebra $B_{\on{untl,aug}}$ by adjoining a unit to $B_{\on{red}}$,
while at the background we also have $A$ and $B$, obtained from $A_{\on{untl,aug}}$ and $B_{\on{untl,aug}}$, respectively, by forgetting the unit
and the augmentation.  

\medskip

This will turn out to be how our sheaves $\CA$ and $\CB$ are related to each other.

\sssec{}  

We should think of sheaves on the Ran space as analogs of non-unital associative algebras (see footnote above). 
In Part I of this paper we introduce geometric gadgets, denoted $\Ran_{\on{untl}}$ and $\Ran_{\on{untl,aug}}$, so that sheaves on 
them are analogs of unital and unital augmented associative algebras, respectively. 

\medskip

It is here that \emph{lax prestacks} come into the picture, because such are $\Ran_{\on{untl}}$ and $\Ran_{\on{untl,aug}}$. 

\medskip

The categories of sheaves on $\Ran$,  $\Ran_{\on{untl}}$ and $\Ran_{\on{untl,aug}}$ are related to each other by several functors
that mirror the corresponding functors for associative algebras: 
$$\on{OblvUnit}:\Shv(\Ran_{\on{untl}})\to \Shv(\Ran), \quad
\on{OblvAug}:\Shv(\Ran_{\on{untl,aug}})\to \Shv(\Ran_{\on{untl}}),$$
$$\on{AddUnit}:\Shv(\Ran) \to \Shv(\Ran_{\on{untl}}),$$
and a pair of adjoint functors
$$\on{AddUnit}_{\on{aug}}:\Shv(\Ran) \rightleftarrows \Shv(\Ran_{\on{untl,aug}}):\on{TakeOut},$$
while
$$\on{OblvAug}\circ \on{AddUnit}_{\on{aug}}\simeq \on{AddUnit}.$$

We will show that the sheaves $\CA$ and $\CB$ are obtained as
$$\CA\simeq \on{OblvUnit}\circ \on{OblvAug}(\CA_{\on{untl,aug}}) \text{ and } 
\CB\simeq \on{OblvUnit}\circ \on{OblvAug}(\CB_{\on{untl,aug}}),$$
respectively for canonically defined sheaves $\CA_{\on{untl,aug}}$ and $\CB_{\on{untl,aug}}$ on $\Ran_{\on{untl,aug}}$.

\medskip

We then set
$$\CA_{\on{red}}:=\on{TakeOut}(\CA_{\on{untl,aug}})  \text{ and } \CB_{\on{red}}:=\on{TakeOut}(\CB_{\on{untl,aug}}).$$

\sssec{}   \label{sss:out prev}

In our case, the !-fibers of $\CA_{\on{red}}$ and $\CB_{\on{red}}$ at a point $\{x_1,...,x_n\}\in \Ran$ are
$$\underset{i=1,...,n}\bigotimes \on{C}^{\on{red}}_*(\Gr_{x_i}) \text{ and } \underset{i=1,...,n}\bigotimes \on{C}^*_{\on{red}}(BG_{x_i}),$$
respectively.   So the effect of replacing $\CA$ be $\CA_{\on{red}}$ and $\CB$ be $\CB_{\on{red}}$ consists at the level of !-fibers
of replacing 
$$\on{C}_*(\Gr_{x_i})\rightsquigarrow \on{C}^{\on{red}}_*(\Gr_{x_i})  \text{ and }
\on{C}^*(BG_{x_i}) \rightsquigarrow \on{C}^*_{\on{red}}(BG_{x_i})$$
\emph{in each factor}.  

\sssec{}

But let us remember that we are interested in the (compactly supported) cohomology of $\CA$ and $\CB$, respectively, on $\Ran$. 
How is  (compactly supported) cohomology affected by the passage
\begin{equation} \label{e:passage}
\CA\rightsquigarrow \CA_{\on{red}} \text{ and } \CB\rightsquigarrow \CB_{\on{red}}?
\end{equation}

We will show that for any sheaf $\CF$ on the Ran space we have a canonical isomorphism
$$\on{C}^*_c(\Ran,\CF) \simeq \on{C}^*_c\left(\Ran,\on{OblvUnit}\circ \on{OblvAug} \circ \on{AddUnit}_{\on{aug}}(\CF)\right).$$

So, the replacement \eqref{e:passage} leaves the (compactly supported) cohomology on $\Ran$ unchanged (up to a direct summand 
equal to $\BQ_\ell$). 

\medskip

This is parallel to the situation in associative algebras, in which for a unital augmented algebra $A_{\on{untl,aug}}$, the Hochschild homology of $A$
(obtained from $A_{\on{untl,aug}}$ by forgetting the unit) and $A_{\on{red}}$ (obtained from $A_{\on{untl,aug}}$ by taking the augmentation
ideal) are isomorphic (up to a copy of the ground field). 

\sssec{}

Thus, we modify our attempt to deduce the duality between $\on{C}_c^*(\Ran,\CA)$ and $\on{C}_c^*(\Ran,\CB)$ from Verdier duality
on the Ran space as follows:

\medskip

We will show that the sheaves $\CA_{\on{red}}$ and $\CB_{\on{red}}$ on $\Ran$ are related by Verdier duality
\begin{equation} \label{e:local duality prev}
\CB_{\on{red}}\simeq \BD_{\Ran}(\CA_{\on{red}})
\end{equation}
and that the resulting map 
$$\on{C}_c^*(\Ran,\CB_{\on{red}})\simeq  \on{C}_c^*(\Ran,\BD_{\Ran}(\CA_{\on{red}}))\to \on{C}_c^*(\Ran,\CA_{\on{red}})^\vee$$
is an isomorphism, thereby producing \eqref{e:global duality prev}.

\medskip

The fact that 
$$\on{C}_c^*(\Ran,\BD_{\Ran}(\CA_{\on{red}}))\to \on{C}_c^*(\Ran,\CA_{\on{red}})^\vee$$
is an isomorphism follows from the fact that the sheaf $\CA_{\on{red}}$ (unlike its precursor $\CA$) \emph{does} satisfies
the cohomological estimate mentioned in \secref{sss:estim}. It is actually here that the assumption that $G$ be 
\emph{semi-simple simply connected} is used. Indeed, the isomorphism \eqref{e:local duality prev} will take place for any reductive $G$,
but the cohomological estimate would fail unless $G$ is semi-simple simply connected. 

\begin{rem}
The above phenomenon of the local duality implying the global one also has an analog for associative algebras: for a non-unital
associative algebra that lives in cohomological degrees $\leq -1$, the Hochschild homology of its Koszul dual maps isomorphically
to the dual of its Hochschild homology.
\end{rem} 

\ssec{Local duality}

\sssec{}

Above we have explained how to reduce the cohomological product formula to the isomorphism \eqref{e:local duality prev}. 
Note, however, that \eqref{e:local duality prev} should not be just \emph{some} isomorphism:
we need it to make the diagram
$$
\CD
\on{C}_c^*(\Ran,\CB_{\on{red}})     @>{\text{\eqref{e:local duality prev}}}>>  \on{C}_c^*(\Ran,\BD_{\Ran}(\CA_{\on{red}}))    \\
@V{\sim}VV   @VV{\sim}V   \\
\on{C}_c^*(\Ran,\CB)     & & \on{C}_c^*(\Ran,\CA_{\on{red}})^\vee   \\
& &   @VV{\sim}V   \\
@V{\text{\eqref{e:product formula prev}}}VV    \on{C}_c^*(\Ran,\CA)^\vee   \\
& &  @A{\sim}A{\text{\eqref{e:non-ab Poinc prev}}}A   \\
\on{C}^*(\Bun_G)   @>{\sim}>> \on{C}_*(\Bun_G)^\vee 
\endCD
$$
commute.

\medskip

In Part III of this paper we explain the general mechanism of what data on a pair of sheaves $\CA$ and $\CB$ on $\Ran$
(or, rather, their unital augmented enhancements  $\CA_{\on{untl,aug}}$ and $\CB_{\on{untl,aug}}$) is needed in order to define a 
map in one direction 
\begin{equation} \label{e:local duality map}
\CB_{\on{red}}\to \BD_{\Ran}(\CA_{\on{red}}).
\end{equation}

Not surprisingly, it turns out that in our case this data amounts to a geometric input, which is a local analog of one used in defining the 
maps \eqref{e:product formula prev} and \eqref{e:non-ab Poinc prev}. 
 
\sssec{}

Having produced the map in \eqref{e:local duality map}, we now have to show that it is an isomorphism. We call this the 
\emph{local duality} statement. By definition, it says that a certain map of sheaves on $\Ran$ is an isomorphism.

\medskip

Now, $\Ran$ is a huge space, but it has one piece that it easy to understand: we have a canonically defined map
$$X\to \Ran.$$

We show that in order to prove that the map \eqref{e:local duality map} is an isomorphism, it is sufficient to show that 
it induces an isomorphism between the respective restrictions of the two sides to $X$.  This reduction step is achieved
via the \emph{factorization} structure on the two sides. The relevant notion of factorization is developed in Part IV 
of the paper. 

\medskip

We then further explore the locality properties of the map \eqref{e:local duality map} by showing that the question of
this map being an isomorphism is enough to resolve in the case when $G$ is a constant group scheme. We also show
that we are free to replace the initial curve by any other curve, and it is sufficient to show that the map in question induces 
an isomorphism at the level of !-fibers at some/any point $\{x\}\in \Ran$.

\sssec{}

Thus, we are reduced to showing that the resulting map
\begin{equation}  \label{e:pointwise duality map}
(\CB_{\on{red}})_{\{x\}}\to  (\BD_{\Ran}(\CA_{\on{red}}))_{\{x\}}
\end{equation} 
is an isomorphism. We call this the \emph{pointwise duality} statement. 

\medskip

We give two (ideologically similar, but technically different) proofs of this assertion, in Sects. \ref{s:dagger} and \ref{s:P1}, respectively.  
In both proofs we regard the map \eqref{e:pointwise duality map} as a local analog of the map \eqref{e:non-ab Poinc prev}.

\medskip

Although the question of the map \eqref{e:pointwise duality map} being an isomorphism is a local one (it makes sense for non-complete
curves), the method we use to prove it is global: we reduce it back to the fact that the map \eqref{e:non-ab Poinc prev}
is an isomorphism for a complete curve. 

\ssec{Summary}   \label{ss:summary}

Let us summarize the steps involved in the derivation of the cohomological product formula (i.e., the isomorphism \eqref{e:product formula prev})
from non-abelian Poincar\'e duality (i.e., the isomorphism \eqref{e:non-ab Poinc prev}). 

\sssec{}

Show that $\CA$ and $\CB$ can be equipped with a unital augmented structure; consider the corresponding reduced versions
$\CA_{\on{red}}$ and $\CB_{\on{red}}$; show that the reduced versions have the same (compactly supported) cohomology on $\Ran$.

\medskip

The preparatory material for these constructions is Part I of the paper, and the construction itself is carried out in \secref{s:local duality}. 

\sssec{}

Show that $\CA_{\on{red}}$ is well-behaved for Verdier duality on the Ran space, i.e., that the map
$$\on{C}_c^*(\Ran,\BD_{\Ran}(\CA_{\on{red}}))  \to \on{C}_c^*(\Ran,\CA_{\on{red}})^\vee$$
is an isomorphism.

\medskip

The preparatory material for this is Part II of the paper.

\sssec{}

Construct a map
$$\CB_{\on{red}}\to \BD_{\Ran}(\CA_{\on{red}}).$$

\medskip

The preparatory material for the construction is Part III of the paper, and the construction itself is carried out in \secref{s:local duality}. 

\sssec{}

Show that is enough to show that the above map $\CB_{\on{red}}\to \BD_{\Ran}(\CA_{\on{red}})$ becomes
an isomorphism after restricting to $X\subset \Ran$.

\medskip

The preparatory material for this is Part IV of the paper.

\sssec{}

Reduce to the pointwise assertion that the map
$$(\CB_{\on{red}})_{\{x\}}\to  (\BD_{\Ran}(\CA_{\on{red}}))_{\{x\}}$$
is an isomorphism (for the split form of $G$, and some/any $x\in X$).

\medskip

This is done in \secref{s:pointwise}. 

\sssec{}

Prove that the map $(\CB_{\on{red}})_{\{x\}}\to  (\BD_{\Ran}(\CA_{\on{red}}))_{\{x\}}$ is an isomorphism. 

\medskip

This is done in Sects. \ref{s:dagger} and \ref{s:P1}. 

\ssec{Contents}

\sssec{}

After becoming familiar with the language of sheaves on prestacks and the Ran space (Sects. \ref{s:prestacks} and \ref{ss:Ran}),
the reader may want to start with \secref{s:global duality}, where the cohomological product formula is stated. The other sections 
in this paper essentially constitute the proof of the cohomological product formula.

\medskip

After \secref{s:global duality}, the reader may try to proceed to \secref{s:local duality}, where the most essential part of the proof is
contained. However, reading \secref{s:local duality} without any idea of what is happening in Parts I, II and III of the paper will 
probably be impossible. Other than reading this paper in order, an alternative reasonable strategy would be to read \secref{s:local duality}
stage-by-stage and refer back to the relevant parts in the previous sections (we tried to point to the part of the background
material needed for each step). 

\medskip

We shall now describe the contents of this paper section-by-section. 

\sssec{}

Part 0 of the paper is devoted to the discussion of sheaves on prestacks and lax prestacks. 

\medskip

In \secref{s:prestacks} we specify what we mean by a \emph{sheaf theory} on schemes, explain how 
each such automatically extends to give rise to the notion of sheaf on a prestack. We then discuss
the technically crucial notion of \emph{pseudo-proper} map: these are maps for which the functor of
pushforward of sheaves is well-behaved. The material in this section is necessary for the formulation
of the main results of this paper.

\medskip

In \secref{s:lax} we introduce \emph{lax prestacks} and sheaves on them. Examples of lax prestacks
are $\Ran_{\on{untl}}$ and $\Ran_{\on{untl,aug}}$, and they are needed in order to perform the
manipulation of \emph{taking the unit out}, discussed in \secref{ss:take out prev} above. The reader
needs to be familiar with lax prestacks if he wants to go in any depth into the proof of the
cohomological product formula. 

\medskip

In \secref{s:uhc} we explore the general question of when is a map of lax prestacks $f:\CY_1\to \CY_2$
is such that pullback with respect to it defines a fully faithful functor. This section may be skipped on the 
first pass and returned to when necessary. (We should say, however, that the results in this section 
are (i) generally useful and (ii) not difficult to formulate and prove, so it may be fun to read in any case.) 

\sssec{}

Part I of the paper is devoted to the discussion of various versions of the Ran space.

\medskip

In \secref{s:Ran} we introduce the usual Ran space, as well as its unital version. We construct the functor
of \emph{adding the unit} from sheaves on $\Ran$ to sheaves on $\Ran_{\on{untl}}$. 
We show that the forgetful functor from 
sheaves on $\Ran_{\on{untl}}$ to sheaves on $\Ran$ preserves compactly supported cohomology. 

\medskip

In \secref{s:aug} we introduce yet another version of the Ran space, $\Ran_{\on{untl,aug}}$. We show that
the procedure of adding units, viewed now as a functor from sheaves on $\Ran$ to sheaves on $\Ran_{\on{untl,aug}}$,
is fully faithful, with the right adjoint given by the functor of \emph{taking the units out}.

\medskip

In \secref{s:untl} we will present an alternative (but equivalent) point of view on the unital and unital augmented Ran space. 
The contents of this section will not be used elsewhere in the paper.

\sssec{}

Part II of the paper is devoted to the discussion of Verdier duality on the (usual) Ran space. 

\medskip

In \secref{s:Verdier} we introduce the functor of Verdier duality on the category of sheaves on an arbitrary prestack
(for which the diagonal morphism is pseudo-proper). We then single out a class of prestacks, called \emph{pseudo-schemes with a finitary diagonal}, 
for which the functor of Verdier duality is manageable (=can be expressed in terms of Verdier duality on schemes).  

\medskip

In \secref{s:Verdier on Ran} we specialize the discussion of Verdier duality to the case of the Ran space. First, we show
that $\Ran$ is a pseudo-scheme with a finitary diagonal, so we can explicitly describe Verdier duality on $\Ran$. We then formulate a
crucial result, \thmref{t:Verdier on Ran},  that gives sufficient conditions on a sheaf $\CF$ on $\Ran$ for the map
$$\on{C}^*_c(\Ran,\BD_{\Ran}(\CF))\to \on{C}^*_c(\Ran,\CF)^\vee$$
to be an isomorphism. 

\medskip

In \secref{s:proofs Verdier} we prove \thmref{t:Verdier on Ran} and some other results of similar nature. 

\sssec{}

Part III is devoted to the discussion of the interaction of the procedure of inserting the unit (from Part I) with 
Verdier duality on $\Ran$.

\medskip

In \secref{s:pairings aug} we discuss the following question: given a pair of sheaves $\CF_{\on{untl,aug}},\CG_{\on{untl,aug}}$ on 
$\Ran_{\on{untl,aug}}$, what kind of datum on them gives rise to a map
$$\CG_{\on{red}}\to \BD_{\Ran}(\CF_{\on{red}})?$$
In the above formula, $\CF_{\on{red}}$ and $\CG_{\on{red}}$ are obtained from $\CF_{\on{untl,aug}}$ and $\CG_{\on{untl,aug}}$,
respectively, by \emph{taking the units out}.  It turns out that the corresponding datum on $\CF_{\on{untl,aug}},\CG_{\on{untl,aug}}$ 
is a map
$$\CF_{\on{untl,aug}}\boxtimes \CG_{\on{untl,aug}}\to \omega_{\Ran_{\on{untl,aug}}\times \Ran_{\on{untl,aug}}},$$
\emph{defined over a certain open subset} of $\Ran_{\on{untl,aug}}\times \Ran_{\on{untl,aug}}$. Note that this is very different
from the usual Verdier duality on a prestack $\CY$, where the map goes to $(\on{diag}_\CY)_!(\omega_\CY)$. 

\medskip

In \secref{s:expl} we give an explicit construction of the !-fiber of the Verdier dual of a sheaf $\CF$ on the Ran space in terms
of the sheaf $\CF_{\on{untl,aug}}$ on $\Ran_{\on{untl,aug}}$, obtained from $\CF$ by inserting the unit. 

\sssec{}

Part IV is devoted to a structure that one can impose on sheaves on $\Ran$, known as \emph{factorization}. 

\medskip

In \secref{s:factorize} we introduce the notions of commutative factorization algebra and cocommutative factorization coalgebra
in sheaves on $\Ran$. We show that, when a certain cohomological estimate is satisfied, Verdier duality maps 
cocommutative factorization coalgebras to commutative factorization algebras.

\medskip

In \secref{s:fact aug} we study how the procedure of inserting the unit from Part I interacts with factorization. It turns out that
for sheaves on $\Ran_{\on{untl,aug}}$ one can introduce its own notions of commutative factorization algebra and cocommutative 
factorization coalgebra (very different in spirit from those on the usual $\Ran$). Yet, we show that the two notions of
factorization match up under the functor of inserting the unit. 

\sssec{}

Part V is the core of this paper, in which we assemble all the pieces of theory developed hitherto and 
prove the cohomological product formula.

\medskip

In \secref{s:global duality} we state the cohomological product formula. We also recall the statement of non-abelian Poincar\'e duality,
from which we deduce that the cohomological product formula follows from the fact that a certain (specified) pairing between 
$\on{C}^*_c(\Ran,\CA)$ and $\on{C}^*_c(\Ran,\CB)$ defines an isomorphism 
\begin{equation} \label{e:global duality again}
\on{C}^*_c(\Ran,\CB)\to \on{C}^*_c(\Ran,\CB)^\vee.  
\end{equation} 

\medskip

\secref{s:local duality} is where everything happens. We use the material in Parts I and III of the paper to state the 
\emph{local duality} theorem, and we use Part II of the paper to show the the local duality theorem implies the isomorphism
\eqref{e:global duality again}.

\medskip

In \secref{s:pointwise} we reduce the local duality theorem to the pointwise duality theorem, using the material from Part IV
of the paper.

\medskip

Finally, in Sects. \ref{s:dagger} and \ref{s:P1} we give two proof of the poinwtise duality theorem.

\sssec{}
 
In Part VI we apply the cohomological product formula to deduce the numerical product formula \eqref{e:num prod prev}. 

\medskip

In \secref{s:AB} we deduce from the cohomological product formula the Atiyah-Bott formula, which is an 
explicit (however, non-canonical) description of the algebra of cochains on $\Bun_G$.

\medskip

In \secref{s:num} we use the the Atiyah-Bott formula to calculate the trace of $\on{Frob}^{-1}$ on 
$\on{H}^*(\Bun_G)$, thereby proving the numerical product formula \eqref{e:num prod prev}. 

\ssec{Conventions and notation}

\sssec{Algebraic geometry}

Throughout the paper $k$ will be an algebraically closed ground field. 

\medskip

We shall denote by $\Sch$ the category of \emph{separated} $k$-schemes of finite type. In this paper we do \emph{not} use derived algebraic geometry,
so ``schemes" are classical schemes.

\medskip

We use the notation $\on{pt}$ for $\Spec(k)$. 

\sssec{Higher category theory: why?}

Higher category theory is indispensable in this paper. 

\medskip

Let is first point out the place where the use of higher categories is \emph{not} essential: we define prestacks (resp., lax prestacks) to be functors 
from the category of schemes to that of $\infty$-groupoids (resp., $\infty$-categories). This is not necessary: the 
prestacks (resp., lax prestacks) that we shall encounter in this paper take values in ordinary groupoids (resp., ordinary
categories).  \footnote{The latter is not surprising: if one stays within the realm of classical (as opposed to derived) algebraic
geometry, higher prestacks or lax prestacks are far less ubiquitous.}  

\medskip

The place where higher categories are essential is that we need the category $\Shv(S)$ of sheaves on a scheme $S$ to be regarded
as an $\infty$-category. In fact, we need more: we need the assignment that sends a scheme to the category of sheaves on
it to be a functor:
$$\Shv:(\Sch)^{\on{op}}\to \inftyCat,\quad S\mapsto \Shv(S)$$
where $\Sch$ is the (ordinary) category of schemes, perceived as an $\infty$-category, and $\inftyCat$ is 
\emph{the $\infty$-category of $\infty$-categories}. 

\medskip

The reason we need this (as opposed to have sheaves form, say, a triangulated category) is that in order to define sheaves 
on a prestack $\CY$, we will be taking the \emph{limit} of the categories $\Shv(S)$ over the index category of schemes $S$ mapping to 
$\CY$. Now, in order for this limit to be well-behaved we need it to be taken in $\inftyCat$. 

\medskip

We should say that our use of higher category theory is model-independent. I.e., one can perceive them as quasi-categories (i.e., a particular kind
simplicial sets), but any other model (e.g., complete Segal spaces) will do as well. All the statements that we ever make are 
homotopy-invariant. 

\sssec{Higher category theory: glossary}   \label{sss:hom th conv}

For the most part, our use of higher category theory is limited to the material covered in \cite{Lu1}.  Here are some basic notions and
pieces of notation that the reader should know:

\medskip

We let $\inftygroup$ denote the $\infty$-category of $\infty$-groupoid. 

\medskip

An $\infty$-category $\bC$ is an $\infty$-groupoid of all $1$-morphisms in $\bC$ are isomorphisms. For every $\bC$ there exists
a universal $\infty$-groupoid that receives a map from $\bC$; we call it the enveloping groupoid of $\bC$ and denote it by $\bC_{\on{str}}$;
it is obtained from $\bC$ by inverting all $1$-morphisms. We shall say that $\bC$ is \emph{contractible} or that it has a \emph{trivial homotopy type} 
if $\bC_{\on{str}}$ is isomorphic, as an $\infty$-groupoid, to $\{*\}$ (the one-point category). 

\medskip

For an $\infty$-category $\bC$ and objects $\bc_1,\bc_2\in \bC$ we write $\Maps_\bC(\bc_1,\bc_2)$ for the space ($\infty$-groupoid)
of maps from $\bc_1$ to $\bc_2$.

\medskip

For a functor $F:\bC'\to \bC$ and an object $\bc\in \bC$ we denote by $\bC'_{\bc/}$ (resp., $\bC'_{/\bc}$) the corresponding \emph{under-category}
(resp., \emph{over-category}). By definition, this is the category of pairs $(\bc'\in \bC',\bc\to F(\bc'))$ (resp., $(\bc'\in \bC',F(\bc')\to \bc)$).   We let $\bC'_\bC$
to be the \emph{fiber} of $\bC'$ over $\bc$, i.e., the category of pairs $(\bc'\in \bC',\bc\simeq F(\bc')$, where $\simeq$ means a (specified)
isomorphism. 

\medskip

Let $F:\bC\to \bD$ be a functor and let $\alpha:\bc_1\to \bc_2$ be a $1$-morphism in $\bC$. We shall say that $\alpha$ is coCartesian if for 
any $\bc\in \bC$ the map
$$\Maps_\bC(\bc_2,\bc)\to \Maps_\bD(F(\bc_2),F(\bc))\underset{\Maps_\bD(F(\bc_1),F(\bc))}\times \Maps(\bc_1,\bc)$$
is an isomorphism (of $\infty$-groupoids). A $1$-morphism is said to be Cartesian is it is coCartesian for the functor
$F^{\on{op}}:\bC^{\on{op}}\to \bD^{\on{op}}$. A functor $F$ is said to be a \emph{locally coCartesian fibration} (resp., \emph{locally Cartesian fibration})
if for every $1$-morphism $\bd_1\to \bd_2$ in $\bD$ there exists a coCartesian (resp., Cartesian) $1$-morphism $\alpha:\bc_1\to \bc_2$
equipped with a datum of commutativity for the diagram
$$
\CD
F(\bc_1)   @>{F(\alpha)}>> F(\bc_2)  \\
@V{\sim}VV   @VV{\sim}V   \\
\bd_1  @>>>  \bd_2. 
\endCD
$$
 A functor $F$ is said to be a \emph{coCartesian fibration} (resp., \emph{Cartesian fibration}) if it is 
a \emph{locally coCartesian fibration} (resp., \emph{locally Cartesian fibration}) \emph{and} the composition of two 
coCartesian (resp., Cartesian) $1$-morphisms is coCartesian (resp., Cartesian). A functor $F$ is said to be a \emph{coCartesian fibration in groupoids} 
(resp., \emph{Cartesian fibration in groupoids}) if, in addition, its fibers are $\infty$-groupoids. We refer the reader to \cite[Sect. 2.4]{Lu1}
for more details. 

\medskip

If $\bI$ is an index $\infty$-category and $\Phi:\bI\to \bC$ is a functor, where $\bC$ is another $\infty$-category, we can talk about its colimit or limit in $\bC$, 
denoted,
$$\underset{i\in \bI}{\on{colim}}\, \Phi(i) \text{ and } \underset{i\in \bI}{\on{lim}}\, \Phi(i),$$
respectively, which, if they exist, are defined uniquely up to a canonical isomorphism (in proper language, up to a contractible set of choices).
We shall say that $\bC$ is \emph{cocomplete} if the colimits of all functors from all index categories $\bI$ 
(satisfying a certain cardinality condition that we ignore) exist. The reader is referred to \cite[Sect. 1.2.13]{Lu1} for more details
and references for the actual definition. 

\medskip

Let $F:\bC\to \bD$ be a functor. Then for another $\infty$-category $\bE$, precomposition with $F$ defines a functor
$$\on{Funct}(\bD,\bE) \to \on{Funct}(\bC,\bE).$$
The (partially defined left (resp., right) adjoint of this functor is called the functor of left (resp., right) Kan extension. The 
left (resp., right) Kan extension of a given functor $\Phi:\bC\to \bE$ can be calculated explicitly on objects:
$$\on{LKE}(\Phi)(\bd)=\underset{\bc\in \bC_{/\bd}}{\on{colim}}\, \Phi(\bc)  \text{ and }
\on{RKE}(\Phi)(\bd)=\underset{\bc\in \bC_{\bd/}}{\on{lim}}\, \Phi(\bc).$$
If $F$ is a coCartesian (resp., Cartesian) fibration, then in the above colimit (resp., limit) the index category can be replaced
by the fiber $\bC_\bd$. See \cite[Sect. 4.3]{Lu1} for more details.

\sssec{Higher algebra} 

We shall need the following notions from higher algebra, developed in \cite{Lu2}:  stable $\infty$-category (Sect .1), symmetric monoidal 
$\infty$-category (Definition 2.0.0.7), and commutative algebra in a symmetric monoidal $\infty$-category (Sect. 2.1.3). 

\sssec{A lemma on limits vs colimits}  \label{sss:limits and colimits}

In two places in the text we will encounter the following situation.  Let $\StinftyCat\subset \inftyCat$ be the non-full subcategory,
where we restrict the objects to be cocomplete stable categories and $1$-morphisms to be colimit-preserving. First,
we note that by \cite[Proposition 5.5.3.13]{Lu1}, the above inclusion preserves limits.

\medskip

Let 
$$\Phi:\bI\to \StinftyCat,\quad i\mapsto \bC_i,\quad (i_1\overset{\alpha}\to i_2)\mapsto \Phi_\alpha\in \on{Funct}(\bC_{i_1},\bC_{i_2})$$
be a functor. Denote 
$$\bC:=\underset{i\in \bI}{\on{colim}}\, \bC_i,$$
where the \emph{colimit is taken in $\StinftyCat$}. For every $i\in \bI$, we have a tautologically defined functor $\on{ins}_i:\bC_i\to \bC$.

\medskip

By the Adjoint Functor Theorem (see \cite[Corollary 5.5.2.9(i)]{Lu1}), for every arrow $i_1\overset{\alpha}\to i_2$ in $\bI$, the corresponding functor $\Phi_\alpha$
admits a right adjoint $\Psi_\alpha$ (which is not necessarily continuous). Consider the functor 
$$\Psi:\bI^{\on{op}}\to \inftyCat,$$
which is the same as $\Phi$ on objects, but which sends 
$$(i_1\overset{\alpha}\to i_2) \mapsto \Psi_\alpha.$$

The right adjoints to the functors $\on{ins}_i$, denoted $\on{ev}_i$, define a functor
\begin{equation} \label{e:from colimit to limit}
\bC\to \underset{i\in \bI^{\on{op}}}{\on{lim}}\, \bC_i,
\end{equation}
where in the right-hand side we are taking the \emph{limit of the functor $\Psi$ in $\inftyCat$}.

\medskip

Suppose now that in the above situation the functors $\Psi_\alpha$ are also colimit preserving. Then it follows
that the functors $\on{ev}_i$ are also colimit preserving.  In this case it is easy to show that for every $\bc$ the 
map
$$\underset{i\in \bI}{\on{colim}}\, \on{ins}_i\circ \on{ev}_i(\bc)\to \bc$$
is an isomorphism.

\ssec{Acknowledgments}

The author would like to thank J.~Lurie for undertaking this project together (but, in fact, so much more).  

\medskip

In addition to the people listed in the acknowledgments in \cite{Main Text}, the author would like to thank S.~Raskin for 
the suggestion to consider direct images for sheaves on lax prestacks.

\medskip

The author is very grateful to Q.~Ho and S.~Raskin for reading the text and catching a number of mistakes. 

\medskip

The author is supported by NSF grant DMS-1063470.

\newpage 

\centerline{\bf Part 0: Preliminaries on prestacks, lax prestacks and sheaves}

\bigskip

\section{Sheaves and prestacks}  \label{s:prestacks}

The goal of this paper is to prove a certain isomorphism of vector spaces (the \emph{Atiyah-Bott formula}, \eqref{e:AB prev}).
One of these vector spaces is defined by an explicit elementary procedure and another as a cohomology of something. 
So neither side explicitly mentions sheaves (that is to say, non-constant sheaves).  However, our way of proving the desired
isomorphism heavily uses sheaves--they already appear in the main step of the proof, namely, the \emph{cohomological product formula}, 
\eqref{e:AB prev}.

\medskip

We are used to considering sheaves on schemes. However, even to state the product formula, we need a slightly more general
set-up: namely, we will need to consider sheaves on \emph{prestacks}. 

\medskip

The goal of the present section it to define what prestacks are and what we mean by a sheaf on a prestack. 
These notions are necessary for understanding the core of this paper, namely Part V. 

\ssec{Sheaves on schemes}  \label{ss:sheaves}

In this subsection we explain what we mean by a \emph{theory of sheaves on schemes}. The discussion here is aimed
at the technically-minded reader. So, the reader who is willing to understand sheaves on schemes intuitively can
skip this subsection and proceed to \secref{ss:prestacks}. 

\sssec{}

In this paper we assume being given a \emph{theory of sheaves}, which is a functor
$$\Shv^!:(\Sch)^{\on{op}}\to \inftyCat.$$

\medskip

I.e., to a scheme $S$ we assign an $\infty$-category $\Shv(S)$ and to a morphism $f:S_1\to S_2$
we assign a pullback functor
$$f^!:\Shv(S_2)\to \Shv(S_1).$$

These functors are endowed with a homotopy-coherent system of compatibilities for compositions of morphisms. 

\begin{rem}
We should explain why we are using the !-pullback rather than the *-pullback. The reason is that the prestacks that 
we will consider are ``ind-objects" (colimits of schemes under closed embeddings), and we want to think of a sheaf on such 
a prestack as a colimit of sheaves on schemes that comprise it. This dictates the choice of the !-pullback over the *-pullback, 
it being the \emph{right adjoint} of the direct image functor,  see \secref{sss:limits and colimits}.
\end{rem}

\sssec{Technical assumptions}  \label{sss:assump cont}

We assume the following additional \emph{properties} of the functor $\Shv^!$: 

\medskip

\noindent(i) It takes values in the full subcategory of $\inftyCat$ formed by cocomplete stable categories.

\medskip

\noindent(ii) For every morphism $f:S_1\to S_2$ in $\Sch$, the corresponding functor $f^!:\Shv(S_2)\to \Shv(S_1)$
commutes with colimits.

\medskip

In other words, $\Shv^!$ (uniquely) factors through a non-full subcategory $\StinftyCat\subset\inftyCat$, where
we restrict the objects to be cocomplete stable categories and $1$-morphisms to be colimit-preserving.

\sssec{Symmetric monoidal structure}  \label{sss:assump mon}

Recall that the category $\StinftyCat$ has a natural symmetric monoidal structure given by tensor product. We 
shall assume that our functor $\Shv^!$ is endowed with the following additional pieces of structure:

\medskip

\noindent(a) $\Shv^!$ is endowed with a \emph{right-lax symmetric monoidal structure}, where $\Sch$
is considered as equipped with the Cartesian symmetric monoidal structure. 

\medskip

The datum in (a) stipulates the existence of a functor
$$\Shv(S_1)\otimes \Shv(S_2)\to \Shv(S_1\times S_2),\quad \CF_1,\CF_2\mapsto \CF_1\boxtimes \CF_2,$$
equipped with a homotopy-coherent system of compatibilities.

\medskip

In particular, for any $S$, pullback with respect to the diagonal morphism $S\overset{\on{diag}_S}\longrightarrow S\times S$
defines on $\Shv(S)$ a structure of symmetric monoidal category
$$\CF_1,\CF_2\mapsto \CF_1\overset{!}\otimes \CF_2:=\on{diag}_S^!(\CF_1\boxtimes \CF_2).$$

We shall refer to this structure as the \emph{pointwise} symmetric monoidal structure. 

\medskip 

The second additional piece of structure on the functor $\Shv^!$ is:

\medskip

\noindent(b) The symmetric monoidal category $\Shv(\on{pt})$ is identified with $\Lambda\mod$, where 
$\Lambda$ is our (fixed) commutative ring of coefficients.

\sssec{}

Here are some examples of sheaf theories:

\medskip

\noindent(1) When the ground field is $\BC$ and an arbitrary ring $\Lambda$, we can take $\Shv(S)$ to be the ind-completion of
the category of constructible sheaves on $S$ with $\Lambda$-coefficients.

\medskip

\noindent(2) For any ground field $k$ and $\Lambda=\BZ/\ell^n\BZ, \BZ_\ell, \BQ_\ell$ (where $\ell$ is assumed to be invertible in $k$),
and we take $\Shv(S)$ to be the ind-completion of the category of constructible \'etale sheaves on $S$ with $\Lambda$-coefficients, 
see \cite[Sect. 4]{Main Text}. 

\medskip

\noindent(3) When the ground field has characteristic $0$, we take $\Shv(S)$ to be the category of holonomic D-modules on $S$. 

\medskip

\noindent(4) When the ground field has characteristic $0$, we take $\Shv(S)$ to be the category of all D-modules on $S$. 

\sssec{}  \label{sss:constr}

In what follows, we shall refer to Examples (1)-(3) above as the \emph{context of constructible sheaves}. Its main feature is that
for any morphism $f:S_1\to S_2$, the functor $f^!$ admits a left adjoint, denoted $f_!$.  
(We note that this property fails in general in example (4), unless the morphism $f$ is proper.)

\medskip

In particular, we obtain that in the constructible context, the functor $f^!$ commutes with \emph{limits}. 

\medskip

In order for a sheaf theory to work well in applications, one should impose several more conditions (e.g., proper base change
and the K\"unneth formula). \footnote{We emphasize that whatever these additional conditions are, they are \emph{properties}
(i.e., certain maps should be isomorphisms), but \emph{not} additional pieces of structure.}
Rather than listing these conditions, we shall assume that our sheaf theory is one of the examples
(1)-(4) listed above.  

\medskip

Note that in each of these examples, for a scheme $S$, the category $\Shv(S)$ possesses a t-structure. In fact, in the constructible
context there are two t-structures: the usual and the perverse one. In the context of D-modules, there is the usual D-module t-structure. 

\ssec{Prestacks}  \label{ss:prestacks}

A prestack is an arbitrary contravariant functor from the category of schemes to that of
groupoids. In this way prestacks are geometric objects that generalize algebraic stacks
or ind-schemes. When working over the ground field of complex numbers, in many cases
the behavior of a prestack is well approximated by that of the underlying topological space 
(for example, see Remark \ref{r:homology} below). 

\sssec{}

By definition, a prestack is an arbitrary functor
$$(\Sch)^{\on{op}}\to \inftygroup.$$

We let $\on{PreStk}$ denote the $\infty$-category of prestacks. 

\sssec{}

Yoneda embedding defines a fully-faithful functor
$$\Sch\hookrightarrow \on{PreStk}.$$

\ssec{Sheaves on prestacks}  \label{ss:sheaves prestacks}

Sheaves on prestacks will be defined in a very straightforward way, see below. Of course, we will not be able
to say much about the category of sheaves on a general prestack. But fortunately, the very formal properties
that one gets for free from the definition will suffice for our purposes. 

\sssec{}

For a prestack $\CY$, the category $\Shv^!(\CY)$ is defined by
$$\Shv^!(\CY):=\underset{S\in \Sch_{/\CY}}{\on{lim}}\, \Shv(S),$$
where for a morphism $f:S_1\to S_2$ in $\Sch_{/\CY}$ the corresponding transition functor
$\Shv(S_2)\to \Shv(S_1)$ is $f^!$. 

\sssec{}

In other words, an object $\CF\in \Shv^!(\CY)$ is an assignment of

\begin{itemize}

\item $S\in \Sch,y\in \CY(S)\,\,\mapsto \,\,\CF_{S,y}\in \Shv^!(S)$,

\item $(S'\overset{f}\to S)\in \Sch,y\in \CY(S_2)\,\, \mapsto \,\, (\CF_{S',f(y)}\simeq f^!(\CF_{S,y}))\in \Shv^!(S')$,
\end{itemize}
satisfying a homotopy-coherent system of compatibilities. 

\sssec{}

A bit more functorially, one can view the assignment $\CY\mapsto \Shv^!(\CY)$ as the \emph{right Kan extension} 
of the functor
$$\Shv^!:(\Sch)^{\on{op}}\to \inftyCat$$
along the Yoneda embedding $$(\Sch)^{\on{op}}\hookrightarrow (\on{PreStk})^{\on{op}}.$$

\medskip

In particular, for a morphism $f:\CY_1\to \CY_2$ between prestacks, we have a tautologically defined functor
$$f^!:\Shv^!(\CY_2)\to \Shv^!(\CY_1).$$

\medskip

\noindent Notation: sometimes, for $\CF\in \Shv^!(\CY_2)$ we shall use a shorthand notation 
$$\CF|_{\CY_1}:=f^!(\CF).$$

\medskip

For a prestack $\CY$ we let $\omega_\CY\in \Shv^!(\CY)$ be the \emph{dualizing sheaf}, i.e., the !-pullback 
of $\Lambda\in \Shv(\on{pt})$. 

\sssec{}

The additional assumptions on the functor $\Shv^!$ of Sects. \ref{sss:assump cont} and \ref{sss:assump mon}
imply that the resulting functor
$$\Shv^!:(\on{PreStk})^{\on{op}}\to \inftyCat$$ also factors via a functor to $\StinftyCat$ and as such is endowed
with a symmetric monoidal structure.  In particular, every $\Shv^!(\CY)$ acquires a symmetric monoidal structure,
and a symmetric monoidal functor from $\Lambda\mod$. 

\medskip

\begin{lem} \label{l:constr limits prestack}
In the context of constructible sheaves \footnote{See \secref{sss:constr} for what we mean by that.}, 
the functor $f^!$ commutes with \emph{limits}.
\end{lem}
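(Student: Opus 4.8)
The plan is to reduce the statement for a morphism of prestacks $f:\CY_1\to\CY_2$ to the analogous statement for morphisms of schemes, which is part of the input in \secref{sss:constr}. Recall that by construction $\Shv^!(\CY_i)=\underset{S\in\Sch_{/\CY_i}}{\on{lim}}\,\Shv(S)$, a limit taken in $\inftyCat$ (or, by \secref{sss:limits and colimits} and the cited \cite[Proposition 5.5.3.13]{Lu1}, equivalently in $\StinftyCat$), and the functor $f^!$ is the one induced on limits by the obvious functor of index categories $\Sch_{/\CY_1}\to\Sch_{/\CY_2}$. First I would reformulate what must be shown: given an index $\infty$-category $\bI$ and a diagram $\Phi:\bI\to\Shv^!(\CY_1)$, we must check that $f^!(\underset{i}{\on{lim}}\,\Phi(i))\to\underset{i}{\on{lim}}\,f^!(\Phi(i))$ is an isomorphism. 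Since a sheaf on a prestack is, by definition, a compatible family of sheaves on its test schemes and morphisms between such sheaves are tested schemewise, limits in $\Shv^!(\CY_i)$ are computed schemewise: for each $S\to\CY_i$, the $S$-component of $\underset{i}{\on{lim}}\,\Phi(i)$ is $\underset{i}{\on{lim}}$ of the $S$-components. This is the standard fact that limits in a limit of categories are computed termwise.

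Next I would unwind both sides on test schemes. An object of $\Shv^!(\CY_2)$ evaluated along a morphism $g:S\to\CY_1$ is first pulled back along $f\circ g:S\to\CY_2$; so for $\CG\in\Shv^!(\CY_2)$, the $S$-component of $f^!(\CG)$ is simply $\CG_{S,f\circ g(-)}\in\Shv(S)$, i.e.\ the $S$-component of $\CG$ viewed via the composed map. In particular $f^!$ at the level of $S$-components is the \emph{identity} assignment relabeled by the functor $\Sch_{/\CY_1}\to\Sch_{/\CY_2}$; no nontrivial pullback functor on schemes intervenes at this stage. Combining this with the previous paragraph: for each $g:S\to\CY_1$, the $S$-component of $f^!(\underset{i}{\on{lim}}\,\Phi(i))$ is $\underset{i}{\on{lim}}\,(\Phi(i)_{S,\ldots})$ computed in $\Shv(S)$, and likewise the $S$-component of $\underset{i}{\on{lim}}\,f^!(\Phi(i))$ is $\underset{i}{\on{lim}}\,(\Phi(i)_{S,\ldots})$ in $\Shv(S)$. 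These agree, and the comparison map is the identity. Hence $f^!$ commutes with limits.

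The only subtlety — and the place I expect the real content to sit — is making precise that $f^!:\Shv^!(\CY_2)\to\Shv^!(\CY_1)$, defined as the functor of limit $\infty$-categories induced by the functor of index categories, really is computed by the schemewise formula above, and that limits in the limit category $\underset{S}{\on{lim}}\,\Shv(S)$ are genuinely termwise. Both are standard facts about limits in $\inftyCat$: the evaluation functors $\Shv^!(\CY)\to\Shv(S)$ jointly detect and preserve all limits (they are the legs of the limit cone), and precomposition with a functor of index diagrams commutes with these evaluations. I would cite \cite[Sect. 5.1, 5.5]{Lu1} for this. I should also note \emph{where} the constructible hypothesis is used: it is used only to know that for an actual morphism of schemes $h:S_1\to S_2$ the functor $h^!$ commutes with limits (because it admits a left adjoint $h_!$); in the argument above this appears when one checks that the transition functors in the diagram defining $\Shv^!(\CY_i)$ are themselves limit-preserving, which is needed to identify $\underset{S}{\on{lim}}\,\Shv(S)$ — a limit in $\StinftyCat$ of limit-preserving functors — with the correspondingly defined object and to conclude that its limits are computed termwise compatibly with pullback. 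Once that bookkeeping is in place, the statement is formal.
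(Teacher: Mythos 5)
Your proposal is correct and takes essentially the same route as the paper's proof: reduce to the observation that, because $h^!$ preserves limits for morphisms of schemes $h$ (the constructibility hypothesis), limits in $\Shv^!(\CY)=\underset{S\in\Sch_{/\CY}}{\on{lim}}\,\Shv(S)$ are computed value-wise, and then note that $f^!$ acts by relabeling $S$-components along $\Sch_{/\CY_1}\to\Sch_{/\CY_2}$, hence commutes with value-wise limits. One small stylistic caution: the phrase "limits in a limit of categories are computed termwise" is not true without a hypothesis -- it requires exactly that the transition functors preserve limits -- and your final paragraph does supply that hypothesis via constructibility, but it would be cleaner to state the conditional fact up front rather than asserting it and then qualifying it.
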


\begin{proof}

Since the functor of !-pullback commutes with limits for morphisms between \emph{schemes},
for a prestack $\CY$, limits in $\Shv^!(\CY)$ are computed value-wise, i.e., for a 
family $a\mapsto \CF^a$ and $(S,y)\in \Sch_{/\CY}$, we have 
$$(\underset{a}{\on{lim}}\, \CF^a)_{S,y}\simeq \underset{a}{\on{lim}}\,  (\CF^a_{S,y}).$$

\medskip

This makes the assertion of the lemma manifest.

\end{proof}

\ssec{Direct images?}  

We now have the theory of sheaves on prestacks, but the only functoriality so far is the !-pullback. One can wonder:
what about the other functors, such as the *-pullback, *-pushforward or !-pushforward? The answer is that we will 
not even attempt to define them, but only grab whatever naturally comes our way.

\sssec{}

For a morphism $f:\CY_1\to \CY_2$ we can consider the \emph{partially defined} left adjoint of $f^!$
$$f_!:\Shv^!(\CY_1)\to \Shv^!(\CY_2).$$

We have:

\begin{cor}   \label{c:!-pushforward prestack}
In the context of constructible sheaves, the functor $f_!$ is always defined.
\end{cor}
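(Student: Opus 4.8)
The plan is to reduce the existence of the partially defined left adjoint $f_!$ for a morphism of prestacks $f\colon \CY_1 \to \CY_2$ to the corresponding fact for schemes, which is built into the definition of the constructible context (see \secref{sss:constr}), using the general adjoint-functor machinery recalled in \secref{sss:limits and colimits}. The key observation is that $\Shv^!(\CY_i)$ is, by definition, a limit over $\Sch_{/\CY_i}$ of the categories $\Shv(S)$ along the $!$-pullback functors, and in the constructible context each $!$-pullback $g^!$ for $g$ a morphism of schemes \emph{commutes with colimits} (it is a left adjoint, being $g_!$'s right adjoint, so it is in fact a right adjoint too by \lemref{l:constr limits prestack}, but what matters here is that it is colimit-preserving). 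Hence the indexing diagram factors through $\StinftyCat$, the (non-full) subcategory of cocomplete stable categories and colimit-preserving functors.

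First I would note that, since the transition functors in the defining diagram for $\Shv^!(\CY_i)$ are colimit-preserving, the limit $\underset{S\in\Sch_{/\CY_i}}{\on{lim}}\,\Shv(S)$ taken in $\inftyCat$ agrees with the limit taken in $\StinftyCat$ (the inclusion $\StinftyCat\subset\inftyCat$ preserves limits, by \cite[Proposition 5.5.3.13]{Lu1}, as invoked in \secref{sss:limits and colimits}). Therefore $\Shv^!(\CY_i)$ is a cocomplete stable $\infty$-category. Next, the functor $f^!\colon\Shv^!(\CY_2)\to\Shv^!(\CY_1)$ is computed value-wise as in the proof of \lemref{l:constr limits prestack}; in particular it commutes with colimits, because colimits in $\Shv^!(\CY_1)$ and $\Shv^!(\CY_2)$ are computed after passing to a suitable cofinal subdiagram — more precisely, $f^!$ is the functor induced on limits by the morphism of diagrams given by precomposition $\Sch_{/\CY_1}\to\Sch_{/\CY_2}$, together with the identity on the $\Shv(S)$'s, and such induced functors between limits of colimit-preserving diagrams are themselves colimit-preserving.

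The conclusion is then immediate from the Adjoint Functor Theorem: a colimit-preserving functor between presentable (in particular, cocomplete) stable $\infty$-categories admits a left adjoint provided the source is presentable. By \cite[Corollary 5.5.2.9]{Lu1}, a colimit-preserving functor between presentable categories admits a right adjoint; dually, since $f^!$ \emph{also} preserves limits in the constructible context (\lemref{l:constr limits prestack}) and is an accessible functor between presentable categories, it admits a left adjoint, which we call $f_!$. I would remark that the word ``partially defined'' in the statement becomes vacuous here: in the constructible context $f_!$ is a genuine, everywhere-defined functor. The main obstacle, such as it is, is purely bookkeeping: one must check that $\Shv^!(\CY_i)$ is \emph{presentable} (not merely cocomplete) so that the Adjoint Functor Theorem applies — this follows because each $\Shv(S)$ is presentable and presentability is stable under limits along accessible functors in $\StinftyCat$ (\cite[Sect. 5.5]{Lu1}), but it is the one point where one genuinely uses that we are in the constructible context rather than dealing with an arbitrary sheaf theory.
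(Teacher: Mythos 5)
Your proof is essentially the same as the paper's: both deduce the existence of $f_!$ from the Adjoint Functor Theorem (\cite[Corollary 5.5.2.9(ii)]{Lu1}) together with the fact that $f^!$ preserves limits in the constructible context (\lemref{l:constr limits prestack}). The paper states this in one line; you have correctly reconstructed the argument, including the background check that $\Shv^!(\CY)$ is presentable.

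One factual slip, though: you write that presentability of $\Shv^!(\CY_i)$ ``is the one point where one genuinely uses that we are in the constructible context.'' That is not right. Presentability of $\Shv^!(\CY)$ holds for \emph{any} of the sheaf theories considered in the paper, since each $\Shv(S)$ is presentable and the limit defining $\Shv^!(\CY)$ is taken over a diagram of presentable categories and colimit-preserving functors (\cite[Proposition 5.5.3.13]{Lu1}); this uses only assumption (ii) of \secref{sss:assump cont}, which is imposed on every sheaf theory. The constructible context enters \emph{only} through \lemref{l:constr limits prestack}: it is what guarantees that $f^!$ preserves \emph{limits}, which is the hypothesis needed for 5.5.2.9(ii). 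In the general D-module context $f^!$ for a morphism of schemes need not have a left adjoint, hence need not preserve limits, so the Adjoint Functor Theorem cannot be invoked — that is the obstruction, not presentability. Relatedly, your parenthetical ``(it is a left adjoint, being $g_!$'s right adjoint \dots)'' is garbled: $g^!$ is the \emph{right} adjoint of $g_!$, which is why $g^!$ preserves limits; its colimit-preservation is a separate standing assumption (\secref{sss:assump cont}(ii)). None of this affects the validity of your argument, which is sound.
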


\begin{proof}
Follows from the Adjoint Functor Theorem (see \cite[Corollary 5.5.2.9(ii)]{Lu1}) and \lemref{l:constr limits prestack}.
\end{proof}

\sssec{}  \label{sss:! dir im}

In particular, taking $\CY_1=\CY$ and $\CY_2=\on{pt}$ we obtain a (partially defined) functor
$$\CF\mapsto \on{C}^*_c(\CY,\CF), \quad \Shv^!(\CY)\to \Lambda\mod,$$
left adjoint to the pullback functor (the latter being the same as $-\otimes \omega_\CY$). 

\medskip

By \corref{c:!-pushforward prestack}, the functor $\on{C}^*_c(\CY,-)$ 
is always defined in the context of constructible sheaves. In general, we have: 
  
\begin{lem} \label{l:express cohomology prestack}
For $\CF\in \Shv^!(\CY)$ we have
$$\on{C}_c^*(\CY,\CF)\simeq \underset{(S,y)\in \Sch_{/\CY}}{\on{colim}}\, \on{C}_c^*(S,\CF_{S,y}),$$
whenever the right-hand side is defined.
\end{lem}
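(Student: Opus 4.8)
The plan is to reduce the statement to the definition of $\Shv^!(\CY)$ as a limit, and to recognize $\on{C}^*_c(\CY,-)$ as the left adjoint to the pullback functor $\Shv(\on{pt})\to \Shv^!(\CY)$. Recall that $\Shv^!(\CY)=\underset{S\in \Sch_{/\CY}}{\on{lim}}\, \Shv(S)$, a limit taken in $\StinftyCat$ (using \secref{sss:limits and colimits} and the fact that the inclusion $\StinftyCat\hookrightarrow \inftyCat$ preserves limits). Dually, one should think of $\Shv^!(\CY)$ as the \emph{colimit} over $(\Sch_{/\CY})^{\on{op}}$ of the categories $\Shv(S)$ along the left adjoints $f_!$ of the transition functors $f^!$ --- this is exactly the passage \eqref{e:from colimit to limit} from \secref{sss:limits and colimits}, now read in the opposite direction. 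The insertion functors for this colimit presentation are the partially-defined $f_!:\Shv(S)\to \Shv^!(\CY)$ (for $(S,y)\in \Sch_{/\CY}$), and the evaluation functors are $\CF\mapsto \CF_{S,y}$.

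First I would treat the constructible context, where everything is honestly defined. Here each $f^!$ (for $f:S\to \CY$, i.e. for a scheme mapping to a point composed with $y$) admits a left adjoint, so by \corref{c:!-pushforward prestack} the functor $\on{C}^*_c(\CY,-)$ exists, and likewise each $f_!:\Shv(S)\to \Shv^!(\CY)$ exists. Since $\on{pt}\in \Sch_{/\CY}$ is not generally available, the cleanest route is: the functor $\Shv^!(\CY)\to \Lambda\mod$ sending $\CF\mapsto \on{C}^*_c(\CY,\CF)$ is, by definition, the left adjoint of $-\otimes\omega_\CY:\Lambda\mod\to \Shv^!(\CY)$. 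Precompose with $f_!:\Shv(S)\to \Shv^!(\CY)$: the composite $\Shv(S)\to \Lambda\mod$ is left adjoint to the composite $\Lambda\mod\to \Shv^!(\CY)\xrightarrow{(S,y)\text{-evaluation}} \Shv(S)$, which is $-\otimes\omega_S$, since $\omega_\CY$ restricts to $\omega_S$. Hence $\on{C}^*_c(\CY,f_!(\CG))\simeq \on{C}^*_c(S,\CG)$ naturally in $\CG$. Now invoke the colimit presentation: for $\CF\in\Shv^!(\CY)$, the canonical map $\underset{(S,y)}{\on{colim}}\, f_!(\CF_{S,y})\to \CF$ is an isomorphism (this is the last displayed isomorphism of \secref{sss:limits and colimits}, valid because the right adjoints $f_!$ to the transition functors $f^!$ are themselves colimit-preserving in the constructible context, by \lemref{l:constr limits prestack}). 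Applying the colimit-preserving functor $\on{C}^*_c(\CY,-)$ and using the previous identification termwise gives
$$\on{C}^*_c(\CY,\CF)\simeq \underset{(S,y)}{\on{colim}}\, \on{C}^*_c(\CY,f_!(\CF_{S,y}))\simeq \underset{(S,y)}{\on{colim}}\, \on{C}^*_c(S,\CF_{S,y}),$$
as desired.

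For the general (e.g. all D-modules) case, where some of the adjoints in play are only partially defined, I would argue as follows: assume the right-hand side $\underset{(S,y)}{\on{colim}}\, \on{C}^*_c(S,\CF_{S,y})$ exists (including that each $\on{C}^*_c(S,\CF_{S,y})$ exists), and show it computes the left adjoint value, i.e. that for all $V\in\Lambda\mod$ one has $\Maps_{\Lambda\mod}\big(\underset{(S,y)}{\on{colim}}\, \on{C}^*_c(S,\CF_{S,y}),\, V\big)\simeq \Maps_{\Shv^!(\CY)}(\CF,\, V\otimes\omega_\CY)$. The left side is $\underset{(S,y)}{\on{lim}}\,\Maps(\on{C}^*_c(S,\CF_{S,y}),V)\simeq \underset{(S,y)}{\on{lim}}\,\Maps_{\Shv(S)}(\CF_{S,y}, V\otimes\omega_S)$ by the (existing) adjunction on schemes, and the right side is the same limit because mapping spaces in a limit of categories are computed as the limit of mapping spaces, and $(V\otimes\omega_\CY)_{S,y}\simeq V\otimes\omega_S$. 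This identifies $\underset{(S,y)}{\on{colim}}\, \on{C}^*_c(S,\CF_{S,y})$ with the value of the (partially defined) left adjoint on $\CF$, which is exactly the assertion that $\on{C}^*_c(\CY,\CF)$ is defined and equals that colimit.

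The main obstacle is bookkeeping of the partially-defined adjoints in the general case: one must be careful that "$\on{C}^*_c(\CY,\CF)$ is defined" is interpreted as "the left adjoint of $-\otimes\omega_\CY$ is defined at $\CF$", and that the computation of mapping spaces out of a colimit is legitimate even when the colimit only exists pointwise. In the constructible context none of this is an issue --- the content there is purely the colimit presentation of $\Shv^!(\CY)$ from \secref{sss:limits and colimits} together with $f_!$ being colimit-preserving, which is \lemref{l:constr limits prestack} combined with the Adjoint Functor Theorem.
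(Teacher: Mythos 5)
Your proof is correct. The paper states this lemma without proof, so there is no paper's argument to compare against; the author evidently treats it as a formal consequence of the limit description of $\Shv^!(\CY)$ and the definition of $\on{C}^*_c(\CY,-)$ as a partially-defined left adjoint. Your second argument is the one to keep: $\Maps$ out of a colimit in $\Lambda\mod$ is the limit of $\Maps$, $\Maps$ in $\Shv^!(\CY)=\underset{(S,y)}{\lim}\,\Shv(S)$ is the limit of termwise $\Maps$, and $(V\otimes\omega_\CY)_{S,y}\simeq V\otimes\omega_S$; together with the schematic adjunctions $\on{C}^*_c(S,-)\dashv -\otimes\omega_S$, this identifies $\underset{(S,y)}{\on{colim}}\,\on{C}^*_c(S,\CF_{S,y})$ with the value of the partially-defined left adjoint at $\CF$ whenever the colimit exists. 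That argument already covers the constructible case and needs no extra input beyond the standing assumption \secref{sss:assump cont}(ii).

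There is a minor misattribution in your constructible-case paragraph. To invoke the last display of \secref{sss:limits and colimits} and conclude $\underset{(S,y)}{\on{colim}}\,f_!(\CF_{S,y})\overset{\sim}{\to}\CF$, the relevant hypothesis is that the \emph{right} adjoints $\Psi_\alpha=f^!$ of the colimit transition functors $\Phi_\alpha=f_!$ be colimit-preserving; this is \secref{sss:assump cont}(ii) and holds in every context, not \lemref{l:constr limits prestack} (which is about $f^!$ commuting with \emph{limits}). The constructible hypothesis enters only to guarantee that all the $f_!$ exist, i.e., that the colimit presentation of $\Shv^!(\CY)$ is available at all. Also note that $f_!$ is the left adjoint of $f^!$, not the right adjoint as written. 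None of this affects the conclusion you draw, but the cited justification should be corrected.
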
 

\begin{cor}
In the context of D-modules, the functor $\on{C}_c^*(\CY,-)$ is defined on any $\CF\in \Shv^!(\CY)$ 
for which for every $(S,y)$, the corresponding object $\CF_{S,y}\in \Shv(S)$ is holonomic.
\end{cor}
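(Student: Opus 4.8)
The plan is to reduce the statement to the level of schemes by means of \lemref{l:express cohomology prestack}, and then to invoke the basic finiteness properties of holonomic D-modules. According to that lemma, $\on{C}^*_c(\CY,\CF)$ is defined (and computed by the colimit displayed there) as soon as $\underset{(S,y)\in\Sch_{/\CY}}{\on{colim}}\,\on{C}^*_c(S,\CF_{S,y})$ is defined. So it is enough to check: (a) for every $(S,y)\in\Sch_{/\CY}$ the object $\on{C}^*_c(S,\CF_{S,y})$ is defined; and (b) the resulting diagram, a functor $\Sch_{/\CY}\to\Lambda\mod$, admits a colimit.

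For (a): in the context of D-modules the functor $\on{C}^*_c(S,-)=(p_S)_!$ (where $p_S\colon S\to\on{pt}$) is not everywhere defined, precisely because $p_S$ need not be proper (see \secref{sss:constr}); but it \emph{is} defined on holonomic D-modules, which is all we need since $\CF_{S,y}$ is assumed holonomic. To spell this out, I would recall that the six operations restrict to the (non-cocomplete) full subcategory of holonomic D-modules, and that inside this subcategory $(p_S)_!$ is defined and, by the classical finiteness theorem for the de Rham cohomology of holonomic D-modules, carries a holonomic object to a perfect complex of $\Lambda$-modules. It then remains to argue that this value actually computes the partially defined left adjoint of $p_S^!=(-)\otimes\omega_S$ on the whole of $\Lambda\mod$; equivalently, that the functor $V\mapsto\Maps_{\Shv(S)}(\CF_{S,y},V\otimes\omega_S)$ is corepresented by it. Since $\CF_{S,y}$, being coherent, is a compact object of $\Shv(S)$, and $p_S^!$ preserves colimits (assumption \secref{sss:assump cont}), while the candidate corepresenting object, being perfect, is compact in $\Lambda\mod$, both sides of the desired adjunction isomorphism commute with filtered colimits in $V$; and for perfect $V$ the isomorphism is exactly the $(p_S)_!\dashv p_S^!$ adjunction of the holonomic formalism (which, all objects involved being coherent, is compatible with the inclusion into the ambient categories). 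Writing an arbitrary $V$ as a filtered colimit of perfect complexes then gives (a).

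For (b) there is essentially nothing to do: $\Sch_{/\CY}$ is essentially small (it lies over the essentially small category $\Sch$) and $\Lambda\mod$ is cocomplete, so the colimit exists. Combining (a) and (b) with \lemref{l:express cohomology prestack} proves the Corollary. The step I expect to be the only real point of friction is the compatibility used at the end of (a), between the $(p_S)_!$ coming from the holonomic six-functor package and the partially defined left adjoint of $(-)\otimes\omega_S$ in the big category $\Shv(S)$: once one grants that holonomic D-modules are compact in $\Shv(S)$ and that $(p_S)_!$ sends them to perfect complexes, it dissolves into the routine filtered-colimit bookkeeping sketched above.
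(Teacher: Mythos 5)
Your proof is correct and follows essentially the same route the paper intends: reduce to Lemma~\ref{l:express cohomology prestack} and note that in the D-module context the functor $\on{C}^*_c(S,-)$ is defined on holonomic objects. The extra paragraph you devote to checking that the holonomic $(p_S)_!$ really does corepresent the partially defined left adjoint in the ambient (non-holonomic) category is a legitimate point of friction that the paper leaves implicit, and your compactness-plus-filtered-colimits argument handles it correctly.
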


In particular, in any context, the functor $\on{C}_c^*(\CY,-)$ is defined on $\omega_\CY\in \Shv^!(\CY)$. We shall use
the following notation: 
$$\on{C}_c^*(\CY,\omega_\CY)=:\on{C}_*(\CY),$$
and we shall refer to $\on{C}_*(\CY)$ as the \emph{homology} of $\CY$. In addition, we denote:
$$\on{Fib}\left((p_\CY)_!\circ (p_\CY)^!(\Lambda)\to \Lambda\right)=\on{Fib}(\on{C}_*(\CY)\to \on{C}_*(\on{pt}))=:\on{C}_*^{\on{red}}(\CY);$$
this is the reduced homology of $\CY$. 

\begin{rem} \label{r:homology}
When working over the ground field of complex numbers, the 
functor that associates to a scheme the underlying analytic space gives rise, by means of
\emph{left Kan extension}, to a functor from the category of prestacks to the $\infty$-category
of topological spaces. Under this functor, the \emph{homology of a prestack} introduced above 
is isomorphic to the homology of the corresponding topological space with coefficients in $\Lambda$.
\end{rem}

\sssec{}

For later use we also note the following:

\begin{lem}  \label{l:base change}
Let $f:\CY_1\to \CY_2$ be a schematic open embedding (i.e., the base change of $f$ by any scheme yields an open embedding).
Then the functor $f_*:\Shv^!(\CY_1)\to \Shv^!(\CY_2)$, \emph{right adjoint} to $f^!$, is defined. Moreover, for a Cartesian diagram 
of prestacks
$$
\CD
\CY'_1  @>{g_1}>>  \CY_1  \\
@V{f'}VV    @VV{f}V   \\
\CY'_2  @>{g_2}>>  \CY_2 
\endCD
$$
the natural transformation 
$$g_2^!\circ f_*\to f'_*\circ g_1^!,$$
that comes by adjunction from the isomorphism $$f'{}^!\circ g_2^!\simeq g_1^!\circ f^!,$$
is an isomorphism.
\end{lem}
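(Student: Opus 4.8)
The plan is to verify the two assertions separately: first the existence of $f_*$ as a right adjoint to $f^!$, then the base-change isomorphism. For the existence, I would reduce it to the schematic situation. Since $f:\CY_1\to\CY_2$ is a schematic open embedding, for every $(S,y)\in\Sch_{/\CY_2}$ the base change $\CY_1\underset{\CY_2}\times S$ is (represented by) an open subscheme $U_y\subset S$, with open embedding $j_y:U_y\hookrightarrow S$. An object of $\Shv^!(\CY_1)$ is then the same as a compatible family $\CG_{U_y}\in\Shv(U_y)$ indexed by $(S,y)\in\Sch_{/\CY_2}$, and since limits in $\Shv^!$ of prestacks are computed value-wise (as in the proof of \lemref{l:constr limits prestack}), it suffices to set
$$(f_*\CG)_{S,y}:=(j_y)_*(\CG_{U_y}),$$
where $(j_y)_*$ is the ordinary $*$-pushforward along the open embedding of schemes, which always exists. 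The compatibilities for transition maps $S'\to S$ in $\Sch_{/\CY_2}$ are supplied precisely by smooth (in fact open-embedding) base change for the sheaf theory on schemes, which holds in all our contexts. The adjunction $\Maps(f^!\CF,\CG)\simeq\Maps(\CF,f_*\CG)$ then follows value-wise from the corresponding adjunction $(j_y)^!\dashv(j_y)_*$ on schemes — here one uses that for an open embedding $j$ one has $j^!=j^*$, so the adjoint pair $(j^*,j_*)$ is available unconditionally.

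For the base-change statement, the argument is again value-wise. Fix $(S,y_2')\in\Sch_{/\CY_2'}$ with image $y_2\in\CY_2(S)$ under $g_2$. Because the square of prestacks is Cartesian and $f$ is a schematic open embedding, the fibre product $\CY_1'\underset{\CY_2'}\times S$ agrees with $\CY_1\underset{\CY_2}\times S$, i.e.\ both are the same open subscheme $U\subset S$; write $j:U\hookrightarrow S$ and let $y_1'\in\CY_1'(U)$ be the induced point. Evaluating the natural transformation $g_2^!\circ f_*\to f_*'\circ g_1^!$ at $(S,y_2')$ then unwinds, by the explicit formula above and the definition of $g_2^!$, into the base-change map $j^!\circ(j)_*\to(j)_*\circ j^!$ for the Cartesian square of schemes
$$
\CD
U  @>{\id}>>  U  \\
@V{j}VV    @VV{j}V   \\
S  @>{\id}>>  S,
\endCD
$$
which is trivially an isomorphism; more honestly, one should write out the relevant square of schemes coming from a general transition map, and there the map is the open-embedding base-change isomorphism, which is part of the axioms of our sheaf theory. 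Since a morphism in $\Shv^!(\CY_2')$ is an isomorphism iff it is so value-wise (as $f^!$ for schemes detects isomorphisms after testing on all $(S,y)$), we conclude.

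The only genuinely delicate point is bookkeeping: one must check that the assignment $(S,y)\mapsto(j_y)_*(\CG_{U_y})$ assembles into an object of the limit category $\Shv^!(\CY_2)=\underset{S\in\Sch_{/\CY_2}}{\on{lim}}\Shv(S)$, i.e.\ that the higher coherences for composable transition maps are furnished, and that the adjunction and base-change maps are constructed compatibly across the index category rather than just objectwise. This is where one should invoke the functoriality of the sheaf theory $\Shv^!$ and open-embedding base change as a \emph{structure} carried by the whole functor, not merely objectwise isomorphisms; concretely, one packages $f_*$ as a morphism of limit diagrams. I do not expect any surprises here — it is the standard "pointwise right adjoint in a limit of categories" pattern — but it is the step that requires care, and in a full write-up it would be cleanest to phrase $f_*$ via the right Kan extension description of $\Shv^!$ on prestacks and cite the compatibility of Kan extensions with the objectwise adjoints.
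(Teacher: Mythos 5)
Your proof is correct and follows essentially the same route as the paper, which in fact gives only the one-line justification ``follows formally from the case when $\CY_2$ (resp.\ $\CY_2$ and $\CY_2'$) are schemes, in which case it is the usual base change isomorphism.'' Your value-wise construction of $f_*$ via $(f_*\CG)_{S,y}=(j_y)_*(\CG_{U_y})$ is exactly the unwinding of that reduction, and your observation that open-embedding base change on schemes is what makes both the transition compatibilities and the final isomorphism work is the substance of the ``formal'' argument; the coherence point you flag at the end (the limit of objectwise right adjoints is a right adjoint to the limit functor, provided the right adjoints commute with the transition functors, which is again open base change) is precisely the one the paper silently absorbs.
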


\begin{proof}
Follows formally from the case when $\CY_2$ (resp., $\CY_2$ and $\CY'_2$) are schemes, in which case it is the usual base change isomorphism.
\end{proof} 

\ssec{Pseudo-properness}  \label{ss:pseudo-properness prestacks}

As we remarked above, for a morphism of prestacks $f:\CY_1\to \CY_2$ and $\CF\in \Shv^!(\CY_1)$ we often have a 
well-defined object
$$f_!(\CF)\in \Shv^!(\CY_2).$$

However, in most cases this object is incalculable: we do not have an algorithm to say what the value of $f_!(\CF)$ is
on $S\overset{y}\to \CY$. 

\medskip

But there is one class of morphisms where $f_!$ is given by a much more explicit procedure: these are maps that we
call \emph{pseudo-proper}, and which are ubiquitous in this paper.

\sssec{}  

Let $f:\CY\to S$ be a map in $\on{PreStk}$, where $S\in \Sch$. We shall say that $\CY$ is \emph{pseudo-proper}
over $S$ if $\CY$ can be written as a colimit of objects representable by schemes proper over $S$.

\medskip

We have:

\begin{prop} \label{p:pseudo-proper}
For a pseudo-proper $f:\CY\to S$, the functor $f_!$, left adjoint to $f^!$, is defined. Furthermore,
for a map of schemes $g_S:S'\to S$ and $\CY':=\CY\underset{S}\times S'$, the 
natural transformation
$$f'_!\circ g_Y^!\to g_S^!\circ f_!,$$
arising from the Cartesian diagram
\begin{equation} \label{e:proper base change}
\CD
\CY' @>{g_Y}>>  \CY \\
@V{f'}VV    @VV{f}V  \\
S' @>{g_S}>>  S,
\endCD
\end{equation}
and the identification 
$$g_Y^!\circ f^!\simeq f'{}^!\circ g_S^!,$$
is an isomorphism.
\end{prop}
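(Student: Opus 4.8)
The plan is to reduce the statement to the already-granted base-change for maps between schemes, by writing $\CY$ as a colimit of schemes proper over $S$ and pushing the colimit through all the functors involved. First I would fix a presentation $\CY \simeq \underset{\alpha\in\bI}{\on{colim}}\, Y_\alpha$, where each $Y_\alpha$ is a scheme equipped with a proper map $f_\alpha\colon Y_\alpha\to S$, and the colimit is taken in $\on{PreStk}$. By definition of $\Shv^!$ on prestacks as a limit, we have $\Shv^!(\CY)\simeq \underset{\alpha\in\bI^{\on{op}}}{\on{lim}}\,\Shv(Y_\alpha)$ with transition functors the $!$-pullbacks; moreover, since each $f_\alpha$ is proper, the functor $(f_\alpha)_!$ exists and, being a left adjoint, the transition functors among the $(f_\alpha)_!$ (for the opposite variance) are intertwined with the $!$-pullbacks via the usual proper base-change natural transformations, which are isomorphisms for maps of schemes. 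The key categorical input is the discussion in \secref{sss:limits and colimits}: the datum of the $\Shv(Y_\alpha)$ with $!$-pullback transition functors, all of which admit left adjoints, lets us identify $\Shv^!(\CY)\simeq\underset{\alpha\in\bI^{\on{op}}}{\on{lim}}\,\Shv(Y_\alpha)$ with $\underset{\alpha\in\bI}{\on{colim}}\,\Shv(Y_\alpha)$ taken in $\StinftyCat$.

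Granting this, I would define $f_!\colon\Shv^!(\CY)\to\Shv(S)$ by the formula $f_!(\CF)\simeq\underset{\alpha\in\bI}{\on{colim}}\,(f_\alpha)_!(\CF_{Y_\alpha})$, where $\CF_{Y_\alpha}\in\Shv(Y_\alpha)$ is the component of $\CF$. Because each $(f_\alpha)_!$ is left adjoint to $f_\alpha^!$ and the colimit presentation above is compatible with these adjunctions (this is exactly the content of \secref{sss:limits and colimits}: the right adjoints $\on{ev}_\alpha$ to the insertion functors are colimit-preserving, and $\underset{\alpha}{\on{colim}}\,\on{ins}_\alpha\circ\on{ev}_\alpha\simeq\id$), a formal adjunction computation shows $f_!$ so defined is left adjoint to $f^!$. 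Concretely, for $\CG\in\Shv(S)$ one computes
$$\Maps_{\Shv(S)}\!\left(\underset{\alpha}{\on{colim}}\,(f_\alpha)_!(\CF_{Y_\alpha}),\CG\right)\simeq\underset{\alpha}{\on{lim}}\,\Maps_{\Shv(S)}\!\left((f_\alpha)_!(\CF_{Y_\alpha}),\CG\right)\simeq\underset{\alpha}{\on{lim}}\,\Maps_{\Shv(Y_\alpha)}\!\left(\CF_{Y_\alpha},f_\alpha^!(\CG)\right),$$
and the last term is $\Maps_{\Shv^!(\CY)}(\CF,f^!(\CG))$ by the limit description of $\Shv^!(\CY)$ together with the fact that $f^!(\CG)$ has components $f_\alpha^!(\CG)$. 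One must check the construction is independent of the chosen presentation, but this is automatic: any left adjoint is unique when it exists.

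For the base-change assertion, given $g_S\colon S'\to S$ and $\CY'=\CY\underset{S}\times S'$, I would observe that $\CY'\simeq\underset{\alpha}{\on{colim}}\,(Y_\alpha\underset{S}\times S')$ with each $Y'_\alpha:=Y_\alpha\underset{S}\times S'$ a scheme proper over $S'$. Then both $f'_!\circ g_Y^!$ and $g_S^!\circ f_!$ are colimits over $\alpha$ of the corresponding functors for the Cartesian squares of schemes with corners $Y_\alpha, S, S', Y'_\alpha$, using that $g_Y^!$ and $g_S^!$ commute with the colimit presentations (the latter because $g_S^!$ is a left adjoint's... no — because $g_S^!$ is colimit-preserving by assumption \secref{sss:assump cont}(ii), and $g_Y^!$ likewise). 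The natural transformation in the statement is thus the colimit of the base-change transformations $(f'_\alpha)_!\circ (g_{Y,\alpha})^!\to (g_S)^!\circ (f_\alpha)_!$ for maps of schemes, each of which is an isomorphism by the proper base-change property built into our sheaf theory; a colimit of isomorphisms is an isomorphism.

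The step I expect to be the main obstacle is the careful bookkeeping in \secref{sss:limits and colimits}: one needs the colimit presentation $\Shv^!(\CY)\simeq\underset{\alpha}{\on{colim}}\,\Shv(Y_\alpha)$ in $\StinftyCat$ to be genuinely compatible, as a diagram of adjoint pairs, with the limit presentation used to define $f^!$, so that the "naive" formula for $f_!$ really is the adjoint and not merely a candidate. Everything else — reindexing the colimit after base change, commuting $!$-pullbacks past colimits, invoking proper base change for schemes — is routine once that compatibility is in hand. A secondary point to handle with care is that the proper maps $Y_\alpha\to S$ need not be finite type in the pathological sense, but since our schemes are finite type over $k$ by convention this is not an issue; and one should note the presentation of $\CY$ as a colimit of proper $S$-schemes may be indexed by a large category, so the cardinality caveat in the definition of cocompleteness should be kept in mind, though it causes no trouble here.
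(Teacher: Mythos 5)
Your proof is correct and follows essentially the same route as the paper: present $\CY$ as a colimit of schemes proper over $S$, invoke the limits-to-colimits identification of $\Shv^!(\CY)$ from \secref{sss:limits and colimits}, define $f_!$ via the compatible family of $(f_\alpha)_!$, and deduce base change term-by-term from proper base change for schemes. Your spelled-out $\Maps$ computation for the adjunction and the explicit colimit description of $\CY'$ make explicit what the paper leaves implicit, but there is no substantive difference in strategy.
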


\begin{proof}

Write 
$$\CY=\underset{a}{\on{colim}}\, Z_a,$$
where $Z_a$ are schemes proper over $S$. By definition, we have:
$$\Shv^!(\CY)=\underset{a}{\on{lim}}\, \Shv(Z_a),$$
where the limit is formed using the functors 
$$f^!_{a_1,a_2}:\Shv(Z_{a_2})\to \Shv(Z_{a_1}) \text{ for } f_{a_1,a_2}:Z_{a_1}\to Z_{a_2}.$$

Now, by \secref{sss:limits and colimits}, we have:
$$\Shv^!(\CY)=\underset{a}{\on{colim}}\, \Shv(Z_a),$$
where the colimit is taken in the category of cocomplete categories and colimit-preserving functors. In the formation of the limit 
we use the functors
$$(f_{a_1,a_2})_!:\Shv(Z_{a_2})\to \Shv(Z_{a_1}) \text{ for } f_{a_1,a_2}:Z_{a_1}\to Z_{a_2}.$$

\medskip

Now, the sought-for left adjoint, viewed as a functor
$$\underset{a}{\on{colim}}\, \Shv(Z_a)\to \Shv(S),$$
is given by the compatible family of functors
$$(f_a)_!:\Shv(Z_a)\to  \Shv(S) \text{ for } f_a:Z_a\to S.$$

The commutation of the diagram \eqref{e:proper base change} follows by base change,
as the maps $f_a$ are proper. 
\footnote{Note that in the context of constructible $\ell$-adic sheaves, the existence of $f_!$ does not require
the maps $f_a$ to be proper, as the functors $(f_a)_!$ are always defined (but this would not be 
the case in the context of arbitrary, i.e., not necessarily holonomic D-modules). However, the properness 
of the maps $f_a$ is needed for the commutation of \eqref{e:proper base change}.}

\end{proof} 

\sssec{}    \label{sss:pseudo proper}

We shall say that a map $f:\CY_1\to \CY_2$ between prestacks is pseudo-proper if its base change by any scheme yields
a prestack over that scheme. 

\medskip

We shall make a repeated use of the following assertion:

\begin{cor} \label{c:pseudo-proper}
If $f:\CY_1\to \CY_2$ is pseudo-proper, then the functor $f_!$, left adjoint to $f^!$, is defined. Furthermore,
for a map $g_2:\CY_2'\to \CY_2$ and $\CY'_1:=\CY_1\underset{\CY_2}\times \CY'_2$, the 
natural transformation
$$f'_!\circ g_1^!\to g_2^!\circ f_!,$$
arising from the Cartesian diagram
$$
\CD
\CY'_1 @>{g_1}>>  \CY_1 \\
@V{f'}VV    @VV{f}V  \\
\CY'_2 @>{g_2}>>  \CY_2,
\endCD
$$
and the identification 
$$g_1^!\circ f^!\simeq f'{}^!\circ g_2^!,$$
is an isomorphism. 
\end{cor}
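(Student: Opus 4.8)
The plan is to reduce \corref{c:pseudo-proper} to \propref{p:pseudo-proper}, i.e. to the case where the target is a scheme, by testing everything against $S$-points. First I would observe that, by the definition of $\Shv^!$ on prestacks as a limit over $\Sch_{/\CY_2}$, a natural transformation of functors $\Shv^!(\CY_1)\to \Shv^!(\CY_2)$ is an isomorphism if and only if it becomes an isomorphism after $!$-restriction along every $y_2:S\to \CY_2$ with $S\in\Sch$. So it suffices to fix such a $y_2$ and check that the base-changed transformation is an isomorphism. For this, form the Cartesian square with $\CY_{1,S}:=\CY_1\underset{\CY_2}\times S$ and note that $\CY_{1,S}\to S$ is pseudo-proper \emph{over a scheme} in the sense of \secref{sss:pseudo proper}, hence (unwinding the definition) a colimit of schemes proper over $S$, so \propref{p:pseudo-proper} applies to it and provides $(f_S)_!$ together with the base-change isomorphism along maps \emph{of schemes} into $S$.

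The first thing to establish is the existence of $f_!$ as a genuine (not merely partially defined) left adjoint. Here I would argue value-wise: for $\CF\in\Shv^!(\CY_1)$ and $y_2:S\to\CY_2$, one wants to \emph{define} $(f_!\CF)_{S,y_2}:=(f_S)_!(\CF|_{\CY_{1,S}})$, using \propref{p:pseudo-proper} for $f_S:\CY_{1,S}\to S$. The compatibility data making this an object of $\Shv^!(\CY_2)=\underset{S\in\Sch_{/\CY_2}}{\on{lim}}\Shv(S)$ is exactly the base-change isomorphism of \propref{p:pseudo-proper} applied to morphisms of schemes $S'\to S$ over $\CY_2$; the homotopy-coherence of this data is inherited from the homotopy-coherent system of base-change compatibilities packaged in that proposition (equivalently, one can phrase the whole construction as a limit of the functors $(f_S)_!$ over $S\in\Sch_{/\CY_2}$). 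That this assignment is left adjoint to $f^!$ is then checked value-wise as well, since mapping spaces in a limit of $\infty$-categories are computed as the corresponding limit of mapping spaces, and the adjunction holds at each $S$ by \propref{p:pseudo-proper}.

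With $f_!$ in hand, the base-change assertion for a general $g_2:\CY_2'\to\CY_2$ is checked by the same device: restrict along an arbitrary $y_2':S\to\CY_2'$, set $y_2:=g_2\circ y_2'$, and observe that the square obtained by base-changing the given Cartesian square along $y_2'$ identifies the $S$-value of $f'_!\circ g_1^!$ (resp. $g_2^!\circ f_!$) with $(f'_S)_!$ applied to a restriction of $\CF$ (resp. with the restriction of $(f_S)_!\CF$), where $f'_S:\CY_1\underset{\CY_2}\times S\to S$ is the same pseudo-proper-over-a-scheme map as before. Thus the transformation in question, after restriction to $S$, is precisely the base-change transformation of \propref{p:pseudo-proper} for a morphism of schemes, which that proposition asserts to be an isomorphism. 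Since this holds for every $y_2'$, the transformation is an isomorphism in $\Shv^!(\CY_2')$.

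I expect the only real subtlety to be the packaging of the coherences in the construction of $f_!$: one must make sure that ``define $f_!$ value-wise via \propref{p:pseudo-proper}'' genuinely lands in the limit category $\Shv^!(\CY_2)$ and produces an honest functor with an honest adjunction, rather than just a collection of objects and pointwise adjunction statements. The clean way to handle this is to package \propref{p:pseudo-proper} itself functorially — i.e. as the statement that the assignment $S\in\Sch_{/\CY_2}\mapsto (f_S)_!$ is a morphism in $\on{Funct}((\Sch_{/\CY_2})^{\on{op}},\StinftyCat)$ from $S\mapsto\Shv(\CY_{1,S})$ to $S\mapsto\Shv(S)$ (the base-change isomorphisms being exactly the datum making it such) — and then take the limit over $\Sch_{/\CY_2}$, invoking \secref{sss:limits and colimits} to pass between the colimit and limit descriptions of $\Shv^!$ as needed. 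Everything else is formal manipulation with limits of $\infty$-categories and is genuinely routine.
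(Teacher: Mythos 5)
Your proposal is correct and takes essentially the same route as the paper: the paper's proof of \corref{c:pseudo-proper} consists of the single line ``Follows formally from \propref{p:pseudo-proper},'' and your argument is precisely the formal deduction being alluded to (define $f_!$ value-wise over $\Sch_{/\CY_2}$ using \propref{p:pseudo-proper}, check the adjunction and base change after restriction to $S$-points, and package coherences via the limit presentation of $\Shv^!$). You have simply made explicit what the paper treats as routine.
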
. 

\begin{proof}

Follows formally from \propref{p:pseudo-proper}.

\end{proof}

Note that the base change property in \corref{c:pseudo-proper} in particular gives an expression to the value of $f_!(\CF)$ on
any $S\overset{y}\to \CY_2$: form the pullback 
$$\CY:=S\underset{\CY_2}\times \CY_1$$
and calculate the !-pushforward of $\CF|_{\CY}$ to $S$. 

\section{Lax prestacks and sheaves}  \label{s:lax}

The contents of this section are needed for Parts I and III of the paper, which in turn will be used in \secref{s:local duality}
(the reduction of the cohomological product formula to a local statement). 

\medskip

It turns that even such general gadgets as prestacks will not be sufficient for a crucial manipulation that we will need to
perform for the proof of the product formula. We will need to generalize them in a yet different direction, 
which in a sense is non-geometric.

\medskip

We will need to consider \emph{lax prestacks}, where those are functors that take values not in \emph{groupoids},
but rather in \emph{categories}. I.e., we will allow non-invertible morphisms between points. When 
dealing with lax prestacks, the conventional geometric intuition had better be abandoned for reasons
of safety. 

\medskip

The idea of working with sheaves on lax prestacks was pioneered by S.~Raskin, and to the best of our knowledge
was first used in his PhD thesis (so far, unpublished). 

\ssec{Lax prestacks}

In this subsection we define what we mean by a \emph{lax prestack}. 

\sssec{}

We let $\on{LaxPreStk}$ denote the category of functors\footnote{Technically we need to consider functors that are 
\emph{accessible} (see \cite[Sect. 5.4.2]{Lu1} for what this means) and that take values in essentially small categories.
That said, from now on we shall ignore set-theoretical issues of this sort.}
$$(\Sch)^{\on{op}}\to \inftyCat.$$

In other words, a prestack is an assignment for any $S\in \Sch$ of a category $\CY(S)$ and for $S'\overset{f}\longrightarrow S$
of a functor 
$$y\mapsto f(y),\quad \CY(S)\to \CY(S'),$$
plus a homotopy-coherent system of compatibilities for compositions.

\medskip

Alternatively, we can think of $\on{LaxPreStk}$ as the category of Cartesian fibrations over $\Sch$.
For $\CY\in \on{LaxPreStk}$ we let $\Sch_{/\CY}$ denote the corresponding category, fibered over
$\Sch$.

\medskip

For $\CY\in \on{LaxPreStk}$ let $p_\CY:\CY\to \on{pt}$ denote the tautological map. 

\sssec{}

The category of lax prestacks contains as a full subcategory the category $\on{PreStk}$ of prestacks: it consists
of those functors that take values
in $\inftygroup\subset \inftyCat$. 

\sssec{}  \label{sss:op}

For $\CY\in  \on{LaxPreStk}$ we let $\CY^{\on{op}}$ be the lax prestack obtained by passing to fiberwise
opposite categories in
$$\Sch_{/\CY}\to \Sch.$$

\medskip

Evidently, if $\CY\in \on{PreStk}$, then $\CY^{\on{op}}\simeq \CY$. 

\sssec{}  

To a lax prestack $\CY$ we attach a prestack $\CY_{\on{str}}$ defined by 
$$\CY_{\on{str}}(S)=\underset{y\in \CY(S)}{\on{colim}}\, \{*\}.$$

I.e., $\CY_{\on{str}}(S)$ is the groupoid, universal with respect to the property of
receiving a functor from $\CY(S)$; explicitly, it is obtained by inverting all the morphisms 
in $\CY(S)$.

\medskip

We have a map $\CY\to \CY_{\on{str}}$, which is universal with respect to maps from $\CY$ to
prestacks. 

\medskip

We have: 
$$\CY_{\on{str}}\simeq (\CY^{\on{op}})_{\on{str}}.$$

\ssec{Sheaves on lax prestacks}

By now we are used to working with sheaves on prestacks:
a sheaf on a prestack is a compatible family of sheaves on schemes that map to it. When working over $\BC$,
sheaves on a prestack are closely related to sheaves on the underlying topological space. 

\medskip

Sheaves on a lax prestack are a very different animal. 

\sssec{}

Let $\Shv^!_{\Sch}$ denote the Cartesian fibration over $\Sch$ that corresponds to the functor
$$\Shv^!:(\Sch)^{\on{op}}\to \inftyCat,\quad S\mapsto \Shv^!(S),\,\, (S_1\overset{f}\to S_2)\mapsto f^!.$$

For $\CY\in \on{LaxPreStk}$ we let $\Shv^!(\CY)$ denote the category of functors
$$\Sch_{/\CY}\to \Shv^!_{\Sch}$$
that take Cartesian arrows to Cartesian arrows.

\medskip

I.e., an object $\CF\in \Shv^!(\CY)$ is an assignment of

\begin{itemize}

\item $S\in \Sch,y\in \CY(S)\,\,\mapsto \,\,\CF_{S,y}\in \Shv^!(S)$,

\item $(y_1\to y_2)\in \CY(S)\,\, \mapsto \,\,(\CF_{S,y_1}\to \CF_{S,y_2})\in \Shv^!(S)$,

\item $(S'\overset{f}\to S)\in \Sch,y\in \CY(S)\,\, \mapsto \,\, (\CF_{S',f(y)}\simeq f^!(\CF_{S,y}))\in \Shv^!(S')$,
\end{itemize}
satisfying a homotopy-coherent system of compatibilities. 

\sssec{}

The assignment
$$\CY\mapsto \Shv^!(\CY)$$
extends to a functor
$$(\on{LaxPreStk})^{\on{op}}\to \inftyCat, \quad (\CY_1\overset{f}\to \CY_2)\mapsto f^!.$$

\sssec{}

For a morphism $f:\CY_1\to \CY_2$, the functor 
$$f^!: \Shv^!(\CY_2)\to \Shv^!(\CY_1)$$
commutes with colimits, by construction.

\medskip

In addition, we have the following assertion, proved in the same way as \lemref{l:constr limits prestack}:

\begin{lem} \label{l:constr limits}
If we are working in the context of constructible sheaves \footnote{Here and in the sequel, we include here the
case of holonomic D-modules.}, the functor $f^!$ commutes with \emph{limits}.
\end{lem}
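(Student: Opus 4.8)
The statement \lemref{l:constr limits} asserts that in the constructible context the pullback functor $f^!:\Shv^!(\CY_2)\to \Shv^!(\CY_1)$ along a morphism of lax prestacks commutes with limits. The plan is to imitate the proof of \lemref{l:constr limits prestack}: reduce to the fact that limits in $\Shv^!(\CY)$ for a lax prestack $\CY$ are computed ``value-wise'' (i.e.\ index-wise over $\Sch_{/\CY}$), and that the structure maps defining an object of $\Shv^!(\CY)$ — both the $!$-pullbacks for morphisms of test schemes and the non-invertible transition maps $\CF_{S,y_1}\to \CF_{S,y_2}$ — commute with those limits.

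First I would recall that, by construction, $\Shv^!(\CY)$ is the full subcategory of $\on{Funct}(\Sch_{/\CY},\Shv^!_{\Sch})$ spanned by functors sending Cartesian arrows to Cartesian arrows, where we view things over $\Sch$. Since limits in a functor category are computed object-wise, and since the Cartesian-arrow condition is stable under limits (this uses that in the constructible context $f^!$ preserves limits for morphisms \emph{between schemes}, which is \secref{sss:constr}, so that a limit of functors each satisfying the condition again satisfies it), the inclusion $\Shv^!(\CY)\hookrightarrow \on{Funct}(\Sch_{/\CY},\Shv^!_{\Sch})$ preserves and detects limits. Concretely, for a diagram $a\mapsto \CF^a$ in $\Shv^!(\CY)$ and any $(S,y)\in \Sch_{/\CY}$ one has $(\underset{a}{\on{lim}}\,\CF^a)_{S,y}\simeq \underset{a}{\on{lim}}\,(\CF^a_{S,y})$, with the transition maps along $(y_1\to y_2)$ and along $(S'\to S)$ induced functorially. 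This is exactly parallel to the display in the proof of \lemref{l:constr limits prestack}, the only new feature being the extra (non-invertible) transition maps attached to $1$-morphisms in the fibers $\CY(S)$, which cause no trouble because they are just additional maps in the indexing category $\Sch_{/\CY}$.

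Granting this, the lemma is immediate: for a morphism $f:\CY_1\to\CY_2$ the functor $f^!$ is, on objects, given by $(f^!\CF)_{S,y} = \CF_{S,f(y)}$ — i.e.\ restriction along the induced functor $\Sch_{/\CY_1}\to \Sch_{/\CY_2}$ — and restriction of functors along any functor of indexing categories commutes with all limits, since both source and target limits are computed object-wise. (Equivalently one may invoke \lemref{l:constr limits prestack} value-wise: for each fixed $S$ and each $y\in\CY_1(S)$ the identification $(f^!\underset{a}{\on{lim}}\,\CF^a)_{S,y}\simeq (\underset{a}{\on{lim}}\,\CF^a)_{S,f(y)}\simeq \underset{a}{\on{lim}}\,\CF^a_{S,f(y)}\simeq \underset{a}{\on{lim}}\,(f^!\CF^a)_{S,y}$ holds.)

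I do not expect a genuine obstacle here; the content is entirely formal once one has the value-wise description of limits in $\Shv^!(\CY)$. The only point that requires a moment's care — and the reason the hypothesis of the constructible context enters — is verifying that the Cartesian-arrow condition defining $\Shv^!(\CY)\subset\on{Funct}(\Sch_{/\CY},\Shv^!_{\Sch})$ is preserved under limits, which rests on $f^!$ preserving limits for morphisms of \emph{schemes} (false in the context of arbitrary, non-holonomic D-modules, whence the footnote restricting to the holonomic case). Everything else is bookkeeping with object-wise limits in functor $\infty$-categories.
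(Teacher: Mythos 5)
Your proposal is correct and follows essentially the same route as the paper: the paper proves \lemref{l:constr limits prestack} by observing that limits in $\Shv^!(\CY)$ are computed value-wise because $!$-pullback for scheme morphisms preserves limits in the constructible context, and then says \lemref{l:constr limits} is "proved in the same way." You have simply spelled out what "the same way" means — that the extra non-invertible structure maps coming from the non-groupoid fibers $\CY(S)$ are just additional arrows in the index category and do not affect the value-wise computation — which is indeed the correct filling-in of the paper's terse argument.
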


\sssec{}

Let $\CY_1$ and $\CY_2$ be two lax prestacks, and let
$$f:\CY_1\rightleftarrows \CY_2:g$$
be maps equipped with natural transformations
$$f\circ g\to \on{id}_{\CY_2} \text{ and} \on{id}_{\CY_1}\to g\circ f$$
making them into an adjoint pair.

\medskip

The following observation will be useful:

\begin{lem}  \label{l:geom adj}
Under the above circumstances, the functors 
$$g^!:\Shv^!(\CY_2)\rightleftarrows \Shv^!(\CY_1):f^!$$
form an adjoint pair.
\end{lem}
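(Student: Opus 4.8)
The plan is to produce the adjunction on sheaf categories directly from the adjunction on lax prestacks by exhibiting explicit unit and counit natural transformations and verifying the triangle identities, all at the level of the functoriality of $\Shv^!$ as a functor $(\on{LaxPreStk})^{\on{op}}\to \inftyCat$. Concretely, since $\Shv^!$ is a functor, the datum of an adjunction $f\dashv g$ between $\CY_1$ and $\CY_2$ — i.e.\ $2$-morphisms $f\circ g\to \on{id}_{\CY_2}$ and $\on{id}_{\CY_1}\to g\circ f$ satisfying the triangle identities — is sent, after applying the (contravariant on $1$-morphisms, order-reversing on $2$-morphisms) functor $\Shv^!$, to $2$-morphisms $\on{id}_{\Shv^!(\CY_2)}\to g^!\circ f^!$ and $f^!\circ g^!\to \on{id}_{\Shv^!(\CY_1)}$ that again satisfy the triangle identities. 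That is precisely the assertion that $g^!\dashv f^!$.

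First I would make precise the sense in which $\Shv^!$ is a functor of $(\infty,2)$-categories, or at least a functor that remembers enough $2$-categorical structure. The cleanest route: recall that $\on{LaxPreStk}$, being the category of Cartesian fibrations over $\Sch$ (equivalently, functors $(\Sch)^{\on{op}}\to \inftyCat$), naturally has mapping $\infty$-categories rather than mapping spaces — a natural transformation between two such functors need not be invertible. An adjoint pair $f\dashv g$ in this $(\infty,2)$-categorical sense is exactly the data described in the statement. Then I would invoke the fact that the assignment $\CY\mapsto \Shv^!(\CY)$, $(\CY_1\overset{f}\to \CY_2)\mapsto f^!$ upgrades to a functor of $(\infty,2)$-categories $(\on{LaxPreStk})^{2\text{-}\on{op}}\to \inftyCat$ (here $2\text{-}\on{op}$ reversing both $1$- and $2$-morphisms, since $\Shv^!$ is contravariant and a natural transformation $f\to g$ of lax prestack maps induces $g^!\to f^!$): this is essentially built into the construction of $\Shv^!(\CY)$ via $\Sch_{/\CY}$ and the Cartesian-fibration formalism. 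Any functor of $(\infty,2)$-categories preserves adjunctions (this is a standard fact — an adjunction is a diagram in the free adjunction $2$-category $\on{Adj}$, and functors preserve it), which immediately yields the lemma.

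The main obstacle — and the only real content — is justifying that $\Shv^!$ genuinely carries $2$-morphisms of lax prestacks to $2$-morphisms of categories in the right variance, and that it does so coherently enough to transport an adjunction. If one wants to avoid invoking a full $(\infty,2)$-categorical machine, the alternative is a hands-on argument: given a natural transformation $\alpha: h_1\to h_2$ of maps $\CY\to \CY'$, construct the induced $\alpha^!: h_2^!\to h_1^!$ by working in $\Sch_{/\CY}$ — for $(S,y)$ over $\CY$, the morphism $\alpha$ provides a map in $\CY'(S)$ from $h_1(y)$ to $h_2(y)$, and since an object of $\Shv^!(\CY')$ assigns functorially a morphism $\CF_{h_1(y)}\to \CF_{h_2(y)}$ in $\Shv^!(S)$, pulling back gives the desired component; coherence follows from the coherence already packaged into objects of $\Shv^!(\CY')$. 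Applying this to $\alpha = (f\circ g\to \on{id})$ and $\alpha=(\on{id}\to g\circ f)$ produces the counit and unit for $g^!\dashv f^!$, and the triangle identities for these follow by applying the (variance-reversing) functoriality to the triangle identities for $f\dashv g$, since $\Shv^!$ respects horizontal and vertical composition of $2$-morphisms. I would present the proof in this second, concrete style, as it stays within the $(\infty,1)$-categorical vocabulary the paper has set up and only uses the description of $\Shv^!(\CY)$ as functors $\Sch_{/\CY}\to \Shv^!_{\Sch}$ preserving Cartesian arrows.
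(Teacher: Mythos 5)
Your overall strategy---transport the adjunction through the $2$-functoriality of $\Shv^!$---is the right one (the paper states this lemma with no proof), but you have the variance of $\Shv^!$ on $2$-morphisms backwards, and with the variance as you wrote it the argument would establish $f^!\dashv g^!$, which is not the lemma. Concretely: given $h_1,h_2:\CY\to\CY'$ and a natural transformation $\alpha: h_1\to h_2$, each component $\alpha_{S,y}: h_1(y)\to h_2(y)$ in $\CY'(S)$ is carried by an object $\CF\in\Shv^!(\CY')$ to a map $\CF_{S,h_1(y)}\to\CF_{S,h_2(y)}$, i.e.\ $(h_1^!\CF)_{S,y}\to(h_2^!\CF)_{S,y}$; so $\alpha$ induces $\alpha^!: h_1^!\to h_2^!$, \emph{covariantly} in $\alpha$. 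Your ``hands-on'' paragraph in fact constructs exactly this covariant arrow but then mislabels it as a map $h_2^!\to h_1^!$. The correct statement is that $\Shv^!$ is a $2$-functor out of $\on{LaxPreStk}^{1\text{-op}}$ (reversing $1$-morphisms only), not out of $\on{LaxPreStk}^{(1,2)\text{-op}}$.

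The error propagates to your conclusion. With the variance you state, $\eps: f\circ g\to\on{id}_{\CY_2}$ and $\eta:\on{id}_{\CY_1}\to g\circ f$ would be sent to $\on{id}_{\Shv^!(\CY_2)}\to g^!\circ f^!$ and $f^!\circ g^!\to\on{id}_{\Shv^!(\CY_1)}$; since $g^!:\Shv^!(\CY_1)\to\Shv^!(\CY_2)$ and $f^!:\Shv^!(\CY_2)\to\Shv^!(\CY_1)$, these are the unit and counit of $f^!\dashv g^!$, so your final sentence of the first paragraph (``That is precisely the assertion that $g^!\dashv f^!$'') does not follow from the displayed $2$-morphisms. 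With the correct (covariant) variance, $\eta$ is sent to $\eta^!:\on{id}_{\Shv^!(\CY_1)}\to (g\circ f)^! = f^!\circ g^!$ and $\eps$ to $\eps^!: g^!\circ f^! = (f\circ g)^!\to\on{id}_{\Shv^!(\CY_2)}$, which are precisely the unit and counit exhibiting $g^!\dashv f^!$, and the triangle identities transport as you intend. So: fix the variance throughout and re-derive the unit and counit; the proof is then correct.
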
  

\sssec{}  \label{sss:str}

For a lax prestack $\CY$, we let $\Shv^!_{\on{str}}(\CY)\subset \Shv^!(\CY)$ denote the full subcategory of
objects for which the arrows $\CF_{S,y_1}\to \CF_{S,y_2}$ are isomorphisms for any $S\in \Sch$ and 
$(y_1\to y_2)\in \CY(S)$.

\medskip

Pullback along $\CY\to \CY_{\on{str}}$ defines an equivalence
$$ \Shv^!(\CY_{\on{str}})\to  \Shv^!_{\on{str}}(\CY).$$

In particular, if $\CZ$ is a \emph{prestack} equipped with a map $g:\CY\to \CZ$, then for any $\CF\in \Shv^!(\CZ)$,
the object $\CF|_{\CY}$ belongs to $\Shv^!_{\on{str}}(\CY)$.

\medskip

In particular, $\omega_\CY\in \Shv^!_{\on{str}}(\CY)$.  

\ssec{Existence of left adjoints}

We now want to address the issue of direct images for morphisms between lax prestacks. Note that
this is a whole zoo: there is the *- vs !- image. But also, at the purely categorical level with no geometry,
there are the left Kan extension vs the right Kan extensions. \footnote{As was mentioned earlier, it was
the idea of Sam Raskin that one ought to be brave and consider direct images for morphisms between
lax prestacks when they seem to be the natural thing to do.} 

\medskip

In this subsection we will set for ourselves a limited goal: we will investigate some instances when the functor 
$$f^!:\Shv^!(\CY_2)\to \Shv^!(\CY_1)$$
admits a left adjoint. 

\sssec{}

We will denote by $f_!$ the (partially defined) functor, left adjoint to $f^!$. As in \corref{c:!-pushforward prestack},
the functor $f_!$ is always defined in the context of constructible sheaves. 

\sssec{}

When $\CY_2=\on{pt}$
and $\CY_1=\CY$, we shall denote the corresponding (partially defined) 
functor $\Shv^!(\CY)\to \Lambda\mod$ by 
$$\CF\mapsto \on{C}_c^*(\CY,\CF).$$

\medskip

As was already mentioned, the functor $\on{C}_c^*(\CY,-)$ is always defined
in the context of $\ell$-adic sheaves. As in \secref{sss:! dir im}, we have:

\begin{lem} \label{l:express cohomology}
For $\CF\in \Shv^!(\CY)$ we have
$$\on{C}_c^*(\CY,\CF)\simeq \underset{(S,y)\in \Sch_{/\CY}}{\on{colim}}\, \on{C}_c^*(S,\CF_{S,y}),$$
whenever the right-hand side is defined.
\end{lem}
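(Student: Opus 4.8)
The plan is to prove this exactly as one proves \lemref{l:express cohomology prestack}: by showing that, under the stated hypothesis, the object $\on{colim}_{(S,y)\in\Sch_{/\CY}}\on{C}_c^*(S,\CF_{S,y})$ corepresents the functor $V\mapsto \Maps_{\Shv^!(\CY)}(\CF,p_\CY^!(V))$ on $\Lambda\mod$. Since $\on{C}_c^*(\CY,-)$ is by definition the partially defined left adjoint of $p_\CY^!:\Lambda\mod\to\Shv^!(\CY)$, establishing this corepresentability simultaneously shows that $\on{C}_c^*(\CY,\CF)$ is defined and produces the asserted canonical isomorphism. Note that the hypothesis ``the right-hand side is defined'' really just amounts to: each individual term $\on{C}_c^*(S,\CF_{S,y})$ is defined (the colimit itself then exists, as $\Lambda\mod$ is cocomplete).

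First I would record the formula for mapping spaces in $\Shv^!(\CY)$. By construction $\Shv^!(\CY)$ is the $\infty$-category of Cartesian sections of the pullback $\Shv^!_{\Sch}\times_{\Sch}\Sch_{/\CY}\to\Sch_{/\CY}$, equivalently the limit over $(\Sch_{/\CY})^{\on{op}}$ of the functor $(S,y)\mapsto\Shv(S)$ with transition functors the $f^!$; hence the standard description of mapping spaces in a limit of $\infty$-categories gives, for $\CF,\CG\in\Shv^!(\CY)$, a canonical identification $\Maps_{\Shv^!(\CY)}(\CF,\CG)\simeq \on{lim}_{(S,y)\in\Sch_{/\CY}}\Maps_{\Shv(S)}(\CF_{S,y},\CG_{S,y})$ (that $\CY$ is only a lax prestack, so $\Sch_{/\CY}$ fails to be fibered in groupoids, does not affect this). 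Applying this with $\CG=p_\CY^!(V)$, and using that the tautological map $(S,y):S\to\CY$ satisfies $p_\CY\circ(S,y)=p_S$, so that $(p_\CY^!V)_{S,y}\simeq p_S^!(V)$, followed by the adjunction $(\on{C}_c^*(S,-),p_S^!)$ at the object $\CF_{S,y}$ — legitimate precisely because the hypothesis guarantees that $\on{C}_c^*(S,\CF_{S,y})$ is defined — one obtains
$$\Maps_{\Shv^!(\CY)}(\CF,p_\CY^!V)\simeq \on{lim}_{(S,y)}\Maps_{\Shv(S)}(\CF_{S,y},p_S^!V)\simeq \on{lim}_{(S,y)}\Maps_{\Lambda\mod}(\on{C}_c^*(S,\CF_{S,y}),V).$$
Since the diagram $(S,y)\mapsto\on{C}_c^*(S,\CF_{S,y})$ is covariant in $\Sch_{/\CY}$ (the transition maps come from the counits $f_!f^!\to\on{id}$), the last term equals $\Maps_{\Lambda\mod}(\on{colim}_{(S,y)}\on{C}_c^*(S,\CF_{S,y}),V)$, which is what we wanted.

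I do not expect a genuine obstacle: the argument is entirely formal. The only points needing care are (i) keeping variances straight in the mapping-space formula for a category of Cartesian sections over the (lax) comma category $\Sch_{/\CY}$, and (ii) carrying out the chain of equivalences above while $\on{C}_c^*$ is only partially defined — both of which are bookkeeping rather than mathematics. An equivalent route, parallel to \secref{sss:limits and colimits}, is available in the constructible context, where the left adjoints $f_!$ exist: present $\Shv^!(\CY)\simeq\on{colim}_{(S,y)}\Shv(S)$ in $\StinftyCat$, identify the right adjoint $\on{ev}_{(S,y)}$ of the insertion $\on{ins}_{(S,y)}$ with the evaluation $\CF\mapsto\CF_{S,y}$, observe $\on{C}_c^*(\CY,-)\circ\on{ins}_{(S,y)}\simeq\on{C}_c^*(S,-)$ (comparing right adjoints, via $\on{ev}_{(S,y)}\circ p_\CY^!\simeq p_S^!$), and then combine the canonical isomorphism $\CF\simeq\on{colim}_{(S,y)}\on{ins}_{(S,y)}(\CF_{S,y})$ with the fact that $\on{C}_c^*(\CY,-)$, being a left adjoint, commutes with colimits.
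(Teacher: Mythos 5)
Your strategy — corepresentability via the $(\on{C}_c^*(\CY,-),p_\CY^!)$ adjunction — is sound, but the parenthetical in which you dispose of the lax-prestack subtlety is exactly backwards, and the mapping-space formula as you state it ``for $\CF,\CG\in\Shv^!(\CY)$'' is false in general. For $\CY\in\on{LaxPreStk}$, $\Shv^!(\CY)$ is \emph{not} the $\infty$-category of Cartesian sections of $\Shv^!_\Sch\times_\Sch\Sch_{/\CY}\to\Sch_{/\CY}$, and in particular is not the limit $\underset{(\Sch_{/\CY})^{\on{op}}}{\on{lim}}\Shv(S)$: the definition in \secref{s:lax} only requires $\CF$ to send Cartesian arrows of $\Sch_{/\CY}\to\Sch$ to Cartesian arrows, so the maps $\CF_{S,y_1}\to\CF_{S,y_2}$ attached to a morphism $(y_1\to y_2)\in\CY(S)$ over $\on{id}_S$ may fail to be invertible — this freedom is the whole point of the lax setting, and a limit of categories would force those maps to be isomorphisms. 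Consequently, for general $\CF,\CG$, a morphism $y_1\to y_2$ induces $\CF_{S,y_1}\to\CF_{S,y_2}$ and $\CG_{S,y_1}\to\CG_{S,y_2}$ \emph{pointing the same way}, so $(S,y)\mapsto\Maps_{\Shv(S)}(\CF_{S,y},\CG_{S,y})$ is not even a diagram over $(\Sch_{/\CY})^{\on{op}}$, and there is no limit to take.

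What saves the argument is the specific target: $p_\CY^!V$ lies in $\Shv^!_{\on{str}}(\CY)$ (\secref{sss:str}), i.e., it \emph{is} a Cartesian section, its structure maps over $\on{id}_S$ being isomorphisms. Then a morphism $y_1\to y_2$ does yield a transition
$$\Maps(\CF_{S,y_2},\CG_{S,y_2})\longrightarrow\Maps(\CF_{S,y_1},\CG_{S,y_2})\overset{\sim}{\longleftarrow}\Maps(\CF_{S,y_1},\CG_{S,y_1}),$$
making the assignment into a genuine functor $(\Sch_{/\CY})^{\on{op}}\to\inftygroup$, and the end formula for maps between sections of a Cartesian fibration collapses to this limit precisely when the target section is Cartesian. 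Once you replace the false ``limit of categories'' claim by this observation — the mapping-space formula holds for $\CG\in\Shv^!_{\on{str}}(\CY)$, not for all $\CG$ — the chain of equivalences goes through as you wrote it, and the remaining points really are bookkeeping. (The ``alternative route'' sketched at the end has the same defect: $\Shv^!(\CY)$ is also not $\underset{(S,y)}{\on{colim}}\Shv(S)$ in $\StinftyCat$ when $\CY$ is a lax prestack, so the claimed presentation does not exist and that paragraph needs the corresponding repair.)
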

 
In particular, in the context of D-modules, the functor $\on{C}_c^*(\CY,-)$ is defined on any 
$\CF\in \Shv^!(\CY)$ for which for every $(S,y)$, the corresponding object $\CF_{S,y}\in \Shv(S)$ is holonomic. 
In particular, it is always defined for the object $\omega_\CY$. 

\medskip

For $\CF=\omega_\CY$ we shall also use the notations
$$\on{C}_c^*(\CY,\omega_\CY)=:\on{C}_*(\CY) \text{ and }
\on{Fib}\left((p_\CY)_!\circ (p_\CY)^!(\Lambda)\to \Lambda\right)=\on{Fib}(\on{C}_*(\CY)\to \on{C}_*(\on{pt}))=:\on{C}_*^{\on{red}}(\CY).$$

We will refer to $\on{C}_*(\CY)$ (reps., $\on{C}_*^{\on{red}}(\CY)$) as the \emph{homology} (resp., \emph{reduced homology}) of $\CY$.  

\sssec{Pseudo-properness for lax prestacks}

We will now discuss a generalization of \corref{c:pseudo-proper} for maps in $\on{LaxPreStk}$. 

\medskip

Let $f:\CY_1\to \CY_2$ be a map in $\on{LaxPreStk}$. 
We shall say that $f$ is pseudo-proper if: 
 
\medskip
 
\begin{itemize}

\item For every $S\in \Sch$, the functor $\CY_1(S)\to \CY_2(S)$ is a coCartesian fibration in groupoids;


\item For every $(S\in \Sch,y_2\in \CY_2(S))$, the fiber product $S\underset{\CY_2}\times \CY_1$
is a pseudo-proper \emph{prestack} over $S$.

\end{itemize} 

As a formal corollary of \propref{p:pseudo-proper} we obtain:

\begin{cor} \label{c:lax homology}
If $f:\CY_1\to \CY_2$ is pseudo-proper, then the functor $f_!$, left adjoint to $f^!$, is defined. Furthermore,
for a map $g_2:\CY_2'\to \CY_2$ and $\CY'_1:=\CY_1\underset{\CY_2}\times \CY'_2$, the 
natural transformation
$$f'_!\circ g_1^!\to g_2^!\circ f_!,$$
arising from the Cartesian diagram
$$
\CD
\CY'_1 @>{g_1}>>  \CY_1 \\
@V{f'}VV    @VV{f}V  \\
\CY'_2 @>{g_2}>>  \CY_2,
\endCD
$$
and the identification 
$$g_1^!\circ f^!\simeq f'{}^!\circ g_2^!,$$
is an isomorphism.
\end{cor}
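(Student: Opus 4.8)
The plan is to derive \corref{c:lax homology} as a formal consequence of \propref{p:pseudo-proper}, exactly in the spirit of how \corref{c:pseudo-proper} was deduced from it for ordinary prestacks, with the extra bookkeeping being the reduction to the ``fiberwise'' situation over a scheme.

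\medskip

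First I would observe that it suffices to establish both assertions (existence of $f_!$ and the base change isomorphism) after pulling back to an arbitrary test scheme mapping to $\CY_2$. Indeed, for any lax prestack $\CY$ one has, by the defining formula $\Shv^!(\CY)=\on{lim}_{(S,y)\in\Sch_{/\CY}}\Shv^!(S)$, that a colimit-preserving functor out of $\Shv^!(\CY_2)$ and a natural transformation between such are detected ``valuewise'' on the slice category $\Sch_{/\CY_2}$; and a natural transformation of functors into $\Shv^!(S)$ for a \emph{scheme} $S$ is an isomorphism iff it is so after further pullback. So the whole statement reduces to the case $\CY_2=S\in\Sch$, with $\CY_1$ a lax prestack equipped with a pseudo-proper map $f:\CY_1\to S$; here the first bullet in the definition of pseudo-properness says $\CY_1(T)\to S(T)$ is a coCartesian fibration in groupoids for every $T$, hence $\CY_1$ is actually a \emph{prestack} (a Cartesian fibration over $\Sch$ whose fibers over schemes are groupoids), and the second bullet says it is pseudo-proper over $S$ in the sense of \secref{ss:pseudo-properness prestacks}.

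\medskip

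With this reduction in hand, the existence of $f_!:\Shv^!(\CY_1)\to\Shv^!(S)$ and the base change isomorphism along a map of schemes $g_S:S'\to S$ are precisely \propref{p:pseudo-proper}. To get the base change statement in the generality claimed --- i.e.\ along an arbitrary map of lax prestacks $g_2:\CY_2'\to\CY_2$ --- I would test the natural transformation $f'_!\circ g_1^!\to g_2^!\circ f_!$ on a scheme $T$ with a map $y'_2:T\to\CY_2'$; composing with $g_2$ gives $y_2:T\to\CY_2$. Forming the iterated fiber products, one gets $T\underset{\CY_2'}\times\CY_1'\simeq T\underset{\CY_2}\times\CY_1$, and the transformation in question, evaluated at $(T,y'_2)$, becomes the transformation of \propref{p:pseudo-proper} for the pseudo-proper prestack $T\underset{\CY_2}\times\CY_1\to T$ and the identity map $T\to T$ --- which is an isomorphism. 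One has to check that the coherence data match up, but this is exactly the content of the compatibility of the base change transformations with composition, which is formal once one sets it up through the slice categories.

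\medskip

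The main (and only real) obstacle is the coherence bookkeeping: making sure that ``test on schemes mapping to $\CY_2$'' genuinely detects both the existence of the left adjoint and the isomorphy of the base change $2$-cell, and that the identifications $T\underset{\CY_2'}\times\CY_1'\simeq T\underset{\CY_2}\times\CY_1$ are compatible with all the structure maps in a way that lets one invoke \propref{p:pseudo-proper} uniformly. In the constructible context this is slightly easier since $f_!$ exists unconditionally (\corref{c:!-pushforward prestack}) and only the base change assertion needs checking; in the D-module context one additionally uses that the objects $\CF_{S,y}$ occurring are holonomic so that the relevant left adjoints exist fiberwise. Either way, no new geometric input is needed beyond \propref{p:pseudo-proper}; the proof is ``follows formally,'' and I would record it as such.
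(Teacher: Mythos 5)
Your proposal is correct and takes the same route as the paper, which indeed proves nothing more than the phrase ``as a formal corollary of \propref{p:pseudo-proper}.'' The reduction to the fiberwise case over a scheme, the observation that a pseudo-proper lax prestack over a scheme is automatically a (pseudo-proper) prestack because the base categories $\CY_2(T)$ are discrete, the direct construction of $f_!$ as a compatible family of the $(f_S)_!$ via the base change clause of \propref{p:pseudo-proper}, and the reduction of the general base change assertion to the one along maps from schemes, are exactly the bookkeeping that ``formal corollary'' is meant to encode.

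One peripheral remark in your last paragraph is wrong and worth correcting: in the D-module context you do \emph{not} need holonomicity of the $\CF_{S,y}$. The whole point of pseudo-properness is that $f_!$ for a pseudo-proper morphism exists unconditionally, in \emph{all} sheaf-theoretic contexts, precisely because it is assembled from $*$-pushforwards along proper morphisms of schemes (which agree with $!$-pushforwards there). Holonomicity is what one invokes for morphisms that are \emph{not} pseudo-proper (cf.\ the discussion following \lemref{l:express cohomology prestack}); invoking it here would misrepresent the content of \propref{p:pseudo-proper}. Also, a small wording slip: you want to speak of functors \emph{into} $\Shv^!(\CY_2)$ (or natural transformations valued there) being detected termwise in the limit $\Shv^!(\CY_2)=\lim_{(S,y)}\Shv(S)$, not of functors ``out of'' $\Shv^!(\CY_2)$; the rest of your paragraph makes clear you meant the former.
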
. 

\ssec{Right adjoints}

The contents of subsection will be needed for the proof of \propref{p:out}. It can be skipped on the first pass.

\medskip

We now turn to the discussion of the right adjoint to the functor $f^!$ (note that this right adjoint will be a pretty weird functor
even for a morphism between schemes). 

\medskip

First, we know that the right adjoint to $f^!$ is always defined,
because $f^!$ commutes with colimits (by the Adjoint Functor Theorem, \cite[Corollary 5.5.2.9(i)]{Lu1}). 

\medskip

So, the question we want to address is that of a more explicit description of $(f^!)^R$.  

\sssec{}

For a general morphism $f:\CY_1\to \CY_2$ it is difficult to describe the functor $(f^!)^R$ in any explicit way. 
Yet, it is not completely ``wild": 

\medskip

Let 
$$
\CD
\CY'_1    @>{g_1}>>  \CY_1  \\
@V{f'}VV   @VV{f}V   \\
\CY'_2    @>{g_2}>>  \CY_2
\endCD
$$
be a Cartesian diagram of lax prestacks. The isomorphism
$$f'{}^!\circ g_2^!\simeq g_1^!\circ f^!$$
gives rise to a natural transformation:
\begin{equation} \label{e:strange base change}
g_2^!\circ (f^!)^R\to (f'{}^!)^R\circ g_1^!
\end{equation}.

\medskip

We claim:

\begin{lem}  \label{l:right adjoint base change}
Suppose that $g_2$ is pseudo-proper. Then the natural transformation \eqref{e:strange base change} is an isomorphism.
\end{lem}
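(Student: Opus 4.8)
The statement to prove is \lemref{l:right adjoint base change}: for a Cartesian square of lax prestacks with $g_2$ pseudo-proper, the base change map $g_2^!\circ (f^!)^R\to (f'{}^!)^R\circ g_1^!$ is an isomorphism. The plan is to reduce to the case where $g_2$ is (representable by) a scheme proper over its target, and then to reduce further to the situation of schemes, where it becomes the standard proper base change isomorphism combined with the fact that $f^!$ already has a left adjoint $f_!$ (so $(f^!)^R$ is the honest right adjoint one can manipulate via duality).

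\emph{Step 1: Reduce the source of $g_2$ to a scheme.} Since $g_2$ is pseudo-proper, by definition (after the variant for lax prestacks in \corref{c:lax homology} and the remark preceding it) $\CY_2'$ can be tested against schemes: it suffices to check that \eqref{e:strange base change} becomes an isomorphism after $!$-pullback along every $S\overset{s}\to \CY_2'$. Form the iterated Cartesian squares by base-changing the whole diagram along $s$; because $!$-pullback commutes with $!$-pullback and because $(f^!)^R$ base-changes correctly along \emph{pseudo-proper} maps once we know this in the scheme case, it is enough to treat the case $\CY_2'=S'$ with $g_2=g_S\colon S'\to \CY_2$ a morphism from a scheme. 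Here I should be a little careful: the pseudo-properness of $g_2$ only says the \emph{base change of $g_2$ along a scheme} is a pseudo-proper prestack, so I write $S'=\CY_2'\times_{\CY_2}S$ as a colimit $\on{colim}_a Z_a$ of schemes $Z_a$ proper over $S$, and reduce to each $Z_a\to S$, i.e.\ to a proper map of schemes mapping to $\CY_2$.

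\emph{Step 2: The scheme case.} So we are reduced to a Cartesian square in which $g_2=p\colon Z\to \CY_2$ with $Z$ a scheme and $p$ proper onto its (scheme-theoretic) image — more precisely to the situation of a Cartesian square of lax prestacks whose bottom map comes from a proper morphism of schemes, or a colimit of such. Now $\Shv^!(\CY_2)=\underset{S\in \Sch_{/\CY_2}}{\on{lim}}\Shv(S)$, and both $(f^!)^R$ and the pullbacks can be computed valuewise on the index category (limits and, where they exist, the relevant adjoints are computed valuewise by \lemref{l:constr limits} in the constructible context, and in general one works with the formula of \secref{sss:limits and colimits}). This localizes the assertion to: for a Cartesian square of \emph{schemes}
$$
\CD
S_1' @>{g_1}>> S_1 \\
@V{f'}VV @VV{f}V \\
S_2' @>{g_2}>> S_2
\endCD
$$
with $g_2$ proper, the map $g_2^!\circ (f^!)^R\to (f'{}^!)^R\circ g_1^!$ is an isomorphism. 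Since $g_2$ is proper, $g_2^!=g_2^!$ has left adjoint $(g_2)_!=(g_2)_*$, and $(f^!)^R$ can be identified — using the self-duality of the sheaf theory on schemes, i.e.\ Verdier duality — with a $*$-pushforward-type functor. Under that identification the claim becomes the ordinary $*$-pushforward proper base change theorem, which holds in each of our sheaf contexts (1)--(4). I would phrase this cleanly by passing to opposite categories / dualizing so that the right adjoint of $f^!$ becomes a $f_*$-type functor, where the compatibility with $g_2^!$ along proper $g_2$ is exactly \lemref{l:base change} (for the open case this is literally that lemma; for the proper case it is its Verdier-dual counterpart / \propref{p:pseudo-proper}).

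\emph{Step 3: Assemble.} Combine Steps 1 and 2: the general $g_2$ pseudo-proper is a colimit of proper schemes over each test scheme, $!$-pullback commutes with the relevant colimits, and the transition maps in the colimit presentation are themselves proper, so the scheme-level proper base change of Step 2 glues to give the isomorphism for $g_2$ pseudo-proper.

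\emph{Main obstacle.} The genuine difficulty is Step 2: unlike $f_!$ or $f_*$, the functor $(f^!)^R$ is ``wild'' (as the text itself warns, it is weird even for schemes), so I cannot manipulate it directly. The key move that makes the proof go through is to trade $(f^!)^R$ for a $*$-pushforward via the duality built into the sheaf theory on schemes, after which the statement collapses to classical proper base change; getting the duality bookkeeping exactly right — especially tracking that the base-change \emph{map} \eqref{e:strange base change}, and not merely some abstract isomorphism, is the one being identified with the classical base-change map — is where the real care is needed. A secondary technical point is justifying in Step 1 that one is allowed to reduce $g_2$ to the colimit-of-proper-schemes presentation compatibly with $(f^!)^R$; this is legitimate because $!$-pullback preserves colimits and because the reduction is only \emph{used} after the scheme case is in hand, so there is no circularity.
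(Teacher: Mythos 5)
Your proposal misses the actual mechanism and, as written, the two main steps both fail.

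In Step~2 you propose to identify $(f^!)^R$ with a $*$-pushforward via Verdier duality, so as to invoke classical proper base change. That identification is not available. Verdier duality conjugates $f^!$ into $f^*$, so conjugating the adjunction $f^!\dashv (f^!)^R$ yields $\BD (f^!)^R\BD$ as the \emph{left} adjoint of $f^*$ (an ``extension by zero'' type functor $f_\sharp$, which typically does not exist), not as $f_*$. The functor $(f^!)^R$ really is the wild object that the surrounding text warns it is, and there is no shortcut through duality to replace it by $f_*$. Consequently the reduction to ``ordinary $*$-pushforward proper base change'' does not go through. The circularity in Step~1 is also real: restricting $(f^!)^R$ along $S\to\CY_2'$ and hoping to compute it termwise in the limit presentation is exactly the kind of base change statement you are trying to prove, and ``the reduction is only used after the scheme case is in hand'' does not fix this, because even the scheme case is established by a route that doesn't work.

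The correct and much shorter argument does not touch $(f^!)^R$ directly at all: pass to left adjoints. Since $g_2$ is pseudo-proper, so is its base change $g_1$, hence both $g_2^!$ and $g_1^!$ admit left adjoints $(g_2)_!$, $(g_1)_!$; and $f^!$, $f'^!$ are themselves the left adjoints of $(f^!)^R$, $(f'^!)^R$. Taking mates of the transformation \eqref{e:strange base change} with respect to these two pairs of adjunctions produces the natural transformation $(g_1)_!\circ f'^!\to f^!\circ (g_2)_!$, which is exactly the pseudo-proper base change map, an isomorphism by \corref{c:lax homology}. Since the mate correspondence (conjugation through adjunctions) preserves the property of being an isomorphism, \eqref{e:strange base change} is an isomorphism as well. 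The key idea you are missing is this dualization of the problem: rather than trying to describe $(f^!)^R$, one converts the assertion into one about left adjoints, where the necessary base change theorem has already been proved.
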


\begin{proof}

By passing to left adjoints, we obtain a natural transformation
$$(g_1)_!\circ f'{}^!\to f^!\circ (g_2)_!,$$
which is an isomorphism by \corref{c:lax homology}.

\end{proof} 

\sssec{}

We shall now describe a situation in which we can get a handle on the functor $(f^!)^R$. 

\medskip

Let $\CF$ be an object of $\Shv^!(\CY_1)$.  
Fix $S\in \Sch$, $y_2\in \CY_2(S)$, and a map of schemes $g_S:S'\to S$. Let us denote $y'_2=g_S(y_2)\in \CY_2(S')$.
Pullback defines a functor
$$g_Y:\CY_1(S)\underset{\CY_2(S)}\times \{y_2\}\to \CY_1(S')\underset{\CY_2(S')}\times \{y_2'\}.$$

\medskip

The datum of $\CF$ defines functors
$$\CF(y_2):\CY_1(S)\underset{\CY_2(S)}\times \{y_2\}\to \Shv^!(S) \text{ and }
\CF(y'_2):\CY_1(S')\underset{\CY_2(S')}\times \{y'_2\}\to \Shv^!(S'),$$
so that the diagram
$$
\CD
 \CF(y_2):\CY_1(S)\underset{\CY_2(S)}\times \{y_2\}   @>{\CF(y_2)}>>   \Shv^!(S)  \\
@V{g_Y}VV    @VV{(g_S)^!}V    \\
 \CF(y'_2):\CY_1(S')\underset{\CY_2(S')}\times \{y'_2\}   @>{\CF(y'_2)}>>   \Shv^!(S')
\endCD
$$
commutes. 

\medskip

Hence, we obtain the following maps in $\Shv^!(S')$:
\begin{multline} \label{e:comp map}
(g_S)^! \left(\underset{\CY_1(S)\underset{\CY_2(S)}\times \{y_2\}}{\on{lim}}\, \CF(y_2)\right)\to
\underset{\CY_1(S)\underset{\CY_2(S)}\times \{y_2\}}{\on{lim}}\, (g_S)^!(\CF(y_2))\simeq \\
\simeq \underset{\CY_1(S)\underset{\CY_2(S)}\times \{y_2\}}{\on{lim}}\, \CF(y'_2)\circ g_Y
\leftarrow 
\underset{\CY_1(S')\underset{\CY_2(S')}\times \{y'_2\}}{\on{lim}}\, \CF(y'_2).
\end{multline}

Note that in the context of constructible sheaves, the map $\to$ in \eqref{e:comp map} is automatically
an isomorphism.

\sssec{}

Let us now assume that:

\begin{itemize}

\item For every $S\in \Sch$, the functor $\CY_1(S)\to \CY_2(S)$ is a Cartesian fibration;

\item For any $g:S'\to S$ the pullback functor $\CY_1(S)\to \CY_1(S')$ maps arrows that are $f(S)$-Cartesian 
to arrows that are $f(S')$-Cartesian.

\end{itemize} 

Together these conditions can be combined into saying that the functor $\Sch_{/\CY_1}\to \Sch_{/\CY_2}$ is
a Cartesian fibration. 

\begin{lem} \label{l:right adjoint}
Assume that for a given $\CF$ and any $(S'\overset{g_S}\to S, y_2\in \CY_2(S))$ as above, both maps
in \eqref{e:comp map} are isomorphisms. Then the natural map
$$(f^!)^R(\CF)_{S,y_2}\to \underset{\CY_1(S)\underset{\CY_2(S)}\times \{y_2\}}{\on{lim}}\, \CF(y_2)$$
is an isomorphism.
\end{lem}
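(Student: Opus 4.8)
The plan is to compute the right adjoint $(f^!)^R$ value-wise using the general formula for a right Kan extension along a (co)Cartesian fibration, and then to identify the resulting limit with the one appearing in the statement. Concretely: for a lax prestack $\CY_2$, the category $\Shv^!(\CY_2)$ is by definition the limit of $\Shv^!(S)$ over $\Sch_{/\CY_2}$, and the pullback $f^!$ is induced by precomposition with $\Sch_{/\CY_1}\to \Sch_{/\CY_2}$. Its right adjoint is therefore the relative right Kan extension along this functor, whose value at an object $(S,y_2)\in \Sch_{/\CY_2}$ is, by the explicit formula for $\on{RKE}$ recalled in \secref{sss:hom th conv}, the limit of $\CF$ over the over-category $(\Sch_{/\CY_1})_{(S,y_2)/}$ — that is, over pairs $(S'\overset{g_S}\to S, y_1\in \CY_1(S'))$ together with a compatible map to $y_2$. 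Since $\Sch_{/\CY_1}\to \Sch_{/\CY_2}$ is assumed to be a Cartesian fibration, this over-category admits a final subcategory (via the remark about Cartesian fibrations in \secref{sss:hom th conv}, applied on the opposite side), allowing us to cut down the index category.

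The first step is to set up this identification carefully. By the standing hypotheses, $\Sch_{/\CY_1}\to \Sch_{/\CY_2}$ is a Cartesian fibration, so the under-category $(\Sch_{/\CY_1})_{(S,y_2)/}$ over $(S,y_2)\in \Sch_{/\CY_2}$ contains the fiber $(\Sch_{/\CY_1})_{(S,y_2)}$ as an initial subcategory; equivalently, after passing to opposite categories the relevant over-category has the fiber as a final subcategory, and the limit defining $(f^!)^R(\CF)_{S,y_2}$ may be computed over that fiber alone. This fiber is precisely $\CY_1(S)\underset{\CY_2(S)}\times\{y_2\}$ together with the functor $\CF(y_2)$ to $\Shv^!(S)$ from the statement. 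Thus a priori
$$(f^!)^R(\CF)_{S,y_2}\simeq \underset{\CY_1(S)\underset{\CY_2(S)}\times\{y_2\}}{\on{lim}}\,\CF(y_2),$$
and it remains to check that this candidate actually assembles into an object of $\Shv^!(\CY_2)$, i.e. that it satisfies !-base-change in the variable $S$. This is exactly where the hypothesis on \eqref{e:comp map} enters: the natural comparison map $(g_S)^!\left((f^!)^R(\CF)_{S,y_2}\right)\to (f^!)^R(\CF)_{S',y'_2}$ is, after unwinding the formula, the composite of the two maps in \eqref{e:comp map}, so assuming both are isomorphisms gives that the value-wise limit is compatible with restriction and hence genuinely defines the right adjoint.

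The second step is the verification that the two displays fit together, i.e. that the map named in the lemma — the counit-type comparison from $(f^!)^R(\CF)_{S,y_2}$ to the limit over the fiber — is the same as the equivalence coming from finality. This is a diagram chase: one checks that the canonical map out of the Kan-extension limit over the under-category, restricted along the inclusion of the fiber, agrees with the finality equivalence, and that the base-change compatibility just established is exactly the data needed for these value-wise isomorphisms to glue to an isomorphism in $\Shv^!(\CY_2)$. I would phrase this using the universal property: $(f^!)^R(\CF)$ is characterized by $\Maps(\CG,(f^!)^R(\CF))\simeq \Maps(f^!\CG,\CF)$ for $\CG\in\Shv^!(\CY_2)$, and one computes the right-hand side as a limit over $\Sch_{/\CY_1}$, reorganizes it as an iterated limit first over $\Sch_{/\CY_2}$ and then over the fibers $\CY_1(S)\underset{\CY_2(S)}\times\{y_2\}$ (legitimate because of the Cartesian fibration structure), and recognizes the inner limit as $\Maps_{\Shv^!(S)}\bigl(\CG_{S,y_2},\,\underset{}{\on{lim}}\,\CF(y_2)\bigr)$ — using here that $(g_S)^!$ commutes with the relevant limits, which in the constructible context is automatic and in general is the content of the hypothesis on \eqref{e:comp map}.

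The main obstacle I anticipate is bookkeeping rather than conceptual: keeping straight the several opposite-category conventions (the fibration $\Sch_{/\CY_1}\to\Sch_{/\CY_2}$ is Cartesian, but Kan extensions and finality arguments want coCartesian/over-category data, so there is a systematic $(-)^{\on{op}}$ to carry through), and checking that the finality claim for the fiber inside the under-category is correctly oriented. The only genuinely substantive point — that the value-wise limit respects !-pullback — is handed to us as the hypothesis about \eqref{e:comp map}, so once the categorical scaffolding is in place the proof is essentially a formal consequence of the pointwise formula for relative right Kan extensions together with that hypothesis.
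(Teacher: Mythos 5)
The paper states \lemref{l:right adjoint} without giving a proof, so there is no argument of the paper's to compare against; your proposal is a correct filling of that gap, and it follows what is surely the intended route: identify $(f^!)^R$ with the relative right Kan extension along $\Sch_{/\CY_1}\to\Sch_{/\CY_2}$, reduce the pointwise limit from the under-category to the fiber using that this functor is a Cartesian fibration (the finality fact recalled in \secref{sss:hom th conv}), and observe that the hypothesis on \eqref{e:comp map} is precisely the statement that the resulting value-wise limit sends Cartesian arrows to Cartesian arrows, hence lands in $\Shv^!(\CY_2)$, hence coincides with $(f^!)^R(\CF)$ by full faithfulness of $\Shv^!(\CY_2)$ inside the ambient functor category.

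One small imprecision worth fixing: you open by saying that ``for a lax prestack $\CY_2$, the category $\Shv^!(\CY_2)$ is by definition the limit of $\Shv^!(S)$ over $\Sch_{/\CY_2}$.'' That is the definition for a \emph{prestack}; for a \emph{lax} prestack, $\Shv^!(\CY_2)$ is the category of functors $\Sch_{/\CY_2}\to\Shv^!_{\Sch}$ over $\Sch$ taking Cartesian arrows to Cartesian arrows, which differs from the strict limit precisely because the transition maps along vertical ($1$-morphism-in-$\CY_2(S)$) arrows are allowed to be non-invertible. This matters conceptually: it is the reason the naive RKE formula may fail to land in $\Shv^!(\CY_2)$, and hence the reason the hypothesis on \eqref{e:comp map} is needed at all. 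Your later steps do use the correct ``Cartesian-section'' picture, so the argument is unaffected, but the opening sentence should be corrected to avoid obscuring where the hypothesis enters.
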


\ssec{The notion of universally homological contractibility over a prestack} \hfill     \label{ss:uhc prestack}

\medskip

\noindent Let $F:\bC\to \bD$ be a functor between $\infty$-categories, where $\bD$ is an $\infty$-groupoid. In this case
it is easy to give a criterion when the functor
$$\on{Funct}(\bD,\bE)\to \on{Funct}(\bC,\bE),$$
given by precomposition with $F$, is fully faithful for any $\bE$. Namely, this happens if and only if all the fibers of $F$ 
are contractible (i.e., have a trivial homotopy type). 

\medskip

In this subsection we will study a generalization of this with the geometry mixed-in: i.e., when instead of  
$\infty$-categories we have lax prestacks. 

\sssec{}   \label{sss:contr over prestack} 

Let $f:\CY_1\to \CY_2$ be a map of lax prestacks, where $\CY_2\in \on{PreStk}$. We shall say that $f$
is \emph{universally homologically contractible} if for any $S\in \Sch_{/\CY_2}$, for the induced map
$$f_S:S\underset{\CY_2}\times \CY_1\to S,$$
the functor
$$(f_S)^!:\Shv^!(S)\to \Shv^!(S\underset{\CY_2}\times \CY_1)$$
is fully faithful.

\sssec{}

The following is easy:

\begin{lem}  \label{l:contr contr}
Let $f:\CY_1\to \CY_2$ be a map, where $\CY_2$ is a prestack. Suppose that for any $S\in \Sch$, the functor 
$\CY_1(S)\to \CY_2(S)$ has contractible fibers. Then the functor $f$ is universally homologically contractible.
\end{lem}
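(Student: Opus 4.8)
The plan is to base-change to a scheme and then identify the $\on{str}$-ification of the source. Fix $S\in \Sch_{/\CY_2}$, set $\CZ:=S\underset{\CY_2}\times\CY_1$, and let $f_S:\CZ\to S$ be the projection; we must show that $(f_S)^!$ is fully faithful. Since fiber products of lax prestacks are computed value-wise and $\CY_2$ is a prestack, for each $T\in\Sch$ the functor $\CZ(T)\to S(T)$ is the base change of $\CY_1(T)\to\CY_2(T)$ along $S(T)\to\CY_2(T)$; in particular its fiber over a point $s\in S(T)$ is the fiber of $\CY_1(T)\to\CY_2(T)$ over the image of $s$, which is contractible by the hypothesis of the lemma (applied to the scheme $T$).

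Now $S(T)$ is a \emph{set} (as $S$ is a scheme), so $\CZ(T)$ decomposes as the disjoint union $\coprod_{s\in S(T)}\CZ(T)_s$ of its fibers. Applying the enveloping-groupoid functor, which preserves disjoint unions, we get
$$\CZ(T)_{\on{str}}\;\simeq\;\coprod_{s\in S(T)}\bigl(\CZ(T)_s\bigr)_{\on{str}}\;\simeq\;\coprod_{s\in S(T)}\{*\}\;\simeq\;S(T),$$
compatibly with the projection to $S(T)$, where the middle isomorphism is precisely the contractibility of each $\CZ(T)_s$. Since $\CZ_{\on{str}}(T)=\CZ(T)_{\on{str}}$ by definition of $\CZ_{\on{str}}$, this shows that the canonical map $\CZ_{\on{str}}\to S$ is an isomorphism of prestacks.

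Finally, the map $f_S$ factors as $\CZ\to\CZ_{\on{str}}\overset{\sim}\to S$, the first arrow being the universal map to a prestack and the second the isomorphism just produced. Hence $(f_S)^!$ is the composite
$$\Shv^!(S)\;\overset{\sim}\longrightarrow\;\Shv^!(\CZ_{\on{str}})\;\longrightarrow\;\Shv^!(\CZ),$$
in which the second functor is the fully faithful inclusion $\Shv^!(\CZ_{\on{str}})\simeq\Shv^!_{\on{str}}(\CZ)\hookrightarrow\Shv^!(\CZ)$ of \secref{sss:str}. A composite of an equivalence followed by a fully faithful functor is fully faithful, which proves the lemma.

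There is no genuine obstacle here; the only point that deserves care is that, because $S(T)$ is discrete, $\CZ(T)$ splits as a coproduct indexed by $S(T)$, which is exactly what upgrades $\CZ(T)_{\on{str}}\to S(T)$ from a map that is merely a homology equivalence to an honest equivalence of groupoids. If one preferred to avoid this splitting, one could instead invoke directly the classical fact recalled at the beginning of this subsection — that a functor to an $\infty$-groupoid all of whose fibers are contractible exhibits the target as the enveloping groupoid of the source — applied value-wise in $T$, and then proceed exactly as above.
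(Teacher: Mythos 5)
The paper does not record a proof of this lemma (it is stated as ``The following is easy''), so there is nothing to compare your argument against; I will just assess correctness.

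Your argument is correct, and it is the natural way to fill this in. The key observations are in order: $\CZ:=S\underset{\CY_2}\times\CY_1$ has $\CZ(T)=S(T)\underset{\CY_2(T)}\times\CY_1(T)$ value-wise, so the (homotopy) fiber of $\CZ(T)\to S(T)$ over any $s\in S(T)$ is the fiber of $\CY_1(T)\to\CY_2(T)$ over the image of $s$, hence contractible by hypothesis applied to $T$; since $S(T)$ is discrete, morphisms in $\CZ(T)$ cannot change the $S(T)$-component, so $\CZ(T)$ really is the coproduct of its fibers, and $\on{str}$ (being a left adjoint) commutes with coproducts, giving $\CZ_{\on{str}}\overset{\sim}\to S$; finally $(f_S)^!$ factors through the equivalence $\Shv^!(S)\simeq\Shv^!(\CZ_{\on{str}})$ followed by the fully faithful inclusion $\Shv^!(\CZ_{\on{str}})\simeq\Shv^!_{\on{str}}(\CZ)\hookrightarrow\Shv^!(\CZ)$ from \secref{sss:str}. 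Your closing remark is also right: the splitting over the discrete set $S(T)$ is only used to see that $\CZ(T)_{\on{str}}\to S(T)$ is an actual equivalence rather than merely a homology equivalence, and one could instead invoke the general fact about functors to $\infty$-groupoids with contractible fibers. Either route is fine.
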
 

\sssec{}

The following results easily from the definitions:

\begin{lem}  \label{l:ff from uhc}
Let $f:\CY_1\to \CY_2$ be a map of lax prestacks, where $\CY_2\in \on{PreStk}$. Suppose that $f$
is universally homologically contractible. Then the functor
$$f^!:\Shv^!(\CY_2)\to \Shv^!(\CY_1)$$
is fully faithful.
\end{lem}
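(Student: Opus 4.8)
The plan is to realize $f^!$ as a limit, over the index category $(\Sch_{/\CY_2})^{\on{op}}$, of pullback functors each of which is fully faithful by hypothesis, and then to invoke the elementary fact that a limit of fully faithful functors of $\infty$-categories is fully faithful (mapping spaces in a limit being computed as limits of the mapping spaces in the terms). One might first be tempted to check instead that the unit $\on{id}\to (f^!)^R\circ f^!$ of the adjunction $f^!\dashv (f^!)^R$ is an isomorphism by restricting to schemes over $\CY_2$; but this runs into the failure of base change for the right adjoint $(f^!)^R$, since \lemref{l:right adjoint base change} requires the map $S\to\CY_2$ to be pseudo-proper, which it need not be. So I work directly with the defining presentation $\Shv^!(\CY_2)\simeq \underset{(S,y_2)\in(\Sch_{/\CY_2})^{\on{op}}}{\on{lim}}\Shv^!(S)$ (valid because $\CY_2$ is a prestack), whose transition functors are the $!$-pullbacks, and the task becomes to produce a matching presentation of $\Shv^!(\CY_1)$ over the same index category.

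For $S\to\CY_2$ in $\Sch_{/\CY_2}$ put $\CY_1^S:=S\underset{\CY_2}\times\CY_1$. The key step is to verify that the canonical colimit presentation $\CY_2\simeq \underset{(S,y_2)\in\Sch_{/\CY_2}}{\on{colim}}S$ of the prestack $\CY_2$ remains a colimit after base change along $f$, i.e.
$$\CY_1\;\simeq\;\underset{(S,y_2)\in\Sch_{/\CY_2}}{\on{colim}}\CY_1^S \qquad\text{in }\on{LaxPreStk}.$$
This is the one place where the hypothesis that $\CY_2$ is a prestack, and not merely a lax prestack, is used. Indeed, colimits in $\on{LaxPreStk}=\on{Funct}((\Sch)^{\on{op}},\inftyCat)$ are computed objectwise, so at a test scheme $T$ the assertion reduces to $\underset{(S,y_2)}{\on{colim}}\bigl(S(T)\underset{\CY_2(T)}\times\CY_1(T)\bigr)\simeq\CY_1(T)$, and since $\CY_2(T)$ is an $\infty$-groupoid this follows by combining $\underset{(S,y_2)}{\on{colim}}S(T)\simeq\CY_2(T)$ (density of representables in $\on{PreStk}$) with the fact that base change of an $\infty$-category along a map of $\infty$-groupoids preserves colimits — which one sees by straightening over the target and using that $\inftyCat$ is Cartesian closed, so $-\times\bC$ preserves colimits. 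Applying $\Shv^!$, which carries colimits of lax prestacks to limits of categories by its very construction as a category of Cartesian sections, gives $\Shv^!(\CY_1)\simeq \underset{(S,y_2)\in(\Sch_{/\CY_2})^{\on{op}}}{\on{lim}}\Shv^!(\CY_1^S)$, with transition functors the $!$-pullbacks along $\CY_1^{S'}\to\CY_1^S$.

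With the two presentations in hand, $f^!$ is identified with the limit of the family of functors $(f_S)^!\colon\Shv^!(S)\to\Shv^!(\CY_1^S)$, where $f_S\colon\CY_1^S\to S$: this is precisely the statement that for the Cartesian squares $\CY_1^S=S\underset{\CY_2}\times\CY_1$ one has $g_1^!\circ f^!\simeq (f_S)^!\circ y_2^!$ (with $g_1\colon\CY_1^S\to\CY_1$, $y_2\colon S\to\CY_2$), coherently in $(S,y_2)$. By the definition of universal homological contractibility each $(f_S)^!$ is fully faithful, so $f^!=\underset{(S,y_2)}{\on{lim}}\,(f_S)^!$ is fully faithful. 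The only step with genuine content here is the second paragraph — the interchange of the colimit presentation of $\CY_2$ with base change along $f$, together with the passage through $\Shv^!$; the rest is the unwinding of the definitions (and the triviality that a limit of fully faithful functors is fully faithful). In fact the lemma is essentially forced, once one notes that the notion of universal homological contractibility is tailored precisely so that the fully faithful pullbacks $(f_S)^!$ assemble, over $\Sch_{/\CY_2}$, into $f^!$.
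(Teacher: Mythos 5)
Your proof is correct. The paper does not actually spell out an argument for \lemref{l:ff from uhc} (it only says it results easily from the definitions), but it does sketch a proof of the generalization \propref{p:uhc general}, in which $\CY_2$ is allowed to be a lax prestack, and that sketch runs differently: it constructs an explicit inverse to $\Maps_{\Shv^!(\CY_2)}(\CF,\CG)\to \Maps_{\Shv^!(\CY_1)}(f^!\CF,f^!\CG)$ by pulling and pushing along the lax prestacks $\on{Factor}_f(\alpha)$. Your argument is a genuinely different and more structural route, available precisely because $\CY_2\in\on{PreStk}$: you write $\Shv^!(\CY_2)$ and $\Shv^!(\CY_1)$ as limits over $(\Sch_{/\CY_2})^{\on{op}}$ and exhibit $f^!$ as a limit of the fully faithful functors $(f_S)^!$. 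The one step you should make slightly more explicit is the assertion that $\Shv^!:(\on{LaxPreStk})^{\on{op}}\to\inftyCat$ carries colimits to limits; for prestacks this is built into the right-Kan-extension description, but for lax prestacks the definition is via Cartesian functors $\Sch_{/\CY}\to\Shv^!_{\Sch}$, so one should observe (say by passing to $K$-points for each $\infty$-category $K$) that this too reduces to corepresentability of mapping spaces in $\on{Funct}((\Sch)^{\on{op}},\inftyCat)$. Granting that, the remaining content is exactly where you place it: the identification $\CY_1\simeq\underset{(S,y_2)}{\on{colim}}\,\CY_1^S$, which rests on base change along a map $\CY_1(T)\to\CY_2(T)$ to an $\infty$-groupoid commuting with colimits — this is precisely what the hypothesis $\CY_2\in\on{PreStk}$ buys, and it is the right way to see why the lemma, unlike \propref{p:uhc general}, needs no further geometric input.
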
 

\sssec{}

Assume for a moment that $\CY_2=\on{pt}$. Then we shall say that $\CY=\CY_1$ is universally homologically
contractible if its map to $\on{pt}$ is. 

\begin{lem}  \label{l:trivial homology}
A lax prestack is universally homologically contractible if and only if the trace map
$$\on{C}_*(\CY)\to \Lambda$$
is an isomorphism. 
\end{lem}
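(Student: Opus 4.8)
The plan is to prove both directions by reducing them to the single statement that the functor $(p_\CY)^!:\Shv^!(\on{pt})\to\Shv^!(\CY)$ is fully faithful.

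\emph{From universal homological contractibility to the trace being an isomorphism.} Here I would simply apply the definition of universal homological contractibility to the object $\on{pt}\in\Sch_{/\on{pt}}$. Since $\on{pt}\times\CY\simeq\CY$ and the induced map is $p_\CY$ itself, this gives that $(p_\CY)^!:\Shv^!(\on{pt})\to\Shv^!(\CY)$ is fully faithful. This functor commutes with colimits, hence has a (partially defined) left adjoint $(p_\CY)_!$, and it is defined at $\omega_\CY=(p_\CY)^!(\Lambda)$, because $\on{C}_*(\CY)=(p_\CY)_!(\omega_\CY)$ is defined in any sheaf-theoretic context (see the corollary following \lemref{l:express cohomology}). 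A standard Yoneda argument shows that, for a fully faithful right adjoint, the counit $(p_\CY)_!(p_\CY)^!(\CV)\to\CV$ is an isomorphism at every $\CV$ at which the left-hand side is defined; applying this at $\CV=\Lambda$, this counit is precisely the trace map $\on{C}_*(\CY)\to\Lambda$, which is therefore an isomorphism.

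\emph{From the trace being an isomorphism to universal homological contractibility.} Fix $S\in\Sch$ and let $\on{pr}_S:S\times\CY\to S$ be the projection, i.e.\ the map $f_S$ of the definition; I must show that $(\on{pr}_S)^!$ is fully faithful. Working first in the context of constructible sheaves, where $(\on{pr}_S)_!$ is everywhere defined, it suffices to show that the counit $(\on{pr}_S)_!\circ(\on{pr}_S)^!\to\on{id}$ is an isomorphism. This rests on one computation: using compatibility of the external product with $!$-pullback and the factorization $\on{pr}_S=\on{id}_S\times p_\CY$, one identifies $(\on{pr}_S)^!(\CF)\simeq\CF\boxtimes\omega_\CY$ for $\CF\in\Shv^!(S)$; then the Künneth formula for $!$-pushforward yields $(\on{pr}_S)_!(\on{pr}_S)^!(\CF)\simeq\CF\boxtimes(p_\CY)_!(\omega_\CY)=\CF\boxtimes\on{C}_*(\CY)\simeq\CF\otimes_\Lambda\on{C}_*(\CY)$, and under this identification the counit becomes $\on{id}_\CF\otimes_\Lambda(\text{trace}):\CF\otimes_\Lambda\on{C}_*(\CY)\to\CF$. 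Since the trace is an isomorphism by hypothesis, so is the counit for every $\CF$, whence $(\on{pr}_S)^!$ is fully faithful; as $S$ was arbitrary, $\CY$ is universally homologically contractible.

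In a general sheaf context (e.g.\ all D-modules) $(\on{pr}_S)_!$ need not be defined, so I would run the same bookkeeping with the right adjoint $(\on{pr}_S)_*:=((\on{pr}_S)^!)^R$: one first notes that the hypothesis already forces $(p_\CY)^!$ to be fully faithful (the counit $(p_\CY)_!(p_\CY)^!(\CV)\to\CV$ is $\on{id}_\CV\otimes(\text{trace})$, and $(p_\CY)_!(p_\CY)^!(\CV)\simeq\CV\otimes_\Lambda\on{C}_*(\CY)$ is defined for every $\CV\in\Lambda\mod$ since it reduces to $\omega_\CY$), so that the unit $\Lambda\to E:=(p_\CY)_*(p_\CY)^!(\Lambda)$ is an isomorphism; then, using the base-change isomorphism for $*$-pushforward along $!$-pullback applied to the Cartesian square relating $\on{pr}_S,\on{pr}_\CY,p_S,p_\CY$ together with the Künneth formula for $*$-pushforward, one identifies $(\on{pr}_S)_*(\on{pr}_S)^!(\CF)\simeq\CF\otimes_\Lambda E$ with unit $\on{id}_\CF\otimes(\Lambda\to E)$, which is then an isomorphism for all $\CF$. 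I expect the main obstacle to be exactly this Künneth-type identification of $(\on{pr}_S)_!(\on{pr}_S)^!$ (resp.\ $(\on{pr}_S)_*(\on{pr}_S)^!$) with $-\otimes_\Lambda\on{C}_*(\CY)$ (resp.\ $-\otimes_\Lambda E$), and in particular the verification that the counit (resp.\ unit) really is the trace map (resp.\ the unit $\Lambda\to E$) tensored with the identity; everything else is pure adjunction formalism, but this step forces one to be careful with the external product on the lax prestack $S\times\CY$ and with the base-change and Künneth properties assumed of the sheaf theory.
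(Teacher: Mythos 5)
Your overall strategy coincides with the paper's: in both directions, identify the map $\Maps(\CF_1,\CF_2)\to\Maps(\on{pr}_S^!\CF_1,\on{pr}_S^!\CF_2)$ with $\Maps(\CF_1,\CF_2)\to\Maps(\CF_1\otimes\on{C}_*(\CY),\CF_2)$, precomposition with the trace, so that the two conditions become equivalent by Yoneda. Direction 1 is fine as written: taking $S=\on{pt}$ in the definition, noting $(p_\CY)_!$ is defined at $\omega_\CY$, and running the partial-left-adjoint version of the Yoneda argument does give that the counit, i.e.\ the trace, is an isomorphism.

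The gap is exactly where you flag it yourself, but it is worth naming precisely. In direction 2 you invoke, for the map $\on{pr}_S:S\times\CY\to S$ with $\CY$ a \emph{lax prestack}, a K\"unneth formula for $!$-pushforward and (in the non-constructible branch) a base-change isomorphism for $*$-pushforward against $!$-pullback. Neither of these is available at that level of generality in this paper's axiomatic framework: base change is established only for pseudo-proper maps (\corref{c:lax homology}, \lemref{l:right adjoint base change}) and for schematic open embeddings (\lemref{l:base change}), and K\"unneth is one of the ``additional conditions'' the paper imposes only on the sheaf theory restricted to \emph{schemes} (\secref{sss:constr}). In particular $p_\CY:\CY\to\on{pt}$ is typically not pseudo-proper, so the $*$-pushforward base change along the square with $\on{pr}_S,\on{pr}_\CY,p_S,p_\CY$ is not a natural transformation one is entitled to call an isomorphism. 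The paper sidesteps this by stating the K\"unneth-type identity only for a pair of \emph{schemes} $S,S'$, namely
$$\Maps_{\Shv(S)}(\CF_1\otimes \on{C}_*(S'),\CF_2)\simeq
\Maps_{\Shv(S\times S')}(\CF_1\boxtimes \omega_{S'},\CF_2\boxtimes \omega_{S'}),$$
and then passing to $\CY$ by writing $\Maps_{\Shv^!(S\times\CY)}(\on{pr}_S^!\CF_1,\on{pr}_S^!\CF_2)$ as the limit of the scheme-level mapping spaces over $\Sch_{/\CY}$ and using \lemref{l:express cohomology} to identify $\colim_{S'}\on{C}_*(S')$ with $\on{C}_*(\CY)$. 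Your argument would be repaired by the same move: establish $(\on{pr}_S)_!(\on{pr}_S)^!(\CF)\simeq\CF\otimes\on{C}_*(\CY)$ (equivalently, the mapping-space identity) by reducing to schemes $S'\to\CY$ first, rather than appealing to K\"unneth or $*$-base change for the lax prestack $S\times\CY$ directly.
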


\begin{proof}

Follows from the fact that for a pair of schemes $S$ and $S$' and $\CF_1,\CF_2\in \Shv(S)$, the map
$$\Maps_{\Shv(S)}(\CF_1\otimes \on{C}_*(S'),\CF_2)\to 
\Maps_{\Shv(S\times S')}(\CF_1\boxtimes \omega_{S'},\CF_2\boxtimes \omega_{S'})$$
is an isomorphism.

\end{proof}

\section{Universally homologically contractible maps and related notions}   \label{s:uhc} 

In this section we address the following question: under what conditions is a morphism between
\emph{lax prestacks} \footnote{In \secref{ss:uhc prestack} this question was addressed in the case when the target is a prestack.}
such the corresponding pullback functor is fully faithful?  When does it induce an isomorphism
on homology of any sheaf? These questions arise in many situations that come up in practice.

\medskip

The material in this section could be skipped on the first pass, and returned to when necessary. 

\ssec{Contractible functors (a digression)} \label{ss:contr funct}

Let $F:\bC\to \bD$ be a functor between $\infty$-categories.  In this section we will give a criterion for when
the functor
$$\on{Funct}(\bD,\bE)\to \on{Funct}(\bC,\bE),$$
given by precomposition with $F$ is a fully faithful embedding, for any $\infty$-category $\bE$.

\sssec{}

We shall say that a functor $F:\bC\to \bD$ is \emph{contractible} if for any arrow $d\overset{\alpha}\to d'$ in $\bD$, the category
$\on{Factor}_F(\alpha)$ of its factorizations as
$$d\to F(c)\to d',\quad c\in \bC$$
is contractible (i.e., has a trivial homotopy type), see \secref{sss:hom th conv} for what this means. 

\sssec{}

Suppose for a moment that $\bD$ is a groupoid. Then the following conditions are equivalent: (i) $F$ is contractible; (ii) $F$ is left or right cofinal;
(iii) $F$ has contractible fibers. 

\medskip

For a general $\bD$ we have:

\begin{lem}  \label{l:cart contr}
Assume that $F:\bC\to \bD$ is a Cartesian or coCartesian fibration. Then $F$ is contractible
if and only if it has contractible fibers.
\end{lem}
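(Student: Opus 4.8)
The plan is to reduce the statement to a known cofinality criterion. The key observation is that contractibility of $F$, as defined via factorization categories, is precisely the condition that $F$ is \emph{both left and right cofinal} in the classical sense; this is essentially Joyal's characterization. So I would first record the following reformulation: a functor $F:\bC\to\bD$ is contractible if and only if for every $d\in\bD$ the slice categories $\bC_{d/}:=\bC\times_\bD \bD_{d/}$ and $\bC_{/d}:=\bC\times_\bD\bD_{/d}$ are both contractible. Indeed, the factorization category $\on{Factor}_F(\alpha)$ of an arrow $\alpha:d\to d'$ fibers over $\bC$, and unwinding the definition it is equivalent to $(\bC_{d/})\times_{\bC}(\bC_{/d'})$ taken appropriately; contractibility for all $\alpha$, including identity arrows, forces the slices to be contractible, and conversely a two-sided cofinal functor makes every such comma category contractible.

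With that reformulation in hand, the forward direction is immediate: if $F$ is contractible then in particular all its fibers $\bC_d$ are contractible, since the fiber $\bC_d$ receives a cofinal (hence homotopy-equivalence-inducing on nerves) map from, or maps cofinally to, the relevant slice when $F$ is a (co)Cartesian fibration — more simply, taking $\alpha=\on{id}_d$ shows $\on{Factor}_F(\on{id}_d)$ is contractible, and this category contains $\bC_d$ as a homotopy-equivalent subcategory. So the substance is the converse.

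For the converse, suppose $F:\bC\to\bD$ is a Cartesian fibration (the coCartesian case is dual, applying the argument to $F^{\on{op}}$) with contractible fibers. I want to show every slice $\bC_{/d}$ is contractible; the slice $\bC_{d/}$ is handled the same way after passing to opposites, or one notes that for a Cartesian fibration the inclusion $\bC_d\hookrightarrow \bC_{/d}$ is right cofinal (it admits a right adjoint given by Cartesian pullback), hence a homotopy equivalence, so $\bC_{/d}$ is contractible because $\bC_d$ is. For $\bC_{d/}$, the point is that the projection $\bC_{d/}\to \bC$ together with $F$ realizes $\bC_{d/}$ as fibered over $\bD_{d/}$ with fiber over $(d\to d')$ equal to $\bC_{d'}$; since $\bD_{d/}$ has an initial object $\on{id}_d$ and all the fibers $\bC_{d'}$ are contractible, a straightforward cofinality/Quillen Theorem A argument (or the dual of \cite[Sect. 4.3]{Lu1}) shows $\bC_{d/}$ is contractible. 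Then $\on{Factor}_F(\alpha)$ for a general $\alpha:d\to d'$ factors through these slices and inherits contractibility.

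The main obstacle I anticipate is organizing the comma-category bookkeeping cleanly: making precise how $\on{Factor}_F(\alpha)$ decomposes in terms of the slices $\bC_{d/}$ and $\bC_{/d'}$, and justifying that for a Cartesian fibration the inclusion of a fiber into the corresponding slice is cofinal (this is where the Cartesian hypothesis is genuinely used — without it, fibers and slices can have very different homotopy types, which is exactly why the lemma restricts to (co)Cartesian fibrations). Once that adjunction/cofinality input is in place, everything else is a formal unwinding via Quillen's Theorem A in the $\infty$-categorical form of \cite[Sect. 4.1]{Lu1}. I would not expect to need anything beyond \cite{Lu1}.
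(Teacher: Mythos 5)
Your plan hinges on an unproved reformulation, and this is the central gap. You assert that $F$ is contractible (meaning every $\on{Factor}_F(\alpha)$ is contractible) if and only if $F$ is both left and right cofinal, attributing this to Joyal, but this is not something you can cite from \cite{Lu1} and it needs an argument. In fact I'm not convinced it is correct: unwinding the definition, $\on{Factor}_F(\alpha)$ identifies with the slice $(\bC_{/d'})_{(d,\alpha)/}$ of the induced functor $\bC_{/d'}\to\bD_{/d'}$, so contractibility of $F$ is the condition that $\bC_{/d'}\to\bD_{/d'}$ be left cofinal for every $d'$; this is a statement about arrows of $\bD$, and there is no reason for it to follow from the slices of $F$ itself being separately contractible. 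Your claim that $\on{Factor}_F(\alpha)$ ``is equivalent to $(\bC_{d/})\times_{\bC}(\bC_{/d'})$ taken appropriately'' also elides the real content: that fiber product records no constraint on the composite $d\to F(c)\to d'$, so what it actually computes is the disjoint union of the $\on{Factor}_F(\beta)$ over \emph{all} $\beta:d\to d'$, not the individual factorization category. And even if you granted the reformulation, your argument that a Cartesian fibration with contractible fibers is right cofinal (base-changing to $\bD_{/d}$ and invoking Quillen A) is essentially re-proving a piece of the lemma.

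A secondary point: your slice conventions are swapped. For a Cartesian fibration, Cartesian lifts take $(c,\ u:d\to F(c))$ to $u^*(c)\in\bC_d$, which produces a right adjoint to the inclusion $\bC_d\hookrightarrow\bC_{d/}$, not $\bC_d\hookrightarrow\bC_{/d}$; there is no Cartesian operation sending $(c, F(c)\to d)$ to an object of $\bC_d$ (that would require a coCartesian structure).

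Both issues disappear if you argue directly from the definition. Let $F$ be a Cartesian fibration and $\alpha:d\to d'$. The functor $\iota:\bC_d\to\on{Factor}_F(\alpha)$, $c\mapsto(c,\on{id}_d,\alpha)$, admits a right adjoint $R:(c,u,v)\mapsto u^*(c)$ given by Cartesian pullback along $u$: a morphism $\iota(c_0)\to(c,u,v)$ in $\on{Factor}_F(\alpha)$ is a morphism $\gamma:c_0\to c$ with $F(\gamma)=u$ (the second compatibility $v\circ F(\gamma)=\alpha$ is automatic from $vu=\alpha$), and the universal property of the Cartesian arrow $u^*(c)\to c$ identifies these with morphisms $c_0\to u^*(c)$ in $\bC_d$. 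An adjunction induces an equivalence on enveloping groupoids, so $\on{Factor}_F(\alpha)$ is contractible if and only if $\bC_d$ is; letting $\alpha$ range over all arrows (including identities) gives both directions of the lemma at once. The coCartesian case is dual, via the left adjoint to $\bC_{d'}\hookrightarrow\on{Factor}_F(\alpha)$, $c\mapsto(c,\alpha,\on{id}_{d'})$, given by coCartesian pushforward.
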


\sssec{}

We now claim:

\begin{prop} \label{p:contr crit}
A functor $F:\bC\to \bD$ is contractible if and only 
if for any $\infty$-category $\bE$, the functor
$$\on{Funct}(\bD,\bE)\to \on{Funct}(\bC,\bE),$$
given by precomposition with $F$ is a fully faithful embedding.
\end{prop}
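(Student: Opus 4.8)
I would prove the two implications of \propref{p:contr crit} separately, and in both directions the main tool is the pointwise formula for Kan extensions recalled in \secref{sss:hom th conv}. Write $F^*:\on{Funct}(\bD,\bE)\to\on{Funct}(\bC,\bE)$ for precomposition with $F$. The functor $F^*$ admits a left adjoint $\on{LKE}_F$ whenever $\bE$ is cocomplete, and full faithfulness of $F^*$ is equivalent (for all $\bE$, or already for a single sufficiently large $\bE$, e.g.\ $\bE=\inftygroup$) to the counit $\on{LKE}_F\circ F^*\to\on{id}$ being an isomorphism. Dually one may work with the right adjoint $\on{RKE}_F$ and the unit $\on{id}\to\on{RKE}_F\circ F^*$; it will be cleanest to run the argument once with right Kan extensions, since $\on{RKE}_F(\Phi)(\bd)=\lim_{\bc\in\bC_{\bd/}}\Phi(\bc)$ and the over/under-categories $\bC_{\bd/}$ are exactly what the factorization categories $\on{Factor}_F(\alpha)$ are built from.

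\textbf{Contractible $\Rightarrow$ fully faithful.} Fix $\bE$ and $\Psi\in\on{Funct}(\bD,\bE)$; I must show the unit $\Psi\to\on{RKE}_F(F^*\Psi)$ is an isomorphism, i.e.\ that for every $d\in\bD$ the canonical map $\Psi(d)\to\lim_{\bc\in\bC_{d/}}\Psi(F(\bc))$ is an isomorphism. Here $\bC_{d/}$ is the category of pairs $(\bc,\,d\to F(\bc))$, and $\Psi(d)$ is the value of $F^*\Psi$ composed with $d\mapsto\bc$ over the constant-at-$d$ object. The statement ``$\Psi(d)\to\lim_{\bC_{d/}}\Psi\circ F$ is an iso for all $\Psi$'' is equivalent to saying that the forgetful functor $\bC_{d/}\to\{*\}$ — better, the functor $\bC_{d/}\to\bD_{d/}$ induced by $F$, composed with the terminal-object projection $\bD_{d/}\to\{*\}$ which is cofinal since $\bD_{d/}$ has an initial object — is cofinal, equivalently that $\bC_{d/}$ is contractible. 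But contractibility of $\bC_{d/}$ for every $d$ is a reformulation of the contractibility hypothesis: taking $\alpha=\on{id}_d$, $\on{Factor}_F(\on{id}_d)$ is the category of factorizations $d\to F(\bc)\to d$, which maps to $\bC_{d/}$; and conversely contractibility of all $\on{Factor}_F(\alpha)$ (for varying $\alpha:d\to d'$) is equivalent to $F$ being cofinal as a functor over $\bD$ in the appropriate slice sense, which by Joyal--Lurie's cofinality criterion \cite[Sect.~4]{Lu1} is exactly the condition that each $\bC_{d/}$ be contractible. So this direction reduces to a bookkeeping translation plus an invocation of the cofinality theorem.

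\textbf{Fully faithful $\Rightarrow$ contractible.} For the converse I would run the construction in reverse with a well-chosen test category $\bE$. Take $\bE=\inftygroup$; then for $\bd\in\bD$ the corepresentable $\Psi=\Maps_\bD(\bd,-)$ satisfies $\on{RKE}_F(F^*\Psi)(\bd')=\lim_{\bc\in\bC_{\bd'/}}\Maps_\bD(\bd,F(\bc))$. Full faithfulness forces this to equal $\Maps_\bD(\bd,\bd')$ for all $\bd'$; unwinding, and using that a limit of spaces is computed as the space of sections, this says precisely that for every arrow $\alpha:\bd\to\bd'$ the fiber of the section space over $\alpha$ — which is the homotopy type of $\on{Factor}_F(\alpha)$ — is contractible. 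This is the definition of $F$ being contractible.

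\textbf{Main obstacle.} The real content, and the step I'd be most careful about, is the precise identification of the factorization category $\on{Factor}_F(\alpha)$ with the relevant comma-category fiber and the correct application of the cofinality/Kan-extension dictionary from \cite{Lu1} — in particular making sure the ``for all $\bE$'' quantifier is handled correctly (it suffices to test on $\inftygroup$, or on $\inftyCat$, because an isomorphism of functors can be detected after postcomposing with corepresentables, but one should state this reduction cleanly). The homotopy-theoretic inputs (pointwise Kan extension formula, Joyal's cofinality theorem, contractibility of slice categories with initial/terminal objects) are all standard and quoted from \cite{Lu1}; no genuinely new argument is needed beyond assembling them. I would also note \lemref{l:cart contr} is not needed for \propref{p:contr crit} itself — it is the companion simplification used when $F$ happens to be a (co)Cartesian fibration.
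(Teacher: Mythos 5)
There is a genuine error in your ``contractible $\Rightarrow$ fully faithful'' direction. You assert that the unit map $\Psi(d)\to\lim_{\bC_{d/}}\Psi\circ(\text{eval})$ being an isomorphism for all $\Psi$ is equivalent to the composite $\bC_{d/}\to\bD_{d/}\to\{*\}$ being cofinal, i.e.\ to $\bC_{d/}$ being weakly contractible, and further that contractibility of all the $\on{Factor}_F(\alpha)$ is equivalent to contractibility of all the $\bC_{d/}$. Both claims are false. What the isomorphism condition really requires is that the map $\bC_{d/}\to\bD_{d/}$ itself be coinitial (left cofinal), and by the pointwise cofinality criterion from \cite[Sect.~4]{Lu1} that is equivalent to contractibility of $\bC_{d/}\times_{\bD_{d/}}(\bD_{d/})_{/\alpha}$ for each arrow $\alpha:d\to d'$ in $\bD$ -- and this fiber product \emph{is} $\on{Factor}_F(\alpha)$. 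In contrast, contractibility of all the $\bC_{d/}$ is strictly weaker: it is exactly the notion of \emph{left cofinality} of $F$ that the paper isolates later, in \secref{ss:l cofinal}, and that notion deliberately does not give full faithfulness of $F^!$/$F^*$ in general (cf.\ the discussion there, where left cofinality only gives fully-faithfulness against ``strict'' targets). A minimal counterexample: $\bD=\{0\to 1\}$, $\bC=\{1\}$, $F$ the inclusion. Then $\bC_{0/}$ and $\bC_{1/}$ are both contractible (each is a single point), but $\on{Factor}_F(\on{id}_0)=\emptyset$ is not, and $F^*:\on{Funct}(\Delta^1,\bE)\to\bE$ (evaluation at $1$) is certainly not fully faithful. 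So your chain of equivalences proves the wrong statement, namely that ``$F$ left cofinal $\iff$ $F^*$ fully faithful'', which is false.

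The fix is essentially a one-line correction: apply the cofinality criterion to the map $\bC_{d/}\to\bD_{d/}$ (not to its composite with the projection to the point), and observe that the relevant slice fiber is precisely $\on{Factor}_F(\alpha)$. With that change your argument for this direction works. I would also remark that the paper's own proof does not invoke cofinality at all: it passes to left rather than right Kan extensions, reduces to $\bE=\inftygroup$ and to covariant Yoneda functors $\wt h_d$, writes $\on{LKE}_F(\wt h_d\circ F)(d')$ as a colimit of spaces, and identifies the homotopy fiber of the counit over $\alpha\in\Maps_\bD(d,d')$ directly with $|\on{Factor}_F(\alpha)|$ via the Grothendieck construction. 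This avoids the limit-of-spaces section-space manipulation in your second direction, where the identification of ``the fiber of the section space over $\alpha$'' with the homotopy type of $\on{Factor}_F(\alpha)$ is less transparent than the colimit case and would need more justification; I would recommend switching to the LKE formulation there as well.
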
 

\begin{proof}

It is easy to see that the fully-faithfulness condition appearing in the statement of the proposition holds
if and only if it holds for $\bE=\inftygroup$, i.e., if and only if the functor
$$\on{Funct}(\bD,\inftygroup)\to \on{Funct}(\bC,\inftygroup),$$
is fully faithful.

\medskip

Furthermore, the latter is equivalent to the fact that for any $d\in \bD$ and $\Phi:\bD\to \inftygroup$
the map
$$\Maps_{\on{Funct}(\bD,\inftygroup)}(\wt{h}_{d},\Phi)\to \Maps_{\on{Funct}(\bC,\inftygroup)}(\wt{h}_{d}\circ F,\Phi\circ F)$$
should be isomorphism, where $\wt{h}_d$ is the covariant Yoneda functor corresponding to the object $d\in \bD$. 

\medskip

I.e., $F$ is contractible if and only if the counit of the adjunction 
$$\on{LKE}_F(\wt{h}_{d}\circ F)\to \wt{h}_{d}$$
is an isomorphism. 

\medskip

Now, we calculate the value of $\on{LKE}_F(\wt{h}_{d}\circ F)$ on $d'\in \bD$ as
$$\underset{c\in \bC,c\to d'}{\on{colim}}\, \wt{h}_{d}(F(c)).$$

Thus, the fiber of $\on{LKE}_F(\wt{h}_{d}\circ F)$ over a given point $d\to d'$ of $\wt{h}_d(d')$
is the homotopy type of the category $\on{Factor}_F(\alpha)$, as required. 

\end{proof} 

\ssec{Value-wise contractibility}

We shall now begin to explore how to transport the notion of contractible functor to the
context of lax prestacks, i.e., where we have category theory coupled with geometry. 

\medskip

But before we do that, we will define a notion which is an overkill (but which is often
useful in practice). 

\sssec{}

Let $f:\CY_1\to \CY_2$ be a map of lax prestacks. We shall say that $\CY$ is 
\emph{value-wise contractible} 
if the corresponding functor $$\CY_1(S)\to \CY_2(S)$$
is contractible. 

\sssec{}

From \lemref{l:cart contr} we obtain: 

\begin{cor} 
Let $f:\CY_1\to \CY_2$ be a map of lax prestacks. Suppose that for any $S\in \Sch$, the functor
$\CY_1(S)\to \CY_2(S)$ is a Cartesian or coCartesian fibration with contractible fibers. Then $f$
is value-wise contractible.
\end{cor}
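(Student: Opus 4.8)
The plan is that this Corollary is a formal consequence of \lemref{l:cart contr}, applied one test scheme at a time. First I would simply unwind the definition: by the notion introduced just above, $f\colon\CY_1\to\CY_2$ is value-wise contractible precisely when, for every $S\in\Sch$, the functor of $\infty$-categories $\CY_1(S)\to\CY_2(S)$ is contractible in the sense of \secref{ss:contr funct}; that is, for every arrow $\alpha$ in $\CY_2(S)$, the factorization category $\on{Factor}_{\CY_1(S)\to\CY_2(S)}(\alpha)$ has trivial homotopy type. So there is nothing to prove beyond checking this pointwise contractibility.

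Next I would fix an arbitrary $S\in\Sch$ and feed the hypothesis into \lemref{l:cart contr}: by assumption the functor $\CY_1(S)\to\CY_2(S)$ is a Cartesian or coCartesian fibration, and all of its fibers are contractible. This is exactly the input of \lemref{l:cart contr}, whose conclusion is then that $\CY_1(S)\to\CY_2(S)$ is contractible. Since $S$ was arbitrary, the defining condition for value-wise contractibility of $f$ is satisfied, and the proof is complete.

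The only non-formal ingredient is \lemref{l:cart contr} itself, which I am entitled to assume; for orientation, the reason it holds is that for a (co)Cartesian fibration $F\colon\bC\to\bD$ and an arrow $\alpha\colon d\to d'$, choosing (co)Cartesian lifts of $\alpha$ produces a cofinal (resp.\ coinitial) functor from the fiber $\bC_{d'}$ (resp.\ $\bC_{d}$) into $\on{Factor}_F(\alpha)$, so the homotopy type of the factorization category coincides with that of the appropriate fiber. Consequently I do not expect any genuine obstacle here: the statement is purely a repackaging of \lemref{l:cart contr}, and whatever difficulty there is resides entirely in that lemma rather than in this Corollary.
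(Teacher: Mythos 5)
Your proof is correct and matches the paper's approach exactly: the paper presents this corollary with no proof beyond "From \lemref{l:cart contr} we obtain," which is precisely the pointwise application of that lemma to each $\CY_1(S)\to\CY_2(S)$ that you spell out. Your closing sketch of why \lemref{l:cart contr} holds is a reasonable sanity check but, as you note, not required.
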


\sssec{}

From \propref{p:contr crit} we obtain: 

\begin{cor}  \label{c:ptw contr}
Let $f:\CY_1\to \CY_2$ be value-wise contractible. Then the functor
$$f^!:\Shv^!(\CY_1)\to \Shv^!(\CY_2)$$
is fully faithful. In particular:

\smallskip

\noindent{\em(i)} For any $\CF\in \Shv^!(\CY)$, the map
$$\on{C}_*(\CY_1,f^!(\CF))\to \on{C}_*(\CY_2,\CF)$$
is an isomorphism whenever either side is defined.

\smallskip

\noindent{\em(ii)} The map $\on{C}_*(\CY_1)\to \on{C}_*(\CY_2)$ is an isomorphism.

\end{cor}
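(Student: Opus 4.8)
The plan is to reduce the claim to \propref{p:contr crit} applied ``value-wise'', i.e.\ separately for each test scheme $S$, exploiting the description of $\Shv^!(\CY)$ for a lax prestack $\CY$ as a relative functor category. First, recall that by definition an object $\CF\in\Shv^!(\CY)$ is the same datum as a compatible family of functors $\CF_S:\CY(S)\to\Shv^!(S)$, $S\in\Sch$, together with coherent identifications $\CF_{S'}\circ\CY(g)\simeq g^!\circ\CF_S$ for every $g:S'\to S$. This presents $\Shv^!(\CY)$ as a full subcategory of a functor category over $\Sch$ whose fibers are $\on{Funct}(\CY(S),\Shv^!(S))$; consequently, for $\CF,\CG\in\Shv^!(\CY)$ the mapping space $\Maps_{\Shv^!(\CY)}(\CF,\CG)$ is assembled, as a limit indexed by $\Sch$ (an end, in the standard bookkeeping for such relative functor categories), out of the mapping spaces $\Maps_{\on{Funct}(\CY(S),\Shv^!(S))}(\CF_S,\CG_S)$, with structure maps given by the $g^!$'s and the $\CY(g)$'s.

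Now let $f:\CY_1\to\CY_2$ be value-wise contractible. The functor $f^!$ sends $\CF$ with components $\CF_S$ to the object with components $\CF_S\circ f_S$, where $f_S:\CY_1(S)\to\CY_2(S)$ is the functor furnished by $f$. Hence, under the presentation above, the map $\Maps_{\Shv^!(\CY_2)}(\CF,\CG)\to\Maps_{\Shv^!(\CY_1)}(f^!\CF,f^!\CG)$ is the $S$-indexed limit of the maps
$$\Maps_{\on{Funct}(\CY_2(S),\Shv^!(S))}(\CF_S,\CG_S)\to\Maps_{\on{Funct}(\CY_1(S),\Shv^!(S))}(\CF_S\circ f_S,\CG_S\circ f_S)$$
given by precomposition with $f_S$. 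Since $f_S$ is contractible, \propref{p:contr crit} (applied with $\bE=\Shv^!(S)$) says that precomposition with $f_S$ is fully faithful, so each of these maps is an isomorphism; a limit of isomorphisms is an isomorphism, and we conclude that $f^!$ is fully faithful. For the two consequences: once $f^!$ is fully faithful, the counit $f_!\circ f^!\to\on{id}$ is an isomorphism wherever the partially defined left adjoint $f_!$ makes sense (in the constructible context, always). Combining this with the identification $(p_{\CY_1})_!\simeq(p_{\CY_2})_!\circ f_!$ coming from $p_{\CY_1}=p_{\CY_2}\circ f$ (composition of partial left adjoints), we get $\on{C}_*(\CY_1,f^!\CF)=(p_{\CY_1})_!\,f^!\CF\simeq(p_{\CY_2})_!\,f_!\,f^!\CF\simeq(p_{\CY_2})_!\CF=\on{C}_*(\CY_2,\CF)$, which is (i) (the definedness hypothesis is precisely what legitimizes these manipulations in the non-constructible contexts); part (ii) is the special case $\CF=\omega_{\CY_2}$, using $\omega_{\CY_1}=f^!\omega_{\CY_2}$ — both sides being the $!$-pullback of $\Lambda$ and $p_{\CY_1}=p_{\CY_2}\circ f$.

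The main obstacle is entirely in the first step: making precise the sense in which $\Maps_{\Shv^!(\CY)}(\CF,\CG)$ is a limit, over $\Sch$, of the fiberwise mapping spaces in $\on{Funct}(\CY(S),\Shv^!(S))$, so that the $f^!$-induced map is, term by term, precomposition with $f_S$. This is the routine-but-technical input about relative functor categories of Cartesian fibrations (equivalently, a Fubini-type interchange turning the end over $\Sch_{/\CY}$ into an end over $\Sch$ of ends over the fibers $\CY(S)$); once it is in place, the corollary is a formal consequence of \propref{p:contr crit}, which is why it is stated as a direct corollary.
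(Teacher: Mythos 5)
Your proof is correct and is essentially the paper's intended argument: the paper states the corollary as following directly from \propref{p:contr crit} without giving details, and the content of that deduction is exactly what you spell out — reduce the full-faithfulness of $f^!$ to the value-wise statement that precomposition with each contractible $f_S:\CY_1(S)\to\CY_2(S)$ is fully faithful on $\on{Funct}(-,\Shv^!(S))$, and then deduce the homology statements from the counit of the $(f_!,f^!)$ adjunction. You are also right to flag the Fubini interchange (rewriting $\Maps_{\Shv^!(\CY_2)}(\CF,\CG)$, an end over the fibered category $\Sch_{/\CY_2}$, as a limit over $\Sch$ of ends over the fibers $\CY_2(S)$) as the one technical point that must be made precise for the reduction to be literal; for part (i) it is marginally cleaner to note that full-faithfulness of $f^!$ together with $(p_{\CY_1})^!\simeq f^!\circ(p_{\CY_2})^!$ gives a natural isomorphism of corepresented functors $V\mapsto\Maps(f^!\CF,(p_{\CY_1})^!V)\simeq\Maps(\CF,(p_{\CY_2})^!V)$, so that either side is corepresentable iff the other is and the corepresenting objects agree — this handles the ``whenever either side is defined'' clause without separately invoking the existence of $f_!$.
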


\ssec{Some weaker value-wise notions}

In \secref{ss:contr funct} we have an explicit criterion for when a functor $F:\bC\to \bD$ is such that for any $\bE$ and $\Phi_1,\Phi_2:\bD\to \bE$,
the map
\begin{equation} \label{e:nat trans}
\Maps_{\on{Funct}(\bD,\bE)}(\Phi_1,\Phi_2)\to \Maps_{\on{Funct}(\bC,\bE)}(\Phi_1\circ F,\Phi_2\circ F)
\end{equation}
is an isomorphism.

\medskip

One can relax this condition as follows. One can ask that \eqref{e:nat trans} be an isomorphism when $\Phi_2$ (resp., $\Phi_1$)
maps all arrows in $\bD$ to isomorphisms in $\bE$ (i.e., when it factors through the maximal sub-groupoid in $\bE$).  One can relax
it even further by requiring that both $\Phi_1$ and $\Phi_2$ have this property.

\medskip

The latter (i.e., the weakest) condition is equivalent to $F$ inducing an equivalence of homotopy types. The former condition
is equivalent to $F$ being left (resp., right) cofinal.  \footnote{The traditional, but equivalent, definition of cofinality uses functors $\Phi_2$
(resp., $\Phi_1$) that take a \emph{constant} value in $\bE$. This is tautologically equivalent to the map
$\underset{\bC}{\on{colim}}\, \Phi\circ F\to \underset{\bD}{\on{colim}}\, \Phi$ (resp., $\underset{\bD}{\on{lim}}\, \Phi\to 
\underset{\bC}{\on{lim}}\, \Phi\circ F$) being an isomorphism for any $\bE$ and $\Phi:\bD\to \bE$.} The left 
(resp., right) cofinality condition can be rewritten as saying that
for every object $\bd\in \bD$ the category $\bC_{\bd/}$ (resp., $\bC_{/\bd}$) should be contractible. 

\medskip

We will now consider the analogous value-wise notions for maps between lax prestacks. 

\sssec{}

We shall say that a map of lax prestacks $f:\CY_1\to \CY_2$ is \emph{value-wise left cofinal} if for every $S\in \Sch$,
the functor 
$$\CY_1(S)\to \CY_2(S)$$
is left cofinal.

\begin{lem}  
Suppose that $f:\CY_1\to \CY_2$ is value-wise left cofinal. Then for any $\CF\in \Shv^!(\CY_2)$, 
and $\CG\in \Shv^!_{\on{str}}(\CY_2)$, the map
$$\Maps_{\Shv(\CY_2)}(\CF,\CG)\to \Maps_{\Shv(\CY_1)}(f^!(\CF),f^!(\CG))$$
is an isomorphism.  In particular:

\smallskip

\noindent{\em(i)} For any $\CF\in \Shv^!(\CY)$, the map
$$\on{C}_*(\CY_1,f^!(\CF))\to \on{C}_*(\CY_2,\CF)$$
is an isomorphism whenever either side is defined.

\smallskip

\noindent{\em(ii)} The map $\on{C}_*(\CY_1)\to \on{C}_*(\CY_2)$ is an isomorphism.

\end{lem}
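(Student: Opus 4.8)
The plan is to establish the displayed equivalence of mapping spaces and then read off (i) and (ii) formally; the argument will run parallel to that of \corref{c:ptw contr}, with the characterization of left cofinal functors recalled above replacing the role played there by \propref{p:contr crit}.

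First I would reduce to a value-wise statement. Since $\Shv^!(\CY_i)$ is the category of Cartesian sections of $\Shv^!_{\Sch}$ over $\Sch_{/\CY_i}$, a morphism $\CF\to\CG$ in $\Shv^!(\CY_i)$ amounts to a compatible family, over the schemes $S$, of natural transformations $\CF_S\to\CG_S$ between the functors $\CY_i(S)\to\Shv(S)$, so that $\Maps_{\Shv(\CY_i)}(\CF,\CG)$ is assembled from the spaces $\Maps_{\on{Funct}(\CY_i(S),\Shv(S))}(\CF_S,\CG_S)$; moreover $f$ intertwines this assembly with the value-wise functors $\CY_1(S)\to\CY_2(S)$. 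Thus it is enough to show that for every $S\in\Sch$ the map
$$\Maps_{\on{Funct}(\CY_2(S),\Shv(S))}(\CF_S,\CG_S)\longrightarrow\Maps_{\on{Funct}(\CY_1(S),\Shv(S))}(\CF_S\circ f,\CG_S\circ f)$$
is an isomorphism. This reduction --- carried out exactly as for \corref{c:ptw contr} --- is the technical heart of the proof: it is the step at which the Cartesian-section description of sheaves on lax prestacks must be handled with care, since the mapping space in $\Shv^!$ of a lax prestack is not literally the limit over $\Sch^{\on{op}}$ of the naive value-wise ones.

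Granting the reduction, the value-wise statement is immediate. Fix $S$. By hypothesis $\CY_1(S)\to\CY_2(S)$ is left cofinal, and since $\CG\in\Shv^!_{\on{str}}(\CY_2)$ the functor $\CG_S:\CY_2(S)\to\Shv(S)$ sends every morphism to an isomorphism, i.e.\ factors through the maximal subgroupoid of $\Shv(S)$. Hence the displayed map is precisely the instance of \eqref{e:nat trans} --- with $\bD=\CY_2(S)$, $\bC=\CY_1(S)$, $\bE=\Shv(S)$, $\Phi_1=\CF_S$, $\Phi_2=\CG_S$ --- that is an isomorphism when $F$ is left cofinal and $\Phi_2$ maps all arrows to isomorphisms. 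This proves the main equivalence. Then (i) and (ii) follow formally: for (i) one applies the equivalence with $\CG=p_{\CY_2}^!(V)\simeq V\otimes\omega_{\CY_2}$ for $V\in\Lambda\mod$ arbitrary (this $\CG$ lies in $\Shv^!_{\on{str}}(\CY_2)$, being pulled back from $\on{pt}$), uses $f^!\circ p_{\CY_2}^!\simeq p_{\CY_1}^!$ together with the adjunction $(p_{\CY})_!\dashv(p_{\CY})^!$ to identify the functors $V\mapsto\Maps_{\Shv(\CY_2)}(\CF,p_{\CY_2}^!V)$ and $V\mapsto\Maps_{\Shv(\CY_1)}(f^!\CF,p_{\CY_1}^!V)$, and concludes by Yoneda that $\on{C}^*_c(\CY_2,\CF)$ is defined if and only if $\on{C}^*_c(\CY_1,f^!\CF)$ is, and that the canonical map between them is then an isomorphism; statement (ii) is the special case $\CF=\omega_{\CY_2}$, using $f^!(\omega_{\CY_2})\simeq\omega_{\CY_1}$.

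The main obstacle is the bookkeeping in the first step; once that is in place, the rest is a direct application of the left cofinal criterion and of formal adjunction manipulations. The only genuinely new input beyond \corref{c:ptw contr} is the remark that "$\CG\in\Shv^!_{\on{str}}$" is exactly a value-wise "sends arrows to isomorphisms" condition on $\CG_S$, which is precisely the hypothesis on $\Phi_2$ under which \eqref{e:nat trans} is an isomorphism for left cofinal $F$, as opposed to contractible $F$.
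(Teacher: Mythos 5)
The paper states this Lemma without proof, so there is no argument of the paper's to compare against; I can only assess the proposal on its own terms. Your key observation is correct and is the real content: $\CG\in\Shv^!_{\on{str}}(\CY_2)$ means exactly that each value-wise functor $\CG_S:\CY_2(S)\to\Shv(S)$ factors through the maximal sub-$\infty$-groupoid of $\Shv(S)$, which is precisely the hypothesis on $\Phi_2$ under which precomposition by a left cofinal functor induces an equivalence in \eqref{e:nat trans}. Your deductions of (i) and (ii) via $\CG=p_{\CY_2}^!(V)$ (str because it is pulled back from $\on{pt}$, cf.\ \ref{sss:str}) and Yoneda are correct and standard.

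The one place where your write-up is thinner than it should be is the reduction. You correctly flag that $\Maps_{\Shv^!(\CY_2)}(\CF,\CG)$ is not the naive limit over $\Sch^{\on{op}}$ of the value-wise spaces $\Maps_{\on{Funct}(\CY_2(S),\Shv(S))}(\CF_S,\CG_S)$, but the argument you point to (``carried out exactly as for \corref{c:ptw contr}'') is not written out in the paper either, so the deferral does not actually close the gap. Concretely, the limit computing the mapping space also involves ``mixed'' terms of the form $\Maps_{\on{Funct}(\CY_2(S),\Shv(S'))}(g^!\CF_S,\,\CG_{S'}\circ r_g)$ indexed by morphisms $g:S'\to S$ of schemes, where $r_g:\CY_2(S)\to\CY_2(S')$ is restriction. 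To conclude from value-wise left cofinality, you also need each such mixed term to map isomorphically under precomposition with $f$, and this is exactly where str-ness of $\CG$ earns its keep a second time: $\CG_{S'}\circ r_g$ still sends all arrows of $\CY_2(S)$ to isomorphisms, so the same left-cofinality criterion applies. Your proof would be complete if you spelled this out rather than asserting ``enough to show for every $S$''; as written, the stated reduction is literally weaker than what is needed, even though it is repaired by the same mechanism you already identified.
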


\sssec{}

Let $f:\CY_1\to \CY_2$ be a map of lax prestacks. We shall say that $f$
is a \emph{value-wise homotopy-type equivalence} if for any $S\in \Sch$, the functor
$$\CY_1(S)\to \CY_2(S)$$ induces an equivalence of homotopy types. 

\medskip

We have:

\begin{lem} \label{l:rel hom type}
Suppose that $f:\CY_1\to \CY_2$ is a value-wise homotopy-type equivalence. Then
for any $\CF,\CG\in \Shv^!_{\on{str}}(\CY_2)$, the induced map
$$\Maps_{\Shv(\CY_2)}(\CF,\CG)\to \Maps_{\Shv(\CY_1)}(f^!(\CF),f^!(\CG))$$
is an is an isomorphism.  In particular,

\smallskip

\noindent{\em(i)} For any $\CF\in \Shv^!_{\on{str}}(\CY_2)$, the map
$$\on{C}^*_c(\CY_1,f^!(\CF))\to \on{C}^*_c(\CY_2,\CF)$$
is an isomorphism whenever either side is defined. 

\smallskip

\noindent{\em(ii)} The map
$\on{C}_*(\CY_1)\to \on{C}_*(\CY_2)$ is an isomorphism.
\end{lem}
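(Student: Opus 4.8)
The plan is to reduce the assertion to the case of a map of honest prestacks, where it becomes essentially tautological; the whole content of the lemma sits in the first step. The key observation is that the hypothesis "value-wise homotopy-type equivalence" is exactly the statement that the induced map of prestacks $f_{\on{str}}\colon(\CY_1)_{\on{str}}\to(\CY_2)_{\on{str}}$ is an isomorphism. Indeed, by definition $(\CY_i)_{\on{str}}(S)=\underset{y\in\CY_i(S)}{\on{colim}}\,\{*\}$ is the homotopy type of the category $\CY_i(S)$, so $f$ being a value-wise homotopy-type equivalence says precisely that $(\CY_1)_{\on{str}}(S)\to(\CY_2)_{\on{str}}(S)$ is an equivalence of $\infty$-groupoids for every $S\in\Sch$, i.e.\ that $f_{\on{str}}$ is an isomorphism in $\on{PreStk}$; in particular $f_{\on{str}}^!\colon\Shv^!((\CY_2)_{\on{str}})\to\Shv^!((\CY_1)_{\on{str}})$ is an equivalence of categories.

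Granting this, I would argue as follows. Let $\pi_i\colon\CY_i\to(\CY_i)_{\on{str}}$ be the canonical maps; by the universal property of $\pi_1$ we have $\pi_2\circ f\simeq f_{\on{str}}\circ\pi_1$. By \secref{sss:str}, pullback along $\pi_i$ is an equivalence $\Shv^!((\CY_i)_{\on{str}})\overset{\sim}{\to}\Shv^!_{\on{str}}(\CY_i)$; let $\CF',\CG'\in\Shv^!((\CY_2)_{\on{str}})$ be the objects corresponding to $\CF,\CG$. Since $\Shv^!_{\on{str}}(\CY_2)$ is a full subcategory of $\Shv^!(\CY_2)$, we get $\Maps_{\Shv(\CY_2)}(\CF,\CG)\simeq\Maps_{\Shv((\CY_2)_{\on{str}})}(\CF',\CG')$, and the factorization $\pi_2\circ f\simeq f_{\on{str}}\circ\pi_1$ shows that $f^!\CF,f^!\CG$ lie in $\Shv^!_{\on{str}}(\CY_1)$ and correspond to $f_{\on{str}}^!\CF',f_{\on{str}}^!\CG'$. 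Under these identifications the map in the statement becomes
$$\Maps_{\Shv((\CY_2)_{\on{str}})}(\CF',\CG')\longrightarrow\Maps_{\Shv((\CY_1)_{\on{str}})}(f_{\on{str}}^!\CF',f_{\on{str}}^!\CG'),$$
which is an isomorphism because $f_{\on{str}}^!$ is an equivalence of categories by the first step.

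For the consequences: for (i), recall that $\on{C}^*_c(\CY,-)$ is the (partially defined) left adjoint of $(p_\CY)^!=-\otimes\omega_\CY$, so for $V\in\Lambda\mod$ one has $\Maps_{\Lambda\mod}(\on{C}^*_c(\CY,\CH),V)\simeq\Maps_{\Shv(\CY)}(\CH,(p_\CY)^!(V))$, and $(p_\CY)^!(V)\in\Shv^!_{\on{str}}(\CY)$. Applying the main statement with $\CG=(p_{\CY_2})^!(V)$, and using $f^!\circ(p_{\CY_2})^!\simeq(p_{\CY_1})^!$, one sees that $\on{C}^*_c(\CY_1,f^!\CF)$ and $\on{C}^*_c(\CY_2,\CF)$ corepresent the same functor on $\Lambda\mod$ as soon as one of them is defined, and that the natural map between them induces the identity under this corepresentation; hence it is an isomorphism. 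Part (ii) is the special case $\CF=\omega_{\CY_2}$, using $f^!\omega_{\CY_2}\simeq\omega_{\CY_1}$ and that $\on{C}_*(\CY):=\on{C}^*_c(\CY,\omega_\CY)$ is defined for every $\CY$.

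I do not expect a genuine obstacle here; the only points that need care are bookkeeping ones: that $f^!$ carries $\Shv^!_{\on{str}}(\CY_2)$ into $\Shv^!_{\on{str}}(\CY_1)$ compatibly with the equivalences $\Shv^!_{\on{str}}(\CY_i)\simeq\Shv^!((\CY_i)_{\on{str}})$ (immediate from functoriality of $\Shv^!$ applied to the commuting square relating $\pi_1$, $\pi_2$, $f$, $f_{\on{str}}$), and that the map appearing in the statement is indeed the one induced by $f_{\on{str}}^!$ under these identifications. The substance of the lemma is entirely the observation that a value-wise homotopy-type equivalence becomes an isomorphism after passing to $(-)_{\on{str}}$; after that, \secref{sss:str} does the rest, exactly as \lemref{l:cart contr} and \propref{p:contr crit} underlie the earlier value-wise statements.
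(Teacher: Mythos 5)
The paper states this lemma without proof, so there is no official argument to compare against; your proof is correct and is the natural one the paper is implicitly relying on. Reducing to $f_{\on{str}}$ via the equivalences $\pi_i^!\colon\Shv^!((\CY_i)_{\on{str}})\xrightarrow{\sim}\Shv^!_{\on{str}}(\CY_i)$ of \secref{sss:str}, and observing that "value-wise homotopy-type equivalence" is by definition the same as "$f_{\on{str}}$ is an isomorphism in $\on{PreStk}$," is exactly the right move, and the corepresentability argument for (i) and (ii) is standard and sound.
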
 

\ssec{Universal homological contractibility for lax prestacks}  \label{ss:uhc lax}

We now come to the less naive notion of homological contractibility for a map between lax prestacks. 

\sssec{}

Let $f:\CY_1\to \CY_2$ be a map of lax prestacks. For $S\in \Sch$ and a map $\alpha:y'_2\to y''_2$ in $\CY_2(S)$,
let $\on{Factor}_f(\alpha)$ denote the following lax prestack over $S$:

\medskip

For $\wt{S}\in \Sch_{/S}$, the category $\on{Factor}_f(\alpha)(\wt{S})$ is that of factorizations of $\alpha|_{\wt{S}}$ as
$$y'_2|_{\wt{S}}\to f(y_1)\to y''_2|_{\wt{S}},\quad y_1\in \CY_1(\wt{S}).$$

\medskip

We shall say that $f$ is \emph{universally homologically contractible} if for any $(S,\alpha)$ as above, the map
$$\on{Factor}_f(\alpha)\to S$$
is universally homologically contractible.

\sssec{}

Note that if $\CY_2$ is a prestack, the two notions of universal homological contractibility (one defined above and
another in \secref{sss:contr over prestack}) coincide.

\sssec{}

We have: 

\begin{lem}
If $f:\CY_1\to \CY_2$ is value-wise contractible then it is universally homologically contractible.
\end{lem}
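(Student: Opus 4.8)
The plan is to unwind the definitions and reduce to \lemref{l:contr contr}. Fix $S\in\Sch$ and an arrow $\alpha:y'_2\to y''_2$ in $\CY_2(S)$; by the definition in \secref{ss:uhc lax} we must show that the structure map $\on{Factor}_f(\alpha)\to S$ is universally homologically contractible in the sense of \secref{sss:contr over prestack}. Since $S$ is a scheme, hence a prestack, \lemref{l:contr contr} is applicable to this map, and it reduces us to checking that for every test scheme $\wt S\in\Sch$ the functor
$$\on{Factor}_f(\alpha)(\wt S)\longrightarrow S(\wt S)$$
has contractible fibers.

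Next I would identify this fiber. A point of $S(\wt S)$ is a morphism $g:\wt S\to S$, and by the construction of $\on{Factor}_f(\alpha)$ as a lax prestack over $S$, the fiber of $\on{Factor}_f(\alpha)(\wt S)$ over $g$ is the category of factorizations
$$y'_2|_{\wt S}\to f(y_1)\to y''_2|_{\wt S},\quad y_1\in\CY_1(\wt S),$$
of the arrow $\alpha|_{\wt S}$ obtained by pulling $\alpha$ back along $g$. In the terminology of \secref{ss:contr funct}, this is precisely the category $\on{Factor}_F(\alpha|_{\wt S})$ attached to the functor $F\colon\CY_1(\wt S)\to\CY_2(\wt S)$ and the arrow $\alpha|_{\wt S}$.

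It remains to invoke the hypothesis: value-wise contractibility of $f$ says exactly that for every $\wt S\in\Sch$ the functor $\CY_1(\wt S)\to\CY_2(\wt S)$ is contractible, i.e.\ the category $\on{Factor}_F(\beta)$ has trivial homotopy type for every arrow $\beta$ in $\CY_2(\wt S)$. Applying this with $\beta=\alpha|_{\wt S}$ gives the desired contractibility of each fiber, and \lemref{l:contr contr} then yields the conclusion.

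As for difficulty: there is essentially no analytic content here --- the argument is pure bookkeeping. The only point requiring care is that the symbol $\on{Factor}$ is used in two a priori different senses: in \secref{ss:contr funct} it denotes a category attached to a functor of $\infty$-categories, while in \secref{ss:uhc lax} it denotes a lax prestack over $S$. The substance of the proof is the observation that the fibers of the latter recover instances of the former; once this is spelled out, the result is immediate.
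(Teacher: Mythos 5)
Your proof is correct and takes the same route as the paper, whose entire proof is the one line ``Follows from \lemref{l:contr contr}.'' Your argument simply spells out the bookkeeping behind that reduction — fixing $(S,\alpha)$, identifying the fibers of $\on{Factor}_f(\alpha)\to S$ over $\wt S$-points with the categories $\on{Factor}_F(\alpha|_{\wt S})$, and invoking value-wise contractibility — which is exactly what the cited lemma requires.
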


\begin{proof}
Follows from \lemref{l:contr contr}.
\end{proof}

\sssec{}

The following assertion is parallel to \lemref{l:cart contr}:

\begin{prop}  \label{p:Cart uhc}
Let $f:\CY_1\to \CY_2$ be such that for any $S\in \Sch$, the functor $\CY_1(S)\to \CY_2(S)$ 
is a Cartesian or coCartesian fibration. Then $f$ is universally homologically contractible
if and only if it has a universally homologically contractible fiber over any $S\in \Sch_{/\CY_2}$.
\end{prop}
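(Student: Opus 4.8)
The plan is to mimic the proof of \lemref{l:cart contr} (the purely categorical statement that a Cartesian or coCartesian fibration is contractible iff its fibers are contractible), but with the geometry interleaved. The ``only if'' direction is formal: if $f$ is universally homologically contractible, then for a scheme $S\to\CY_2$, base-changing gives a map $S\underset{\CY_2}\times\CY_1\to S$ whose further base change along any $S'\to S$ is the fiber of $f$ over $S'$; so universal homological contractibility of $f$ forces the same for each fiber. The substance is the ``if'' direction.

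So assume $f:\CY_1\to\CY_2$ is such that each $\CY_1(S)\to\CY_2(S)$ is a Cartesian (the coCartesian case is symmetric, passing to $\CY_i^{\on{op}}$ as in \secref{sss:op}) fibration, and assume all fibers over schemes are universally homologically contractible. Fix $S\in\Sch$ and an arrow $\alpha:y_2'\to y_2''$ in $\CY_2(S)$; I must show that $\on{Factor}_f(\alpha)\to S$ is universally homologically contractible. First reduce to the case $S'=S$ by observing that everything in sight commutes with base change along $S'\to S$: the formation of $\on{Factor}_f(\alpha)$ is compatible with pullback, so it suffices to check that $(f_S)^!$ is fully faithful for the structure map $f_S:\on{Factor}_f(\alpha)\to S$. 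Now the key categorical input: because $\CY_1(\wt S)\to\CY_2(\wt S)$ is a Cartesian fibration for each $\wt S/S$, a choice of Cartesian lift of $\alpha|_{\wt S}$ endowed with the target $y_1$ exhibits the category $\on{Factor}_f(\alpha)(\wt S)$ of factorizations $y_2'|_{\wt S}\to f(y_1)\to y_2''|_{\wt S}$ as fibered over the fiber $\CY_1(\wt S)\underset{\CY_2(\wt S)}\times\{y_2''|_{\wt S}\}$, with the fiber over a given $y_1$ being the mapping space $\Maps(y_2'|_{\wt S},\,(\text{the Cartesian lift source}))$ in that fiber — i.e.\ a mapping space inside the fiber of $f$ over $\wt S$, hence a space whose relevant homological invariants are controlled by universal homological contractibility of the fibers of $f$. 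Concretely: the projection $\on{Factor}_f(\alpha)\to \CY_1\underset{\CY_2}{\times}_{y_2''}S$ (landing in the fiber of $f$ over the point $y_2''$) is value-wise left cofinal — this is the lax-prestack incarnation of the statement that, for a Cartesian fibration, the category of factorizations of $\alpha$ through $\bC$ is cofinal in the fiber over the target of $\alpha$, because each such category has an initial-type object built from the Cartesian lift. Applying \corref{c:ptw contr} (or its left-cofinal refinement) reduces the universal homological contractibility of $\on{Factor}_f(\alpha)\to S$ to that of the fiber of $f$ over $y_2''\in\CY_2(S)$, which holds by hypothesis.

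In more detail, the argument runs: (1) unwind the definition of $\on{Factor}_f(\alpha)(\wt S)$ and, using the Cartesian fibration hypothesis together with the compatibility with pullback (the second bullet one would need, as in \lemref{l:right adjoint}, namely that pullback preserves Cartesian arrows), identify $\on{Factor}_f(\alpha)$ value-wise with a category whose objects are pairs $(y_1, y_2'|_{\wt S}\to \widetilde{y_1})$ where $\widetilde{y_1}$ is the source of the Cartesian lift of $\alpha$ with target $f(y_1)$; (2) observe that forgetting the second datum gives a map to the fiber $\CY_1\underset{\CY_2}\times_{y_2''}S$, and that its value-wise fibers are under-categories of the form $(\text{fiber of }f)_{y_2'|_{\wt S}/}$, which are contractible because every under-category $\bC_{\bc/}$ has an initial object and hence trivial homotopy type; (3) conclude that this forgetful map is value-wise left cofinal, and in particular universally homologically contractible by the lemma preceding \lemref{l:rel hom type} (value-wise left cofinal $\Rightarrow$ the relevant pullback is fully faithful), since $\omega$ and pullbacks of sheaves from the prestack quotient land in $\Shv^!_{\on{str}}$; (4) compose with the hypothesis that the fiber of $f$ over $y_2''$ is universally homologically contractible over $S$, using that composites of universally homologically contractible maps are again such (a formal consequence of fully-faithfulness composing), to get that $\on{Factor}_f(\alpha)\to S$ is universally homologically contractible.

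The main obstacle I anticipate is step (1)–(2): making precise, \emph{uniformly in $\wt S\in\Sch_{/S}$ and homotopy-coherently}, the identification of $\on{Factor}_f(\alpha)$ with the promised under-category fibration. The Cartesian lift of $\alpha|_{\wt S}$ is only defined up to contractible choice, and one must check that the assignment $\wt S\rightsquigarrow(\text{this category of factorizations})$ assembles into a genuine lax prestack equivalent to the one built from the fibers of $f$ — this is exactly where the second compatibility hypothesis (pullback preserves $f$-Cartesian arrows, cf.\ the setup before \lemref{l:right adjoint}) is consumed, and where one cannot be cavalier about strictness. Once that identification is in hand, the cofinality and the reduction to the fiber are soft, relying only on \corref{c:ptw contr}, \lemref{l:contr contr}, and the already-established stability of universal homological contractibility under composition and base change.
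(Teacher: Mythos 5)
Your intuition is the right one (reduce the factorization category to a fiber of $f$ by means of Cartesian lifts), and the ``only if'' direction is indeed formal. But the ``if'' direction as you sketch it has a genuine gap, and in fact it is exactly the obstacle you flag at the end.

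The issue is that the map you want to write down, $\on{Factor}_f(\alpha)\to S\underset{\CY_2}\times \CY_1$, is not a strict map of lax prestacks. As you observe, producing $\widetilde{y_1}$ from a factorization $(y_1,\beta,\gamma)$ requires a choice of Cartesian lift of $\beta$, defined only up to contractible ambiguity, and there is no evident way to rigidify these choices coherently in $\wt S\in\Sch_{/S}$. The paper's proof circumvents this by never trying to construct such a map at all. It introduces an auxiliary lax prestack $\on{Factor}'_f(\alpha)$ whose $\wt S$-points are diagrams $y'_2|_{\wt S}\overset{\sim}\to f(y'_1)\to f(y_1)\to y''_2|_{\wt S}$ with $(y'_1\to y_1)\in\CY_1(\wt S)$; both forgetful maps $\on{Factor}'_f(\alpha)\to\on{Factor}_f(\alpha)$ (forget $y'_1$) and $\on{Factor}'_f(\alpha)\to S\underset{y'_2,\CY_2}\times\CY_1$ (send everything to $y'_1$) are strict, and each is a value-wise (co)Cartesian fibration with contractible fibers, hence value-wise contractible. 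The two legs of the zigzag then combine to give full-faithfulness by \corref{c:ptw contr}. So the fix is precisely to abandon the direct comparison in favor of this zigzag.

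Two smaller points. First, for a Cartesian fibration the reduction goes to the fiber over the \emph{source} $y'_2$ of $\alpha$, not the target $y''_2$ as you wrote: Cartesian lifts point backward, so $\widetilde{y_1}$ lives over $y'_2$. (For the coCartesian case the reduction would go to the fiber over $y''_2$.) Since the hypothesis grants universal homological contractibility of every fiber this does not affect the eventual conclusion, but your intermediate objects are pinned to the wrong point. Second, the paper gets value-wise contractibility (not merely value-wise left cofinality) for both legs, which is what one wants here: the intermediate object $S\underset{y'_2,\CY_2}\times\CY_1$ is a lax prestack when $\CY_1$ is, and full-faithfulness of $g^!$ is asserted in \corref{c:ptw contr} without the $\Shv^!_{\on{str}}$ caveat that the left-cofinality lemma carries. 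Your workaround (everything pulled back from $S$ lands in $\Shv^!_{\on{str}}$) would let the weaker statement suffice, but the stronger contractibility is what actually falls out of the $\on{Factor}'$ construction, so you may as well use it.
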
 

\begin{proof}
We will give a proof for Cartesian fibrations; the case of coCartesian fibrations is similar. For a given
$(S,\alpha)$ consider the prestack $\on{Factor}'_f(\alpha)$
over $S$, that attaches to $\wt{S}\in \Sch_{/S}$ the category of
$$y'_2|_{\wt{S}}\overset{\sim}\longrightarrow f(y'_1) \to f(y_1) \to y''_2|_{\wt{S}}, \quad (y'_1\to y_1)\in \CY_1(\wt{S}).$$

We have a natural forgetful map
$$\on{Factor}'_f(\alpha)\to \on{Factor}_f(\alpha),$$
and we claim that it is value-wise contractible. Indeed, for a given $\wt{S}$, the functor
$$\on{Factor}'_f(\alpha)(\wt{S})\to \on{Factor}_f(\alpha)(\wt{S})$$
is a coCartesian fibration, and the assumption that
$$\CY_1(S)\to \CY_2(S)$$
is a Cartesian fibration implies that it has contractible fibers.

\medskip

Hence, by \corref{c:ptw contr}, the pullback functor
$$\Shv(S)\to \Shv^!(\on{Factor}_f(\alpha))$$ 
is fully faithful if and only if the functor
$$\Shv(S)\to \Shv^!(\on{Factor}'_f(\alpha))$$ 
is fully faithful.

\medskip

Consider now the forgetful map
$$\on{Factor}'_f(\alpha)\to S\underset{y'_2,\CY_2}\times \CY_1,\quad 
(y'_2|_{\wt{S}}\overset{\sim}\longrightarrow f(y'_1) \to f(y_1) \to y''_2|_{\wt{S}})\mapsto y'_1.$$
We claim that it is value-wise contractible.  Indeed, for a given $\wt{S}$, the corresponding map
$$\on{Factor}'_f(\alpha)(\wt{S})\to (S\underset{y'_2,\CY_2}\times \CY_1)(\wt{S})$$
is a Cartesian fibration with contractible fibers (each fiber has an initial point). 

\medskip

Hence, the pullback functor
$$\Shv(S)\to \Shv^!(\on{Factor}'_f(\alpha))$$ 
is fully faithful if and only if the functor
$$\Shv(S)\to \Shv^!(S\underset{y'_2,\CY_2}\times \CY_1)$$
is fully faithful, as required.

\end{proof}

\begin{rem}
Note, however, that the notion of universally homologically contractibility over a lax prestack (unlike a usual prestack)
is \emph{not} stable under base change. 
\end{rem} 

\sssec{}

We have:

\begin{prop}  \label{p:uhc general}
If $f:\CY_1\to \CY_2$ is universally homologically contractible, then the functor
$$f^!:\Shv^!(\CY_1)\to \Shv^!(\CY_2)$$
is fully faithful. 
\end{prop}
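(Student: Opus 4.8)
The plan is to imitate the proof of \propref{p:contr crit}, but with sheaf coefficients, reducing the full faithfulness of $f^!$ to the defining contractibility property of the factorization lax prestacks $\on{Factor}_f(\alpha)$.

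First I would reduce to a pointwise statement. Since $f^!$ commutes with colimits it admits a right adjoint $f_*:=(f^!)^R$, and $f^!$ is fully faithful if and only if the unit $\CG\to f_*f^!\CG$ is an equivalence for every $\CG\in\Shv^!(\CY_2)$. As $\Shv^!(\CY_2)$ is by definition a category of Cartesian sections over $\Sch_{/\CY_2}$, a map in it is an equivalence as soon as all the evaluations $\on{ev}_{S,y_2}\colon\CF\mapsto\CF_{S,y_2}$ send it to an equivalence; so it is enough to show that $\CG_{S,y_2}\to(f_*f^!\CG)_{S,y_2}$ is an equivalence in $\Shv(S)$ for every $(S,y_2)\in\Sch_{/\CY_2}$.

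Next I would compute the right-hand side. Using the Kan-extension description of the value of a right adjoint in a category of Cartesian sections (in the spirit of \lemref{l:right adjoint}, and exactly parallel to the computation of $\on{LKE}_F(\wt{h}_d\circ F)$ in the proof of \propref{p:contr crit}), one obtains $(f_*f^!\CG)_{S,y_2}\simeq\underset{\fC}{\on{lim}}\,(\text{appropriate pullbacks of }\CG)$, where $\fC=\fC(S,y_2)$ is the comma category of objects $(S',y_1)\in\Sch_{/\CY_1}$ equipped with a morphism $(S,y_2)\to q(S',y_1)$ in $\Sch_{/\CY_2}$, for $q\colon\Sch_{/\CY_1}\to\Sch_{/\CY_2}$ the functor induced by $f$; concretely an object of $\fC$ is the data of $g\colon S\to S'$, an object $y_1\in\CY_1(S')$, and a morphism $\phi\colon y_2\to g^{*}f(y_1)$ in $\CY_2(S)$, and the comparison map $\CG_{S,y_2}\to(f_*f^!\CG)_{S,y_2}$ is the evident one built from $\phi$ and the transition maps of $\CG$.

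The heart of the matter is then to recognize that this limit is governed by the factorization lax prestacks: a datum $\phi\colon y_2\to g^{*}f(y_1)$ is only ``half'' of a factorization, the complementary half being supplied by the transition maps of $\CG$, and after the usual dévissage --- descending from the auxiliary $S$-schemes $S'$ to $S$ and from comma categories to their fibers, as in the proofs of \propref{p:contr crit} and \propref{p:Cart uhc} --- the fibers of $\CG_{S,y_2}\to(f_*f^!\CG)_{S,y_2}$ become homologies of base changes over $S$ of the lax prestacks $\on{Factor}_f(\alpha)$, for $\alpha$ ranging over the morphisms of the categories $\CY_2(\wt{S})$, $\wt{S}\in\Sch_{/S}$. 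The hypothesis that $f$ is universally homologically contractible says exactly that every $\on{Factor}_f(\alpha)\to\wt{S}$ is universally homologically contractible, so by \lemref{l:trivial homology} (in its evident $\wt{S}$-relative form) these homologies are trivial, the comparison map is an equivalence, and we are done. I expect the last bookkeeping step to be the main obstacle: carving out inside $\fC(S,y_2)$ the precise sub-diagram controlled by the $\on{Factor}_f(\alpha)$'s and running the cofinality reductions correctly is delicate because the morphisms in the fibers $\CY_2(\wt{S})$ are not invertible --- so one cannot cheaply replace $\CG$ by something constant or collapse $S'$ to $S$ --- and because, as the remark after \propref{p:Cart uhc} warns, universal homological contractibility over a lax prestack is not stable under base change; hence one cannot reduce to a scheme target or commute the construction past base change, and must argue directly with the factorization lax prestacks as they enter the definition.
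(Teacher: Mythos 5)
Your approach is genuinely different from the paper's, and the difference matters. You aim to show the unit $\CG\to f_*f^!\CG$ is an isomorphism, which requires you to \emph{compute} the right adjoint $f_*=(f^!)^R$; the paper instead bypasses $f_*$ entirely and constructs, for each pair $\CF,\CG\in\Shv^!(\CY_2)$, a direct inverse to $f^!\colon\Maps_{\Shv^!(\CY_2)}(\CF,\CG)\to\Maps_{\Shv^!(\CY_1)}(f^!\CF,f^!\CG)$. Concretely: to give a map $\CF\to\CG$ in a category of Cartesian sections is to give, for every $S$ and every arrow $\alpha\colon y_2'\to y_2''$ in $\CY_2(S)$, a map $\CF_{S,y_2'}\to\CG_{S,y_2''}$, compatibly. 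Given $f^!\CF\to f^!\CG$ and such an $\alpha$, one pulls back to $\on{Factor}_f(\alpha)$ along the three evaluation maps $\on{ev}',\on{ev}^m,\on{ev}''$, produces a map $\CF_{S,y_2'}|_{\on{Factor}_f(\alpha)}\to\CG_{S,y_2''}|_{\on{Factor}_f(\alpha)}$ by composing the structure maps of $\CF$ and $\CG$ with the given map pulled back via $\on{ev}^m$, and then invokes the defining property that $\on{Factor}_f(\alpha)\to S$ is universally homologically contractible to descend uniquely to a map in $\Shv(S)$. No right adjoint, no comma category, no Kan extension formula.

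The gap in your proposal is precisely at the step you phrase as ``one obtains $(f_*f^!\CG)_{S,y_2}\simeq\lim_{\fC}(\ldots)$.'' This is not an automatic consequence of an abstract right Kan extension formula. The category $\Shv^!(\CY)$ is a category of \emph{Cartesian} sections, not of all sections, and the right adjoint to restriction along $\Sch_{/\CY_1}\to\Sch_{/\CY_2}$ need not preserve the Cartesian-section condition, nor need its value at $(S,y_2)$ be computable by the naive comma-category limit. The paper's \lemref{l:right adjoint} gives such a formula only under two extra hypotheses --- that $\Sch_{/\CY_1}\to\Sch_{/\CY_2}$ is a Cartesian fibration and that the base-change maps \eqref{e:comp map} are isomorphisms --- and you assume neither. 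Nothing in ``universally homologically contractible'' grants you a Cartesian-fibration structure on the general $f$ of \propref{p:uhc general}; that hypothesis appears later, in \propref{p:Cart uhc}, precisely as an extra assumption. So before you ever reach your final ``bookkeeping'' step, you have asserted a formula for $(f_*f^!\CG)_{S,y_2}$ that you cannot justify. The final dévissage you flag is also a genuine second gap, but the first one comes earlier and is more fundamental; it is exactly the difficulty the paper's direct construction is designed to sidestep. If you want to salvage your route, you would need to either prove the limit formula for $f_*$ in this generality, or (much more likely) abandon the computation of $f_*$ and, as the paper does, argue at the level of mapping spaces, where the role of $\on{Factor}_f(\alpha)$ is transparent.
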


Note that \propref{p:uhc general} generalizes \corref{c:ptw contr}. 

\begin{proof}[Sketch of the proof]

Given $\CF,\CG\in \Shv^!(\CY_2)$, let us construct the map
$$\Maps_{\Shv^!(\CY_1)}(f^!(\CF),f^!(\CG))\to \Maps_{\Shv^!(\CY_2)}(\CF,\CG),$$
inverse to the map given by the functor $f^!$.

\medskip

Given a map
\begin{equation} \label{e:map to start}
f^!(\CF)\to f^!(\CG), 
\end{equation}
to specify the corresponding map $\CF\to \CG$, we need to give for every 
$S\in \Sch$ and every arrow $\alpha:y'_2\to y''_2$ in $\CY_2(S)$ a map
\begin{equation} \label{e:map to end}
\CF_{S,y'_2}\to \CG_{S,y''_2}
\end{equation} 
in $\Shv(S)$.

\medskip

Consider the lax prestack $\on{Factor}_f(\alpha)$, and note that it is endowed with the following maps
$$\on{ev}',\on{ev}'':\on{Factor}_f(\alpha)\to \CY_2 \text{ and } \on{ev}^m:\on{Factor}_f(\alpha)\to \CY_1,$$
that send an $\wt{S}$-point of $\on{Factor}_f(\alpha)$, given by 
$$y'_2|_{\wt{S}}\to f(y_1)\to y''_2|_{\wt{S}}$$
to
$$y'_2|_{\wt{S}},\,\, y''_2|_{\wt{S}} \text{ and } y_1,$$
respectively.

\medskip

Consider the following four objects in $\Shv^!(\on{Factor}_f(\alpha))$:
$$(\on{ev}')^!(\CF),\,\, (\on{ev}^m)^!\circ f^!(\CF),\,\,  (\on{ev}^m)^!\circ f^!(\CG),\,\, (\on{ev}'')^!(\CG).$$

Note that 
$$(\on{ev}')^!(\CF)\simeq \CF_{S,y'_2}|_{\on{Factor}_f(\alpha)} \text{ and } 
(\on{ev}'')^!(\CG)\simeq \CG_{S,y''_2}|_{\on{Factor}_f(\alpha)},$$
and that we have the natural maps
$$(\on{ev}')^!(\CF)\to (\on{ev}^m)^!\circ f^!(\CF) \text{ and } (\on{ev}^m)^!\circ f^!(\CG)\to (\on{ev}'')^!(\CG).$$

Composing, we obtain a map
$$\CF_{S,y'_2}|_{\on{Factor}_f(\alpha)} \simeq (\on{ev}')^!(\CF)\to (\on{ev}^m)^!\circ f^!(\CF) \to
(\on{ev}^m)^!\circ f^!(\CG)\to (\on{ev}'')^!(\CG)\simeq \CG_{S,y''_2}|_{\on{Factor}_f(\alpha)},$$
where the third arrow is induced by \eqref{e:map to start}.

\medskip

Thus, we have obtained a map $\CF_{S,y'_2}|_{\on{Factor}_f(\alpha)}\to \CG_{S,y''_2}|_{\on{Factor}_f(\alpha)}$ 
in $\Shv^!(\on{Factor}_f(\alpha))$.

\medskip

Now, the assumption that the projection $\on{Factor}_f(\alpha)\to S$ is universally
homologically contractible implies that the latter map comes from a uniquely defined 
map \eqref{e:map to end}. 

\end{proof}

\begin{cor}  \label{c:lax contr}
Let $f:\CY_1\to \CY_2$ be universally homologically contractible. Then: 

\smallskip

\noindent{\em(i)} For any $\CF\in \Shv^!(\CY)$, the map
$$\on{C}_*(\CY_1,f^!(\CF))\to \on{C}_*(\CY_2,\CF)$$
is an isomorphism whenever either side is defined.

\smallskip

\noindent{\em(ii)} The map $\on{C}_*(\CY_1)\to \on{C}_*(\CY_2)$ is an isomorphism.

\end{cor}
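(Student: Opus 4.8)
The plan is to deduce both statements formally from \propref{p:uhc general}, which asserts that under the hypothesis of universal homological contractibility the pullback functor $f^!:\Shv^!(\CY_2)\to \Shv^!(\CY_1)$ is fully faithful. No new geometric input is needed beyond that proposition; everything else is a manipulation of adjunctions, exactly parallel to the ``in particular'' clauses of \corref{c:ptw contr}.

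First I would observe that (ii) is the special case of (i) obtained by taking $\CF=\omega_{\CY_2}$. Indeed $f^!(\omega_{\CY_2})\simeq \omega_{\CY_1}$, since both are the $!$-pullback of $\Lambda\in \Shv(\on{pt})$ and $p_{\CY_1}\simeq p_{\CY_2}\circ f$; hence $\on{C}_*(\CY_1,f^!\omega_{\CY_2})=\on{C}_*(\CY_1)$ and $\on{C}_*(\CY_2,\omega_{\CY_2})=\on{C}_*(\CY_2)$, and the map in (ii) is the map in (i) for this $\CF$. So it suffices to prove (i).

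For (i), recall that $\on{C}_c^*(\CY,-)$ is (the partially defined) left adjoint to $(p_\CY)^!=(-)\otimes \omega_\CY$, so for $V\in \Lambda\mod$ and $\CG\in\Shv^!(\CY)$ one has $\Maps_{\Lambda\mod}(\on{C}_c^*(\CY,\CG),V)\simeq \Maps_{\Shv^!(\CY)}(\CG,V\otimes \omega_\CY)$ whenever the left-hand side is defined. Apply this with $\CY=\CY_1$ and $\CG=f^!(\CF)$: since $f^!$ is $\Lambda$-linear and $f^!(\omega_{\CY_2})\simeq \omega_{\CY_1}$, we have $f^!(V\otimes \omega_{\CY_2})\simeq V\otimes \omega_{\CY_1}$, so the right-hand side is $\Maps_{\Shv^!(\CY_1)}(f^!(\CF),f^!(V\otimes \omega_{\CY_2}))$, which by full faithfulness of $f^!$ equals $\Maps_{\Shv^!(\CY_2)}(\CF,V\otimes \omega_{\CY_2})\simeq \Maps_{\Lambda\mod}(\on{C}_c^*(\CY_2,\CF),V)$. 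Thus $\on{C}_*(\CY_1,f^!\CF)$ and $\on{C}_*(\CY_2,\CF)$ corepresent the same functor on $\Lambda\mod$; by Yoneda one side is defined iff the other is, and the canonical comparison map between them is an isomorphism. In the constructible context $f_!$ is unconditionally defined (as in \corref{c:!-pushforward prestack}), and one then checks directly that $(p_{\CY_2})_!$ applied to the counit $f_!\,f^!\to \id$ furnishes this comparison map.

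I do not expect a genuine obstacle. The only point requiring a bit of care is bookkeeping: verifying that the isomorphism produced by the above chain of adjunctions is indeed the natural map $\on{C}_*(\CY_1,f^!\CF)\to \on{C}_*(\CY_2,\CF)$ intended in the statement (equivalently, the one adjoint to $\id_{f^!\CF}$), which is a routine unwinding of the adjunctions involved.
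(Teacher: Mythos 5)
Your argument is correct and is essentially the paper's own: the paper states \corref{c:lax contr} as an immediate consequence of \propref{p:uhc general} without spelling out the adjunction argument, and you have simply filled in that routine argument (deducing (ii) from (i) by taking $\CF=\omega_{\CY_2}$, and proving (i) by corepresentability of $\on{C}^*_c$ together with $f^!(V\otimes\omega_{\CY_2})\simeq V\otimes\omega_{\CY_1}$ and full faithfulness of $f^!$).
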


\ssec{The notion of universal homological left cofinality}  \label{ss:l cofinal}

In this subsection we will replace the notion of value-wise left cofinality by a less naive one. 

\sssec{}

Let $f:\CY_1\to \CY_2$ be a map of lax prestacks. For $S\in \Sch$ and $y_2\in \CY_2(S)$, consider the lax prestack $(\CY_1)_{y_2/}$ that assigns to
$\wt{S}\to S$ the category of  
$$y_1\in \CY_1(\wt{S}), \,\, y_2|_{\wt{S}}\to f(y_1).$$

\medskip

We shall say that $f$ is \emph{universally homologically left cofinal} if for all $(S,y_2)$ as above, the lax prestack
$(\CY_1)_{y_2/}$ be universally homologically contractible over $S$. 

\sssec{}

Note that if $\CY_2$ is a \emph{prestack}  (as opposed to a lax prestack), then the notions of universally homological contractibility
and universally homological left cofinality coincide. 

\sssec{}

We have:

\begin{prop}
Let $f:\CY_1\to \CY_2$ be universally homologically contractible. Then it is universally homologically left cofinal. 
\end{prop}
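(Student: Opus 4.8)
The plan is to prove directly that for every $S\in\Sch$ and every $y_2\in\CY_2(S)$ the pullback functor $\Shv^!(S)\to\Shv^!\bigl((\CY_1)_{y_2/}\bigr)$ is fully faithful. Since, for a map of schemes $\wt S\to S$, the base change $\wt S\underset{S}\times(\CY_1)_{y_2/}$ is canonically identified with $(\CY_1)_{y_2|_{\wt S}/}$ (as a lax prestack over $\wt S$), this family of statements is exactly the assertion that each $(\CY_1)_{y_2/}$ is universally homologically contractible over $S$, i.e.\ that $f$ is universally homologically left cofinal. So no separate ``universality'' argument is needed beyond quantifying over all $(S,y_2)$ at the outset.

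Fix $(S,y_2)$. Let $(\CY_2)_{y_2/}$ be the lax prestack over $S$ whose $\wt S$-points (for $\wt S\to S$) are pairs $(z\in\CY_2(\wt S),\ y_2|_{\wt S}\to z)$, and let $g\colon(\CY_1)_{y_2/}\to(\CY_2)_{y_2/}$ be the map over $S$ sending $(y_1,\ \mu\colon y_2|_{\wt S}\to f(y_1))$ to $(f(y_1),\mu)$; thus the structure map $(\CY_1)_{y_2/}\to S$ factors as $(\CY_1)_{y_2/}\xrightarrow{g}(\CY_2)_{y_2/}\to S$. The projection $(\CY_2)_{y_2/}\to S$ is value-wise contractible: over each $\wt S$ its fibres over the set $S(\wt S)$ each have the initial object $(y_2|_{\wt S},\on{id})$. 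Hence, by \corref{c:ptw contr}, the pullback $\Shv^!(S)\to\Shv^!\bigl((\CY_2)_{y_2/}\bigr)$ is fully faithful. Consequently it suffices to show that $g$ is universally homologically contractible, for then $g^!$ is fully faithful by \propref{p:uhc general}, and the composite $\Shv^!(S)\to\Shv^!\bigl((\CY_2)_{y_2/}\bigr)\to\Shv^!\bigl((\CY_1)_{y_2/}\bigr)$ is fully faithful.

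To see that $g$ is universally homologically contractible, take $S'\in\Sch$ and an arrow $\alpha\colon(z_1,s_1)\to(z_2,s_2)$ in $(\CY_2)_{y_2/}(S')$, with underlying arrow $\bar\alpha\colon z_1\to z_2$ in $\CY_2(S')$; we must show $\on{Factor}_g(\alpha)\to S'$ is universally homologically contractible. Unwinding the definition, an $\wt S$-point of $\on{Factor}_g(\alpha)$ consists of $y_1\in\CY_1(\wt S)$, a map $\mu\colon y_2|_{\wt S}\to f(y_1)$, and a factorization $z_1|_{\wt S}\xrightarrow{a}f(y_1)\xrightarrow{b}z_2|_{\wt S}$ of $\bar\alpha|_{\wt S}$ compatible, in the slice $(\CY_2)_{y_2/}$, with the structure maps out of $y_2|_{\wt S}$ --- i.e.\ $a\circ(s_1|_{\wt S})=\mu$ and $b\circ\mu=s_2|_{\wt S}$. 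The second equation is forced by the first together with $b\circ a=\bar\alpha|_{\wt S}$ and $\bar\alpha\circ s_1=s_2$, while the first simply exhibits $\mu$ as the composite $a\circ(s_1|_{\wt S})$ and so carries no further data. Thus the forgetful map $\on{Factor}_g(\alpha)\to\on{Factor}_f(\bar\alpha)$ is an equivalence of lax prestacks over $S'$. Since $\bar\alpha$ is an arrow in $\CY_2(S')$, the hypothesis that $f$ is universally homologically contractible gives that $\on{Factor}_f(\bar\alpha)\to S'$ --- and hence $\on{Factor}_g(\alpha)\to S'$ --- is universally homologically contractible, as needed.

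The step that demands care is the identification $\on{Factor}_g(\alpha)\simeq\on{Factor}_f(\bar\alpha)$: one must check, homotopy-coherently rather than merely on objects, that the compatibility conditions imposed by working in the slice $(\CY_2)_{y_2/}$ collapse to no condition at all on the $\on{Factor}_f$ side. Everything else is formal, given \corref{c:ptw contr}, \propref{p:uhc general}, and the evident compatibility of $(\CY_2)_{y_2/}$, $g$, and the $\on{Factor}$ construction with base change in $S$.
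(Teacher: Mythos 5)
Your proof is correct and reaches the same conclusion, but the route differs from the paper's at the key step. The paper introduces an auxiliary lax prestack $(\CY_1,\CY_2)_{y_2/}$ (classifying chains $y_2|_{\wt S}\to f(y_1)\to y'_2$) equipped with maps to $(\CY_1)_{y_2/}$ and $(\CY_2)_{y_2/}$, uses value-wise contractibility for the first of these (and for $(\CY_2)_{y_2/}\to S$), and then invokes \propref{p:Cart uhc} because the map $(\CY_1,\CY_2)_{y_2/}\to (\CY_2)_{y_2/}$ \emph{is} a value-wise Cartesian fibration with fibers that are factorization prestacks for $f$. You instead work directly with the map $g\colon(\CY_1)_{y_2/}\to(\CY_2)_{y_2/}$ and verify the definition of universal homological contractibility for it by constructing an equivalence $\on{Factor}_g(\alpha)\simeq\on{Factor}_f(\bar\alpha)$, then apply \propref{p:uhc general}. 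What the paper's detour buys is that it never has to prove such an equivalence of factorization lax prestacks: the auxiliary object $(\CY_1,\CY_2)_{y_2/}$ is precisely the blow-up that turns the relevant map into a fibration, so only \lemref{l:contr contr} and \propref{p:Cart uhc} are needed, both of which reduce to checking contractibility of fibers on the nose. What your argument buys is transparency: it exhibits directly that the factorization problem for $g$ over a coslice is equivalent to the factorization problem for $f$ on the underlying arrow, so the reduction to the hypothesis is immediate once that equivalence is in hand. The equivalence $\on{Factor}_g(\alpha)\simeq\on{Factor}_f(\bar\alpha)$ is indeed the one step requiring a genuine $\infty$-categorical check (that the coslice structure and its compatibilities contribute contractible data, not merely that it is a bijection on objects); you flag this appropriately, and it is true — the forgetful functor $\bD_{d_0/}\to\bD$ is a left fibration, so the induced $\bC_{d_0/}\to\bC$ is one as well, and the comparison of factorization categories reduces to a path-space argument in the fibers, exactly as you sketch.
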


\begin{proof}

Fix $S\in \Sch$ and $y_2\in \CY_2(S)$. Consider the following lax prestacks $(\CY_2)_{y_2/}$ and $(\CY_1,\CY_2)_{y_2/}$
over $S$. The lax prestack $(\CY_2)_{y_2/}$
attaches to $\wt{S}\to S$ the category $\CY_2(\wt{S})_{y_2|_{\wt{S}}/}$. The lax prestack $(\CY_1,\CY_2)_{y_2/}$ attaches to 
$\wt{S}\to S$ the category of
$$y'_2\in \CY_2(\wt{S}),\,\,y_1\in \CY_1(\wt{S}), \,\,y_2|_{\wt{S}}\to f(y_1)\to y'_2.$$

We have naturally defined morphisms over $S$
$$(\CY_2)_{y_2/} \leftarrow (\CY_1,\CY_2)_{y_2/}\to (\CY_1)_{y_2/}.$$

The map $(\CY_1,\CY_2)_{y_2/}\to (\CY_1)_{y_2/}$ is value-wise contractible, and hence universally homologically contractible.
Therefore, to prove the proposition, it suffices to show that the map $(\CY_1,\CY_2)_{y_2/}\to S$ is universally homologically contractible. 

\medskip

The map $(\CY_2)_{y_2/} \to S$ is value-wise contractible, and hence is universally homologically contractible. Therefore, it remains
to show that the map $(\CY_1,\CY_2)_{y_2/}\to (\CY_2)_{y_2/} $ is universally homologically contractible. 

\medskip

We note that the map $(\CY_1,\CY_2)_{y_2/}\to (\CY_2)_{y_2/} $ is a value-wise Cartesian fibration. Hence, by \propref{p:Cart uhc}, it suffices 
to show that its fibers are universally homologically contractible. However, the latter follows from the assumption
that $f$ is universally homologically contractible.

\end{proof}

\sssec{}

We observe:

\begin{lem}
Let $f$ be value-wise left cofinal. Then it is universally homologically left cofinal.
\end{lem}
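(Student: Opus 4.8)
The plan is to reduce the statement directly to \lemref{l:contr contr}. By definition, to show that $f$ is universally homologically left cofinal I must check, for every $S\in\Sch$ and every $y_2\in\CY_2(S)$, that the lax prestack $(\CY_1)_{y_2/}$ is universally homologically contractible over $S$. Since $S$ is a scheme, hence a prestack, ``universally homologically contractible over $S$'' is the notion of \secref{sss:contr over prestack} applied to the structure map $p:(\CY_1)_{y_2/}\to S$ (i.e. for every $\wt S\in\Sch_{/S}$ the $!$-pullback $\Shv^!(\wt S)\to \Shv^!(\wt S\underset{S}\times (\CY_1)_{y_2/})$ is fully faithful). So it suffices to verify the hypothesis of \lemref{l:contr contr} for $p$, with ``$\CY_2$'' there taken to be $S$.

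First I would unwind what $(\CY_1)_{y_2/}$ is as a functor $(\Sch)^{\on{op}}\to\inftyCat$. For $\wt S\in\Sch$, an object of $(\CY_1)_{y_2/}(\wt S)$ is a triple consisting of a map $\phi:\wt S\to S$, an object $y_1\in\CY_1(\wt S)$, and an arrow $\phi^*y_2\to f(y_1)$ in $\CY_2(\wt S)$; thus $(\CY_1)_{y_2/}(\wt S)=\coprod_{\phi\in\Hom(\wt S,S)}\CY_1(\wt S)_{\phi^*y_2/}$, the disjoint union being over the discrete set $S(\wt S)$, and the functor $p(\wt S):(\CY_1)_{y_2/}(\wt S)\to S(\wt S)$ is the evident projection. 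Hence the fiber of $p(\wt S)$ over a point $\phi:\wt S\to S$ of $S(\wt S)$ is precisely the under-category $\CY_1(\wt S)_{\phi^*y_2/}$ with respect to the functor $f(\wt S):\CY_1(\wt S)\to\CY_2(\wt S)$.

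Next I would invoke the hypothesis: $f$ is value-wise left cofinal, i.e. $f(\wt S):\CY_1(\wt S)\to\CY_2(\wt S)$ is left cofinal for every $\wt S\in\Sch$. By the characterization of left cofinality recalled in \secref{ss:contr funct} (a functor $F:\bC\to\bD$ is left cofinal iff $\bC_{d/}$ is contractible for every $d\in\bD$), this says exactly that $\CY_1(\wt S)_{z/}$ has trivial homotopy type for every $z\in\CY_2(\wt S)$ --- in particular for $z=\phi^*y_2$. Therefore every fiber of $p(\wt S)$ is contractible.

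Having checked that $p(\wt S):(\CY_1)_{y_2/}(\wt S)\to S(\wt S)$ has contractible fibers for all $\wt S\in\Sch$, \lemref{l:contr contr} applies and yields that $p:(\CY_1)_{y_2/}\to S$ is universally homologically contractible. Since $(S,y_2)$ was arbitrary, $f$ is universally homologically left cofinal, as desired. The argument is entirely formal; the only place requiring care is the identification in the second step of the fiber of $(\CY_1)_{y_2/}\to S$ over a $\wt S$-point of $S$ with the appropriate slice category of $\CY_1(\wt S)$. Once that bookkeeping is in place, ``universality'' is handled automatically by \lemref{l:contr contr}, so --- unlike in the proof of the preceding \propref{p:uhc general}-style reductions --- no separate base-change manipulation is needed.
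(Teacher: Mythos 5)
Your proof is correct and takes exactly the same route as the paper's: reduce to \lemref{l:contr contr} by observing that the map $(\CY_1)_{y_2/}\to S$ has value-wise contractible fibers, with each fiber over a point $\phi:\wt S\to S$ identified as the slice $\CY_1(\wt S)_{\phi^*y_2/}$, which is contractible by the left cofinality hypothesis. You simply spell out the fiber identification that the paper leaves implicit.
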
 

\begin{proof}

If $f$ is value-wise left cofinal, then for any $S\in \Sch$ and $y_2\in \CY_2(S)$, the map 
$$(\CY_1)_{y_2/}\to S$$
has value-wise contractible fibers, and hence is universally homologically contractible, by \lemref{l:contr contr}. 

\end{proof}

\begin{lem}
Assume that $f$ is such that for any $S\in \Sch$, the map
$$\CY_1(S)\to \CY_2(S)$$
is a Cartesian fibration. Then $f$ is universally homologically left cofinal if and only if all of its fibers are universally homologically contractible. 
\end{lem}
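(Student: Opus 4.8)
The plan is to adapt the proof of the analogous statement \propref{p:Cart uhc} (which treats homological contractibility rather than left cofinality). For each $S\in\Sch$ and $y_2\in\CY_2(S)$, write $\CY_1^{y_2}:=S\underset{y_2,\CY_2}\times\CY_1$ for the corresponding fiber of $f$; I will interpose between the coslice $(\CY_1)_{y_2/}$ and $\CY_1^{y_2}$ an auxiliary lax prestack $\CW$ over $S$ carrying value-wise contractible maps to both, and then propagate full-faithfulness of the relevant pullback functors back and forth using \corref{c:ptw contr}.

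Concretely, let $\CW$ be the lax prestack over $S$ sending $\tilde S\to S$ to the category of diagrams
$$
y_2|_{\tilde S}\overset{\sim}{\longrightarrow} f(y'_1)\longrightarrow f(y_1),\qquad (y'_1\to y_1)\in\CY_1(\tilde S).
$$
There are two evident forgetful maps over $S$: $\pi:\CW\to(\CY_1)_{y_2/}$, which composes the two arrows after applying $f$ and remembers $y_1$, and $\rho:\CW\to\CY_1^{y_2}$, which remembers $y'_1$ together with the isomorphism $y_2|_{\tilde S}\overset{\sim}{\to}f(y'_1)$. I would check that both are value-wise contractible. For $\rho$: for each $\tilde S$ the functor $\rho(\tilde S)$ is a Cartesian fibration (the Cartesian lift along a morphism $\bar y'_1\to y'_1$ is obtained by precomposing $y'_1\to y_1$), and its fiber over an object $(y'_1,\phi)$ is the coslice $\CY_1(\tilde S)_{y'_1/}$, which is contractible; so \lemref{l:cart contr} applies. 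For $\pi$: for each $\tilde S$ the functor $\pi(\tilde S)$ is a coCartesian fibration (push the $y_1$-coordinate forward along a morphism $y_1\to y''_1$), and here the hypothesis that $\CY_1(\tilde S)\to\CY_2(\tilde S)$ is a Cartesian fibration enters: the fiber of $\pi(\tilde S)$ over an object $(y_1,\beta)$ has the Cartesian lift of $\beta:y_2|_{\tilde S}\to f(y_1)$ as a terminal object, hence is contractible, and \lemref{l:cart contr} again applies.

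Granting this, \corref{c:ptw contr} gives that the pullback functors $\pi^!$ and $\rho^!$ are fully faithful. Writing $p_\CW$, $p_{(\CY_1)_{y_2/}}$, $p_{\CY_1^{y_2}}$ for the structure maps to $S$, we have $(p_\CW)^!=\pi^!\circ(p_{(\CY_1)_{y_2/}})^!$ and $(p_\CW)^!=\rho^!\circ(p_{\CY_1^{y_2}})^!$, so two-out-of-three for fully faithful functors shows that $(p_{(\CY_1)_{y_2/}})^!$ is fully faithful if and only if $(p_{\CY_1^{y_2}})^!$ is. To upgrade this to a statement over all of $\Sch_{/S}$, I would note that $\CW$, $\pi$, $\rho$ are compatible with base change: for $\tilde S\to S$ there are canonical identifications $(\CY_1)_{y_2/}\underset{S}\times\tilde S\cong(\CY_1)_{y_2|_{\tilde S}/}$, $\ \CY_1^{y_2}\underset{S}\times\tilde S\cong\CY_1^{y_2|_{\tilde S}}$ and $\CW\underset{S}\times\tilde S\cong\CW'$ (the analogous construction for $(\tilde S,y_2|_{\tilde S})$), under which $\pi,\rho$ pull back to the analogous maps, and the verification above applies verbatim over $\tilde S$. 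Hence $(\CY_1)_{y_2/}$ is universally homologically contractible over $S$ if and only if $\CY_1^{y_2}$ is. Quantifying over all $(S,y_2)$ and unwinding the definitions of \secref{ss:l cofinal} and \secref{sss:contr over prestack}, this is precisely the assertion that $f$ is universally homologically left cofinal if and only if every fiber of $f$ is universally homologically contractible.

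The one step I expect to require genuine work is the verification that $\pi(\tilde S)$ is a coCartesian fibration and $\rho(\tilde S)$ a Cartesian fibration, together with the identification of their fibers — a routine but slightly tedious manipulation with the commuting-square description of morphisms in $\CW(\tilde S)$. The Cartesian fibration hypothesis on $\CY_1(\tilde S)\to\CY_2(\tilde S)$ is used only to produce the terminal objects of the fibers of $\pi(\tilde S)$; everything downstream is formal, given \corref{c:ptw contr}, \lemref{l:cart contr}, and the base-change compatibilities.
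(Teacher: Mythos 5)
Your proof is correct, but it takes a more circuitous route than the paper's. The paper observes directly that the inclusion $S\underset{y_2,\CY_2}\times\CY_1\hookrightarrow(\CY_1)_{y_2/}$ is a \emph{value-wise homotopy-type equivalence}: for each $\tilde S\to S$, the hypothesis that $\CY_1(\tilde S)\to\CY_2(\tilde S)$ is a Cartesian fibration gives this inclusion a right adjoint (send $(y_1,\beta)$ to the source of the Cartesian lift of $\beta$), so the two categories have the same homotopy type. The assertion then follows in one line from \lemref{l:rel hom type}, since sheaves pulled back from a base scheme always lie in the ``str'' subcategory and this property is preserved under base change. Your argument instead transplants the proof of \propref{p:Cart uhc} wholesale: you build the roof $\CW$ between the fiber and the coslice, verify that both projections are value-wise (co)Cartesian fibrations with contractible fibers, and conclude via \corref{c:ptw contr} and a two-out-of-three argument for full faithfulness. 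The verification you defer at the end — that $\rho$ is a Cartesian fibration with coslice fibers and $\pi$ is a coCartesian fibration whose fibers have a terminal object supplied by the Cartesian lift — goes through as you sketch it, so the proof is complete. Both approaches are sound and use the Cartesian-fibration hypothesis in exactly one place (to produce a right adjoint/terminal object), but the paper's recognizes the direct comparison map between fiber and coslice and is thereby much shorter; yours is more in line with the pattern of \propref{p:Cart uhc}, which can be viewed as the general template that the paper's shorter proof shortcuts.
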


\begin{proof}

If $f$ is a value-wise Cartesian fibration, then 
for any $S\in \Sch$ and $y_2\in \CY_2(S)$, the inclusion
$$S\underset{y_2,\CY_2}\times \CY_1\to (\CY_1)_{y_2/}$$
is a value-wise homotopy-type equivalence. Hence, 
the assertion of the lemma follows from \lemref{l:rel hom type}.

\end{proof} 

\sssec{}

We will need the following: 

\begin{prop} \label{p:uhlc}
Let $f:\CY_1\to \CY_2$ be universally homologically left cofinal and let $g:\CY'_2\to \CY_2$ be a value-wise
coCartesian fibration. Then the base-changed map
$$f':\CY'_1:=\CY'_2\underset{\CY_2}\times \CY_1\to \CY'_2$$
is also universally homologically left cofinal.
\end{prop}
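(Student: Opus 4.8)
The strategy is to unwind the definition of universal homological left cofinality for $f'$ one point at a time and reduce it to the corresponding property of $f$ by means of a comparison functor built from coCartesian transport along $g$. Fix $S\in\Sch$ and $y'_2\in\CY'_2(S)$, and set $y_2:=g(y'_2)\in\CY_2(S)$; we must show that $(\CY'_1)_{y'_2/}$ is universally homologically contractible over $S$. Since $\CY'_1=\CY'_2\underset{\CY_2}\times\CY_1$ with $f'$ the first projection, an $\wt S$-point of $(\CY'_1)_{y'_2/}$ is a datum $\bigl(z\in\CY'_2(\wt S),\ w\in\CY_1(\wt S),\ g(z)\simeq f(w),\ \phi\colon y'_2|_{\wt S}\to z\bigr)$, while an $\wt S$-point of $(\CY_1)_{y_2/}$ is a datum $\bigl(w\in\CY_1(\wt S),\ \beta\colon y_2|_{\wt S}\to f(w)\bigr)$. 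Applying $g$ to $\phi$, and using $g(y'_2)=y_2$, defines a map of lax prestacks over $S$
$$\pi\colon (\CY'_1)_{y'_2/}\longrightarrow (\CY_1)_{y_2/},\qquad (z,w,\phi)\longmapsto \bigl(w,\ g(\phi)\bigr),$$
which is nothing but the base change along $\CY_1\to\CY_2$ of the evident functor $(\CY'_2)_{y'_2/}\to(\CY_2)_{y_2/}$.

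The key point is that $\pi$ is, value-wise, a coCartesian fibration with contractible fibres. For the fibres: given $(w,\beta)\in(\CY_1)_{y_2/}(\wt S)$, choose a coCartesian lift $\bar\phi\colon y'_2|_{\wt S}\to\bar z$ of $\beta\colon y_2|_{\wt S}\to f(w)$ along the coCartesian fibration $g(\wt S)\colon\CY'_2(\wt S)\to\CY_2(\wt S)$ (this is where the hypothesis on $g$ enters); then $(\bar z,w,\bar\phi)$ is an initial object of the fibre of $\pi(\wt S)$ over $(w,\beta)$, by the universal property of the coCartesian arrow $\bar\phi$, so that fibre is contractible. That $\pi(\wt S)$ is itself a coCartesian fibration is a routine check, since $\pi(\wt S)$ is obtained by base change along $\CY_1(\wt S)\to\CY_2(\wt S)$ from the functor $\CY'_2(\wt S)_{y'_2|_{\wt S}/}\to\CY_2(\wt S)_{y_2|_{\wt S}/}$ induced by $g(\wt S)$, and both slicing and base change preserve coCartesian fibrations (\cite{Lu1}). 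By \lemref{l:cart contr}, applied value-wise, $\pi$ is value-wise contractible, hence $\pi^!$ is fully faithful by \corref{c:ptw contr}. Moreover the condition ``value-wise a coCartesian fibration with contractible fibres'' involves only value-wise data, hence is stable under base change by an arbitrary map of lax prestacks; so for every scheme $T$ over $S$ the base-changed functor $\pi_T\colon T\underset{S}\times(\CY'_1)_{y'_2/}\to T\underset{S}\times(\CY_1)_{y_2/}$ again has this property, and therefore $\pi_T^!$ is fully faithful.

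It remains to compose fully faithful functors. Fix a scheme $T$ over $S$. The hypothesis that $f$ is universally homologically left cofinal says precisely that $(\CY_1)_{y_2/}\to S$ is universally homologically contractible, i.e. that $\Shv^!(T)\to\Shv^!\bigl(T\underset{S}\times(\CY_1)_{y_2/}\bigr)$ is fully faithful; composing with the fully faithful $\pi_T^!$ shows that the $!$-pullback functor $\Shv^!(T)\to\Shv^!\bigl(T\underset{S}\times(\CY'_1)_{y'_2/}\bigr)$ along $T\underset{S}\times(\CY'_1)_{y'_2/}\to T$ is fully faithful. As $T$ was arbitrary, $(\CY'_1)_{y'_2/}\to S$ is universally homologically contractible, and as $(S,y'_2)$ was arbitrary, $f'$ is universally homologically left cofinal.

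The main obstacle is the middle step: checking that the comparison functor $\pi$ is value-wise a coCartesian fibration (the contractibility of its fibres is immediate once an initial object is exhibited). This is the only place where the coCartesian-fibration hypothesis on $g$ is genuinely used, and one must be careful to propagate along $\pi$ and its base changes the robust condition ``value-wise coCartesian fibration with contractible fibres'' rather than ``universally homologically contractible'' itself: as the remark following \propref{p:Cart uhc} points out, the latter is not stable under base change over a lax prestack.
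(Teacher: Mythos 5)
Your proof is correct and follows essentially the same strategy as the paper's: both construct the comparison map $\pi\colon(\CY'_1)_{y'_2/}\to(\CY_1)_{g(y'_2)/}$ and show that, value-wise, it is well-behaved because coCartesian transport along $g$ provides the key structure, reducing the claim to the hypothesis on $f$. The difference is in the intermediate characterization and the lemma invoked: the paper observes that each $\pi(\wt S)$ admits a left adjoint (the coCartesian lift), concludes that $\pi$ is a \emph{value-wise homotopy-type equivalence}, and applies \lemref{l:rel hom type}; you prove the stronger statement that $\pi$ is value-wise a coCartesian fibration with contractible fibres, hence value-wise contractible by \lemref{l:cart contr}, and apply \corref{c:ptw contr}. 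Your route buys full faithfulness of $\pi^!$ on the whole of $\Shv^!$ (not merely on $\Shv^!_{\on{str}}$), at the modest cost of verifying the coCartesian-fibration property; the paper's route is slightly shorter but gives only the weaker mapping statement, which is all that is needed since one is always comparing pullbacks from the base scheme. You are also right, and careful, to observe that the value-wise condition you use is stable under base change over lax prestacks, whereas universal homological contractibility itself is not — the paper uses the parallel fact implicitly via the value-wise nature of homotopy-type equivalence.
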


\begin{proof} 

For $S\in \Sch$ and $y'_2\in \CY'_2(S)$ we have a tautological map of lax prestacks over $S$:
$$(\CY'_1)_{y'_2/}\to (\CY_1)_{g(y'_2)/}.$$

By \lemref{l:rel hom type}, it suffices to show that the above map is a value-wise homotopy-type
equivalence. However, the latter follows from the fact that for any $\wt{S}\to S$, the resulting functor
$$(\CY'_1)_{y'_2/}(\wt{S})\to  (\CY_1)_{g(y'_2)/}(\wt{S})$$
admits a left adjoint.

\end{proof} 

\sssec{}

Finally, we claim:

\begin{prop}  \label{p:l cofinal ff}
Let $f:\CY_1\to \CY_2$ be universally homologically left cofinal. Then for any 
$\CF\in \Shv^!(\CY_2)$ and $\CG\in \Shv^!_{\on{str}}(\CY_2)$, the map
$$\Maps_{\Shv^!(\CY_2)}(\CF,\CG)\to 
\Maps_{\Shv^!(\CY_1)}(f^!(\CF),f^!(\CG))$$
is an isomorphism. 
\end{prop}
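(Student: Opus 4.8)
The plan is to exhibit $f^!$ on the relevant mapping space as the composite of a genuinely \emph{fully faithful} pullback functor and a formal manipulation that only uses the strictness of $\CG$, and then to produce an explicit two-sided inverse. The key device is a lax prestack interpolating between $\CY_1$ and $\CY_2$.

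Set-up. Let $\CM$ be the lax prestack whose $S$-points form the category of triples $(y_2\in\CY_2(S),\,y_1\in\CY_1(S),\,\gamma\colon y_2\to f(y_1))$, morphisms acting on the $y_2$- and $y_1$-components and making the evident square commute (this is the oriented fiber product of $\on{id}_{\CY_2}$ and $f$). It carries two maps $q\colon\CM\to\CY_2$, $(y_2,y_1,\gamma)\mapsto y_2$, and $e\colon\CM\to\CY_1$, $(y_2,y_1,\gamma)\mapsto y_1$, a tautological natural transformation $\eta\colon q\to f\circ e$ of maps $\CM\to\CY_2$ with component $\gamma$ at $(y_2,y_1,\gamma)$, and a section $\sigma\colon\CY_1\to\CM$, $y_1\mapsto(f(y_1),y_1,\on{id})$, satisfying $e\circ\sigma\simeq\on{id}_{\CY_1}$, $q\circ\sigma\simeq f$, and $\sigma^*\eta\simeq\on{id}_f$.

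Step 1: $q^!$ is fully faithful. For every $S$, the functor $\CM(S)\to\CY_2(S)$ is a Cartesian fibration: a Cartesian lift of $u\colon y_2'\to y_2$ at $(y_2,y_1,\gamma)$ is $(y_2',y_1,\gamma\circ u)$, with the forced morphisms on the $y_1$-component. Moreover, for $(S,y_2)\in\Sch_{/\CY_2}$ the fiber product $S\underset{\CY_2}\times\CM$ is canonically identified, as a lax prestack over $S$, with the under-category lax prestack $(\CY_1)_{y_2/}$ of \secref{ss:l cofinal}. Hence, by the very definition of universal homological left cofinality, every such fiber is universally homologically contractible over $S$, so \propref{p:Cart uhc} applies and shows that $q$ is universally homologically contractible; by \propref{p:uhc general} the functor $q^!\colon\Shv^!(\CY_2)\to\Shv^!(\CM)$ is fully faithful. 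Separately, $\eta$ induces for any $\CH\in\Shv^!(\CY_2)$ a map $q^!\CH\to(f\circ e)^!\CH=e^!f^!\CH$, natural in $\CH$; when $\CH=\CG$ lies in $\Shv^!_{\on{str}}(\CY_2)$ this map is an isomorphism, since on the $!$-fiber at $(y_2,y_1,\gamma)$ it is $\CG$ applied to the arrow $\gamma$, which is invertible by \secref{sss:str}.

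Step 2: the inverse. Define $\Theta\colon\Maps_{\Shv^!(\CY_1)}(f^!\CF,f^!\CG)\to\Maps_{\Shv^!(\CY_2)}(\CF,\CG)$ by sending $\phi\colon f^!\CF\to f^!\CG$ to the unique map $\psi\colon\CF\to\CG$ (unique by full faithfulness of $q^!$) with $q^!\psi$ equal to the composite $q^!\CF\to e^!f^!\CF\xrightarrow{\,e^!\phi\,}e^!f^!\CG\xleftarrow{\ \sim\ }q^!\CG$, the outer two maps being those of Step 1. That $\Theta\circ f^!=\on{id}$ follows from the naturality of $q^!(-)\to e^!f^!(-)$: for $\psi\colon\CF\to\CG$ the relevant square commutes, so the composite above equals $q^!\psi$, whence $\Theta(f^!\psi)=\psi$. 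That $f^!\circ\Theta=\on{id}$ follows by applying $\sigma^!$ to the defining composite and using $e\circ\sigma\simeq\on{id}$, $q\circ\sigma\simeq f$, $\sigma^*\eta\simeq\on{id}_f$: the two outer maps become identities and $\sigma^!(e^!\phi)\simeq\phi$, while $\sigma^!q^!\Theta(\phi)\simeq f^!\Theta(\phi)$, giving $f^!\Theta(\phi)\simeq\phi$. (For $\CY_2$ a prestack, $\CM$ is value-wise equivalent to $\CY_1$ via $e$, so this specializes to \propref{p:uhc general}.)

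The only place where something must actually be checked is Step 1: that $\CM(S)\to\CY_2(S)$ is a Cartesian fibration and — crucially — that its fiber over $(S,y_2)$ is precisely $(\CY_1)_{y_2/}$, so that the hypothesis ``$f$ universally homologically left cofinal'' is exactly what is needed to feed \propref{p:Cart uhc}. The coherence bookkeeping in Step 2 (that $\Theta$ and $f^!$ are mutually inverse together with all higher homotopies) is formal once the section $\sigma$ and the naturality of $\eta$ are recorded, as in the proof of \propref{p:uhc general}.
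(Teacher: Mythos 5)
Your proof is correct, and it organizes the argument differently from the paper.  The paper's own proof is a sketch that constructs the inverse map \emph{fiber-wise}: for each $(S,y_2)$ it pulls everything back to the under-category lax prestack $(\CY_1)_{y_2/}$, composes with the given map via the evaluation $\on{ev}:(\CY_1)_{y_2/}\to\CY_1$, and descends using the universal homological contractibility of $(\CY_1)_{y_2/}\to S$; the assembly of these pointwise maps into a single morphism of sheaves (with all its coherences) is left implicit.  You instead assemble all the fibers $(\CY_1)_{y_2/}$ at once into the single oriented fiber product $\CM$, observe that the fiber of the Cartesian fibration $q:\CM\to\CY_2$ over $(S,y_2)$ is exactly $(\CY_1)_{y_2/}$, and then feed this into \propref{p:Cart uhc} to get that $q$ is universally homologically contractible, whence $q^!$ is fully faithful by \propref{p:uhc general}.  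The inverse is then an honest map of mapping spaces, constructed from the comparison map $q^!(-)\to e^!f^!(-)$ coming from $\eta$ (an isomorphism on the strict sheaf $\CG$) and the section $\sigma$.  What your route buys: the homotopy-coherence bookkeeping that the paper elides in its sketch is absorbed by the single invocation of full faithfulness of $q^!$, and the proof is reduced to a corollary of results already established (\propref{p:Cart uhc}, \propref{p:uhc general}) rather than a fresh fiber-wise argument; and the specialization to $\CY_2$ a prestack, where $\CM\simeq\CY_1$ value-wise, recovers \propref{p:uhc general} itself, which makes transparent the sense in which left cofinality generalizes homological contractibility.  What the paper's route buys: it avoids introducing the auxiliary lax prestack $\CM$ and proceeds by the same concrete recipe used to prove \propref{p:uhc general}.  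All the key verifications in your argument — that $q$ is a value-wise Cartesian fibration, that its fiber over $(S,y_2)$ is $(\CY_1)_{y_2/}$, that $\nu_\CG$ is invertible because $\CG$ is strict, and the two triangle identities involving $\sigma$ — check out.
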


\begin{proof}[Sketch of proof]

Given $\CF\in \Shv^!(\CY_2)$  and $\CG\in \Shv^!_{\on{str}}(\CY_2)$, 
let us construct the map
$$\Maps_{\Shv^!(\CY_1)}(f^!(\CF),f^!(\CG))\to \Maps_{\Shv^!(\CY_2)}(\CF,\CG),$$
inverse to the map given by the functor $f^!$.

\medskip

Given a map
\begin{equation} \label{e:map to start new}
f^!(\CF)\to f^!(\CG), 
\end{equation}
to specify the corresponding map $\CF\to \CG$, we need to give for every 
$S\in \Sch$ and every $y_2\in \CY_2(S)$ a map
\begin{equation} \label{e:map to end new}
\CF_{S,y_2}\to \CG_{S,y_2}
\end{equation} 
in $\Shv(S)$.

\medskip

Consider the prestack $(\CY_1)_{y_2/}$, and consider the evaluation map
$$\on{ev}:(\CY_1)_{y_2/}\to \CY_1,$$
that sends an $\wt{S}$-point of $(\CY_1)_{y_2/}$, given by 
$y_2|_{\wt{S}}\to y_1$, to $y_1\in \CY_1(\wt{S})$. 

\medskip

Note that we have a canonical map
$$\CF_{S,y_2}|_{(\CY_1)_{y_2/}}\to \on{ev}^!\circ f^!(\CF)$$
and an isomorphism
$$(\CG_{S,y_2})|_{(\CY_1)_{y_2/}}\simeq \on{ev}^!\circ f^!(\CG).$$

Composing with \eqref{e:map to start new}, we obtain a map
$$\CF_{S,y_2}|_{(\CY_1)_{y_2/}}\to \on{ev}^!\circ f^!(\CF) \to \on{ev}^!\circ f^!(\CG)\simeq (\CG_{S,y_2})|_{(\CY_1)_{y_2/}}.$$

Using the assumption on the morphism $(\CY_1)_{y_2/}\to S$, we obtain that the latter map comes from a
uniquely defined map in \eqref{e:map to end new}.

\end{proof}

\begin{cor}  \label{c:l cofinal ff}
Let $f$ be universally homologically left cofinal. Then:

\smallskip

\noindent{\em(i)} For $\CF\in \Shv^!(\CY_2)$, the map
$$\on{C}_*(\CY_1,f^!(\CF))\to \on{C}_*(\CY_2,\CF)$$
is an isomorphism, whenever either side is defined.

\smallskip

\noindent{\em(ii)} The map $\on{C}_*(\CY_1)\to \on{C}_*(\CY_2)$ is an isomorphism.

\end{cor}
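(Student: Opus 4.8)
The plan is to deduce both parts from \propref{p:l cofinal ff} by running it through the adjunction that \emph{defines} $\on{C}_*$. First I would note that (ii) is merely the special case of (i) for $\CF=\omega_{\CY_2}$: since $\omega_{\CY}=(p_{\CY})^!(\Lambda)$ and $p_{\CY_2}\circ f=p_{\CY_1}$, we have $f^!(\omega_{\CY_2})\simeq\omega_{\CY_1}$, so $\on{C}_*(\CY_1)=\on{C}_*(\CY_1,f^!(\omega_{\CY_2}))$ and the map of (ii) is the map of (i). Thus it suffices to prove (i).

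For (i), recall that $\on{C}_*(\CY,\CF)=\on{C}^*_c(\CY,\CF)$ is by definition the value of the partially defined left adjoint of $(p_\CY)^!$; hence, whenever it is defined, it corepresents the functor $V\mapsto\Maps_{\Shv^!(\CY)}(\CF,(p_\CY)^!(V))$ on $\Lambda\mod$. The key observation is that $(p_{\CY_2})^!(V)$ always lies in $\Shv^!_{\on{str}}(\CY_2)$ — this is exactly the remark in \secref{sss:str}, applied to the map $p_{\CY_2}\colon\CY_2\to\on{pt}$ and $V\in\Lambda\mod\simeq\Shv^!(\on{pt})$. Therefore \propref{p:l cofinal ff} applies with the given $\CF$ and with $\CG:=(p_{\CY_2})^!(V)$: the map
$$\Maps_{\Shv^!(\CY_2)}\bigl(\CF,(p_{\CY_2})^!(V)\bigr)\longrightarrow\Maps_{\Shv^!(\CY_1)}\bigl(f^!(\CF),f^!(p_{\CY_2})^!(V)\bigr)$$
induced by $f^!$ is an isomorphism, naturally in $V$. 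Rewriting $f^!(p_{\CY_2})^!(V)\simeq(p_{\CY_1})^!(V)$ via $p_{\CY_1}=p_{\CY_2}\circ f$, the right-hand side becomes the functor corepresented by $\on{C}_*(\CY_1,f^!(\CF))$.

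What remains is purely formal: I would conclude that the two functors on $\Lambda\mod$, corepresented (when defined) by $\on{C}_*(\CY_2,\CF)$ and by $\on{C}_*(\CY_1,f^!(\CF))$, are canonically isomorphic; this immediately shows that one of these homology objects is defined if and only if the other is — which is how the clause ``whenever either side is defined'' gets handled — and that they are canonically identified. The only point needing care is that the resulting identification is the \emph{particular} map $\on{C}_*(\CY_1,f^!(\CF))\to\on{C}_*(\CY_2,\CF)$ of the statement (the one induced by $f$), rather than some anonymous isomorphism; but this is built in, because the map that \propref{p:l cofinal ff} inverts is by construction the one induced by $f^!$ on mapping spaces, and under the adjunction that is precisely the map corepresenting $\on{C}_*(\CY_1,f^!(\CF))\to\on{C}_*(\CY_2,\CF)$. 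I expect this compatibility bookkeeping to be the only subtle step; the rest is a direct invocation of the proposition together with the defining adjunction of $\on{C}_*$.
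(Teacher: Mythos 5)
Your proof is correct and is essentially the intended derivation: the paper offers no explicit argument for this corollary precisely because it is the direct consequence of \propref{p:l cofinal ff} that you spell out. The only point worth flagging is that you rely on the (correct) reading that the notation $\on{C}_*(\CY,\CF)$ appearing in the corollary is the paper's (slightly inconsistent) shorthand for $\on{C}^*_c(\CY,\CF)$, and your observation that $(p_{\CY_2})^!(V)\in\Shv^!_{\on{str}}(\CY_2)$ — coming from \secref{sss:str} applied to the prestack $\on{pt}$ — is exactly the step that makes \propref{p:l cofinal ff} applicable with $\CG=(p_{\CY_2})^!(V)$; the Yoneda bookkeeping you do afterward, including the check that the induced isomorphism of objects is the canonical comparison map and not an anonymous one, is the right way to extract both the isomorphism and the ``whenever either side is defined'' clause.
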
 

\newpage 

\centerline{\bf Part I: Various incarnations of the Ran space}

\section{The non-unital and unital versions of the Ran space}  \label{s:Ran}

Let $X$ be a separated scheme (in most applications so far one takes $X$ to be a curve). The Ran space of $X$
is a geometric object that incarnates the idea that it classifies finite collections of points of $X$, but where the
cardinality of the collection is not fixed (in this it has an advantage over symmetric powers of $X$ that count
points with multiplicities).

\medskip

When taken literally, the Ran space is a \emph{prestack}, and when working over the ground field
of complex numbers it corresponds to
a reasonably behaved topological space. However, for our purposes we will also need the \emph{unital} and
\emph{unital augmented} versions of the Ran space, and those will be \emph{lax prestacks}. 

\medskip

The contents of \secref{ss:Ran} are necessary for the statement of the cohomological product formula \thmref{t:product formula}, i.e.,
the core of this paper. As a prerequisite for \secref{ss:Ran} one only needs \secref{s:prestacks}. 
The rest of the present section is needed for the material in Part III of the paper, which in turn
is used in \secref{s:local duality} (the reduction of the cohomological product formula to a local duality statement). 

\ssec{The usual (=non-unital) Ran space}   \label{ss:Ran}

In this subsection we will introduce the usual Ran space; this is the same object as one that was originally studied in \cite{BD}. 

\sssec{}

Let $X$ be a (separated) scheme. We let $\Ran$ denote the following object of $\on{PreStk}$:

\medskip

\noindent For $S\in \Sch$ we let $\Ran(S)$ be the (discrete) groupoid of finite non-empty subsets of $\Maps(S,X)$. 

\medskip

We shall refer to $\Ran$ as the \emph{non-unital} version of the Ran space of $X$.




\sssec{}

The following give a description of $\Ran$ as colimit of schemes:

\begin{prop}   \label{p:express Ran}
The prestack  $\Ran$ as canonically isomorphic to 
$$\underset{\CI\in (\on{Fin}^s)^{\on{op}}}{\on{colim}}\, X^\CI,$$
where $\on{Fin}^s$ is the category of finite non-empty sets and surjective maps.
\end{prop}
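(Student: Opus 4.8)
The goal is to identify the prestack $\Ran$ with the colimit $\underset{\CI\in (\on{Fin}^s)^{\on{op}}}{\on{colim}}\, X^\CI$, the colimit being taken in $\on{PreStk}$. Since colimits in $\on{PreStk}=\on{Funct}((\Sch)^{\on{op}},\inftygroup)$ are computed value-wise, it suffices to produce, for every $S\in\Sch$, a natural equivalence of groupoids
$$\underset{\CI\in (\on{Fin}^s)^{\on{op}}}{\on{colim}}\, \Maps(S,X^\CI)\;\simeq\;\Ran(S),$$
where the left-hand colimit is now taken in $\inftygroup$ (equivalently in $\Sets$, since everything in sight is discrete). Unwinding, $\Maps(S,X^\CI)=\Maps(S,X)^\CI$, so the left-hand side is the colimit over $\CI\in(\on{Fin}^s)^{\on{op}}$ of the functor $\CI\mapsto \Maps(S,X)^\CI$, where a surjection $\CI\twoheadrightarrow\CJ$ in $\on{Fin}^s$ induces the diagonal-type map $\Maps(S,X)^\CJ\to\Maps(S,X)^\CI$ (restriction along the surjection). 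I would first record this reduction explicitly, then concentrate on the purely combinatorial statement: for any set $T$ (here $T=\Maps(S,X)$), the colimit $\underset{\CI\in(\on{Fin}^s)^{\on{op}}}{\on{colim}}\, T^\CI$ in $\Sets$ is the set of finite non-empty subsets of $T$.

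\textbf{Key steps.}
First I would construct the comparison map. An element of $T^\CI$ is a map $\phi:\CI\to T$; send it to its image $\phi(\CI)\subseteq T$, a finite non-empty subset. This is compatible with the transition maps: precomposing $\phi:\CJ\to T$ with a surjection $\CI\twoheadrightarrow\CJ$ does not change the image. Hence it descends to a map $\underset{\CI}{\on{colim}}\, T^\CI\to \{\text{finite non-empty subsets of }T\}$. Second, surjectivity: a finite non-empty subset $F\subseteq T$ of cardinality $n$ is the image of the inclusion $F\hookrightarrow T$, viewed as an element of $T^F$. Third, and this is the crux, injectivity of the comparison map. Two elements $\phi_1\in T^{\CI_1}$, $\phi_2\in T^{\CI_2}$ have the same image $F$ iff both $\phi_1,\phi_2$ factor through surjections onto $F$, i.e. $\phi_1:\CI_1\twoheadrightarrow F\hookrightarrow T$ and $\phi_2:\CI_2\twoheadrightarrow F\hookrightarrow T$; but then in the colimit both $\phi_1$ and $\phi_2$ are identified with the class of the inclusion $F\hookrightarrow T\in T^F$ (the surjection $\CI_1\twoheadrightarrow F$ is a morphism in $\on{Fin}^s$ witnessing $\phi_1\sim(F\hookrightarrow T)$, and similarly for $\phi_2$). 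The cleanest way to package steps one through three is to observe that $\on{Fin}^s$, being a category with a terminal-ish cofinality structure, makes the colimit a filtered-type quotient; concretely, one shows that the equivalence relation generated by the transition maps on $\coprod_{\CI}T^\CI$ is exactly ``same image'', and that each class has the canonical representative given by the subset inclusion. Finally I would check naturality in $S$: all constructions are manifestly functorial in $T$, hence in $S$ via $T=\Maps(S,X)$, so the value-wise equivalences assemble into an isomorphism of prestacks.

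\textbf{Main obstacle.}
The genuine content is the injectivity step, i.e. controlling the equivalence relation generated by all the transition maps $T^\CJ\to T^\CI$ for surjections $\CI\twoheadrightarrow\CJ$. One must be a little careful that these maps generate a \emph{two-sided} relation and that one can always ``resolve'' a zig-zag $\phi_1\sim\cdots\sim\phi_2$ into the single canonical representative $(F\hookrightarrow T)\in T^F$; the key observation making this work is that for any $\phi:\CI\to T$ the factorization $\CI\twoheadrightarrow \phi(\CI)\hookrightarrow T$ is a surjection in $\on{Fin}^s$ followed by an injection, so $\phi$ is identified in the colimit with $\phi(\CI)\hookrightarrow T$, and composites of such identifications collapse precisely to equality of images. (An alternative, slicker route: exhibit the subcategory of $(\on{Fin}^s)^{\on{op}}$ spanned by $\CI$'s of the form ``a fixed finite subset of $T$'' and observe it is cofinal, reducing the colimit to a disjoint union over finite non-empty subsets of a point; but the elementary argument above is self-contained and avoids invoking cofinality machinery.) Everything else — the value-wise computation of colimits in $\on{PreStk}$, the discreteness of all groupoids involved, naturality — is routine.
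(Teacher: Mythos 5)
There is a genuine gap, located exactly where you declare the problem solved. You assert that the colimit may ``equivalently'' be computed in $\Sets$ ``since everything in sight is discrete.'' This is false: the inclusion $\Sets\hookrightarrow\inftygroup$ does not preserve colimits. (For instance, the pushout of $\ast\leftarrow\{a,b\}\to\ast$ in $\inftygroup$ is a circle, not a point.) The colimit in the statement is taken in $\on{PreStk}$, hence value-wise in $\inftygroup$, and the discreteness of that colimit is precisely what needs to be proved. Your subsequent surjectivity/injectivity argument about equivalence classes establishes only that $\pi_0$ of the true colimit is the set of finite non-empty subsets of $T$; it says nothing about higher homotopy, which is where the actual content of the proposition lies.

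The fix is available in what you already wrote, but it needs to be repackaged. The colimit of a $\Sets$-valued functor $\Phi$ on a small category $\bI$, taken in $\inftygroup$, is the homotopy type of the Grothendieck construction $\int_\bI\Phi$, and it is discrete exactly when each connected component of $\int_\bI\Phi$ is contractible. Your ``canonical representative'' observation---that $\phi:\CI\to T$ factors uniquely as $\CI\twoheadrightarrow \phi(\CI)\hookrightarrow T$---is precisely a contractibility statement: it says that in the component of $\int\Phi$ lying over a fixed image $F$ (objects being pairs $(\CI,\phi)$ with $\on{im}(\phi)=F$), the pair $(F,\iota_F)$ is an initial/terminal object, so the component has trivial homotopy type. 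That is the paper's proof: it decomposes the colimit by image of the map $\CI\to A$ and notes that each fibrewise index category has a terminal point given by $\CI=I$. Your ``alternative slicker route'' by cofinality also gestures at the right idea, but as stated it refers to a nonexistent subcategory of $(\on{Fin}^s)^{\on{op}}$ (the $\CI$ are abstract finite sets, not subsets of $T$); the relevant cofinality lives in the Grothendieck construction. So: replace the ``compute in $\Sets$'' step by the contractibility argument, and the proof is correct and coincides with the paper's.
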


\begin{rem}
In what follows we shall denote by straight letters $I,J,K$ finite \emph{subsets} of $\Maps(S,X)$
(for a given $S$) and by script characters $\CI,\CJ,\CK$ \emph{abstract} finite sets. 
\end{rem} 

Note that \propref{p:express Ran} gives an explicit description of the category $\Shv^!(\Ran)$ in terms
of categories of sheaves on schemes:
$$\Shv^!(\Ran)\simeq \underset{\CI\in \on{Fin}^s}{\on{lim}}\, \Shv(X^\CI).$$

\begin{proof}[Proof of \propref{p:express Ran}]

This follows from the fact that, given a set $A$, the set of its finite non-empty subsets is canonically isomorphic to
$\underset{\CI\in (\on{Fin}^s)^{\on{op}}}{\on{colim}}\, \Maps(\CI,A)$. 

Indeed, the above colimit is the disjoint union over finite subsets $I\subset A$ of 
$$\underset{\CI\in (\on{Fin}^s)^{\on{op}}}{\on{colim}}\, \Maps(\CI,A)_{\text{image equals }I}.$$

Each such expression is the same as
$$\underset{\CI\in (\on{Fin}^s)^{\on{op}},\CI\twoheadrightarrow I}{\on{colim}}\,\{*\},$$
whereas the latter index category has the final point, given by $\CI=I$. 

\end{proof} 

In what follows, for a finite set $\CI$ we shall denote by $\on{ins}_\CI$ the corresponding map
$$X^\CI\to \Ran.$$

Also, for future use we introduce the notation $$\overset{\circ}X{}^\CI\subset X^\CI$$ for the open locus, 
whose $S$-points are $\CI$-tuples of elements of $\Maps(S,X)$ that have \emph{pairwise non-intersecting images}. 
I.e., $k$-points of $\overset{\circ}X{}^\CI$ are $\CJ$-tuples of pairwise distinct $k$-points of $X$. 

\sssec{}

From \propref{p:pseudo-proper}, we obtain: 

\begin{cor}  \label{c:chiral homology}
Assume that $X$ is: \hfill

\smallskip

\noindent--proper if we are working in the context of D-mdoules, and

\smallskip

\noindent--arbitrary for the context of constructible sheaves.

\smallskip

\noindent Then the functor $\on{C}_c^*(\Ran,-):\Shv^!(\Ran)\to \Lambda\mod$ is defined.
\end{cor}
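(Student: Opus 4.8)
The plan is to deduce the statement from \propref{p:pseudo-proper}, using the presentation of $\Ran$ as a colimit of schemes supplied by \propref{p:express Ran}, namely
$$\Ran\simeq \underset{\CI\in (\on{Fin}^s)^{\on{op}}}{\on{colim}}\, X^\CI .$$
The transition maps in this diagram are the partial diagonals $X^\CI\to X^\CJ$ attached to the surjections $\CJ\twoheadrightarrow\CI$; since $X$ is separated these are closed embeddings, in particular proper. This is the only geometric input; everything else is formal.

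First, in the context of constructible sheaves there is essentially nothing to prove: by \corref{c:!-pushforward prestack} the functor $f_!$, left adjoint to $f^!$, is defined for an arbitrary morphism of prestacks, and applying this to $f=p_\Ran:\Ran\to\on{pt}$ gives exactly the functor $\on{C}_c^*(\Ran,-)$, see \secref{sss:! dir im}. In this context no hypothesis on $X$ is needed; the properness assumption on $X$ only plays a role in the D-module context.

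Second, in the D-module context, suppose $X$ is proper over $k$. Then each $X^\CI$ is a scheme proper over $\on{pt}=\Spec(k)$, and, by the observation above, the structure maps of the colimit diagram are proper as well. Hence $\Ran$ is pseudo-proper over $\on{pt}$ in the sense of \secref{ss:pseudo-properness prestacks}: it is exhibited as a colimit of objects representable by schemes proper over $\on{pt}$. Invoking \propref{p:pseudo-proper} with $S=\on{pt}$, we conclude that $f_!=(p_\Ran)_!$, left adjoint to $(p_\Ran)^!$, exists; and by \secref{sss:! dir im} this left adjoint is by definition $\on{C}_c^*(\Ran,-)$.

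The only step requiring any care — and it is a routine verification rather than a genuine obstacle — is checking that the diagram produced by \propref{p:express Ran} literally satisfies the hypotheses of \propref{p:pseudo-proper}: that its objects $X^\CI$ are proper over the base (this is where properness of $X$ enters) and that its transition maps are proper (this follows from separatedness of $X$ alone). Once this is in place, the corollary is a formal consequence of the cited proposition.
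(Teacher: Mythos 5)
Your proof is correct and follows precisely the route the paper has in mind: the paper simply states the corollary with the annotation "From Proposition~\ref{p:pseudo-proper}, we obtain:", and the substance you supply — that Proposition~\ref{p:express Ran} exhibits $\Ran$ as a colimit of the $X^\CI$ along partial-diagonal transition maps (closed embeddings, hence proper, since $X$ is separated), that properness of $X$ makes each $X^\CI$ proper over $\on{pt}$, and that the constructible case is handled already by Corollary~\ref{c:!-pushforward prestack} — is exactly what the paper leaves implicit.
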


The functor $\on{C}_c^*(\Ran,-)$ from \corref{c:chiral homology} is the functor of (non-unital) chiral homology. 

\sssec{}

A basic feature of the Ran space is the following theorem, due to Beilinson and Drinfeld
(see \cite[Theorem 2.4.5]{Main Text} for a proof):

\begin{thm}  \label{t:Ran contr}
Suppose $X$ is connected. Then $\Ran$ is universally homologically contractible. 
\end{thm}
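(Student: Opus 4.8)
The plan is to deduce the universal homological contractibility of $\Ran$ from the characterization in \lemref{l:trivial homology}: it suffices to show that the trace map $\on{C}_*(\Ran)\to\Lambda$ is an isomorphism, and moreover we must do this ``universally'', i.e.\ after base change to any $S\to\on{pt}$; but since $\Ran$ is a prestack built from the fixed curve $X$, the relevant statement is really just that $\on{C}_*(\Ran\times S)\to \on{C}_*(S)$ is an isomorphism for every scheme $S$, which by the K\"unneth formula reduces to $\on{C}_*(\Ran)\simeq \Lambda$. So the heart of the matter is the computation of the homology of the Ran space of a connected $X$.

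For that computation I would use \propref{p:express Ran}, which writes $\Ran\simeq \underset{\CI\in(\on{Fin}^s)^{\on{op}}}{\on{colim}}\, X^\CI$. Since each structure map $X^\CI\to\Ran$ is representable by a proper scheme (for $X$ proper; arbitrary in the constructible context), the maps are pseudo-proper, and by \corref{c:chiral homology} together with \lemref{l:express cohomology prestack} we get
$$\on{C}_*(\Ran)\simeq \underset{\CI\in\on{Fin}^s}{\on{colim}}\, \on{C}_*(X^\CI)\simeq \underset{\CI\in\on{Fin}^s}{\on{colim}}\, \on{C}_*(X)^{\otimes\CI},$$
the last step by the K\"unneth formula, where $\on{C}_*(X)$ is a co-augmented object (the co-augmentation being the pushforward along $X\to\on{pt}$, which has a splitting coming from any $k$-point, or more canonically one works with the reduced homology $\on{C}_*^{\on{red}}(X)=\on{Fib}(\on{C}_*(X)\to\Lambda)$). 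The functoriality in $\on{Fin}^s$ (surjections) is: a surjection $\CI\twoheadrightarrow\CJ$ induces the diagonal $X^\CJ\hookrightarrow X^\CI$ and hence, on homology, the multiplication maps $\on{C}_*(X)^{\otimes\CI}\to\on{C}_*(X)^{\otimes\CJ}$ coming from the commutative (co)algebra structure on $\on{C}_*(X)$ (with unit given by the co-augmentation splitting).

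The key step is then the purely combinatorial/homotopical fact that this colimit over $(\on{Fin}^s)^{\on{op}}$ collapses: for a commutative algebra $A$ with unit, $\underset{\CI\in\on{Fin}^s}{\on{colim}}\, A^{\otimes\CI}\simeq A$, the colimit being computed by the arrow $\CI\twoheadrightarrow\{*\}$, \emph{provided} $A$ is such that the relevant simplicial-type diagram is contractible. Concretely, one filters $\on{Fin}^s$ by cardinality and shows the transition maps stabilize; equivalently, using $\on{C}_*(X)\simeq\Lambda\oplus\on{C}_*^{\on{red}}(X)$, one expands $\on{C}_*(X)^{\otimes\CI}$ into a sum over subsets of $\CI$ of $\on{C}_*^{\on{red}}(X)$-tensor-factors, and checks that in the colimit all the higher tensor powers of $\on{C}_*^{\on{red}}(X)$ cancel against each other while exactly one copy of $\Lambda$ survives — this cancellation is where \emph{connectedness} of $X$ enters, guaranteeing $\on{H}_0(\on{C}_*^{\on{red}}(X))=0$ so that $\on{C}_*^{\on{red}}(X)$ is concentrated in negative homological degrees (after the appropriate shift) and the relevant colimit of a ``simplicial object on a connected space'' is contractible. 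I expect this combinatorial collapse — making precise the cancellation of the $(\on{C}_*^{\on{red}}(X))^{\otimes\CI}$ terms over the category of finite sets and surjections, in a homotopy-coherent way rather than just on the level of Euler characteristics — to be the main obstacle; the cleanest route is probably to invoke the general statement that for a co-augmented (co)commutative coalgebra $C$ with $\on{C}^{\on{red}}$ suitably connected, $\on{colim}_{\on{Fin}^s} C^{\otimes\bullet}$ recovers the co-augmentation base, which is exactly the content of \cite[Theorem 2.4.5]{Main Text} cited in the statement, so in the present paper this theorem is legitimately imported rather than reproved.
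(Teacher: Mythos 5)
Your closing sentence is the decisive point and it is correct: the paper does not reprove this theorem. It attributes it to Beilinson--Drinfeld and cites \cite[Theorem 2.4.5]{Main Text} for the proof, and in the paragraph immediately following it records, exactly as you do at the outset, that by \lemref{l:trivial homology} the assertion is equivalent to the trace map $\on{C}_*(\Ran)\to\Lambda$ being an isomorphism. So your opening reduction plus your final import together reproduce everything the present paper actually says about this result.

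The speculative middle of your write-up, though, would not survive being made precise. Two concrete issues. First, a direction slip: a surjection $\alpha:\CI\twoheadrightarrow\CJ$ gives a diagonal $X^\CJ\hookrightarrow X^\CI$, and on homology this pushes forward to a map $\on{C}_*(X)^{\otimes\CJ}\to\on{C}_*(X)^{\otimes\CI}$ (a comultiplication arising from the cocommutative coalgebra structure on $\on{C}_*(X)$), not a ``multiplication'' $\on{C}_*(X)^{\otimes\CI}\to\on{C}_*(X)^{\otimes\CJ}$ as you wrote; in $(\on{Fin}^s)^{\on{op}}$ the singleton is \emph{initial}, and the diagram flows outward into larger tensor powers. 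Second, and more seriously, the stated ``cancellation'' is not justified by the degree bound on $\on{C}^{\on{red}}_*(X)$ alone: the category $(\on{Fin}^s)^{\on{op}}$ is not filtered, the diagram $\CI\mapsto\on{C}_*(X)^{\otimes\CI}$ is a genuine homotopy colimit encoding the full coherence data of the coalgebra structure, and decomposing $\on{C}_*(X)\simeq\Lambda\oplus\on{C}^{\on{red}}_*(X)$ does not split the diagram into summands that can be discarded one at a time. The actual Beilinson--Drinfeld argument in \cite{Main Text} is a different, more geometric one, exploiting the union semi-group structure on $\Ran$ together with the connectedness of $X$ to produce a contracting homotopy. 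None of this affects your final verdict, since importing the theorem is precisely what this paper does, but the heuristic sketch should not be mistaken for an alternative proof.
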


According to \lemref{l:trivial homology}, an equivalent way to state this theorem is that the trace map
$$\on{C}_*(\Ran)\to \Lambda$$
is an isomorphism.

\ssec{The unital version of the Ran space}   \label{ss:untl Ran}

As is explained in \cite[Sect. 5.5]{Lu2}, in the topological setting for $X=\BR^1$, \emph{non-unital} associative algebras give rise to objects
of $\Shv^!(\Ran)$. Namely, for an associative algebra $A$ and a subset $I\subset \BR$, the !-fiber of the corresponding sheaf $\CA$
at $I$ will be 
$$\CA_I:=\underset{i\in I}\otimes\, A.$$ 
The unital Ran space for $\BR^1$ will be the natural recipient of a functor from 
\emph{unital} associative algebras: the unit in $A$ allows to map 
$$\underset{i\in I_1}\otimes\, A\to \underset{i\in I_2}\otimes\, A$$
each time we have an inclusion $I_1\subseteq I_2$, so we have a map 
$\CA_{I_1}\to \CA_{I_2}$. 

\medskip

In this subsection we introduce the unital version of the Ran space. The difference between it and the usual Ran space
is that now we will be able to account for the fact that one finite subset of $X$ is contained in another by means of maps
$\CF_{S,I_1}\to \CF_{S,I_2}$ for $I_1\subseteq I_2\subset \Maps(S,X)$. 

\sssec{}

We let $\Ran_{\on{untl}}$ denote the following object of $\on{LaxPreStk}$: 

\medskip

\noindent For 
$S\in \Sch$ we let $\Ran_{\on{untl}}(S)$ be the (ordinary) category whose objects are
finite non-empty subsets of $\Maps(S,X)$, and where the morphisms are given by
inclusion of finite subsets. 

\medskip

We shall refer to $\Ran_{\on{untl}}$ as the \emph{unital} version of the Ran space of $X$. 

\begin{rem}
One can introduce a version of $\Ran_{\on{untl}}$, where one also allows the empty set. In the topological context
this is a good idea if we want to have a closer contact with associative algebras. 
\end{rem} 

\begin{rem}
In the case of the usual Ran space, we had its explicit expression as a colimit of schemes, given by \propref{p:express Ran}.
A similar description is possible also for $\Ran_{\on{untl}}$, see \secref{ss:untl expl}.
\end{rem}

\sssec{}

We have the tautological map 
\begin{equation} \label{e:Ran to Ran untl}
\phi:\Ran\to \Ran_{\on{untl}}.
\end{equation} 
Denote by 
$$\on{OblvUnit}:=\phi^!:
\Shv^!(\Ran_{\on{untl}})\to \Shv^!(\Ran)$$
the corresponding pullback functor. 

\begin{rem}
In terms of the analogy with associative algebras the functor $\on{OblvUnit}$ corresponds to the obvious forgetful functor
from the category of unital algebras to that of non-unital ones. 
\end{rem}

\sssec{}

One can show that the functor 
$$\on{C}^*_c(\Ran_{\on{untl}},-):\Shv^!(\Ran_{\on{untl}})\to \Lambda\mod,$$ 
is well-defined (in the context of D-modules, under the additional assumption that $X$ be proper). 
When $X$ is connected, this will be done in the course of the proof of \thmref{t:unital chiral homology}.  
This is the functor of unital chiral homology. 

\medskip 

\begin{thm} \label{t:unital chiral homology}
Assume that $X$ is connected. Then the functor $\on{C}^*_c(\Ran_{\on{untl}},-)$ is well-defined and the natural
transformation
$$\on{C}^*_c(\Ran,-)\circ \on{OblvUnit} \to \on{C}^*_c(\Ran_{\on{untl}},-)$$
is an isomorphism. 
\end{thm}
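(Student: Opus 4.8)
The plan is to deduce both assertions from the single claim that the map $\phi\colon\Ran\to\Ran_{\on{untl}}$ of \eqref{e:Ran to Ran untl} is \emph{universally homologically left cofinal} in the sense of \secref{ss:l cofinal}. Granting this, the rest is formal: for $V\in\Lambda\mod$ the object $\CG:=p_{\Ran_{\on{untl}}}^!(V)$ lies in $\Shv^!_{\on{str}}(\Ran_{\on{untl}})$ by \secref{sss:str} (it is pulled back from the prestack $\on{pt}$), and $\phi^!(\CG)\simeq p_\Ran^!(V)$, so \propref{p:l cofinal ff} will give, for every $\CF\in\Shv^!(\Ran_{\on{untl}})$, an isomorphism $\Maps_{\Shv^!(\Ran_{\on{untl}})}(\CF,p_{\Ran_{\on{untl}}}^!(V))\simeq\Maps_{\Shv^!(\Ran)}(\phi^!(\CF),p_\Ran^!(V))$ natural in $V$. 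By \secref{sss:! dir im} the functor $V\mapsto\Maps_{\Shv^!(\Ran)}(\phi^!(\CF),p_\Ran^!(V))$ is corepresented by $\on{C}_c^*(\Ran,\phi^!(\CF))=\on{C}_c^*(\Ran,\on{OblvUnit}(\CF))$, which is well-defined by \lemref{l:express cohomology} (in the D-module context one invokes \corref{c:chiral homology}, i.e.\ uses that $X$ is proper); hence the functor $V\mapsto\Maps_{\Shv^!(\Ran_{\on{untl}})}(\CF,p_{\Ran_{\on{untl}}}^!(V))$ is corepresentable as well, which is exactly the assertion that $\on{C}_c^*(\Ran_{\on{untl}},-)$ is defined on $\CF$. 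Unwinding the definitions, the resulting isomorphism $\on{C}_c^*(\Ran,\on{OblvUnit}(\CF))\to\on{C}_c^*(\Ran_{\on{untl}},\CF)$ is the natural transformation in the statement. (Alternatively, one can quote \corref{c:l cofinal ff}(i).)

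It then remains to show that $\phi$ is universally homologically left cofinal. By definition I would fix $S\in\Sch$ and $I\in\Ran_{\on{untl}}(S)$, i.e.\ a finite non-empty subset $I\subseteq\Maps(S,X)$, and show that the prestack $(\Ran)_{I/}$ — which sends $\wt S\to S$ to the discrete set of finite non-empty $J\subseteq\Maps(\wt S,X)$ with $I|_{\wt S}\subseteq J$ — is universally homologically contractible over $S$. The key point is the pair of morphisms of $S$-prestacks $\iota\colon(\Ran)_{I/}\to S\times\Ran$, $J\mapsto J$, and $r\colon S\times\Ran\to(\Ran)_{I/}$, $K\mapsto K\cup I|_{\wt S}$, which are compatible with the projections to $S$ and satisfy $r\circ\iota=\on{id}$. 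Base change along an arbitrary $T\to S$ turns $S\times\Ran$ into $T\times\Ran$ and $(\Ran)_{I/}$ into $(\Ran)_{I_T/}$ with $I_T:=I|_T$, again with $r\circ\iota=\on{id}$; so it suffices to show that $q_\CP^!\colon\Shv^!(T)\to\Shv^!(\CP)$ is fully faithful, where $\CP:=(\Ran)_{I_T/}$, $\CR:=T\times\Ran$, $q_\CP\colon\CP\to T$, $q_\CR\colon\CR\to T$ (so that $q_\CR=q_\CP\circ r$ and $q_\CP=q_\CR\circ\iota$), knowing that $q_\CR^!$ is fully faithful — the latter being \thmref{t:Ran contr} together with the definition of universal homological contractibility.

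This final reduction is purely formal, and I would carry it out as follows. For $\CF_1,\CF_2\in\Shv^!(T)$, the functor $\iota^!$ induces a map $\beta\colon\Maps_{\Shv^!(\CR)}(q_\CR^!\CF_1,q_\CR^!\CF_2)\to\Maps_{\Shv^!(\CP)}(q_\CP^!\CF_1,q_\CP^!\CF_2)$ (using $\iota^!q_\CR^!=q_\CP^!$), and $r^!$ induces a map $\gamma$ in the opposite direction (using $r^!q_\CP^!=q_\CR^!$). From $r\circ\iota=\on{id}_\CP$ one gets $\iota^!\circ r^!=\on{id}$, whence $\beta\circ\gamma=\on{id}$; and since $\iota\circ r$ is the endomorphism $u$ of $\CR$ given by $K\mapsto K\cup I_T|_{\wt S}$, which satisfies $q_\CR\circ u=q_\CR$ and hence $u^!\circ q_\CR^!=q_\CR^!$, and since $q_\CR^!$ is \emph{full}, every morphism between objects in the essential image of $q_\CR^!$ is fixed by $u^!=r^!\circ\iota^!$; thus $\gamma\circ\beta=\on{id}$ as well. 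So $\beta$ is an isomorphism, and precomposing it with the isomorphism $\Maps_{\Shv^!(T)}(\CF_1,\CF_2)\xrightarrow{\sim}\Maps_{\Shv^!(\CR)}(q_\CR^!\CF_1,q_\CR^!\CF_2)$ of \thmref{t:Ran contr} identifies the map $\Maps_{\Shv^!(T)}(\CF_1,\CF_2)\to\Maps_{\Shv^!(\CP)}(q_\CP^!\CF_1,q_\CP^!\CF_2)$ induced by $q_\CP^!$ with it, so $q_\CP^!$ is fully faithful. I expect the only delicate point to be the contractibility of $(\Ran)_{I/}$ over $S$ — a relative, ``pointed'' version of \thmref{t:Ran contr} — and the argument above is precisely the reduction of this to \thmref{t:Ran contr} itself: adjoining a fixed finite subset exhibits $(\Ran)_{I/}$ as a retract of $S\times\Ran$ at the level of $!$-pullback functors, and the care needed is in checking that this retraction survives arbitrary base change and acts on all of $\Shv^!$, not merely on homology.
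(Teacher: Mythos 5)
Your proof is correct and takes essentially the same route as the paper: you reduce the theorem to the statement that $\phi\colon\Ran\to\Ran_{\on{untl}}$ is universally homologically left cofinal (the paper's \thmref{t:Ran left cofinal}), and you prove the latter by exhibiting $(\Ran)_{I/}$ as a retract of $S\times\Ran$ over $S$ via the maps $J\mapsto J$ and $K\mapsto K\cup I|_{\wt S}$ and invoking \thmref{t:Ran contr}. The only difference is that you spell out in full the two steps the paper cites in passing — the formal deduction of the theorem from left cofinality (the paper quotes \corref{c:l cofinal ff}(i)) and the fact that universal homological contractibility survives retracts.
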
 

The proof is given in \secref{ss:1st proof chiral}. 

\ssec{Adding the unit}    \label{ss:add unit}

Continuing to draw on the analogy from topology, there is a naturally defined functor from the category non-unital algebras
to that of unital algebras, given by adjoining the unit. If $A$ is a non-unital algebra and $\CA\in \Shv^!(\Ran)$ the corresponding
object, then the object of $\Shv^!(\Ran_{\on{unital}})$, corresponding to the algebra $\on{AddUnit}(A)$, is given as follows
$$\on{AddUnit}(\CA_I)=\underset{J\subseteq I}\oplus\, \CA_J.$$

\medskip

The corresponding functor for sheaves on the Ran space will be introduced in this subsection. I.e., we will construct a functor
$$\on{AddUnit}: \Shv^!(\Ran)\to \Shv^!(\Ran_{\on{untl}}).$$
Ultimately, we will show (see \secref{ss:add unit simple}) that the functor $\on{AddUnit}$ is the left adjoint of 
$\on{OblvUnit}$.

\medskip

The construction of this functor was explained to us by S.~Raskin. To the best of our knowledge, it was the first usage of
pushforward for a morphism between lax prestacks.

\sssec{}  \label{sss:xi and psi}

Consider the following object of $\on{LaxPreStk}$, denoted $\Ran^\to$:

\medskip

\noindent For $S\in \Sch$ we let $\Ran^\to(S)$ be the (ordinary) category whose objects are
pairs $(J\subseteq I)$ of finite subsets of $\Maps(S,X)$ with $J\neq \emptyset$. Morphisms from
$(J\subseteq I)$ to $(J_1\subseteq I_1)$ are inclusions $I\subseteq I_1$ such that $J=J_1$.

\medskip

We have the maps
$$\psi:\Ran^\to \to \Ran_{\on{untl}}, \quad (J\subseteq I)\mapsto I,$$
$$\xi:\Ran^\to  \to \Ran, \quad (J\subseteq I)\mapsto J$$
$$\upsilon:\Ran\to \Ran^\to,\quad J\mapsto (J\subseteq J).$$

We claim:

\begin{lem}  \label{l:psi pseudo-proper}
The map $\psi$ is pseudo-proper.
\end{lem}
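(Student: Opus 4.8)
The plan is to verify the definition of pseudo-properness for a map of lax prestacks directly, as given in \secref{sss:pseudo proper} (the version for $\on{LaxPreStk}$). That definition has two clauses: (i) for every $S\in \Sch$, the functor $\Ran^\to(S)\to \Ran_{\on{untl}}(S)$ induced by $\psi$ is a coCartesian fibration in groupoids; and (ii) for every $(S,I)$ with $I\in \Ran_{\on{untl}}(S)$, the fiber product $S\underset{\Ran_{\on{untl}}}\times \Ran^\to$ is a pseudo-proper \emph{prestack} over $S$. I would dispatch these in turn.

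For clause (i): fix $S$. An object of $\Ran^\to(S)$ is a pair $(J\subseteq I)$ with $J\neq\emptyset$, and $\psi$ forgets $J$, sending it to $I$. A morphism in $\Ran^\to(S)$ from $(J\subseteq I)$ to $(J_1\subseteq I_1)$ is an inclusion $I\subseteq I_1$ with $J=J_1$; its image under $\psi$ is the inclusion $I\subseteq I_1$. So given $(J\subseteq I)$ and an inclusion $I\subseteq I_1$ in $\Ran_{\on{untl}}(S)$, the morphism $(J\subseteq I)\to(J\subseteq I_1)$ lies over it, and it is coCartesian essentially because $\psi$ is \emph{faithful} and every morphism over $I\subseteq I_1$ with source $(J\subseteq I)$ is forced to have source-label $J$ preserved — i.e., the lift is unique, which is exactly what makes the fibers of $\psi(S)$ \emph{discrete groupoids}: the fiber over $I$ is the set $\{J : \emptyset\neq J\subseteq I\}$ with only identity morphisms. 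Hence $\psi(S)$ is a coCartesian fibration in groupoids (in fact in sets).

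For clause (ii): fix $(S,I)$, i.e.\ a finite non-empty subset $I\subseteq \Maps(S,X)$, say $I=\{y_1,\dots,y_n\}$. I need the prestack $\CZ_{S,I}:=S\underset{I,\Ran_{\on{untl}}}\times \Ran^\to$. For a test scheme $\widetilde S\to S$, an $\widetilde S$-point is a pair: the pullback $I|_{\widetilde S}$ together with $(J'\subseteq I')\in \Ran^\to(\widetilde S)$ and a morphism $I|_{\widetilde S}\to I'$ in $\Ran_{\on{untl}}(\widetilde S)$, i.e.\ an inclusion $I|_{\widetilde S}\subseteq I'$. Because the fiber is taken over a \emph{fixed} point $I$, and the relevant evaluation on $\Ran^\to$ is $\psi:(J'\subseteq I')\mapsto I'$, but the \emph{comma} structure allows $I'\supseteq I|_{\widetilde S}$... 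Here I must be careful about which variance is used; the cleaner route is to observe that the comma-type construction degenerates: the datum $I|_{\widetilde S}\subseteq I'$ together with $J'\subseteq I'$, modulo the morphisms in $\Ran^\to$ that move $I'$ while fixing $J'$, means that in the $\str$-completion (or simply after noting the fiber is a \emph{set}-valued functor since everything in sight is discrete) the $\widetilde S$-points are just the subsets $J'$ with $\emptyset\neq J'$ together with a compatibility making $J'$ a subset of something containing $I|_{\widetilde S}$. After unwinding, $\CZ_{S,I}$ is a disjoint union indexed by abstract combinatorial data (subsets of $I$, or more precisely surjections onto quotients, in the spirit of \propref{p:express Ran}) of closed subschemes of powers $X^{\CK}\underset{}\times S$ built from the graphs of the $y_j$ and finitely many free moving points — each of which is proper over $S$ (graphs of maps to the separated scheme $X$ are closed, hence proper if $X$ is proper; and in the constructible context no properness of $X$ is needed for $f_!$ to exist, though I only claim pseudo-properness of the \emph{map} $\psi$, not that $\Ran^\to$ is pseudo-proper over $\on{pt}$). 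Writing $\CZ_{S,I}$ as the evident colimit of these proper-over-$S$ schemes over a category of finite sets exhibits it as a pseudo-proper prestack over $S$.

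I expect the main obstacle to be clause (ii): carefully computing the fiber product $S\underset{\Ran_{\on{untl}}}\times \Ran^\to$ as a lax prestack and checking it is in fact a \emph{prestack} (groupoid-valued, indeed set-valued) and then presenting it as a colimit of schemes \emph{proper over $S$}. The subtlety is bookkeeping of the morphisms in $\Ran^\to$ — they enlarge $I'$ while fixing $J'$ — versus the inclusion $I|_{\widetilde S}\subseteq I'$ coming from the comma construction; one must check these combine to give a set-valued fiber whose components are honestly schemes (cut out inside products of $X$ and $S$ by graph conditions) proper over $S$. Once the correct combinatorial indexing category is identified (it will be a category of finite sets with surjections, analogous to $(\on{Fin}^s)^{\on{op}}$ from \propref{p:express Ran}, decorated with the data of where $I$ sits), the proper-over-$S$ property of each term is immediate, and pseudo-properness follows by definition. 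Clause (i) is purely formal and essentially already proved above.
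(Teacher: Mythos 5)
Clause (i) of your plan is correct and matches the paper: $\psi(S)$ is a coCartesian fibration in sets, with fiber over $I$ the set of non-empty subsets of $I$. The gap is in clause (ii), and it is a real one.

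You hedge about ``which variance is used'' and then tentatively model the fiber product as a comma: points $(J'\subseteq I')$ equipped with an inclusion $I|_{\wt{S}}\subseteq I'$. That is the wrong pullback. The fiber product of lax prestacks is computed value-wise as a fiber product of categories, i.e.\ the iso-comma, and $\Ran_{\on{untl}}(\wt{S})$ has no non-identity isomorphisms, so the only admissible $I'$ is $I'=I|_{\wt{S}}$. Hence an $\wt{S}$-point of $S\underset{\Ran_{\on{untl}}}\times\Ran^\to$ is a map $\wt{S}\to S$ together with a non-empty subset $J'\subseteq I|_{\wt{S}}$ — with no extra freedom. This matters because your (incorrect) comma picture would allow $J'$ to contain elements outside $I|_{\wt{S}}$, which is exactly the phantom ``finitely many free moving points'' in $X$ you keep circling around; had those been there, the terms of the colimit would be schemes mapping to $S$ via projections off $X$-factors, and you would genuinely need $X$ proper. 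In the correct picture there are no such factors: writing $I$ as an abstract finite set, one presents the fiber product as
$$\underset{\CJ\to \CJ'\twoheadleftarrow I}{\on{colim}}\, X^{\CJ'}\underset{X^I}\times S,$$
where $\CJ$ runs over finite non-empty sets and morphisms are diagrams surjective on the $\CJ$'s. Each $I\twoheadrightarrow\CJ'$ makes $X^{\CJ'}\hookrightarrow X^I$ a closed embedding (as $X$ is separated), so each term is a closed subscheme of $S$, hence proper over $S$ unconditionally. Your proposal never commits to this colimit presentation, and the parenthetical about needing $X$ proper signals that you do not yet have the correct description of the fiber. That is the idea you are missing: the fiber product lives entirely over $S$, with no $X$-valued coordinates, because $\psi$ was defined to remember $I$ and not $J$.
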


\begin{proof}

For $S\in \Sch$ and $I$ an object of $\Ran_{\on{untl}}(S)$, the fiber of $\Ran^\to(S)$ over it is the groupoid
of non-empty subsets $J\subseteq I$. For a morphism $I_1\subseteq I_2$ in $\Ran_{\on{untl}}(S)$, 
we have a map between the corresponding fibers, given by
$$(J\subseteq I_1)\mapsto (J\subseteq I_2).$$
This shows that $\Ran^\to(S)\to \Ran_{\on{untl}}(S)$ is a coCartesian fibration in groupoids. 

\medskip

Let us now show that for a given $I\subset \Maps(S,X)$, the fiber product
\begin{equation} \label{e:arr fib}
S\underset{\Ran_{\on{untl}}}\times \Ran^\to
\end{equation}
is pseudo-proper over $S$. 

\medskip

Consider the category whose objects are the data of $\CJ\to \CJ'\twoheadleftarrow I$, where $\CJ$ is a finite non-empty set.
Morphisms in this category are commutative diagrams that induce surjections on the $\CJ$'s. Now, the fiber product \eqref{e:arr fib}
equals
$$\underset{\CJ\to \CJ'\twoheadleftarrow I}{\on{colim}}\, X^{\CJ'}\underset{X^I}\times S,$$
which is proved similarly to \propref{p:express Ran}. 

\end{proof} 

\sssec{}

Thus, it follows from \corref{c:lax homology} that the functor 
$$\psi_!: \Shv^!(\Ran^\to)\to \Shv^!(\Ran_{\on{untl}}),$$
left adjoint to $\psi^!$, is defined.

\medskip

We define the functor 
$$\on{AddUnit}:\Shv^!(\Ran)\to \Shv^!(\Ran_{\on{untl}})$$ to be the composition $\psi_!\circ \xi^!$.

\medskip

\noindent {\it Warning}: We emphasize that in our definition the unital Ran space does not allow the empty set. Related to this 
is the fact that $\on{AddUnit}(0)=0$. 

\begin{rem} 
In \secref{sss:add unit expl} we will give an explicit description of the functor $\on{AddUnit}$ as a colimit of 
direct image functors for schemes.
\end{rem} 

\sssec{}

The functor $\on{AddUnit}$ can be described explicitly as follows:

\medskip

For $S\in \Sch$ fix an object $I\subset \Ran_{\on{untl}}(S)$ corresponding to a finite set $I$ of maps $S\to X$
with \emph{pairwise non-intersecting images}. I.e., the map $S\to \Ran_{\on{untl}}$ factors as
$$S\to \overset{\circ}X{}^I\to X^I \overset{\on{ins}_I}\longrightarrow \Ran \overset{\phi} \longrightarrow \Ran_{\on{untl}}.$$

\begin{prop} \label{p:inserting the unit} 
For $\CF\in \Shv^!(\Ran)$, the corresponding object $\on{AddUnit}(\CF)_{S,I}\in \Shv^!(S)$ is given by
$$\underset{\emptyset\neq J\subseteq I}\oplus\, \CF_{S,J},$$
where we regard $J$ as an $S$-point of $\Ran$ via $J\subseteq I \subset \Maps(S,X)$.
\end{prop}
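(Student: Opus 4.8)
The plan is to compute $\on{AddUnit}(\CF)_{S,I} = (\psi_!\circ \xi^!(\CF))_{S,I}$ directly from the definitions, using the base-change property of $\psi_!$ supplied by \corref{c:lax homology} (valid since $\psi$ is pseudo-proper by \lemref{l:psi pseudo-proper}). First I would fix the $S$-point $I$ of $\Ran_{\on{untl}}$ corresponding to a finite set of maps $S\to X$ with pairwise non-intersecting images, and form the fiber product $S\underset{\Ran_{\on{untl}}}\times \Ran^\to$. By \lemref{l:psi pseudo-proper} the map $\Ran^\to(S)\to \Ran_{\on{untl}}(S)$ is a coCartesian fibration in groupoids, so the fiber of $\Ran^\to$ over $I$ (as an $S$-point) is the \emph{groupoid} of nonempty subsets $J\subseteq I$; crucially, since the maps comprising $I$ have pairwise non-intersecting images, the index category $\CJ\to\CJ'\twoheadleftarrow I$ appearing in the proof of \lemref{l:psi pseudo-proper} degenerates: the only $\CJ'$ that can receive a surjection from $I$ and map to it nontrivially is a quotient of $I$ whose fibers stay within the "disjoint" locus, so $X^{\CJ'}\underset{X^I}\times S$ is just $S$ (or empty), indexed by the choice of $\emptyset\neq J\subseteq I$. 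Hence $S\underset{\Ran_{\on{untl}}}\times \Ran^\to$ is the disjoint union $\coprod_{\emptyset\neq J\subseteq I} S$.

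Next I would identify the sheaf $\xi^!(\CF)$ restricted to this fiber product. On the copy of $S$ indexed by $J\subseteq I$, the map $\xi\colon \Ran^\to\to \Ran$ sends $(J\subseteq I)\mapsto J$, and the composite $S\to \Ran^\to \overset{\xi}\to \Ran$ is precisely the $S$-point $J$ of $\Ran$ obtained by viewing $J\subseteq I\subseteq \Maps(S,X)$. Therefore the restriction of $\xi^!(\CF)$ to the $J$-th copy of $S$ is $\CF_{S,J}$. Applying \corref{c:lax homology} to express $\psi_!$ via base change — $(\psi_!(\CG))_{S,I}$ is the $!$-pushforward to $S$ of $\CG$ restricted to $S\underset{\Ran_{\on{untl}}}\times \Ran^\to$ — and using that $!$-pushforward along the identity maps in the disjoint union $\coprod_{J} S\to S$ is a direct sum, I get
$$
\on{AddUnit}(\CF)_{S,I} \simeq \underset{\emptyset\neq J\subseteq I}\bigoplus\, \CF_{S,J},
$$
as claimed.

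The main obstacle is the careful justification of the degeneration of the colimit index category $\CJ\to\CJ'\twoheadleftarrow I$ in the "disjoint images" situation, i.e.\ that the pseudo-proper prestack $S\underset{\Ran_{\on{untl}}}\times \Ran^\to$ really is just the discrete disjoint union $\coprod_{\emptyset\neq J\subseteq I}S$ rather than something with a genuine scheme-theoretic structure over $S$ coming from diagonals. This is exactly parallel to the final paragraph of the proof of \propref{p:express Ran}, where the index category over a fixed subset $I$ has a final object; here, over the chosen $S$-point, because the components of $I$ are pairwise disjoint, for each nonempty $J\subseteq I$ the subcategory of diagrams $\CJ\to\CJ'\twoheadleftarrow I$ with "$\CJ$-image equal to $J$" has an initial/final object given by $\CJ'=I$, $\CJ=J$, and the corresponding $X^{\CJ'}\underset{X^I}\times S$ equals $S$. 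Once this combinatorial reduction is in place, the rest is the formal manipulation with pseudo-proper pushforward and base change already available from \corref{c:lax homology}, together with the elementary fact that $!$-pushforward along a disjoint union of copies of $S$ over $S$ computes the direct sum.
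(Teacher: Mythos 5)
Your proposal is correct and follows essentially the same route as the paper: \corref{c:lax homology} (base change for the pseudo-proper map $\psi$), applied to the $S$-point $I$, identifies $\on{AddUnit}(\CF)_{S,I}$ as the $!$-pushforward to $S$ of $\xi^!(\CF)$ restricted to $S\underset{\Ran_{\on{untl}}}\times\Ran^\to$, and then the disjoint-images hypothesis collapses that fiber product to $\coprod_{\emptyset\neq J\subseteq I}S$. The paper's proof is just the one-line citation of \corref{c:lax homology}; you have written out the unwinding of the colimit index category from \lemref{l:psi pseudo-proper} that this citation tacitly presupposes, and that unwinding is correct (in particular, since the maps in $I$ have pairwise non-intersecting images, any surjection $I\twoheadrightarrow\CJ'$ with $X^{\CJ'}\underset{X^I}\times S\neq\emptyset$ must be a bijection, and then the argument of \propref{p:express Ran} gives the discrete decomposition indexed by nonempty $J\subseteq I$).
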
 

\begin{proof}

Follows from \corref{c:lax homology}. 

\end{proof}

\ssec{Another interpretation}  \label{ss:add unit simple}

\sssec{}

Consider the morphism
$$\phi:\Ran\to \Ran_{\on{untl}}.$$

This morphism is \emph{not} pseudo-proper.  However, we claim:

\begin{prop}  \label{p:add unit simple}
The functor
$$\phi_!: \Shv(\Ran)\to  \Shv(\Ran_{\on{untl}})$$
exists and identifies canonically with $\on{AddUnit}$.
\end{prop}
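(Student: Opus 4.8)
The plan is to exploit the factorization $\phi=\psi\circ\upsilon$ from \secref{sss:xi and psi} together with a geometric adjunction between $\upsilon$ and $\xi$. Recall that $\xi\circ\upsilon=\on{id}_{\Ran}$ and $\psi\circ\upsilon=\phi$, so that $\phi^!=\upsilon^!\circ\psi^!$. Hence, provided the relevant left adjoints exist, the left adjoint of $\phi^!$ is the composite
$$\phi_!\;\simeq\;(\psi^!)^L\circ(\upsilon^!)^L.$$
Now $\psi$ is pseudo-proper by \lemref{l:psi pseudo-proper}, so by \corref{c:lax homology} the functor $(\psi^!)^L=\psi_!$ is defined (in any context); everything therefore comes down to identifying $(\upsilon^!)^L$ with $\xi^!$, after which $\phi_!\simeq\psi_!\circ\xi^!=\on{AddUnit}$ by the very definition of $\on{AddUnit}$.

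To identify $(\upsilon^!)^L$ I would first check that $\upsilon\colon\Ran\to\Ran^\to$ is \emph{left adjoint} to $\xi\colon\Ran^\to\to\Ran$ as maps of lax prestacks, i.e. that for each $S\in\Sch$ the functor $\upsilon(S)\colon\Ran(S)\to\Ran^\to(S)$, $J\mapsto(J\subseteq J)$, is left adjoint to $\xi(S)\colon(J\subseteq I)\mapsto J$. The unit is the identity natural transformation $\on{id}_{\Ran}=\xi\circ\upsilon$, and the counit $\upsilon\circ\xi\to\on{id}_{\Ran^\to}$ is, at an object $(J\subseteq I)$, the morphism $(J\subseteq J)\to(J\subseteq I)$ of $\Ran^\to(S)$ afforded by the inclusion $J\subseteq I$ (which is a legitimate morphism of $\Ran^\to(S)$ precisely because the two copies of $J$ agree). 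The triangle identities are immediate: the relevant composites of these structure maps reduce to identity morphisms in $\Ran(S)$ and $\Ran^\to(S)$, and since $\Ran$ and $\Ran^\to$ take values in ordinary categories there are no higher coherences to verify. Applying \lemref{l:geom adj} to the adjoint pair $\upsilon\dashv\xi$ then shows that $\xi^!$ and $\upsilon^!$ form an adjoint pair with $\xi^!$ on the left, i.e. $(\upsilon^!)^L\simeq\xi^!$ (in particular this left adjoint exists unconditionally, being a plain pullback).

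Combining the two steps yields $\phi_!\simeq\psi_!\circ\xi^!=\on{AddUnit}$, which in particular shows that $\phi_!$ is defined. As a consistency check one recovers \propref{p:inserting the unit}: for an $S$-point of $\Ran_{\on{untl}}$ given by a finite set $I$ of maps $S\to X$ with pairwise non-intersecting images, the base-change property of \corref{c:lax homology} (applicable since $\psi$ is pseudo-proper) evaluates $\bigl(\psi_!\circ\xi^!(\CF)\bigr)_{S,I}$ as $\bigoplus_{\emptyset\neq J\subseteq I}\CF_{S,J}$. I do not expect a serious obstacle here: the proof is essentially formal, and the only point requiring (minor) care is the verification of the adjunction $\upsilon\dashv\xi$, which is bookkeeping with finite subsets since all the categories involved are ordinary $1$-categories.
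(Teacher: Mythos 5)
Your proof is correct and takes essentially the same route as the paper: the paper also factors $\phi=\psi\circ\upsilon$, proves that $\xi^!$ is the left adjoint of $\upsilon^!$ via the geometric adjunction between $\upsilon$ and $\xi$ (this is the unnamed lemma and corollary just before the proposition), and concludes $\phi_!\simeq\psi_!\circ\upsilon_!\simeq\psi_!\circ\xi^!=\on{AddUnit}$. Your write-up is slightly more careful than the paper's one-line proof of the intermediate lemma, which misnames the isomorphism $\xi\circ\upsilon\overset{\sim}\to\on{id}$ as the ``counit'' of the geometric adjunction, whereas (as you correctly observe, since $\upsilon\dashv\xi$) it is the unit.
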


This proposition can be reformulated as follows:

\begin{cor}
The functor $\on{AddUnit}$ is the left adjoint of $\on{ObvlUnit}$.
\end{cor}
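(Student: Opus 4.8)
The plan is to factor $\phi$ through the auxiliary lax prestack $\Ran^\to$ of \secref{sss:xi and psi} and to deduce both the existence of $\phi_!$ and its identification with $\on{AddUnit}$ from the existence of $\psi_!$ (already available by \lemref{l:psi pseudo-proper} and \corref{c:lax homology}) together with one elementary adjunction.

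First I would record the two identities
$$\phi = \psi\circ\upsilon \qquad\text{and}\qquad \xi\circ\upsilon = \on{id}_{\Ran},$$
both immediate on $S$-points from $\upsilon\colon J\mapsto (J\subseteq J)$, $\psi\colon (J\subseteq I)\mapsto I$ and $\xi\colon (J\subseteq I)\mapsto J$. The crux of the argument is the following assertion: for every $S\in\Sch$, the functor $\upsilon(S)\colon\Ran(S)\to\Ran^\to(S)$ is \emph{left adjoint} to $\xi(S)\colon\Ran^\to(S)\to\Ran(S)$. Indeed, for $J\in\Ran(S)$ and $(J_1\subseteq I_1)\in\Ran^\to(S)$ the set $\Hom_{\Ran^\to(S)}\bigl((J\subseteq J),(J_1\subseteq I_1)\bigr)$, i.e., the inclusions $J\subseteq I_1$ with $J=J_1$, is a singleton if $J=J_1$ and empty otherwise, hence agrees with $\Hom_{\Ran(S)}(J,J_1)=\Hom_{\Ran(S)}\bigl(J,\xi(J_1\subseteq I_1)\bigr)$ (here we use that $\Ran(S)$ is discrete). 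The unit of this adjunction is the identity, by the second identity above, and the counit $\upsilon\circ\xi\to\on{id}_{\Ran^\to}$ is the inclusion $(J\subseteq J)\hookrightarrow(J\subseteq I)$; as both are manifestly natural in $S$, we obtain $\upsilon\dashv\xi$ as a pair of maps of lax prestacks.

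Applying \lemref{l:geom adj} to this adjunction, we get that $\xi^!$ is left adjoint to $\upsilon^!$; in particular the left adjoint $\upsilon_!$ of $\upsilon^!$ exists and is canonically $\xi^!$. Since $\phi^! = \upsilon^!\circ\psi^!$ (contravariance of $\Shv^!$ applied to $\phi=\psi\circ\upsilon$) and the left adjoint $\psi_!$ of $\psi^!$ exists by \lemref{l:psi pseudo-proper} and \corref{c:lax homology}, the left adjoint of $\phi^!$ exists and is the composite of these left adjoints in the reverse order:
$$\phi_!\;\simeq\;\psi_!\circ\upsilon_!\;=\;\psi_!\circ\xi^!,$$
which is $\on{AddUnit}$ by definition. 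As $\on{OblvUnit}=\phi^!$, this also yields the reformulation that $\on{AddUnit}$ is left adjoint to $\on{OblvUnit}$.

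I do not anticipate a genuine obstacle: the whole argument rests on the adjunction $\upsilon\dashv\xi$, a one-line check on the $1$-categories $\Ran(S)$ and $\Ran^\to(S)$, followed by formal bookkeeping with adjoints and the already-proven pseudo-properness of $\psi$. The only point requiring care is variance: because $\Shv^!$ is contravariant, the adjunction $\upsilon\dashv\xi$ on lax prestacks becomes $\xi^!\dashv\upsilon^!$ on sheaves, and the left adjoint of the composite $\upsilon^!\circ\psi^!$ must be formed as $\psi_!\circ\xi^!$ (the other order is not even composable).
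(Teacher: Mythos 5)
Your proof is correct and follows essentially the same route as the paper: one records $\phi=\psi\circ\upsilon$, verifies that $\upsilon(S)\dashv\xi(S)$ for each $S$, applies \lemref{l:geom adj} to conclude $\xi^!\dashv\upsilon^!$ (hence $\upsilon_!\simeq\xi^!$), and then computes $\phi_!\simeq\psi_!\circ\upsilon_!\simeq\psi_!\circ\xi^!=\on{AddUnit}$. Your write-up is slightly more explicit than the paper's (you verify the adjunction $\upsilon\dashv\xi$ by a direct hom-set computation rather than merely asserting it, and you flag the variance issue), but the decomposition of the argument and the key lemma invoked are identical.
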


\sssec{}

For the proof of \propref{p:add unit simple} we note:

\begin{lem}
The (iso)morphism
$$\on{Id}_{\on{Shv}^!(\Ran)}\overset{\sim}\to (\xi\circ \upsilon)^!\simeq \upsilon^!\circ \xi^!$$
is unit of an adjunction. 
\end{lem}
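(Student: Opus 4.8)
The plan is to exhibit $\upsilon$ as a \emph{left adjoint} of $\xi$ as a $1$-morphism of lax prestacks, and then to invoke \lemref{l:geom adj}. Concretely, I would produce the unit and counit of this geometric adjunction, check the triangle identities value-wise, and then read off the induced adjunction on sheaves together with the claimed description of its unit.

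First, note that $\xi\circ\upsilon=\on{id}_\Ran$ literally: both functors send a finite non-empty subset $J\subseteq\Maps(S,X)$ to itself, and $\Ran(S)$ carries no non-identity morphisms; this provides the (identity) unit $\on{id}_\Ran\to\xi\circ\upsilon$. On the other hand, there is a natural transformation
$$\epsilon:\upsilon\circ\xi\to\on{id}_{\Ran^\to},$$
whose value on an object $(J\subseteq I)\in\Ran^\to(S)$ is the tautological morphism $(J\subseteq J)\to(J\subseteq I)$ of $\Ran^\to(S)$, i.e.\ the inclusion $J\subseteq I$ with the $J$-entry kept fixed. This is visibly functorial in $(J\subseteq I)$ and compatible with pullback along maps of schemes $S'\to S$, since forming subsets and forming inclusions both commute with base change; so $\epsilon$ is a well-defined $2$-morphism of lax prestacks.

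Next, I would verify that $\on{id}_\Ran\to\xi\circ\upsilon$ and $\epsilon$ satisfy the triangle identities, which (being equations of $2$-morphisms) may be checked value-wise over each $S\in\Sch$. This reduces to the elementary bijection
$$\Maps_{\Ran^\to(S)}\big((J\subseteq J),(J'\subseteq I')\big)\;\simeq\;\Maps_{\Ran(S)}(J,J'),$$
both sides being a one-point set when $J=J'$ and empty otherwise (one uses that $J'\subseteq I'$ is automatic, so the left-hand side does not see $I'$); this bijection is natural in $(J'\subseteq I')$, in $J$ and in $S$, its unit is the identity of $J$, and its counit is $\epsilon$. The triangle identities then hold by inspection — e.g.\ the $J$-component of the composite $\upsilon\to\upsilon\circ\xi\circ\upsilon\to\upsilon$ (unit followed by $\epsilon$) is the identity morphism of $(J\subseteq J)$. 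This value-wise bookkeeping is the only thing that needs checking, and it is entirely formal; I do not anticipate any genuine obstacle.

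Finally, applying \lemref{l:geom adj} with $\CY_1=\Ran$, $\CY_2=\Ran^\to$, $f=\upsilon$ and $g=\xi$, we conclude that $\xi^!$ is left adjoint to $\upsilon^!$. By the construction in that lemma, the unit of the resulting adjunction is induced by applying $!$-pullback to the geometric unit $\on{id}_\Ran\to\xi\circ\upsilon$; since the latter is the identity $2$-morphism of $\on{id}_\Ran$, the resulting unit
$$\on{Id}_{\Shv^!(\Ran)}\to\upsilon^!\circ\xi^!$$
is precisely the canonical isomorphism $\on{Id}_{\Shv^!(\Ran)}\overset{\sim}\to(\xi\circ\upsilon)^!\overset{\sim}\to\upsilon^!\circ\xi^!$ of the statement. (In the applications this identifies the partially defined left adjoint of $\upsilon^!$ with $\xi^!$, which is what feeds into the proof of \propref{p:add unit simple}.)
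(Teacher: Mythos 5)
Your proposal is correct and follows the same route as the paper: exhibit the adjunction $\upsilon \dashv \xi$ at the level of lax prestacks by checking it value-wise over each $S\in\Sch$ (the unit $\on{id}_\Ran\to\xi\circ\upsilon$ is the identity, the counit $\upsilon\circ\xi\to\on{id}_{\Ran^\to}$ is the tautological inclusion $(J\subseteq J)\to(J\subseteq I)$), then invoke \lemref{l:geom adj} to transfer this to the adjunction $\xi^!\dashv\upsilon^!$ on sheaves. The paper's one-line proof phrases the geometric input as ``$\xi\circ\upsilon\overset{\sim}\to\on{id}$ defines the counit of an adjunction when evaluated on any $S$,'' which (as your direction-chasing shows) is really the unit of $\upsilon(S)\dashv\xi(S)$; your more explicit account of which morphism plays which role, together with the value-wise triangle-identity check, is a cleaner rendition of the same argument.
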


\begin{proof}
Follows from the fact that the (iso)morphism
$$\xi\circ \upsilon\overset{\sim}\to \on{id}$$
defines the counit of an adjunction when evaluated on any $S\in \Sch$.
\end{proof} 

\begin{cor}
The functor $\upsilon_!:\Shv^!(\Ran)\to \Shv^!(\Ran^\to)$, left adjoint to $\upsilon^!$,
is well-defined and identifies with $\xi^!$.
\end{cor}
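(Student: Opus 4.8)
The plan is to read off the Corollary formally from the Lemma just recorded. That Lemma produces a canonical natural transformation $\on{Id}_{\Shv^!(\Ran)}\overset{\sim}\to \upsilon^!\circ\xi^!$ and asserts that it is the unit of an adjunction. Since $\upsilon^!\circ\xi^! = (\xi\circ\upsilon)^!$ is an endofunctor of $\Shv^!(\Ran)$ obtained by applying $\upsilon^!$ after $\xi^!$, the only adjunction of which such a transformation can be the unit is $\xi^!\dashv\upsilon^!$, with $\xi^!:\Shv^!(\Ran)\to\Shv^!(\Ran^\to)$ the left adjoint and $\upsilon^!:\Shv^!(\Ran^\to)\to\Shv^!(\Ran)$ the right adjoint (the counit being the one obtained by $(-)^!$-functoriality from the value-wise counit $\upsilon\circ\xi\to\on{id}_{\Ran^\to}$). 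In particular $\upsilon^!$ admits a left adjoint.

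The second step is uniqueness of adjoints. By definition $\upsilon_!$ is the name for the partially defined left adjoint of $\upsilon^!$; the first step exhibits $\xi^!$ as such a left adjoint, and left adjoints are determined up to canonical isomorphism whenever they exist, so $\upsilon_!$ is well-defined and $\upsilon_!\simeq\xi^!$. I would emphasize that this gives well-definedness in \emph{every} sheaf-theoretic context of \secref{ss:sheaves}, not only the constructible one: $\upsilon$ is not pseudo-proper, so neither \corref{c:lax homology} nor the Adjoint Functor Theorem is a priori available to guarantee existence, but $\xi^!$ is an honest everywhere-defined functor that does the job. If one prefers to bypass the precise phrasing of the Lemma, the same conclusion follows by checking the value-wise adjunction $\upsilon\dashv\xi$ directly — for each $S$ one has $\xi(S)\circ\upsilon(S)=\on{id}_{\Ran(S)}$ and $\Maps_{\Ran^\to(S)}((J\subseteq J),(J'\subseteq I'))\simeq\Maps_{\Ran(S)}(J,J')$, both being a point when $J=J'$ and empty otherwise, with the canonical inclusions $(J\subseteq J)\to(J\subseteq I)$ serving as counit — and then applying \lemref{l:geom adj} with $f=\upsilon$, $g=\xi$.

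I do not expect a genuine obstacle here: the entire content is the value-wise adjunction $\upsilon\dashv\xi$, which is the elementary combinatorics of finite subsets indicated above (and already underlies the Lemma's proof), together with the transfer principle \lemref{l:geom adj}. The only point meriting a line of care is the ``well-defined'' clause, for the reason just given, and it is worth stressing because this identification $\upsilon_!\simeq\xi^!$, combined with $\phi=\psi\circ\upsilon$, the pseudo-properness of $\psi$ (\lemref{l:psi pseudo-proper}), and $\on{AddUnit}=\psi_!\circ\xi^!$, is exactly what yields $\phi_!\simeq\psi_!\circ\upsilon_!\simeq\psi_!\circ\xi^!=\on{AddUnit}$, i.e.\ \propref{p:add unit simple}.
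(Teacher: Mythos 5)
Your proposal is correct and is essentially the paper's own argument: the preceding Lemma supplies the unit $\on{Id}_{\Shv^!(\Ran)}\overset{\sim}\to\upsilon^!\circ\xi^!$ of an adjunction (established by the value-wise adjunction $\upsilon\dashv\xi$, which is exactly the finite-subset combinatorics you spell out together with \lemref{l:geom adj}), and the corollary then follows by uniqueness of adjoints. Your emphasis that this gives well-definedness of $\upsilon_!$ in every sheaf-theoretic context, despite $\upsilon$ failing to be pseudo-proper, is a useful point the paper leaves implicit.
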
 

\sssec{Proof of \propref{p:add unit simple}}

We have:

$$\on{AddUnit}=\psi_!\circ \xi^!\simeq \psi_!\circ \upsilon_!\simeq (\psi\circ \upsilon)_!\simeq \phi_!.$$

\qed

\sssec{}

Consider the unit of the adjunction
$$\on{Id}_{\Shv^!(\Ran)}\to \on{OblvUnit}\circ \on{AddUnit}.$$

\medskip

It follows from the construction that this natural transformation can be described as the composition
\begin{equation} \label{e:id to add}
\on{Id}_{\Shv^!(\Ran)}\simeq \upsilon^!\circ \xi^!\to 
\upsilon^! \circ \psi^! \circ \psi_! \circ \xi^!=
\phi^!\circ \psi_! \circ \xi^!=\on{OblvUnit}\circ \on{AddUnit}.
\end{equation} 

\ssec{Comparing chiral homology}

Assume that $X$ is proper (or arbitrary if we are working in the context of constructible sheaves). 

\sssec{}

Note that by interpreting $\on{AddUnit}$ as $\phi_!$, we obtain a tautological isomorphism 
\begin{equation} \label{e:nat trans 0}
\on{C}^*_c(\Ran_{\on{untl}},-) \circ \on{AddUnit} \simeq \on{C}^*_c(\Ran,-).
\end{equation} 

\begin{cor} \label{c:id to add}
Assume that $X$ is connected. Then for $\CF\in \Shv^!(\Ran)$, the map
$$\on{C}^*_c(\Ran,\CF) \to \on{C}^*_c(\Ran,\on{OblvUnit}\circ \on{AddUnit}(\CF)),$$
induced by \eqref{e:id to add}, is an isomorphism.
\end{cor}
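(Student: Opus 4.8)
The plan is to deduce the statement formally from \thmref{t:unital chiral homology} together with the tautological isomorphism \eqref{e:nat trans 0}, the only genuine input being the triangle identities for the adjunction $(\on{AddUnit},\on{OblvUnit})=(\phi_!,\phi^!)$ established in \propref{p:add unit simple}.

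First I would identify the map in question. By \propref{p:add unit simple} we have $\on{AddUnit}=\phi_!$ and $\on{OblvUnit}=\phi^!$, and, by construction (see \eqref{e:id to add}), the natural transformation $\on{Id}_{\Shv^!(\Ran)}\to \on{OblvUnit}\circ\on{AddUnit}$ is the unit $u$ of this adjunction. Hence the map of the corollary is $\on{C}^*_c(\Ran,u_\CF)$, where $u_\CF:\CF\to\phi^!\phi_!(\CF)$. Since $p_\Ran=p_{\Ran_{\on{untl}}}\circ\phi$ and $\phi_!$ is defined (constructible context, resp. $X$ proper), there is a canonical identification $(p_\Ran)_!\simeq (p_{\Ran_{\on{untl}}})_!\circ\phi_!$; this is exactly \eqref{e:nat trans 0}, which I will write as
$$\iota:\on{C}^*_c(\Ran,\CF)\overset{\sim}\longrightarrow \on{C}^*_c(\Ran_{\on{untl}},\phi_!\CF).$$
Under $\iota$ in source and target, $\on{C}^*_c(\Ran,u_\CF)$ becomes $(p_{\Ran_{\on{untl}}})_!$ applied to $\phi_!(u_\CF):\phi_!\CF\to\phi_!\phi^!\phi_!\CF$.

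Next I would apply \thmref{t:unital chiral homology} to the object $\on{AddUnit}(\CF)=\phi_!\CF\in\Shv^!(\Ran_{\on{untl}})$: it yields that the natural map
$$\gamma:\on{C}^*_c(\Ran,\on{OblvUnit}(\phi_!\CF))=\on{C}^*_c(\Ran,\phi^!\phi_!\CF)\longrightarrow \on{C}^*_c(\Ran_{\on{untl}},\phi_!\CF)$$
is an isomorphism. After the identification $(p_\Ran)_!\simeq (p_{\Ran_{\on{untl}}})_!\circ\phi_!$, the natural transformation of \thmref{t:unital chiral homology} is $(p_{\Ran_{\on{untl}}})_!$ of the counit $c:\phi_!\phi^!\to\on{Id}$, so $\gamma$ is $(p_{\Ran_{\on{untl}}})_!$ applied to $c_{\phi_!\CF}:\phi_!\phi^!\phi_!\CF\to\phi_!\CF$. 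The triangle identity $c_{\phi_!\CF}\circ\phi_!(u_\CF)=\on{id}_{\phi_!\CF}$ then gives $\gamma\circ\on{C}^*_c(\Ran,u_\CF)=\iota$ as maps $\on{C}^*_c(\Ran,\CF)\to\on{C}^*_c(\Ran_{\on{untl}},\phi_!\CF)$; since $\gamma$ and $\iota$ are isomorphisms, so is $\on{C}^*_c(\Ran,u_\CF)$, which is the assertion.

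The one point that must be checked rather than merely cited is the identification, used above, of the natural transformation appearing in \thmref{t:unital chiral homology} with $(p_{\Ran_{\on{untl}}})_!$ of the counit $\phi_!\phi^!\to\on{Id}$ (under $(p_\Ran)_!\simeq(p_{\Ran_{\on{untl}}})_!\circ\phi_!$), so that the triangle identity applies on the nose; granting this, everything is bookkeeping with adjunction units and counits. The hypothesis ``$X$ connected'' enters only through \thmref{t:unital chiral homology}, whereas the standing properness assumption in the D-module context is what makes $\phi_!$ and the functors $\on{C}^*_c(\,\cdot\,,-)$ above defined.
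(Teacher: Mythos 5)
Your proof is correct and is essentially the paper's own argument: the paper presents the same three ingredients — the isomorphism \eqref{e:nat trans 0}, \thmref{t:unital chiral homology} applied to $\on{AddUnit}(\CF)$, and the commutativity of the resulting square — packaged as a commutative diagram, and you have simply spelled out why that square commutes (the triangle identity for the adjunction $(\phi_!,\phi^!)$, together with the identification of the natural transformation in \thmref{t:unital chiral homology} as $(p_{\Ran_{\on{untl}}})_!$ applied to the counit).
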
 

\begin{proof} 

We have a commutative diagram
$$
\CD
\on{C}^*_c(\Ran,\CF)  @>{\sim}>> \on{C}^*_c\left(\Ran_{\on{untl}},\on{AddUnit}(\CF)\right)  \\
@VVV   @VV{\on{id}}V   \\
\on{C}^*_c(\Ran,\on{OblvUnit}\circ \on{AddUnit}(\CF))  @>>>  \on{C}^*_c\left(\Ran_{\on{untl}},\on{AddUnit}(\CF)\right),
\endCD
$$
where the bottom horizontal arrow is an isomorphism by  \thmref{t:unital chiral homology}. This implies that
the left vertical map is an isomorphism as well.

\end{proof}

\ssec{Proof of \thmref{t:unital chiral homology}}  \label{ss:1st proof chiral}

\sssec{}

We will prove a stronger assertion:

\begin{thm}  \label{t:Ran left cofinal}
Suppose that $X$ is connected. Then the map $\phi:\Ran\to \Ran_{\on{untl}}$ is universally homologically left cofinal.
\end{thm}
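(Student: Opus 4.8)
The plan is to unwind the definition of universal homological left cofinality (\secref{ss:l cofinal}) and reduce to the already-established contractibility of the usual Ran space, \thmref{t:Ran contr}. So fix $S \in \Sch$ and an object $I \in \Ran_{\on{untl}}(S)$, i.e. a finite non-empty subset $I \subseteq \Maps(S,X)$. By definition I must show that the lax prestack $(\Ran)_{I/}$ over $S$ — whose $\wt S$-points, for $\wt S \to S$, are pairs $(J, I|_{\wt S} \subseteq J)$ with $J$ a finite non-empty subset of $\Maps(\wt S, X)$ — is universally homologically contractible over $S$. The key observation is that $(\Ran)_{I/}$ is precisely the Ran space of ``finite subsets containing $I$'', which should be the same as a \emph{relative} Ran space attached to $X$ over $S$, and hence contractible by the same Beilinson--Drinfeld argument used in \thmref{t:Ran contr}.

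More concretely, I would argue as follows. First I identify $(\Ran)_{I/}$ as a prestack (not merely a lax prestack): a point is $J \supseteq I|_{\wt S}$, and such a $J$ is recorded by the additional subset $J \setminus I$ — a finite, possibly empty, subset of $\Maps(\wt S, X)$ — so up to this bookkeeping $(\Ran)_{I/}$ is the \emph{unital} (empty set now allowed) Ran space of $X$ relative to $S$, base-changed appropriately. The cleanest way to package this: form the map $X \times S \to S$ and note $(\Ran)_{I/} \simeq S \underset{}{\times} \Ran_{\geq \emptyset}(X)$-type object, where the empty collection is the distinguished point; adding the empty set gives $(\Ran)_{I/}$ a terminal-like structure value-wise. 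Then I invoke (the relative form of) \thmref{t:Ran contr}: since $X$ is connected, the relative Ran space is universally homologically contractible over $S$. The passage from the absolute statement in \thmref{t:Ran contr} to the relative one is formal — it follows from the base-change property of universal homological contractibility over a \emph{prestack} (noted in \secref{ss:uhc prestack}), applied to the projection $X\times S \to S$, together with \thmref{t:Ran contr} applied on each geometric fiber. This establishes that $(\Ran)_{I/}\to S$ is universally homologically contractible, which is exactly the condition defining universal homological left cofinality of $\phi$.

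Finally, \thmref{t:unital chiral homology} itself then follows: by \corref{c:l cofinal ff}, universal homological left cofinality of $\phi$ gives that $\phi^! = \on{OblvUnit}$ is fully faithful on the relevant subcategory and that $\on{C}^*_c(\Ran, \on{OblvUnit}(-)) \to \on{C}^*_c(\Ran_{\on{untl}}, -)$ is an isomorphism; in particular the left-hand functor $\on{C}^*_c(\Ran_{\on{untl}},-)$ is defined, being computed via the defined functor $\on{C}^*_c(\Ran,-)$. (One must also check the dual-variance bookkeeping in \corref{c:l cofinal ff}, which is stated for homology $\on{C}_*$; here one uses that $\phi$ being universally homologically left cofinal gives the fully-faithfulness of $\phi^!$ on $\Shv^!_{\on{str}}$ via \propref{p:l cofinal ff}, and applies it with the dualizing sheaf to pass to compactly supported cohomology.) The main obstacle I anticipate is purely organizational rather than conceptual: correctly setting up the \emph{relative over $S$} version of the Beilinson--Drinfeld contractibility theorem and matching $(\Ran)_{I/}$ with it, in particular handling the fact that the ``extra points'' $J\setminus I$ are allowed to collide with $I$ and with each other, so that $(\Ran)_{I/}$ is the \emph{unital-with-empty-set} relative Ran space rather than the plain one. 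Once that identification is clean, the contractibility is immediate from \thmref{t:Ran contr} and its base-change stability.
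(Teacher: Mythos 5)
Your overall strategy — fix $S$ and $I \in \Ran_{\on{untl}}(S)$, show $(\Ran)_{I/}\to S$ is universally homologically contractible by leveraging \thmref{t:Ran contr} — is correct in outline, and this is indeed how the paper proceeds. But there is a genuine gap in the mechanism you propose for passing from \thmref{t:Ran contr} to the statement about $(\Ran)_{I/}$.

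You want to \emph{identify} $(\Ran)_{I/}$ with a (relative, possibly-empty) Ran space of $X$ over $S$. No such identification exists. The map $(S\times\Ran)(\wt S) \to (\Ran)_{I/}(\wt S)$, $K \mapsto K \cup I|_{\wt S}$, is surjective but emphatically not injective (e.g.\ $K=\emptyset$ and $K=I|_{\wt S}$ have the same image, once the empty set is admitted), and the ``bookkeeping'' map $J \mapsto J\smallsetminus I|_{\wt S}$ lands only in subsets disjoint from $I|_{\wt S}$, which is neither $\Ran$ nor a version of it allowing $\emptyset$. You flag this yourself at the end (``the extra points are allowed to collide with $I$ and with each other''), but calling it ``purely organizational'' understates the problem: the object you are trying to match $(\Ran)_{I/}$ with simply does not match it, and no amount of reorganization produces an isomorphism. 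What \emph{does} exist, and what the paper's proof uses, is that these two maps exhibit $(\Ran)_{I/}(\wt S)$ as a \emph{retract} of $(S\times\Ran)(\wt S)$: the tautological inclusion $J\mapsto J$ is split by $K\mapsto K\cup I|_{\wt S}$. Universal homological contractibility over $S$ is stable under retracts, and $S\times\Ran\to S$ is UHC by \thmref{t:Ran contr}. That one sentence — ``retracts of UHC prestacks are UHC'' — is exactly the missing idea; once you have it, neither a relative Beilinson--Drinfeld argument nor any manipulation of empty sets is needed.

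Your final paragraph on deducing \thmref{t:unital chiral homology} from the present statement is tangential (it is not the statement under review) but is consistent with how the paper packages things via \corref{c:l cofinal ff}.
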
 

See \secref{ss:l cofinal} for the notion of universal homological left cofinality. This theorem implies \thmref{t:unital chiral homology}
by \corref{c:l cofinal ff}.

\sssec{}

The proof of \thmref{t:Ran left cofinal} is essentially given in \cite{Main Text} in the guise of the proof of Proposition 2.5.23 from {\it loc. cit.} 
Let us repeat it for completeness. 

\medskip

Given an $S$-point $I$ of $\Ran_{\on{untl}}$, consider the corresponding lax prestack (which turns out to actually be a prestack)  
$(\Ran)_{I/}$ over $S$, see \secref{ss:l cofinal} for the notation. For $S'\in \Sch$ the corresponding category (actually, a groupoid) 
$(\Ran)_{I/}(S')$ consists of 
$$(S'\to S, I'\subset \Maps(S',X) \text{ such that } I|_{S'}\subseteq I').$$

Note that this groupoid is a retract of $\Maps(S',S\times \Ran)$. Indeed, the inverse map sends $I'\mapsto I'\cup  I|_{S'}$. 

\medskip

We need to show that $(\Ran)_{I/}$ is universally homologically contractible over $S$. However, the property of being 
universally homologically contractible survives retracts. Hence, it suffices to show that $S\times \Ran$ is 
universally homologically contractible over $S$, which follows from \thmref{t:Ran contr}.

\sssec{}

We shall now prove another assertion in the spirit of \thmref{t:Ran left cofinal}, 
to be used later (concretely, in the proof of \thmref{t:pointwise duality}). 

\medskip

Fix a point $x\in X$, and consider the maps
\begin{equation} \label{e:add point}
\unn_x:\Ran\to \Ran,\quad I\mapsto I\cup x.
\end{equation}
and
\begin{equation} \label{e:add point unital}
\unn_{x,\on{untl}}:\Ran_{\on{untl}}\to \Ran_{\on{untl}},\quad I\mapsto I\cup x.
\end{equation}

We will prove:

\begin{prop}  \label{p:ins point}
The map \eqref{e:add point unital} is universally homologically left cofinal.
\end{prop}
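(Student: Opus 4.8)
The plan is to exhibit, for every test scheme $S$ and every $S$-point $I$ of $\Ran_{\on{untl}}$, that the lax prestack $(\Ran_{\on{untl}})_{I/}$ formed relative to the map $\unn_{x,\on{untl}}$ is universally homologically contractible over $S$; by the definition of universal homological left cofinality in \secref{ss:l cofinal}, this is exactly what is required. So first I would unwind what $(\Ran_{\on{untl}})_{I/}$ is: for $S'\to S$ its value is the category of pairs $(J,\ \unn_{x,\on{untl}}(I)|_{S'}\to J)$, i.e.\ of finite non-empty subsets $J\subseteq\Maps(S',X)$ equipped with an inclusion $I|_{S'}\cup x\subseteq J$. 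The key observation is that this category has an \emph{initial object}, namely $J=I|_{S'}\cup x$ itself: any other $J$ receives a unique inclusion from it because the morphisms in $\Ran_{\on{untl}}(S')$ over a fixed source are just inclusions of subsets, and there is at most one such inclusion between two given subsets. Hence for every $S'$ the category $(\Ran_{\on{untl}})_{I/}(S')$ is contractible (it has an initial object, so its homotopy type is a point).

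Having this, I would conclude by \lemref{l:contr contr}: a map $g:\CY\to S$ of lax prestacks with $S$ a prestack whose value-wise functors $\CY(S')\to S(S')$ have contractible fibers is universally homologically contractible. Here the relevant functor is $(\Ran_{\on{untl}})_{I/}\to S$, and for each $S'$ its fiber over a point $S'\to S$ is precisely the category described above, which we have just seen is contractible. Therefore $(\Ran_{\on{untl}})_{I/}\to S$ is universally homologically contractible, and since this holds for all $(S,I)$, the map $\unn_{x,\on{untl}}$ is universally homologically left cofinal.

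The one point that needs a little care — and which I expect to be the only genuine obstacle — is the bookkeeping of \emph{morphisms} in the slice category: one must check that $(\Ran_{\on{untl}})_{I/}(S')$ really is the poset-like category of subsets containing $I|_{S'}\cup x$, with inclusions as morphisms, and in particular that $I|_{S'}\cup x$ is initial rather than merely weakly initial. This comes down to the fact that $\Ran_{\on{untl}}(S')$ is an ordinary $1$-category in which $\Hom$ between two objects is either empty or a single point (an inclusion of subsets), so that the under-category inherits the same property; the compatibility of all this with pullback along $S'\to S$ is automatic because $\unn_{x,\on{untl}}$ and the formation of $(\Ran_{\on{untl}})_{I/}$ are defined functor-value-wise. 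Once this is in place the argument is formal. I would also remark, as the author likely does, that this is the ``unital'' counterpart of \propref{p:ins point}'s non-unital sibling, and that unlike there one does not need $X$ connected: the contractibility here is on the nose, coming from an initial object, not from \thmref{t:Ran contr}.
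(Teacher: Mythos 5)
Your identification of the slice category is backwards, and the mistake is not just bookkeeping — with the corrected slice your argument as written fails. Recall from \secref{ss:l cofinal} that for a map $f:\CY_1\to\CY_2$ and $y_2\in\CY_2(S)$, the lax prestack $(\CY_1)_{y_2/}$ assigns to $S'\to S$ the category of pairs $\left(y_1\in\CY_1(S'),\ y_2|_{S'}\to f(y_1)\right)$: the functor $f$ is applied to $y_1$, the object being chosen, not to the fixed source $y_2$. With $f=\unn_{x,\on{untl}}$ and $y_2=I$ this gives the category of non-empty finite $I'\subset\Maps(S',X)$ with $I|_{S'}\subseteq I'\cup x$, \emph{not} the category of $J$ with $I|_{S'}\cup x\subseteq J$ as you wrote. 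These are genuinely different: the correct condition allows $I'$ to drop any element of $I|_{S'}$ that happens to be the constant map to $x$.

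This matters because the correct slice category does not have an initial object in general. For instance, if $I|_{S'}$ is just the constant map to $x$, the condition $I|_{S'}\subseteq I'\cup x$ is vacuous and the category is \emph{all} non-empty finite subsets of $\Maps(S',X)$, which has no minimal element. So you cannot conclude contractibility directly from an initial object, as your draft does. The paper's proof inserts one extra step: it passes to the full subcategory of $I'$ with the stronger condition $I|_{S'}\subseteq I'$. The inclusion of this subcategory admits a left adjoint $I'\mapsto I'\cup I|_{S'}$, hence is left cofinal; and the subcategory does have an initial object, namely $I'=I|_{S'}$, so it is contractible. With that insertion the rest of your argument — value-wise contractibility, appeal to \lemref{l:contr contr}, and the remark that connectedness of $X$ plays no role here (unlike in \thmref{t:Ran left cofinal}) — is correct and matches the paper.
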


Combining with \corref{c:l cofinal ff} we obtain:

\begin{cor} \hfill

\smallskip

\noindent{\em(a)} For $\CF\in \Shv^!(\Ran_{\on{untl}})$, the map
$$\on{C}_c^*(\Ran_{\on{untl}},\unn_{x,\on{untl}}^!(\CF))\to \on{C}_c^*(\Ran_{\on{untl}},\CF)$$
is an isomorphism.

\smallskip

\noindent{\em(b)} If $X$ is connected, for $\CF\in \Shv^!(\Ran)$ that lies in the essential image of the functor $\on{OblvUnit}$,
the map
$$\on{C}_c^*(\Ran,\unn_x^!(\CF))\to \on{C}_c^*(\Ran,\CF)$$
is an isomorphism.

\end{cor}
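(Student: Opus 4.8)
The plan is to prove (a) directly from \propref{p:ins point} together with the general formalism of universally homologically left cofinal maps developed in \secref{ss:l cofinal}, and then to reduce (b) to (a) by passing through the unital Ran space. For part (a): \propref{p:ins point} asserts precisely that the endomorphism $\unn_{x,\on{untl}}\colon\Ran_{\on{untl}}\to\Ran_{\on{untl}}$ is universally homologically left cofinal, so I would simply invoke \corref{c:l cofinal ff}(i) with $\CY_1=\CY_2=\Ran_{\on{untl}}$ and $f=\unn_{x,\on{untl}}$ to obtain that the canonical map $\on{C}^*_c(\Ran_{\on{untl}},\unn_{x,\on{untl}}^!(\CF))\to\on{C}^*_c(\Ran_{\on{untl}},\CF)$ is an isomorphism whenever either side is defined. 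Definedness is automatic in the constructible context and, in the D-module context, holds under the usual holonomicity/properness hypotheses, exactly as in the statement of \corref{c:l cofinal ff}. This is (a).

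For part (b), the one geometric input is the manifest identity of morphisms of lax prestacks $\phi\circ\unn_x=\unn_{x,\on{untl}}\circ\phi\colon\Ran\to\Ran_{\on{untl}}$, immediate from the descriptions $I\mapsto I\cup x$ of both $\unn_x$ and $\unn_{x,\on{untl}}$ and $I\mapsto I$ of $\phi$. Passing to $!$-pullbacks yields a canonical isomorphism of functors $\unn_x^!\circ\on{OblvUnit}\simeq\on{OblvUnit}\circ\unn_{x,\on{untl}}^!$ from $\Shv^!(\Ran_{\on{untl}})$ to $\Shv^!(\Ran)$. By hypothesis $\CF\simeq\on{OblvUnit}(\CF_{\on{untl}})$ for some $\CF_{\on{untl}}\in\Shv^!(\Ran_{\on{untl}})$, so $\unn_x^!(\CF)\simeq\on{OblvUnit}(\unn_{x,\on{untl}}^!(\CF_{\on{untl}}))$. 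I would then apply the functoriality of compactly supported cohomology (the description of $\on{C}^*_c(\CY,-)$ as a colimit over $\Sch_{/\CY}$, \lemref{l:express cohomology}, is functorial in $\CY$) to the two factorizations of the composite $\Ran\to\Ran_{\on{untl}}$, namely $\phi\circ\unn_x$ and $\unn_{x,\on{untl}}\circ\phi$, applied to the sheaf $\CF_{\on{untl}}$. Along the first path, the resulting canonical map $\on{C}^*_c(\Ran,\unn_x^!(\CF))\to\on{C}^*_c(\Ran_{\on{untl}},\CF_{\on{untl}})$ factors as $\on{C}^*_c(\Ran,\unn_x^!(\CF))\to\on{C}^*_c(\Ran,\on{OblvUnit}(\CF_{\on{untl}}))\to\on{C}^*_c(\Ran_{\on{untl}},\CF_{\on{untl}})$, whose second arrow is an isomorphism by \thmref{t:unital chiral homology} (this is where connectedness of $X$ enters). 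Along the second path it factors as $\on{C}^*_c(\Ran,\on{OblvUnit}(\unn_{x,\on{untl}}^!(\CF_{\on{untl}})))\to\on{C}^*_c(\Ran_{\on{untl}},\unn_{x,\on{untl}}^!(\CF_{\on{untl}}))\to\on{C}^*_c(\Ran_{\on{untl}},\CF_{\on{untl}})$, whose first arrow is an isomorphism by \thmref{t:unital chiral homology} and whose second is an isomorphism by part (a). Hence the common composite is an isomorphism; since the second arrow of the first factorization is an isomorphism, the first arrow there --- which is exactly the map $\on{C}^*_c(\Ran,\unn_x^!(\CF))\to\on{C}^*_c(\Ran,\CF)$ of the statement --- is an isomorphism, proving (b).

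The main point requiring care is not any deep obstacle but a bookkeeping one: I must verify that the map ``$\on{C}^*_c(\Ran,\unn_x^!(\CF))\to\on{C}^*_c(\Ran,\CF)$'' named in (b) literally \emph{is} the first arrow of the first factorization above, i.e.\ that the natural transformation furnished by \thmref{t:unital chiral homology} is compatible, as a morphism of ``pushforward-to-a-point'' functors, with the identity $\phi\circ\unn_x=\unn_{x,\on{untl}}\circ\phi$. Concretely this is the associativity (``two-out-of-three'') statement for the canonical maps $\on{C}^*_c(\CY,g^!(-))\to\on{C}^*_c(\CY',-)$ attached to a morphism $g\colon\CY\to\CY'$ of lax prestacks, composed along $\Ran\to\Ran_{\on{untl}}\to\Ran_{\on{untl}}$; I expect it to follow formally from \lemref{l:express cohomology} and the functoriality in $\CY$ of the indexing category $\Sch_{/\CY}$. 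The only other thing to track is that all cohomologies in sight are defined, which is free in the constructible context and holds under the standard hypotheses on $X$ (e.g.\ properness) in the D-module context.
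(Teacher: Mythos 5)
Your proposal is correct and follows the paper's intended route: part (a) is exactly the paper's one-line derivation from \propref{p:ins point} together with \corref{c:l cofinal ff}, and part (b) is the natural deduction (left implicit in the paper) via the identity $\phi\circ\unn_x=\unn_{x,\on{untl}}\circ\phi$, \thmref{t:unital chiral homology} applied twice, and part (a). The compatibility of the canonical maps $\on{C}^*_c(\CY,g^!(-))\to\on{C}^*_c(\CY',-)$ with composition that you flag is indeed just the standard compatibility of counits $(g\circ h)_!(g\circ h)^!\to g_!g^!\to\on{Id}$, so there is no gap.
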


\begin{rem}
In the theory of chiral algebras, the assertion of the above corollary is known under the motto ``inserting of the vacuum does not change
chiral homology".
\end{rem}

\begin{proof}[Proof of \propref{p:ins point}]

Given an $S$-point $I$ of $\Ran_{\on{untl}}$, consider the corresponding lax prestack
$(\Ran_{\on{untl}})_{I/}$. We claim that the map $(\Ran_{\on{untl}})_{I/}\to S$ is
value-wise contractible, which would imply the required assertion by \lemref{l:contr contr}.

\medskip

For $S'\in \Sch_{/S}$ the category of lifts of $I$ to an $S'$-point of $(\Ran_{\on{untl}})_{I/}$ is that of 
$$(I'\subset \Maps(S',X) \text{ such that } I|_{S'}\subseteq I'\cup x).$$
We wish to show that this category is contractible.

\medskip

The above category contains a left cofinal full subcategory consisting of $I'$ for which $I|_{S'}\subset I'$. Now, the 
latter subcategory is contractible because it has an initial point, namely, one with $I'=I|_{S'}$.

\end{proof} 

\section{The augmented version of the Ran space and taking the units out}  \label{s:aug}

The material in this section is needed for the material in Part III of the paper, which in turn
is used in \secref{s:local duality} (the reduction of the cohomological product formula to a local statement). 

\medskip

In this section we will introduce yet one more version of the Ran space--the augmented one. Continuing the
analogy with associative algebras from \secref{ss:untl Ran}, in the topological context and when $X=\BR^1$, the augmented Ran space 
will be the natural recipient of the functor from \emph{unital augmented} associative algebras. 

\medskip

Note, however, that the category of unital augmented associative algebras is naturally equivalent to the category
of non-unital associative algebras. Therefore, it is natural to expect a parallel phenomenon for sheaves on the corresponding
Ran spaces: this will be reflected by \thmref{t:main}.  

\ssec{The augmented Ran space}

To motivate the definition of the unital augmented Ran space we shall once again appeal to the analogy with associative
algebras. For an associative algebra $A$ consider the corresponding $\CA\in \Shv^!(\Ran)$ so that for a finite subset
$I$ of points of $\BR$ we have
$$\CA_I=\underset{i\in I}\otimes\, A.$$

As was mentioned above, if $A$ is unital, whenever $I_1\subseteq I_2$ we have a map $\CA_{I_1}\to \CA_{I_2}$. Assume
now that $A$ is augmented, and let $A^+$ denote its augmentation ideal. Then, for a finite set $I$ and its (possibly empty)
subset $K$, we set
$$\CA_{K\subseteq I}:=\left(\underset{i\in K}\otimes\, A/A^+\right) \bigotimes \left(\underset{i\in I-K}\otimes\, A\right),$$
where $A/A^+$ is the ground ring, so tensoring by it is the identity functor.

\medskip
  
Now, whenever we have an inclusion $I_1\subseteq I_2$ such that $K_1\subseteq K_2$, we have a map
$$\CA_{K_1\subseteq I_1}\to \CA_{K_2\subseteq I_2}.$$

This is the kind of structure that will be encoded by sheaves on the augmented Ran space. 

\sssec{}

We define the unital augmented version of the Ran space, denoted $\Ran_{\on{untl,aug}}\in \on{LaxPreStk}$, as follows:

\medskip

\noindent For $S\in \Sch$ we let $\Ran_{\on{untl,aug}}(S)$ be the (ordinary) category whose objects are
pairs $(K\subseteq I)$ of finite subsets of $\Maps(S,X)$ with $I\neq \emptyset$. Morphisms from
$(K\subseteq I)$ to $(K_1\subseteq I_1)$ are inclusions $I\subseteq I_1$ such that $K\subseteq K_1$.

\sssec{}

We have the tautological map
$$\iota:\Ran_{\on{untl}}\to \Ran_{\on{untl,aug}},\quad  I\mapsto (\emptyset\subset I)$$
and its left inverse
$$\pi:\Ran_{\on{untl,aug}}\to \Ran_{\on{untl}},\quad (K\subseteq I)\mapsto I.$$

We let 
$$\on{OblvAug}:=\iota^!:\Shv^!(\Ran_{\on{untl,aug}})\to \Shv^!(\Ran_{\on{untl}})$$
denote the corresponding pullback functor. 

\begin{rem}
In terms of the analogy with associative algebras, the functor $\on{OblvAug}$ corresponds to the forgetful
functor from the category of unital augmented algebras to that of just unital algebras. 
\end{rem} 

\ssec{Adding the unit and augmentation}  \label{ss:unit and aug}

We now claim that the functor 
$$\on{AddUnit}: \Shv^!(\Ran)\to \Shv^!(\Ran_{\on{untl}}),$$
constructed in \secref{ss:add unit}, canonically factors as
$$\Shv^!(\Ran) \overset{\on{AddUnit}_{\on{aug}}}\longrightarrow \Shv^!(\Ran_{\on{untl,aug}})\overset{\on{OblvAug}}\longrightarrow 
\Shv^!(\Ran_{\on{untl}}).$$

\medskip

Note that this is in line with the situation with associative algebras: the algebra obtained by adjoining a unit to a non-unital algebra
is naturally augmented. 

\sssec{}

Consider the object $\Ran^\to_{\on{aug}}\in \on{LaxPreStk}$, defined as follows:

\medskip

\noindent For $S\in \Sch$ we let $\Ran^\to_{\on{aug}}(S)$ be the (ordinary) category whose objects are triples 
$$(K\subseteq I \supseteq J),\quad K\cap J\neq \emptyset$$ of finite subsets of $\Maps(S,X)$. Morphisms from 
$(K\subseteq I\supseteq J)$ to $(K_1\subseteq I_1\supseteq J_1)$ are inclusions $I\subseteq I_1$, such that
$K\subseteq K_1$ and $J=J_1$. 

\medskip

We have the natural maps
$$\psi_{\on{aug}}:\Ran^\to_{\on{aug}}\to \Ran_{\on{untl,aug}}, \quad (K\subseteq I\supseteq J)\mapsto (K\subseteq I)$$
and
$$\xi_{\on{aug}}:\Ran^\to_{\on{aug}}\to \Ran, \quad (K\subseteq I\supseteq J)\mapsto J.$$

\begin{lem}  \label{l:psi pseudo-proper aug}
The map $\psi_{\on{aug}}$ is pseudo-proper. 
\end{lem}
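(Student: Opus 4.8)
The plan is to imitate the proof of \lemref{l:psi pseudo-proper} almost verbatim, since $\psi_{\on{aug}}$ and $\psi$ differ only by the extra ``augmentation'' datum $K$ carried along for the ride, and that datum does not interact with the fibers of $\psi_{\on{aug}}$. Concretely, I will verify the two conditions in the definition of pseudo-properness for a map of lax prestacks (\secref{sss:pseudo proper}): (i) for every $S\in\Sch$, the functor $\Ran^\to_{\on{aug}}(S)\to\Ran_{\on{untl,aug}}(S)$ is a coCartesian fibration in groupoids; (ii) for every $(S,(K\subseteq I))$, the fiber product $S\underset{\Ran_{\on{untl,aug}}}\times \Ran^\to_{\on{aug}}$ is a pseudo-proper \emph{prestack} over $S$.

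First I would check (i). Fix $S$. An object of $\Ran^\to_{\on{aug}}(S)$ is a triple $(K\subseteq I\supseteq J)$ with $K\cap J\neq\emptyset$, and a morphism to $(K_1\subseteq I_1\supseteq J_1)$ is an inclusion $I\subseteq I_1$ with $K\subseteq K_1$ and $J=J_1$. The fiber over a fixed object $(K\subseteq I)\in\Ran_{\on{untl,aug}}(S)$ is the groupoid (in fact discrete set) of subsets $J\subseteq I$ with $K\cap J\neq\emptyset$, with only identity morphisms, so the fibers are indeed $\infty$-groupoids. Given a morphism $(K\subseteq I)\to(K_1\subseteq I_1)$ in the base (so $I\subseteq I_1$, $K\subseteq K_1$) and an object $(K\subseteq I\supseteq J)$ over the source, the coCartesian lift is $(K\subseteq I\supseteq J)\to(K_1\subseteq I_1\supseteq J)$; one needs $K_1\cap J\neq\emptyset$, which holds because $K\cap J\neq\emptyset$ and $K\subseteq K_1$. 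The universal property of this arrow as a coCartesian lift is immediate from the combinatorial description of morphisms, exactly as in \lemref{l:psi pseudo-proper}.

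Next I would check (ii). Fix $S$ and $(K\subseteq I)$ an $S$-point of $\Ran_{\on{untl,aug}}$, i.e.\ finite subsets $K\subseteq I\subseteq\Maps(S,X)$ with $I\neq\emptyset$. The fiber product $S\underset{\Ran_{\on{untl,aug}}}\times \Ran^\to_{\on{aug}}$ assigns to $S'\to S$ the category of triples $(K'\subseteq I'\supseteq J')$ over $S'$ with $K'\cap J'\neq\emptyset$ together with a morphism to $(K|_{S'}\subseteq I|_{S'})$ in $\Ran_{\on{untl,aug}}(S')$; unwinding, this is the datum of a finite non-empty set $J'$ mapping to $S'$, landing in the image of $I$, and meeting $K$. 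Following the proof of \lemref{l:psi pseudo-proper} (and of \propref{p:express Ran}), this is the colimit over the category of diagrams $\CJ\to\CJ'\twoheadleftarrow I$ (with $\CJ$ finite non-empty, $\CJ\to\CJ'$ an injection whose image meets the preimage of $K$ — combinatorial bookkeeping one makes precise) of the schemes $X^{\CJ'}\underset{X^I}\times S$, each of which is a closed subscheme of $X^{\CJ'}\times S$ proper over $S$ since $X$ is separated. Hence the fiber product is a colimit of schemes proper over $S$, i.e.\ pseudo-proper over $S$.

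I do not expect any real obstacle: the only point requiring care is the bookkeeping in (ii) — namely writing down the precise index category of ``abstract'' finite sets $\CJ\to\CJ'\twoheadleftarrow I$ that computes the fiber product, and checking that the constraint $K\cap J\neq\emptyset$ translates into a non-emptiness condition on the appropriate fiber of $\CJ\to\CJ'$ over (the abstract set underlying) $K$, without disturbing the proof that the colimit is over proper schemes. Since the extra constraint only cuts the index category down to a full subcategory and removes some schemes from the colimit, pseudo-properness of the total fiber product is unaffected. Thus the proof is a routine variant of \lemref{l:psi pseudo-proper}, and I would present it as such, referring back to that lemma and to \propref{p:express Ran} for the details of the colimit presentation.
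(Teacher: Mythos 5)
Your proposal matches the paper's own proof: the paper likewise refers to the proof of \lemref{l:psi pseudo-proper}, and the only modification is to the index category for the colimit expressing the fiber product, which is cut down to the full subcategory of diagrams $\CJ\to \CJ'\twoheadleftarrow I$ for which the images of $\CJ$ and $K$ in $\CJ'$ have non-empty intersection—precisely the constraint you identified (note only that the maps $\CJ\to\CJ'$ are arbitrary, not required to be injections, though this does not affect the argument). Your explicit verification of the coCartesian-fibration-in-groupoids condition is correct and is a worthwhile piece of detail the paper leaves implicit.
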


\begin{proof} 

The proof is similar to that of \lemref{l:psi pseudo-proper} except for the expression for the
fiber product 
$$S\underset{\Ran_{\on{untl,aug}}}\times \Ran^\to_{\on{aug}}$$
for a given $S$-point $K\subseteq I$ of $\Ran_{\on{untl,aug}}$. 

\medskip

The fiber product in question is 
$$\underset{\CJ\to \CJ'\twoheadleftarrow I}{\on{colim}}\, X^{J'}\underset{X^I}\times S,$$
where the colimit is taken over the full subcategory of that appearing in the proof
of \lemref{l:psi pseudo-proper}, consisting of those $\CJ\to \CJ'\twoheadleftarrow I$,
for which the images of $\CJ$ and $K$ in $\CJ'$ have a non-empty intersection. 

\end{proof} 

\sssec{}

We claim that there is a canonically defined natural transformation

\begin{equation} \label{e:nat trans 1}
(\psi_{\on{aug}})_!\circ (\xi_{\on{aug}})^! \to \pi^! \circ \on{AddUnit}.
\end{equation}
as functors 
$$\Shv^!(\Ran)\to \Shv^!(\Ran_{\on{untl,aug}}).$$

\medskip

Indeed, it comes by adjunction using the commutative diagram
$$
\CD
\Ran^\to_{\on{aug}}  @>{\pi^\to}>>  \Ran^\to @>{\xi}>>  \Ran \\
@V{\psi_{\on{aug}}}VV    @VV{\psi}V   \\
\Ran_{\on{untl,aug}}   @>{\pi}>>  \Ran_{\on{untl}},
\endCD
$$
where 
$$\pi^\to(K\subseteq I\supseteq J)=(J\subseteq I),$$
and using the fact that
$$\xi_{\on{aug}}=\xi\circ \pi^\to.$$

\sssec{}

We define the functor
$$\on{AddUnit}_{\on{aug}}:\Shv^!(\Ran)\to \Shv^!(\Ran_{\on{untl,aug}})$$ as the cofiber of the natural transformation
\eqref{e:nat trans 1}.

\sssec{}

The functor $\on{AddUnit}_{\on{aug}}$ can be described explicitly as follows:

\medskip

For $S\in \Sch$ fix an object $I\in \Ran_{\on{untl}}(S)$ corresponding to a finite set $I$ 
of maps $S\to X$ with \emph{pairwise non-intersecting images}. Let $K\subseteq I$ be a subset, 
regarded as an object of $\Ran_{\on{untl,aug}}(S)$. 

\begin{prop} \label{p:inserting the aug} 
For $\CF\in \Shv^!(\Ran)$, the object $\on{AddUnit}_{\on{aug}}(\CF)_{S,K\subset I}\in \Shv^!(S)$ is given by
$$\underset{\emptyset\neq J\subseteq (I-K)}\oplus\, \CF_{S,J},$$ where $J$ is regarded as an $S$-point of $\Ran$
via $J\subseteq I\subset \Maps(S,X)$. 
\end{prop}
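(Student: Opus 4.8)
The plan is to evaluate both the source and the target of the defining cofiber sequence \eqref{e:nat trans 1} at the $S$-point $K\subseteq I$, to identify the natural transformation there with the inclusion of a direct summand, and to read off its cofiber. Throughout we use that $I$ consists of maps with pairwise non-intersecting images, so that the map $S\to\Ran_{\on{untl,aug}}$ factors through $\overset{\circ}X{}^I$.

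First I would compute the target. The map $\pi\colon\Ran_{\on{untl,aug}}\to\Ran_{\on{untl}}$ sends $(K\subseteq I)$ to $I$ and $\pi^!$ is restriction along it, so $(\pi^!\circ\on{AddUnit}(\CF))_{S,K\subseteq I}\simeq\on{AddUnit}(\CF)_{S,I}$, which by \propref{p:inserting the unit} equals $\underset{\emptyset\neq J\subseteq I}\oplus\,\CF_{S,J}$. Next I would compute the source $(\psi_{\on{aug}})_!\circ(\xi_{\on{aug}})^!(\CF)$ at $(K\subseteq I)$. By \lemref{l:psi pseudo-proper aug} the map $\psi_{\on{aug}}$ is pseudo-proper, so by \corref{c:lax homology} its $!$-pushforward is computed after base change to $S$: one forms the prestack $S\underset{\Ran_{\on{untl,aug}}}\times\Ran^\to_{\on{aug}}$, which by the proof of \lemref{l:psi pseudo-proper aug} is the colimit of the schemes $X^{\CJ'}\underset{X^I}\times S$ over those $\CJ\to\CJ'\twoheadleftarrow I$ for which the images of $\CJ$ and $K$ in $\CJ'$ meet, and then pushes $(\xi_{\on{aug}})^!(\CF)$ forward from it to $S$. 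Exactly as in the proof of \propref{p:inserting the unit}: since $S$ maps to $\overset{\circ}X{}^I$, only the terms with $I\overset{\sim}\to\CJ'$ contribute; for each non-empty $J\subseteq I$ the indexing category of $\CJ$'s with image $J$ has a terminal object (as in the proof of \propref{p:express Ran}), so the colimit collapses; and the surviving constraint ``images of $\CJ$ and $K$ meet'' becomes $J\cap K\neq\emptyset$. Hence the source is $\underset{\emptyset\neq J\subseteq I,\ J\cap K\neq\emptyset}\oplus\,\CF_{S,J}$.

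Finally I would identify the arrow \eqref{e:nat trans 1} under these two descriptions. Unwinding its construction from the commutative square relating $\pi^\to,\psi_{\on{aug}},\psi,\pi$ together with $\xi_{\on{aug}}=\xi\circ\pi^\to$, and matching the colimit presentation used for $(\psi_{\on{aug}})_!\circ(\xi_{\on{aug}})^!$ with the one used in \propref{p:inserting the unit} for $\on{AddUnit}$, the map becomes the inclusion of the subsum indexed by the $J$ with $J\cap K\neq\emptyset$ into the full sum $\underset{\emptyset\neq J\subseteq I}\oplus\,\CF_{S,J}$. This inclusion is split, so its cofiber is the complementary summand $\underset{\emptyset\neq J\subseteq I,\ J\cap K=\emptyset}\oplus\,\CF_{S,J}=\underset{\emptyset\neq J\subseteq(I-K)}\oplus\,\CF_{S,J}$, which is the assertion. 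The step that requires genuine care — and which I expect to be the main obstacle — is precisely this last compatibility: verifying that \eqref{e:nat trans 1} is \emph{literally} the split inclusion, and not merely some abstract map between these two direct sums. This amounts to tracing the base-change isomorphisms of \corref{c:lax homology} through the above square and checking that they respect the $J$-indexed decompositions on both sides; everything else is a formal consequence of pseudo-properness and \propref{p:inserting the unit}.
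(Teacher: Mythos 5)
Your proposal is correct and is essentially an unpacking of the paper's one-line proof, which simply cites \corref{c:lax homology} (pseudo-properness and base change): you compute both sides of the cofiber sequence \eqref{e:nat trans 1} at $(K\subseteq I)$ via base change, recognize the source as the subsum of $\on{AddUnit}(\CF)_{S,I}$ indexed by $J$ with $J\cap K\neq\emptyset$, and read off the complementary summand. The step you flag as the ``main obstacle''---that \eqref{e:nat trans 1} is the split inclusion and not some other map---is indeed the only point of substance, but it is the same kind of bookkeeping as in the proof of \propref{p:inserting the unit} (base change is natural, so the identification of the colimit over $\CJ\to\CJ'\twoheadleftarrow I$ for $\Ran^\to_{\on{aug}}$ with the corresponding full subcategory of the one for $\Ran^\to$ carries the $J$-indexed decompositions into each other), and the paper does not elaborate on it either.
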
 

\begin{proof}

Follows from \corref{c:lax homology}. 

\end{proof}

\sssec{}

By construction, the functor $\on{AddUnit}_{\on{aug}}$ comes equipped with a natural transformation
$$\pi^! \circ \on{AddUnit}\to \on{AddUnit}_{\on{aug}},\quad \Shv^!(\Ran)\to \Shv^!(\Ran_{\on{untl,aug}}).$$

Applying the functor
$$\on{OblvAug}: \Shv^!(\Ran_{\on{untl,aug}})\to \Shv^!(\Ran_{\on{untl}})$$
we obtain a natural transformation

\begin{equation} \label{e:nat trans 2}
\on{AddUnit}\to \on{OblvAug}\circ \on{AddUnit}_{\on{aug}}.
\end{equation}

\begin{lem} \label{l:inserting the aug} 
The natural transformation \eqref{e:nat trans 2} is an isomorphism.
\end{lem}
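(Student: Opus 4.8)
The statement to be proved is \lemref{l:inserting the aug}: that the natural transformation \eqref{e:nat trans 2}, namely $\on{AddUnit}\to \on{OblvAug}\circ \on{AddUnit}_{\on{aug}}$, is an isomorphism. The plan is to reduce everything to the explicit !-fiber descriptions already available and check that the map in question is pointwise the identity.

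First I would use the fact that to prove a map of sheaves on $\Ran_{\on{untl}}$ is an isomorphism it suffices to check it on !-fibers at all $S$-points $I\in\Ran_{\on{untl}}(S)$; and by the construction of $\Ran_{\on{untl}}$ (an object of $\on{LaxPreStk}$ whose value at $S$ is the poset of finite non-empty subsets of $\Maps(S,X)$) together with \propref{p:express Ran}-type reasoning, it is enough to check on points $I$ corresponding to a finite set of maps $S\to X$ with \emph{pairwise non-intersecting images}, since an arbitrary $S$-point factors through such a locus after base change by the corresponding partition-diagonal strata. This is exactly the setting in which \propref{p:inserting the unit} and \propref{p:inserting the aug} apply. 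By \propref{p:inserting the unit}, $\on{AddUnit}(\CF)_{S,I}\simeq \underset{\emptyset\neq J\subseteq I}\oplus \CF_{S,J}$. On the other side, $\on{OblvAug}=\iota^!$ where $\iota(I)=(\emptyset\subseteq I)$, so $\bigl(\on{OblvAug}\circ \on{AddUnit}_{\on{aug}}(\CF)\bigr)_{S,I}\simeq \on{AddUnit}_{\on{aug}}(\CF)_{S,\emptyset\subseteq I}$, and by \propref{p:inserting the aug} (with $K=\emptyset$, so that $I-K=I$) this is $\underset{\emptyset\neq J\subseteq I}\oplus \CF_{S,J}$. Thus both sides have identified !-fibers; the content is that \emph{the map} \eqref{e:nat trans 2} realizes precisely this identification.

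Next I would trace through the construction of \eqref{e:nat trans 2} to see that it is the identity under these identifications. The map is obtained by applying $\on{OblvAug}=\iota^!$ to the natural transformation $\pi^!\circ\on{AddUnit}\to \on{AddUnit}_{\on{aug}}$, which in turn is the cofiber of \eqref{e:nat trans 1}, i.e.\ of $(\psi_{\on{aug}})_!\circ(\xi_{\on{aug}})^!\to \pi^!\circ\on{AddUnit}$. All of these functors are computed via the pseudo-properness statements \lemref{l:psi pseudo-proper} and \lemref{l:psi pseudo-proper aug} together with \corref{c:lax homology}, so each !-fiber is an explicit colimit over one of the combinatorial index categories (of diagrams $\CJ\to\CJ'\twoheadleftarrow I$, with or without the non-empty-intersection constraint involving $K$). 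Restricting to $K=\emptyset$, the constraint ``images of $\CJ$ and $K$ in $\CJ'$ meet'' becomes vacuous in the relevant way — more precisely the term $(\psi_{\on{aug}})_!\circ(\xi_{\on{aug}})^!$ in \eqref{e:nat trans 1}, when pulled back along $\iota$, has !-fiber indexed by those $\CJ\to\CJ'\twoheadleftarrow I$ with image of $\CJ$ disjoint from image of $K=\emptyset$, hence by those for which $\CJ$ maps to $\emptyset$, which forces $\CJ=\emptyset$; but $\CJ$ is required non-empty, so that colimit is over the empty category and hence is $0$. Therefore the cofiber of \eqref{e:nat trans 1}, restricted along $\iota$, is just $\pi^!\circ\on{AddUnit}$ restricted along $\iota$, i.e.\ $\on{OblvAug}\circ\pi^!\circ\on{AddUnit}=\on{AddUnit}$ (using $\pi\circ\iota=\on{id}$), and the natural transformation becomes an isomorphism. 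I would phrase this cleanly using the commutative diagram displayed before \eqref{e:nat trans 1} and the base-change/pseudo-properness compatibilities of \corref{c:lax homology} and \lemref{l:base change}, rather than writing out colimits by hand.

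The main obstacle I anticipate is bookkeeping rather than mathematics: making sure the identifications of !-fibers from \propref{p:inserting the unit} and \propref{p:inserting the aug} are \emph{compatible with the transition maps} (i.e.\ that one genuinely has an isomorphism of sheaves, not just a fiberwise isomorphism), and that the natural transformation \eqref{e:nat trans 2} is the one inducing the evident matching of direct summands $\CF_{S,J}$. The cleanest route is probably to avoid fiberwise arguments entirely where possible: observe that, since $\pi\circ\iota=\on{id}_{\Ran_{\on{untl}}}$, applying $\iota^!$ to the defining cofiber sequence for $\on{AddUnit}_{\on{aug}}$ yields a cofiber sequence $\iota^!(\psi_{\on{aug}})_!(\xi_{\on{aug}})^!\to \on{AddUnit}\to \on{OblvAug}\circ\on{AddUnit}_{\on{aug}}$, and then show the first term vanishes. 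Vanishing of $\iota^!\circ(\psi_{\on{aug}})_!$ should follow formally: by pseudo-properness of $\psi_{\on{aug}}$ (\lemref{l:psi pseudo-proper aug}) and base change (\corref{c:lax homology}), $\iota^!\circ(\psi_{\on{aug}})_!$ is computed by the !-pushforward along the base change of $\psi_{\on{aug}}$ over $\iota$, whose source is the fiber product parametrizing $(K\subseteq I\supseteq J)$ with $K=\emptyset$ and $K\cap J\neq\emptyset$ — an \emph{empty} lax prestack — so the pushforward of anything from it is $0$. That is the crux, and once it is in place the lemma follows immediately.
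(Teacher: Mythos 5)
Your proposal is correct, and you actually offer two complete routes. The first --- reducing via \lemref{l:testing} to $S$-points with pairwise non-intersecting images and comparing the explicit fiber formulas from \propref{p:inserting the unit} and \propref{p:inserting the aug} (the $K=\emptyset$ case of the latter giving $\underset{\emptyset\neq J\subseteq I}\oplus\,\CF_{S,J}$ on both sides) --- is exactly the paper's one-line proof; you rightly flag that the subtle point there is confirming that \eqref{e:nat trans 2} induces the evident matching of summands and not merely an abstract fiberwise isomorphism. Your second route sidesteps this issue entirely and is, I think, cleaner: apply the exact functor $\iota^!$ to the defining cofiber sequence $(\psi_{\on{aug}})_!\circ\xi_{\on{aug}}^!\to\pi^!\circ\on{AddUnit}\to\on{AddUnit}_{\on{aug}}$; since $\pi\circ\iota=\on{id}$, the middle term becomes $\on{AddUnit}$ and the second map becomes precisely \eqref{e:nat trans 2}, so it suffices to show the first term $\iota^!\circ(\psi_{\on{aug}})_!\circ\xi_{\on{aug}}^!$ vanishes. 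This follows from pseudo-proper base change (\corref{c:lax homology}, via \lemref{l:psi pseudo-proper aug}) once one observes that $\Ran_{\on{untl}}\underset{\Ran_{\on{untl,aug}}}\times\Ran^\to_{\on{aug}}$ (fiber product over $\iota$ and $\psi_{\on{aug}}$) is empty: matching over $\iota$ in the poset $\Ran_{\on{untl,aug}}(S)$ forces $K=\emptyset$, which is incompatible with the constraint $K\cap J\neq\emptyset$ defining $\Ran^\to_{\on{aug}}$. The second argument buys you a structurally transparent proof in which the identification of the transformation comes for free, at the modest cost of explicitly invoking the base-change machinery; the paper's proof is shorter on the page but leaves that identification to the reader.
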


\begin{proof}
Follows from \propref{p:inserting the aug}, using the next lemma.
\end{proof}

\begin{lem} \label{l:testing}
In order to check whether a map between two objects in the category $\Shv^!(\Ran)$ or $\Shv^!(\Ran_{\on{untl}})$ \emph{(}resp., $\Shv^!(\Ran_{\on{untl,aug}})$\emph{)}
is an isomorphism, it is enough to check that it induces on $S$-points corresponding to finite subsets of $I\subset \Maps(S,X)$
\emph{(}resp., $K\subset I\subset \Maps(S,X)$\emph{)} such that the maps $S\to X$ that comprise $I$ have 
\emph{pairwise non-intersecting images}.
\end{lem}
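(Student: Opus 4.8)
The plan is to reduce all three assertions to a single statement about schemes --- that a morphism of sheaves on $X^\CI$ is an isomorphism as soon as its restriction to every diagonal stratum is --- and then to match those strata with the prescribed test $S$-points.

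First I would observe that for each of $\CY\in\{\Ran,\Ran_{\on{untl}},\Ran_{\on{untl,aug}}\}$ the category $\Shv^!(\CY)$ is built as a limit, resp.\ a category of Cartesian-arrow-preserving functors, out of the categories $\Shv(S)$ for $S\in\Sch_{/\CY}$; hence a natural transformation $\CF\to\CG$ is an isomorphism if and only if each $\CF_{S,y}\to\CG_{S,y}$ is. (The extra, non-Cartesian, morphisms of $\Sch_{/\Ran_{\on{untl}}}$ and $\Sch_{/\Ran_{\on{untl,aug}}}$ coming from inclusions of finite subsets are irrelevant here, since equivalences in a functor category are detected objectwise.) Next, for an abstract finite nonempty set $\CI$ I would use the tautological map $\on{ins}_\CI:X^\CI\to\Ran$, and in the augmented case its evident variant
$$\on{ins}_{\CK\subseteq\CI}:X^\CI\to\Ran_{\on{untl,aug}},\qquad (x_i)_{i\in\CI}\mapsto\bigl(\{x_k\}_{k\in\CK}\subseteq\{x_i\}_{i\in\CI}\bigr),$$
for a subset $\CK\subseteq\CI$. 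Since any $S$-point of $\Ran$ (resp.\ $\Ran_{\on{untl}}$, $\Ran_{\on{untl,aug}}$) is a finite nonempty subset $I\subseteq\Maps(S,X)$ (resp.\ such an $I$ with a chosen subset $K$), choosing a labelling $\CI\overset{\sim}\to I$ exhibits the $S$-point as $S\to X^\CI$ followed by $\on{ins}_\CI$ (resp.\ $\on{ins}_{\CK\subseteq\CI}$), so that $\CF_{S,y}$ is the $!$-pullback along $S\to X^\CI$ of $\on{ins}_\CI^!(\CF)$ (resp.\ $\on{ins}_{\CK\subseteq\CI}^!(\CF)$). As $!$-pullback, being a functor, preserves isomorphisms, it therefore suffices to prove that $\on{ins}_\CI^!(\CF\to\CG)$ (resp.\ $\on{ins}_{\CK\subseteq\CI}^!(\CF\to\CG)$) is an isomorphism on $X^\CI$, for all $\CI$ (resp.\ all $\CK\subseteq\CI$).

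Finally I would stratify $X^\CI$ by the locally closed strata indexed by surjections $q:\CI\twoheadrightarrow\CJ$, the $q$-stratum being the locus on which $x_i=x_{i'}$ precisely when $q(i)=q(i')$; via the diagonal closed embedding $\Delta_q:X^\CJ\hookrightarrow X^\CI$ this stratum is identified with the open subscheme $\overset{\circ}X{}^\CJ\subset X^\CJ$. A morphism of sheaves on $X^\CI$ is an isomorphism if and only if its restriction to each such stratum is an isomorphism (standard recollement). A direct computation gives $\on{ins}_\CI\circ\Delta_q=\on{ins}_\CJ$ and $\on{ins}_{\CK\subseteq\CI}\circ\Delta_q=\on{ins}_{q(\CK)\subseteq\CJ}$; restricting further to $\overset{\circ}X{}^\CJ$, these are precisely $S$-points of the required kind --- finite subsets of $\Maps(\overset{\circ}X{}^\CJ,X)$ whose members have pairwise non-intersecting images, together in the augmented case with the augmentation subset $q(\CK)$. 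Assembling these three reductions proves the lemma; the $\Ran_{\on{untl}}$ case may alternatively be deduced directly from the $\Ran$ case, as $\on{OblvUnit}=\phi^!$ is conservative and identifies the two classes of test points. I expect the only point needing care --- rather than a genuine obstacle --- to be this last identification of the diagonal strata of $X^\CI$ with copies of $\overset{\circ}X{}^\CJ$ and the compatibility of the $\on{ins}$-maps with the diagonal embeddings; everything else is formal.
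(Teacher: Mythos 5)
Your proof is correct and follows exactly the route the paper has in mind — the paper's proof is simply the one-line hint ``Use the diagonal stratification of $X^I$,'' and your argument fills in precisely that stratification (indexed by surjections $\CI\twoheadrightarrow\CJ$, with each stratum identified with $\overset{\circ}X{}^\CJ$ via the diagonal map, and the compatibility $\on{ins}_\CI\circ\Delta_q=\on{ins}_\CJ$, resp. $\on{ins}_{\CK\subseteq\CI}\circ\Delta_q=\on{ins}_{q(\CK)\subseteq\CJ}$). The preliminary reductions — detecting isomorphisms objectwise in the lax-prestack sheaf category, and factoring an arbitrary $S$-point through $\on{ins}_\CI$ (resp. $\on{ins}_{\CK\subseteq\CI}$) via a labelling — are exactly the formal steps the paper leaves implicit.
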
 

\begin{proof}

Use the diagonal stratification of $X^I$.

\end{proof} 

\ssec{The functor of taking the unit out}   \label{ss:out}

As was mentioned in the beginning of this chapter, we expect the procedure of inserting the unit, viewed as taking values
in the unital augmented category, to be invertible. We will realize this in the present subsection.

\medskip

Namely, we will now consider the functor
$$\on{TakeOut}:\Shv^!(\Ran_{\on{untl,aug}})\to \Shv^!(\Ran),$$
right adjoint to $\on{AddUnit}_{\on{aug}}$ (this right adjoint exists because $\on{AddUnit}_{\on{aug}}$
is colimit-preserving).  We will show that the functor $\on{TakeOut}$ is the left inverse of the functor 
$\on{AddUnit}_{\on{aug}}$. 

\sssec{}

The functor $\on{TakeOut}$ can be explicitly described as follows. 

\begin{prop} \label{p:out}
For $\wt\CF\in \Shv^!(\Ran_{\on{untl,aug}})$, an object $S\in \Sch$ and an object $J\in \Ran(S)$, the object
$$\on{TakeOut}(\wt\CF)_{S,J}\in \Shv^!(S)$$ is given by
$$\on{Fib}\left(\wt\CF_{S,\emptyset \subset J}\to \underset{\emptyset\neq K\subseteq J}{\on{lim}}\, \wt\CF_{S,K\subseteq J}\right).$$
\end{prop}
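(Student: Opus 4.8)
The strategy is to compute the right adjoint $\on{TakeOut}$ by the formula for right Kan extension / right adjoints along $f^!$, specialized to the situation at hand, and to match it with the explicit description of $\on{AddUnit}_{\on{aug}}$ from \propref{p:inserting the aug}. Concretely, I would proceed in three steps. First, I would exhibit $\on{TakeOut}$ as $(\pi')^R \circ (\iota')^!$ for appropriate maps, or more directly: since $\on{AddUnit}_{\on{aug}} = \on{cofib}\big((\psi_{\on{aug}})_! \circ (\xi_{\on{aug}})^! \to \pi^! \circ \on{AddUnit}\big)$ and $\on{AddUnit} = \phi_!$, and since taking right adjoints turns cofiber sequences of colimit-preserving functors into fiber sequences of their right adjoints, I get a fiber sequence
$$
\on{TakeOut}(\wt\CF) \to (\phi^! \circ \pi_*)(\wt\CF) \to \big((\xi_{\on{aug}})_* \circ (\psi_{\on{aug}})^!\big)(\wt\CF),
$$
where $\pi_*$ is the right adjoint of $\pi^!$ (which exists since $\pi^!$ is colimit-preserving — Adjoint Functor Theorem), and $(\xi_{\on{aug}})_*$ is the right adjoint of $(\xi_{\on{aug}})^!$. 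Here I am using that $\psi_{\on{aug}}$ is pseudo-proper (\lemref{l:psi pseudo-proper aug}), so $(\psi_{\on{aug}})_!$ is well-defined and its right adjoint is $(\psi_{\on{aug}})^!$, hence the right adjoint of $(\psi_{\on{aug}})_! \circ (\xi_{\on{aug}})^!$ is $(\xi_{\on{aug}})_* \circ (\psi_{\on{aug}})^!$.

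Second, I would evaluate each term on an $S$-point $J \in \Ran(S)$, reducing by \lemref{l:testing} to the case where the maps comprising the ambient finite set have pairwise non-intersecting images. The term $(\phi^! \circ \pi_*)(\wt\CF)_{S,J}$ should, by the explicit formula for the right adjoint along $\phi$ (Lemma~\ref{l:right adjoint}, using that $\Sch_{/\Ran} \to \Sch_{/\Ran_{\on{untl}}}$ and then to $\Sch_{/\Ran_{\on{untl,aug}}}$ are Cartesian fibrations), be computed as a limit over the over-category of $\Ran_{\on{untl,aug}}$-points receiving a map from $(\emptyset \subseteq J)$; in the pairwise-disjoint regime this over-category is the poset of pairs $(\emptyset \subseteq J) \to (K \subseteq I)$, i.e. $J \subseteq I$, $\emptyset \subseteq K \subseteq I$. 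Isolating the contribution at $I = J$ — which is a final-type piece — gives the limit $\underset{K \subseteq J}{\on{lim}}\, \wt\CF_{S,K\subseteq J}$, taken over \emph{all} $K \subseteq J$ including $K = \emptyset$. Similarly, the term $\big((\xi_{\on{aug}})_* \circ (\psi_{\on{aug}})^!\big)(\wt\CF)_{S,J}$, using pseudo-properness of $\xi_{\on{aug}}$-base-changes and \lemref{l:right adjoint base change}, should compute to $\underset{\emptyset \neq K \subseteq J}{\on{lim}}\, \wt\CF_{S,K\subseteq J}$ (the constraint $K \cap J \neq \emptyset$ in the definition of $\Ran^\to_{\on{aug}}$ forces $K \neq \emptyset$ after restricting along $\xi_{\on{aug}}$ to the point $J$).

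Third, I would assemble: the fiber sequence on $S$-points becomes
$$
\on{TakeOut}(\wt\CF)_{S,J} \to \underset{K \subseteq J}{\on{lim}}\, \wt\CF_{S,K\subseteq J} \to \underset{\emptyset \neq K \subseteq J}{\on{lim}}\, \wt\CF_{S,K\subseteq J}.
$$
The limit over all $K \subseteq J$ splits (as the indexing poset is the cone on the poset of nonempty $K$, with cone point $K = \emptyset$) into the pullback of $\wt\CF_{S,\emptyset \subseteq J} \to \underset{\emptyset\neq K\subseteq J}{\on{lim}}\, \wt\CF_{S,K\subseteq J}$ along itself — more precisely, the first map in the fiber sequence identifies with the canonical map from $\wt\CF_{S,\emptyset\subseteq J} \times_{(\cdots)} (\cdots)$; taking the fiber of the second map yields exactly $\on{Fib}\big(\wt\CF_{S,\emptyset \subseteq J} \to \underset{\emptyset\neq K\subseteq J}{\on{lim}}\, \wt\CF_{S,K\subseteq J}\big)$, as claimed.

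\textbf{The main obstacle.} I expect the hard part to be Step 2: justifying that the right adjoints $\pi_*$ and $(\xi_{\on{aug}})_*$ — which are genuinely ``weird'' functors, even for morphisms between schemes — evaluate on $S$-points by the stated (finite) limits. This requires checking the hypotheses of \lemref{l:right adjoint} (that the relevant comparison maps in \eqref{e:comp map} are isomorphisms — automatic in the constructible context, but needing care in general), verifying the Cartesian-fibration conditions on the slice categories, and carefully identifying the relevant over-categories of $\Ran_{\on{untl,aug}}$-points and extracting the $I = J$ piece via a cofinality argument (the inclusion of the subcategory with $I = J$ being right cofinal in the pairwise-disjoint regime, analogous to the cofinality used in the proof of \propref{p:express Ran}). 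The bookkeeping of which $K$'s are allowed to be empty — governed by the $K \cap J \neq \emptyset$ condition in $\Ran^\to_{\on{aug}}$ versus no such condition for $\Ran_{\on{untl,aug}}$ — is exactly what produces the $\on{Fib}$ over the $K = \emptyset$ term, so that combinatorial distinction must be tracked precisely.
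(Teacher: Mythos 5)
Your proposal is correct and follows essentially the same strategy as the paper: pass to right adjoints across the cofiber sequence defining $\on{AddUnit}_{\on{aug}}$, identify each term on $S$-points, and assemble the resulting fiber sequence. The one place the paper's proof diverges from yours (and is slightly slicker) is the middle term: rather than invoking \lemref{l:right adjoint} for $\pi$ and checking the hypotheses of \eqref{e:comp map} a second time (which you correctly flag as the main obstacle), Lemma~\ref{l:pre out} identifies the right adjoint of $\pi^!\circ\on{AddUnit}$ directly as $\phi^!\circ\iota^!$ by exhibiting the map $\upsilon'$ as a fiberwise adjoint of $\xi\circ\pi'$ and appealing to \lemref{l:geom adj}, so no Kan-extension computation or cofinality bookkeeping is needed for that term.
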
 

\begin{rem}
Although we will not need this, note that \propref{p:out} implies that the functor $\on{TakeOut}$ is colimit-preserving.
\end{rem} 

\begin{proof}

We first show:

\begin{lem} \label{l:pre out}
The functor $\Shv^!(\Ran_{\on{untl,aug}})\to \Shv^!(\Ran)$, right adjoint to the functor
$\pi^! \circ \on{AddUnit}$, is given by 
$$\on{OblvUnit}\circ \on{OblvAug}=\phi^!\circ \iota^!.$$
\end{lem}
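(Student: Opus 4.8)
The plan is to compute the right adjoint of $\pi^!\circ\on{AddUnit}$ by writing it as a composite of two left adjoints and dualizing each factor. First I would invoke \propref{p:add unit simple}, which identifies $\on{AddUnit}$ with $\phi_!$, the left adjoint of $\on{OblvUnit}=\phi^!$. Thus $\pi^!\circ\on{AddUnit}=\pi^!\circ\phi_!$ is a composite of left adjoints (recall that $\pi^!$, being an $!$-pullback, preserves colimits and hence is a left adjoint), so its right adjoint is $(\phi_!)^R\circ(\pi^!)^R=\on{OblvUnit}\circ(\pi^!)^R$. The whole statement therefore reduces to identifying $(\pi^!)^R$ with $\on{OblvAug}=\iota^!$.

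For that I would exhibit the maps $\iota\colon\Ran_{\on{untl}}\to\Ran_{\on{untl,aug}}$ and $\pi\colon\Ran_{\on{untl,aug}}\to\Ran_{\on{untl}}$ as an adjoint pair of maps of lax prestacks, with $\iota$ the left adjoint, and then apply \lemref{l:geom adj}, which converts such a geometric adjunction into an adjunction $\pi^!\dashv\iota^!$ on the sheaf categories, i.e. $(\pi^!)^R\simeq\iota^!=\on{OblvAug}$. To produce the geometric adjunction I would check it value-wise: for each $S\in\Sch$, the functor $\iota(S)\colon I\mapsto(\emptyset\subseteq I)$ is left adjoint to $\pi(S)\colon(K\subseteq I)\mapsto I$, because a morphism $(\emptyset\subseteq I)\to(K'\subseteq I')$ in $\Ran_{\on{untl,aug}}(S)$ is precisely an inclusion $I\subseteq I'$ (the constraint $\emptyset\subseteq K'$ being automatic), hence the same datum as a morphism $I\to I'=\pi(S)(K'\subseteq I')$ in $\Ran_{\on{untl}}(S)$. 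These adjunctions are compatible with pullback along $S'\to S$, and since $\pi\circ\iota=\on{id}_{\Ran_{\on{untl}}}$ the unit $\on{id}_{\Ran_{\on{untl}}}\to\pi\circ\iota$ is the identity; this supplies exactly the unit/counit data required in \lemref{l:geom adj}.

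Combining the two steps gives that the right adjoint of $\pi^!\circ\on{AddUnit}$ is $\on{OblvUnit}\circ\on{OblvAug}=\phi^!\circ\iota^!$, which is the assertion. I do not expect a real obstacle: the proof is entirely formal once \lemref{l:geom adj} and \propref{p:add unit simple} are in hand. The one place to be careful is tracking the variance — checking that in \lemref{l:geom adj} it is the left adjoint $\iota$ among the maps of lax prestacks that yields the left adjoint $\pi^!$ (so that $\iota^!=\on{OblvAug}$ ends up as the \emph{right} adjoint of $\pi^!$, not the other way around), and that the composite-of-adjoints formula is applied with the factors in the order $(\phi_!)^R\circ(\pi^!)^R$ rather than the reverse.
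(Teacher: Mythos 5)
Your proof is correct, and it takes a genuinely different route from the paper's. The paper proves \lemref{l:pre out} directly from the definition $\on{AddUnit}=\psi_!\circ\xi^!$: it applies \corref{c:lax homology} to rewrite $\pi^!\circ\on{AddUnit}$ as a pull--push $\wt\psi_!\circ(\xi\circ\pi')^!$ along the base-changed square $\Ran_{\on{untl,aug}}\underset{\Ran_{\on{untl}}}\times\Ran^\to$, then produces a section $\upsilon'\colon\Ran\to\Ran_{\on{untl,aug}}\underset{\Ran_{\on{untl}}}\times\Ran^\to$ which is a value-wise left adjoint to $\xi\circ\pi'$, applies \lemref{l:geom adj} to identify $((\xi\circ\pi')^!)^R$ with $(\upsilon')^!$, and finally computes $\wt\psi\circ\upsilon'=\iota\circ\phi$. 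You instead factor the whole argument through the reinterpretation $\on{AddUnit}=\phi_!$ from \propref{p:add unit simple}, so that $\pi^!\circ\on{AddUnit}=\pi^!\circ\phi_!$ is a composite of two left adjoints whose right adjoints you identify one at a time: $\phi^!$ for $\phi_!$ by definition, and $\iota^!$ for $\pi^!$ by running \lemref{l:geom adj} on the elementary geometric adjunction $\iota\dashv\pi$ (which you correctly verify value-wise, together with the fact that $\pi\circ\iota=\on{id}_{\Ran_{\on{untl}}}$ strictly, so the unit is the identity). Both arguments ultimately turn on \lemref{l:geom adj}, but yours applies it to the simpler pair $(\iota,\pi)$ directly on $\Ran_{\on{untl}}$ and $\Ran_{\on{untl,aug}}$ rather than to a section of a fiber product over $\Ran_{\on{untl}}$, and it sidesteps the base-change step entirely. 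The trade is that your proof leans on \propref{p:add unit simple} as a prerequisite, whereas the paper's proof works from the raw definition of $\on{AddUnit}$; since \propref{p:add unit simple} is established earlier in the text, there is no circularity, and your version is, if anything, more transparent. You also handled the variance carefully: in \lemref{l:geom adj} it is indeed the geometric left adjoint $f$ whose pullback $f^!$ ends up as the \emph{right} adjoint of $g^!$, so $\iota^!$ correctly lands as the right adjoint of $\pi^!$.
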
 

\begin{proof} 
By \corref{c:lax homology}, the functor $\pi^! \circ \on{AddUnit}$ is given by pull-push along the following
diagram
$$
\CD
\Ran_{\on{untl,aug}}\underset{\Ran_{\on{untl}}}\times \Ran^\to @>{\pi'}>>  \Ran^\to  @>{\xi}>> \Ran \\
@V{\wt\psi}VV   @VV{\psi}V   \\
\Ran_{\on{untl,aug}}   @>{\pi}>>  \Ran_{\on{untl}}.
\endCD
$$

Hence, its right adjoint is given by
$$((\xi\circ \pi')^!)^R\circ \wt\psi^!.$$

Note there is a canonically defined map of lax prestacks
$$\upsilon':\Ran\to \Ran_{\on{untl,aug}}\underset{\Ran_{\on{untl}}}\times \Ran^\to,\quad J\mapsto (\emptyset \subset J, J\subseteq J),$$
such that for every $S\in \Sch$, the functors $(\upsilon'(S),(\xi\circ \pi')(S))$ form an adjoint pair. Hence, by 
\lemref{l:geom adj}, the functor $(\upsilon')^!$
identifies with the right adjoint of $(\xi\circ \pi')^!$. 

\medskip

Therefore, the right adjoint to $\pi^! \circ \on{AddUnit}$ is given by
$(\wt\psi\circ \upsilon')^!$, while $\wt\psi\circ \upsilon'=\iota\circ \phi$. 

\end{proof}

We will now show that the right adjoint to the functor $(\psi_{\on{aug}})_!\circ (\xi_{\on{aug}})^!$ is calculated by the formula
\begin{equation} \label{e:out}
(\wt\CF\in \Shv^!(\Ran_{\on{untl,aug}}), S\in \Sch, J\in \Ran(S))\mapsto 
\left(\underset{\emptyset\neq K\subseteq J}{\on{lim}}\, \wt\CF_{S,K\subseteq J}\right)\in \Shv^!(S).
\end{equation}

\medskip

We shall do so by applying \lemref{l:right adjoint} to the morphism $\xi_{\on{aug}}:\Ran^\to_{\on{aug}}  \to \Ran$.
For a given  $(S,J)$ as above, note that the limit appearing 
in \lemref{l:right adjoint} is 
$$\underset{K\subseteq I}{\on{lim}}\, \wt\CF_{S,K\subseteq I},$$
where the limit is taken over the category of pairs of finite subsets $K\subseteq I$ of $\Maps(S,X)$, such that $I$ contains $J$
and $K\cap J\neq \emptyset$.  

\medskip

However, right cofinal in this category is the full subcategory consisting of those $I$, for which the 
inclusion $J\subseteq I$ is an equality. Hence, the limit in question maps isomorphically to that in 
formula \eqref{e:out}.

\medskip

To finish the proof we need to show that the maps \eqref{e:comp map} are isomorphisms in our case. The fact that the 
map $\to$ is an isomorphism is automatic, because the index category of $\emptyset \neq K\subseteq J$ is finite.
To show that the map $\leftarrow$ is an isomorphism, we argue as follows:

\medskip

For a given $g:S'\to S$ and the corresponding set $J'=g(J)\subset \Maps(S',X)$, we need to show that the limit over 
$\emptyset \neq K'\subseteq J'$ of a certain functor with values in $\Shv(S')$ maps isomorphically to the limit
of the restriction of this functor to the category $\emptyset \neq K\subseteq J$, where the map between the index
categories is given by $K\mapsto g(K)$. 

\medskip

However, we claim that the above map of index categories is right cofinal (which would imply that the maps of the
limits is an isomorphism). Namely, it is easy to see that this map is a Cartesian fibration, and each fiber is contractible
(it has a final object). 

\end{proof}

\begin{rem}  \label{r:out}

The description of the functor $\on{TakeOut}$ given by \propref{p:out} admits the following reformulation (we will not
need it in the sequel, except for the optional \secref{ss:take out expl}):

\medskip

Let $\Ran^\leftarrow$ denote the lax prestack, whose category of $S$-points is that of $\CK\subseteq \CJ\subset \Maps(S,X)$,
where both $\CK$ and $\CJ$ are non-empty, and morphisms $(\CK\subseteq \CJ)\to (\CK'\subseteq \CJ')$ are
inclusions $\CK\subseteq \CK'$ with $\CJ=\CJ'$. 

\medskip

Let $\xi'_{\on{aug}}$ and $\psi'_{\on{aug}}$ denote the maps from $\Ran^\leftarrow$ to $\Ran$ and $\Ran_{\on{untl,aug}}$ that send 
$$(\CK\subseteq \CJ)\mapsto \CJ \text{ and } (\CK\subseteq \CJ)\mapsto (\CK\subseteq \CJ),$$
respectively. 

\medskip

Then we obtain functor $\on{TakeOut}$ is canonically isomorphic to
$$\on{Fib}\left(\on{OblvUnit}\circ \on{OblvAug}\to ((\xi'_{\on{aug}})^!)^R\circ (\psi'_{\on{aug}})^!\right).$$
\end{rem} 

\ssec{Fully faithfulness of the functor $\on{AddUnit}_{\on{aug}}$}

In this subsection we will establish the following crucial result: the functor $\on{AddUnit}_{\on{aug}}$ is fully faithful.

\sssec{} \label{sss:disj images}

For $S\in \Sch$ and $I_1,I_2\subset \Maps(S,X)$, we shall say that the $S$-points $I_1$ and $I_2$ of $\Ran$ have
\emph{disjoint images} if for every $i_1\in I_1$ and every $i_2\in I_2$ the resulting two maps $S\rightrightarrows X$ have
non-intersecting images. 

\sssec{}

We now claim:

\begin{thm} \label{t:main}
The functor 
$$\on{AddUnit}_{\on{aug}}: \Shv^!(\Ran)\to \Shv^!(\Ran_{\on{untl,aug}})$$
is fully faithful. Its essential image consists of those objects $\wt\CF\in \Shv^!(\Ran_{\on{untl,aug}})$ that
satisfy the following two conditions: 

\smallskip

\begin{itemize}

\item$(*)$ For every $S\in \Sch$ and $I\in \Ran_{\on{untl}}(S)$, for $K=I\subseteq I$, 
the object $\wt\CF_{S,I\subseteq I}$ is zero. 

\smallskip

\item$(**)$ For $S\in \Sch$ and $L\in \Ran(S)$ and $(K\subseteq I)\in \Ran_{\on{untl,aug}}(S)$, 
such that $L$ and $I$ \emph{have disjoint images}, the map
$$(K\subseteq I)\to (K\cup L\subseteq I\cup L)$$ in $\Ran_{\on{untl,aug}}(S)$ gives rise to an \emph{isomorphism}
$\wt\CF_{S,K\subseteq I}\to \wt\CF_{S,K\cup L\subseteq I\cup L}$. 

\end{itemize}

\end{thm}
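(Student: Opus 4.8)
The strategy is to use the adjunction $(\on{AddUnit}_{\on{aug}},\on{TakeOut})$ from \secref{ss:out} together with the explicit descriptions in \propref{p:inserting the aug} and \propref{p:out}. Since $\on{AddUnit}_{\on{aug}}$ has a right adjoint, full faithfulness is equivalent to the unit
$$\on{Id}_{\Shv^!(\Ran)}\to \on{TakeOut}\circ \on{AddUnit}_{\on{aug}}$$
being an isomorphism, and the essential image is then automatically the full subcategory of those $\wt\CF$ for which the counit $\on{AddUnit}_{\on{aug}}\circ \on{TakeOut}(\wt\CF)\to \wt\CF$ is an isomorphism. Both assertions concern maps of sheaves on schemes $S$ indexed by $S$-points of $\Ran$, resp. $\Ran_{\on{untl,aug}}$, so by \lemref{l:testing} it is enough to test them on $S$-points $J$ (resp. $K\subseteq I$) whose constituent maps $S\to X$ have pairwise non-intersecting images; on such points the formulas of \propref{p:inserting the aug} and \propref{p:out} apply verbatim, with the transition maps of $\on{AddUnit}_{\on{aug}}(\CF)$ being the evident ``block'' maps (identity on a summand $\CF_{S,J'}$ with $J'\subseteq I_1-K_1$ whenever $J'\subseteq I_2-K_2$, and zero otherwise), as one extracts from the construction or from the associative-algebra heuristic recalled in \secref{s:aug}.

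Given this, the unit isomorphism reduces to the combinatorial identity
$$\CF_{S,J}\;\simeq\; \on{Fib}\left(\ \bigoplus_{\emptyset\neq J'\subseteq J}\CF_{S,J'}\ \longrightarrow\ \lim_{\emptyset\neq K\subseteq J}\ \bigoplus_{\emptyset\neq J'\subseteq J-K}\CF_{S,J'}\ \right),$$
which I would prove by re-indexing the target by $J'$. As $J$ is finite the inner sums are finite biproducts and, working in the stable category $\Shv^!(S)$, the finite limit over $K$ commutes with them; the $J'$-summand of the target becomes the limit of the constant diagram with value $\CF_{S,J'}$ over the poset of non-empty $K\subseteq J$ with $K\cap J'=\emptyset$, i.e. over the poset of non-empty subsets of $J-J'$. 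That poset has a terminal object ($J-J'$ itself) when $J'\subsetneq J$, so the limit is $\CF_{S,J'}$ and the map from the source is the identity, while it is empty when $J'=J$, so the limit is $0$. Hence the fibre is precisely the $J'=J$ summand, namely $\CF_{S,J}$, and one checks this identification is the unit map. This is the sheaf-theoretic form of the statement that the reduced cohomology of a boolean cube sits in its top corner (inclusion--exclusion).

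For the essential image, the inclusion of the image into the subcategory cut out by $(*)$ and $(**)$ is immediate from \propref{p:inserting the aug}: the summation range $I-K$ is empty when $K=I$, and adjoining to both $K$ and $I$ a set $L$ disjoint from $I$ leaves $I-K$, hence the direct sum, unchanged (with structure map the identity). Conversely, let $\wt\CF$ satisfy $(*)$ and $(**)$; I again test the counit on non-intersecting points. The key point is that on such points $(**)$, applied with $L=K$ (which is disjoint from $I-K$), gives $\wt\CF_{S,K\subseteq I}\simeq \wt\CF_{S,\emptyset\subseteq I-K}$ when $K\subsetneq I$, while $(*)$ gives $\wt\CF_{S,K\subseteq I}=0$ when $K=I$. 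Substituting this into the formula for $\on{TakeOut}(\wt\CF)$ and running an induction on $|I|$ — using, at each stage, the same cube computation as above, now read as reconstructing $\wt\CF_{S,\emptyset\subseteq I}$ out of the objects $\on{TakeOut}(\wt\CF)_{S,J'}$, and the fact that the counit provides a section of the resulting fibre sequence — identifies $\wt\CF$ with $\on{AddUnit}_{\on{aug}}(\on{TakeOut}(\wt\CF))$.

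The main obstacle I anticipate is bookkeeping rather than ideas: one must verify that the $\infty$-categorical limits over the various subset posets of the (not quite constant) diagrams genuinely collapse as claimed — in particular the cofinality of the ``support'' subposet on which a given diagram is nonzero, and the correct interpretation of the $K=J$ (empty) term — and that the reduction to pairwise non-intersecting $S$-points via \lemref{l:testing} and the diagonal stratification is compatible with all structure maps, including checking condition $(**)$ (which is stated for arbitrary $S$-points) after this reduction. One must also pin down the promised splitting of the fibre sequence in the essential-image induction by tracing through the definitions of the unit and counit. Modulo these verifications, the argument is the sheaf-theoretic transcription of the classical equivalence between non-unital and augmented unital associative algebras.
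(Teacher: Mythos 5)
Your proof of full faithfulness is exactly the argument in the paper: by \lemref{l:testing} reduce to pairwise-non-intersecting $S$-points, then apply Propositions \ref{p:inserting the aug} and \ref{p:out} and run the cube/inclusion--exclusion computation (commute the finite limit past the finite direct sum, observe that the index poset of non-empty $K\subseteq J$ with $K\cap J'=\emptyset$ has a terminal object when $J'\subsetneq J$ and is empty when $J'=J$). Your use of $(**)$ with $L=K$ applied to $(\emptyset\subseteq I-K)$ to get $\wt\CF_{S,K\subseteq I}\simeq\wt\CF_{S,\emptyset\subseteq I-K}$ is likewise exactly the reduction the paper performs.

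Where you diverge slightly is in the converse for the essential image. You propose to verify directly that the counit $\on{AddUnit}_{\on{aug}}\circ\on{TakeOut}(\wt\CF)\to\wt\CF$ is an isomorphism by induction, which requires you to produce and check a splitting of the fibre sequence coming from the $\on{TakeOut}$ formula --- the piece of bookkeeping you yourself flag at the end. The paper sidesteps this: it instead proves that if $\wt\CF$ satisfies $(*)$ and $(**)$ \emph{and} $\on{TakeOut}(\wt\CF)=0$, then $\wt\CF=0$, by the same induction on $|I|$ (reduce to $K=\emptyset$ via $(**)$, then the limit in the $\on{TakeOut}$ formula vanishes by inductive hypothesis). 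This vanishing statement yields the essential-image characterization for free: full faithfulness means $\on{TakeOut}$ kills the cofibre of the counit, conditions $(*)$ and $(**)$ cut out a stable subcategory so the cofibre inherits them, hence the cofibre vanishes and the counit is an isomorphism. If you are filling in the details, switching to the paper's vanishing statement will spare you the splitting argument. Everything else in your outline is sound.
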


\begin{proof}

To prove that the functor $\on{AddUnit}_{\on{aug}}$ is fully faithful, we have to check that the unit of the adjunction
$$\CF\to \on{TakeOut} \circ \on{AddUnit}_{\on{aug}}(\CF),\quad \CF\in \Shv^!(\Ran)$$
is an isomorphism. By \lemref{l:testing}, it is enough to check that the above map induces an isomorphism
at $S$-valued points $I$ that correspond to finite subsets of $\Maps(S,X)$ with pairwise non-intersecting images. 

\medskip

By Propositions \ref{p:inserting the aug} and \ref{p:out}, the value of the right-hand side at $I$
is the fiber of the map 
\begin{equation} \label{e:kernel}
\underset{\emptyset \neq J\subseteq I}\oplus\, \CF_{S,J} \to 
\underset{\emptyset \neq K\subseteq I}{\on{lim}}\,\, \underset{\emptyset \neq J\subseteq I-K}\oplus\, \CF_{S,J},
\end{equation} 
defined as follows: 

\medskip

For a given $\emptyset \neq K\subseteq I$, the corresponding map
$$\underset{\emptyset \neq J\subseteq I}\oplus\, \CF_{S,J}\to \underset{\emptyset \neq J\subseteq I-K}\oplus\, \CF_{S,J}$$
is the projection onto those direct summands for which $J\cap K=\emptyset$.

\medskip

In terms of this description, the unit of the adjunction is given by the map 
$$\CF_{S,I}\to \underset{\emptyset \neq J\subseteq I}\oplus\, \CF_{S,J}$$
corresponding to the direct summand $J=I$.

\medskip

Now, the limit in the right-hand side of \eqref{e:kernel}
can be computed as
$$\underset{\emptyset \neq J\subseteq I}\oplus\, \,\underset{\emptyset \neq K\subseteq I-J}{\on{lim}}\, \CF_{S,J},$$
which is isomorphic to 
$$\underset{\emptyset \neq J\subsetneq I}\oplus\, \CF_{S,J},$$
because for each $J\neq I$ the category of indices $\emptyset \neq K\subseteq I-J$ is contractible.  Under this identification, the map
\eqref{e:kernel} is the projection on the summands with $J\neq I$. 

\medskip

This makes the isomorphism assertion manifest.

\medskip

It is clear from \propref{p:inserting the aug} and \lemref{l:testing}
that any object in the essential image of $\on{AddUnit}_{\on{aug}}$ satisfies
conditions (*) and (**). To finish the proof, we need to show that if $\wt\CF$ is an object of 
$\wt\CF\in \Shv^!(\Ran_{\on{untl,aug}})$ that satisfies conditions (*) and (**) such that
$\on{TakeOut}(\wt\CF)=0$, then $\wt\CF=0$.

\medskip

By \lemref{l:testing}, it is enough to show that $\wt\CF_{S,K\subseteq I}=0$ for 
$I$ corresponding to a finite subset of $\Maps(S,X)$ consisting of maps with pairwise
non-intersecting images.  By induction on the cardinality
of $I$ and conditions (*) and (**), we can assume that $\wt\CF_{S,K'\subseteq I'}=0$,
whenever $|I'|-|K'|<|I|$. In particular, it remains to prove that $\wt\CF_{S,\emptyset  \subset I}=0$
we can assume that $K=\emptyset$. 

\medskip

By \propref{p:out}, we have:
$$\on{TakeOut}(\wt\CF)_{S,I}=
\on{Fib}\left(\wt\CF_{S,\emptyset \subset I}\to \underset{\emptyset\neq K\subseteq I}{\on{lim}}\, \wt\CF_{S,K\subseteq I}\right).$$
However, by the induction hypothesis, all the terms in the above limit vanish. Hence,
$$0=\on{TakeOut}(\wt\CF)_{S,I}\simeq \wt\CF_{S,\emptyset \subset I}.$$

\end{proof} 

\sssec{}

By \lemref{l:pre out}, we have a natural transformation of functors
\begin{equation} \label{e:nat trans 3}
\on{TakeOut}\to \on{OblvUnit}\circ \on{OblvAug},\quad \Shv^!(\Ran_{\on{untl,aug}})\to \Shv^!(\Ran).
\end{equation}

We claim: 

\begin{cor}  \label{c:main}
Assume that $X$ is connected\footnote{And proper, if we are working in the context of D-modules.}.  
Let $\CG$ be an object of $\Shv^!(\Ran_{\on{untl,aug}})$ satisfying conditions $(*)$ and $(**)$. 
Then the natural transformation 
\eqref{e:nat trans 3} induces an isomorphism
$$\on{C}^*_c(\Ran,-)\circ \on{TakeOut}(\CG)\to \on{C}^*_c(\Ran,-)\circ \on{OblvUnit}\circ \on{OblvAug}(\CG).$$
\end{cor}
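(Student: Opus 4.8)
The key observation is that $\on{Cor}~\ref{c:main}$ follows by combining $\thmref{t:main}$ with $\corref{c:id to add}$. First, I would note that since $\CG$ satisfies conditions $(*)$ and $(**)$, it lies in the essential image of $\on{AddUnit}_{\on{aug}}$ by $\thmref{t:main}$; write $\CG \simeq \on{AddUnit}_{\on{aug}}(\CF)$ for $\CF := \on{TakeOut}(\CG) \in \Shv^!(\Ran)$, where the isomorphism is the counit of the (now-established) adjunction restricted to the essential image, which is an equivalence. Thus $\on{TakeOut}(\CG) \simeq \CF$ and $\on{OblvUnit}\circ \on{OblvAug}(\CG) \simeq \on{OblvUnit}\circ \on{OblvAug}\circ \on{AddUnit}_{\on{aug}}(\CF)$.

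Next I would identify the target of the natural transformation. By $\lemref{l:inserting the aug}$, the composite $\on{OblvAug}\circ \on{AddUnit}_{\on{aug}}$ is canonically isomorphic to $\on{AddUnit}$; hence $\on{OblvUnit}\circ \on{OblvAug}(\CG) \simeq \on{OblvUnit}\circ \on{AddUnit}(\CF)$. So the statement to be proved becomes that the map
$$\on{C}^*_c(\Ran,\CF) \to \on{C}^*_c\bigl(\Ran, \on{OblvUnit}\circ \on{AddUnit}(\CF)\bigr)$$
is an isomorphism. I would then check that, under the identifications above, the natural transformation $\eqref{e:nat trans 3}$ evaluated on $\CG$ and composed with $\on{C}^*_c(\Ran,-)$ is precisely the map induced by $\eqref{e:id to add}$, i.e.\ the unit $\on{Id} \to \on{OblvUnit}\circ \on{AddUnit}$ applied to $\CF$; this is a diagram-chase reconciling the two adjunctions ($\on{AddUnit}_{\on{aug}} \dashv \on{TakeOut}$ versus $\on{AddUnit} \dashv \on{OblvUnit}$) through $\lemref{l:pre out}$ and $\lemref{l:inserting the aug}$. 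Granting this compatibility, $\corref{c:id to add}$ (which requires $X$ connected) gives that this map is an isomorphism, completing the proof.

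The step I expect to be the main obstacle is the compatibility of the two natural transformations: one must verify that $\eqref{e:nat trans 3}$, which is built from the factorization $(\psi_{\on{aug}})_! \circ (\xi_{\on{aug}})^! \to \pi^! \circ \on{AddUnit}$ of $\eqref{e:nat trans 1}$ and the description of $\on{TakeOut}$ via $\lemref{l:pre out}$, matches the unit $\eqref{e:id to add}$ of the $\on{AddUnit} \dashv \on{OblvUnit}$ adjunction after applying $\on{C}^*_c(\Ran,-)$ and restricting to the essential image of $\on{AddUnit}_{\on{aug}}$. This is essentially a bookkeeping exercise with the commutative diagrams of lax prestacks from $\secref{ss:unit and aug}$ and $\secref{ss:out}$ — tracing that $\wt\psi \circ \upsilon' = \iota \circ \phi$ and the various pseudo-properness base-change isomorphisms of $\corref{c:lax homology}$ are threaded through consistently — but no genuinely new input is needed beyond what is already assembled. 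An alternative, perhaps cleaner route avoiding the diagram-chase: since $\CG$ lies in the essential image of $\on{AddUnit}_{\on{aug}}$, one may instead use the tautological isomorphism $\on{C}^*_c(\Ran_{\on{untl,aug}},-)\circ \on{AddUnit}_{\on{aug}} \simeq \on{C}^*_c(\Ran,-)$ (the analogue of $\eqref{e:nat trans 0}$, obtained because $\on{AddUnit}_{\on{aug}} = (\psi_{\on{aug}})_!\circ(\xi_{\on{aug}})^!$ composed appropriately and $\xi_{\on{aug}}$, $\psi_{\on{aug}}$ interact well with pushforward to a point), and then deduce both sides compute $\on{C}^*_c(\Ran,\CF)$ compatibly with $\eqref{e:nat trans 3}$; this reduces everything to $\thmref{t:unital chiral homology}$ applied twice.
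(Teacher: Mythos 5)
Your proposal is correct and takes essentially the same route as the paper: the paper's proof also precomposes \eqref{e:nat trans 3} with $\on{AddUnit}_{\on{aug}}$ (equivalently, writes $\CG\simeq\on{AddUnit}_{\on{aug}}(\CF)$ using \thmref{t:main}), observes that the resulting transformation $\on{Id}\to\on{OblvUnit}\circ\on{AddUnit}$ is \eqref{e:id to add}, and invokes \corref{c:id to add}. The compatibility you flag as "the main obstacle" is indeed stated in the paper without further elaboration, so you are on the same footing.
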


\begin{proof}

Note that if we precompose the natural transformation \eqref{e:nat trans 3} with the functor $\on{AddUnit}_{\on{aug}}$ 
we obtain a natural transformation
$$\on{Id}_{\Shv^!(\Ran)}\overset{\sim}\longrightarrow
\on{TakeOut}\circ \on{AddUnit}_{\on{aug}}\to \on{OblvUnit}\circ \on{OblvAug}\circ \on{AddUnit}_{\on{aug}}\simeq
\on{OblvUnit}\circ \on{AddUnit},$$
which equals the natural transformation \eqref{e:id to add}. 

\medskip

Hence, the assertion of the corollary follows from \corref{c:id to add}. 

\end{proof} 

\section{An alternative point of view on sheaves on the unital Ran space}   \label{s:untl}

The contents of this section will not be used in the sequel; it is added for the sake for completeness.

\medskip

Recall the presentation of $\Ran$ as a colimit of schemes, given by \propref{p:express Ran}. So, a datum of
a sheaf on $\Ran$ is amounts to an assignment
$$(\CI\in \on{Fin}^s) \rightsquigarrow \CF_\CI\in \Shv(X^\CI),$$
equipped with a homotopy-compatible system of isomorphisms 
$$(\CI\overset{\alpha}\twoheadrightarrow \CJ) \rightsquigarrow
(\on{diag}_\alpha)^!(\CF_\CI)\simeq \CF_\CJ,$$
where $\on{diag}_\alpha$ denotes the map $X^\CJ\to X^\CI$, induced by $\alpha$. 

\medskip

In this section we will give a similar description of the categories $\Shv^!(\Ran_{\on{untl}})$, $\Shv^!(\Ran_{\on{untl,aug}})$
and the functors $\on{AddUnit}$, $\on{AddUnit}_{\on{aug}}$ and $\on{TakeOut}$. 

\ssec{The case of the usual Ran space} 

First, we will give a different interpretation of the presentation of $\Ran$ as a colimit, given by \propref{p:express Ran}.

\sssec{}

For a set $A$ the functor
$$\CI\mapsto A^\CI,\quad (\on{Fin}^s)^{\on{op}}\to \on{Sets}$$
defines a Cartesian fibration in groupoids $A^{\on{Fin}^s}\to \on{Fin}^s$.

\medskip

Let $X^{\on{Fin}^s}$ denote the lax prestack that assigns to $S\in \Sch$ the category
$$\Maps(S,X)^{\on{Fin}^s}.$$

\medskip

In other words, the category $\Sch_{/X^{\on{Fin}^s}}$ is a Cartesian fibration in groupoids over $\Sch\times \on{Fin}^s$
with the fiber over $S\times \CI$ being $\Maps(S,X)^\CI$. 

\medskip

Note that we can think of an object $\CF\in \Shv^!(X^{\on{Fin}^s})$ as an assignment 
$$(\CI\in \on{Fin}^s)  \rightsquigarrow \CF_\CI\in \Shv(X^\CI),$$
equipped with a homotopy-compatible system of \emph{morphisms} 
\begin{equation} \label{e:IJ surj}
(\CI\overset{\alpha}\twoheadrightarrow \CJ) \rightsquigarrow
(\on{diag}_\alpha)^!(\CF_\CI)\to \CF_\CJ.
\end{equation} 

\sssec{}

Let $A^{\on{Fin}^s,\sim}$ denote the following the \emph{localization} of $A^{\on{Fin}^s}$: we invert
all arrows that are Cartesian over $\on{Fin}^s$. Since the fibers of $A^{\on{Fin}^s}$ over
objects of $\on{Fin}^s$ are groupoids, we obtain that $A^{\on{Fin}^s,\sim}$ is itself a groupoid. 

\medskip

Let $X^{\on{Fin}^s,\sim}$ denote the corresponding prestack.  

\medskip

The category $\Shv^!(X^{\on{Fin}^s,\sim})$ is a full subcategory of $\Shv^!(X^{\on{Fin}^s})$ consisting of those objects 
for which the maps \eqref{e:IJ surj} are \emph{isomorphisms}. 

\sssec{}

Note now that we have a canonical identification: 
$$X^{\on{Fin}^s,\sim}\simeq \underset{\CI\in \on{Fin}^s}{\on{colim}}\, X^\CI.$$

Combining with \propref{p:express Ran}, we obtain an identification
\begin{equation} \label{e:Ran as loc}
X^{\on{Fin}^s,\sim}\simeq \Ran.
\end{equation} 

\medskip

In the rest of this section we will develop an analog of the interpretation of $\Ran$, given by \eqref{e:Ran as loc}, to
the case of $\Ran_{\on{untl}}$ and $\Ran_{\on{untl,aug}}$. 

\ssec{The case of the unital Ran space}   \label{ss:untl expl}

\sssec{}

Let $\on{Fin}$ denote the category of finite non-empty sets (and all maps). For a set $A$ the functor
$$\CI\mapsto A^\CI,\quad (\on{Fin})^{\on{op}}\to \on{Sets}$$
defines a Cartesian fibration in groupoids $A^{\on{Fin}}\to \on{Fin}$.

\medskip

Let $X^{\on{Fin}}$ denote the lax prestack that assigns to $S\in \Sch$ the category
$$\Maps(S,X)^{\on{Fin}}.$$

\medskip

In other words, the category $\Sch_{/X^{\on{Fin}}}$ is a Cartesian fibration in groupoids over $\Sch\times \on{Fin}$
with the fiber over $S\times \CI$ being $\Maps(S,X)^\CI$. 

\medskip

By definition, an object $\CF\in \Shv^!(X^{\on{Fin}})$ is an assignment 
$$(\CI\in \on{Fin})  \rightsquigarrow \CF_\CI\in \Shv(X^\CI),$$
equipped with a homotopy-compatible system of \emph{morphisms} 
\begin{equation} \label{e:IJ}
(\CI\overset{\alpha}\longrightarrow \CJ) \rightsquigarrow
(\on{diag}_\alpha)^!(\CF_\CI)\to \CF_\CJ.
\end{equation} 

\sssec{}

For a set $A$, let $A^{\on{Fin},\sim}$ denote the following the \emph{localization} of $A^{\on{Fin}}$: we invert
all \emph{Cartesian} arrows that lie above that maps in $\on{Fin}$ given by 
\emph{surjective} maps of finite sets.   

\medskip

Let $X^{\on{Fin},\sim}$ denote the corresponding lax prestack.  

\medskip

The category $\Shv^!(X^{\on{Fin},\sim})$ is a full subcategory of $\Shv^!(X^{\on{Fin}})$ consisting of those objects 
for which the maps \eqref{e:IJ} are \emph{isomorphisms for those $\alpha$ that are surjective}. 

\sssec{}

We claim:
\begin{prop}  \label{p:express Ran untl}
There exists a canonical isomorphism
$$X^{\on{Fin},\sim}\simeq \Ran_{\on{untl}}.$$
\end{prop}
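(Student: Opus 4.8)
The plan is to construct the isomorphism $X^{\on{Fin},\sim}\simeq \Ran_{\on{untl}}$ by exhibiting both sides as suitable localizations/colimits and matching the universal properties, just as was done for the non-unital case in \propref{p:express Ran} and its reinterpretation \eqref{e:Ran as loc}. First I would unwind the definition: a point of $\Ran_{\on{untl}}(S)$ is a finite non-empty subset $I\subseteq\Maps(S,X)$, with morphisms given by inclusions $I_1\subseteq I_2$. A point of $X^{\on{Fin},\sim}(S)$ is, by construction of the localization, a finite non-empty set $\CI$ together with a map $\CI\to\Maps(S,X)$, i.e.\ an $S$-point of $X^\CI$, but with the Cartesian arrows over surjections in $\on{Fin}$ inverted. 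So the natural candidate functor $X^{\on{Fin}}\to\Ran_{\on{untl}}$ sends a tuple $(\CI,\,\CI\to\Maps(S,X))$ to its image $I:=\on{Im}(\CI\to\Maps(S,X))\subseteq\Maps(S,X)$, which is finite and non-empty; a general map $\alpha:\CI\to\CJ$ over $\on{Fin}$ (i.e.\ a commuting triangle $\CJ\to\Maps(S,X)$, $\CI\to\CJ$, compatible on the nose after applying $\on{diag}_\alpha$ — more precisely an arrow in $\Sch_{/X^{\on{Fin}}}$) induces the inclusion $\on{Im}(\CI\to\Maps(S,X))\subseteq\on{Im}(\CJ\to\Maps(S,X))$ when $\alpha$ is \emph{injective} and equality when $\alpha$ is surjective. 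Hence this functor sends the Cartesian arrows over surjections to identities, so it factors through $X^{\on{Fin},\sim}$.

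Next I would check that the induced functor $X^{\on{Fin},\sim}(S)\to\Ran_{\on{untl}}(S)$ is an equivalence of categories for each $S$, which suffices since both sides are lax prestacks (functors to $\inftyCat$) and a natural transformation that is a value-wise equivalence is an equivalence. For this I would argue, exactly in the spirit of the proof of \propref{p:express Ran}, that $\Maps(S,X)^{\on{Fin},\sim}$ — the localization of the Cartesian fibration $\Maps(S,X)^{\on{Fin}}\to\on{Fin}$ at Cartesian arrows over surjections — is computed by first taking, for each finite non-empty subset $I\subseteq\Maps(S,X)$, the full subcategory $\CC_I$ of $\Maps(S,X)^{\on{Fin}}$ on those $(\CI,\CI\to\Maps(S,X))$ with image exactly $I$, and observing that the localization of $\CC_I$ at its (Cartesian-over-surjection) arrows has contractible nerve: the surjections $\CI\twoheadrightarrow I$ form a category with final object $\CI=I$, and inverting them turns $\CC_I$ into a contractible groupoid. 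So the localization has $\pi_0$ equal to the set of finite non-empty subsets of $\Maps(S,X)$, with each component contractible; I then have to track what happens to the \emph{injective} (and general) maps between different $I$'s, and the claim is they descend to precisely the inclusion partial order on subsets — giving $\Ran_{\on{untl}}(S)$ on the nose. An efficient way to package this is: the functor $\on{Fin}\to\on{Fin}^{\le}$ (finite non-empty sets with injections, a poset-like category on each fiber) together with the image map realizes $\Ran_{\on{untl}}(S)$ as the appropriate Grothendieck construction, and the localization identifies with it.

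The main obstacle I anticipate is the bookkeeping in the localization: one must be careful that inverting Cartesian arrows over surjective maps of finite sets, but \emph{not} over injective ones, yields exactly the inclusion poset and not some larger category (e.g.\ not inverting too much, which would collapse inclusions $I_1\subsetneq I_2$). The cleanest route is probably to produce an explicit inverse functor $\Ran_{\on{untl}}\to X^{\on{Fin},\sim}$ sending $I\mapsto$ (the set $I$ itself, with its tautological map $I\hookrightarrow\Maps(S,X)$) and an inclusion $I_1\subseteq I_2$ to the injection $I_1\hookrightarrow I_2$ viewed as an arrow in $X^{\on{Fin}}$, then check this is well-defined into the localization and inverse to the functor above up to coherent homotopy — the coherences being automatic because everything in sight on the $\Ran_{\on{untl}}$ side is $1$-categorical. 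I would also remark that, once this is set up, pullback along $X^{\on{Fin},\sim}\hookrightarrow X^{\on{Fin}}$ gives the description of $\Shv^!(\Ran_{\on{untl}})$ as the full subcategory of $\Shv^!(X^{\on{Fin}})$ on those $\{\CF_\CI\}$ for which \eqref{e:IJ} is an isomorphism for surjective $\alpha$, which is presumably the payoff used in the subsequent discussion of $\on{AddUnit}$, $\on{AddUnit}_{\on{aug}}$ and $\on{TakeOut}$, and the $\Ran_{\on{untl,aug}}$ analogue would then be proved in exactly the same manner using a two-step variant of $\on{Fin}$.
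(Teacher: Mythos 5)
Your proposal is essentially the paper's proof. You construct the same image functor $A^{\on{Fin}}\to\on{Subsets}_f(A)$, observe it kills Cartesian arrows over surjections, and reduce to the contractibility of the localization of the subcategory $\CC_I$ of objects with image exactly $I$; the paper phrases this last step by noting that $A^{\on{Fin}}\to\on{Subsets}_f(A)$ is a Cartesian fibration and applying precisely your ``final object $\CI=I$'' observation (via an explicit lemma about localizing categories with a final object) to the fiber over each $B\subseteq A$. The explicit-inverse variant you sketch ($I\mapsto(I,\,I\hookrightarrow\Maps(S,X))$, inclusions to injections, with the natural zigzag through the Cartesian arrow $\CI\twoheadrightarrow\on{Im}(f)$) is a perfectly good alternative way to close the argument and avoids appealing to the general Cartesian-fibration-localization principle, but it is the same proof at heart; one small slip is that the image-inclusion $\on{Im}(f)\subseteq\on{Im}(g)$ holds for any morphism in $A^{\on{Fin}}$, not only those over injective $\alpha$.
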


\begin{proof}

We need to show that for a set $A$, the category $A^{\on{Fin},\sim}$ is (canonically) equivalent to the category
$\on{Subsets}_f(A)$ of finite non-empty subsets of $A$.

\medskip

We have a naturally defined functor 
\begin{equation} \label{e:A fiber}
A^{\on{Fin}}\to \on{Subsets}_f(A)
\end{equation} 
that assigns to an object $(\CI\to A)\in A^{\on{Fin}}$ its image in $A$. We claim that this functor defines an equivalence from
$A^{\on{Fin},\sim}$ to  $\on{Subsets}_f(A)$.

\medskip

First, it is easy to see that \eqref{e:A fiber} is a Cartesian fibration,
and the arrows in $A^{\on{Fin}}$ that are Cartesian over \emph{surjective} maps in $\on{Fin}^{\on{op}}$ get sent to isomorphisms
in $\on{Subsets}_f(A)$. Hence, it remains to show that that the localization of every fiber of \eqref{e:A fiber} with respect to the
same class of morphisms is contractible. 

\medskip

For a subset $B\subset A$, the fiber in question is the category of pairs $(\CI,\CI\overset{f}\twoheadrightarrow B)$. We localize it with
respect to the morphisms that are surjective on the $\CI$'s. However, the above category has a final object, namely $\CI=B$, and the
canonical map from any object to it belongs to the class of morphisms with respect to which we localize. The assertion follows now
from the next general lemma:

\begin{lem}
Let $\bC$ be a category with a final object $\bc_f$. Then any localization of $\bC$ with respect to a class of morphisms that 
contains the canonical morphisms $\bc\to \bc_f$ for all $\bc\in \bC$, is contractible.
\end{lem}

\end{proof} 

\sssec{}   \label{sss:add unit expl}

Thus, from \propref{p:express Ran untl} we obtain a fully faithful embedding 
$$\Shv^!(\Ran_{\on{untl}})\hookrightarrow \Shv^!(X^{\on{Fin}}).$$

\medskip

Let us now describe the functor
\begin{equation} \label{e:add unit expl}
\Shv^!(\Ran) \overset{\on{AddUnit}}\longrightarrow \Shv^!(\Ran_{\on{untl}})\hookrightarrow \Shv^!(X^{\on{Fin}}).
\end{equation} 

As we shall see in \secref{sss:sheaf on Ran as a colimit}, for every finite non-empty set $\CJ$, the morphism $\on{ins}_\CJ:X^\CJ\to \Ran$
is pseudo-proper, and for any $\CF\in \Shv^!(\Ran)$, the canonical map 
$$\underset{\CI\in \on{Fin}^s}{\on{colim}}\, (\on{ins}_\CI)_!\circ (\on{ins}_\CI)^!(\CF)\to \CF$$
is an isomorphism. 

\medskip

So, it would suffice to describe the composition of \eqref{e:add unit expl} with the functor $(\on{ins}_\CJ)_!$ for every $\CJ$. 
What we will actually do is write explicitly the composed functor
\begin{equation} \label{e:IJ eval}
\Shv(X^\CJ) \overset{(\on{ins}_\CJ)_!}\longrightarrow \Shv^!(\Ran) \overset{\on{AddUnit}}\longrightarrow \Shv^!(\Ran_{\on{untl}})\hookrightarrow 
\Shv^!(X^{\on{Fin}})\to \Shv(X^\CI)
\end{equation}
the last arrow is the evaluation functor corresponding to a finite non-empty set $\CI$.

\medskip

Since the maps $\psi:\Ran^\to\to \Ran_{\on{untl}}$ and $\on{ins}_\CJ$ are pseudo-proper,
the composition \eqref{e:IJ eval} is computed algorithmically via base change: it is given by pull-push along 
\begin{equation} \label{e:pull push IJ}
\CD
(X^\CI\times X^\CJ)^\sim_{\supseteq}:=X^\CI\underset{\Ran_{\on{untl}}}\times \Ran^\to \underset{\Ran}\times X^\CJ  @>>>  X^\CJ \\
@VVV   \\
X^\CI.
\endCD
\end{equation}

So, it remains to describe explicitly the above prestack $(X^\CI\times X^\CJ)^\sim_{\supseteq}$.
As in \propref{p:express Ran} one shows that it is given as
$$\underset{\CI\twoheadrightarrow \CK\leftarrow \CJ}{\on{colim}}\, X^\CK,$$
where the colimit is taken over the category of finite non-empty sets $\CK$, equipped with a surjection $\CI\twoheadrightarrow \CK$ and
a map $\CJ\to \CK$.

\begin{rem}   \label{r:replace by scheme 1}
Let $(X^\CI\times X^\CJ)_{\supseteq}$ be the closed reduced subscheme of $X^\CI\times X^\CJ$, whose $k$-points are
those pairs of an $\CI$-tuple and a $\CJ$-tuple of $k$-points of $X$, for which the $\CJ$-tuple is contained in the $\CI$-tuple 
as a subset of $X(k)$. In other words, $(X^\CI\times X^\CJ)_{\supseteq}$ is the reduced subscheme underlying 
the union of images of the maps $X^\CK\to X^\CI\times X^\CJ$ for all $\CI\twoheadrightarrow \CK\leftarrow \CJ$. 

\medskip

It follows from \lemref{l:replace lego}(d) that pullback along the morphism 
$$(X^\CI\times X^\CJ)^\sim_{\supseteq}\to (X^\CI\times X^\CJ)_{\supseteq}$$
is an equivalence of categories. Hence, as in \corref{c:push-pull Ran}, 
when computing the functor \eqref{e:IJ eval} as pull-push, we can replace the diagram \eqref{e:pull push IJ} that
involves the prestack $(X^\CI\times X^\CJ)^\sim_{\supseteq}$ by the diagram
$$
\CD
(X^\CI\times X^\CJ)_{\supseteq} @>>>  X^\CJ \\
@VVV   \\
X^\CI,
\endCD
$$
which only involves schemes.

\end{rem}

\ssec{The case of the unital augmented Ran space}

In this subsection we shall describe the category $\Shv^!(\Ran_{\on{untl,aug}})$ in terms similar to those in \secref{ss:untl expl}.

\sssec{}

Let $\on{Fin}_{\on{aug}}$ denote the category of pairs $(\CK\subseteq \CI)$, where $\CI$ is a finite non-empty set and
$\CK$ is its (possibly empty) subset; morphisms in the category are defined naturally.

\medskip

For a set $A$ we define the Cartesian fibration $A^{\on{Fin}_{\on{aug}}}\to \on{Fin}_{\on{aug}}$ to correspond to the functor
$$(\on{Fin}_{\on{aug}})^{\on{op}}\to \on{Sets},\quad (\CK\subseteq \CI)\mapsto A^\CI.$$
 
\medskip

Let $A^{\on{Fin}_{\on{aug}},\sim}$ be the localization of $A^{\on{Fin}_{\on{aug}}}$, where we invert the Cartesian arrows that lie over
the arrows $(\CK_1\subseteq \CI_1)\to (\CK_2\subseteq \CI_2)$ for which both $\CK_1\to \CK_2$ and $\CI_1\to \CI_2$ are
surjective.

\sssec{}  \label{sss:untl Ran via schemes}

Let 
$$X^{\on{Fin}_{\on{aug}}} \text{ and } X^{\on{Fin}_{\on{aug}},\sim}$$
denote the corresponding lax prestacks. 

\medskip

In other words, the category $\Sch_{/X^{\on{Fin}_{\on{aug}}}}$ is a Cartesian fibration over $\Sch\times \on{Fin}_{\on{aug}}$,
with the fiber over $S\times (\CK\subseteq \CI)$ being $\Maps(S,X)^\CI$. 

\medskip

As in \propref{p:express Ran untl} one shows that the natural map
$$X^{\on{Fin}_{\on{aug}},\sim}\to \Ran_{\on{untl,aug}}$$
is an isomorphism.

\sssec{}

The category $\Shv^!(X^{\on{Fin}_{\on{aug}}})$ can be (tautologically) described as follows. It consists of assignments
\begin{equation} \label{e:on aug expl}
(\CK\subseteq \CI) \rightsquigarrow \CF_{\CK\subseteq \CI}\in \Shv(X^\CI),
\end{equation} 
equipped with a homotopy-compatible set of morphisms
\begin{equation} \label{e:IK}
\CF_{\CK_1\subseteq \CI_1}|_{X^{\CI_2}}\to \CF_{\CK_2\subseteq \CI_2}
\end{equation}
for every arrow $(\CK_1\subseteq \CI_1)\to (\CK_2\subseteq \CI_2)$ in $\on{Fin}_{\on{aug}}$.

\medskip

The category $\Shv^!(X^{\on{Fin}_{\on{aug}},\sim})$ is a full subcategory of $\Shv^!(X^{\on{Fin}_{\on{aug}}})$ that consists
of those objects for which the maps \eqref{e:IK} are isomorphisms for those maps $(\CK_1\subseteq \CI_1)\to (\CK_2\subseteq \CI_2)$ 
for which both $\CK_1\to \CK_2$ and $\CI_1\to \CI_2$ are surjective. 

\sssec{}

Let us now describe the functor $\on{AddUnit}_{\on{aug}}$. As in \secref{sss:add unit expl}, we shall describe the corresponding functor
$$\Shv(X^\CJ) \overset{(\on{ins}_\CJ)_!}\longrightarrow \Shv^!(\Ran) \overset{\on{AddUnit}_{\on{aug}}}\longrightarrow \Shv^!(\Ran_{\on{untl,aug}})\hookrightarrow 
\Shv^!(X^{\on{Fin}_{\on{aug}}})\to \Shv(X^\CI)$$
for $X^\CI\to X^{\on{Fin}_{\on{aug}}}$, corresponding to any given $(\CK\subseteq \CI)\in \on{Fin}_{\on{aug}}$. 

\medskip

As in \secref{sss:add unit expl}, this amounts to a description of the fiber product
$$(X^\CI\times X^\CJ)^\sim_{\cap\neq\emptyset}:=
X^\CI\underset{\Ran_{\on{untl,aug}}}\times \Ran^\to_{\on{aug}} \underset{\Ran}\times X^\CJ$$
as a prestack over $X^\CI\times X^\CJ$.  This prestack is described as the colimit
$$\underset{\CI\supseteq \CK \supseteq \CK' \twoheadrightarrow \CL \twoheadleftarrow \CJ'\subseteq \CJ}{\on{colim}}\, 
X^\CI\underset{X^{\CK'}}\times X^\CL\underset{X^{\CJ'}}\times X^\CJ,$$
where the colimit is taken over the category of diagrams  
$$\CI\supseteq \CK \supseteq \CK' \twoheadrightarrow \CL \twoheadleftarrow \CJ'\subseteq \CJ,\quad \CL\neq \emptyset,$$
and where the morphisms are surjections on the $\CL$'s.

\begin{rem}
The above index category is quite complicated. This is the reason that for some applications, it is better to use the definition
of the functor $\on{AddUnit}_{\on{aug}}$ as it is given in \secref{ss:unit and aug} rather than try to express it in terms of schemes
as in \secref{sss:untl Ran via schemes}.
\footnote{Exhibiting a lax prestack as a \emph{lax} colimit of schemes is analogous to choosing coordinates on a manifold. This is convenient
for some purposes, but is a nuisance for others.} 
\end{rem} 

\begin{rem}
As in Remark \ref{r:replace by scheme 1}, we can replace the prestack $(X^\CI\times X^\CJ)^\sim_{\cap\neq\emptyset}$ by the
corresponding closed subscheme
$$(X^\CI\times X^\CJ)_{\cap\neq\emptyset}\subset (X^\CI\times X^\CJ).$$
\end{rem} 

\ssec{A description of the functor $\on{TakeOut}$}  \label{ss:take out expl}

In this subsection we will describe the functor $\on{TakeOut}$ in terms of the presentation of $\Ran_{\on{untl,aug}}$
as $X^{\on{Fin}_{\on{aug}},\sim}$.

\sssec{}

Specifically, we would like to describe the composition
\begin{equation} \label{e:out expl}
\Shv^!(\Ran_{\on{untl,aug}}) \overset{(\psi_{\on{aug}})^!}\longrightarrow \Shv^!(\Ran^\to_{\on{aug}})  
\overset{((\xi_{\on{aug}})^!)^R}\longrightarrow \Shv^!(\Ran) 
\overset{\on{ins}_\CJ^!}\longrightarrow \Shv(X^\CJ).
\end{equation} 

On the one hand, \propref{p:out} readily provides the description of this functor. Namely, it sends an object $\CF\in \Shv^!(\Ran_{\on{untl,aug}})$,
thought of as in \eqref{e:on aug expl}, to 
$$\underset{\emptyset\neq \CK\subseteq \CJ}{\on{lim}}\, \CF_{\CK\subseteq \CJ}\in \Shv^!(X^\CJ).$$

\medskip

Just for fun, we will now rederive this formula by a different method. Namely, by Remark \ref{r:out}, the functor \eqref{e:out expl} identifies with
the composition
$$\Shv^!(\Ran_{\on{untl,aug}})\overset{(\psi'_{\on{aug}})^!}\longrightarrow \Shv^!(\Ran^\leftarrow) 
\overset{((\xi'_{\on{aug}})^!)^R}\longrightarrow  \Shv^!(\Ran) \overset{\on{ins}_\CJ^!}\longrightarrow \Shv(X^\CJ),$$
where $\Ran^\leftarrow$ is as in Remark \ref{r:out}.

\medskip

By \lemref{l:right adjoint base change}, the above functor is given by !-pull and (right adjoint of !-pullback)-push along the diagram
$$
\CD
X^\CJ\underset{\Ran}\times \Ran^\leftarrow @>{\psi'_\CJ}>> \Ran_{\on{untl,aug}}  \\
@V{\xi'_\CJ}VV  \\  
X^\CJ.
\endCD
$$

We claim:

\begin{prop}  \label{p:take out expl}
The functor
$$((\xi'_\CJ)^!)^R\circ (\psi'_\CJ)^!:\Shv^!(\Ran_{\on{untl,aug}})\to \Shv(X^\CJ)$$
is canonically isomorphic to the functor
$$\CF\mapsto \underset{\emptyset\neq \CK\subseteq \CJ}{\on{lim}}\, \CF_{\CK\subseteq \CJ}.$$
\end{prop}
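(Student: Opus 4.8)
The statement to prove (Proposition~\ref{p:take out expl}) asserts that the composite functor $((\xi'_\CJ)^!)^R\circ (\psi'_\CJ)^!$ computes the limit $\CF\mapsto \underset{\emptyset\neq \CK\subseteq \CJ}{\on{lim}}\, \CF_{\CK\subseteq \CJ}$. The overall strategy is to run exactly the same argument that proved the master formula in \propref{p:out}, but in the simpler geometric setting afforded by the lax prestack $\Ran^\leftarrow$ (where we have already separated out the variable $\CJ$), using \lemref{l:right adjoint base change} instead of appealing directly to \lemref{l:right adjoint}. So the first step is to unwind the diagram: the fiber product $X^\CJ\underset{\Ran}\times \Ran^\leftarrow$ over an $S$-point $J\in \Ran(S)$ parametrizing a $\CJ$-tuple has, at the relevant stratum (maps with pairwise non-intersecting images, by \lemref{l:testing}), $S$-points the data $(\CK\subseteq \CJ)$ with $\CK\neq\emptyset$, with morphisms given by inclusions $\CK\subseteq\CK'$. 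This is exactly the constant cofibered category $(\emptyset\neq\CK\subseteq\CJ)^{\on{op}}\times S$ over $S$.

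\textbf{Key steps.} First I would identify the map $\psi'_\CJ: X^\CJ\underset{\Ran}\times \Ran^\leftarrow\to \Ran_{\on{untl,aug}}$ on $S$-points: it sends $(\CK\subseteq\CJ)$ to the object $(\CK\subseteq\CJ)\in\Ran_{\on{untl,aug}}(S)$, so $(\psi'_\CJ)^!(\CF)$ at $(\CK\subseteq\CJ)$ is $\CF_{S,\CK\subseteq\CJ}$. Second, I would apply \lemref{l:right adjoint base change} to the Cartesian square expressing $X^\CJ\underset{\Ran}\times \Ran^\leftarrow$ as the base change of $\psi'_{\on{aug}}:\Ran^\leftarrow\to \Ran_{\on{untl,aug}}$ along $\on{ins}_\CJ$ --- here I must check that $\on{ins}_\CJ:X^\CJ\to\Ran$ is pseudo-proper (this is asserted in \secref{sss:sheaf on Ran as a colimit} / Remark~\ref{r:replace by scheme 1}), so the base change of right adjoints holds. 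Third, since $\xi'_\CJ:X^\CJ\underset{\Ran}\times \Ran^\leftarrow\to X^\CJ$ is, at the relevant stratum, a value-wise coCartesian fibration in groupoids with fiber the (finite, nonempty-subset) poset $(\emptyset\neq\CK\subseteq\CJ)$, the functor $((\xi'_\CJ)^!)^R$ is computed value-wise by a limit over that fiber poset, via \lemref{l:right adjoint} (whose hypothesis --- that the maps \eqref{e:comp map} are isomorphisms --- holds automatically because the index category $\emptyset\neq\CK\subseteq\CJ$ is finite, just as in the proof of \propref{p:out}). Composing, $((\xi'_\CJ)^!)^R\circ(\psi'_\CJ)^!(\CF)$ at $\CJ$ is $\underset{\emptyset\neq\CK\subseteq\CJ}{\on{lim}}\,\CF_{\CK\subseteq\CJ}$, which is the claim; the reduction from all $S$-points of $X^\CJ$ to the non-intersecting-images stratum is handled by the diagonal stratification as in \lemref{l:testing}.

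\textbf{Main obstacle.} The content is genuinely light --- as the paper notes, this is a ``just for fun'' rederivation --- so there is no deep obstacle; the care needed is bookkeeping. The one point deserving attention is verifying the hypotheses of \lemref{l:right adjoint base change} and \lemref{l:right adjoint} precisely: namely that $\Ran^\leftarrow\to\Ran_{\on{untl,aug}}$ (equivalently its base change $\xi'_\CJ$) really is a value-wise coCartesian (resp. Cartesian, as needed) fibration in groupoids, and that $\on{ins}_\CJ$ is pseudo-proper so that base change for the right adjoint of $!$-pullback applies. Once those are in place the identification of the fiber poset with $\{\emptyset\neq\CK\subseteq\CJ\}$ and the finiteness of that poset make the value-wise limit formula immediate, and one concludes by comparing with the formula already extracted from \propref{p:out}.
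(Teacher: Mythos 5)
Your plan (apply \lemref{l:right adjoint} pointwise to $\xi'_\CJ$ and read off the fiber poset) is genuinely different from the paper's, which stays within the coordinate presentations $X^{\on{Fin}^{\leftarrow}}$, $X^{\CQ}$ and so on, identifies the relevant lax prestack with the \emph{constant} family $X^\CJ\times\CQ'$ over $X^\CJ$, and then performs a single categorical cofinality reduction $\CQ'\to\CQ''$. Because the paper arranges for the index category to be constant before taking the right adjoint, the troublesome hypothesis of \lemref{l:right adjoint} becomes trivial. Your route skips that arrangement, and this creates a real gap where you invoke \lemref{l:right adjoint}: you assert that both maps in \eqref{e:comp map} are isomorphisms ``automatically because the index category $\emptyset\neq\CK\subseteq\CJ$ is finite, just as in the proof of \propref{p:out}.'' That misreads the proof of \propref{p:out}, which treats the two maps separately: finiteness only gives the map $\to$; the map $\leftarrow$ requires showing that for each $g:S'\to S$ the induced functor of fiber posets $\{\emptyset\neq K\subseteq J\}\to\{\emptyset\neq K'\subseteq J'\}$, $K\mapsto g(K)$, is right cofinal, which the paper establishes by a Cartesian-fibration-with-final-objects argument. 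Your proof supplies none of this.

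Second, you only identify the fiber of $\xi'_\CJ$ over $\overset{\circ}X{}^\CJ$, where it is literally the poset $\{\emptyset\neq\CK\subseteq\CJ\}$; over a diagonal stratum it is the strictly smaller poset of subsets of the collapsed image $J$, so the pointwise formula \lemref{l:right adjoint} would give has a \emph{varying} index category. Matching it against the constant-index formula $\underset{\emptyset\neq\CK\subseteq\CJ}{\on{lim}}\CF_{\CK\subseteq\CJ}$ requires yet another right-cofinality observation (the image map $\{\emptyset\neq\CK\subseteq\CJ\}\to\{\emptyset\neq K\subseteq J\}$ has over-categories with final objects), which you omit. Invoking \lemref{l:testing} does not replace these steps: that lemma verifies that a \emph{given} map is an isomorphism by restricting to $\overset{\circ}X{}^\CJ$, but here the map itself only becomes available after the hypotheses of \lemref{l:right adjoint} have been checked at all $S$-points, not merely over the open stratum. (Also a small slip: \lemref{l:right adjoint} requires a value-wise Cartesian fibration; $\xi'_\CJ$ is indeed one, but it is not a ``coCartesian fibration in groupoids,'' since its fibers are non-trivial posets.)
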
 

The rest of this subsection is devoted to the proof of \propref{p:take out expl}.

\sssec{}

Let $\on{Fin}^{\leftarrow}$ denote the category, whose objects are pairs of non-empty finite sets $\CK\subseteq \CL$, 
and whose morphisms are \emph{surjections} $\CL_1\twoheadrightarrow \CL_2$ that map $\CK_1\to \CK_2$
(not necessarily surjectively).

\medskip

For a set $A$, let $A^{\on{Fin}^{\leftarrow}}$ denote the Cartesian fibration in groupoids over $\on{Fin}^{\leftarrow}$
that assigns to $\CK\subseteq \CL$ the set $A^\CL$. 

\medskip

Let $A^{\on{Fin}^{\leftarrow},\sim}$ denote the localization of $A^{\on{Fin}^{\leftarrow}}$, where we invert Cartesian arrows lying
over those morphisms in $\on{Fin}^{\leftarrow}$, for which the map $\CK_1\to \CK_2$ is also surjective. 

\medskip

Let $X^{\on{Fin}^{\leftarrow}}$ and $X^{\on{Fin}^{\leftarrow},\sim}$ denote the corresponding lax prestacks. As in \propref{p:express Ran untl}
one shows that $X^{\on{Fin}^{\leftarrow},\sim}$ is canonically isomorphic to $\Ran^\leftarrow$. 

\medskip

The composed map
$$X^{\on{Fin}^{\leftarrow}}\to X^{\on{Fin}^{\leftarrow},\sim}\simeq \Ran^\leftarrow\to \Ran_{\on{untl,aug}}$$
factors as
$$X^{\on{Fin}^{\leftarrow}} \to X^{\on{Fin}_{\on{aug}}} \to X^{\on{Fin}_{\on{aug}},\sim} \simeq \Ran_{\on{untl,aug}}.$$

\medskip

Since the maps 
$$X^{\on{Fin}_{\on{aug}}}\to \Ran_{\on{untl,aug}} \text{ and } X^{\on{Fin}^{\leftarrow}}\to \Ran^\leftarrow$$
are \emph{localizations} when evaluated on every $S\in \Sch$, functor 
$((\xi'_\CJ)^!)^R\circ (\psi'_\CJ)^!$ is canonically isomorphic to  
!-pull and (right adjoint of !-pullback)-push along the diagram
\begin{equation} \label{e:IL}
\CD
X^\CJ\underset{\Ran}\times  X^{\on{Fin}^{\leftarrow}}  @>>>  X^{\on{Fin}_{\on{aug}}}  @>>> \Ran_{\on{untl,aug}} \\
@VVV   \\
X^\CJ.
\endCD
\end{equation}

\sssec{}

The lax prestack $X^\CJ\underset{\Ran}\times  X^{\on{Fin}^{\leftarrow}}$ attaches to $S\in \Sch$ the 
Cartesian fibration in groupoids over $\on{Fin}_{\on{aug}}$ with the fiber over $\CK\subseteq \CL$ being
$$\Maps(S,X^\CJ\underset{\Ran}\times X^\CL)\simeq \underset{\CL\twoheadrightarrow \CM \twoheadleftarrow \CJ}{\on{colim}}\, \Maps(S,X)^\CM.$$

\medskip

Let $\CQ$ denote the category of 
\begin{equation} \label{e:q}
\CK\subseteq \CL \overset{\alpha}\twoheadrightarrow \CM \overset{\beta}\twoheadleftarrow \CJ,
\end{equation} 
where the morphisms are commutative diagrams that induce surjective maps between the $\CL$'s. For a set $A$,
let $A^\CQ$ denote the Cartesian fibration in groupoids over $\CQ$, with the fiber over an object \eqref{e:q}
being $A^\CM$.

\medskip

Let $X^\CQ$ be the corresponding lax prestack. We have a canonically defined map
$$X^\CQ\to X^\CJ\underset{\Ran}\times  X^{\on{Fin}^{\leftarrow}},$$
which is a localization when evaluated on every $S\in \Sch$ (we invert the Cartesian arrows that lie over those maps in 
$\CQ$ that induce isomorphisms on the $\CK$'s and the $\CL$'s).

\medskip

Hence, we obtain that when computing the !-pull and (right adjoint of !-pullback)-push along the diagram \eqref{e:IL}, we can replace this diagram by
\begin{equation} \label{e:qq}
\CD 
X^\CQ  @>>>  \Ran_{\on{untl,aug}} \\
@VVV    \\
X^\CJ.
\endCD
\end{equation} 

\sssec{}

Note now that right-cofinal in the category $\CQ$ there is a full subcategory $\CQ'$ consisting of those objects \eqref{e:q} for which the map
$\beta$ is an isomorphism. 

\medskip

Hence, we can further replace the diagram \eqref{e:qq} by the diagram
\begin{equation} \label{e:qq'}
\CD 
X^{\CQ'}  @>>>  \Ran_{\on{untl,aug}} \\
@VVV    \\
X^\CJ.
\endCD
\end{equation} 

Note, however, that the lax prestack $X^{\CQ'}$ identifies with
$$X^\CJ\times \CQ'.$$

\medskip

Hence, we rewrite the functor in the proposition as
$$\CF\mapsto \underset{\CK\subseteq \CL \overset{\alpha}\twoheadrightarrow \CJ}{\on{lim}}\, \on{diag}_\alpha^!(\CF_{\CK\subseteq \CL}).$$

\sssec{}

Consider now the category $\CQ''$ of finite non-empty subsets of $\CJ$ (i.e., this is the category appearing in the statement of the proposition).
We have a canonically defined functor 
\begin{equation} \label{e:prime and prime}
\CQ'\to \CQ'',\quad (\CK\subseteq \CL \overset{\alpha}\twoheadrightarrow \CJ)\mapsto (\alpha(\CK)\subset \CJ).
\end{equation}

The functor
$$\CQ'\to \Shv(X^\CJ),\quad (\CK\subseteq \CL \overset{\alpha}\twoheadrightarrow \CJ)\mapsto \on{diag}_\alpha^!(\CF_{\CK\subseteq \CL})$$
factors through the above functor \eqref{e:prime and prime} and the functor 
$$\CQ''\to \Shv(X^\CJ),\quad (\CK\subseteq \CJ)\mapsto \CF_{\CK\subseteq \CJ}.$$

Hence, it remains to show that the functor \eqref{e:prime and prime} is right cofinal. However, the latter is clear as it is a Cartesian fibration,
with contractible fibers (each fiber has a final object). 

\newpage 

\centerline{\bf Part II: Verdier duality on the Ran space}

\section{Verdier duality on prestacks}  \label{s:Verdier}

The functor of Verdier duality (for schemes, or topological spaces) is not
something mysterious, see \secref{sss:Verdier defn} below. 

\medskip

Our proof of the \emph{cohomological product formula} \eqref{e:product formula prev}, is based on considering the operation
of Verdier duality on sheaves on the Ran space. The trouble is that the Ran space is not a scheme, and here
the difference between schemes and prestacks (even the nice one such as $\Ran$) will be significant. 

\medskip

In this section we will define what we mean by the Verdier duality functor on a prestack and describe it
rather explicitly for a class of prestacks (called pseudo-schemes with a finitary diagonal), for which it is reasonable to 
expect such a description.  

\medskip

The material in Sects. \ref{ss:Verdier} and \ref{ss:Verdier pushforward} is needed for the statements
of the results in \secref{s:local duality}. The material in the rest of this section is needed for the proofs
of the theorems stated in \secref{s:Verdier on Ran}. As a prerequisite for the present section, as well as 
the rest of Part II, one only needs \secref{s:prestacks} (i.e., lax prestacks will not appear). 

\ssec{Verdier duality}  \label{ss:Verdier}

In this subsection we introduce the functor of Verdier duality in the context of prestacks. 

\sssec{}

Let $\CY$ be a prestack such that the diagonal map 
$$\on{diag}_\CY:\CY\to \CY\times \CY$$  
is pseudo-proper (see \secref{sss:pseudo proper} for what this means).  In particular, the functor
$$(\on{diag}_\CY)_!:\Shv^!(\CY) \to \Shv^!(\CY\times \CY)$$
is defined and satisfies base change.

\medskip

Given two objects $\CF,\CG\in \Shv^!(\CY)$, by a \emph{pairing} between them we shall mean a map
$$\CF\boxtimes \CG\to (\on{diag}_\CY)_!(\omega_\CY).$$

\begin{rem}
The pseudo-properness assumption on $\on{diag}_\CY$ does not merely appear for technical reasons, i.e.,
to ensure the existence of $(\on{diag}_\CY)_!(\omega_\CY)$. More importantly, in actual Verdier duality,
we want to use $(\on{diag}_\CY)_*(\omega_\CY)$ rather than $(\on{diag}_\CY)_!(\omega_\CY)$, and 
while we do not necessarily know what $(\on{diag}_\CY)_*(\omega_\CY)$ is supposed to be for an 
arbitrary prestack $\CY$, the pseudo-properness is supposed to imply that it is isomorphic to 
$(\on{diag}_\CY)_!(\omega_\CY)$.
\end{rem} 

\sssec{}  \label{sss:Verdier defn}

For a given $\CG\in \Shv^!(\CY)$, we define its Verdier dual $\BD_{\CY}(\CG)\in \Shv^!(\CY)$ to represent the functor
that sends $\CF\in \Shv^!(\CY)$ to the space of pairings between $\CF$ and $\CG$. The above contravariant
functor is representable, because it sends colimits in $\Shv^!(\CY)$ to limits in $\inftygroup$. 

\medskip

In particular, for a given $\CF$, we have a canonically defined pairing
\begin{equation} \label{e:canonical pairing}
\CF\boxtimes \BD_\CY(\CF)\to  (\on{diag}_\CY)_!(\omega_\CY).
\end{equation} 

\ssec{Interaction of Verdier duality with pushforwards}    \label{ss:Verdier pushforward}

The basic property of Verdier duality on schemes is  \emph{commutation with direct images} for proper morphisms. 
In this subsection we shall start exploring what can be said about the extension of this property for maps
between prestacks. 

\sssec{} \label{sss:pseudo-proper pairing}

Let $f:\CY_1\to \CY_2$ be a map which is itself pseudo-proper, so that the functor $f_!$, left adjoint to $f^!$
is defined. If $\CF,\CG$ are objects of $\Shv^!(\CY_1)$, then a datum of a pairing between them gives rise
to a pairing between $f_!(\CF)$ and $f_!(\CG)$ via
$$f_!(\CF)\boxtimes f_!(\CG) \overset{\text{pseudo-properness}} {\underset{\sim}\longrightarrow}
(f\times f)_!(\CF\boxtimes \CG) \to (f\times f)_!\circ (\on{diag}_{\CY_1})_!(\omega_{\CY_1}) \to (\on{diag}_{\CY_2})_!(\omega_{\CY_2}).$$

In particular, if $\CY$ is pseudo-proper, a datum of a pairing between $\CF$ and $\CG$ gives rise to a pairing
\begin{equation} \label{e:pairing on homology}
\on{C}^*_c(\CY,\CF)\otimes \on{C}^*_c(\CY,\CG)\to \Lambda.
\end{equation}

\sssec{}

We obtain that a pseudo-proper map $f:\CY_1\to \CY_2$, the construction from \secref{sss:pseudo-proper pairing} gives
rise to a canonically defined map
\begin{equation} \label{e:duality and direct image}
f_!(\BD_{\CY_1}(\CF))\to \BD_{\CY_2}(f_!(\CF)).
\end{equation} 

We have the following fundamental fact:

\begin{lem}  \label{l:Verdier duality sch}
Let us be working in the context of constructible sheaves. 
Let $f:Y_1\to Y_2$ be a proper map between schemes. Then the map \eqref{e:duality and direct image} 
is an isomorphism.
\end{lem}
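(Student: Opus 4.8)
The plan is to verify the isomorphism by testing \eqref{e:duality and direct image} against an arbitrary object $\CG\in\Shv^!(Y_2)$ and comparing mapping spaces via Yoneda; the whole argument is driven by one feature of \emph{schemes} (as opposed to general prestacks): since our schemes are separated, the diagonal $\on{diag}_Y\colon Y\to Y\times Y$ is a closed immersion, hence proper. Consequently $(\on{diag}_Y)_!\simeq(\on{diag}_Y)_*$ and $(\on{diag}_Y)^*$ is its \emph{left} adjoint, so for $Y\in\{Y_1,Y_2\}$ a pairing $\CF'\boxtimes\CG'\to(\on{diag}_Y)_!(\omega_Y)$ is the same datum as a map $(\on{diag}_Y)^*(\CF'\boxtimes\CG')\to\omega_Y$. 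Writing $\CF'\overset{*}\otimes\CG':=(\on{diag}_Y)^*(\CF'\boxtimes\CG')$ for the $*$-tensor product, this says that $\BD_Y$ is nothing but the ordinary Verdier-duality functor $\underline{\on{Hom}}(-,\omega_Y)$ on $Y$ (internal Hom for the $*$-tensor product). We are free to use all six functors here because everything in sight is a scheme of finite type.

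Granting this, I would rewrite both sides of \eqref{e:duality and direct image}. For the target, the adjunction $(\on{diag}_{Y_2})^*\dashv(\on{diag}_{Y_2})_!$, the projection formula $\CG\overset{*}\otimes f_!\CF\simeq f_!(f^*\CG\overset{*}\otimes\CF)$, the adjunction $f_!\dashv f^!$, and $f^!\omega_{Y_2}\simeq\omega_{Y_1}$ give
$$\Maps_{\Shv(Y_2)}(\CG,\BD_{Y_2}(f_!\CF))\simeq\Maps_{\Shv(Y_2)}(\CG\overset{*}\otimes f_!\CF,\omega_{Y_2})\simeq\Maps_{\Shv(Y_1)}(f^*\CG\overset{*}\otimes\CF,\omega_{Y_1}).$$
For the source, properness of $f$ gives $f_!\simeq f_*$, so the adjunction $f^*\dashv f_*$ followed by $(\on{diag}_{Y_1})^*\dashv(\on{diag}_{Y_1})_!$ gives
$$\Maps_{\Shv(Y_2)}(\CG,f_!\BD_{Y_1}(\CF))\simeq\Maps_{\Shv(Y_1)}(f^*\CG,\BD_{Y_1}(\CF))\simeq\Maps_{\Shv(Y_1)}(f^*\CG\overset{*}\otimes\CF,\omega_{Y_1}).$$
Thus both sides are canonically $\Maps_{\Shv(Y_1)}(f^*\CG\overset{*}\otimes\CF,\omega_{Y_1})$, and the substance of the lemma is that the map induced by \eqref{e:duality and direct image} is precisely this identification.

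The step I expect to be the real work is exactly that last compatibility: unwinding the construction of \eqref{e:duality and direct image} from \secref{sss:pseudo-proper pairing} — push the canonical pairing $\CF\boxtimes\BD_{Y_1}(\CF)\to(\on{diag}_{Y_1})_!\omega_{Y_1}$ forward along $(f\times f)_!$, apply the K\"unneth isomorphism $f_!\CF\boxtimes f_!\BD_{Y_1}(\CF)\simeq(f\times f)_!(\CF\boxtimes\BD_{Y_1}(\CF))$, and compose with $(f\times f)_!(\on{diag}_{Y_1})_!\omega_{Y_1}\simeq(\on{diag}_{Y_2})_!f_!f^!\omega_{Y_2}\to(\on{diag}_{Y_2})_!\omega_{Y_2}$ — and checking that, after the rewriting above, this composite is the identity. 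This is a diagram chase in units, counits, the projection-formula morphism and K\"unneth, which commute by the standard compatibilities of the six-functor formalism on schemes. A slightly less hands-on alternative is to observe that both $\CF\mapsto f_!\BD_{Y_1}(\CF)$ and $\CF\mapsto\BD_{Y_2}(f_!\CF)$ send colimits in $\CF$ to limits (for proper $f$, $f_!\simeq f_*$ is a right adjoint), hence are determined by their restrictions to compact $\CF$; for compact (constructible) $\CF$ one identifies $\BD_{Y_i}$ with classical Verdier duality and invokes the classical isomorphism $f_!D_{Y_1}\simeq D_{Y_2}f_!$ for proper $f$, together with the (again routine) observation that this is the restriction of \eqref{e:duality and direct image}.
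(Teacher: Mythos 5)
The paper states this lemma without proof, citing it as a fundamental fact of the six-functor formalism, so there is no in-paper argument to compare against. Your argument is the standard one and is correct in structure: the observation that a separated scheme has a closed-immersion (hence proper) diagonal makes $(\on{diag}_Y)^*$ a left adjoint of $(\on{diag}_Y)_!\simeq(\on{diag}_Y)_*$, identifying $\BD_Y(-)$ with the classical internal Hom $\underline{\on{Hom}}(-,\omega_Y)$ for the $*$-tensor product; after that, the projection formula and the adjunctions $f_!\dashv f^!$ and $f^*\dashv f_*\simeq f_!$ compute both sides of \eqref{e:duality and direct image}, tested against an arbitrary $\CG$, to be the same mapping space $\Maps_{\Shv(Y_1)}(f^*\CG\overset{*}\otimes\CF,\omega_{Y_1})$. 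You correctly identify what remains --- the coherence check that the map \eqref{e:duality and direct image}, defined by pushing the canonical pairing forward along $(f\times f)_!$ and applying K\"unneth, agrees with this Yoneda identification. This is genuine but routine bookkeeping in the six-functor formalism (compatibility of the external-product/K\"unneth isomorphism, projection formula, and adjunction units/counits). Your alternative via compact objects is equally sound: $\Shv(Y_i)$ is compactly generated, both $f_!\circ\BD_{Y_1}$ and $\BD_{Y_2}\circ f_!$ carry colimits to limits, and on compacts the comparison is the classical commutation of Verdier duality with proper pushforward --- though this relocates rather than eliminates the coherence check, now to be verified on compact constructible objects, where the constructions are more concrete.
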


\begin{proof}
The assertion is well-known if $\CF$ is compact. In the general case, it follows from the fact that 
\emph{in the context of constructible sheaves}, the functor $f_!$ commutes with limits: indeed, since $f$ is proper,
we have $f_!=f_*$, while $f_*$ admits a left adjoint, namely, $f^*$. 
\end{proof} 

\sssec{}

In particular, we obtain that if $\CY$ is a pseudo-proper prestack and $\CF\in \Shv^!(\CY)$, the pairings \eqref{e:canonical pairing}
and \eqref{e:pairing on homology} give rise to a pairing:
$$\on{C}^*_c(\CY,\CF)\otimes \on{C}^*_c(\CY,\BD_\CY(\CF))\to \Lambda,$$
and thus to a map
\begin{equation} \label{e:duality and cohomology}
\on{C}^*_c(\CY,\BD_\CY(\CF)) \to \left(\on{C}^*_c(\CY,\CF)\right)^\vee,
\end{equation}
(which is the map \eqref{e:duality and direct image} for $\CY_2=\on{pt}$).

\medskip

If $\CY=Y$ is a proper scheme, then \lemref{l:Verdier duality sch} says that the map \eqref{e:duality and cohomology}
is an isomorphism. 

\begin{rem}
Note that the map \eqref{e:duality and cohomology} is not an isomorphism for a general pseudo-proper prestack. For example,
take $\CY$ to be the disjoint union of infinitely many copies of $\on{pt}$.
\end{rem} 

\ssec{Interaction of Verdier duality with other functors}

The material in the rest of this section is of technical nature and can be skipped on the first pass.

\medskip

In this subsection we will start exploring how Verdier duality interacts with the operations of external product and
pullback under an \'etale morphism. 

\sssec{Products}

Let $\CY_1$ and $\CY_2$ be two prestacks and $\CF_i\in \Shv^!(\CY_i)$. We have a tautologically defined map
\begin{equation} \label{e:product and Verdier}
\BD_{\CY_1}(\CF_1)\boxtimes \BD_{\CY_2}(\CF_2)\to \BD_{\CY_1\times \CY_2}(\CF_1\boxtimes \CF_2).
\end{equation}

The following in well-known:

\begin{lem} \label{l:product and Verdier sch compact}
Assume that the ring of coefficients $\Lambda$ has a bounded Tor-dimension. 
Let $\CY_i=Y_i$ be schemes and $\CF_i\in \Shv(Y_i)$ be bounded above with compact cohomology sheaves.
Then the map \eqref{e:product and Verdier} is an isomorphism.
\end{lem}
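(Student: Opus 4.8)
The statement is the K\"unneth formula for Verdier duality on schemes, restricted to the class of sheaves for which it is valid. The plan is to reduce it to the definitions and then to the standard adjunctions. First I would unwind what the map \eqref{e:product and Verdier} actually is: by the defining property of $\BD$, giving a map $\BD_{Y_1}(\CF_1)\boxtimes \BD_{Y_2}(\CF_2)\to \BD_{Y_1\times Y_2}(\CF_1\boxtimes\CF_2)$ is the same as giving a pairing $(\CF_1\boxtimes\CF_2)\boxtimes(\BD_{Y_1}(\CF_1)\boxtimes \BD_{Y_2}(\CF_2))\to (\on{diag}_{Y_1\times Y_2})_!(\omega_{Y_1\times Y_2})$, and this is manufactured from the two canonical pairings \eqref{e:canonical pairing} for $\CF_1$ and $\CF_2$ together with the identification $(\on{diag}_{Y_1})_!(\omega_{Y_1})\boxtimes (\on{diag}_{Y_2})_!(\omega_{Y_2})\simeq (\on{diag}_{Y_1\times Y_2})_!(\omega_{Y_1\times Y_2})$ (using that external product is symmetric monoidal and the K\"unneth/base-change compatibility of $(\on{diag})_!$, which holds since diagonals of schemes are proper). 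So \eqref{e:product and Verdier} is just ``multiply the two canonical pairings''.

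Next I would translate everything into the usual six-functor language on schemes, where $\BD_{Y}(\CF)=\underline{\Hom}(\CF,\omega_Y)$ and the canonical pairing is the evaluation map; concretely $\BD_Y(\CF)\simeq \underline{\Hom}(\CF,\omega_Y)$ and for $f:Y\to \on{pt}$ one has $\omega_Y = f^!\Lambda$. Then the claim becomes the classical statement that the natural map
\[
\underline{\Hom}(\CF_1,\omega_{Y_1})\boxtimes \underline{\Hom}(\CF_2,\omega_{Y_2})\to \underline{\Hom}(\CF_1\boxtimes\CF_2,\omega_{Y_1\times Y_2})
\]
is an isomorphism under the hypotheses. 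I would prove this by d\'evissage: (i) it is an isomorphism when $\CF_1,\CF_2$ are (shifts of) the constant/dualizing-type generators, because then both sides are computable by base change and the K\"unneth formula $\omega_{Y_1\times Y_2}\simeq \omega_{Y_1}\boxtimes\omega_{Y_2}$; more precisely, the key input is that for a compact object $P\in\Shv(Y)$ one has $\underline{\Hom}(P,-)\simeq P^\vee\otimes(-)$ where $P^\vee$ is the monoidal dual, and $(-)^\vee$ is compatible with $\boxtimes$; (ii) both sides of \eqref{e:product and Verdier}, as functors of $\CF_1$ (resp.\ $\CF_2$) on the subcategory of bounded-above objects with compact cohomology sheaves, send cofiber sequences to cofiber sequences and commute with the relevant finite colimits/limits, so the locus where the map is an isomorphism is closed under these operations; (iii) the objects in the statement are built, via finitely many such operations and shifts, out of compact generators (here the bounded-above + compact-cohomology-sheaves hypothesis and the bounded Tor-dimension hypothesis on $\Lambda$ are exactly what make this d\'evissage terminate and keep the relevant $\underline{\Hom}$'s bounded). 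Assembling (i)--(iii) gives the result.

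The main obstacle I expect is purely bookkeeping rather than conceptual: one has to be careful that the d\'evissage stays inside the class of sheaves for which the statement is being asserted (bounded above, compact cohomology sheaves) and that the boundedness of $\underline{\Hom}(\CF,\omega_Y)$ is controlled --- this is precisely where the hypothesis that $\Lambda$ has bounded Tor-dimension enters, since otherwise $\underline{\Hom}(\CF_1\boxtimes\CF_2,\omega)$ could fail to be bounded and the colimit/limit interchange needed to reduce to compact generators would break. A secondary point requiring care is matching the abstract pairing-theoretic description of \eqref{e:product and Verdier} coming from \secref{ss:Verdier} with the six-functor description, i.e.\ checking that the map one writes down via the universal property of $\BD$ agrees with the evaluation-map K\"unneth comparison; this is a diagram chase using the base-change isomorphism for $(\on{diag}_Y)_!$ established in \propref{p:pseudo-proper}. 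Since this last lemma is stated as ``well-known'', I would in the actual write-up simply cite the classical reference (e.g.\ the K\"unneth formula in \cite[Exp.\ XVII]{SGA4} or the analogous statement for constructible sheaves) rather than reproduce the d\'evissage in full.
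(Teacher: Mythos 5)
The paper offers no proof of this lemma at all---it is introduced with the phrase ``The following is well-known'' and left at that. So there is nothing in the paper for your argument to be compared against; the relevant question is just whether your sketch is sound, and it essentially is: translating $\BD_Y$ into $\underline{\Hom}(-,\omega_Y)$, proving the comparison for compact objects via $\underline{\Hom}(P,-)\simeq P^\vee\otimes(-)$ and compatibility of monoidal duals with $\boxtimes$, and then performing a d\'evissage is the standard route, and you have correctly located where the bounded-Tor-dimension hypothesis enters.

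One point in your outline is stated loosely enough that it would not survive as written. Your steps (ii)--(iii) read as though a bounded-above object with compact cohomology sheaves is built out of compact generators by \emph{finitely many} cofiber sequences; it is not, since it may have infinitely many nonzero cohomology sheaves. The actual reduction is an infinite one: both sides of \eqref{e:product and Verdier} are bounded below (because the $\CF_i$ are bounded above, $\omega_{Y_i}$ is bounded, and the Tor-dimension hypothesis keeps $\CF_1\boxtimes\CF_2$ bounded above with compact cohomology sheaves), so one checks the map in each fixed cohomological degree $k$, and for a fixed $k$ only a finite truncation $\tau^{\geq -N}\CF_i$ contributes, which \emph{is} compact. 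Said differently, one exhibits both sides as colimits along the Postnikov tower of $\BD_{Y_i}(\CF_i)$ (using that $\boxtimes$ preserves colimits) and matches them degree-by-degree. You gesture at this (``colimit/limit interchange''), and the idea is there, but a careful write-up needs to pin down the direction of the truncation and why the interchange is legitimate, since that is precisely where an unbounded-Tor-dimension $\Lambda$ would cause the argument to fail. Apart from this, the secondary concern you flag---matching the pairing-theoretic definition of \eqref{e:product and Verdier} with the $\underline{\Hom}$-evaluation-map description---is indeed a routine but necessary diagram chase using the base-change for $(\on{diag}_Y)_!$ from \propref{p:pseudo-proper}, and you are right that in practice one would cite a classical reference for the K\"unneth formula rather than reproducing the d\'evissage.
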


\sssec{}  \label{sss:etale and dual}

Let now $f:\CY_1\to \CY_2$ be an \'etale map between prestacks. We claim that for $\CF\in \Shv^!(\CY_2)$, there exists
a canonically defined map
\begin{equation} \label{e:etale Verdier}
f^!\circ \BD_{\CY_2}(\CF)\to \BD_{\CY_1}\circ f^!(\CF).
\end{equation} 

Indeed, to specify such a map is equivalent to specifying a map
\begin{equation} \label{e:construct etale Verdier}
f^!\circ \BD_{\CY_2}(\CF)\boxtimes f^!(\CF)\to (\on{diag}_{\CY_1})_!(\omega_{\CY_1}).
\end{equation} 

The latter map is constructed as follows. The fact that $f$ is \'etale implies that the diagonal map
$$\CY_1\to \CY_1\underset{\CY_2}\times \CY_1=(\CY_1\times \CY_1)\underset{\CY_2\times \CY_2}\times \CY_2$$
is an embedding of a connected component.

\medskip

In particular, we have a canonically defined map
$$(f\times f)^!\circ (\on{diag}_{\CY_2})_!(\omega_{\CY_2})\to (\on{diag}_{\CY_1})_!(\omega_{\CY_1}).$$

Now, the map in \eqref{e:construct etale Verdier} is the composition
$$f^!\circ \BD_{\CY_2}(\CF)\boxtimes f^!(\CF)=(f\times f)^!(\BD_{\CY_2}(\CF)\boxtimes \CF)\to
(f\times f)^!\circ (\on{diag}_{\CY_2})_!(\omega_{\CY_2})\to (\on{diag}_{\CY_1})_!(\omega_{\CY_1}),$$
where the second arrow is the pullback by means of $f\times f$ of the tautological map
$$ \BD_{\CY_2}(\CF)\boxtimes \CF\to (\on{diag}_{\CY_2})_!(\omega_{\CY_2}).$$

\medskip

We have:

\begin{lem} \label{l:etale Verdier sch}
Let us be working in the context of constructible sheaves, and let 
$\CY_1=Y_1$ and $\CY_2=Y_2$ be schemes. Then the map \eqref{e:etale Verdier} is an isomorphism.
\end{lem}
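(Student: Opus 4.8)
The plan is to reduce to the case where $\CF$ is constructible and then invoke the classical compatibility of Verdier duality with pullback along an \'etale map. First I would note that for a separated scheme $Y$ the functor $\BD_Y$ of \secref{sss:Verdier defn} restricts on the constructible (= compact) objects of $\Shv(Y)$ to the classical Verdier duality functor $\CG\mapsto \underline{\Hom}(\CG,\omega_Y)$: since $\on{diag}_Y$ is a closed immersion one has $(\on{diag}_Y)_!(\omega_Y)\simeq (\on{diag}_Y)_*(\omega_Y)$, and the representability property defining $\BD_Y(\CG)$ unwinds, via the $((\on{diag}_Y)^*,(\on{diag}_Y)_*)$-adjunction and the projection formula, into the functor represented by $\underline{\Hom}(\CG,\omega_Y)$. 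Next, both contravariant functors $f^!\circ \BD_{Y_2}$ and $\BD_{Y_1}\circ f^!$ carry colimits in $\Shv(Y_2)$ to limits in $\Shv(Y_1)$: the functor $\BD$ does so by construction, $f^!$ preserves colimits by our running assumptions and preserves limits in the constructible context by \lemref{l:constr limits prestack}, so each composite is limit-preserving. Since $\Shv(Y_2)$ is generated under colimits by its constructible objects, it is enough to check that \eqref{e:etale Verdier} is an isomorphism when $\CF$ is constructible.

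For constructible $\CF$ the object $f^!(\CF)$ is again constructible, and both $\BD_{Y_2}(\CF)$ and $\BD_{Y_1}(f^!\CF)$ are the honest Verdier duals by the previous paragraph. Here I would use the standard input from the six-functor formalism: for \'etale $f$ one has $f^!\simeq f^*$ canonically, and Verdier duality on schemes intertwines $f^*$ and $f^!$, giving a canonical isomorphism $\BD_{Y_1}\circ f^*\simeq f^!\circ \BD_{Y_2}$ on constructible sheaves (equivalently, $f^!=\BD_{Y_1}\circ f^*\circ \BD_{Y_2}$). Combining these produces an abstract isomorphism $f^!\circ \BD_{Y_2}(\CF)\simeq \BD_{Y_1}\circ f^!(\CF)$, so the one thing left is to match it with the particular map built in \secref{sss:etale and dual}.

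The matching is where the care goes, and I expect it to be the main obstacle, though it is not deep. Recall that \eqref{e:etale Verdier} is induced by the canonical map
$$c\colon (f\times f)^!\circ (\on{diag}_{Y_2})_!(\omega_{Y_2})\to (\on{diag}_{Y_1})_!(\omega_{Y_1})$$
coming from the fact that $\on{diag}_{Y_1}$ realizes $Y_1$ as a connected component of $Z:=Y_1\underset{Y_2}\times Y_1=(Y_1\times Y_1)\underset{Y_2\times Y_2}\times Y_2$. By proper base change along this Cartesian square one identifies $(f\times f)^!\circ (\on{diag}_{Y_2})_!(\omega_{Y_2})$ with $i_!(\omega_Z)$, where $i\colon Z\hookrightarrow Y_1\times Y_1$; because $f$ is \'etale the subscheme $\on{diag}_{Y_1}(Y_1)\subset Z$ is open and closed, so under this identification $c$ is the projection of $i_!(\omega_Z)$ onto the direct summand $(\on{diag}_{Y_1})_!(\omega_{Y_1})$ and is therefore an isomorphism. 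Threading this through the recipe \eqref{e:construct etale Verdier} then shows that \eqref{e:etale Verdier} coincides with the abstract isomorphism of the previous paragraph. Everything beyond the assertion that the diagonal of an \'etale map is a component of its self-fiber-product is formal diagram chasing in the six-functor formalism, entirely parallel to what underlies \lemref{l:Verdier duality sch} and \lemref{l:product and Verdier sch compact}.
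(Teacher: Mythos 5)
Your proposal is correct and follows essentially the same route as the paper: reduce to compact (constructible) $\CF$ by observing that both $f^!\circ \BD_{Y_2}$ and $\BD_{Y_1}\circ f^!$ carry colimits to limits — the latter using that $f^!$ commutes with limits in the constructible context because it has a left adjoint — and then invoke the standard six-functor compatibility on compacts. The paper's proof is just a terse version of yours (it dismisses the compact case as "well-known"), whereas you usefully spell out why $\BD_Y$ agrees with the honest Verdier dual on compacts and why the specific map $c$ of \secref{sss:etale and dual} is the projection onto the diagonal summand; these are exactly the details hidden behind the paper's "well-known."
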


\begin{proof}
Same as that of \lemref{l:Verdier duality sch}. 
\end{proof} 

\ssec{Digression: properties of prestacks}

In this subsection we will introduce several notions that will help describe the category of sheaves on a prestack
and the Verdier duality functor.

\medskip

The key notion is that of \emph{pseudo-scheme}. In a sense, pseudo-schemes are ``as good as schemes", as
far as sheaves on them are concerned. 

\sssec{} \label{sss:pseudo-scheme}

Let $\CY$ be a prestack. We shall say that $\CY$ is a \emph{pseudo-scheme} if $\CY$ can be written as a colimit
\begin{equation} \label{e:prestack as a colimit}
\CY=\underset{a\in A}{\on{colim}}\, Z_a,\quad Z_a\in \Sch,
\end{equation} 
where the transition maps $f_{a_1,a_2}:Z_{a_1}\to Z_{a_2}$ are proper.\footnote{Pseudo-schemes are different from ind-schemes
in two respects: for the latter one requires that the transition maps $f_{a_1,a_2}$ be \emph{closed embeddings}, and (which is
also crucial) that the index category $A$ be \emph{filtered}.} 

\medskip

For $a\in A$, we let $\on{ins}_a:Z_a\to \CY$ denote the corresponding map. We claim:

\begin{prop} \label{p:ins is pseudo}
In the above notations, the maps $\on{ins}_a$ are pseudo-proper. 
\end{prop}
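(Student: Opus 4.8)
The plan is to verify the defining property of a pseudo-proper map directly. By \secref{sss:pseudo proper}, it is enough to fix a scheme $T$ with a map $g\colon T\to\CY$ and show that $\CY':=T\underset{\CY}\times Z_a$ is a pseudo-proper \emph{prestack} over $T$; the existence of $(\on{ins}_a)_!$ and the base-change isomorphism then follow from \propref{p:pseudo-proper}. (One sees immediately from \secref{sss:limits and colimits}, using that each transition functor $f_{a_1,a_2}^!$ is colimit-preserving, that $\Shv^!(\CY)\simeq\underset{a}{\on{colim}}\,\Shv^!(Z_a)$ in $\StinftyCat$, that $(\on{ins}_a)_!$ exists and coincides with the structure functor of this colimit, and that $\underset{a}{\on{colim}}\,(\on{ins}_a)_!\circ(\on{ins}_a)^!\to\on{id}$; but this by itself does not deliver the base-change statement, which is the actual content.)

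The first step is to simplify the test scheme. Since $\CY(T)=\underset{a'\in A}{\on{colim}}\,Z_{a'}(T)$ is a colimit of spaces, the canonical map $\bigsqcup_{a'}\pi_0\big(Z_{a'}(T)\big)\to\pi_0\big(\CY(T)\big)$ is surjective, so $g$ factors — after the identifications imposed by the colimit — as $T\overset{g_b}\to Z_b\overset{\on{ins}_b}\to\CY$ for some $b\in A$. Hence $\CY'\simeq T\underset{Z_b}\times\big(Z_b\underset{\CY}\times Z_a\big)$. Because base change of schemes preserves properness and base change of prestacks commutes with colimits (as $\on{PreStk}$ is an $\infty$-topos), it therefore suffices to treat the case $T=Z_b$, i.e. to prove that $Z_b\underset{\CY}\times Z_a$ is pseudo-proper over $Z_b$ for all $a,b\in A$.

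For the second step I would compute $Z_b\underset{\CY}\times Z_a$ using universality of colimits in $\on{PreStk}$. From $Z_a\simeq\underset{a'}{\on{colim}}\,\big(Z_a\underset{\CY}\times Z_{a'}\big)$ one gets $Z_b\underset{\CY}\times Z_a\simeq\underset{a'}{\on{colim}}\,\big(Z_b\underset{\CY}\times Z_{a'}\underset{\CY}\times Z_a\big)$, and iterating this identifies $Z_b\underset{\CY}\times Z_a$ with the geometric realization of a \v{C}ech-type simplicial prestack assembled from the $Z_{a'}$ along the transition maps. Equivalently, and more usefully, I would present $\CY$ as $\mathcal E_{\on{str}}$ for the lax prestack $\mathcal E$ with $\mathcal E(U)=\int_A\Maps(U,Z_\bullet)$ (the Grothendieck construction, so that $\mathcal E\to\CY$ is fibrewise a localization, each $Z_a$ being the fibre of $\mathcal E$ over $a\in A$); then $Z_b\underset{\CY}\times Z_a$ is identified with a colimit, indexed by a category of ``fences'' (zig-zags) $a\rightsquigarrow b$ in $A$, of the schemes $Z_c$ occurring along such fences. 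Since the transition maps are proper, each contributing scheme maps properly to $Z_b$ — the backward legs of a fence are absorbed by further base changes, which by universality stay within proper maps between schemes — so $Z_b\underset{\CY}\times Z_a$ is exhibited as a colimit of $Z_b$-proper schemes, as required.

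The main obstacle is this last bookkeeping: the index category governing $Z_b\underset{\CY}\times Z_a$ is genuinely \emph{not} the category of spans $a\leftarrow c\to b$ in $A$ (already a diagram with two parallel arrows produces extra components of the fibre product coming from loops in $A$), but a zig-zag/twisted-arrow category reflecting the homotopy type of $A$, and one must check that, after the base changes forced by the backward legs, every scheme appearing in the resulting colimit is still proper over $Z_b$. The remaining ingredients — unwinding definitions, universality of colimits, and stability of properness under base change — are formal.
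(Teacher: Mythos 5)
Your reduction steps are correct and aligned with the paper's (factoring a test map $T\to\CY$ through some $Z_b$, then reducing to pseudo-properness of $Z_b\underset\CY\times Z_a$ over $Z_b$), but the core of your argument — presenting $Z_b\underset\CY\times Z_a$ as a colimit over zig-zags in $A$ and then checking properness — is sketched but not actually carried out, and the claim that what remains is ``formal bookkeeping'' is not warranted. This is precisely where the content lies: iterating the identity $Z_b\underset\CY\times Z_a\simeq\underset{a'}{\on{colim}}\bigl(Z_b\underset\CY\times Z_{a'}\underset\CY\times Z_a\bigr)$ produces at each stage fiber products of prestacks over $\CY$ (not schemes over a scheme), and unwinding them into an honest colimit of schemes along a zig-zag category is a non-trivial $\infty$-categorical exercise. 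You are also right that the backward legs of a fence are problematic: a backward leg $Z_c\leftarrow Z_{c'}$ is proper, but absorbing it means pulling back \emph{everything else} through $Z_{c'}$, and it requires a genuine argument (not just stability of properness under base change) to see that the scheme at the end of this iteration is still proper over $Z_b$. As written, that argument does not appear.

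The paper avoids this computation entirely. It introduces the non-full subcategory $\Sch_{\on{proper}}\subset\Sch$ with only proper morphisms, forms $\on{PreStk}_{\on{proper}}:=\on{Funct}\bigl((\Sch_{\on{proper}})^{\on{op}},\inftygroup\bigr)$, and observes that pseudo-schemes are exactly the essential image of $\on{LKE}\colon\on{PreStk}_{\on{proper}}\to\on{PreStk}$. The whole proposition then reduces to showing that this $\on{LKE}$ commutes with finite limits, which is proved cleanly by exhibiting the evaluation functors $\Phi\mapsto\on{LKE}(\Phi)(\bc)$ as pro-representable (\lemref{l:LKE exact}). In particular, $Z_a\underset\CY\times Z_b$ is automatically in the image of $\on{LKE}$ — hence a pseudo-scheme with a map to $Z_b$ coming from $\on{PreStk}_{\on{proper}}$ — without ever writing down the zig-zag diagram. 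If you want to salvage your approach, you would essentially have to prove the pro-representability fact (or a cofinality statement for the zig-zag category) that the paper's lemma packages; that is the missing idea.
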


As an immediate corollary of this proposition, we obtain that the diagonal morphism of a pseudo-scheme
is pseudo-proper (we use here the fact that all our schemes are assumed separated). 

\begin{proof}

Let $\Sch_{\on{proper}}$ be the non-full subcategory of $\Sch$, where we restrict $1$-morphisms to be proper.
Let $\on{PreStk}_{\on{proper}}$ be the category of functors \footnote{The proof given below works for the category
$\Sch$ replaced by any $\infty$-category $\bC$ and $\on{PreStk}_{\on{proper}}$ replaced by any non-full subcategory
$\bC'$ of $\bC$ that has the same objects, but which is obtained by restricting $1$-morphisms to certain isomorphism
classes.}  
$$(\Sch_{\on{proper}})^{\on{op}}\to \inftygroup.$$

Restriction and left Kan extension define an adjoint pair of functors
$$\on{LKE}:\on{PreStk}_{\on{proper}}\rightleftarrows \on{PreStk}:\on{Res}.$$

Note that for $\CY\in \on{PreStk}_{\on{proper}}$, written as a colimit $\underset{a\in A}{\on{colim}}\, Z_a$, the value 
of $\on{LKE}(\CY)$ is the same colimit, but taken in $\on{PreStk}$. Hence, we obtain that pseudo-schemes 
are exactly those objects of $\on{PreStk}$ that lie in the essential image of the functor $\on{LKE}$. 

\medskip

To prove the proposition, it suffices to show that for $\CY$ as above and a pair of indices $a,b$, the fiber product
$Z_a\underset{\CY}\times Z_b$ is a pseudo-scheme and that its map to $Z_b$ comes from a morphism
in $\on{PreStk}_{\on{proper}}$.  For that it suffices to show that the functor $\on{LKE}$
commutes with finite limits. This follows from the next lemma:

\begin{lem}  \label{l:LKE exact}
Let $F:\bC'\to \bC$ be a functor that commutes with finite limits. Then so does the functor 
$$\on{LKE}:\on{Funct}((\bC')^{\on{op}},\inftygroup)\to \on{Funct}(\bC^{\on{op}},\inftygroup).$$
\end{lem}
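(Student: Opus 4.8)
\textbf{Proof plan for Lemma \ref{l:LKE exact}.}

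The plan is to reduce the statement to two things: (1) left Kan extension along $F$ preserves the terminal object, and (2) it preserves fiber products. Since finite limits are generated by the terminal object and pullbacks, establishing these two facts suffices. I would work with the pointwise formula for left Kan extension: for $\Phi\in \on{Funct}((\bC')^{\on{op}},\inftygroup)$ and $\bc\in \bC$,
$$\on{LKE}(\Phi)(\bc)\simeq \underset{(\bc',F(\bc')\to \bc)\in ((\bC')^{\on{op}})_{/\bc}}{\on{colim}}\, \Phi(\bc'),$$
i.e.\ a colimit over the comma category. The key input will be that $F$ commutes with finite limits, which controls how these comma categories behave.

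First I would treat the terminal object. The terminal object of $\on{Funct}((\bC')^{\on{op}},\inftygroup)$ is the constant functor at $\{*\}$, and likewise for $\bC$. So I need to show that $\on{LKE}$ sends the constant functor $\underline{\{*\}}$ to the constant functor $\underline{\{*\}}$; equivalently, for every $\bc\in\bC$ the comma category $(\bC')^{\on{op}}_{/\bc}$ (over which we take a colimit of the constant diagram $\{*\}$) is contractible. Here is where $F$ preserving the terminal object of $\bC'$ enters: if $\bc'_f$ is terminal in $\bC'$, then $F(\bc'_f)$ is terminal in $\bC$, hence there is a unique map $F(\bc'_f)\to \bc$, giving an object of the comma category; and this object is terminal in the comma category (any $(\bc', F(\bc')\to\bc)$ maps uniquely to it via $\bc'\to\bc'_f$). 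A category with a terminal object is contractible, so the colimit is $\{*\}$ as desired.

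Next, and this is the step I expect to be the main obstacle, I would handle fiber products. Given a pullback square $\Phi\simeq \Phi_1\times_{\Phi_0}\Phi_2$ in $\on{Funct}((\bC')^{\on{op}},\inftygroup)$, I want $\on{LKE}(\Phi)\simeq \on{LKE}(\Phi_1)\times_{\on{LKE}(\Phi_0)}\on{LKE}(\Phi_2)$. Limits and colimits of space-valued functors are computed pointwise, so after evaluating at $\bc\in\bC$ this becomes a statement about commuting a colimit over the comma category $\bI_\bc:=(\bC')^{\on{op}}_{/\bc}$ with a fiber product of spaces. Such commutation holds when the index category $\bI_\bc$ is \emph{sifted} (finite limits of spaces commute with sifted colimits), so the heart of the argument is to show that each comma category $(\bC')^{\on{op}}_{/\bc}$ is sifted. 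Since $\inftygroup$ has finite products, it is enough to check that $\bI_\bc$ is nonempty and that the diagonal $\bI_\bc\to \bI_\bc\times\bI_\bc$ is cofinal (equivalently, that for any two objects the category of cocones is contractible); here I would use that $F$ commutes with products: given $(\bc'_1,F(\bc'_1)\to\bc)$ and $(\bc'_2,F(\bc'_2)\to\bc)$, the product $\bc'_1\times\bc'_2$ in $\bC'$ satisfies $F(\bc'_1\times\bc'_2)\simeq F(\bc'_1)\times F(\bc'_2)$, which receives a map from... — wait, one needs a map \emph{to} $\bc$, which comes from composing with either projection consistently only after forming the pullback $F(\bc'_1)\times_\bc F(\bc'_2)$; since $F$ preserves this pullback it equals $F$ of the corresponding pullback in $\bC'$, producing the needed cocone, and a diagram chase shows the cocone category is cofiltered with an initial-type object, hence contractible. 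Assembling: nonemptiness from the terminal object, and this product/pullback argument for the diagonal, gives siftedness of $\bI_\bc$, and the commutation of the colimit with the fiber product follows; evaluating at all $\bc$ gives the claim. The delicate point — and the main obstacle — is verifying siftedness cleanly, i.e.\ checking that ``$F$ commutes with finite limits'' translates into exactly the cofinality statement one needs for the diagonal of the comma category, without hidden coherence gaps; I would do this by invoking the characterization of sifted $\infty$-categories in \cite[Sect.~5.5.8]{Lu1} and reducing to a pointwise check.
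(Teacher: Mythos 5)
Your reduction to a pointwise statement is the right first move, and the terminal-object case is essentially fine (modulo a direction slip in the comma category: the index is $(\bc',\,\bc\to F(\bc'))$, not $F(\bc')\to\bc$, and then the terminal object of $\bC'$ gives a \emph{terminal}, not initial, object of that comma category via the \emph{unique map $\bc\to F(\bc'_f)$}). But the pullback step contains a genuine error: the claim that ``finite limits of spaces commute with sifted colimits'' is false. Sifted colimits in $\inftygroup$ commute with finite \emph{products}, not with general finite limits (e.g.\ geometric realization, a sifted colimit, does not commute with pullbacks). For general finite limits you need the index category to be \emph{filtered}, not merely sifted. This is not a cosmetic gap: your product-only cocone argument cannot produce the equalizing objects required for filteredness, so the strategy as written does not close.

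The fix is to show that the comma category is filtered outright, and this is where the full strength of ``$F$ commutes with finite limits'' (not just products) enters. Writing $\bI_\bc \simeq (\bC'_{\bc/})^{\on{op}}$ where $\bC'_{\bc/}$ has objects $(\bc',\alpha:\bc\to F(\bc'))$, a finite diagram $k\mapsto(\bc'_k,\alpha_k)$ in $\bC'_{\bc/}$ has a cone given by $\bc'_*:=\lim_k\bc'_k$, because $F(\bc'_*)\simeq\lim_k F(\bc'_k)$ and the $\alpha_k$ assemble to a map $\bc\to F(\bc'_*)$; hence $\bC'_{\bc/}$ is cofiltered and $\bI_\bc$ is filtered, so $\on{colim}_{\bI_\bc}$ commutes with finite limits. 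The paper packages exactly this observation more abstractly: it notes that $\bc'\mapsto\Maps_\bC(\bc,F(\bc'))$ is a left-exact accessible functor $\bC'\to\inftygroup$, i.e.\ a pro-object in $\bC'$, and that $\Phi\mapsto\on{LKE}(\Phi)(\bc)$ is pro-represented by the image of this pro-object under the (pro-extended) Yoneda embedding; pro-representable functors are filtered colimits of corepresentables, hence preserve finite limits. That route avoids checking cofinality/filteredness by hand and is what you should aim to reproduce, or else replace ``sifted'' by ``filtered'' and run the limit-cone argument above.
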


\end{proof}

\begin{proof}[Proof of \lemref{l:LKE exact}]

It is enough to show that for any $\bc\in \bC$, the functor 
$$\Phi\mapsto \on{LKE}(\Phi)(\bc), \quad \on{Funct}((\bC')^{\on{op}},\inftygroup)\to \inftygroup$$
is pro-representable. However, it is pro-represented by the object of $\on{Pro}(\bC')$ (thought of as
an object of $\on{Pro}(\on{Funct}((\bC')^{\on{op}},\inftygroup))$ via the pro-extension of the Yoneda
embedding), corresponding to the functor
$$\bc'\mapsto \Maps_{\bC}(\bc,F(\bc')).$$

\end{proof}

\begin{rem}
The proof of \propref{p:ins is pseudo}
shows that any morphism between pseudo-schemes that comes from a morphism in $\on{PreStk}_{\on{proper}}$
is pseudo-proper. 

\medskip

More generally, for $\CY_1,\CY_2\in \on{PreStk}_{\on{proper}}$, the map
$$\Maps_{\on{PreStk}_{\on{proper}}}(\CY_1,\CY_2)\to \Maps_{\on{PreStk}}(\CY_1,\CY_2)$$ is
a monomorphism in $\inftygroup$, i.e., the embedding of a union of connected components, and its image consists
of those maps of prestacks $\CY_1\to \CY_2$ that are pseudo-proper. In particular, any \emph{isomorphism}
$\CY_1\to \CY_2$ as prestacks comes from an isomorphism in $\on{PreStk}_{\on{proper}}$. 

\medskip

In addition, if $\CY$ is a pseudo-scheme presented as in \eqref{e:prestack as a colimit}, and $S$ is a scheme equipped
with a \emph{pseudo-proper} map $S\to \CY$, then for any index $i$ such that the above map
$$S\to Z_i\to \CY$$
factors as $S\to Z_i\to \CY$, the resulting map $S\to Z_i$ is proper (see end of proof of \corref{c:finite duality}). 

\end{rem} 

\sssec{}   \label{sss:ppty pseudo}

Since the functor $\Shv^!$ takes colimits to limits, we have:
$$\Shv^!(\CY)\simeq \underset{a\in A}{\on{lim}}\, \Shv(Z_a),$$
where the transition functors in the formation of the limit are $f_{a_1,a_2}^!$. The corresponding functors
$$\Shv^!(\CY)\to \Shv(Z_a)$$
are given by $(\on{ins}_a)^!$.

\medskip

In addition, as was already noted in the proof of  \propref{p:pseudo-proper}, we also have: 
$$\Shv^!(\CY)\simeq \underset{a\in A}{\on{colim}}\, \Shv(Z_a),$$
where the transition functors are $(f_{a_1,a_2})_!$, and the colimit is taken in $\infty$-category
of cocomplete categories and colimit-preserving functors. The corresponding functors
$$\Shv(Z_a)\to \Shv^!(\CY)$$
are given by $(\on{ins}_a)_!$.

\medskip

Furthermore, as the functors $\on{ins}_a^!$ are colimit-preserving, for $\CF\in \Shv^!(\CY)$ we have a canonical
isomorphism:
\begin{equation} \label{e:object as colim}
\underset{a\in A}{\on{colim}}\,  (\on{ins}_a)_!\circ (\on{ins}_a)^!(\CF)\simeq \CF
\end{equation} 
(more precisely, the tautological map $\to$ is an isomorphism). 

\sssec{}

We will now add several adjectives to the notion of pseudo-scheme.

\medskip

We shall say that $\CY$ is a \emph{finitary pseudo-scheme} if it admits a presentation as in 
\eqref{e:prestack as a colimit} with the category $A$ being finite\footnote{We call an $(\infty,1)$-category \emph{finite} if
it can be obtained by a finite iteration of taking push-outs from the point category and the category $0\to 1$.}. 

\medskip

We recall that $\CY$ is said to be \emph{pseudo-proper} over a given scheme $S$, if it admits a presentation as in 
\eqref{e:prestack as a colimit} with the schemes $Z_\alpha$ being proper over $S$.

\medskip

We shall say that $\CY$ is a \emph{finitary pseudo-proper} over $S$ if it admits a presentation as in 
\eqref{e:prestack as a colimit} with the category $A$ being finite \emph{and} with the schemes $Z_\alpha$ being proper over $S$.

\medskip

The above notions have evident relative versions. So, we obtain the notion of a morphism $f:\CY_1\to \CY_2$ between
prestacks to be: (i) pseudo-schematic; (ii) finitary pseudo-schematic; (iii) pseudo-proper; (iv) finitary pseudo-proper.

\medskip

Recall that \corref{c:pseudo-proper} implies that for a pseudo-proper map  $f:\CY_1\to \CY_2$, the functor $f^!$
admits a left adjoint $f_!$. The following results from the proof of \propref{p:pseudo-proper}:

\begin{lem} \label{l:lax homology finite}
Let us be working in the context of constructible sheaves. 
Let $f:\CY_1\to \CY_2$ be finitary pseudo-proper. Then the functor $f_!$ commutes with \emph{limits}.
\end{lem} 

\sssec{}   \label{sss:strong ps-sch}

As was shown in \propref{p:ins is pseudo}, if $\CY$ is a pseudo-scheme presented as in \eqref{e:prestack as a colimit}, then the maps 
$\on{ins}_a:Z_a\to \CY$ and hence the diagonal map $\on{diag}_\CY:\CY\to \CY\times \CY$ are pseudo-proper. 

\medskip

The following is easy:

\begin{lem}
For a pseudo-scheme presented as in \eqref{e:prestack as a colimit}, the following conditions are equivalent:

\smallskip

\noindent{\em(i)} For every $a\in A$, the map $\on{ins}_a:Z_a\to \CY$ is \emph{finitary} pseudo-proper.

\smallskip

\noindent{\em(ii)} The diagonal map $\on{diag}_\CY:\CY\to \CY\times \CY$ is \emph{finitary} pseudo-proper.

\end{lem}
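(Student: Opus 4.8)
The plan is to prove the equivalence of conditions (i) and (ii) by a direct comparison of presentations, using the description of $\CY\times\CY$ as a colimit together with \propref{p:ins is pseudo} and the remark following it, which guarantees that a map between pseudo-schemes coming from $\on{PreStk}_{\on{proper}}$ is pseudo-proper, and that finitary pseudo-properness can be tested on the schemes in a presentation.

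First I would establish the implication (i)$\Rightarrow$(ii). Given the presentation $\CY=\underset{a\in A}{\on{colim}}\, Z_a$ with proper transition maps, the product $\CY\times\CY$ is presented by $\underset{(a,b)\in A\times A}{\on{colim}}\, Z_a\times Z_b$, again with proper transition maps, so $\CY\times\CY$ is a pseudo-scheme and each $\on{ins}_{(a,b)}:Z_a\times Z_b\to\CY\times\CY$ is pseudo-proper. The diagonal $\on{diag}_\CY$ fits into Cartesian squares whose base changes along the $\on{ins}_{(a,b)}$ are the fiber products $Z_a\underset{\CY\times\CY}\times(\CY\times\CY)$; concretely, pulling back $\on{diag}_\CY$ along $Z_a\times Z_b\to\CY\times\CY$ yields $Z_a\underset{\CY}\times Z_b$, which by \propref{p:ins is pseudo} (applied to the maps $\on{ins}_a$, $\on{ins}_b$) is a pseudo-scheme over $Z_a\times Z_b$, and in fact, if $\on{ins}_a$ is finitary pseudo-proper, is \emph{finitary} pseudo-proper over $Z_a\times Z_b$ (the index category for $Z_a\underset{\CY}\times Z_b$ is built from the finite index categories of the two finitary pseudo-proper maps being pulled back). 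Since finitary pseudo-properness of a map of prestacks can be checked after base change to a covering family of schemes — here $\{Z_a\times Z_b\to\CY\times\CY\}$ — we conclude $\on{diag}_\CY$ is finitary pseudo-proper.

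Conversely, for (ii)$\Rightarrow$(i), I would recover $\on{ins}_a$ as a base change of $\on{diag}_\CY$. We have a Cartesian square
$$
\CD
Z_a\underset{\CY}\times Z_a @>>> \CY \\
@VVV @VV{\on{diag}_\CY}V \\
Z_a\times Z_a @>>> \CY\times\CY,
\endCD
$$
but more to the point, restricting the second factor via a point $\on{pt}\to Z_a$ — or, better, using that $\on{ins}_a$ factors as the composition $Z_a\overset{(\on{id},\on{ins}_a)}\to Z_a\times\CY\to\CY$ where the first map is the base change of $\on{diag}_\CY$ along $\on{id}_{Z_a}\times\on{ins}_a:Z_a\times\CY\to\CY\times\CY$ and the second is the projection — exhibits $\on{ins}_a$ as a composite of a finitary pseudo-proper map (base change of $\on{diag}_\CY$, finitary pseudo-properness being stable under base change by the relative analogue of \propref{p:ins is pseudo}) with the proper projection $Z_a\times\CY\to\CY$... wait, the projection $Z_a\times\CY\to\CY$ is proper (finitary, even, if $Z_a$ is, since $Z_a$ is proper... no, $Z_a$ need not be proper). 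I would instead argue directly: base change $\on{diag}_\CY$ along $\on{ins}_a\times\on{ins}_b$ to get $Z_a\underset{\CY}\times Z_b$ finitary pseudo-proper over $Z_a\times Z_b$; taking $b=a$ and composing with the (proper, since separated) diagonal $Z_a\to Z_a\times Z_a$ and then observing the resulting map $Z_a\to Z_a\underset{\CY}\times Z_a\to\CY$ equals $\on{ins}_a$ up to the canonical identification, one reads off that $\on{ins}_a$ is a composite of finitary pseudo-proper maps, hence finitary pseudo-proper.

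The main obstacle I anticipate is the bookkeeping of \emph{finiteness} of index categories under fiber products: one must check that when the presentation of $Z_a\underset{\CY}\times Z_b$ built in the proof of \propref{p:ins is pseudo} (via the left Kan extension description) is assembled from finite presentations of the maps being pulled back, the resulting index category is again finite. This amounts to verifying that the category $\on{Pro}(\bC')$-style colimit appearing there, when both inputs are finite, has a finite indexing diagram — essentially a product of finite categories — which is routine but is the one place where the argument is not purely formal. Everything else reduces to the already-established stability properties of pseudo-properness under base change and composition, together with the principle that finitary pseudo-properness is local on the target for the fppf (indeed Zariski, or simply schematic-cover) topology.
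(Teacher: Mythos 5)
Your argument for (i)$\Rightarrow$(ii) reaches the right conclusion, but the finiteness mechanism you invoke is not quite it: one does not combine the index categories of the two finitary maps, one uses just one of them. If $\on{ins}_a$ is finitary pseudo-proper, applying this to the scheme $Z_b\to\CY$ gives a finite presentation $Z_b\underset{\CY}\times Z_a=\underset{i\in I}{\on{colim}}\,W_i$ with the $W_i$ proper \emph{over $Z_b$}; each $W_i$ is then automatically proper over $Z_b\times Z_a$ because $W_i\to Z_b\times Z_a\to Z_b$ is proper and $Z_b\times Z_a\to Z_b$ is separated. Since any map from a scheme to $\CY\times\CY$ factors through some $Z_b\times Z_a$, this is all that is needed.

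The direction (ii)$\Rightarrow$(i) as you wrote it is circular, and this is a genuine gap. The factorization $Z_a\to Z_a\underset{\CY}\times Z_a\to\CY$ cannot close the argument: the second map equals $\on{ins}_a\circ\on{pr}_j$ for one of the two projections $\on{pr}_j:Z_a\underset{\CY}\times Z_a\to Z_a$, so its finitary pseudo-properness is exactly what is to be proved; and (ii) only gives you control over the map to $Z_a\times Z_a$, not over a projection to a single factor (the very issue you already ran into in your first attempt). Here is the missing step. Given a scheme $T\to\CY$, factor it through some $Z_b$. By (ii), $Z_b\underset{\CY}\times Z_a=\underset{i\in I}{\on{colim}}\,W_i$ with $I$ finite and $W_i$ proper over $Z_b\times Z_a$. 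One must show that these \emph{same} schemes $W_i$ are proper over $Z_b$, which is not automatic since $Z_b\times Z_a\to Z_b$ need not be proper. But each $W_i\to Z_b$ factors as $W_i\to Z_b\underset{\CY}\times Z_a\to Z_b$, a composite of pseudo-proper maps: the first by \propref{p:ins is pseudo} applied to this finite presentation (whose transition maps are proper, being $(Z_b\times Z_a)$-morphisms between proper $(Z_b\times Z_a)$-schemes), the second a base change of $\on{ins}_a$, which is pseudo-proper again by \propref{p:ins is pseudo}. And a pseudo-proper map between schemes is proper — this is the content of the Remark following \propref{p:ins is pseudo}, established by the retraction argument at the end of the proof of \corref{c:finite duality}. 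This last fact, which upgrades the finite collection $\{W_i\}$ from proper over $Z_b\times Z_a$ to proper over $Z_b$, is what your argument is missing.
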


We shall say that pseudo-scheme has a \emph{finitary diagonal} if it satisfies the equivalent conditions of the above lemma. 

\medskip 

It will turn out that pseudo-schemes with a finitary diagonal are well adapted to the operation of Verdier duality.



\sssec{}  

We proceed with several more observations. 

\medskip

\begin{lem}  \label{l:replace lego}
Let $f:\CY_1\to \CY_2$ be a pseudo-proper map between prestacks. 

\smallskip

\noindent{\em(a)} If $f$ is \emph{injective} (i.e., the map of $\infty$-groupoids $\CY_1(S)\to \CY_2(S)$
is a \emph{monomorphism} for any $S\in \Sch$), then the functor $f_!$ is fully faithful. 

\smallskip

\noindent{\em(b)} If $f$ is \emph{finitary pseudo-proper}, 
there exists a uniquely defined reduced closed sub-prestack $\CY'_2\subset \CY_2$
(to be thought of as the reduced scheme-theoretic image of $f$), 
such that the map $f|_{(\CY_1)_{\on{red}}}$ factors through a map $f':(\CY_1)_{\on{red}}\to \CY'_2$,
the latter being surjective on $k$-points. 

\smallskip

\noindent{\em(c)} If $f$ is finitary pseudo-proper and surjective on $k$-points, then the functor
$f^!$ is conservative.

\smallskip

\noindent{\em(d)} If $f$ is injective, finitary pseudo-proper and surjective on $k$-points, then the functors
$(f_!,f^!)$ are mutually inverse equivalences.  

\end{lem}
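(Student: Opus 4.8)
The plan is to handle the four assertions largely separately: (a) is a direct computation of the unit of the adjunction $f_!\dashv f^!$ via base change; (c) is a reduction to the conservativity of $h^!$ for proper surjections of schemes; (d) is a formal consequence of (a) and (c); and (b) is the construction of a reduced scheme-theoretic image, carried out value-wise. Throughout I would use \corref{c:pseudo-proper} (base change for pseudo-proper maps) and the description of $\Shv^!$ of a pseudo-scheme from \propref{p:pseudo-proper}.

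For (a): since $f_!$ is left adjoint to $f^!$, fully faithfulness is equivalent to the unit $\on{id}_{\Shv^!(\CY_1)}\to f^!\circ f_!$ being an isomorphism. Fix $\CF\in\Shv^!(\CY_1)$, a scheme $S$, and $y\in\CY_1(S)$; then $(f^!\circ f_!(\CF))_{S,y}=(f_!(\CF))_{S,f\circ y}$, which by \corref{c:pseudo-proper} equals $\tilde f_!\bigl(\CF|_{S\underset{\CY_2}\times\CY_1}\bigr)$ for $\tilde f:S\underset{\CY_2}\times\CY_1\to S$. Injectivity of $f$ means $f$ is a monomorphism of prestacks, so $\CY_1\underset{\CY_2}\times\CY_1\overset{\on{pr}_1}\to\CY_1$ is an isomorphism; factoring $f\circ y$ as $S\overset{y}\to\CY_1\overset{f}\to\CY_2$ gives $S\underset{\CY_2}\times\CY_1\simeq S\underset{\CY_1}\times(\CY_1\underset{\CY_2}\times\CY_1)\simeq S$, under which $\tilde f$ becomes $\on{id}_S$. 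Hence $(f_!(\CF))_{S,f\circ y}\simeq\CF_{S,y}$, and tracing through definitions shows this is the unit map.

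For (c): suppose $f^!(\CF)=0$; I must show $\CF_{S,y}=0$ for every $(S,y)$. Base-changing along $y$ (using that $!$-pullback is compatible with base change) yields $\tilde f^!(\CF_{S,y})=0$, where $\tilde f:\CY_{1,y}:=S\underset{\CY_2}\times\CY_1\to S$ is finitary pseudo-proper; writing $\CY_{1,y}=\underset{a\in A}{\on{colim}}\,Z_a$ with $A$ finite and $g_a:Z_a\to S$ proper, and using $\Shv^!(\CY_{1,y})=\underset{a}{\on{lim}}\,\Shv(Z_a)$, this gives $g_a^!(\CF_{S,y})=0$ for all $a$. Since $f$ is surjective on $k$-points so is $\tilde f$, and a $k$-point of the finite colimit $\underset{a}{\on{colim}}\,Z_a$ factors through some $Z_a$; therefore $\bigcup_a\on{Im}(g_a)$ contains all closed points of $S$, hence equals $S$, so $\coprod_a Z_a\to S$ is a proper surjection. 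As $h^!$ is conservative for a proper surjection of schemes, $\CF_{S,y}=0$. Part (d) is then formal: by (a) the unit $\on{id}\to f^!\circ f_!$ is an isomorphism and by (c) $f^!$ is conservative; for any adjunction $L\dashv R$ with $L$ fully faithful and $R$ conservative the counit is an isomorphism (apply $R$ to $\varepsilon$, use the triangle identity $R\varepsilon\circ\eta_R=\on{id}_R$ and invertibility of $\eta$, then conservativity of $R$), so $(f_!,f^!)$ are mutually inverse equivalences.

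For (b): for each $S\overset{y}\to\CY_2$ the base change $\CY_{1,y}\to S$ is finitary pseudo-proper, say $\CY_{1,y}=\underset{a\in A}{\on{colim}}\,Z_a$ with $A$ finite and $Z_a\to S$ proper; let $S_y\subseteq S$ be the reduced closed subscheme supported on the closed set $\bigcup_a\on{Im}(Z_a\to S)$. Set-theoretic images of proper maps are stable under arbitrary base change, so $y\mapsto(S_y\hookrightarrow S)$ is compatible with pullback in $S$ and defines a reduced closed sub-prestack $\CY'_2\subseteq\CY_2$ with $S\underset{\CY_2}\times\CY'_2=S_y$. Testing on reduced $S$, for $\tilde y\in\CY_1(S)$ the tautological section $S\to\CY_{1,f(\tilde y)}$ shows on $k$-points that $\bigcup_a\on{Im}(Z_a\to S)=S$, i.e.\ $f(\tilde y)\in\CY'_2(S)$, so $f_{\on{red}}$ factors through $\CY'_2$; a $k$-point of $\CY'_2$ is precisely a $y\in\CY_2(k)$ admitting a lift to $\CY_1(k)$, giving surjectivity of $f'$ on $k$-points, and uniqueness of $\CY'_2$ follows from uniqueness of the reduced scheme-theoretic image applied value-wise. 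The one genuinely delicate point is this last part: making the reduced scheme-theoretic image work in the prestack setting and checking that maps out of the colimit $\underset{a}{\on{colim}}\,Z_a$ are controlled on $k$-points — this is exactly where finitariness is indispensable, since for an infinite colimit a $k$-point need not factor through a single term.
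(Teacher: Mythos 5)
Your proof is correct and follows essentially the same route as the paper's: (a) via base change together with the observation that injectivity is equivalent to the diagonal $\CY_1\to\CY_1\underset{\CY_2}\times\CY_1$ being an isomorphism, (b) and (c) by reducing to the case $\CY_2=S\in\Sch$ and using the finite colimit presentation $\CY_{1,y}=\underset{a\in A}{\on{colim}}\,Z_a$ with $Z_a$ proper over $S$, and (d) formally from (a) and (c). One small imprecision in your closing remark: a $k$-point of a colimit of schemes \emph{always} factors through a single term, regardless of whether the index category is finite, since $\pi_0$ commutes with colimits in $\inftygroup$; the role of finitariness in (b) and (c) is rather to ensure that $\bigcup_a\on{Im}(Z_a\to S)$ is a \emph{closed} subset of $S$ and that $\coprod_a Z_a\to S$ is a scheme-theoretically proper cover.
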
 

\begin{proof}

Point (i) follows from base change, since the assumption that $f$ be injective is equivalent to the fact that
the diagonal map 
$$\CY_1\to \CY_1\underset{\CY_2}\times \CY_1$$
be an isomorphism.

\medskip

For point (ii) we can assume that $\CY_2=S\in \Sch$. If $\CY_1$ is the (finite) colimit of the schemes
$Z_a$, we take $\CY'_2=:S'$ to be the union of the images of the schemes $Z_a$ in $S$, equipped with
the reduced structure.

\medskip

For point (iii) we can again take $\CY_2=S\in \Sch$. Then the assumption implies that $S$ can be written
as a union of locally closed subschemes $S_\alpha$, for each of which there exists a scheme $S'_\alpha$,
equipped with a map $S'_\alpha\to \CY_1$, such that the composed map 
$$S'_\alpha\to \CY_1\to S$$
factors through a map $S'_\alpha\to S_\alpha$, the latter being surjective on $k$-points. Now, the assertion
follows from the fact that pullback along a map of schemes that is surjective on $k$-points is a conservative
functor on $\Shv(-)$. 

\medskip

Point (iv) is a combination of (i) and (iii).

\end{proof} 

\ssec{Consequences for Verdier duality}

In this subsection we will assume that our prestacks are \emph{pseudo-schemes with a finitary diagonal} (see \secref{sss:strong ps-sch} for
what this means).  We will show that the Verdier duality functor on such prestacks can be described explicitly in terms of 
Verdier duality on schemes. 

\medskip

In addition, we will show that Verdier duality commutes with direct images under finitary pseudo-proper maps. 
 
\sssec{}

Let $\CY$ be written as in \eqref{e:prestack as a colimit}. We claim: 

\begin{prop} \label{p:on fin-proper diag}
In the above notations, for every $a\in A$, the natural transformation
$$(\on{ins}_a)_!\circ \BD_{Z_a}\to \BD_\CY\circ (\on{ins}_a)_!$$
of \eqref{e:duality and direct image} is an isomorphism.
\end{prop}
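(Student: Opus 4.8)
The plan is to test the natural transformation \eqref{e:duality and direct image} against an arbitrary $\CF\in\Shv^!(\CY)$ and to reduce the claim, via base change, to the scheme‑level compatibility of Verdier duality with proper direct images (\lemref{l:Verdier duality sch}). First I would use the presentation $\CY=\underset{a\in A}{\on{colim}}\,Z_a$ together with the isomorphism \eqref{e:object as colim} to express every $\CF$ as a colimit of objects of the form $(\on{ins}_b)_!(\CF_b)$, $b\in A$, $\CF_b\in\Shv(Z_b)$. Since $\CF$ enters both $\Maps_{\Shv^!(\CY)}(\CF,-)$ and the space of pairings contravariantly, it then suffices to prove that for every such $b$ and $\CF_b$ the map induced by \eqref{e:duality and direct image},
$$\Maps\bigl((\on{ins}_b)_!(\CF_b),\,(\on{ins}_a)_!\BD_{Z_a}(\CG)\bigr)\ \longrightarrow\ \Maps\bigl((\on{ins}_b)_!(\CF_b),\,\BD_\CY((\on{ins}_a)_!\CG)\bigr),$$
is an isomorphism. (Equivalently, one may check that \eqref{e:duality and direct image} becomes an isomorphism after applying $\on{ins}_b^!$ for every $b$, using that these functors are jointly conservative on $\Shv^!(\CY)\simeq\underset{a}{\on{lim}}\,\Shv(Z_a)$; it leads to the same computation.)

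Next I would unwind both sides by base change. Put $\CW:=Z_a\underset{\CY}\times Z_b$, with projections $q_a\colon\CW\to Z_a$, $q_b\colon\CW\to Z_b$. Because $\CY$ has a \emph{finitary} diagonal, $\on{ins}_a$ is finitary pseudo‑proper, hence so is $q_b$; thus $\CW$ is a \emph{finitary} pseudo‑scheme and may be written as $\underset{c}{\on{colim}}\,W_c$ over a \emph{finite} index category with each $W_c$ a scheme proper over $Z_b$ (hence also proper over $Z_b\times Z_a$). On the left, the adjunction $(\on{ins}_b)_!\dashv\on{ins}_b^!$ and \corref{c:pseudo-proper} (for the pseudo‑proper $\on{ins}_a$) give
$$\Maps\bigl((\on{ins}_b)_!(\CF_b),\,(\on{ins}_a)_!\BD_{Z_a}(\CG)\bigr)\ \simeq\ \Maps_{\Shv(Z_b)}\bigl(\CF_b,\ (q_b)_!\,q_a^!\,\BD_{Z_a}(\CG)\bigr)\ \simeq\ \underset{c}{\on{colim}}\ \Maps_{\Shv(Z_b)}\bigl(\CF_b,\ (q_{b,c})_!\,q_{a,c}^!\,\BD_{Z_a}(\CG)\bigr),$$
where $q_{b,c},q_{a,c}$ are the maps of $W_c$ to $Z_b,Z_a$, and where the last step uses that $\Maps$ out of a fixed object of a stable category preserves finite colimits. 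On the right, $\Maps((\on{ins}_b)_!(\CF_b),\BD_\CY((\on{ins}_a)_!\CG))$ is by definition the space of pairings $(\on{ins}_b)_!(\CF_b)\boxtimes(\on{ins}_a)_!(\CG)\to(\on{diag}_\CY)_!(\omega_\CY)$; the Künneth isomorphism $(\on{ins}_b)_!(\CF_b)\boxtimes(\on{ins}_a)_!(\CG)\simeq(\on{ins}_b\times\on{ins}_a)_!(\CF_b\boxtimes\CG)$ (valid as $\on{ins}_a,\on{ins}_b$ are pseudo‑proper) and the adjunction $(\on{ins}_b\times\on{ins}_a)_!\dashv(\on{ins}_b\times\on{ins}_a)^!$ turn this into $\Maps_{\Shv(Z_b\times Z_a)}(\CF_b\boxtimes\CG,\ (\on{ins}_b\times\on{ins}_a)^!(\on{diag}_\CY)_!(\omega_\CY))$, and a second use of \corref{c:pseudo-proper} identifies $(\on{ins}_b\times\on{ins}_a)^!(\on{diag}_\CY)_!(\omega_\CY)$ with $\delta_!(\omega_\CW)\simeq\underset{c}{\on{colim}}\,(\delta_c)_!(\omega_{W_c})$, where $\delta=(q_b,q_a)\colon\CW\to Z_b\times Z_a$ and $\delta_c=(q_{b,c},q_{a,c})$.

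For a fixed $c$ both sides are now purely scheme‑theoretic, and I would identify each of $\Maps_{\Shv(Z_b)}(\CF_b,(q_{b,c})_!\,q_{a,c}^!\,\BD_{Z_a}(\CG))$ and $\Maps_{\Shv(Z_b\times Z_a)}(\CF_b\boxtimes\CG,(\delta_c)_!(\omega_{W_c}))$ with the single object $\Maps_{\Shv(W_c)}\bigl(q_{b,c}^{*}(\CF_b),\,\BD_{W_c}(q_{a,c}^{*}(\CG))\bigr)$ — the space of pairings on the scheme $W_c$ between the two pullbacks. For the first, one uses that $q_{b,c}$ is proper (so $(q_{b,c})_!\simeq(q_{b,c})_*$ and $q_{b,c}^{*}\dashv(q_{b,c})_*$) together with $q_{a,c}^!\circ\BD_{Z_a}\simeq\BD_{W_c}\circ q_{a,c}^{*}$; for the second, one uses that $\delta_c$ is proper, the identity $\delta_c^{*}(\CF_b\boxtimes\CG)\simeq q_{b,c}^{*}(\CF_b)\otimes q_{a,c}^{*}(\CG)$, the tensor–$\CHom$ adjunction on $W_c$, and \lemref{l:Verdier duality sch} (for the proper $\delta_c$). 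Assembling these isomorphisms over the finite index of $c$'s yields an isomorphism between the two sides; the remaining task is to check that it is exactly the one induced by \eqref{e:duality and direct image}, which one does by unwinding the construction recalled in \secref{sss:pseudo-proper pairing} and matching it, arrow by arrow, with the chain of base‑change, Künneth and adjunction isomorphisms above.

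The hard part is precisely this last coherence verification, together with the bookkeeping leading up to it. The conceptual obstruction is that in our framework there is no $*$‑pullback functor on prestacks, so the slick one‑line argument ``$f^!\circ\BD\simeq\BD\circ f^{*}$ and $f^{*}$ satisfies base change against $f_!$'' is unavailable at the level of $\CY$; one is forced to descend to the schemes $W_c$, where $*$‑pullback does exist, and to verify that the three base‑change isomorphisms in play (for $\on{ins}_a$, for $\on{diag}_\CY$, and for $q_b$) are mutually compatible. It is also worth isolating where the finitary‑diagonal hypothesis is used: it guarantees that $\CW$ admits a \emph{finite} presentation $\underset{c}{\on{colim}}\,W_c$, which is what allows the colimit over $c$ to be pulled out of the relevant mapping spaces without any compactness assumption on $\CF_b$ or $\CG$.
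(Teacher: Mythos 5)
Your proposal is correct and follows essentially the same route as the paper's own proof: reduce to testing against objects of the form $(\on{ins}_b)_!(\CH)$ with $\CH\in\Shv(Z_b)$, use pseudo-proper base change to replace $\CY$ by $Z_a\underset{\CY}\times Z_b$, use the finitary-diagonal hypothesis to write this fiber product as a \emph{finite} colimit of schemes $W_c$ proper over both $Z_a$ and $Z_b$ (so that $\Maps(\CH,-)$ commutes with the colimit), and finally invoke compatibility of Verdier duality with proper pushforward at the scheme level. The only difference is that you spell out the final scheme-level identification (via $*$-pullback adjunctions, $q^!\circ\BD\simeq\BD\circ q^*$, and the tensor–Hom adjunction) in more detail than the paper, which simply cites properness of $q^i_b$.
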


Note that this proposition gives an expression for what the Verdier duality functor \emph{actually does} 
on some particular objects of $\Shv^!(\CY)$. 

\begin{proof}

Let $\CF$ be an object of $\Shv(Z_a)$, and let $\CG$ be an object of $\Shv^!(\CY)$. It suffices to show that
for any index $b$, the map
\begin{multline*}
\Maps_{\Shv(Z_b)}(\on{ins}_b^!(\CG),\on{ins}_b^!\circ (\on{ins}_a)_!\circ \BD_{Z_a}(\CF))\to \\
\to \Maps_{\Shv^!(\CY\times Z_b)}\left((\on{ins}_a)_!(\CF) \boxtimes \on{ins}_b^!(\CG),
(\on{Graph}_{\on{ins}_b})_!(\omega_{Z_b})\right)
\end{multline*}
is an isomorphism.  We will prove more generally that for any $\CH\in \Shv(Z_b)$, the map
\begin{multline}  \label{e:a&b}
\Maps_{\Shv(Z_b)}(\CH,\on{ins}_b^!\circ (\on{ins}_a)_!\circ \BD_{Z_a}(\CF))\to \\
\to \Maps_{\Shv^!(\CY\times Z_b)}\left((\on{ins}_a)_!(\CF)\boxtimes \CH,
(\on{Graph}_{\on{ins}_b})_!(\omega_{Z_b})\right)
\end{multline}
is an isomorphism.  

\medskip

Consider the prestack $\wt{Z}_{a,b}:=Z_a\underset{\CY}\times Z_b$; let $\wt{q}_a$, $\wt{q}_b$ and $\wt{q}_{a,b}$ 
denote its maps to $Z_a$, $Z_b$ and $Z_a\times Z_b$, respectively.  By base change (i.e., \corref{c:pseudo-proper}, due to the
fact that $\on{ins}_a$ is pseudo-proper), we can rewrite the left-hand side
in \eqref{e:a&b} as
\begin{equation} \label{e:a&b lhs}
\Maps_{\Shv(Z_b)}(\CH,(\wt{q}_b)_!\circ \wt{q}_a^!\circ \BD_{Z_a}(\CF))
\end{equation}
and the right-hand side as 
\begin{equation} \label{e:a&b rhs}
\Maps_{\Shv^!(Z_a\times Z_b)}\left(\CF\boxtimes \CH,
(\wt{q}_{a,b})_!(\omega_{\wt{Z}_{a,b}})\right),
\end{equation}
respectively. 

\medskip

The map from \eqref{e:a&b lhs} to \eqref{e:a&b rhs} comes from the map in $\Shv(Z_a\times Z_b)$
$$\CF \boxtimes \left((\wt{q}_b)_!\circ \wt{q}_a^!\circ \BD_{Z_a}(\CF)\right) \to (\wt{q}_{a,b})_!(\omega_{\wt{Z}_{a,b}}),$$
which is obtained by applying the functor $(\on{id}\times \wt{q}_b)_!$ to the map in $\Shv(Z_a\times \wt{Z}_{a,b})$
$$\CF \boxtimes \left(\wt{q}_a^!\circ \BD_{Z_a}(\CF)\right) \to (\on{Graph}_{\wt{q}_a})_!(\omega_{\wt{Z}_{a,b}}),$$
and which is in turned obtained by applying $(\on{id}\times \wt{q}_a)^!$ to the map
$$\CF\boxtimes \BD_{Z_a}(\CF) \to (\on{diag}_{Z_a})_!(\omega_{Z_a})$$
of \eqref{e:canonical pairing}.

\medskip

Write
$$\wt{Z}_{a,b}=\underset{i}{\on{colim}}\, Z^i_{a,b},$$
where the index $i$ runs over a finite category and where $Z^i_{a,b}$ are schemes proper over $Z_a$ and $Z_b$. Let
$q_a^i,q_b^i,q_{a,b}^i$ denote the maps from $Z^i_{a,b}$ to $Z_a$, $Z_b$ and $Z_a\times Z_b$, respectively.

\medskip

We have:
$$(\wt{q}_b)_!\circ \wt{q}_a^!\circ \BD_{Z_a}(\CF)\simeq \underset{i}{\on{colim}}\, 
(q^i_b)_! \circ (q^i_a)^! \circ \BD_{Z_a}(\CF)$$
and 
$$(\wt{q}_{a,b})_!(\omega_{\wt{Z}_{a,b}})\simeq \underset{i}{\on{colim}}\, 
(q^i_{a,b})_!(\omega_{Z^i_{a,b}}).$$

The map from \eqref{e:a&b lhs} to \eqref{e:a&b rhs} fits into a commutative diagram
$$
\CD
\Maps_{\Shv(Z_b)}(\CH,(\wt{q}_b)_!\circ \wt{q}_a^!\circ \BD_{Z_a}(\CF))   @>>>  
\Maps_{\Shv^!(Z_a\times Z_b)}\left(\CF\boxtimes \CH, (\wt{q}_{a,b})_!(\omega_{\wt{Z}_{a,b}})\right)  \\
@A{\sim}AA    @AA{\sim}A   \\
\Maps_{\Shv(Z_b)}\left(\CH, \underset{i}{\on{colim}}\, (q^i_b)_! \circ (q^i_a)^! \circ \BD_{Z_a}(\CF)\right) & & 
\Maps_{\Shv^!(Z_a\times Z_b)}\left(\CF\boxtimes \CH, \underset{i}{\on{colim}}\, (q^i_{a,b})_!(\omega_{Z^i_{a,b}})\right)  \\
@AAA    @AAA    \\
\underset{i}{\on{colim}}\, \Maps_{\Shv(Z_b)}\left(\CH, (q^i_b)_! \circ (q^i_a)^! \circ \BD_{Z_a}(\CF)\right) @>>>
\underset{i}{\on{colim}}\, 
\Maps_{\Shv^!(Z_a\times Z_b)}\left(\CF\boxtimes \CH, (q^i_{a,b})_!(\omega_{Z^i_{a,b}})\right),
\endCD
$$
where the bottom horizontal arrow comes from maps
\begin{equation} \label{e:map for i}
\Maps_{\Shv(Z_b)}\left(\CH, (q^i_b)_! \circ (q^i_a)^! \circ \BD_{Z_a}(\CF)\right)\to 
\Maps_{\Shv^!(Z_a\times Z_b)}\left(\CF\boxtimes \CH, (q^i_{a,b})_!(\omega_{Z^i_{a,b}})\right),
\end{equation}
 defined in the same way as the map from \eqref{e:a&b lhs} to \eqref{e:a&b rhs}.

\medskip

Now, in the above diagram the vertical maps are isomorphisms because the index category was finite. The bottom
horizontal arrow is an isomorphism because the maps \eqref{e:map for i} are isomorphisms, due to the properness
of $q^i_b$.  

\medskip

Hence, the top horizontal arrow is an isomorphism, as required.

\end{proof} 

\sssec{}

As a consequence of \propref{p:on fin-proper diag}, we obtain the following expression for the Verdier duality functor on
the prestack $\CY$ in terms of the schemes $Z_a$:

\begin{propconstr} \label{p:expl Verdier}
In the above notations, for $\CF\in \Shv^!(\CY)$ 
we have a canonical isomorphism
$$\BD_{\CY}(\CF)\simeq \underset{a\in A^{\on{op}}}{\on{lim}}\, (\on{ins}_a)_!\circ \BD_{Z_a}\circ (\on{ins}_a)^!(\CF).$$
\end{propconstr}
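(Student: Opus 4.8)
The statement to prove is \propref{p:expl Verdier}: for a pseudo-scheme $\CY$ with a finitary diagonal, presented as $\CY=\underset{a\in A}{\on{colim}}\, Z_a$ with proper transition maps, one has a canonical isomorphism $\BD_\CY(\CF)\simeq \underset{a\in A^{\on{op}}}{\on{lim}}\, (\on{ins}_a)_!\circ \BD_{Z_a}\circ (\on{ins}_a)^!(\CF)$. The natural approach is to combine \propref{p:on fin-proper diag}, which is the key input (it says $(\on{ins}_a)_!\circ \BD_{Z_a}\simeq \BD_\CY\circ (\on{ins}_a)_!$ for each $a$), with the tautological presentation of an object as a colimit, namely the isomorphism \eqref{e:object as colim}: $\underset{a\in A}{\on{colim}}\, (\on{ins}_a)_!\circ (\on{ins}_a)^!(\CF)\simeq \CF$. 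The point is that $\BD_\CY$ is a contravariant functor sending colimits in $\Shv^!(\CY)$ to limits in $\Shv^!(\CY)$ (this is exactly the representability property noted in \secref{sss:Verdier defn}), so applying $\BD_\CY$ to \eqref{e:object as colim} turns the colimit over $A$ into a limit over $A^{\on{op}}$.

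Concretely, the plan is as follows. First, recall from \secref{sss:ppty pseudo} that for $\CF\in \Shv^!(\CY)$ the canonical map $\underset{a\in A}{\on{colim}}\, (\on{ins}_a)_!\circ (\on{ins}_a)^!(\CF)\to \CF$ is an isomorphism, where the colimit is computed in the cocomplete $\infty$-category $\Shv^!(\CY)$. Second, apply the functor $\BD_\CY$. Since $\BD_\CY:\Shv^!(\CY)^{\on{op}}\to \Shv^!(\CY)$ is representable as a contravariant functor, it carries colimits to limits; hence
$$\BD_\CY(\CF)\simeq \BD_\CY\left(\underset{a\in A}{\on{colim}}\, (\on{ins}_a)_!\circ (\on{ins}_a)^!(\CF)\right)\simeq \underset{a\in A^{\on{op}}}{\on{lim}}\, \BD_\CY\left((\on{ins}_a)_!\circ (\on{ins}_a)^!(\CF)\right).$$
Third, for each fixed $a$, apply \propref{p:on fin-proper diag} to the object $(\on{ins}_a)^!(\CF)\in \Shv(Z_a)$: it gives a canonical isomorphism $\BD_\CY\circ (\on{ins}_a)_!\circ (\on{ins}_a)^!(\CF)\simeq (\on{ins}_a)_!\circ \BD_{Z_a}\circ (\on{ins}_a)^!(\CF)$. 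Substituting this into the limit yields the desired formula. Here one uses that $\CY$ has a finitary diagonal, so that each $\on{ins}_a$ is finitary pseudo-proper and \propref{p:on fin-proper diag} applies as stated.

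**The main obstacle.** The nontrivial point is to make the substitution in the third step \emph{functorial in $a$}, i.e. to check that the isomorphisms of \propref{p:on fin-proper diag} assemble into an isomorphism of the two diagrams $A^{\on{op}}\to \Shv^!(\CY)$, namely $a\mapsto \BD_\CY\circ (\on{ins}_a)_!\circ (\on{ins}_a)^!(\CF)$ and $a\mapsto (\on{ins}_a)_!\circ \BD_{Z_a}\circ (\on{ins}_a)^!(\CF)$, rather than just a pointwise isomorphism. This requires observing that the natural transformation of \eqref{e:duality and direct image}, which is what \propref{p:on fin-proper diag} asserts to be an isomorphism, is itself a natural transformation of functors, and that it is compatible with the transition maps $f_{a_1,a_2}$ (using the base-change compatibility of $(f_{a_1,a_2})_!$ and of Verdier duality with the proper pushforwards along the $f_{a_1,a_2}$). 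Once this naturality is in place, the passage to the limit over $A^{\on{op}}$ is formal. In writing this up I would phrase the argument so that \eqref{e:object as colim}, viewed as an isomorphism of functors indexed by $A$, is the diagram to which $\BD_\CY$ is applied, and the families of isomorphisms from \propref{p:on fin-proper diag} and \eqref{e:duality and direct image} are tracked as natural transformations throughout; no further computation is needed.
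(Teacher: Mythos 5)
Your proposal matches the paper's proof exactly: express $\CF$ via \eqref{e:object as colim}, apply $\BD_\CY$ (which turns colimits into limits), and substitute via \propref{p:on fin-proper diag}. The additional remarks you make about tracking the naturality of the identifications from \propref{p:on fin-proper diag} over the index category are a reasonable elaboration of a point the paper leaves implicit, but the argument is the same.
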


\begin{proof}

By \eqref{e:object as colim}, we have:
$$\CF\simeq  \underset{a\in A}{\on{colim}}\,  (\on{ins}_a)_!\circ (\on{ins}_a)^!(\CF).$$

\medskip

The functor of Verdier duality maps colimits into limits. Hence,
$$\BD_{\CY}(\CF)\simeq \underset{a\in A}{\on{lim}}\,  \BD_{\CY}\circ (\on{ins}_a)_!\circ \on{ins}_a^!(\CF).$$

Now, the assertion of the proposition follows from \propref{p:on fin-proper diag}.

\end{proof} 

\sssec{}

The next assertion shows that, when dealing with maps between pseudo-schemes, each with a finitary diagonal, Verdier duality commutes
with direct images under finitary pseudo-proper maps: 

\begin{cor} \label{c:finite duality}
Let us be working in the context of constructible sheaves. 
Let $f:\CY_1\to \CY_2$ be a finitary pseudo-proper map between pseudo-schemes, each having a finitary diagonal. 
Then the natural transformation
$$f_!\circ \BD_{\CY_2}\to \BD_{\CY_1}\circ f_!$$
of \eqref{e:duality and direct image} is an isomorphism.
\end{cor}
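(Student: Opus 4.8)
The plan is to reduce \corref{c:finite duality} to the case of a proper morphism between \emph{schemes}, where it is \lemref{l:Verdier duality sch}, by choosing a presentation of $\CY_1$ as a colimit of schemes that is adapted simultaneously to its own pseudo-scheme structure and to a chosen presentation of $\CY_2$. First I would treat the special case $\CY_2=S$ with $S\in\Sch$. By the definition of being finitary pseudo-proper over a scheme (see \secref{sss:ppty pseudo}), $\CY_1$ then admits a presentation $\CY_1=\underset{a\in A}{\on{colim}}\,Z_a$ with $A$ \emph{finite} and every $Z_a$ proper over $S$. By \propconstrref{p:expl Verdier} we have $\BD_{\CY_1}(\CF)\simeq\underset{a\in A^{\on{op}}}{\on{lim}}\,(\on{ins}_a)_!\circ\BD_{Z_a}\circ(\on{ins}_a)^!(\CF)$; since this limit is finite and $f$ is finitary pseudo-proper over a scheme, $f_!$ commutes with it by \lemref{l:lax homology finite}. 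Using $f_!\circ(\on{ins}_a)_!\simeq(f\circ\on{ins}_a)_!$, the fact that $f\circ\on{ins}_a\colon Z_a\to S$ is proper between schemes together with \lemref{l:Verdier duality sch}, and the fact that $\BD_S$ sends colimits to limits, one rewrites $f_!\circ\BD_{\CY_1}(\CF)$ as $\BD_S\bigl(\underset{a\in A}{\on{colim}}\,(f\circ\on{ins}_a)_!\circ(\on{ins}_a)^!(\CF)\bigr)$, which equals $\BD_S(f_!(\CF))$ by \eqref{e:object as colim}. One then verifies that the resulting isomorphism is the map \eqref{e:duality and direct image}.

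For the general case I would write $\CY_2=\underset{b\in B}{\on{colim}}\,W_b$ with $W_b\in\Sch$ and proper transition maps; since $\CY_2$ has a finitary diagonal, each $\on{ins}_b\colon W_b\to\CY_2$ is finitary pseudo-proper. Starting from an arbitrary presentation $\CY_1=\underset{a\in A}{\on{colim}}\,Z_a$, refine it by pulling back the presentation of $\CY_2$ along each map $Z_a\to\CY_2$: the prestack $Z_a\underset{\CY_2}\times W_b$ is the base change of $\on{diag}_{\CY_2}$ along $Z_a\times W_b\to\CY_2\times\CY_2$, hence finitary pseudo-proper over $Z_a\times W_b$, so it is of the form $\underset{c}{\on{colim}}\,V_{a,b,c}$ with each $V_{a,b,c}$ proper over $Z_a\times W_b$, hence proper over $Z_a$ and over $W_b$. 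Organising the triples $(a,b,c)$ into the total category of the evident functor (a Grothendieck construction), one obtains a presentation $\CY_1\simeq\underset{e}{\on{colim}}\,V_e$ in which each composite $V_{a,b,c}\to\CY_1\overset{f}\to\CY_2$ factors as $V_{a,b,c}\to W_b\overset{\on{ins}_b}\to\CY_2$ with the first arrow proper between schemes. Running the computation of the previous paragraph — $f_!$ commutes with $\underset{e^{\on{op}}}{\on{lim}}$ by \lemref{l:lax homology finite} (here $\CY_2$ is a pseudo-scheme), $(V_{a,b,c}\to W_b)_!\circ\BD_{V_{a,b,c}}\simeq\BD_{W_b}\circ(V_{a,b,c}\to W_b)_!$ by \lemref{l:Verdier duality sch}, and $(\on{ins}_b)_!\circ\BD_{W_b}\simeq\BD_{\CY_2}\circ(\on{ins}_b)_!$ by \propref{p:on fin-proper diag} since $\on{ins}_b$ belongs to the presentation of $\CY_2$ — and then applying \eqref{e:object as colim} for the refined presentation yields $f_!\circ\BD_{\CY_1}(\CF)\simeq\BD_{\CY_2}(f_!(\CF))$, again compatibly with \eqref{e:duality and direct image}.

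The step I expect to be the main obstacle is the bookkeeping underlying the refined presentation: upgrading the fibrewise refinements $Z_a\underset{\CY_2}\times W_b=\underset{c}{\on{colim}}\,V_{a,b,c}$ to an honest presentation of $\CY_1$ as a colimit of \emph{schemes along proper maps} indexed by a single category, and checking the analogue of \eqref{e:object as colim} for it. This is purely an $\infty$-categorical/combinatorial task — essentially straightening/unstraightening the assignment $b\mapsto\{\text{presentations of }Z_a\underset{\CY_2}\times W_b\}$ and making the choices functorial in $(a,b)$ — with no further geometric content, but it is where the care lies. Once it is in place, the rest is a formal concatenation of the already-available isomorphisms \lemref{l:Verdier duality sch}, \propref{p:on fin-proper diag}, \propconstrref{p:expl Verdier}, \lemref{l:lax homology finite}, and \eqref{e:object as colim}.
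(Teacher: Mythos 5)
Your strategy — presentations reducing everything to \lemref{l:Verdier duality sch} — is the right one, and the special case $\CY_2 = S$ is handled correctly. But in the general case you take a genuinely different route from the paper, and there is a specific step that fails. You claim that since each $V_{a,b,c}$ is proper over $Z_a\times W_b$, it is ``hence proper over $Z_a$ and over $W_b$.'' This is a non\nobreakdash-sequitur: neither projection $Z_a\times W_b\to Z_a$ nor $Z_a\times W_b\to W_b$ is proper (the factors $Z_a$, $W_b$ are arbitrary separated schemes, not proper ones), so properness over the product does not propagate to properness over either factor. And this is exactly what your argument leans on: properness of $V_{a,b,c}\to W_b$ is what lets you cite \lemref{l:Verdier duality sch}, and properness of $V_{a,b,c}\to Z_a$ (together with the unaddressed functoriality) is what would make the refined diagram a pseudo-scheme presentation. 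The claim does happen to be true, but establishing it requires a separate cancellation argument (factor $V_{a,b,c}\to Z_a\times_{\CY_2}W_b$ through a term $U_d$ of a presentation of the latter pseudo-proper over $Z_a$, observe that $V_{a,b,c}\to U_d\to Z_a\times W_b$ equals the given proper map while $U_d\to Z_a\times W_b$ is separated, so $V_{a,b,c}\to U_d$ and hence $V_{a,b,c}\to Z_a$ is proper; similarly over $W_b$) — precisely the kind of properness chase you were trying to avoid by building the refined presentation. And the bookkeeping you flag as the main obstacle (making the choice of $V_{a,b,c}$ functorial in $(a,b)$ so that a single colimit diagram in $\Sch_{\text{proper}}$ results) is itself non-trivial, not ``purely combinatorial'' once the above properness issue is in play.

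The paper sidesteps both problems. It keeps the original presentations of $\CY_1$ and $\CY_2$ untouched: by \eqref{e:object as colim}, \propconstrref{p:expl Verdier}, \lemref{l:lax homology finite}, and the fact that $\BD_{\CY_2}$ turns colimits into limits, the assertion reduces to showing that the map \eqref{e:duality and direct image} is an isomorphism for the single morphism $f\circ\on{ins}_{a_1}\colon Z_{a_1}\to\CY_2$. One then factors $f\circ\on{ins}_{a_1}$ through some $\on{ins}_{a_2}\colon Z_{a_2}\to\CY_2$, so that by \propref{p:on fin-proper diag} it suffices to prove the scheme map $g\colon Z_{a_1}\to Z_{a_2}$ proper. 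This is done by a two-step chase through auxiliary pseudo-proper presentations (of $Z_{a_2}\times_{\CY_2}\CY_1$ over $Z_{a_2}$ and of $W_b\times_{\CY_1}Z_{a_1}$ over $W_b$), at the end of which $Z_{a_1}$ maps to a scheme $V_c$ proper over $W_b$ and this map admits a retraction, forcing it to be a closed immersion. No refinement of the presentation of $\CY_1$ is ever constructed; the difficulty is localized into one properness statement about a map of schemes, which is then disposed of directly. Your approach front-loads the difficulty into building the refined presentation and is where things come apart.
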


\begin{proof}

Fix presentations
$$\CY_1= \underset{a_1\in A_1}{\on{colim}}\, Z_{1,a_1},\quad Z_{1,a_1}\in \Sch$$
and
$$\CY_2=\underset{a_2\in A_2}{\on{colim}}\, Z_{2,a_2},\quad Z_{2,a_2}\in \Sch$$
as in \eqref{e:prestack as a colimit}.

\medskip

For $\CF\in \Shv^!(\CY_1)$ we have
$$\CF\simeq  \underset{a_1\in A_1}{\on{colim}}\,  (\on{ins}_{a_1})_!\circ (\on{ins}_{a_1})^!(\CF),$$
and hence
$$f_!(\CF) \simeq \underset{a_1\in A_1}{\on{colim}}\,  (f\circ \on{ins}_{a_1})_!\circ (\on{ins}_{a_1})^!(\CF).$$

By \propref{p:expl Verdier}, we have
$$\BD_{\CY_1}(\CF)\simeq \underset{a_1\in A_1}{\on{lim}}\, (\on{ins}_{a_1})_!\circ \BD_{Z_{a_1}}\circ (\on{ins}_{a_1})^!(\CF).$$

By \lemref{l:lax homology finite}, we have
$$f_!\circ \BD_{\CY_1}(\CF)\simeq \underset{a_1\in A_1}{\on{lim}}\, (f\circ \on{ins}_{a_1})_!\circ \BD_{Z_{a_1}}\circ (\on{ins}_{a_1})^!(\CF),$$
and since the functor $\BD_{\CY_2}$ takes colimits to limits, we also have:
$$\BD_{\CY_2}\circ f_!(\CF)\simeq \underset{a_1\in A_1}{\on{lim}}\,  \BD_{\CY_2}\circ (f\circ \on{ins}_{a_1})_!\circ (\on{ins}_{a_1})^!(\CF).$$

\medskip

Thus, we obtain that it suffices to show that for a given index $a_1\in A_1$ and $\CF_1\in \Shv(Z_{a_1})$, the map
$$(f\circ \on{ins}_{a_1})_!\circ \BD_{Z_{a_1}}(\CF_1)\to \BD_{\CY_2}\circ (f\circ \on{ins}_{a_1})_!(\CF_1)$$
(which is the map \eqref{e:duality and direct image} for the morphism $f\circ \on{ins}_{a_1}:Z_{a_1}\to \CY_2$)
is an isomorphism.

\medskip

Let $a_2\in A_2$ be some index so that the morphism $f\circ \on{ins}_{a_1}$ factors as 
$$Z_{a_1}\overset{g}\longrightarrow Z_{a_2} \overset{\on{ins}_{a_2}}\longrightarrow \CY_2.$$

By \propref{p:on fin-proper diag} applied to the morphism $\on{ins}_{a_2}$, it suffices to show that the 
map $g:Z_{a_1}\to Z_{a_2}$ is proper. 

\medskip

Write 
$$Z_{a_2}\underset{\CY_2}\times \CY_1\simeq \underset{b}{\on{colim}}\, W_b,\quad W_b\in \Sch$$
where the schemes $W_b$ are proper over $Z_{a_2}$. Let $b$ be an index such that the map 
$$Z_{a_1}\to Z_{a_2}\underset{\CY_2}\times \CY_1$$
factors through a map $h:Z_{a_1}\to W_b$.  It suffices to show that the map $h$ is proper. 

\medskip

Write
$$W_b\underset{\CY_1}\times Z_{a_1}\simeq \underset{c}{\on{colim}}\, V_c,\quad V_c\in \Sch,$$
where the schemes $V_c$ are proper over $W_b$. Let $c$ be an index such that the map
$$Z_{a_1}\to W_b\underset{\CY_1}\times Z_{a_1}$$ factors through a map $Z_{a_1}\to V_c$.

\medskip

It suffices to show that the latter map $Z_{a_1}\to V_c$ is proper. But this is so because it admits
a left inverse, namely,
$$V_c\to W_b\underset{\CY_1}\times Z_{a_1}\to Z_{a_1}.$$

\end{proof}

\sssec{}

Finally, we have the following assertion:

\begin{lem} \label{l:etale and dual}
Let us be working in the context of constructible sheaves. Let $f:\CY_1\to \CY_2$ be an \'etale map. 
Then the natural transformation \eqref{e:etale Verdier} is an isomorphism.
\end{lem}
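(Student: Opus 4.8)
The statement to prove is \lemref{l:etale and dual}: for an \'etale map $f:\CY_1\to\CY_2$ of prestacks in the constructible context, the natural transformation \eqref{e:etale Verdier}, i.e. $f^!\circ\BD_{\CY_2}(\CF)\to\BD_{\CY_1}\circ f^!(\CF)$, is an isomorphism for every $\CF\in\Shv^!(\CY_2)$. Since $\Shv^!(\CY_i)$ is built as a limit over $\Sch_{/\CY_i}$ and $f^!$ is computed value-wise, a map of sheaves on $\CY_1$ is an isomorphism iff its restriction to every scheme $S\to\CY_1$ is an isomorphism; but pulling back further along $S\to\CY_1\to\CY_2$, and using compatibility of $\BD$ with pullback and the construction of \eqref{e:etale Verdier}, it will suffice to verify the claim after base change to schemes. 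So the plan is a reduction-to-schemes argument, where the scheme case is exactly \lemref{l:etale Verdier sch}.

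First I would spell out what \eqref{e:etale Verdier} does on $S$-points. Fix $S\in\Sch$ with a map $y_1:S\to\CY_1$, composing to $y_2:S\to\CY_2$. The diagonal of $\CY_1$ being pseudo-proper (needed for $\BD_{\CY_1}$ to be defined) and $\BD$ representing the pairing functor, one has that $\BD_{\CY_1}(\CG)_{S,y_1}$ is computed, via \propconstrref{p:expl Verdier}-type formulas or directly via the base-change property \corref{c:pseudo-proper} applied to $\on{diag}_{\CY_1}$, in terms of the fiber products $S\underset{\CY_1}\times\CY_1$ and Verdier duality on the resulting schemes; the key structural input is that $(\on{diag}_{\CY_i})_!\omega$ satisfies base change. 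I would then show that the restriction of the map \eqref{e:etale Verdier} to $(S,y_1)$ is identified with the analogous map \eqref{e:etale Verdier} for the \'etale-locally-trivial (in fact: \'etale) morphism $f':S\underset{\CY_2}\times\CY_1\to S$ obtained by base change, applied to the sheaf $\CF_{S,y_2}$; this uses that \eqref{e:etale Verdier} is functorial and compatible with pullback, which is immediate from its construction in \secref{sss:etale and dual} (every ingredient there — $(f\times f)^!$, the connected-component embedding of the relative diagonal, the tautological pairing map — is stable under base change). In particular the ``\'etale $\Rightarrow$ relative diagonal is a clopen embedding" fact localizes: $S\underset{\CY_2}\times\CY_1\to S$ is \'etale because \'etaleness of a map of prestacks is defined by base change to schemes.

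Having reduced to the case where the target is a scheme $S$ and the source is a scheme or a prestack \'etale over $S$, I would first handle the genuinely schematic case via \lemref{l:etale Verdier sch} and its proof: for compact $\CF$ the statement is classical SGA material (Verdier duality commutes with smooth/\'etale pullback, up to the relevant shift/twist which is trivial in the \'etale case), and the general case follows because $f^!$ commutes with limits in the constructible context (it has a left adjoint $f_!$), hence commutes with the limit presentation $\CF\simeq\underset{?}{\on{lim}}(\text{compact truncations})$ — exactly the argument already given in the proof of \lemref{l:etale Verdier sch}. If the source $\CY_1$ is merely a prestack \'etale over $S$ (not a scheme), I would write $\CY_1$ as a colimit of schemes $U_a$ each \'etale over $S$ (this is what \'etale-over-a-scheme gives), use \propconstrref{p:expl Verdier} / the colimit presentation to reduce to each $U_a$, and invoke the schematic case. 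The main obstacle — and really the only subtle point — is the first step: cleanly identifying the restriction of \eqref{e:etale Verdier} to an $S$-point with the schematic map, i.e. checking that all the base-change isomorphisms for $(\on{diag}_{\CY_i})_!\omega$ and for $f^!\circ\BD$ are compatible with the explicit construction of \eqref{e:etale Verdier}. This is bookkeeping with pseudo-proper base change (\corref{c:pseudo-proper}) rather than a conceptual difficulty, and is entirely parallel to how \propref{p:on fin-proper diag} and \corref{c:finite duality} are proved; once it is in place the rest is formal plus the scheme-level input.
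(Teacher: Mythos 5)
The central reduction step in your plan has a genuine gap. You want to identify the restriction of \eqref{e:etale Verdier} at an $S$-point $y_1:S\to\CY_1$ (with $y_2:=f\circ y_1$) with the instance of \eqref{e:etale Verdier} for the base-changed \'etale map $f':S\underset{\CY_2}\times\CY_1\to S$ applied to the sheaf $\CF_{S,y_2}$. That identification would require $(\BD_{\CY_2}\CF)_{S,y_2}$ to agree with $\BD_S(\CF_{S,y_2})$, i.e.\ it would require $\BD$ to commute with $!$-pullback along the arbitrary morphism $y_2:S\to\CY_2$ --- which is precisely the type of commutation that \emph{fails} in general, and that the lemma is trying to establish in the special case of an \'etale $f$. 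Concretely, by \propconstrref{p:expl Verdier} and base change for the pseudo-proper insertions, $(\BD_{\CY_2}\CF)_{S,y_2}$ is a limit, over the index of a presentation $\CY_2=\underset{a}{\on{colim}}\,Z_{2,a}$, of terms involving $\BD_{Z_{2,a}}((\on{ins}_{2,a})^!\CF)$ pulled back along the (generically non-\'etale) projections $S\underset{\CY_2}\times Z_{2,a}\to Z_{2,a}$; this is a global quantity depending on the whole presentation of $\CY_2$, and there is no reason for it to coincide with $\BD_S(\CF_{S,y_2})$. Your observation that every ingredient of the construction is stable under base change covers the explicit ingredients ($(f\times f)^!$, the connected-component inclusion of the relative diagonal, the tautological pairing map), but the tautological pairing has $\BD_{\CY_2}\CF$ as one of its two arguments, and that object does not pull back --- so the compatibility you would need is not available.

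The paper's proof avoids localizing over arbitrary $S$-points and works directly with a presentation (recall that throughout the subsection containing this lemma the standing hypothesis is that the prestacks are pseudo-schemes with a finitary diagonal). It fixes $\CY_2=\underset{a}{\on{colim}}\,Z_{2,a}$, sets $Z_{1,a}:=\CY_1\underset{\CY_2}\times Z_{2,a}$, so that $\CY_1=\underset{a}{\on{colim}}\,Z_{1,a}$ and the induced maps $f_a:Z_{1,a}\to Z_{2,a}$ are \'etale maps of \emph{schemes}. Using \propconstrref{p:expl Verdier} to write both $f^!\BD_{\CY_2}(\CF)$ and $\BD_{\CY_1}(f^!\CF)$ as limits over $a$, one exhibits \eqref{e:etale Verdier} as a composition whose factors are: commuting $f^!$ past the limit (an isomorphism because $f^!$ preserves limits in the constructible context, \lemref{l:constr limits}), pseudo-proper base change for the insertions (\corref{c:pseudo-proper}), the schematic case \lemref{l:etale Verdier sch} applied to each $f_a$, and an identity. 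In other words, the correct objects to test against are the insertions $Z_{2,a}\to\CY_2$ of a fixed presentation, which are pseudo-proper and hence enjoy base change --- not arbitrary $S$-points of $\CY_2$. Your proposal has the right ingredients (the scheme-level input \lemref{l:etale Verdier sch} and the fact that $f^!$ preserves limits), but the reduction that is supposed to connect them is unjustified and, as stated, false.
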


\begin{proof}

Fix a presentation 
$$\CY_2=\underset{a\in A}{\on{colim}}\, Z_{2,a},\quad Z_{2,a}\in \Sch$$
as in \eqref{e:prestack as a colimit}, and set
$$Z_{1,a}:=\CY_1\underset{\CY_2}\times Z_{2,a},$$
so that
$$\CY_1\simeq \underset{a\in A}{\on{colim}}\, Z_{1,a}.$$

Let 
$$\on{ins}_{1,a}:Z_{1,a}\to \CY_1 \text{ and } \on{ins}_{2,a}:Z_{2,a}\to \CY_2$$
denote the resulting maps. Let $f_a$ denote the map $Z_{1,a}\to Z_{2,a}$; by 
assumption these maps are \'etale.

\medskip

For $\CF\in \Shv^!(\CY_2)$, we have
$$\BD_{\CY_2}(\CF)\simeq \underset{a}{\on{lim}}\, (\on{ins}_{2,a})_!\circ \BD_{Z_{2,a}}\circ (\on{ins}_{2,a})^!(\CF)$$
and
$$\BD_{\CY_1}\circ f^!(\CF) \simeq 
\underset{a}{\on{lim}}\, (\on{ins}_{1,a})_!\circ \BD_{Z_{1,a}}\circ (\on{ins}_{1,a})^!\circ f^!(\CF).$$

In terms of these identification, the map \eqref{e:etale Verdier} equals the composition
\begin{multline}  \label{e:comp etale and Verdier}
f^! \left( \underset{a}{\on{lim}}\, (\on{ins}_{2,a})_!\circ \BD_{Z_{2,a}}\circ (\on{ins}_{2,a})^!(\CF)\right) \to
\underset{a}{\on{lim}}\, f^! \circ (\on{ins}_{2,a})_!\circ \BD_{Z_{2,a}}\circ (\on{ins}_{2,a})^!(\CF) \simeq \\
\simeq \underset{a}{\on{lim}}\,  (\on{ins}_{1,a})_!\circ f_a^! \circ \BD_{Z_{2,a}}\circ (\on{ins}_{2,a})^!(\CF) \to  
 \underset{a}{\on{lim}}\,  (\on{ins}_{1,a})_!\circ \BD_{Z_{1,a}}\circ f_a^! \circ (\on{ins}_{2,a})^!(\CF) = \\
=\underset{a}{\on{lim}}\, (\on{ins}_{1,a})_!\circ \BD_{Z_{1,a}}\circ (\on{ins}_{1,a})^!\circ f^!(\CF),
\end{multline} 
where the third arrow is the natural transformation \eqref{e:etale Verdier} for the morphism $f_a$,
and hence is an isomorphism by \lemref{l:etale Verdier sch}. 

\medskip

Hence, it remains to show that the first arrow in \eqref{e:comp etale and Verdier} is an isomorphism.
However, this follows from the fact that in the context of constructible sheaves, the functor of pullback
commutes with limits, see \lemref{l:constr limits}.

\end{proof}

\section{The case of the Ran space, statements of Verdier duality results}  \label{s:Verdier on Ran}

As was mentioned in the preamble to \secref{s:Verdier}, for the proof of the cohomological product formula, we will need to perform
Verdier duality on the Ran space. More precisely, we will need to show that for some specific object in $\Shv^!(\Ran)$,
the compactly supported cohomology of its Verdier dual maps isomorphically to the dual of its own compactly supported 
cohomology. Now, such an isomorphism would fail in general (not surprisingly so, because  $\Ran$ is ``infinite", which
technically means \emph{not finitary}). Our task in this section we will be to give sufficient conditions on an object of
$\Shv^!(\Ran)$ so that the isomorphism in question does hold. 

\medskip

The main results of this section are stated in Sects. \ref{ss:Verdier on Ran} and \ref{ss:open variant}; the prerequisites 
for these subsections are \secref{s:prestacks} and Sects. \ref{ss:Verdier} and \ref{ss:Verdier pushforward}. The material in
the rest of the present section is needed for the proofs of the results stated in Sects. \ref{ss:Verdier on Ran} and \ref{ss:open variant};
in order to be able to understand it, the reader needs to be familiar with the entirety of \secref{s:Verdier}. 

\ssec{The category of sheaves on the Ran space}  \label{ss:shv on Ran}

In this subsection we will give a more explicit description of the category of sheaves on the Ran space. Namely, for
a finite non-empty set $\CI$, we have a pair of adjoint functors
$$(\on{ins}_\CI)_!:\Shv^!(X^\CI)\rightleftarrows \Shv^!(\Ran):(\on{ins}_\CI)^!.$$

We will give a more explicit description of the composed functor $(\on{ins}_\CJ)^!\circ (\on{ins}_\CI)_!$ for a pair
of finite sets $\CI$ and $\CJ$.

\medskip

The material in this subsection is needed for the proof of Theorems \ref{t:Verdier on Ran}, 
\ref{t:Verdier Ran open} and \ref{t:products}. But it is not necessary for the statement of these theorems,
and so can be skipped on the first pass. 

\sssec{}  \label{sss:sheaf on Ran as a colimit}

Recall that we can represent $\Ran$ as 
$$\underset{\CI\in (\on{Fin}^s)^{\on{op}}}{\on{colim}}\, X^{\CI},$$
where $\on{Fin}^s$ is the category of finite non-empty sets and surjective maps and that
$\on{ins}_\CI$ denotes the resulting map $X^\CI\to \Ran$.

\medskip

Thus, we obtain that $\Ran$ is a pseudo-scheme (see \secref{sss:pseudo-scheme} for what this means).  In particular,
by \propref{p:ins is pseudo}, the morphisms $\on{ins}_\CI$ are pseudo-proper (for an alternative proof of this result in the
case of $\Ran$, see \secref{sss:diag Ran} below). By \secref{sss:ppty pseudo}, the category $\Shv^!(\Ran)$ can be described as 
$$\Shv^!(\Ran)\simeq \underset{\CI\in \on{Fin}^s}{\on{lim}}\, \Shv^!(X^\CI),$$
and also
$$\Shv^!(\Ran)\simeq \underset{\CI\in \on{Fin}^s}{\on{colim}}\, \Shv(X^\CI).$$

In particular, for $\CF\in \Shv^!(\Ran)$, we have a canonical isomorphism
\begin{equation} \label{e:sheaf on Ran as a colimit}
\underset{\CI\in \on{Fin}^s}{\on{colim}}\, (\on{ins}_\CI)_!\circ (\on{ins}_\CI)^!(\CF)\to \CF
\end{equation} 
(more precisely, the tautological map $\to$ is an isomorphism). 

\sssec{}  \label{sss:diag Ran}

We claim that $\Ran$ is a pseudo-scheme with a finitary diagonal. Indeed, for a pair of
finite sets $\CI$ and $\CJ$, 
the fiber product
$$\wt{X}{}^{\CI,\CJ}:=X^\CI\underset{\Ran}\times X^\CJ\simeq (X^\CI\times X^\CJ)\underset{\Ran\times \Ran}\times \Ran$$
can be described as follows:
\begin{equation} \label{e:prod over Ran}
\wt{X}{}^{\CI,\CJ}\simeq \underset{\CI\twoheadrightarrow \CK \twoheadleftarrow \CJ}{\on{colim}}\, X^\CK.
\end{equation}

\medskip

Let $\wt{q}_\CI$ and $\wt{q}_\CJ$ denote the maps from $\wt{X}{}^{\CI,\CJ}$ to $X^\CI$ and $X_\CJ$, respectively. 
By \corref{c:pseudo-proper}
we have
$$(\on{ins}_\CJ)^!\circ (\on{ins}_\CI)_!\simeq (\wt{q}_\CJ)_!\circ (\wt{q}_\CI)^!,\quad \Shv(X^\CI)\to \Shv(X^\CJ).$$

\medskip

However, in order to have a better understanding of the category $\Shv^!(\Ran)$, one should describe the above
functors more explicitly, as we shall presently do.  

\sssec{}  \label{sss:coincide}

Let  $X^{\CI,\CJ}\subset X^\CI\times X^\CJ$ be the reduced closed subscheme, whose set of $k$-points corresponds to 
those pairs of an $\CI$-tuple and a $\CJ$-tuple of $k$-points of $X$, for which the corresponding \emph{subsets} of $X(k)$ 
coincide. 

\medskip

In other words, $X^{\CI,\CJ}$ is the reduced subscheme underlying the union of the closed subsets
$$\on{Graph}_{\on{diag}_{\alpha,\beta}},$$
where the union runs over the set of isomorphism classes of surjections
$$\CI\overset{\alpha}\twoheadrightarrow \CK \overset{\beta}\twoheadleftarrow \CJ,$$
and $\on{diag}_{\alpha,\beta}$ denotes the resulting map $X^\CK\to X^\CI\times X^\CJ$. 

\medskip

We have an evident map 
$$g:\wt{X}{}^{\CI,\CJ}\to X^{\CI,\CJ}.$$

\begin{lem} \label{l:glue}
The functors $(g_!,g^!)$ are mutually inverse equivalences of categories.
\end{lem}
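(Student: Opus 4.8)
The statement \lemref{l:glue} asserts that the map $g:\wt X{}^{\CI,\CJ}\to X^{\CI,\CJ}$ induces mutually inverse equivalences $(g_!,g^!)$ between the sheaf categories. Since $X^{\CI,\CJ}$ is a scheme, $\wt X{}^{\CI,\CJ}$ is a pseudo-scheme presented by \eqref{e:prod over Ran} as a \emph{finite} colimit of the schemes $X^\CK$ over the category of isomorphism classes of surjections $\CI\twoheadrightarrow\CK\twoheadleftarrow\CJ$, and $g$ arises from the corresponding morphism in $\on{PreStk}_{\on{proper}}$ (the maps $X^\CK\to X^{\CI,\CJ}$ are closed embeddings, in particular proper). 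So $g$ is a \emph{finitary pseudo-proper} map of prestacks, and the natural target to invoke is \lemref{l:replace lego}(d): if $g$ is injective, finitary pseudo-proper, and surjective on $k$-points, then $(g_!,g^!)$ are mutually inverse equivalences. Thus the whole proof reduces to verifying these three properties of $g$.

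First I would check finitary pseudo-properness: this is immediate from the presentation \eqref{e:prod over Ran}, since the index category (surjections $\CI\twoheadrightarrow\CK\twoheadleftarrow\CJ$ up to isomorphism, with morphisms the surjections on the $\CK$'s) is finite, and each $X^\CK$ is proper (indeed a closed subscheme, via $\on{diag}_{\alpha,\beta}$) over $X^{\CI,\CJ}$. Next, surjectivity on $k$-points: a $k$-point of $X^{\CI,\CJ}$ is a pair consisting of an $\CI$-tuple and a $\CJ$-tuple of points of $X$ whose underlying subsets of $X(k)$ coincide, say equal to some finite set with indexing $\CI\twoheadrightarrow\CK$ and $\CJ\twoheadrightarrow\CK$ (take $\CK$ to be that common subset); this point lies in the image of the corresponding $X^\CK(k)\to\wt X{}^{\CI,\CJ}(k)\to X^{\CI,\CJ}(k)$, so $g$ is surjective on $k$-points.

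The remaining, and I expect the only slightly delicate, point is injectivity of $g$, i.e.\ that $\wt X{}^{\CI,\CJ}(S)\to X^{\CI,\CJ}(S)$ is a monomorphism of groupoids for every $S\in\Sch$. Equivalently, the diagonal $\wt X{}^{\CI,\CJ}\to\wt X{}^{\CI,\CJ}\underset{X^{\CI,\CJ}}\times\wt X{}^{\CI,\CJ}$ must be an isomorphism. Unwinding \eqref{e:prod over Ran}, an $S$-point of $\wt X{}^{\CI,\CJ}$ is a choice of surjections $\CI\twoheadrightarrow\CK\twoheadleftarrow\CJ$ together with a map $S\to X^\CK$, modulo the identifications coming from morphisms in the index category; the key observation is that given an $S$-point of $X^{\CI,\CJ}$ — namely an $\CI$-tuple $\underline a$ and a $\CJ$-tuple $\underline b$ of maps $S\to X$ with the same image-subset — the pair $(\CK,S\to X^\CK)$ is \emph{canonically determined} Zariski-locally (over the locus where the partition of $\CI\sqcup\CJ$ into "equal-image" classes is constant, $\CK$ is forced to be the common quotient and the map $S\to X^\CK$ is forced), so the fiber of $g$ over any $S$-point is either empty or contractible with a canonical object. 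I would make this precise exactly as in \propref{p:express Ran}: for fixed $S$ and fixed image-subset $I\subseteq\Maps(S,X)$, the relevant index category of $\CK$'s (with both $\CI\twoheadrightarrow\CK$ and $\CJ\twoheadrightarrow\CK$ factoring through $I$) has the final object $\CK=I$, hence is contractible, giving the desired monomorphism. Having established all three properties, \lemref{l:replace lego}(d) yields the claim, and one notes in passing that $g^!$ is then conservative (part (c)) and $g_!$ fully faithful (part (a)), packaging to the equivalence.

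\textbf{Main obstacle.} The one step requiring care is the injectivity/monomorphism verification: one must be careful that although the abstract set $\CK$ and the surjections $\CI\twoheadrightarrow\CK\twoheadleftarrow\CJ$ are not literally unique as data attached to an $S$-point of $X^{\CI,\CJ}$ (different isomorphic $\CK$'s, or a priori a non-injective pattern of coincidences that varies over $S$), the \emph{groupoid} of such data has trivial homotopy type — this is exactly the finality-of-$\CK=I$ argument from \propref{p:express Ran}, applied fiberwise. Once that is in hand everything else is formal from \secref{s:Verdier}.
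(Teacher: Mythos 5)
Your proposal is correct and follows exactly the paper's (one-line) proof: the paper also establishes the lemma by invoking \lemref{l:replace lego}(d), and your write-up simply fills in the verification of the three hypotheses (injectivity, finitary pseudo-properness, surjectivity on $k$-points) that the paper leaves implicit. One small remark: for the injectivity step, the most painless route is to note that $\wt{X}{}^{\CI,\CJ}(S) = X^\CI(S)\underset{\Ran(S)}\times X^\CJ(S)$ is already a \emph{subset} of $X^\CI(S)\times X^\CJ(S)$ (since $\Ran(S)$ is a discrete groupoid), and $X^{\CI,\CJ}(S)$ is likewise a subset of the same product, so $g(S)$ is tautologically a monomorphism — this avoids having to re-run the finality argument of \propref{p:express Ran} fiberwise, although your version is also valid.
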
 

\begin{proof}
Follows from \lemref{l:replace lego}(d).
\end{proof} 

Let 
$$X^\CI \overset{q_\CI} \longleftarrow X^{\CI,\CJ} \overset{q_\CJ}\longrightarrow X^\CJ$$
denote the two maps. We obtain:

\begin{cor} \label{c:push-pull Ran}
The functor 
$$(\on{ins}_\CJ)^!\circ (\on{ins}_\CI)_!\simeq (\wt{q}_\CJ)_!\circ (\wt{q}_\CI)^!$$ is canonically isomorphic to 
$(q_\CJ)_!\circ (q_\CI)^!$.
\end{cor}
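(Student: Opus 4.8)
\textbf{Proof proposal for \corref{c:push-pull Ran}.}

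The plan is to deduce the assertion formally from \lemref{l:glue} (the statement immediately preceding), using the base-change description of $(\on{ins}_\CJ)^!\circ (\on{ins}_\CI)_!$ that was already established via \corref{c:pseudo-proper}. First I would recall that $\Ran$ is a pseudo-scheme (so $\on{ins}_\CI$ and $\on{ins}_\CJ$ are pseudo-proper by \propref{p:ins is pseudo}), and that by \corref{c:pseudo-proper} applied to the Cartesian square expressing $\wt{X}{}^{\CI,\CJ}=X^\CI\underset{\Ran}\times X^\CJ$, we have the isomorphism
$$(\on{ins}_\CJ)^!\circ (\on{ins}_\CI)_!\simeq (\wt{q}_\CJ)_!\circ (\wt{q}_\CI)^!,$$
which was already observed in \secref{sss:diag Ran}. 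So it only remains to identify $(\wt{q}_\CJ)_!\circ (\wt{q}_\CI)^!$ with $(q_\CJ)_!\circ (q_\CI)^!$.

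Next I would invoke \lemref{l:glue}: the map $g:\wt{X}{}^{\CI,\CJ}\to X^{\CI,\CJ}$ induces mutually inverse equivalences $(g_!,g^!)$. The key compatibility to write down is that the two projections factor through $g$, i.e. $\wt{q}_\CI = q_\CI\circ g$ and $\wt{q}_\CJ = q_\CJ\circ g$, which is clear from the construction of $X^{\CI,\CJ}$ as the reduced image (the maps $\on{diag}_{\alpha,\beta}$ through which the colimit $\wt{X}{}^{\CI,\CJ}$ maps to $X^\CI\times X^\CJ$ all factor through $X^{\CI,\CJ}$). Then
$$(\wt{q}_\CJ)_!\circ (\wt{q}_\CI)^! \simeq (q_\CJ)_!\circ g_!\circ g^!\circ (q_\CI)^! \simeq (q_\CJ)_!\circ (q_\CI)^!,$$
where the first isomorphism uses $(q_\CJ\circ g)_!\simeq (q_\CJ)_!\circ g_!$ (valid since $g_!$ is defined — indeed $g$ is finitary pseudo-proper by \lemref{l:replace lego}, being injective, finitary pseudo-proper and surjective on $k$-points — and composition of such left adjoints behaves as expected) together with $(q_\CI\circ g)^!\simeq g^!\circ (q_\CI)^!$, and the second uses the counit isomorphism $g_!\circ g^!\simeq \on{id}$ from \lemref{l:glue}. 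One should check this identification is the natural one, i.e. compatible with the base-change map, but this is automatic since every arrow in sight is either a structural isomorphism of $!$-pullback/pushforward functors or the canonical (co)unit of the equivalence.

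I do not anticipate a genuine obstacle here: the corollary is a purely formal consequence of \lemref{l:glue} plus the already-proven base-change identity, and the only point requiring a moment's care is verifying that $g$ is finitary pseudo-proper so that $(q_\CJ\circ g)_!$ decomposes correctly — but that is precisely what \lemref{l:replace lego}(d) (as used in the proof of \lemref{l:glue}) delivers, since $\wt{X}{}^{\CI,\CJ}$ is the (finite, as the indexing category of surjections $\CI\twoheadrightarrow\CK\twoheadleftarrow\CJ$ up to isomorphism is finite) colimit of the schemes $X^\CK$, each proper — indeed finite — over $X^{\CI,\CJ}$. Thus the whole argument is a two-line formal manipulation once the pieces are assembled.
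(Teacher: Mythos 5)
Your proposal is correct and follows exactly the route that the paper intends (the paper gives no written proof of the corollary, treating it as an immediate consequence of \lemref{l:glue}). You correctly identify the three ingredients: the factorizations $\wt{q}_\CI = q_\CI\circ g$ and $\wt{q}_\CJ = q_\CJ\circ g$, the formal compatibility of $!$-pullback and its left adjoint with composition, and the counit isomorphism $g_!\circ g^!\simeq\on{id}$ supplied by \lemref{l:glue}. Your extra remark that $g$ is finitary pseudo-proper, injective, and surjective on $k$-points (so that \lemref{l:replace lego}(d) applies) is precisely the content of the proof of \lemref{l:glue}, so nothing is missing and nothing is circular.
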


\ssec{Verdier duality on the Ran space}  \label{ss:Verdier on Ran}

In thus subsection we will formulate two main results pertaining to Verdier duality on the Ran space, Theorems
\ref{t:Verdier on Ran} and \ref{t:products}. \thmref{t:Verdier on Ran} (or rather its generalization 
\thmref{t:Verdier Ran open}) will be used in \secref{s:local duality} to deduce the cohomological product formula from a
\emph{local duality} statement.  \thmref{t:products} will be used in \secref{s:pointwise} to establish a crucial factorization
property. 

\medskip

For the rest of this section we will be working in the context of constructible sheaves. 

 
\sssec{}

We will apply the discussion of \secref{ss:Verdier} to the case $\CY=\Ran$, i.e., the Ran space of $X$, where
$X$ is a (separated) scheme.  

\medskip

Thus, we obtain: 

\begin{itemize}

\item For two objects $\CF,\CG\in \Shv^!(\Ran)$, there is a notion of pairing between map, by which we mean a map 
$$\CF\boxtimes \CG\to  (\on{diag}_{\Ran})_!(\omega_{\Ran});$$

\item Given a pairing, we obtain a pairing between $\CF$ and $\CG$, 
$$\on{C}^*_c(\Ran,\CF) \otimes \on{C}^*_c(\Ran,\CG)\to k$$
(under the additional assumption that $X$ be proper if we are in the context of arbitrary D-modules).

\item For $\CF\in \Shv^!(\Ran)$, we have a well-defined object $\BD_{\Ran}(\CF)\in \Shv^!(\Ran)$;

\smallskip

\item There is a canonically defined pairing
$$\on{C}^*_c(\Ran,\CF) \otimes \on{C}^*_c(\Ran, \BD_{\Ran}(\CF))\to k.$$ 

\end{itemize} 

In addition, from \propref{p:expl Verdier} and \secref{sss:diag Ran}, we obtain: 

\begin{cor} \label{c:Verdier on Ran}
For a given $\CF\in \Shv^!(\Ran)$, there exists a canonical isomorphism
$$\BD_{\Ran}(\CF)\simeq \underset{\CI\in \on{Fin}^s}{\on{lim}}\, (\on{ins}_\CI)_!\circ \BD_{X^\CI}\circ (\on{ins}_\CI)^!(\CF).$$
\end{cor}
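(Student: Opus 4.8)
The plan is to derive this purely formally from \propconstrref{p:expl Verdier}, applied to the prestack $\CY=\Ran$. The single nontrivial input is the fact, recorded in \secref{sss:diag Ran}, that $\Ran$ is a \emph{pseudo-scheme with a finitary diagonal} in the sense of \secref{sss:strong ps-sch}; once this is granted, the asserted formula is literally the conclusion of \propconstrref{p:expl Verdier} with the index category $A=(\on{Fin}^s)^{\on{op}}$ and the schemes $Z_a=X^\CI$, after the harmless reindexing $A^{\on{op}}=((\on{Fin}^s)^{\on{op}})^{\on{op}}=\on{Fin}^s$.

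So, concretely, I would proceed as follows. First, invoke \propref{p:express Ran} to present $\Ran\simeq\underset{\CI\in(\on{Fin}^s)^{\on{op}}}{\on{colim}}\,X^\CI$ as a colimit of schemes whose transition maps are the diagonal embeddings $\on{diag}_\alpha\colon X^\CJ\to X^\CI$ attached to surjections $\alpha\colon\CI\twoheadrightarrow\CJ$; since $X$ is separated these are closed embeddings, hence proper, so $\Ran$ is a pseudo-scheme. Next, check that the diagonal is finitary: by \eqref{e:prod over Ran} we have $\wt{X}{}^{\CI,\CJ}=X^\CI\underset{\Ran}\times X^\CJ\simeq\underset{\CI\twoheadrightarrow\CK\twoheadleftarrow\CJ}{\on{colim}}\,X^\CK$, and for fixed finite $\CI,\CJ$ there are only finitely many isomorphism classes of spans $\CI\twoheadrightarrow\CK\twoheadleftarrow\CJ$, each $X^\CK$ sitting in $X^\CI\times X^\CJ$ as a proper (closed) subscheme; equivalently, \lemref{l:glue} identifies $\Shv^!(\wt{X}{}^{\CI,\CJ})$ with $\Shv(X^{\CI,\CJ})$ for the honest closed subscheme $X^{\CI,\CJ}\subset X^\CI\times X^\CJ$, which is finitary pseudo-proper over $X^\CJ$. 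Thus each $\on{ins}_\CI\colon X^\CI\to\Ran$ is finitary pseudo-proper and $\Ran$ has a finitary diagonal. Then I would simply quote \propconstrref{p:expl Verdier}, which yields
$$\BD_{\Ran}(\CF)\simeq\underset{\CI\in\on{Fin}^s}{\on{lim}}\,(\on{ins}_\CI)_!\circ\BD_{X^\CI}\circ(\on{ins}_\CI)^!(\CF).$$
If one wanted to avoid citing \propconstrref{p:expl Verdier} and argue directly, the two ingredients are: (i) the isomorphism $\CF\simeq\underset{\CI\in\on{Fin}^s}{\on{colim}}\,(\on{ins}_\CI)_!\circ(\on{ins}_\CI)^!(\CF)$ of \eqref{e:sheaf on Ran as a colimit}, combined with the fact that $\BD_{\Ran}$ sends colimits to limits (it represents a functor that does, see \secref{sss:Verdier defn}); and (ii) \propref{p:on fin-proper diag}, applicable since $\Ran$ has a finitary diagonal, which says the natural transformation $(\on{ins}_\CI)_!\circ\BD_{X^\CI}\to\BD_{\Ran}\circ(\on{ins}_\CI)_!$ of \eqref{e:duality and direct image} is an isomorphism. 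Combining, $\BD_{\Ran}(\CF)\simeq\underset{\CI}{\on{lim}}\,\BD_{\Ran}\big((\on{ins}_\CI)_!\circ(\on{ins}_\CI)^!(\CF)\big)\simeq\underset{\CI}{\on{lim}}\,(\on{ins}_\CI)_!\circ\BD_{X^\CI}\circ(\on{ins}_\CI)^!(\CF)$.

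The one place where any genuine content hides, and hence the ``main obstacle'' such as it is, is the verification that $\on{ins}_\CI$ is \emph{finitary} pseudo-proper rather than merely pseudo-proper: this is precisely the finiteness of the span category $\{\CI\twoheadrightarrow\CK\twoheadleftarrow\CJ\}$ entering \eqref{e:prod over Ran}, and it is what allows the limits and colimits to be interchanged in \propref{p:on fin-proper diag} and \propconstrref{p:expl Verdier}. Everything else --- in particular the passage $((\on{Fin}^s)^{\on{op}})^{\on{op}}=\on{Fin}^s$ --- is formal bookkeeping. So the proposal reduces to: confirm $\Ran$ is a pseudo-scheme with finitary diagonal via \secref{sss:diag Ran}, \eqref{e:prod over Ran} and \lemref{l:glue}; then apply \propconstrref{p:expl Verdier}.
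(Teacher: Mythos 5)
Your proposal is correct and follows exactly the paper's route: the paper derives the corollary by applying \propconstrref{p:expl Verdier} to $\CY=\Ran$, using \secref{sss:diag Ran} to verify that $\Ran$ is a pseudo-scheme with finitary diagonal, which is precisely what you do (including the index-category bookkeeping $A=(\on{Fin}^s)^{\on{op}}$, $A^{\on{op}}=\on{Fin}^s$). The extra detail you supply --- unpacking \propconstrref{p:expl Verdier} via \eqref{e:sheaf on Ran as a colimit} and \propref{p:on fin-proper diag}, and spelling out why the span category $\{\CI\twoheadrightarrow\CK\twoheadleftarrow\CJ\}$ is essentially finite --- is just a restatement of the ingredients the paper already proved.
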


\sssec{}

The functor $\BD_{\Ran}$ is not in general very well-behaved. For example, one can show that for $X$ smooth of positive dimension, we have:
$$\BD_{\Ran}(\omega_{\Ran})=0.$$

Therefore, it is not true that (for $X$ proper) the functor of compactly supported cohomology
commutes with Verdier duality. We will now formulate a connectivity assumption that ensures that
the corresponding isomorphism holds for a given object. 

\medskip

Namely, the following result gives a sufficient condition for that map \eqref{e:duality and cohomology} to be an isomorphism
(i.e., commutation of Verdier duality with the functor of compactly supported cohomology): 

\begin{thm} \label{t:Verdier on Ran}
Let $X\in \Sch$ be a proper smooth curve.
Let $\CF\in \Shv^!(\Ran)$ have the property that for every integer $k\geq 0$ there exists an integer $n_k\geq 0$, such that 
the object $\on{ins}_\CI^!(\CF)|_{\overset{\circ}X{}^\CI}$ is concentrated in \emph{perverse}\footnote{In the context of D-modules,
this is the standard t-structure on the category of D-modules.} 
cohomological degrees 
$\leq -k-|\CI|$ whenever $|\CI|>n_k$.
Then the map \eqref{e:duality and cohomology} is an isomorphism.
\end{thm}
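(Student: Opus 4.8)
The plan is to reduce the assertion to a statement about finitary truncations of $\CF$ on the Ran space, where Verdier duality \emph{does} commute with compactly supported cohomology, and then control the error terms using the connectivity hypothesis. First I would use \corref{c:Verdier on Ran}, which expresses $\BD_{\Ran}(\CF)$ as the limit $\underset{\CI\in \on{Fin}^s}{\on{lim}}\, (\on{ins}_\CI)_!\circ \BD_{X^\CI}\circ (\on{ins}_\CI)^!(\CF)$, and \eqref{e:sheaf on Ran as a colimit}, which expresses $\CF$ as the colimit $\underset{\CJ\in \on{Fin}^s}{\on{colim}}\, (\on{ins}_\CJ)_!\circ (\on{ins}_\CJ)^!(\CF)$. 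Since $X$ is proper, $\on{C}^*_c(\Ran,-)$ commutes with colimits, so $\on{C}^*_c(\Ran,\CF)^\vee\simeq \underset{\CJ\in \on{Fin}^s}{\on{lim}}\, \on{C}^*_c(X^\CJ,(\on{ins}_\CJ)^!(\CF))^\vee$, each term being the usual Verdier duality on the proper scheme $X^\CJ$ (where \lemref{l:Verdier duality sch} applies). On the other side, $\on{C}^*_c(\Ran,\BD_{\Ran}(\CF))$ would be computed, after interchanging the colimit defining $\on{C}^*_c$ with the limit defining $\BD_{\Ran}$, by the same indexing diagram. The map \eqref{e:duality and cohomology} is then, term by term, the isomorphism of \lemref{l:Verdier duality sch} on $X^\CI$; the whole content of the theorem is that the interchange of colimit and limit is \emph{asymptotically harmless} thanks to the connectivity estimate.

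Second, I would make the cofinality/stratification reduction precise. Using the diagonal stratification of each $X^\CI$ (as in \lemref{l:testing}), the closed substacks $\Ran^{\leq n}:=\underset{|\CI|\leq n}{\on{colim}}\, X^\CI$ exhaust $\Ran$, and $\CF$ restricted to the open stratum $\overset{\circ}X{}^\CI$ with $|\CI|=m$ is, by hypothesis, concentrated in perverse degrees $\leq -k-m$ once $m>n_k$. The key point is that for a smooth curve $X$, the open stratum $\overset{\circ}X{}^\CI\subset X^\CI$ has codimension-$0$ complement contributions controlled by diagonals, each of which is again (a symmetric power of a point times) a smaller configuration space; so the contribution of $\CF$ living ``at depth $m$'' to $\on{C}^*_c(\Ran,\CF)$ lands in cohomological degrees bounded below by something like $-\dim X^\CI\cdot 2 + (k+m)$, which tends to $+\infty$ as $m\to\infty$ for each fixed $k$. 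The same estimate run through $\BD_{X^\CI}$ (which flips perverse degrees and shifts by $2\dim X^\CI=2|\CI|$) shows that $\BD_{X^\CI}\circ (\on{ins}_\CI)^!(\CF)|_{\overset{\circ}X{}^\CI}$ is concentrated in perverse degrees $\geq k+|\CI|$, hence its contribution to the limit $\BD_{\Ran}(\CF)$ and thence to $\on{C}^*_c$ is \emph{co}connective, again with a bound improving linearly in $|\CI|$. Thus in each fixed cohomological degree, only finitely many strata contribute to either side, and on that finite truncation the colimit and limit are finite, so they commute and \lemref{l:Verdier duality sch} finishes the job.

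Concretely, I would organize the argument as a spectral-sequence/filtration comparison: filter both $\on{C}^*_c(\Ran,\CF)$ and $\on{C}^*_c(\Ran,\BD_{\Ran}(\CF))$ by the substacks $\Ran^{\leq n}$, observe that the map \eqref{e:duality and cohomology} is filtered, that on the $n$-th associated graded it is the map $\on{C}^*_c(\overset{\circ}X{}^\CI,-)\to \on{C}^*_c(\overset{\circ}X{}^\CI,-)^\vee$ summed over $|\CI|=n$ (via Verdier duality on the \emph{open smooth} scheme $\overset{\circ}X{}^\CI$ — here one needs the curve smoothness and the $\mathfrak{S}_\CI$-equivariance to identify $\Ran^{\leq n}/\Ran^{\leq n-1}$ with configuration space quotients), and that by the connectivity estimate the $n$-th graded piece of the left side is in degrees $\geq \approx k$ and of the right side in degrees $\leq \approx -k$... wait, I must be careful with the direction: the point is that for each target cohomological degree $d$, all but finitely many graded pieces vanish in degree $d$ on \emph{both} sides, so the filtrations are complete and exhaustive with degreewise-finitely-many nonzero terms, whence the map of filtered objects is an isomorphism iff it is on associated gradeds, which it is by \lemref{l:Verdier duality sch} applied to each $\overset{\circ}X{}^\CI$.

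The main obstacle I anticipate is \emph{bookkeeping the perverse-degree estimates through the stratification and through $\BD$}: one must verify that $(\on{ins}_\CI)^!(\CF)$ on the \emph{closed} strata (smaller diagonals) inherits the connectivity from the open strata — this requires knowing that $\CF$, being a sheaf on $\Ran$, is compatible under the $!$-restriction maps $\on{diag}_\alpha^!$, so that the estimate on $\overset{\circ}X{}^\CJ$ for small $\CJ$ governs the diagonal locus inside $X^\CI$ for large $\CI$ — and that passing to $\BD_{X^\CI}$ converts ``perverse degrees $\leq -k-|\CI|$'' into ``perverse degrees $\geq k+|\CI|$'' on a \emph{smooth} $X^\CI$ of dimension $|\CI|$, then taking $\on{C}^*_c$ of something perversely $\geq k+|\CI|$ on a smooth variety of dimension $|\CI|$ lands in ordinary cohomological degrees $\geq k - |\CI|$... which does \emph{not} go to $+\infty$. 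So the genuinely delicate input is that the shift built into the hypothesis ($-k-|\CI|$, not merely $-k$) is exactly calibrated so that after $\BD$ and $\on{C}^*_c$ the bound still improves with $|\CI|$; getting the arithmetic of these shifts right, and handling the non-open strata via induction on $|\CI|$ together with the gluing triangles, is where the real work lies. I expect this is precisely what \secref{s:proofs Verdier} carries out in detail.
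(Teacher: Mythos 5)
Your high-level plan — truncate by $\Ran^{\leq n}$, exploit the Cousin-type stratification by $\overset{\circ}X{}^n/\Sigma_n$, and push the connectivity hypothesis through — is indeed what the paper does in \secref{s:proofs Verdier}, and you correctly flag the arithmetic of the shifts as the delicate point. But as written, your filtration comparison has a real gap. The assertion that the $n$-th associated graded of the $\BD_{\Ran}(\CF)$-side is ``Verdier duality on the open smooth scheme $\overset{\circ}X{}^\CI$'' is not correct: the filtration quotient on that side is $\on{C}^*\bigl(\overset{\circ}X{}^n,(j_n)^!(\on{ins}_n)^!\BD_{\Ran}(\CF)\bigr)_{\Sigma_n}$, and $(\on{ins}_n)^!\BD_{\Ran}(\CF)$ is not $\BD_{X^n}(\on{ins}_n^!\CF)$ — it is a limit of contributions $(\on{ins}_n)^!(\on{ins}_\CI)_!\BD_{X^\CI}(\on{ins}_\CI^!\CF)$ smeared across all $\CI$. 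So ``the map on associated gradeds is term-by-term \lemref{l:Verdier duality sch}'' fails; the associated graded on the dual side is not the dual of the associated graded on the primal side, and the two filtrations are not visibly dual in the way your last paragraph assumes. Moreover, your own worry at the end (``lands in degrees $\geq k-|\CI|$ ... which does not go to $+\infty$'') is exactly the symptom of trying to bound $\on{C}^*_c$ of a Verdier dual directly on $X^\CI$ with $|\CI|\to\infty$: that naive estimate does not close.

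The paper resolves both issues by a different, two-step interchange (Propositions \ref{p:Verdier on Ran 1} and \ref{p:stabilization}). It first fixes a truncation $\Ran^{\leq k}$ (with $k$ the \emph{outer} colimit index) and then moves the limit over $n$ defining $\BD_{\Ran}$ \emph{inside} $\on{C}^*_c(\Ran^{\leq k},-)$; the step you are missing is that this commutation is licensed by \lemref{l:lax homology finite}, because $\Ran^{\leq k}$ is finitary pseudo-proper, so $\on{C}^*_c(\Ran^{\leq k},-)$ commutes with limits. After the interchange, the estimate that must be proved (Lemma \ref{l:estimate dual}) is not about $\on{C}^*_c$ of a dual on $X^\CI$, but about the $!$-pull–push $\on{ins}_k^!\circ(\on{ins}_n)_!\circ\BD_{X^n}(-)$ to a fixed $X^k$, realized through the quasi-finite correspondence $X^{k,n}$. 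There the bound that saves the day is that $(\overset{\circ}q_n)_*(\omega_{\overset{\circ}X{}^{k,n}})$ lies in perverse degrees $\geq -n$ (quasi-finiteness of $\overset{\circ}q_n$, $\dim X^n=n$), which paired against $\CG$ in perverse degrees $\leq -n-2$ gives a uniform bound $\geq 2$ on the successive differences — the linear shift $|\CI|$ in the hypothesis is calibrated against $\dim X^n$, \emph{not} against $\dim X^k$. So: right picture and right diagnosis of the obstacle, but the filtration/spectral-sequence mechanism does not by itself give the interchange, and the actual bookkeeping goes through the finitary commutation lemma plus the quasi-finite pull–push estimate rather than a term-by-term Verdier duality on configuration spaces.
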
 

\sssec{}

The next theorem gives a sufficient condition for the map \eqref{e:product and Verdier} to be an isomorphism. 
It will play a crucial role in the discussion of the behavior of \emph{factorization algebras} under Verdier duality,
see \secref{ss:factor and Verdier}. 

\begin{thm} \label{t:products}
Let $X$ be a smooth curve, and let the ring of coefficients $\Lambda$ be of finite cohomological dimension. 
Let $\CF_1,\CF_2\in \Shv^!(\Ran)$ have the property appearing in \thmref{t:Verdier on Ran}. 
Assume, in addition, that every $\CI$, the object $\on{ins}_\CI^!(\CF_i)\in \Shv(X^\CI)$ is bounded above 
and has compact cohomology sheaves. Then the map
$$\BD_{\Ran}(\CF_1)\boxtimes \BD_{\Ran}(\CF_2)\to \BD_{\Ran\times \Ran}(\CF_1\boxtimes \CF_2)$$
of \eqref{e:product and Verdier} is an isomorphism.
\end{thm}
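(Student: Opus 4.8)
\textbf{Proof proposal for \thmref{t:products}.}

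The plan is to reduce the statement to the corresponding scheme-level fact, \lemref{l:product and Verdier sch compact}, by writing both sides as limits indexed by $\on{Fin}^s\times \on{Fin}^s$ and comparing them term by term. First I would use \corref{c:Verdier on Ran} (equivalently \propref{p:expl Verdier} applied to $\Ran$, whose diagonal is finitary by \secref{sss:diag Ran}) to present the left-hand side as
$$\BD_{\Ran}(\CF_1)\boxtimes \BD_{\Ran}(\CF_2)\simeq
\left(\underset{\CI_1\in \on{Fin}^s}{\on{lim}}\, (\on{ins}_{\CI_1})_!\circ \BD_{X^{\CI_1}}\circ (\on{ins}_{\CI_1})^!(\CF_1)\right)
\boxtimes
\left(\underset{\CI_2\in \on{Fin}^s}{\on{lim}}\, (\on{ins}_{\CI_2})_!\circ \BD_{X^{\CI_2}}\circ (\on{ins}_{\CI_2})^!(\CF_2)\right).$$
Here I would want to pull the external product inside the two limits; since $\Lambda$ has finite cohomological dimension, $\boxtimes$ is exact and commutes with the relevant limits in each variable (the terms are uniformly bounded once one uses the hypothesis on $\on{ins}_{\CI}^!(\CF_i)$ together with the vanishing outside the open stratum, exactly as in the proof of \thmref{t:Verdier on Ran}), so this becomes a limit over $\on{Fin}^s\times \on{Fin}^s$ of
$$\big((\on{ins}_{\CI_1})_!\circ \BD_{X^{\CI_1}}\circ (\on{ins}_{\CI_1})^!(\CF_1)\big)\boxtimes
\big((\on{ins}_{\CI_2})_!\circ \BD_{X^{\CI_2}}\circ (\on{ins}_{\CI_2})^!(\CF_2)\big).$$

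Next I would do the same for the right-hand side. Since $\Ran\times\Ran$ is again a pseudo-scheme with a finitary diagonal, presented as $\underset{\CI_1,\CI_2}{\on{colim}}\, X^{\CI_1}\times X^{\CI_2}$, \propref{p:expl Verdier} gives
$$\BD_{\Ran\times\Ran}(\CF_1\boxtimes \CF_2)\simeq
\underset{(\CI_1,\CI_2)\in (\on{Fin}^s\times \on{Fin}^s)^{\on{op}}}{\on{lim}}\,
(\on{ins}_{\CI_1}\times \on{ins}_{\CI_2})_!\circ \BD_{X^{\CI_1}\times X^{\CI_2}}\circ (\on{ins}_{\CI_1}\times \on{ins}_{\CI_2})^!(\CF_1\boxtimes \CF_2),$$
and $(\on{ins}_{\CI_1}\times \on{ins}_{\CI_2})^!(\CF_1\boxtimes \CF_2)\simeq (\on{ins}_{\CI_1})^!(\CF_1)\boxtimes (\on{ins}_{\CI_2})^!(\CF_2)$ since the two Ran factors are independent. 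So it suffices to check that for each fixed pair $(\CI_1,\CI_2)$ the natural map
$$\big((\on{ins}_{\CI_1})_!\circ \BD_{X^{\CI_1}}\circ (\on{ins}_{\CI_1})^!(\CF_1)\big)\boxtimes
\big((\on{ins}_{\CI_2})_!\circ \BD_{X^{\CI_2}}\circ (\on{ins}_{\CI_2})^!(\CF_2)\big)
\to (\on{ins}_{\CI_1}\times \on{ins}_{\CI_2})_!\circ \BD_{X^{\CI_1}\times X^{\CI_2}}\big((\on{ins}_{\CI_1})^!(\CF_1)\boxtimes (\on{ins}_{\CI_2})^!(\CF_2)\big)$$
is an isomorphism, and that these maps are compatible with the transition maps so as to assemble into \eqref{e:product and Verdier}. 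For the term-wise statement I would use that $(\on{ins}_{\CI_1})_!\boxtimes (\on{ins}_{\CI_2})_!\simeq (\on{ins}_{\CI_1}\times \on{ins}_{\CI_2})_!$ (the pseudo-properness/Künneth compatibility of $\boxtimes$ with $!$-pushforward along the pseudo-proper maps $\on{ins}_{\CI}$, as in \secref{sss:pseudo-proper pairing}), which moves the problem inside the box, and then apply \lemref{l:product and Verdier sch compact} to $\BD_{X^{\CI_1}}\big((\on{ins}_{\CI_1})^!(\CF_1)\big)\boxtimes\BD_{X^{\CI_2}}\big((\on{ins}_{\CI_2})^!(\CF_2)\big)$ — this is legitimate precisely because the hypothesis guarantees $(\on{ins}_{\CI})^!(\CF_i)$ is bounded above with compact cohomology sheaves and $\Lambda$ has finite cohomological dimension.

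The main obstacle I anticipate is the first step: justifying that $\boxtimes$ commutes with the (infinite, $\on{Fin}^s$-indexed) limits computing $\BD_{\Ran}(\CF_i)$. External product is a priori only a right-lax monoidal operation and need not commute with infinite limits. The way around this is exactly the connectivity estimate borrowed from \thmref{t:Verdier on Ran}: the hypothesis forces $(\on{ins}_\CI)^!(\CF_i)|_{\overset{\circ}{X}{}^\CI}$ into perverse degrees $\leq -k-|\CI|$ for $|\CI|>n_k$, so after applying $\BD_{X^\CI}$ and $(\on{ins}_\CI)_!$ the terms become uniformly highly coconnected as $|\CI|\to\infty$, whence the limit is eventually constant in each degree and $\boxtimes$ (exact, since $\Lambda$ has finite cohomological dimension) commutes with it. I would isolate this as a lemma — "under the hypotheses of \thmref{t:Verdier on Ran}, $\BD_{\Ran}(\CF)$ is computed by a limit that is pro-constant in each perverse degree" — reusing the bookkeeping already done in the proof of \thmref{t:Verdier on Ran} rather than redoing it. Everything else (the independence of the two Ran factors, the Künneth compatibilities, compatibility of the term-wise isomorphisms with transition maps) is formal manipulation of the kind carried out in \secref{s:Verdier}.
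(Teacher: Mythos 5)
Your overall strategy coincides with the paper's: use the presentation of $\BD_{\Ran}(\CF_i)$ and $\BD_{\Ran\times\Ran}(\CF_1\boxtimes\CF_2)$ as limits given by \propref{p:expl Verdier}, reduce via a stabilization argument to the truncated Ran space $\Ran^{\leq n}$ (where the limits are finite and commute with everything in sight), and then apply the scheme-level statement \lemref{l:product and Verdier sch compact} term by term. In particular, the ``pro-constant in each perverse degree'' lemma you propose to isolate is exactly \propref{p:stabilization}(ii), which is already available.

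However, there is a genuine gap in the key step you flag as the main obstacle. You argue: the limit stabilizes degree by degree, and since $\Lambda$ has finite cohomological dimension, $\boxtimes$ commutes with this stabilized limit. This does not close on its own. Eventual stabilization in each degree tells you that, for any fixed degree $d$ and each factor separately, the transition maps in the limit are isomorphisms in degree $d$ once $n$ is large. But when you form $\boxtimes$, the degree-$d$ piece of the product receives contributions from $A^{d_1}\otimes B^{d_2}$ (together with Tor terms) for \emph{all} $(d_1,d_2)$ with $d_1+d_2$ near $d$; if $A$ and $B$ are unbounded below, infinitely many such pairs contribute and there is no single $n$ that handles them all, even with finite Tor-dimension. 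What makes the argument work is the additional fact — established as a separate step in the paper's proof — that $\on{ins}_{m_i}^!\circ \BD_{\Ran}(\CF_i)$ is \emph{bounded below}. This is itself nontrivial: the paper proves it by first replacing $\Ran$ by $\Ran^{\leq n}$ via \propref{p:stabilization}(ii), then invoking \corref{c:truncated duality} to write the truncated dual as a finite limit, whose terms are bounded below because $(\on{ins}_\CI)^!(\CF_i)$ is bounded above (hence $\BD_{X^\CI}$ of it is bounded below) and the functor $\on{ins}_{m_i}^!\circ(\on{ins}^{\leq n})_!$ has finite cohomological amplitude by \corref{c:push-pull Ran}. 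Only once boundedness below is in hand does the combination ``finite cohomological dimension $+$ stabilization'' allow you to replace both sides of the Künneth map by their truncated counterparts in a fixed range of degrees, which is what you need to pass to the finite limit and invoke \lemref{l:product and Verdier sch compact}. Your proposal should be amended to include this boundedness-below step explicitly.
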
 

\sssec{}

The proofs of the above two theorems rely on considering truncated Ran spaces $\Ran^{\leq n}$ and 
a crucial stabilization assertion, namely \propref{p:stabilization}.

\ssec{A variant}  \label{ss:open variant}

In this subsection we will state a version of 
\thmref{t:Verdier on Ran} that involves non-proper curves. This theorem will be used in the derivation of
the cohomological product formula from the local duality statement in \secref{s:local duality}.

\sssec{}

Let $X'\overset{\bj}\hookrightarrow X$ be an open subset, and let
$$\Ran'\overset{\bj_{\Ran}}\hookrightarrow \Ran$$
be the corresponding open sub-prestack. 

\medskip

Note that the pullback functor $\bj_{\Ran}^!:\Shv^!(\Ran)\to \Shv^!(\Ran')$ admits both a left and a right adjoints,
to be denoted $(\bj_{\Ran})_!$ and $(\bj_{\Ran})_*$, respectively,

\medskip

Explicitly, for $\CF'\in \Shv^!(\Ran')$, we have:
\begin{equation} \label{e:bj!}
(\bj_{\Ran})_!(\CF')=\underset{\CI\in \on{Fin}^s}{\on{colim}}\,  (\on{ins}_\CI)_!  \circ (\bj_\CI)_! \circ (\on{ins}'_\CI)^!(\CF'),
\end{equation} 
where 
$$\on{ins}'_\CI:X'{}^\CI\to \Ran' \text{ and } \bj_\CI:X'{}^\CI\to X^\CI$$
denote the corresponding maps.

\medskip

For $\CF'\in \Shv^!(\Ran')$, the object $(\bj_{\Ran})_*(\CF')$ is uniquely characterized by the property that
\begin{equation} \label{e:descr j*}
(\on{ins}_\CI)^!  \circ (\bj_{\Ran})_*(\CF')=(\bj_\CI)_* \circ (\on{ins}'_\CI)^!(\CF').
\end{equation} 

In fact, the above formula for $(\bj_{\Ran})_*(\CF')$ implies that
$$(\bj_{\Ran})_*(\CF')=\underset{\CI\in \on{Fin}^s}{\on{colim}}\,  (\on{ins}_\CI)_!  \circ (\bj_\CI)_* \circ (\on{ins}'_\CI)^!(\CF').$$

\sssec{}

The above description of $(\bj_{\Ran})_*$ implies that the counit of the adjunction
$$(\bj_{\Ran})^!\circ (\bj_{\Ran})_*\to \on{Id}_{ \Shv^!(\Ran')}$$
is an isomorphism, so the functor $(\bj_{\Ran})_*$ is fully faithful.

\medskip

As a formal consequence, we obtain:

\begin{cor}
The functor $(\bj_{\Ran})_!$ is fully faithful.
\end{cor}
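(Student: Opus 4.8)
The plan is to prove this as a purely formal consequence of the triple adjunction
$$(\bj_{\Ran})_! \;\dashv\; \bj_{\Ran}^! \;\dashv\; (\bj_{\Ran})_*$$
together with the fact, just recorded, that the counit $\bj_{\Ran}^!\circ (\bj_{\Ran})_* \to \on{Id}_{\Shv^!(\Ran')}$ is an isomorphism (equivalently, $(\bj_{\Ran})_*$ is fully faithful). Recall that $(\bj_{\Ran})_!$ exists because $\bj_{\Ran}$ is a schematic open embedding and we are in the constructible context, via the formula \eqref{e:bj!}, while $(\bj_{\Ran})_*$ exists by \lemref{l:base change}. To show $(\bj_{\Ran})_!$ is fully faithful it suffices to show that the unit $\on{Id}_{\Shv^!(\Ran')}\to \bj_{\Ran}^!\circ (\bj_{\Ran})_!$ is an isomorphism.

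The first step is the chain of natural equivalences: for $\CF',\CG'\in \Shv^!(\Ran')$,
$$\Maps\big(\bj_{\Ran}^!(\bj_{\Ran})_!\CF',\CG'\big)\;\simeq\;\Maps\big((\bj_{\Ran})_!\CF',(\bj_{\Ran})_*\CG'\big)\;\simeq\;\Maps\big(\CF',\bj_{\Ran}^!(\bj_{\Ran})_*\CG'\big)\;\simeq\;\Maps(\CF',\CG'),$$
where the first uses $\bj_{\Ran}^!\dashv (\bj_{\Ran})_*$, the second uses $(\bj_{\Ran})_!\dashv \bj_{\Ran}^!$, and the third uses the counit isomorphism $\bj_{\Ran}^!(\bj_{\Ran})_*\CG'\simeq \CG'$. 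The second step is to unwind these three adjunction equivalences and check, using the triangle identity for the adjunction $\bj_{\Ran}^!\dashv (\bj_{\Ran})_*$, that the composite equivalence above is precisely precomposition with the unit map $\CF'\to \bj_{\Ran}^!(\bj_{\Ran})_!\CF'$. Then the Yoneda lemma (applied in $\Shv^!(\Ran')$) forces that unit map to be an isomorphism for every $\CF'$, which is what we want.

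The only step requiring any care is the triangle-identity bookkeeping that identifies the Yoneda comparison map with the unit — everything else is formal and the statement involves no geometry beyond what has already been established. As a sanity check (not needed for the argument) one can also verify the unit isomorphism directly: since the functors $(\on{ins}'_\CJ)^!$ are jointly conservative on $\Shv^!(\Ran')$, it is enough to apply $(\on{ins}'_\CJ)^!$ to the unit, and then using $(\on{ins}'_\CJ)^!\circ \bj_{\Ran}^! \simeq (\bj_\CJ)^!\circ (\on{ins}_\CJ)^!$, the formula \eqref{e:bj!}, commutation of $(\on{ins}_\CJ)^!$ with colimits, and \corref{c:push-pull Ran}, one reduces to the corresponding assertion on the schemes $X^\CI$, where it is the standard fact that $j^!\circ j_!$ is the identity for an open embedding. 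I expect the formal route to be the cleaner one to write up.
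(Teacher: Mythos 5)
Your proof is correct and takes the same approach as the paper: the paper asserts full faithfulness of $(\bj_{\Ran})_!$ as ``a formal consequence'' of full faithfulness of $(\bj_{\Ran})_*$, and your argument is precisely that formal consequence spelled out — from the triple adjunction $(\bj_{\Ran})_! \dashv \bj_{\Ran}^! \dashv (\bj_{\Ran})_*$, full faithfulness of the right adjoint implies full faithfulness of the left adjoint, via the Yoneda/triangle-identity bookkeeping you describe.
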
 

Finally, we have: 

\begin{lem} \label{l:j}
For $\CF'\in \Shv^!(\Ran')$, we have a canonical isomorphism
$$\BD_{\Ran}\circ (\bj_{\Ran})_!(\CF')\simeq (\bj_{\Ran})_*\circ \BD_{\Ran'}(\CF').$$
\end{lem}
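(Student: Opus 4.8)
\textbf{Proof proposal for \lemref{l:j}.}

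The plan is to verify the desired isomorphism by checking that both sides represent the same functor on $\Shv^!(\Ran)$, namely $\CF \mapsto \Maps(\CF\boxtimes(\bj_{\Ran})_!(\CF'),(\on{diag}_{\Ran})_!(\omega_{\Ran}))$, or — more efficiently — by directly comparing !-restrictions to the schemes $X^\CI$ using the explicit formulas already available. Since $\Ran$ is a pseudo-scheme with a finitary diagonal (\secref{sss:diag Ran}), an object of $\Shv^!(\Ran)$ is determined by the compatible family of its !-restrictions $(\on{ins}_\CI)^!(-)$, so it suffices to produce a canonical isomorphism $(\on{ins}_\CI)^!\circ \BD_{\Ran}\circ (\bj_{\Ran})_!(\CF') \simeq (\on{ins}_\CI)^!\circ (\bj_{\Ran})_*\circ \BD_{\Ran'}(\CF')$ compatibly in $\CI$.

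First I would compute the right-hand side: by \eqref{e:descr j*} we have $(\on{ins}_\CI)^!\circ (\bj_{\Ran})_*\circ \BD_{\Ran'}(\CF') = (\bj_\CI)_*\circ (\on{ins}'_\CI)^!\circ \BD_{\Ran'}(\CF')$, and by \corref{c:Verdier on Ran} applied on $\Ran'$ together with the base change \lemref{l:base change} for the open embedding $\bj_\CI$, this is $(\bj_\CI)_* \left(\underset{\CJ\in \on{Fin}^s}{\on{lim}}\, (\on{ins}'_\CJ\text{-pull-push through }X'^{\CI,\CJ})\right)$ of $\BD_{X'^\CJ}\circ(\on{ins}'_\CJ)^!(\CF')$. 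For the left-hand side, I would use \corref{c:Verdier on Ran} on $\Ran$ to write $(\on{ins}_\CI)^!\circ \BD_{\Ran}\circ (\bj_{\Ran})_!(\CF')$ as a limit over $\CJ$ of $(\on{ins}_\CI)^!\circ(\on{ins}_\CJ)_!\circ \BD_{X^\CJ}\circ (\on{ins}_\CJ)^!\circ (\bj_{\Ran})_!(\CF')$; then I would plug in \eqref{e:bj!} for $(\bj_{\Ran})_!(\CF')$, use that $(\on{ins}_\CJ)^!\circ(\on{ins}_\CK)_!$ is the pull-push through $X^{\CJ,\CK}$ (\corref{c:push-pull Ran}), and use the compatibility of Verdier duality with the open pushforward $(\bj_\CK)_!$ on \emph{schemes} together with the étale/open-embedding compatibility \lemref{l:etale and dual} (or directly $\BD_{X^\CK}\circ (\bj_\CK)_! \simeq (\bj_\CK)_*\circ \BD_{X'^\CK}$, which is the standard fact on schemes). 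After these rewritings both sides become the same limit over $\on{Fin}^s$ of pull-push-Verdier expressions built out of the schemes $X'^{\CK}$, $X'^{\CI,\CK}$ and the open embeddings into $X^\CI$, and the comparison reduces to base-change identities for the Cartesian squares relating $X'^{\CI,\CK}$, $X^{\CI,\CK}$, $X'^\CI$ and $X^\CI$ under the open embeddings $\bj$.

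The main obstacle will be bookkeeping the compatibility of three operations at once — Verdier duality, the open pushforward $(\bj_{(-)})_!$, and the !-pull-!-push through the fiber-product schemes $X^{\CI,\CJ}$ — and checking that the various base-change maps I invoke are the \emph{canonical} ones so that the resulting isomorphism is natural in $\CI$ (hence descends to an isomorphism of objects of $\Shv^!(\Ran)$, not merely a level-wise one). A cleaner route, which I would try first to avoid this bookkeeping, is the purely formal one: since $(\bj_{\Ran})_!$ is left adjoint to $(\bj_{\Ran})^!$ and $(\bj_{\Ran})_*$ is right adjoint to it, and since $\BD_{\Ran}$ and $\BD_{\Ran'}$ are defined as representing objects for pairing functors, one checks that for any $\CF\in\Shv^!(\Ran)$ the space of pairings $\CF\boxtimes (\bj_{\Ran})_!(\CF')\to (\on{diag}_{\Ran})_!(\omega_{\Ran})$ is naturally identified, via the projection/base-change formula for the open embedding $\bj_{\Ran}\times\bj_{\Ran}$ and the fact that $\on{diag}_{\Ran'}$ is the base change of $\on{diag}_{\Ran}$ along $\bj_{\Ran}\times\bj_{\Ran}$ (\lemref{l:base change} plus the open-diagonal argument of \secref{sss:etale and dual}), with the space of pairings $(\bj_{\Ran})^!(\CF)\boxtimes \CF' \to (\on{diag}_{\Ran'})_!(\omega_{\Ran'})$, i.e.\ with $\Maps((\bj_{\Ran})^!(\CF),\BD_{\Ran'}(\CF'))\simeq \Maps(\CF,(\bj_{\Ran})_*\BD_{\Ran'}(\CF'))$; this exhibits $(\bj_{\Ran})_*\BD_{\Ran'}(\CF')$ as representing the same functor as $\BD_{\Ran}((\bj_{\Ran})_!(\CF'))$, giving the claim. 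I expect this adjunction argument to go through with only the open-embedding base-change inputs already recorded in Sects.~\ref{ss:sheaves prestacks} and \ref{ss:Verdier pushforward}, and I would fall back on the explicit level-wise computation only if some compatibility turns out to be delicate.
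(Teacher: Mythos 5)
Your ``cleaner route'' in the last paragraph is essentially the paper's proof: both identify $\BD_{\Ran}\circ(\bj_{\Ran})_!(\CF')$ and $(\bj_{\Ran})_*\circ\BD_{\Ran'}(\CF')$ by showing they represent the same functor, transferring a pairing on $\Ran\times\Ran$ to a pairing on $\Ran'\times\Ran'$ using the isomorphism $\CG\boxtimes(\bj_{\Ran})_!(\CF')\simeq(\on{id}\times\bj_{\Ran})_!(\CG\boxtimes\CF')$, pseudo-proper base change for $\on{diag}_{\Ran}$, and the adjunctions for the open embeddings. Two small imprecisions in your sketch worth noting, though they do not affect correctness: the open embedding is peeled off one factor at a time (first $\on{id}\times\bj_{\Ran}$ via the $(_!,{}^!)$-adjunction, then $\bj_{\Ran}\times\on{id}$ via the $({}^!,{}_*)$-adjunction), not in one go as $\bj_{\Ran}\times\bj_{\Ran}$ --- the two factors require opposite adjunctions; and the step you attribute to ``the open-diagonal argument of \secref{sss:etale and dual}'' is really the identification $(\bj_{\Ran}\times\on{id})_!\circ(\on{diag}_{\Ran'})_!(\omega_{\Ran'})\simeq(\bj_{\Ran}\times\on{id})_*\circ(\on{diag}_{\Ran'})_!(\omega_{\Ran'})$, which holds because $\on{Graph}_{\bj_{\Ran}}$ is pseudo-proper and its image, being also the image of $\on{diag}_{\Ran'}$, already lies in the open $\Ran'\times\Ran'$; this is what makes the $!$- and $*$-extensions agree on the object in question.
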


\begin{proof}
Let $\CG$ be an object of $\Shv^!(\Ran)$. We have
\begin{multline*}
\Maps_{\Shv^!(\Ran)}(\CG,(\bj_{\Ran})_*\circ \BD_{\Ran}(\CF'))\simeq 
\Maps_{\Shv^!(\Ran')}((\bj_{\Ran})^!(\CG),\BD_{\Ran}(\CF'))= \\
=\Maps_{\Shv^!(\Ran'\times \Ran')}\left((\bj_{\Ran})^!(\CG)\boxtimes \CF',(\on{diag}_{\Ran'})_!(\omega_{\Ran'})\right)
\end{multline*} 
and
\begin{multline} \label{e:duality and open}
\Maps_{\Shv^!(\Ran)}(\CG,\BD_{\Ran}\circ (\bj_{\Ran})_!(\CF'))=\\
=\Maps_{\Shv^!(\Ran\times \Ran)}\left(\CG\boxtimes (\bj_{\Ran})_!(\CF'),(\on{diag}_{\Ran})_!(\omega_{\Ran})\right).
\end{multline}

Now, the description of the functor $(\bj_{\Ran})_!$ implies that the canonically defined map
$$(\on{id}_{\Ran}\times \bj_{\Ran})_!(\CG\boxtimes \CF')\to \CG\boxtimes (\bj_{\Ran})_!(\CF')$$
is an isomorphism. Hence, the expression in \eqref{e:duality and open} can be rewritten as
\begin{multline*}
\Maps_{\Shv^!(\Ran\times \Ran')}\left(\CG\boxtimes \CF',(\on{id}_{\Ran}\times \bj_{\Ran})^!(\on{diag}_{\Ran'})_!(\omega_{\Ran'})\right)\simeq \\
\simeq \Maps_{\Shv^!(\Ran\times \Ran')}
\left(\CG\boxtimes \CF',(\on{Graph}_{\bj_{\Ran}})_!(\omega_{\Ran'})\right)\simeq \\
\simeq \Maps_{\Shv^!(\Ran\times \Ran')}\left(\CG\boxtimes \CF', (\bj_{\Ran}\times \on{id}_{\Ran'})_*\circ 
(\on{diag}_{\Ran'})_!(\omega_{\Ran'})\right)\simeq \\
\simeq \Maps_{\Shv^!(\Ran'\times \Ran')}\left((\bj_{\Ran})^!(\CG)\boxtimes \CF',(\on{diag}_{\Ran'})_!(\omega_{\Ran'})\right),
\end{multline*} 
as desired.

\end{proof} 

\sssec{}

We are now going to state a variant of \thmref{t:Verdier on Ran} that involves $\Shv^!(\Ran')$:

\begin{thm} \label{t:Verdier Ran open} Let $X$ be a proper smooth curve and $X'\subset X$
an open subscheme. Let $\CF'\in \Shv^!(\Ran')$ have the property as in \thmref{t:Verdier on Ran}, 
for the curve $X'$. Then for $\CF:=(\bj_{\Ran})_!(\CF')$, the map \eqref{e:duality and cohomology} 
is an isomorphism.
\end{thm}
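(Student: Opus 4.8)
The plan is to reduce \thmref{t:Verdier Ran open} to \thmref{t:Verdier on Ran}, applied to the \emph{proper} curve $X$ and to the object $\CF:=(\bj_{\Ran})_!(\CF')$. The first observation is that the target of \eqref{e:duality and cohomology} for this $\CF$ is already the right thing: since $(\bj_{\Ran})_!$ is left adjoint to $\bj_{\Ran}^!$ and $p_{\Ran}\circ \bj_{\Ran}=p_{\Ran'}$, the functor $\on{C}^*_c(\Ran,-)\circ (\bj_{\Ran})_!$ identifies tautologically with $\on{C}^*_c(\Ran',-)$, so $\on{C}^*_c(\Ran,\CF)\simeq \on{C}^*_c(\Ran',\CF')$. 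Hence everything comes down to verifying that $\CF$ satisfies the cohomological hypothesis of \thmref{t:Verdier on Ran}, namely that for each $k\geq 0$ there is an $n_k$ with $\on{ins}_\CI^!(\CF)|_{\overset{\circ}X{}^\CI}$ concentrated in perverse degrees $\leq -k-|\CI|$ whenever $|\CI|>n_k$; granting this, that theorem immediately gives that \eqref{e:duality and cohomology} is an isomorphism.

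First I would record the geometric input. For a finite non-empty set $\CI$ the square
$$
\CD
\overset{\circ}{X'}{}^\CI   @>{j_\CI}>>   \overset{\circ}X{}^\CI  \\
@VVV    @VV{\on{ins}_\CI}V  \\
\Ran'   @>{\bj_{\Ran}}>>   \Ran
\endCD
$$
is Cartesian, with $j_\CI$ an open embedding: an $\CI$-tuple of maps $S\to X$ with pairwise non-intersecting images factors through $\Ran'\subset\Ran$ exactly when each of its coordinates factors through $X'$. Since $\bj_{\Ran}$ is a schematic open embedding, base change (the $(-)_!$-analogue of \lemref{l:base change}) yields
$$\on{ins}_\CI^!(\CF)|_{\overset{\circ}X{}^\CI}\;\simeq\;(j_\CI)_!\left((\on{ins}'_\CI)^!(\CF')|_{\overset{\circ}{X'}{}^\CI}\right).$$
Concretely, one can also read this off from the explicit formula \eqref{e:bj!} for $(\bj_{\Ran})_!$ together with \corref{c:push-pull Ran}: after restricting $X^{\CI,\CJ}$ over the distinct-points locus $\overset{\circ}X{}^\CI$ only the surjections $\CJ\twoheadrightarrow\CI$ survive, and using the compatibility of $\CF'$ with such surjections the whole colimit collapses to a colimit of copies of $(j_\CI)_!\bigl((\on{ins}'_\CI)^!(\CF')|_{\overset{\circ}{X'}{}^\CI}\bigr)$.

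Then I would conclude by t-exactness. In the constructible context $(j_\CI)_!$ is right t-exact for the perverse t-structure (since $i^*(j_\CI)_!=0$ for the closed complement $i$), and arbitrary colimits are right t-exact; hence if $(\on{ins}'_\CI)^!(\CF')|_{\overset{\circ}{X'}{}^\CI}$ lies in perverse degrees $\leq -k-|\CI|$ — which, by the hypothesis imposed on $\CF'$ relative to the curve $X'$, holds for all $|\CI|>n_k$ — then so does $\on{ins}_\CI^!(\CF)|_{\overset{\circ}X{}^\CI}$, with the same constants $n_k$. Applying \thmref{t:Verdier on Ran} to $X$ and $\CF$ then finishes the proof. (If one wishes, \lemref{l:j} identifies the source of \eqref{e:duality and cohomology} as $\on{C}^*_c(\Ran,(\bj_{\Ran})_*\BD_{\Ran'}(\CF'))$, but this description is not needed for the argument.)

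The argument is short modulo \thmref{t:Verdier on Ran}; the only point that genuinely requires care — and the main thing I would double-check — is the identification of $\on{ins}_\CI^!(\CF)$ on the disjoint locus: one must make sure that the diagonal contributions implicit in \eqref{e:bj!} really do disappear after restricting to $\overset{\circ}X{}^\CI$, so that what remains is visibly an extension by zero from $\overset{\circ}{X'}{}^\CI$ of an object of the correct perverse amplitude. Everything else is formal manipulation with adjunctions and right t-exactness.
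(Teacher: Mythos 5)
The reduction to \thmref{t:Verdier on Ran} hinges on the claimed identification
$$\on{ins}_\CI^!(\CF)|_{\overset{\circ}X{}^\CI}\;\simeq\;(j_\CI)_!\Bigl((\on{ins}'_\CI)^!(\CF')|_{\overset{\circ}{X'}{}^\CI}\Bigr),$$
which you justify by a ``$(-)_!$-analogue of \lemref{l:base change}''. That is the gap: no such base change is available. \lemref{l:base change} gives base change against $!$-pullback for the \emph{right} adjoint $(\bj_{\Ran})_*$; base change for a left adjoint $f_!$ against $!$-pullback is a theorem (\corref{c:pseudo-proper}) only when $f$ is \emph{pseudo-proper}, and an open embedding is never pseudo-proper. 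In general, for an open embedding $j\colon U\hookrightarrow W$ and a closed embedding $i\colon Z\hookrightarrow W$ with $Z\cap(W-U)\neq\emptyset$, the natural map $(j_Z)_!\circ i_U^!\to i^!\circ j_!$ fails to be an isomorphism, with cofiber supported on $Z\cap(W-U)$ (it records the discrepancy between $j_!$ and $j_*$ as seen from $Z$). Exactly this occurs in the colimit \eqref{e:bj!}: for a strict surjection $\CJ\twoheadrightarrow\CI$ and the closed embedding $\on{diag}_\alpha\colon X^\CI\hookrightarrow X^\CJ$, the term $\on{diag}_\alpha^!\circ(\bj_\CJ)_!$ contributes a boundary piece along $\overset{\circ}X{}^\CI-\overset{\circ}{X'}{}^\CI$; these are precisely the ``diagonal contributions'' you flag at the end. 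Restricting to $\overset{\circ}X{}^\CI$ removes the higher diagonals of $X^\CI$, but not the boundary $X-X'$, and $\overset{\circ}X{}^\CI-\overset{\circ}{X'}{}^\CI$ is nonempty. Moreover, $\on{diag}_\alpha^!$ is perverse \emph{left} t-exact (between smooth varieties it is $\on{diag}_\alpha^*[-2(|\CJ|-|\CI|)]$), so these pieces may sit in perverse degree up to $2(|\CJ|-|\CI|)$ above the input; the amplitude hypothesis of \thmref{t:Verdier on Ran} for $\CF$ over the proper curve $X$ does not obviously follow from the hypothesis on $\CF'$ over $X'$, and I do not see how to rescue the estimate from within your framework.

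The paper's own proof never invokes \thmref{t:Verdier on Ran} for $X$. It keeps the open curve on the $\CF'$-side throughout: by \lemref{l:j}, $\BD_{\Ran}(\CF)\simeq(\bj_{\Ran})_*\circ\BD_{\Ran'}(\CF')$, and it is $(\bj_{\Ran})_*$ (not $(\bj_{\Ran})_!$) that commutes with $\on{ins}_\CI^!$, by \eqref{e:descr j*}. The paper then proves the open variant \propref{p:stabilization open} of the stabilization statement, reducing to \lemref{l:estimate dual open}, whose key step is that $\on{C}^*_c\bigl(X^\CJ,(\bj_\CJ)_*(-)\bigr)\simeq\on{C}^*\bigl(X'{}^\CJ,-\bigr)$ for $X$ proper, so that the estimates of Sects.~\ref{sss:final steps 1}--\ref{sss:final steps 2} can be reused verbatim. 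Your tautological identification $\on{C}^*_c(\Ran,\CF)\simeq\on{C}^*_c(\Ran',\CF')$ is correct and handles the target of \eqref{e:duality and cohomology}; it is the source that must be controlled, and that is where the $*$-pushforward --- rather than the $!$-pushforward --- is the right tool.
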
 

This theorem will be proved in \secref{ss:on open}. 

\ssec{The truncated Ran space}

In this subsection we will perform the first step towards the proofs of the results stated above. Namely, we will
reduce these theorems to statements of the sort that some cohomology stabilizes in the limit. 
The idea is the following: 

\medskip

The reason for the non-commutation of the functor of compactly supported cohomology on the Ran space
with Verdier duality is that $\Ran$ is not finitary. In this subsection we will introduce a truncated version of the Ran
space, denoted $\Ran^{\leq n}$, which will be a finitary pseudo-scheme.   The idea of $\Ran^{\leq n}$ is very simple:
whereas $\Ran$ parameterizes finite non-empty collections of points of $X$, its sub-prestack $\Ran^{\leq n}$ parameterizes
those collections that have cardinality $\leq n$. 

\medskip

The stabilization referred to above 
says that for a given range of cohomological degrees we can replace all of $\Ran$ by $\Ran^{\leq n}$. 

\sssec{}

For an integer $n$, let $\Ran^{\leq n}$ be the following prestack: 
for $S\in \Sch$ we let $\Ran^{\leq n}(S)$ be the (discrete) groupoid of finite non-empty sets of $\Maps(S,X)$
of cardinality $\leq n$. Thus, 
$$\Ran^{\leq n}\simeq \underset{\CI\in (\on{Fin}^{s,\leq n})^{\on{op}}}{\on{colim}}\, X^\CI,$$
where $\on{Fin}^{s,\leq n}\subset \on{Fin}^s$ is the full subcategory consisting of sets of cardinality
$\leq n$. In particular, if $X$ is proper, then the prestack $\Ran^{\leq n}$ is finitary pseudo-proper. 
Moreover, the discussion in \secref{sss:diag Ran} shows that $\Ran^{\leq n}$ is a pseudo-scheme with a finitary diagonal. 

\sssec{}

Let $\on{ins}^{\leq n}$ denote the tautological map $\Ran^{\leq n}\hookrightarrow \Ran$. It is easy to
see as in \secref{sss:diag Ran} that the map $\on{ins}^{\leq n}$ is finitary pseudo-proper. In particular,
we have an adjoint pair
$$(\on{ins}^{\leq n})_!:\Shv^!(\Ran^{\leq n})\rightleftarrows \Shv^!(\Ran):(\on{ins}^{\leq n})^!.$$

\medskip

We have
$$\Ran \simeq \underset{n}{\on{colim}}\, \Ran^{\leq n},$$
where the maps $\on{ins}^{\leq n}$ are pseudo-proper. 
Hence, by \secref{sss:limits and colimits}, we have
\begin{equation} \label{e:F via trunc}
\CF\simeq \underset{n}{\on{colim}}\,  (\on{ins}^{\leq n})_!\circ (\on{ins}^{\leq n})^!(\CF).
\end{equation} 

In particular, we have:
\begin{equation} \label{e:on Ran via n}
\on{C}^*_c(\Ran,\CF)\simeq \underset{n}{\on{colim}}\, \on{C}^*_c\left(\Ran^{\leq n},(\on{ins}^{\leq n})^!(\CF)\right).
\end{equation} 

\sssec{}

The following assertion, proved in \secref{ss:proof of Verdier one}, 
is one stabilization statement that goes into the proof of \thmref{t:Verdier on Ran}: 

\begin{prop} \label{p:Verdier on Ran 1}
Let $X$ be a smooth curve. Let $\CF\in \Shv^!(\Ran)$ be such that there exists an integer $m\geq 0$ such that the object 
$\on{ins}_\CI^!(\CF)|_{\overset{\circ}X{}^\CI}$ is concentrated in \emph{perverse} cohomological degrees $\leq -3$ whenever $|\CI|> m$.
Then the map
$$\on{C}^*_c\left(\Ran^{\leq n},(\on{ins}^{\leq n})^!(\CF)\right)\to \on{C}^*_c(\Ran,\CF)$$
induces an isomorphism in cohomological degrees $\geq 0$ for $n\geq m$.
\end{prop}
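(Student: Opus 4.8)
The plan is to compare the two compactly supported cohomologies via the presentation \eqref{e:on Ran via n}, which exhibits $\on{C}^*_c(\Ran,\CF)$ as the colimit over $n$ of $\on{C}^*_c(\Ran^{\leq n},(\on{ins}^{\leq n})^!(\CF))$. Since this colimit is filtered, it suffices to show that for $n\geq m$ the transition maps
$$\on{C}^*_c\left(\Ran^{\leq n},(\on{ins}^{\leq n})^!(\CF)\right)\to \on{C}^*_c\left(\Ran^{\leq n+1},(\on{ins}^{\leq n+1})^!(\CF)\right)$$
induce isomorphisms in cohomological degrees $\geq 0$; equivalently, that the cofiber of each such map lives in perverse cohomological degrees $\leq -1$ (in fact, as will come out of the estimate, much lower). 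Then the colimit over $n\geq m$ stabilizes in degrees $\geq 0$, and the map to $\on{C}^*_c(\Ran,\CF)$ is an isomorphism there.

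First I would identify the cofiber. Let $\jmath_n:\Ran^{\leq n}\hookrightarrow \Ran^{\leq n+1}$ and let $\Delta^{(n+1)}\subset \Ran^{\leq n+1}$ be the locally closed locus of subsets of cardinality exactly $n+1$. Since $\jmath_n$ is a schematic open embedding with closed complement $\Delta^{(n+1)}$, the cofiber of the restriction map on $\on{C}^*_c$ is computed by the $*$-restriction of $(\on{ins}^{\leq n+1})^!(\CF)$ to $\Delta^{(n+1)}$: one has a distinguished triangle relating $\on{C}^*_c(\Ran^{\leq n},-)$, $\on{C}^*_c(\Ran^{\leq n+1},-)$ and $\on{C}^*_c(\Delta^{(n+1)},i^*(\cdots))$ for the appropriate sheaf. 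The stratum $\Delta^{(n+1)}$ is the quotient of $\overset{\circ}X{}^\CI$ (for $|\CI|=n+1$) by the symmetric group $\Sigma_{n+1}$, so it is smooth of dimension $n+1$, and $\on{ins}_\CI^!(\CF)|_{\overset{\circ}X{}^\CI}$ is, by hypothesis, in perverse degrees $\leq -3$ (here we use $n+1>m$, which holds since $n\geq m$). Taking $*$-restriction rather than $!$-restriction to a smooth locus of dimension $n+1$ shifts perverse amplitude by at most $2(n+1)$ in the wrong direction — but what matters is that on a scheme $Z$ of dimension $d$, an object in perverse degrees $\leq -3$ has $\on{C}^*_c(Z,-)$ concentrated in degrees $\leq -3 + 2d$... the point is that the relevant bound, after taking $\Sigma_{n+1}$-invariants (which is exact in characteristic zero coefficients, or harmless rationally) and using that $\dim \overset{\circ}X{}^\CI = n+1$, places $\on{C}^*_c(\Delta^{(n+1)}, i^*(\cdots))$ in degrees $\leq -1$. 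Here one must be careful: $*$-restriction to the open stratum $\overset{\circ}X{}^\CI$ inside $X^\CI$ preserves the perverse bound, and then $\on{C}^*_c$ of a smooth $(n+1)$-dimensional variety applied to something in perverse degrees $\leq -3$ lands in usual degrees $\leq -3 + (n+1) \leq -1$ for $n \geq 1$ — actually one needs the sharper ``perverse $\le -3$'' $\Rightarrow$ ``$\on{C}^*_c$ in degrees $\le -3+\dim$'' estimate, valid for smooth varieties. So the cofiber vanishes in degrees $\geq -1$, hence a fortiori in degrees $\geq 0$, giving the claim.

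The main obstacle I anticipate is getting the cohomological-amplitude bookkeeping exactly right: tracking how the perverse t-structure amplitude of $\on{ins}_\CI^!(\CF)|_{\overset{\circ}X{}^\CI}$ interacts with (i) passing from $!$- to $*$-restriction along the open inclusion $\overset{\circ}X{}^\CI \hookrightarrow X^\CI$ (this is fine: open restriction is perverse $t$-exact), (ii) descending along the finite étale quotient $\overset{\circ}X{}^\CI \to \Delta^{(n+1)}$ by $\Sigma_{n+1}$ (pullback is $t$-exact and conservative, so the bound on $\Delta^{(n+1)}$ is the same), and (iii) the estimate $\on{C}^*_c(Z,\CG)$ lives in degrees $\leq a + \dim Z$ when $\CG$ is in perverse degrees $\leq a$ on a variety $Z$. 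Combining (iii) with $a=-3$, $\dim = n+1$, and observing that all the genuinely new contribution to $\on{C}^*_c(\Ran^{\leq n+1},-)$ over $\on{C}^*_c(\Ran^{\leq n},-)$ comes from strata of dimension $n+1$, gives the bound $-3+(n+1) = n-2 \leq -1$ only for $n\geq 1$; for the low values $m \leq n \leq$ (small) one checks directly or simply notes the hypothesis forces $m\geq 0$ and the argument only needs $n\ge m$ together with the blanket degree-$\geq 0$ conclusion, so no separate low-$n$ analysis is needed as long as the perverse bound $\le -3$ is in force, i.e. for $|\CI| = n+1 > m$. Assembling the triangle and passing to the filtered colimit then yields \propref{p:Verdier on Ran 1}.
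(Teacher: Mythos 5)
Your overall strategy --- filter by cardinality and bound the graded pieces of the Cousin filtration --- is the paper's own, but both the geometry and the estimate are off, and the latter is fatal. On the geometry: the inclusion $\Ran^{\leq n}\hookrightarrow\Ran^{\leq n+1}$ is a \emph{closed} embedding (points can collide in families, so the locus where cardinality drops is closed), and the top stratum $\overset{\circ}X{}^{(n+1)}$ is its \emph{open} complement; you have the open/closed roles reversed. The transition map on $\on{C}^*_c$ comes from the counit $i_!i^!\to\on{id}$ of this closed inclusion, so the relevant recollement triangle is $i_!i^!\to\on{id}\to j_*j^!$ and the cofiber is $\on{C}^*_c\bigl(\Ran^{\leq n+1},\,j_*j^!(\cdots)\bigr)$, which one identifies with the \emph{ordinary} (not compactly supported) cohomology $\on{C}^*\bigl(\overset{\circ}X{}^{n+1},\cdots\bigr)_{\Sigma_{n+1}}$ of the open stratum --- this is the content of \corref{c:Chevalley-Cousin} and of the remark immediately after it.

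The decisive problem is the estimate. Your bound ``perverse degrees $\leq a$ on a $d$-dimensional variety implies $\on{C}^*_c$ in degrees $\leq a+d$'' yields $-3 + (n+1) = n-2$, which is $\geq 0$ once $n\geq 2$; the sentence ``$\leq -1$ only for $n\geq 1$'' has the inequality reversed, the bound actually \emph{worsens} as $n$ grows, and so the argument fails for almost all $n$ rather than for finitely many. The key input the proposition needs, and which your proposal omits, is Artin vanishing for ordinary cohomology via the affine projection $\overset{\circ}X{}^n\to X$ onto a single coordinate: pushing forward along an affine morphism is perverse right $t$-exact, so $\on{C}^*(\overset{\circ}X{}^n,-)$ sends perverse degrees $\leq -3$ to usual degrees $\leq -3 + 1 = -2$, a bound that is \emph{uniform in $n$} (the ``$+1$'' coming from $\on{C}^*(X,-)$ applied to a perverse sheaf on the curve $X$). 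This is also why the cofiber must be identified with $\on{C}^*$ of the open stratum rather than $\on{C}^*_c$: there is no Artin vanishing on the compactly-supported side. Without this affineness input, the constant hypothesis $\leq -3$ is not enough; your estimate, which degrades linearly in $n$, would at best prove the proposition under the stronger hypothesis $\leq -|\CI|-C$ appearing in \thmref{t:Verdier on Ran}, whereas the whole point of \propref{p:Verdier on Ran 1} (see the remark following it) is that the weaker constant bound suffices.
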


\begin{rem}
Note that the requirement on the cohomological degree in \propref{p:Verdier on Ran 1} is weaker than that in
\thmref{t:Verdier on Ran}. We need the bound on the cohomological degrees to be $\leq -C_0$ rather than $-|\CI|-C_1$, where $C_0$ and
$C_1$ are constants.  
\end{rem}

\sssec{Duality via the truncated Ran space}

Since the map $\on{ins}^{\leq n}$ is finitary pseudo-proper, the functor $(\on{ins}^{\leq n})_!$ intertwines Verdier duality 
on $\Ran^{\leq n}$ with Verdier duality on $\Ran$, by \corref{c:finite duality}. Hence, as in \propref{p:expl Verdier}, we obtain
\begin{equation} \label{e:Verdier on Ran}
\BD_{\Ran}(\CF)\simeq \underset{n}{\on{lim}}\, (\on{ins}^{\leq n})_!\circ \BD_{\Ran^{\leq n}}\circ (\on{ins}^{\leq n})^!(\CF).
\end{equation} 

\medskip

Observe also that by \propref{p:expl Verdier}, applied to $\Ran^{\leq n}$ and \corref{c:finite duality}, we obtain:

\begin{cor} \label{c:truncated duality}
For $\CF\in \Shv^!(\Ran)$ we have a canonical isomorphism
$$(\on{ins}^{\leq n})_!\circ \BD_{\Ran^{\leq n}}\circ (\on{ins}^{\leq n})^!(\CF)\simeq 
\underset{\CI\in \on{Fin}^{s,\leq n}}{\on{lim}} \, (\on{ins}_\CI)_!\circ \BD_{X^\CI}\circ (\on{ins}_\CI)^!(\CF).$$
\end{cor}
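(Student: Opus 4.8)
The plan is to deduce the formula by applying \propconstrref{p:expl Verdier} directly to the prestack $\Ran^{\leq n}$ and then pushing forward along the finitary pseudo-proper map $\on{ins}^{\leq n}:\Ran^{\leq n}\to \Ran$. First I would record the structural input: by the discussion in \secref{sss:diag Ran}, $\Ran^{\leq n}$ is a pseudo-scheme with a finitary diagonal, presented as $\Ran^{\leq n}\simeq \underset{\CI\in (\on{Fin}^{s,\leq n})^{\on{op}}}{\on{colim}}\, X^\CI$, whose structure maps are the maps $\on{ins}_\CI^{\leq n}:X^\CI\to \Ran^{\leq n}$, and these satisfy the compatibility $\on{ins}^{\leq n}\circ \on{ins}_\CI^{\leq n}=\on{ins}_\CI$.

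Next I would invoke \propconstrref{p:expl Verdier} for $\CY=\Ran^{\leq n}$, noting that, since here $A=(\on{Fin}^{s,\leq n})^{\on{op}}$, the index category $A^{\on{op}}$ appearing in {\it loc.\,cit.} is $\on{Fin}^{s,\leq n}$: for any $\CG\in \Shv^!(\Ran^{\leq n})$ one has $\BD_{\Ran^{\leq n}}(\CG)\simeq \underset{\CI\in \on{Fin}^{s,\leq n}}{\on{lim}}\, (\on{ins}_\CI^{\leq n})_!\circ \BD_{X^\CI}\circ (\on{ins}_\CI^{\leq n})^!(\CG)$. Specializing to $\CG=(\on{ins}^{\leq n})^!(\CF)$ and using $(\on{ins}_\CI^{\leq n})^!\circ (\on{ins}^{\leq n})^!\simeq (\on{ins}^{\leq n}\circ \on{ins}_\CI^{\leq n})^!=(\on{ins}_\CI)^!$ rewrites the terms of the limit, giving $\BD_{\Ran^{\leq n}}\circ (\on{ins}^{\leq n})^!(\CF)\simeq \underset{\CI\in \on{Fin}^{s,\leq n}}{\on{lim}}\, (\on{ins}_\CI^{\leq n})_!\circ \BD_{X^\CI}\circ (\on{ins}_\CI)^!(\CF)$. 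Then I would apply $(\on{ins}^{\leq n})_!$ to both sides. Since $\on{ins}^{\leq n}$ is finitary pseudo-proper with target the pseudo-scheme $\Ran$, the functor $(\on{ins}^{\leq n})_!$ commutes with limits by \lemref{l:lax homology finite} (equivalently, by \corref{c:finite duality} it intertwines $\BD_{\Ran^{\leq n}}$ with $\BD_\Ran$, which in view of \propconstrref{p:expl Verdier} amounts to the same assertion), so it passes inside the limit over the essentially finite category $\on{Fin}^{s,\leq n}$. Combining this with $(\on{ins}^{\leq n})_!\circ (\on{ins}_\CI^{\leq n})_!\simeq (\on{ins}^{\leq n}\circ \on{ins}_\CI^{\leq n})_!=(\on{ins}_\CI)_!$ yields $(\on{ins}^{\leq n})_!\circ \BD_{\Ran^{\leq n}}\circ (\on{ins}^{\leq n})^!(\CF)\simeq \underset{\CI\in \on{Fin}^{s,\leq n}}{\on{lim}}\, (\on{ins}_\CI)_!\circ \BD_{X^\CI}\circ (\on{ins}_\CI)^!(\CF)$, which is the claim.

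The only genuine content — and the step I would flag as the crux — is the commutation of $(\on{ins}^{\leq n})_!$ with the displayed limit; this is precisely the reason the truncated spaces $\Ran^{\leq n}$ are introduced in the first place, since it rests on $\on{Fin}^{s,\leq n}$ being essentially finite together with the \emph{finitariness} of $\on{ins}^{\leq n}$, as packaged in \lemref{l:lax homology finite}. The corresponding statement fails for $\on{ins}_\CI:X^\CI\to \Ran$ itself, which is pseudo-proper but not finitary, and this failure is exactly what prevents $\BD_\Ran$ from commuting with compactly supported cohomology in general. Everything else in the argument is bookkeeping with the functoriality identities $(g\circ h)^!\simeq h^!\circ g^!$ and $(g\circ h)_!\simeq g_!\circ h_!$ for the pseudo-proper maps $\on{ins}^{\leq n}$, $\on{ins}_\CI^{\leq n}$ and $\on{ins}_\CI$.
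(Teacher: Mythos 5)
Your argument is correct and matches the paper's intent: the paper derives this corollary by citing \propconstrref{p:expl Verdier} applied to $\Ran^{\leq n}$ together with \corref{c:finite duality}, and the only non-formal step in either formulation is precisely the commutation of $(\on{ins}^{\leq n})_!$ with the limit over $\on{Fin}^{s,\leq n}$, which you correctly pin on \lemref{l:lax homology finite} (the very lemma invoked in the proof of \corref{c:finite duality}). Your parenthetical claim that \corref{c:finite duality} ``amounts to the same assertion'' is slightly loose --- \corref{c:finite duality} is the stronger statement $(\on{ins}^{\leq n})_!\circ\BD_{\Ran^{\leq n}}\simeq\BD_\Ran\circ(\on{ins}^{\leq n})_!$, and using it literally would route through $\BD_\Ran$ and a limit over all of $\on{Fin}^s$, requiring extra bookkeeping --- but your direct appeal to \lemref{l:lax homology finite} is cleaner and is in fact the mechanism at work inside the proof of \corref{c:finite duality}, so the substance is the same.
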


\sssec{}

In \secref{ss:stab 1} and \ref{ss:stab 2} we will prove the following crucial ingredient in the proof of
Theorems \ref{t:Verdier on Ran} and \ref{t:products}:

\begin{prop} \label{p:stabilization}
Let $X$ be a smooth curve, and let $\CF\in \Shv^!(\Ran)$ be such that there exists an integer $m\geq 0$ such that the object 
$\on{ins}_\CI^!(\CF)|_{\overset{\circ}X{}^\CI}$ is concentrated in \emph{perverse} cohomological degrees $\leq -|\CI|-2$ whenever $|\CI|>m$.

\smallskip

\noindent{\em(i)} Assume that $X$ is proper. Then the map
$$ \on{C}^*_c\left(\Ran,\BD_{\Ran}(\CF)\right) \to 
\on{C}^*_c\left(\Ran^{\leq n},\BD_{\Ran^{\leq n}}\circ (\on{ins}^{\leq n})^!(\CF)\right)$$
induces an isomorphism in cohomological degrees $\leq 0$ for $n\geq m$.

\smallskip

\noindent{\em(ii)} For any $\CJ$, the map
$$\on{ins}_\CJ^!\circ \BD_{\Ran}(\CF) \to \on{ins}_\CJ^!\circ (\on{ins}^{\leq n})_!\circ \BD_{\Ran^{\leq n}}\circ (\on{ins}^{\leq n})^!(\CF)$$
induces an isomorphism in perverse cohomological degrees $\leq 0$ for $n\geq m$.

\end{prop}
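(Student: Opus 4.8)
\smallskip
\noindent
The plan is to exploit the exhaustion $\Ran=\underset{n}{\on{colim}}\,\Ran^{\leq n}$ together with \eqref{e:Verdier on Ran}, which presents $\BD_{\Ran}(\CF)$ as the limit of the tower $\{T_n\}$ with $T_n:=(\on{ins}^{\leq n})_!\circ\BD_{\Ran^{\leq n}}\circ(\on{ins}^{\leq n})^!(\CF)$, and to control the fibers of the transition maps. So the first step is to identify $\on{Fib}_n:=\on{Fib}(T_{n+1}\to T_n)$. For this I would use the recollement attached to the closed embedding $\Ran^{\leq n}\hookrightarrow\Ran^{\leq n+1}$, whose open complement $U_{n+1}$ is the locus of configurations of exactly $n+1$ points; note $U_{n+1}$ receives a finite \'etale surjection from $\overset{\circ}X{}^\CI$ for any $\CI$ with $|\CI|=n+1$. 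Applying $\BD_{\Ran^{\leq n+1}}$ to the recollement triangle for $(\on{ins}^{\leq n+1})^!(\CF)$, using \corref{c:finite duality} so that $\BD$ commutes with the closed pushforward $\Ran^{\leq n}\hookrightarrow\Ran^{\leq n+1}$, and using that $\BD$ interchanges $*$- and $!$-pushforward along the open embedding $U_{n+1}\hookrightarrow\Ran^{\leq n+1}$ (this follows from \lemref{l:etale and dual} by passing to adjoints), one identifies $\on{Fib}_n$ with the extension by zero to $\Ran$ of $\BD_{U_{n+1}}(\CF|_{U_{n+1}})$. It is cleanest to phrase all of this through the presentation $\Ran=\underset{\CI}{\on{colim}}\,X^\CI$ and \corref{c:push-pull Ran}, \corref{c:truncated duality}, so as to work with honest schemes rather than the \'etale quotient $U_{n+1}$.

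The second step is the numerical estimate that makes the $|\CI|$-dependent hypothesis do its job. For $|\CI|=n+1>m$ the object $\CF|_{\overset{\circ}X{}^\CI}$ lies in perverse cohomological degrees $\leq-(n+1)-2$, so $\BD_{\overset{\circ}X{}^\CI}(\CF|_{\overset{\circ}X{}^\CI})$ lies in perverse degrees $\geq(n+1)+2$. Since $\overset{\circ}X{}^\CI$ is smooth of dimension $n+1$, since the !-pushforward of $\on{Fib}_n$ to any $X^\CI$ (and the computation of $\on{C}^*_c$ against it) is assembled from !-pushforwards along the locally closed ``diagonal'' embeddings $\overset{\circ}X{}^\CK\hookrightarrow X^\CI$ with $|\CK|=n+1$ — which are perverse $t$-exact, being composites of an open and a closed embedding — and since $(a_V)_!$ for $V$ of dimension $d$ carries perverse degrees $\geq k$ to cohomological degrees $\geq k-d$, one concludes: for $n\geq m$, the object $\on{ins}_\CJ^!(\on{Fib}_n)$ lies in perverse degrees $\geq n+3$, and $\on{C}^*_c(\Ran,\on{Fib}_n)\simeq\underset{\CI}{\on{colim}}\,\on{C}^*_c(X^\CI,\on{ins}_\CI^!(\on{Fib}_n))$ lies in cohomological degrees $\geq 2$.

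Given this, part (ii) is immediate: $\on{ins}_\CJ^!\circ\BD_{\Ran}(\CF)=\underset{n}{\on{lim}}\,\on{ins}_\CJ^!(T_n)$ (pullback commutes with limits, \lemref{l:constr limits prestack}), and since $\on{ins}_\CJ^!(\on{Fib}_n)=0$ once $n\geq|\CJ|$ (a configuration of $n+1$ distinct points cannot meet $X^\CJ$), this tower stabilizes at $n=|\CJ|$; hence for $n\geq m$ the fiber of $\on{ins}_\CJ^!\circ\BD_{\Ran}(\CF)\to\on{ins}_\CJ^!(T_n)$ is a finite iterated extension of the $\on{ins}_\CJ^!(\on{Fib}_j)$ for $n\leq j<|\CJ|$, each in perverse degrees $\geq j+3\geq 1$, so the map is an isomorphism in perverse degrees $\leq 0$. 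For part (i), the fiber of $\on{C}^*_c(\Ran,\BD_{\Ran}(\CF))\to\on{C}^*_c(\Ran,T_n)\simeq\on{C}^*_c(\Ran^{\leq n},\BD_{\Ran^{\leq n}}\circ(\on{ins}^{\leq n})^!(\CF))$ is $\on{C}^*_c(\Ran,R_n)$ with $R_n:=\on{Fib}(\BD_{\Ran}(\CF)\to T_n)$; writing $R_n\simeq\underset{n''}{\on{colim}}\,(\on{ins}^{\leq n''})_!(\on{ins}^{\leq n''})^!(R_n)$ as in \eqref{e:F via trunc}, each $(\on{ins}^{\leq n''})_!(\on{ins}^{\leq n''})^!(R_n)$ is a finite iterated extension of the $\on{Fib}_j$ with $n\leq j<n''$ (all $j\geq m$ when $n\geq m$), hence has $\on{C}^*_c$ in degrees $\geq 2$ by the previous step; as the colimit over $n''$ is filtered, $\on{C}^*_c(\Ran,R_n)$ lies in degrees $\geq 2$, so the map is an isomorphism in degrees $\leq 0$.

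The routine part — recollement, cofiber sequences, filtered colimits — is straightforward once set up through the presentation by the $X^\CI$. The main obstacle is the precise identification of $\on{Fib}_n$: one must check that the Verdier-duality and base-change compatibilities of \secref{s:Verdier} genuinely survive on these non-schematic (and mildly stacky) prestacks, which is why I would route everything through \corref{c:push-pull Ran} and \corref{c:truncated duality}. The second delicate point is the amplitude bookkeeping in the second step: the gain of $|\CK|+2$ perverse degrees from Verdier duality must survive $(a_{-})_!$, which costs at most $|\CK|$, leaving exactly the two degrees of margin — this is precisely why the hypothesis is stated with the bound $-|\CI|-2$ rather than a constant, and is the real content of the statement.
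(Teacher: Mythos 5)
Your overall strategy matches the paper's: present $\BD_{\Ran}(\CF)$ as the limit of the tower $T_n=(\on{ins}^{\leq n})_!\circ\BD_{\Ran^{\leq n}}\circ(\on{ins}^{\leq n})^!(\CF)$, identify the successive fibers $\on{Fib}_n$ via the open-closed recollement of $\Ran^{\leq n+1}$, and estimate. The identification of $\on{Fib}_n$ and the per-term perverse-degree bookkeeping on each $X^{\CI}$ (including the observation that the affine open embedding $\overset{\circ}X{}^{\CK}\hookrightarrow X^{\CK}$ makes the locally closed inclusion $t$-exact, so the margin of $2$ is exactly right) are correct.

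The gap is in the step where you pass from the per-term estimate to $\on{C}^*_c(\Ran,\on{Fib}_n)\in\Lambda\mod^{\geq 2}$ by writing it as $\underset{\CI}{\on{colim}}\,\on{C}^*_c(X^{\CI},\on{ins}_{\CI}^!(\on{Fib}_n))$. That colimit is taken over the index category for the presentation $\Ran=\on{colim}_{\CI}X^{\CI}$, namely $(\on{Fin}^s)^{\on{op}}$, which is \emph{not} filtered (e.g.\ two distinct bijections $\{1,2\}\rightrightarrows\{1,2\}$ cannot be coequalized by any surjection onto $\{1,2\}$). A non-filtered colimit in $\Lambda\mod$ does not preserve the condition of lying in degrees $\geq 2$: the symmetric group contributions enter as \emph{coinvariants}, and $M_{\Sigma_k}$ for $M$ bounded below is not bounded below in general, since $\on{Tor}^{\Lambda[\Sigma_k]}_{i}(\Lambda,-)$ contributes in arbitrarily negative cohomological degree. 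This is precisely the difficulty the paper's proof confronts head-on: in \secref{sss:cousin step}, after invoking \corref{c:Chevalley-Cousin} (whose graded piece is a $\Sigma_{k'}$-\emph{coinvariant}), the proof uses the norm-map isomorphism $\on{C}^*(\overset{\circ}X{}^{k},\CG)_{\Sigma_{k}}\overset{\sim}\to\on{C}^*(\overset{\circ}X{}^{k},\CG)^{\Sigma_{k}}$, which holds because $\Sigma_{k}$ acts freely on $\overset{\circ}X{}^{k}$; the invariants are a limit and do preserve the lower bound. Your argument nowhere uses that the $\Sigma$-action on the open stratum is free, and without it the degree bound on the colimit is unjustified. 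One way to patch your proof without re-deriving the Chevalley--Cousin filtration is to note that $\on{Fib}_n$ is $!$-extended from the finitary pseudo-scheme $\Ran^{\leq n+1}$, so $\on{C}^*_c(\Ran,\on{Fib}_n)=\on{C}^*_c(\Ran^{\leq n+1},\BD_{\Ran^{\leq n+1}}(\CH'))$ with $\CH'$ the Chevalley--Cousin cofiber; by \corref{c:finite duality} this equals $\bigl(\on{C}^*_c(\Ran^{\leq n+1},\CH')\bigr)^\vee$, and the latter cohomology is estimated directly as in the proof of \lemref{l:estimate} (where coinvariants do preserve the needed \emph{upper} bound before dualizing).

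Part (ii) as you present it does go through, since there the $\Sigma_n$-structure enters as invariants (it sits inside the limit presentation of $\BD_{\Ran^{\leq n}}$), exactly as in the paper's \secref{ss:stab 2}, and the stabilization of $\on{ins}_{\CJ}^!(\on{Fib}_j)$ for $j\geq|\CJ|$ is correct. Also note a minor misstatement: finite colimits in a stable category do not preserve the condition $(-)\in\Lambda\mod^{\geq 2}$ either (a cofiber introduces a shift), so the filtration by $\Ran^{\leq n''}$ must be presented as a finite \emph{extension} (a finite descending filtration with graded pieces $\on{Fib}_j$), as you in fact use, not as a finite colimit.
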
 

\sssec{Proof of \thmref{t:Verdier on Ran}}   \label{sss:proof of Verdier on Ran}

Let us assume \propref{p:Verdier on Ran 1} and \propref{p:stabilization} and deduce \thmref{t:Verdier on Ran}. The idea
is to reduce to the situation  where we can apply \corref{c:finite duality} (we cannot apply this proposition to $\Ran$ because it is
not finitary).

\medskip

We need to prove that for every cohomological degree $k\geq 0$, the map
$$\left(\on{C}^*_c\left(\Ran,\BD_\CY(\CF)\right)\right)^{\leq k}\to \left(\on{C}^*_c(\Ran,\CF)^\vee\right)^{\leq k}$$
is an isomorphism. With no restriction of generality, we can take $k=0$ (if an object $\CF\in \Shv^!(\Ran)$ satisfies the assumption of
\thmref{t:Verdier on Ran}, then so does any of its shifts). 

\medskip

We have a commutative diagram
$$
\CD
\left(\on{C}^*_c\left(\Ran,\BD_\CY(\CF)\right)\right)^{\leq 0}   @>>> \left(\on{C}^*_c(\Ran,\CF)^\vee\right)^{\leq 0} \\
@VVV   @VVV   \\
\left(\on{C}^*_c\left(\Ran^{\leq n},\BD_{\Ran^{\leq n}}\circ (\on{ins}^{\leq n})^!(\CF)\right)\right)^{\leq 0}   @>>>
\left(\on{C}^*_c\left(\Ran^{\leq n},(\on{ins}^{\leq n})^!(\CF)\right)^\vee\right)^{\leq 0}. 
\endCD
$$

Note that the bottom horizontal arrow in the diagram is an isomorphism for any $n$ because the map 
$$\on{C}^*_c\left(\Ran^{\leq n},\BD_{\Ran^{\leq n}}\circ (\on{ins}^{\leq n})^!(\CF)\right)   \to
\on{C}^*_c\left(\Ran^{\leq n},(\on{ins}^{\leq n})^!(\CF)\right)^\vee$$
is (by \corref{c:finite duality}). 

\medskip 

Hence, it remains to show that the vertical maps are isomorphisms for some $n$. Now, the left vertical map
is an isomorphism for $n\gg 0$ by \propref{p:stabilization}(i).  For the right vertical map, it suffices to show that the map
$$\left(\on{C}^*_c\left(\Ran^{\leq n},(\on{ins}^{\leq n})^!(\CF)\right)\right)^{\geq 0}\to \left(\on{C}^*_c(\Ran,\CF)\right)^{\geq 0}$$ is
an isomorphism for $n\gg 0$, and that is given by \propref{p:Verdier on Ran 1}.

\section{Proofs of the stabilization and Verdier duality results}   \label{s:proofs Verdier}

\ssec{Proof of \propref{p:Verdier on Ran 1}}   \label{ss:proof of Verdier one} 

\sssec{}

Consider the $n$-th Cartesian power $X^n$ and the \emph{prestack-theoretic quotient} $X^n/\Sigma_n$. 
Note that the map $\on{ins}_n:X^n\to \Ran$ canonically factors through a map
$$\on{ins}_n/\Sigma_n:X^n/\Sigma_n\to \Ran.$$

\medskip

Let $\overset{\circ}X{}^n\overset{j_n}\hookrightarrow X^n$ denote the complement in $X^n$ to the diagonal divisor, and consider
the corresponding open embedding
$$j_n/\Sigma_n:\overset{\circ}X{}^n/\Sigma_n\hookrightarrow X^n/\Sigma_n.$$

Note that the symmetric power 
$\overset{\circ}X{}^{(n)}$ is the \'etale sheafification of $\overset{\circ}X{}^n/\Sigma_n$, so the pullback functor
\begin{equation} \label{e:symmetric}
\Shv(\overset{\circ}X{}^{(n)})\to \Shv^!(\overset{\circ}X{}^n/\Sigma_n)
\end{equation} 
is an equivalence of categories. 

\sssec{}

We will use the following assertion: 

\begin{lem} \label{l:Chevalley-Cousin}
For $\CF\in \Shv^!(\Ran)$, the object 
$$\on{coFib}\left((\on{ins}^{\leq n-1})_! \circ (\on{ins}^{\leq n-1})^!(\CF)\to 
((\on{ins}^{\leq n})_! \circ (\on{ins}^{\leq n})^!(\CF)\right) \in \Lambda\mod$$ 
is canonically isomorphic to 
$$(\on{ins}_n/\Sigma_n)_! \circ (j_n/\Sigma_n)_*\circ (j_n/\Sigma_n)^!\circ (\on{ins}_n/\Sigma_n)^!(\CF).$$ 
\end{lem}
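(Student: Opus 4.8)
The statement is a form of the Cousin/Chevalley--Cousin complex for sheaves on the Ran space, isolating the ``top stratum'' contribution when passing from $\Ran^{\leq n-1}$ to $\Ran^{\leq n}$. The plan is to compare the two sides via the stratification of $X^n$ by diagonals, after pushing down to $\Ran$. First I would reduce the assertion to a computation on $X^n/\Sigma_n$ and use the explicit description of $(\bj_{\Ran})_*$-type operations from \secref{ss:open variant} together with the fact (established in \secref{sss:diag Ran} for $\Ran$, and for $\Ran^{\leq n}$ in \secref{ss:proof of Verdier one}) that $\on{ins}^{\leq n}$ and $\on{ins}_n/\Sigma_n$ are (finitary) pseudo-proper, so that the functors $(\on{ins}^{\leq n})_!$ and $(\on{ins}_n/\Sigma_n)_!$ satisfy base change. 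The key point is to identify, inside $X^n$, the locus $\overset{\circ}X{}^n$ of pairwise-distinct points with the preimage of the ``genuinely $n$-point'' locus $\Ran^{\leq n}\setminus \Ran^{\leq n-1}$, so that the closed/open decomposition of $X^n$ into the diagonal divisor and its complement matches the decomposition of $\Ran^{\leq n}$ into $\Ran^{\leq n-1}$ and the new stratum.

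The main steps, in order, would be: (1) Using \eqref{e:F via trunc}/\eqref{e:on Ran via n} and the adjunctions, interpret $\on{coFib}\bigl((\on{ins}^{\leq n-1})_!\circ(\on{ins}^{\leq n-1})^!(\CF)\to (\on{ins}^{\leq n})_!\circ(\on{ins}^{\leq n})^!(\CF)\bigr)$ as $(\on{ins}^{\leq n})_!$ applied to the cofiber of the counit $(\on{ins}^{\leq n-1})_!\circ (\on{ins}^{\leq n-1})^!(\CG)\to \CG$ for $\CG=(\on{ins}^{\leq n})^!(\CF)\in\Shv^!(\Ran^{\leq n})$, i.e.\ to the ``clean extension'' supported on the open complement of $\Ran^{\leq n-1}$ inside $\Ran^{\leq n}$. (2) Pull this back along the pseudo-proper map $X^n/\Sigma_n\to \Ran^{\leq n}$: here one checks that the preimage of $\Ran^{\leq n-1}$ is exactly the image of the diagonal divisor, whose complement is $\overset{\circ}X{}^n/\Sigma_n$, so the cofiber becomes $(j_n/\Sigma_n)_*\circ (j_n/\Sigma_n)^!$ applied to $(\on{ins}_n/\Sigma_n)^!(\CF)$ — this is the standard ``$i_*i^!\to \on{id}\to j_*j^!$'' triangle on $X^n/\Sigma_n$ combined with the fact that $j_n/\Sigma_n$ is the open embedding complementary to the relevant closed substack. (3) Push forward along $\on{ins}_n/\Sigma_n$ (pseudo-proper, so $(\on{ins}_n/\Sigma_n)_!$ is well-defined and commutes with base change by \corref{c:pseudo-proper}), and observe that the result agrees with $(\on{ins}^{\leq n})_!$ of the cofiber computed in step (1), because both compute the same thing on $\Ran^{\leq n}$ after restriction along the faithfully-conservative family of maps $X^\CI\to \Ran^{\leq n}$, using \lemref{l:replace lego}(d) to descend from $X^n/\Sigma_n$ to the honest symmetric power where it matters, or more simply the fact that the maps involved are pseudo-proper and surjective so $(\on{ins}_n/\Sigma_n)^!$ is conservative on the new stratum.

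The hard part will be step (2): verifying that the cofiber of the counit $(\on{ins}^{\leq n-1})_!\circ(\on{ins}^{\leq n-1})^!\to \on{id}$ on $\Ran^{\leq n}$, after pulling back to $X^n/\Sigma_n$, is precisely $(j_n/\Sigma_n)_*(j_n/\Sigma_n)^!$ of the pullback — the subtlety being that $X^n/\Sigma_n\to \Ran^{\leq n}$ is not an isomorphism onto its ``image'' (the partial-diagonal strata of lower-cardinality collections are hit with nontrivial stabilizers/multiplicities), so one must argue that the pullback of the closed complement $\Ran^{\leq n-1}\hookrightarrow\Ran^{\leq n}$ is, as a reduced closed substack, exactly $(X^n\setminus\overset{\circ}X{}^n)/\Sigma_n$, and that $*$-pushforward along the relevant open embeddings is compatible with the pseudo-proper pushforwards (which needs \lemref{l:base change} for the schematic open embedding $j_n/\Sigma_n$ and the base-change from \corref{c:pseudo-proper}). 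Once the combinatorics of the diagonal stratification of $X^n$ — equivalently, the expression \eqref{e:prod over Ran} for fiber products over $\Ran$ restricted to $\Ran^{\leq n}$ — is matched up with the poset of subsets defining $\Ran^{\leq n-1}\subset\Ran^{\leq n}$, the identification of the two sides is formal.
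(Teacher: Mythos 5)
Your proposal is correct and follows essentially the same route as the paper's (quite terse) proof: identify the cofiber as the $*$-extension from the open stratum $\Ran^{\leq n}\setminus\Ran^{\leq n-1}$ of $\Ran^{\leq n}$ via \lemref{l:replace lego}(b,d), match that stratum with $\overset{\circ}X{}^n/\Sigma_n$, and transport along $\on{ins}_n/\Sigma_n$ using base change. One small remark on your step (3): the final comparison is cleanest not by conservativity of restriction to the $X^\CI$'s, but by the ``mixed base change'' identity $f_!\circ (j_n/\Sigma_n)_*\simeq (j_U)_*\circ (\sim)_!$ for the Cartesian square relating $\overset{\circ}X{}^n/\Sigma_n\hookrightarrow X^n/\Sigma_n$ to $U\hookrightarrow\Ran^{\leq n}$ (provable via the recollement triangle plus \corref{c:pseudo-proper}), which is exactly the compatibility you flag at the end of your ``hard part'' paragraph — your invocation of \lemref{l:replace lego}(d) and of conservativity there are somewhat beside the point, but the correct ingredients are already named.
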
 

\begin{proof}

The map 
$$\on{ins}^{n-1,n}:\Ran^{\leq n-1}\to \Ran^{\leq n}$$
is finitary pseudo-proper and injective. Let 
$$(\Ran^{\leq n}-\Ran^{\leq n-1})\subset \Ran^{\leq n}$$
denote the open sub-prestack equal the complement of its scheme-theoretic image (see \lemref{l:replace lego}(b) for
what this means).  Now, the assertion of the lemma follows from \lemref{l:replace lego}(d) and the fact that the map 
$\on{ins}_n:X^n\to \Ran^{\leq n}$ defines an isomorphism 
$$\overset{\circ}X{}^n/\Sigma_n\to (\Ran^{\leq n}-\Ran^{\leq n-1}).$$

\end{proof}

\begin{cor} \label{c:Chevalley-Cousin}
For $\CF\in \Shv^!(\Ran)$, the object 
$$\on{coFib}\left(\on{C}^*_c\left(\Ran^{\leq n-1},(\on{ins}^{\leq n-1})^!(\CF)\right)\to 
\on{C}^*_c\left(\Ran^{\leq n},(\on{ins}^{\leq n})^!(\CF)\right)\right) \in \Lambda\mod$$ 
is canonically isomorphic to 
$$\on{C}^*\left(\overset{\circ}X{}^n,(j_n)^!\circ (\on{ins}_n)^!(\CF)\right)_{\Sigma_n}.$$
\end{cor}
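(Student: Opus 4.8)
The plan is to deduce \corref{c:Chevalley-Cousin} directly from \lemref{l:Chevalley-Cousin} by applying the functor of compactly supported cohomology $\on{C}^*_c(\Ran,-)$ to the exact triangle of \lemref{l:Chevalley-Cousin}. Since $\on{C}^*_c(\Ran,-)$ is exact (it is a left adjoint, hence preserves colimits, and in particular cofiber sequences), and since $\on{C}^*_c(\Ran,-)\circ (\on{ins}^{\leq n})_!\simeq \on{C}^*_c(\Ran^{\leq n},-)$ by the very construction of $(\on{ins}^{\leq n})_!$ as a left adjoint, the cofiber of
$$\on{C}^*_c\left(\Ran^{\leq n-1},(\on{ins}^{\leq n-1})^!(\CF)\right)\to \on{C}^*_c\left(\Ran^{\leq n},(\on{ins}^{\leq n})^!(\CF)\right)$$
is identified with $\on{C}^*_c\left(\Ran,(\on{ins}_n/\Sigma_n)_!\circ (j_n/\Sigma_n)_*\circ (j_n/\Sigma_n)^!\circ (\on{ins}_n/\Sigma_n)^!(\CF)\right)$. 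So the only real content is to rewrite this last expression.

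First I would use that $\on{C}^*_c(\Ran,-)\circ (\on{ins}_n/\Sigma_n)_!\simeq \on{C}^*_c(X^n/\Sigma_n,-)$, again because the former functor is left adjoint to $(\on{ins}_n/\Sigma_n)^!\circ (p_{\Ran})^!\simeq (p_{X^n/\Sigma_n})^!$. Then I need the identification
$$\on{C}^*_c\left(X^n/\Sigma_n,(j_n/\Sigma_n)_*\circ (j_n/\Sigma_n)^!\circ (\on{ins}_n/\Sigma_n)^!(\CF)\right)\simeq \on{C}^*_c\left(\overset{\circ}X{}^n/\Sigma_n,(j_n/\Sigma_n)^!\circ (\on{ins}_n/\Sigma_n)^!(\CF)\right),$$
where the left-hand side collapses because the map $\on{C}^*_c(U,-)\circ j_*\to \on{C}^*_c(X/\Sigma_n,-)$ is ambiguous in general — rather, one should note that $\on{C}^*_c$ of an open embedding computed with $j_*$ agrees with $\on{C}^*_c$ of the closed complement being zero on the relevant object; more cleanly, I would instead argue that $(j_n/\Sigma_n)_!\circ (j_n/\Sigma_n)^! \to (j_n/\Sigma_n)_*\circ(j_n/\Sigma_n)^!$ and push the computation through the fact that $\on{C}^*_c\circ (j/\Sigma_n)_!\simeq \on{C}^*_c(\overset{\circ}X{}^n/\Sigma_n,-)$. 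Here I must be careful: \lemref{l:Chevalley-Cousin} genuinely has $j_*$, not $j_!$, so the passage from $\Ran^{\leq n}/$ open complement forces the $*$-extension; the resolution is that $\on{C}^*_c(X^n/\Sigma_n, (j/\Sigma_n)_*(\CG))$, while not equal to $\on{C}^*_c(\overset{\circ}X{}^n/\Sigma_n,\CG)$ in general, becomes so after taking $\Sigma_n$-coinvariants is not the point either — rather the statement of the corollary already only claims an identification with $\on{C}^*(\overset{\circ}X{}^n,-)_{\Sigma_n}$ (note: $\on{C}^*$, not $\on{C}^*_c$, and coinvariants), so I should track which functor actually appears.

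The cleanest route, and the one I would take: unwind $(\on{ins}_n/\Sigma_n)$ via the prestack quotient $X^n/\Sigma_n$, so that $\on{C}^*_c(X^n/\Sigma_n,-)$ is computed as the $\Sigma_n$-coinvariants (homotopy colimit over $B\Sigma_n$) of $\on{C}^*_c(X^n,-)$, using \lemref{l:express cohomology prestack} applied to the atlas $X^n\to X^n/\Sigma_n$. Then restricting to the open $\overset{\circ}X{}^n/\Sigma_n$ and using the equivalence \eqref{e:symmetric} between $\Shv(\overset{\circ}X{}^{(n)})$ and $\Shv^!(\overset{\circ}X{}^n/\Sigma_n)$, together with the fact that for the \emph{free} $\Sigma_n$-action on $\overset{\circ}X{}^n$ the $*$- and $!$-extensions followed by $\on{C}^*_c$ on the quotient prestack reproduce $\on{C}^*(\overset{\circ}X{}^n,-)_{\Sigma_n}$ because $\overset{\circ}X{}^n\to \overset{\circ}X{}^{(n)}$ is an honest \'etale $\Sigma_n$-torsor. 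The main obstacle I anticipate is precisely this bookkeeping: matching the $j_*$ appearing in \lemref{l:Chevalley-Cousin} with the $j^!$ and the coinvariants ($\Sigma_n$-orbit, lower index) in the target of \corref{c:Chevalley-Cousin}, i.e., checking that the Verdier-type duality between $j_!$ on $\overset{\circ}X{}^n$ and $j_*$ on $X^n$ interacts correctly with passing to the quotient by the free $\Sigma_n$-action and with compactly supported cohomology — which is where one uses that $\overset{\circ}X{}^n$ is the diagonal complement and the action there is free, so no stacky subtleties intervene. Everything else is a formal consequence of exactness of $\on{C}^*_c(\Ran,-)$ and the adjunction identities $\on{C}^*_c(\Ran,-)\circ f_!\simeq \on{C}^*_c(\mathrm{source},-)$ for the pseudo-proper maps $f$ in play.
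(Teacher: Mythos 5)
Your overall strategy is the right one: apply $\on{C}^*_c(\Ran,-)$ to the cofiber sequence of \lemref{l:Chevalley-Cousin}, invoke the formal identity $\on{C}^*_c(\Ran,-)\circ f_!\simeq \on{C}^*_c(\text{source of }f,-)$ (which holds because both sides are left adjoint to $f^!\circ (p_{\Ran})^!$), and then unwind the prestack quotient $X^n/\Sigma_n$ so that $\on{C}^*_c(X^n/\Sigma_n,-)$ becomes $\Sigma_n$-coinvariants of $\on{C}^*_c(X^n,-)$. That correctly reduces everything to the single identification
$$\on{C}^*_c\bigl(X^n, (j_n)_*(\CG')\bigr)\simeq \on{C}^*\bigl(\overset{\circ}X{}^n,\CG'\bigr),
\qquad \CG':=(j_n)^!\circ(\on{ins}_n)^!(\CF),$$
after which taking $\Sigma_n$-coinvariants is formal.

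The justification you give for this last step, however, is wrong, and this is a real gap. You assert that "for the free $\Sigma_n$-action on $\overset{\circ}X{}^n$ the $*$- and $!$-extensions followed by $\on{C}^*_c$ on the quotient prestack reproduce $\on{C}^*(\overset{\circ}X{}^n,-)_{\Sigma_n}$." Both parts of this claim are false. First, $!$- and $*$-extension do \emph{not} give the same answer: $\on{C}^*_c(X^n,(j_n)_!(\CG'))\simeq \on{C}^*_c(\overset{\circ}X{}^n,\CG')$, which differs from $\on{C}^*(\overset{\circ}X{}^n,\CG')$ in general; distinguishing these two is precisely the content of the remark that the paper places immediately after the corollary. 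Second, freeness of the $\Sigma_n$-action on $\overset{\circ}X{}^n$ is not what is doing the work. The reason the $*$-extension computes ordinary cohomology is that $X$ (hence $X^n$) is \emph{proper}, which is the standing hypothesis under which the stabilization results that use this corollary are stated. Properness of $X^n$ gives $(p_{X^n})_!\simeq (p_{X^n})_*$, and therefore
$$\on{C}^*_c\bigl(X^n,(j_n)_*(\CG')\bigr)=(p_{X^n})_!\circ(j_n)_*(\CG')\simeq (p_{X^n})_*\circ(j_n)_*(\CG')\simeq \on{C}^*\bigl(\overset{\circ}X{}^n,\CG'\bigr).$$
Without properness the identity already fails at $n=1$, where the corollary would assert $\on{C}^*_c(X,\CF_X)\simeq \on{C}^*(X,\CF_X)$. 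Your instinct that freeness of the action on $\overset{\circ}X{}^n$ should matter is not unreasonable -- it is used later in the paper (in the step where coinvariants are replaced by invariants via the norm map, cf.\ the observation in the proof of \lemref{l:estimate dual}) -- but it plays no role in \corref{c:Chevalley-Cousin} itself. So: replace the appeal to freeness by the appeal to properness of $X^n$, and drop the claim that the $!$-extension gives the same answer.
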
 

\begin{rem}
Note that in \corref{c:Chevalley-Cousin}, we are using the functor $\on{C}^*(\overset{\circ}X{}^n,-)$, 
and not $\on{C}^*_c(\overset{\circ}X{}^n,-)$. 
\end{rem}

\sssec{}

We now claim:

\begin{lem} \label{l:estimate}
Let $\CF\in \Shv^!(\Ran)$ and $m\in \BN$ have the property that $\on{ins}_\CI^!(\CF)|_{\overset{\circ}X{}^\CI}$ is concentrated in 
perverse cohomological degrees $\leq -3$ whenever $|\CI|>m$. Then for any $m \leq  n_1\leq n_2$, the map
$$\on{C}^*_c\left(\Ran^{\leq n_1}, (\on{ins}^{\leq n_1})^!(\CF)\right) \to 
\on{C}^*_c\left(\Ran^{\leq n_2}, (\on{ins}^{\leq n_2})^!(\CF)\right)$$
induces an isomorphism in cohomological degrees $\geq 0$. 
\end{lem}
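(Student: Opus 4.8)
The plan is to prove this by induction on $n_2-n_1$, reducing immediately to the case $n_2=n_1+1$, i.e., to showing that for $n\geq m$ the map
$$\on{C}^*_c\left(\Ran^{\leq n}, (\on{ins}^{\leq n})^!(\CF)\right) \to
\on{C}^*_c\left(\Ran^{\leq n+1}, (\on{ins}^{\leq n+1})^!(\CF)\right)$$
induces an isomorphism in cohomological degrees $\geq 0$. Since the cofiber of a map of complexes is (co)connective in the appropriate range exactly when the map is an isomorphism in the complementary range, it suffices to show that the cofiber of the above map lives in cohomological degrees $\geq 1$ (strictly), or rather $\geq 0$ is enough; I will aim to show it is concentrated in degrees $> 0$. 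By \corref{c:Chevalley-Cousin}, this cofiber is canonically identified with
$$\on{C}^*\left(\overset{\circ}X{}^{n+1},(j_{n+1})^!\circ (\on{ins}_{n+1})^!(\CF)\right)_{\Sigma_{n+1}},$$
so the task is purely one of estimating the cohomological amplitude of this object.

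First I would use the hypothesis: since $n+1 > m$, the object $(j_{n+1})^!\circ (\on{ins}_{n+1})^!(\CF) = \on{ins}_\CI^!(\CF)|_{\overset{\circ}X{}^\CI}$ for $\CI$ of cardinality $n+1$ is concentrated in perverse cohomological degrees $\leq -3$. On the smooth scheme $\overset{\circ}X{}^{n+1}$ of dimension $n+1$, being in perverse degrees $\leq -3$ means being in usual cohomological degrees $\leq -3-(n+1) = -n-4$ — actually one only needs perverse degrees $\leq -3$, which gives usual degrees $\leq -(n+1)-(-3)$... let me be careful: a perverse sheaf on a smooth variety of dimension $d$ placed in perverse degree $0$ sits, for a local system, in usual degree $-d$; so perverse degrees $\leq -3$ means usual degrees $\leq -d-3 = -(n+1)-3$. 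Then $\on{C}^*(\overset{\circ}X{}^{n+1}, -)$ of a complex in usual degrees $\leq -(n+1)-3$ lands in degrees $\leq -(n+1)-3 + 2(n+1) = (n+1) - 3 = n-2$... that is the wrong direction. The point is rather the opposite: I should note that $\overset{\circ}X{}^{n+1}$ is an \emph{affine} variety when $X$ is affine, but $X$ need not be affine here. Instead, the correct input is that $\on{C}^*(\overset{\circ}X{}^{n+1},-)$ (non-compactly-supported cohomology) has cohomological amplitude bounded \emph{below} by the amplitude of the sheaf: if $\CG$ is in usual cohomological degrees $\geq a$ then $\on{C}^*(\overset{\circ}X{}^{n+1},\CG)$ is in degrees $\geq a$. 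But our estimate is an upper bound on the degrees of the sheaf, not a lower bound, so this does not directly help either. The resolution, and the genuine content, is that on $\overset{\circ}X{}^{n+1}$ — the complement of the diagonals in a curve power — non-compactly-supported cohomology $\on{C}^*(\overset{\circ}X{}^{n+1},-)$ has cohomological amplitude $[0, n+1]$ (it is homotopy equivalent to a space of dimension $\leq n+1$; more precisely $\overset{\circ}X{}^{n+1}$ is an open subvariety of the $(n+1)$-dimensional $X^{n+1}$, so $\on{C}^*$ has amplitude $\leq n+1$ for a perverse sheaf in perverse degree $0$... ). Let me state the estimate I will actually use: for a perverse sheaf $P$ on $\overset{\circ}X{}^{n+1}$, the complex $\on{C}^*(\overset{\circ}X{}^{n+1}, P)$ is concentrated in usual cohomological degrees $\leq n+1$ (since $\overset{\circ}X{}^{n+1}$ is a variety of dimension $n+1$, and non-compactly-supported cohomology of a perverse sheaf is right-bounded by the dimension). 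Therefore if our sheaf is in perverse degrees $\leq -3$, i.e., it is $P[k]$ with $P$ perverse and $k \geq 3$, then $\on{C}^*$ is in usual degrees $\leq n+1 - k \leq n+1-3 = n-2$.

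\textbf{The main obstacle.} So the heart of the matter — and what I expect to be the main obstacle — is establishing the right \emph{upper} bound on the amplitude of $\on{C}^*(\overset{\circ}X{}^{n+1}, -)$ together with the effect of passing to $\Sigma_{n+1}$-coinvariants, and then checking that $n-2 < 0$ fails for large $n$: indeed $n-2$ is not $<0$. This tells me the estimate above is still not what is needed, and the actual argument must be more delicate: one cannot bound the amplitude of $\on{C}^*(\overset{\circ}X{}^{n+1},-)$ uniformly; instead one must exploit that $\overset{\circ}X{}^{n+1}$, being the complement of the diagonals, is affine \emph{over} $X^{(n+1)}$ or rather has the property that its non-compactly-supported cohomology of a sheaf pulled back along a finite map behaves well. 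The genuinely correct mechanism — which the excerpt's Remark after \propref{p:Verdier on Ran 1} hints at by insisting the bound is a \emph{constant} $\leq -3$ rather than $\leq -|\CI|-C$ — is this: $\overset{\circ}X{}^{n+1}/\Sigma_{n+1}$ has the étale-local structure of $\overset{\circ}X{}^{(n+1)}$, a smooth variety of dimension $n+1$ which is \emph{affine over $X^{(n+1)}$ minus diagonals}; the key is that $\on{C}^*$ of a perverse sheaf here, \emph{after taking $\Sigma_{n+1}$-coinvariants}, does \emph{not} grow: more precisely, $\on{C}^*(\overset{\circ}X{}^{n+1}, -)_{\Sigma_{n+1}} = \on{C}^*(\overset{\circ}X{}^{(n+1)}, -)$ via the equivalence \eqref{e:symmetric}, and on the symmetric power one uses that a perverse sheaf has $\on{C}^*$ in degrees $\leq n+1$ — still the same issue. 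I now believe the real argument is: since we only need degrees $\geq 0$, and since the cofiber is $\on{C}^*(\overset{\circ}X{}^{(n+1)}, \text{something in perverse degrees} \leq -3)$, one writes $\overset{\circ}X{}^{(n+1)}$ as having cohomological dimension for \emph{this particular sheaf} bounded using the stratification, or — most likely — one simply notes $\overset{\circ}X{}^{(n+1)}$ is affine (it is! the configuration space of $n+1$ points on a curve, modulo $\Sigma$, when... no). Given the uncertainty, my concrete plan is: (1) reduce to $n_2 = n_1+1$ by induction; (2) invoke \corref{c:Chevalley-Cousin} to identify the cofiber with $\on{C}^*(\overset{\circ}X{}^{n}, (j_n)^!(\on{ins}_n)^!(\CF))_{\Sigma_n}$; (3) apply the hypothesis to put the sheaf in perverse degrees $\leq -3$; (4) prove the auxiliary estimate that for a perverse sheaf $P$ on $\overset{\circ}X{}^{(n)}$ (equivalently $\Sigma_n$-equivariantly on $\overset{\circ}X{}^n$), the complex $\on{C}^*(\overset{\circ}X{}^{(n)}, P)$ lives in cohomological degrees $\leq 2$ — this is the crux, and it should follow from the fact that $\overset{\circ}X{}^{(n)}$ admits a proper map to $X^{(n)}$ whose fibers... or from Artin vanishing if $\overset{\circ}X{}^{(n)}$ is affine, which I will need to verify or replace by the correct statement that non-compactly supported cohomology of a perverse sheaf on a curve power complement of diagonals is bounded by a constant after symmetrization because the map to $\mathrm{Sym}$ collapses dimension — honestly this step is the one I am least sure about and would need to think through carefully, consulting how \cite{Main Text} handles it; (5) conclude that the cofiber is in degrees $\leq 2 - 3 = -1 < 0$, hence the map is an isomorphism in degrees $\geq 0$.
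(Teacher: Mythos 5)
Your reduction to \corref{c:Chevalley-Cousin} and the plan to bound the cohomological amplitude of
$\on{C}^*\bigl(\overset{\circ}X{}^{n},(j_{n})^!(\on{ins}_{n})^!(\CF)\bigr)_{\Sigma_{n}}$
match the paper. The problem is entirely in your step (4), where you never pin down the correct estimate, and the candidates you float would not give the result.

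The key fact the paper uses, and which you do not identify, is that the projection $\overset{\circ}X{}^{n}\to X$ onto (say) the first coordinate is an \emph{affine morphism}. Pushforward along an affine morphism is right t-exact for the perverse t-structure (Artin vanishing), and $\on{C}^*(X,-)$ on a curve has right cohomological dimension $\leq 1$; composing, $\on{C}^*(\overset{\circ}X{}^{n},-)$ sends objects in perverse degrees $\leq 0$ to complexes in usual degrees $\leq 1$. Applied to a sheaf in perverse degrees $\leq -3$, this gives $\on{C}^*$ in degrees $\leq -2$, and taking $\Sigma_n$-coinvariants (a left-exact-amplitude-preserving operation) keeps it there. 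The candidates you consider instead --- that $\overset{\circ}X{}^{(n)}$ might be affine, or that $X$ is affine --- are both false in the setting of the lemma ($X$ is a complete curve, and $\overset{\circ}X{}^{(n)}$ is not affine); the affineness that matters is of the map $\overset{\circ}X{}^n\to X$, not of the source or target scheme.

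There is also a quantitative error that would sink the conclusion even if you repaired the mechanism. You early on write that the cofiber should ``live in cohomological degrees $\geq 1$ (strictly), or rather $\geq 0$ is enough,'' which has the direction backwards; later you correct the direction but land on the bound $\leq 2-3 = -1$. A cofiber in degrees $\leq -1$ only yields an isomorphism on $H^i$ for $i\geq 1$: the long exact sequence at $i=0$ is $H^{-1}(C)\to H^0(A)\to H^0(B)\to H^0(C)$, and with $H^{-1}(C)$ possibly nonzero one gets only surjectivity, not injectivity, in degree $0$. To conclude an isomorphism in degrees $\geq 0$ you need the cofiber concentrated in degrees $\leq -2$. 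This is exactly what the bound $\leq 1$ (not $\leq 2$) from the affine-morphism argument delivers, and is why the paper's hypothesis is tuned to perverse degrees $\leq -3$.
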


\begin{proof}

By \corref{c:Chevalley-Cousin}, it suffices to show that for $n>m$, the object 
$$\on{C}^*\left(\overset{\circ}X{}^n,(j_n)^!\circ (\on{ins}_n)^!(\CF)\right)_{\Sigma_n}$$
lives in cohomological degrees $<-1$ (i.e., $\leq -2$). For that, it 
suffices to show that the object $\on{C}^*(\overset{\circ}X{}^n,(j_n)^!\circ (\on{ins}_n)^!(\CF))$
lives in cohomological degrees $\leq -2$. 

\medskip 

By assumption, $j_n^!\circ \on{ins}_n^!(\CF)$ is concentrated in perverse cohomological degrees $\leq -3$. 
Thus, it remains to show that the variety $\overset{\circ}X{}^n$ has the property that the functor
$\on{C}^*(\overset{\circ}X{}^n,-)$ has cohomological dimension bounded on the right by $1$.
This follows from the fact that the projection on, say, the first coordinate
$$\overset{\circ}X{}^n\to X$$
is an affine morphism. 

\end{proof}

\sssec{}

Finally, we note that \lemref{l:estimate} readily implies \propref{p:Verdier on Ran 1}: 
take $n_1=n$ and pass to the colimit with respect to $n_2$.

\ssec{Proof of \propref{p:stabilization}(i)}  \label{ss:stab 1}

The proof of \propref{p:stabilization}(i) is more subtle than that
of \propref{p:Verdier on Ran 1}, because it is not true that the functor $\on{C}^*_c(\Ran,-)$ commutes
with limits. So, the crux of the argument will be to commute a limit with a colimit. 

\sssec{}

By \eqref{e:on Ran via n}, it suffices to show that for all $k\geq 0$, the map
\begin{multline} \label{e:dual estimate limit}
\on{C}^*_c\left(\Ran^{\leq k},(\on{ins}^{\leq k})^! \circ \BD_{\Ran}(\CF)\right) 
\to \on{C}^*_c\left(\Ran^{\leq k},(\on{ins}^{\leq k})^! \circ (\on{ins}^{\leq n})_! \circ  \BD_{\Ran^{\leq n}}\circ (\on{ins}^{\leq n})^!(\CF)\right)
\end{multline}
induces an isomorphism in cohomological degrees $\leq 0$ whenever $n\geq m$. 

\medskip

Note that the functor $(\on{ins}^{\leq k})^!$ commutes with limits (because it admits a left adjoint). In addition,
the functor $ \on{C}^*_c(\Ran^{\leq k},-)$ \emph{does} commutes with limits 
by \lemref{l:lax homology finite}. 

\medskip

Therefore, in order to prove that \eqref{e:dual estimate limit} induces an isomorphism in cohomological degrees $\leq 0$,
it suffices to show that under the assumptions of the proposition, for $m\leq n_1\leq n_2$, the map
\begin{multline*} 
\on{C}^*_c\left(\Ran^{\leq k},(\on{ins}^{\leq k})^! \circ (\on{ins}^{\leq n_2})_! \circ  \BD_{\Ran^{\leq n_2}}\circ (\on{ins}^{\leq n_2})^!(\CF)\right)
\to \\
\to 
\on{C}^*_c\left(\Ran^{\leq k},
(\on{ins}^{\leq k})^! \circ (\on{ins}^{\leq n_1})_! \circ  \BD_{\Ran^{\leq n_1}}\circ (\on{ins}^{\leq n_1})^!(\CF)\right)
\end{multline*}
induces an isomorphism in cohomological degrees $\leq 0$. 

\medskip

Thus, it remains to show:

\begin{lem}  \label{l:estimate dual}
Under the assumptions of \propref{p:stabilization}, for $n> m$ and any $k$ the map
\begin{multline} \label{e:dual estimate}
\on{C}^*_c\left(\Ran^{\leq k},
(\on{ins}^{\leq k})^! \circ (\on{ins}^{\leq n})_! \circ  \BD_{\Ran^{\leq n}}\circ (\on{ins}^{\leq n})^!(\CF)\right)\to \\
\to 
\on{C}^*_c\left(\Ran^{\leq k}, (\on{ins}^{\leq k})^! \circ (\on{ins}^{\leq n-1})_! \circ  \BD_{\Ran^{\leq n-1}}\circ (\on{ins}^{\leq n-1})^!(\CF)\right)
\end{multline}
induces an isomorphism in cohomological degrees $\leq 0$. 
\end{lem}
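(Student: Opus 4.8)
The plan is to analyse the fibre of the transition map in the limit \eqref{e:Verdier on Ran} and to show that, after $!$-restriction to $\Ran^{\leq k}$ and passage to compactly supported cohomology, it is concentrated in cohomological degrees $\geq 2$; this forces \eqref{e:dual estimate} to be an isomorphism in degrees $\leq 0$ (and a monomorphism in degree $1$) by the long exact sequence of the fibre triangle. Write $T_\ell:=(\on{ins}^{\leq \ell})_!\circ\BD_{\Ran^{\leq \ell}}\circ(\on{ins}^{\leq \ell})^!(\CF)$, so that \eqref{e:dual estimate} is the map induced by the transition map $T_n\to T_{n-1}$. Since $(\on{ins}^{\leq k})^!$ and $\on{C}^*_c(\Ran^{\leq k},-)$ are exact, it suffices to control $\on{C}^*_c\bigl(\Ran^{\leq k},(\on{ins}^{\leq k})^!\on{Fib}(T_n\to T_{n-1})\bigr)$.

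First I would identify $\on{Fib}(T_n\to T_{n-1})$ geometrically. Let $\iota:\Ran^{\leq n-1}\hookrightarrow\Ran^{\leq n}$ be the closed embedding complementary to the open $j:U_n\hookrightarrow\Ran^{\leq n}$; by \lemref{l:replace lego}(d) and the discussion in \secref{ss:proof of Verdier one}, $U_n$ is identified with $\overset{\circ}X{}^n/\Sigma_n$, and via \eqref{e:symmetric} I will freely replace it by the smooth variety $\overset{\circ}X{}^{(n)}$. Set $\CE:=(\on{ins}^{\leq n})^!(\CF)$. The recollement triangle on $\Ran^{\leq n}$ reads $\iota_!\iota^!\CE\to\CE\to j_*j^!\CE$, and by \corref{c:finite duality} (applicable since $\iota$ is finitary pseudo-proper and $\Ran^{\leq n}$, $\Ran^{\leq n-1}$ are pseudo-schemes with finitary diagonal) the map $T_n\to T_{n-1}$ is exactly $(\on{ins}^{\leq n})_!\circ\BD_{\Ran^{\leq n}}$ applied to the counit $\iota_!\iota^!\CE\to\CE$. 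Hence, as $\BD_{\Ran^{\leq n}}$ sends cofibre sequences to fibre sequences and $(\on{ins}^{\leq n})_!$ is exact,
\[
\on{Fib}(T_n\to T_{n-1})\ \simeq\ (\on{ins}^{\leq n})_!\circ\BD_{\Ran^{\leq n}}\bigl(j_*j^!\CE\bigr)\ \simeq\ (\on{ins}^{\leq n})_!\circ j_!\circ\BD_{U_n}\bigl(j^!\CE\bigr),
\]
where the second isomorphism is the identity $\BD_{\Ran^{\leq n}}\circ j_*\simeq j_!\circ\BD_{U_n}$ for the open (hence étale) embedding $j$; this is proved exactly as \lemref{l:etale and dual}, by reducing to schemes through \propref{p:expl Verdier} and the analogous statement $\BD_{X^\CI}\circ j_*\simeq j_!\circ\BD_{U_n^\CI}$ for $U_n^\CI:=U_n\underset{\Ran^{\leq n}}\times X^\CI$. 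Thus $\on{Fib}(T_n\to T_{n-1})$ is, $!$-adically, supported on collections of exactly $n$ points, and equals $(\on{ins}_{\overset{\circ}X{}^{(n)}})_!\,\BD_{\overset{\circ}X{}^{(n)}}(\CM_n)$ with $\CM_n\in\Shv(\overset{\circ}X{}^{(n)})$ the descent of $j_n^!\on{ins}_n^!(\CF)$.

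Next I would restrict to $\Ran^{\leq k}$ and take $\on{C}^*_c$. Since the object above is $(\on{ins}^{\leq n})_!$ of a $j_!$-sheaf, base change (\corref{c:pseudo-proper} for the finitary pseudo-proper maps $\on{ins}^{\leq k}$, $\on{ins}^{\leq n}$) gives $0$ when $k<n$; when $k\geq n$ it gives, after $\on{C}^*_c(\Ran^{\leq k},-)$, simply $\on{C}^*_c\bigl(\overset{\circ}X{}^{(n)},\BD_{\overset{\circ}X{}^{(n)}}(\CM_n)\bigr)\simeq\bigl(\on{C}^*(\overset{\circ}X{}^{(n)},\CM_n)\bigr)^{\vee}$, the last step being the identity $p_!\circ\BD\simeq\BD\circ p_*$ for $p:\overset{\circ}X{}^{(n)}\to\on{pt}$. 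So the lemma reduces to showing: for $n>m$, $\on{C}^*(\overset{\circ}X{}^{(n)},\CM_n)$ is concentrated in cohomological degrees $\leq-2$. This degree estimate follows from the hypothesis just as in \lemref{l:estimate}: $\CM_n$ pulls back along the finite étale cover $\overset{\circ}X{}^n\to\overset{\circ}X{}^{(n)}$ to $j_n^!\on{ins}_n^!(\CF)|_{\overset{\circ}X{}^n}$, which by assumption lies in perverse degrees $\leq-n-2$; the first-coordinate projection $\pi:\overset{\circ}X{}^n\to X$ is affine, so $\pi_*$ is right t-exact for the perverse t-structures, and $\on{C}^*$ on the proper smooth curve $X$ carries ${}^pD^{\leq d}(X)$ into cohomological degrees $\leq d+1$; taking $\Sigma_n$-invariants (exact over the coefficient field) gives $\on{C}^*(\overset{\circ}X{}^{(n)},\CM_n)$ in degrees $\leq -n-1\leq-2$ (using $n>m\geq0$). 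Dualising, the fibre lives in degrees $\geq n+1\geq2$, as needed.

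The main obstacle, in my view, is this middle step — pinning down $\on{Fib}(T_n\to T_{n-1})$ intrinsically — because $\Ran^{\leq n}$ is only a finitary pseudo-scheme and $U_n=\overset{\circ}X{}^n/\Sigma_n$ a prestack quotient. One must verify carefully that the open–closed recollement triangle is available on $\Ran^{\leq n}$ (it is, $\iota$ being finitary pseudo-proper and injective, \lemref{l:replace lego}), that Verdier duality on $\Ran^{\leq n}$ exchanges $j_*$ and $j_!$ for the open $j$ as claimed, and that the equivalence $\Shv^!(U_n)\simeq\Shv(\overset{\circ}X{}^{(n)})$ is compatible with $\omega$, with $\on{C}^*_c$ and with $\BD$. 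Once this bookkeeping is in place, the cohomological degree count is a routine consequence of the tools already developed (affineness of $\pi$, the identity $p_!\BD\simeq\BD p_*$, and the long exact sequence).
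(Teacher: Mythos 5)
Your overall plan—isolate $\on{Fib}(T_n\to T_{n-1})$ via the open/closed recollement on $\Ran^{\leq n}$ and bound its degrees—is the same strategy the paper follows (through Lemma~\ref{l:Chevalley-Cousin}), and your final degree estimate is sound: for an affine morphism $\pi$, $\pi_*$ is indeed right $t$-exact for the perverse $t$-structure (Artin vanishing), and $\on{C}^*$ on the proper curve adds at most $1$ to the degree, exactly as in Lemma~\ref{l:estimate}. But there are two real problems in the middle of the argument.

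The serious one is your claim that for $k<n$ the $!$-restriction of the fibre to $\Ran^{\leq k}$ vanishes, justified by ``base change (\corref{c:pseudo-proper})''. That corollary applies to pseudo-proper morphisms, and while $\on{ins}^{\leq k}$ and $\on{ins}^{\leq n}$ are pseudo-proper, the open embedding $j\colon U_n\to\Ran^{\leq n}$ is not, and it is the base change for $j_!$ along the closed $\on{ins}^{k,n}\colon\Ran^{\leq k}\to\Ran^{\leq n}$ that you actually need. In the $\Shv^!$-framework there is no $*$-restriction available; the $!$-restriction of the extension-by-zero $j_!\BD_{U_n}(j^!\CE)$ to the complementary closed locus is a ``local cohomology'' term, the fibre of $j_!\to j_*$, which is nonzero in general. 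This is precisely why the paper does not just localize to the top stratum: it must estimate $\on{ins}_k^!$ of the fibre for \emph{every} $k$, including $k<n$, which it does via \corref{c:push-pull Ran}, the quasi-finite correspondence $q_k,q_n$ on $X^{k,n}$, and the $\CHom$-computation in \secref{sss:final steps 2}. As written, your argument only addresses $k\geq n$.

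Separately, the identity $\BD_{\Ran^{\leq n}}\circ j_*\simeq j_!\circ\BD_{U_n}$ is not what Lemma~\ref{l:etale and dual} proves (that lemma is about $\BD$ commuting with \'etale $!$-pullback). What you want is the reverse direction of Lemma~\ref{l:j}, and in a framework where $\BD\circ\BD\neq\on{id}$ and $*$-restriction is unavailable it requires a separate argument which the paper never supplies. The paper sidesteps it: instead of commuting $\BD$ past $j_*$, it keeps the composite $\BD_{X^n}\circ(j_n)_*$, uses that $(j_n)_*$ is affine (hence perverse $t$-exact for the affine open $j_n$) to preserve the hypothesis on cohomological degrees, and then runs the $\CHom$-estimate. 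You would either need to prove the commutation identity in this setting or, better, adopt the paper's route of keeping $\BD_{X^n}\circ(j_n)_*$ intact.
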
 

\begin{rem}
Note that the fact that the map 
$$\on{C}^*_c\left(\Ran,
(\on{ins}^{\leq n})_! \circ  \BD_{\Ran^{\leq n}}\circ (\on{ins}^{\leq n})^!(\CF)\right)\to
\on{C}^*_c\left(\Ran,(\on{ins}^{\leq n-1})_! \circ  \BD_{\Ran^{\leq n-1}}\circ (\on{ins}^{\leq n-1})^!(\CF)\right)$$
induces an isomorphism in cohomological degrees $\leq 0$ follows by duality from \lemref{l:estimate}.
In particular, this statement needs a weaker assumption on the cohomological degrees in which 
$\on{ins}_\CI^!(\CF)|_{\overset{\circ}X{}^\CI}$ lives. 
\end{rem} 

\sssec{}  \label{sss:cousin step}

The rest of this subsection is devoted to the proof of \lemref{l:estimate dual}.

\medskip

By \corref{c:Chevalley-Cousin}, it suffices to show that for any $k'\leq k$, the object 
\begin{multline*}
\biggl(\on{Fib}\biggl(\on{C}^*\left(\overset{\circ}X{}^{k'},(j_{k'})^!\circ (\on{ins}_{k'})^!\circ 
(\on{ins}^{\leq n})_! \circ  \BD_{\Ran^{\leq n}}\circ (\on{ins}^{\leq n})^!(\CF)\right)\to \\
\to \on{C}^*\left(\overset{\circ}X{}^{k'},(j_{k'})^!\circ (\on{ins}_{k'})^!\circ 
(\on{ins}^{\leq n-1})_! \circ  \BD_{\Ran^{\leq n-1}}\circ (\on{ins}^{\leq n-1})^!(\CF)\right)\biggr)\biggr)_{\Sigma_{k'}} 
\end{multline*}
is concentrated in cohomological degrees $>1$ (i.e., $\geq 2$). Since $k$ was arbitrary,
we can take $k'=k$. 

\medskip

Note that for $\CG\in \Shv^!(\overset{\circ}X{}^k/\Sigma_k)$, the norm map
$$\on{C}^*(\overset{\circ}X{}^k,\CG)_{\Sigma_k} \to
\on{C}^*(\overset{\circ}X{}^k,\CG)^{\Sigma_k}$$
is an isomorphism.

\medskip

Hence, it suffices to show that the object
\begin{multline} \label{e:estimate dual bis}
\on{Fib}\biggl(\on{C}^*\left(\overset{\circ}X{}^k,j_k^!\circ (\on{ins}_k)^!\circ 
(\on{ins}^{\leq n})_! \circ  \BD_{\Ran^{\leq n}}\circ (\on{ins}^{\leq n})^!(\CF)\right) \to \\
\on{C}^*\left(\overset{\circ}X{}^k, (j_k)^!\circ (\on{ins}_k)^!\circ 
(\on{ins}^{\leq n-1})_! \circ  \BD_{\Ran^{\leq n-1}}\circ (\on{ins}^{\leq n-1})^!(\CF)\right)\biggr) 
\end{multline}
is concentrated in cohomological degrees $\geq 2$. 

\sssec{}  \label{sss:final steps 1}

By Lemmas \ref{l:Chevalley-Cousin} and \ref{l:lax homology finite}, the object \eqref{e:estimate dual bis} identifies with
$$\left(\on{C}^*\left(\overset{\circ}X{}^k, j_k^!\circ (\on{ins}_k)^!\circ 
(\on{ins}_n)_! \circ \BD_{X^n}\circ (j_n)_* \circ (j_n)^!\circ (\on{ins}_n)^! (\CF)\right)\right){}^{\Sigma_n}.$$

Thus, it suffices that the object
$$\on{C}^*\left(\overset{\circ}X{}^k, j_k^!\circ (\on{ins}_k)^!\circ 
(\on{ins}_n)_! \circ \BD_{X^n}\circ (j_n)_* \circ (j_n)^!\circ (\on{ins}_n)^! (\CF)\right)$$
is concentrated in cohomological degrees $\geq 2$. 

\medskip

By assumption, the object $(j_n)^!\circ (\on{ins}_n)^!(\CF)\in \Shv(X^n)$
is concentrated in perverse cohomological degrees $\leq -n-2$. Hence,  $(j_n)_* \circ (j_n)^!\circ (\on{ins}_n)^!(\CF)$
lives in perverse cohomological degrees $\leq -n-2$, as the morphism $(j_n)_*$ is affine. 

\medskip

We will show that for any $\CG\in \Shv(X^n)$ that lives in perverse
cohomological degrees $\leq -n-2$, the object 
\begin{equation} \label{e:estimate dual bis bis}  
\on{C}^*\left(\overset{\circ}X{}^k, j_k^!\circ (\on{ins}_k)^!\circ 
(\on{ins}_n)_! \circ \BD_{X^n}(\CG)\right)
\end{equation} 
is concentrated in cohomological degrees $\geq 2$. 

\sssec{}

Let $X^{k,n}\subset X^k\times X^n$ be the closed subset, corresponding to the condition
that the $k$-tuple and the $n$-tuple coincide set-theoretically (see \secref{sss:coincide}). 
Let $q_k$ and $q_n$ denote the two projections
$$X^k \overset{q_k}\longleftarrow X^{k,n}\overset{q_n}\longrightarrow X^n.$$

By \corref{c:push-pull Ran}, the functor 
$$(\on{ins}_k)^!\circ (\on{ins}_n)_!:\Shv^!(X^n)\to \Shv^!(X^m)$$
is canonically isomorphic to 
$$(q_k)_!\circ (q_n)^!,$$
where $q_k$ and $q_n$ are the two projections
$$X^k \overset{q_k}\longleftarrow X^{k,n}\overset{q_n}\longrightarrow X^n.$$

\medskip

Hence, we obtain that the expression in \eqref{e:estimate dual bis bis} identifies canonically with
\begin{equation}\label{e:estimate dual bis bis bis}  
\on{C}^*\left(\overset{\circ}X{}^k,j_k^!\circ (q_k)_!\circ (q_n)^! \circ \BD_{X^n}(\CG)\right),
\end{equation}
and the latter expression only involves schemes (rather than prestacks).

\sssec{} \label{sss:final steps 2}

Note that both maps $q_k$ and $q_n$ are finite. Let $\overset{\circ}X{}^{k,n}$ denote the
preimage of $\overset{\circ}X{}^k$ under $q_k$. Let $\overset{\circ}q{}^n$ denote the resulting 
(quasi-finite) map $\overset{\circ}X{}^{k,n}\to X^n$. 

\medskip

By base change (and using the fact that $q_k$ is proper), we rewrite the expression  
in \eqref{e:estimate dual bis bis bis} as 
$$\on{C}^*\left(X^n,\left((\overset{\circ}q_n)_*(\omega_{\overset{\circ}X^{k,n}})\overset{!}\otimes \BD_{X^n}(\CG)\right)\right),$$
and, due to the compactness of the object $(\overset{\circ}q_n)_*(\omega_{\overset{\circ}X^{k,n}})$, further as 
$$\CHom\left(\CG, (\overset{\circ}q_n)_*(\omega_{\overset{\circ}X^{k,n}})\right),$$
where $\CHom(-,-)\in \Lambda\mod$ denotes the complex of maps between given two objects.

\medskip

Due to the assumption on $\CG$, it remains to show that $(\overset{\circ}q_n)_*(\omega_{\overset{\circ}X^{k,n}})$
is concentrated in perverse cohomological degrees $\geq -n$. The latter is immediate from the fact that $\overset{\circ}q_n$
is quasi-finite, while $\dim(X^n)=n$. 

\ssec{Proof of \thmref{t:Verdier Ran open}}  \label{ss:on open}

In this subsection we will be working in the context of constructible sheaves. 

\sssec{}

Let 
$$\on{ins}'{}^{\leq n}:\Ran'{}^{\leq n}\to \Ran' \text{ and } \bj_{\Ran^{\leq n}}:\Ran'{}^{\leq n}\to \Ran^{\leq n}$$
denote the corresponding morphisms. We have a tautological isomorphism
$$(\on{ins}^{\leq n})_!\circ (\bj_{\Ran^{\leq n}})_!\simeq (\bj_{\Ran})_!\circ (\on{ins}'{}^{\leq n})_!.$$

In addition, it is easy to see that the natural transformation 
$$(\on{ins}^{\leq n})_!\circ (\bj_{\Ran^{\leq n}})_*\to (\bj_{\Ran})_*\circ (\on{ins}'{}^{\leq n})_!$$
is also an isomorphism. 

\medskip

As in \lemref{l:j}, for $\CG'\in \Shv^!(\Ran'{}^{\leq n})$, we have a canonical isomorphism
$$\BD_{\Ran^{\leq n}}\circ (\bj_{\Ran^{\leq n}})_!(\CG')\simeq 
(\bj_{\Ran^{\leq n}})_*\circ \BD_{\Ran'{}^{\leq n}}(\CG').$$

\sssec{}

By \propref{p:Verdier on Ran 1}, applied to the curve $X'$, as in \secref{sss:proof of Verdier on Ran}, in order to prove
\thmref{t:Verdier Ran open}, it suffices to establish the following variant of \propref{p:stabilization}(i):

\begin{prop}  \label{p:stabilization open}
Let $\CF\in \Shv^!(\Ran')$ be such that there exists an integer $m\geq 0$ such that the object 
$(\on{ins}'_\CI)^!(\CF')|_{\overset{\circ}X'{}^\CI}$ is concentrated in \emph{perverse} cohomological degrees $\leq -|\CI|-2$ whenever $|\CI|>m$.
Then the map
$$ \on{C}^*_c\left(\Ran,(\bj_{\Ran})_*\circ \BD_{\Ran'}(\CF')\right) \to 
\on{C}^*_c\left(\Ran^{\leq n},(\bj_{\Ran^{\leq n}})_*\circ 
\BD_{\Ran'{}^{\leq n}}\circ (\on{ins}'{}^{\leq n})^!(\CF')\right)$$
induces an isomorphism in cohomological degrees $\leq 0$ for $n\geq m$.
\end{prop}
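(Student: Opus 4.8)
\textbf{Proof proposal for \propref{p:stabilization open}.}

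The plan is to mimic the proof of \propref{p:stabilization}(i) given in \secref{ss:stab 1}, tracking the extra open-embedding $\bj$ throughout and replacing the use of \corref{c:Chevalley-Cousin} by its open analog. First, I would reduce, exactly as in the proof of \propref{p:stabilization}(i), via \eqref{e:on Ran via n} applied to $\CF:=(\bj_{\Ran})_!(\CF')$ (using \lemref{l:j} in the form $\BD_{\Ran}\circ(\bj_{\Ran})_!(\CF')\simeq(\bj_{\Ran})_*\circ\BD_{\Ran'}(\CF')$): it suffices to show that for each $k\geq 0$ the map obtained by applying $\on{C}^*_c(\Ran^{\leq k},(\on{ins}^{\leq k})^!(-))$ to
$$\BD_{\Ran}\circ(\bj_{\Ran})_!(\CF')\to (\on{ins}^{\leq n})_!\circ \BD_{\Ran^{\leq n}}\circ(\on{ins}^{\leq n})^!\circ(\bj_{\Ran})_!(\CF')$$
is an isomorphism in cohomological degrees $\leq 0$ for $n\geq m$. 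Since $(\on{ins}^{\leq k})^!$ admits a left adjoint and $\on{C}^*_c(\Ran^{\leq k},-)$ commutes with limits by \lemref{l:lax homology finite} (as $\Ran^{\leq k}$ is finitary pseudo-proper — this is where I use that $X$ is proper), and since both targets are expressed as limits over $n$ via \eqref{e:Verdier on Ran}, it is enough to prove the single-step statement: for $n>m$ and all $k$, the map
$$\on{C}^*_c\!\left(\Ran^{\leq k},(\on{ins}^{\leq k})^!\circ(\on{ins}^{\leq n})_!\circ\BD_{\Ran^{\leq n}}\circ(\on{ins}^{\leq n})^!\circ(\bj_{\Ran})_!(\CF')\right)\to \on{C}^*_c\!\left(\Ran^{\leq k},(\on{ins}^{\leq k})^!\circ(\on{ins}^{\leq n-1})_!\circ\BD_{\Ran^{\leq n-1}}\circ(\on{ins}^{\leq n-1})^!\circ(\bj_{\Ran})_!(\CF')\right)$$
is an isomorphism in cohomological degrees $\leq 0$. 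Using the compatibilities between $(\on{ins}^{\leq n})_!$, $(\bj_{\Ran})_!$ and Verdier duality recorded at the start of \secref{ss:on open}, the relevant objects are $(\bj_{\Ran^{\leq n}})_*\circ\BD_{\Ran'^{\leq n}}\circ(\on{ins}'^{\leq n})^!(\CF')$ and its $n-1$ analog.

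The heart of the argument is then the open-curve version of \corref{c:Chevalley-Cousin}: I would apply \lemref{l:Chevalley-Cousin} to the curve $X'$ together with the commutation of $(\bj_{\Ran})_*$ with the truncation filtration to identify the cofiber of the $(n{-}1)$-step into the $n$-step (after the appropriate $\BD$ and $\Sigma_n$-manipulations) with an object of the form
$$\on{C}^*\!\left(\overset{\circ}X{}^k,\, j_k^!\circ(\on{ins}_k)^!\circ(\on{ins}_n)_!\circ\BD_{X^n}\circ(j_n)_*\circ(j_n)^!\circ(\on{ins}_n)^!\circ(\bj_n)_!(\CF'_{\ldots})\right)^{\Sigma_n},$$
where $\bj_n:X'^n\hookrightarrow X^n$. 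I then follow \secref{sss:final steps 1}--\secref{sss:final steps 2} verbatim: the object $\CG:=(j_n)_*\circ(j_n)^!\circ(\on{ins}_n)^!\circ(\bj_n)_!(\CF')$ lives in perverse degrees $\leq -n-2$ — here I use the hypothesis on $(\on{ins}'_\CI)^!(\CF')|_{\overset{\circ}X'{}^\CI}$ together with the fact that $(\bj_n)_!$ is exact for the perverse t-structure and $(j_n)_*$ is affine — and then \corref{c:push-pull Ran} rewrites $\on{C}^*(\overset{\circ}X{}^k, j_k^!\circ(\on{ins}_k)^!\circ(\on{ins}_n)_!\circ\BD_{X^n}(\CG))$ in terms of the schemes $X^{k,n}$, $\overset{\circ}X{}^{k,n}$ and the quasi-finite map $\overset{\circ}q_n$. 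The same base-change-and-compactness computation as in \secref{sss:final steps 2} identifies this with $\CHom\big(\CG,(\overset{\circ}q_n)_*(\omega_{\overset{\circ}X^{k,n}})\big)$, which lies in degrees $\geq 2$ because $\CG$ is in perverse degrees $\leq -n-2$ and $(\overset{\circ}q_n)_*(\omega_{\overset{\circ}X^{k,n}})$ is in perverse degrees $\geq -n$ (quasi-finiteness plus $\dim X^n = n$). Taking $\Sigma_n$-invariants (= coinvariants, by the norm isomorphism in the constructible context) preserves this bound, and passing to the colimit over $k$ finishes the reduction.

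The main obstacle I anticipate is purely bookkeeping rather than conceptual: making sure that \lemref{l:Chevalley-Cousin} and \corref{c:Chevalley-Cousin}, which are stated for $\CF\in\Shv^!(\Ran)$ on a general curve, interact correctly with $(\bj_{\Ran})_*$ and with the diagonal stratification when the curve is open, i.e.\ that the open locus $(\Ran^{\leq n}-\Ran^{\leq n-1})$ for $X$ and for $X'$ are compatibly identified with $\overset{\circ}X{}^n/\Sigma_n$ and $\overset{\circ}X'{}^n/\Sigma_n$, and that $(\bj_{\Ran})_*$ restricted to these open loci is just $(\bj_n/\Sigma_n)_*$. Once that compatibility is in place, every remaining step is identical to the proper case, since the estimates in \secref{sss:final steps 1}--\secref{sss:final steps 2} only ever used the curve $X$ through the affineness of $j_n$ and $(j_n)_*$ and the dimension of $X^n$, all of which are unaffected by passing to $X'$. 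The one genuinely new input — that $(\bj_n)_!$ is perverse t-exact — is standard in the constructible context (open embeddings are affine, so $(\bj_n)_!$ is right t-exact, and $(\bj_n)_!=(\bj_n)_{!*}$-left-adjoint so also left t-exact on the relevant objects), which is why the hypothesis is imposed on $X'$ rather than $X$.
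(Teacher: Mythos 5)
Your overall strategy is sound and your conclusion is correct, but your route differs from the paper's in a way that introduces an extra (and slightly mis-justified) ingredient. The paper's proof in \secref{ss:on open} uses the commutation of $(\bj_{\Ran})_*$ with $(\on{ins}_m)_!$, applied inside the Chevalley--Cousin decomposition, to push \emph{all} of the analysis onto the open curve $X'$: the key object to estimate is
$$\on{C}^*\left(\overset{\circ}X{}'{}^k,(j'_k)^!\circ (\on{ins}'_k)^!\circ
(\on{ins}'{}^{\leq n})_! \circ  \BD_{\Ran'{}^{\leq n}}\circ (\on{ins}'{}^{\leq n})^!(\CF')\right),$$
and the arguments of \secref{sss:final steps 1}--\ref{sss:final steps 2} apply verbatim with $X$ replaced by $X'$ (they only use affineness of $j'_n$ and $\dim (X')^n = n$). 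No statement about $\bj_n$ ever enters. You instead keep the proper curve $X$ in play, work with $\CF = (\bj_\Ran)_!(\CF')$ throughout, and get a cofiber whose input sheaf is $(j_n)_*\circ (j_n)^!\circ (\bj_n)_!\circ(\on{ins}'_n)^!(\CF')$ on $X^n$. This forces you to prove that $(\bj_n)_!$ is perverse t-exact in order to preserve the bound $\leq -n-2$. That conclusion is true, but your justification is wrong: open embeddings are \emph{not} affine in general. It works here only because $X' = X - \{\text{finite set}\}$, so $X'^n$ is the complement of an effective divisor in $X^n$, and hence $\bj_n$ is an \emph{affine} open embedding; then Artin's theorem gives perverse t-exactness of $(\bj_n)_!$. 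Similarly, the parenthetical ``$(\bj_n)_! = (\bj_n)_{!*}$-left-adjoint'' is not a valid argument. Once you replace that sentence with the correct reason (affineness by the divisor-complement argument), your proof goes through; but the paper's reduction to $X'$ is both shorter and avoids the issue altogether, so you may want to notice that the cleaner way is to invoke the commuting square of \secref{ss:on open} before, not after, the Cousin decomposition, so that the final estimate is one you have literally already proved for an arbitrary (non-proper) smooth curve.
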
 

This, in turn, reduces to the following variant of \lemref{l:estimate dual}:

\begin{lem}  \label{l:estimate dual open}
Under the assumptions of \propref{p:stabilization open}, for $n> m$ and any $k$ the map
\begin{multline*} 
\on{C}^*_c\left(\Ran^{\leq k},(\bj_{\Ran^{\leq k}})_*\circ 
(\on{ins}'{}^{\leq k})^! \circ (\on{ins}'{}^{\leq n})_! \circ  \BD_{\Ran'{}^{\leq n}}\circ (\on{ins}'{}^{\leq n})^!(\CF')\right)\to \\
\to 
\on{C}^*_c\left(\Ran^{\leq k}, (\bj_{\Ran^{\leq k}})_*\circ 
(\on{ins}'{}^{\leq k})^! \circ (\on{ins}'{}^{\leq n-1})_! \circ  \BD_{\Ran'{}^{\leq n-1}}\circ (\on{ins}'{}^{\leq n-1})^!(\CF')\right)
\end{multline*}
induces an isomorphism in cohomological degrees $\leq 0$. 
\end{lem}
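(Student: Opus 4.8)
The plan is to run the argument of \secref{ss:stab 1} (the proof of \lemref{l:estimate dual}) essentially verbatim, the only new input being two applications of base change that make the open-embedding pushforward drop out at the level of each graded piece of the cardinality filtration. Put
$$\CG'_n:=(\on{ins}'{}^{\leq n})_!\circ \BD_{\Ran'{}^{\leq n}}\circ (\on{ins}'{}^{\leq n})^!(\CF')\in \Shv^!(\Ran'),\qquad \CF_n:=(\bj_{\Ran})_*(\CG'_n)\in \Shv^!(\Ran).$$
Since the Cartesian square with vertical maps $\bj_{\Ran^{\leq k}}$, $\bj_{\Ran}$ and horizontal maps $\on{ins}'{}^{\leq k}$, $\on{ins}^{\leq k}$ exhibits $\on{ins}^{\leq k}$ as base-changing the schematic open embedding $\bj_{\Ran}$, \lemref{l:base change} gives a canonical isomorphism $(\on{ins}^{\leq k})^!(\CF_n)\simeq (\bj_{\Ran^{\leq k}})_*\circ (\on{ins}'{}^{\leq k})^!(\CG'_n)$. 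Hence the two sides of the map in \lemref{l:estimate dual open} are identified with $\on{C}^*_c(\Ran^{\leq k},(\on{ins}^{\leq k})^!(\CF_n))$ and $\on{C}^*_c(\Ran^{\leq k},(\on{ins}^{\leq k})^!(\CF_{n-1}))$, the map between them being induced (via $(\on{ins}^{\leq k})^!\circ(\bj_{\Ran})_*$) by the $n$-th transition map of \eqref{e:Verdier on Ran} written for the curve $X'$, and we must show it is an isomorphism in cohomological degrees $\le 0$.

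Next I would repeat \secref{sss:cousin step}: \corref{c:Chevalley-Cousin} applied to $\CF_n$ and to $\CF_{n-1}$ equips $\on{C}^*_c(\Ran^{\leq k},(\on{ins}^{\leq k})^!(\CF_n))$ and $\on{C}^*_c(\Ran^{\leq k},(\on{ins}^{\leq k})^!(\CF_{n-1}))$ with finite filtrations whose $k'$-th graded pieces ($k'\le k$) are $\on{C}^*(\overset{\circ}X{}^{k'},j_{k'}^!\circ (\on{ins}_{k'})^!(\CF_n))_{\Sigma_{k'}}$ and the analogous object with $\CF_{n-1}$; as in \secref{sss:cousin step}, and since $k$ is arbitrary, it is enough to show that for every $k$ the fibre of
$$\on{C}^*\left(\overset{\circ}X{}^k,j_k^!\circ (\on{ins}_k)^!(\CF_n)\right)_{\Sigma_k}\longrightarrow \on{C}^*\left(\overset{\circ}X{}^k,j_k^!\circ (\on{ins}_k)^!(\CF_{n-1})\right)_{\Sigma_k}$$
lies in cohomological degrees $\ge 2$; the norm isomorphism lets us drop the $\Sigma_k$-symmetry. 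By \eqref{e:descr j*} we have $(\on{ins}_k)^!(\CF_n)=(\on{ins}_k)^!\circ (\bj_{\Ran})_*(\CG'_n)\simeq (\bj_k)_*\circ (\on{ins}'_k)^!(\CG'_n)$ with $\bj_k:X'{}^k\hookrightarrow X^k$, and a second application of \lemref{l:base change}, to $\bj_k$ and the open embedding $j_k:\overset{\circ}X{}^k\hookrightarrow X^k$ (whose fibre product with $X'{}^k$ over $X^k$ is $\overset{\circ}X'{}^k$), yields $j_k^!\circ (\bj_k)_*\simeq (\bj^\circ_k)_*\circ (j'_k)^!$ for the open embeddings $\bj^\circ_k:\overset{\circ}X'{}^k\hookrightarrow \overset{\circ}X{}^k$ and $j'_k:\overset{\circ}X'{}^k\hookrightarrow X'{}^k$. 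Since $\on{C}^*(\overset{\circ}X{}^k,(\bj^\circ_k)_*(-))\simeq \on{C}^*(\overset{\circ}X'{}^k,-)$, the object $\on{C}^*(\overset{\circ}X{}^k,j_k^!\circ (\on{ins}_k)^!(\CF_n))$ is canonically $\on{C}^*(\overset{\circ}X'{}^k,(j'_k)^!\circ (\on{ins}'_k)^!\circ (\on{ins}'{}^{\leq n})_!\circ \BD_{\Ran'{}^{\leq n}}\circ (\on{ins}'{}^{\leq n})^!(\CF'))$, and likewise for $n-1$.

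This last pair of objects, and the fibre of the map between them, is exactly the object estimated in Sects.~\ref{sss:cousin step}--\ref{sss:final steps 2} in the proof of \lemref{l:estimate dual}, with the smooth curve $X'$ in place of $X$, with $\CF'$ in place of $\CF$, and with $k'=k$; moreover the cohomological-degree hypothesis imposed on $\CF'$ in \lemref{l:estimate dual open} is precisely the one needed there. Since that estimate --- reduction via Lemmas \ref{l:Chevalley-Cousin} and \ref{l:lax homology finite} to $\Sigma_n$-invariants, affineness of $j'_n$ to keep $(j'_n)_*\circ (j'_n)^!\circ (\on{ins}'_n)^!(\CF')$ in perverse degrees $\le -n-2$, the base-change and compactness identification of the resulting object with $\CHom(\CG,(\overset{\circ}q'_n)_*(\omega_{\overset{\circ}X'{}^{k,n}}))$ for $\CG\in \Shv(X'{}^n)$ in perverse degrees $\le -n-2$, and the bound on $(\overset{\circ}q'_n)_*(\omega_{\overset{\circ}X'{}^{k,n}})$ coming from $\overset{\circ}q'_n$ being quasi-finite and $\dim(X'{}^n)=n$ --- only uses that $X'$ is a smooth curve, it applies and gives the desired concentration in degrees $\ge 2$. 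I expect the only real work to be the bookkeeping: checking that the two uses of \lemref{l:base change} together with the identity $\on{C}^*(\overset{\circ}X{}^k,(\bj^\circ_k)_*(-))\simeq \on{C}^*(\overset{\circ}X'{}^k,-)$ are compatible with the Chevalley--Cousin graded-piece decomposition, so that $(\bj_{\Ran})_*$ genuinely drops out piece by piece and the whole statement reduces to the already-proved estimate for the (possibly non-proper) curve $X'$.
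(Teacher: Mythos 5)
Your proof is correct and takes essentially the same route as the paper's: both commute $(\bj_{\Ran})_*$ past the Chevalley--Cousin filtration via base change for open embeddings, reducing the lemma to the already-established estimate of Sects.~\ref{sss:final steps 1}--\ref{sss:final steps 2} over the (not necessarily proper) curve $X'$. Your version helpfully spells out the two applications of \lemref{l:base change} (at the $\Ran^{\leq k}$ level and at the $\overset{\circ}X{}^{k'}$ level) that the paper packages into the commutative diagram and the phrase ``as in \secref{sss:cousin step}''.
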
 

\sssec{}

Note that for any $m$, the diagram
$$
\CD
\Shv(X^{m}/\Sigma_{m})   @<{(\bj_{X^{m}/\Sigma_{m}})_*}<<   \Shv(X'{}^{m}/\Sigma_{m})  \\
@V{(\on{ins}'_{m})_!}VV    @VV{(\on{ins}_{m})_!}V    \\
\Shv(\Ran)  @<{(\bj_{\Ran})_*}<< \Shv(\Ran')  
\endCD
$$
commutes. 

\medskip

As in \secref{sss:cousin step}, this implies that in order to prove \lemref{l:estimate dual open}, it suffices to show that the object
\begin{multline*} 
\on{Fib}\biggl(\on{C}^*\left(\overset{\circ}X{}'{}^k,(j'_k)^!\circ (\on{ins}'_k)^!\circ 
(\on{ins}'{}^{\leq n})_! \circ  \BD_{\Ran'{}^{\leq n}}\circ (\on{ins}'{}^{\leq n})^!(\CF')\right) \to \\
\on{C}^*\left(\overset{\circ}X{}'{}^k, (j'_k)^!\circ (\on{ins}'_k)^!\circ 
(\on{ins}'{}^{\leq n-1})_! \circ  \BD_{\Ran'{}^{\leq n-1}}\circ (\on{ins}'{}^{\leq n-1})^!(\CF')\right)\biggr) 
\end{multline*}
is concentrated in cohomological degrees $>1$. 

\medskip

However, this was proved in Sects. \ref{sss:final steps 1}-\ref{sss:final steps 2}. 
 
\ssec{Proof of \propref{p:stabilization}(ii)}  \label{ss:stab 2}

The proof was essentially given in the process of proof of \propref{p:stabilization}(i):

\sssec{}

The functor $\on{ins}_k^!$ commutes with limits, so we need to show that the map
$$\underset{n'}{\on{lim}}\, \on{ins}_k^!\circ (\on{ins}^{\leq n'})_! \circ \BD_{\Ran^{\leq n'}}\circ 
(\on{ins}^{\leq n'})^! (\CF) \to  \on{ins}_k^!\circ (\on{ins}^{\leq n})_! \circ \BD_{\Ran^{\leq n}}\circ 
(\on{ins}^{\leq n})^! (\CF)$$
induces an isomorphism in degrees $\leq 0$ for $n\geq m$. 

\medskip

For that, it suffices to show that for $n'\geq n\geq m$, the map
$$\on{ins}_k^!\circ (\on{ins}^{\leq n'})_! \circ \BD_{\Ran^{\leq n'}}\circ 
(\on{ins}^{\leq n'})^! (\CF) \to  \on{ins}_k^!\circ (\on{ins}^{\leq n})_! \circ \BD_{\Ran^{\leq n}}\circ 
(\on{ins}^{\leq n})^! (\CF)$$
induces an isomorphism in cohomological degrees $\leq 0$. 

\medskip

The latter is equivalent to showing that
\begin{equation} \label{e:restr of dual}
\on{ins}_k^! \left( \on{Fib} \left((\on{ins}^{\leq n})_! \circ \BD_{\Ran^{\leq n}}\circ 
(\on{ins}^{\leq n})^! (\CF)\to \on{ins}^{\leq n-1}_! \circ \BD_{\Ran^{\leq n-1}}\circ 
(\on{ins}^{\leq n-1})^! (\CF)\right)\right)
\end{equation} 
is concentrated in cohomological degrees $>1$ (i.e., $\geq 2$) for $n>m$. 

\sssec{}

By Lemmas \ref{l:Chevalley-Cousin} and \ref{l:lax homology finite}, the object \eqref{e:restr of dual}, identifies with
$$\left(\on{ins}_k^! \circ  (\on{ins}_n)_! \circ \BD_{X^n} \circ (j_n)_* \circ j_n^!\circ \on{ins}_n^! (\CF)\right)^{\Sigma_n}.$$

So, it is enough to show that the object 
$$\on{ins}_k^! \circ (\on{ins}_n)_! \circ \BD_{X^n} \circ (j_n)_* \circ j_n^!\circ \on{ins}_n^! (\CF)$$
is concentrated in perverse cohomological degrees $\geq 2$. 

\medskip

The object $(j_n)_* \circ j_n^!\circ \on{ins}_n^! (\CF)$ is concentrated in perverse cohomological degrees $\leq -n-2$. We will show
that for any $\CG\in \Shv(X^n)$ which lives in such degrees, the object  
$$\on{ins}_k^! \circ (\on{ins}_n)_! \circ \BD_{X^n}(\CG)$$
lives in perverse degrees $\geq 2$.

\sssec{}

By \corref{c:push-pull Ran}, we have
$$\on{ins}_k^!\circ  (\on{ins}^{\leq n})_! \circ \BD_{X^n}(\CG) \simeq 
(q_k)_!\circ q_n^! \circ  \BD_{X^n}(\CG).$$

Since the morphisms $q_n$ and $q_k$ are both finite, the functor $(q_k)_!\circ q_n^!$ is right t-exact. 

\medskip

Hence, it suffices to show that the object $\BD_{X^n}(\CG)$ lives in perverse degrees $\geq 2$. However, this follows
from the fact that $\CG$ lives in perverse degrees 
$\leq -n-2$, while $\dim(X^n)=n$.  \footnote{Note that when working in the context of constructible sheaves, the object
$\BD_{X^n}(\CG)$ actually lives in degrees $\geq 2+n$.}

\ssec{Proof of \thmref{t:products}}

In this subsection we will be referring to the perverse t-structure on $\Shv(Z)$ for $Z\in \Sch$ (which the standard
t-structure if we work in the context of D-modules). 

\sssec{}

We need to show that for every $m_1,m_2$ and every cohomological degree $k\geq 0$, the map in \eqref{e:product and Verdier}
$$(\on{ins}_{m_1}\times \on{ins}_{m_2})^!\left((\BD_{\Ran}(\CF_1)\boxtimes \BD_{\Ran}(\CF_2)\right)\to
(\on{ins}_{m_1}\times \on{ins}_{m_2})^!\circ  \BD_{\Ran\times \Ran}(\CF_1\boxtimes \CF_2)$$
induces an isomorphism in cohomological degrees $\leq k$. With no restriction of generality, we can assume that $k=0$. 

\sssec{}

First, we claim that the object
$$\on{ins}_{m_i}^!\circ \BD_{\Ran}(\CF_i)\in \Shv(X^{m_i})$$
(for $i=1,2$) is bounded below.

\medskip

To prove this, by \propref{p:stabilization}(ii), it suffices to show that the object
$$\on{ins}_{m_i}^!\circ (\on{ins}^{\leq n})_!\circ \BD_{\Ran^{\leq n}}\circ (\on{ins}^{\leq n})^!(\CF_i)$$
is bounded below for some $n$ sufficiently large. 

\medskip

By \corref{c:truncated duality}, we have:
$$(\on{ins}^{\leq n})_!\circ \BD_{\Ran^{\leq n}}\circ (\on{ins}^{\leq n})^!(\CF_i)\simeq 
\underset{\CI\in \on{Fin}^{s\leq n}}{\on{lim}}\,  (\on{ins}_\CI)_! \circ \BD_{X^\CI}\circ (\on{ins}_\CI)^!(\CF_i).$$

Since the above limit is finite, it suffices to show that for each $\CI$, the object
$$\on{ins}_{m_i}^!\circ (\on{ins}_\CI)_! \circ \BD_{X^\CI}\circ (\on{ins}_\CI)^!(\CF_i)$$
(for $i=1,2$) is bounded below.

\medskip

By assumption, $(\on{ins}_\CI)^!(\CF_i)$ is bounded above.
Hence, $\BD_{X^\CI}\circ (\on{ins}_\CI)^!(\CF_i)$ is bounded below. Finally, the 
functor $\on{ins}_{m_i}^!\circ (\on{ins}^{\leq n})_!$ has a finite cohomological dimension by 
\corref{c:push-pull Ran}. 

\sssec{}

Combining the facts that:

\smallskip

\noindent(a) The ring of coefficients $\Lambda$ has a finite cohomological dimension;
 
\smallskip

\noindent(b) $\on{ins}_{m_i}^!\circ \BD_{\Ran}(\CF_i)$ are bounded below; 

\smallskip

\noindent(c) \propref{p:stabilization}(ii),

\smallskip

we obtain that there exists an integer $n\gg 0$, such that the map
\begin{multline*}
(\on{ins}_{m_1}\times \on{ins}_{m_2})^!\left((\BD_{\Ran}(\CF_1)\boxtimes \BD_{\Ran}(\CF_2)\right)\to  \\ \to
(\on{ins}_{m_1}\times \on{ins}_{m_2})^!\circ (\on{ins}^{\leq n}\times \on{ins}^{\leq n})_!
\left(\left(\BD_{\Ran^{\leq n}}\circ (\on{ins}^{\leq n})^! (\CF_1)\right)
\boxtimes \left(\BD_{\Ran^{\leq n}}\circ (\on{ins}^{\leq n})^!(\CF_2)\right)\right)
 \end{multline*} 
induces an isomorphism in degrees $\leq 0$. 

\medskip

By an analog of \propref{p:stabilization}(ii) for $\Ran\times \Ran$, we can choose $n$ large enough so that the map
\begin{multline*} 
(\on{ins}_{m_1}\times \on{ins}_{m_2})^!\circ \BD_{\Ran\times \Ran}(\CF_1\boxtimes \CF_2)\to \\
(\on{ins}_{m_1}\times \on{ins}_{m_2})^!\circ (\on{ins}^{\leq n}\times \on{ins}^{\leq n})_!\circ \BD_{\Ran^{\leq n}\times \Ran^{\leq n}}
\circ (\on{ins}^{\leq n}\times \on{ins}^{\leq n})^! (\CF_1\boxtimes \CF_2)
\end{multline*} 
induces an isomorphism in degrees $\leq 0$.

\medskip

Hence, it suffices to show that under the compactness assumption of the theorem, the map
\begin{multline}  \label{e:finite product}
(\on{ins}_{m_1}\times \on{ins}_{m_2})^! \circ (\on{ins}^{\leq n}\times \on{ins}^{\leq n})_! 
\left(\left(\BD_{\Ran^{\leq n}}\circ (\on{ins}^{\leq n})^!(\CF_1)\right)\boxtimes \left(\BD_{\Ran^{\leq n}}\circ (\on{ins}^{\leq n})^!(\CF_2)\right)
\right) \\
\to (\on{ins}_{m_1}\times \on{ins}_{m_2})^!\circ (\on{ins}^{\leq n}\times \on{ins}^{\leq n})_!\circ 
\BD_{\Ran^{\leq n}\times \Ran^{\leq n}}
\circ (\on{ins}^{\leq n}\times \on{ins}^{\leq n})^! (\CF_1\boxtimes \CF_2)
\end{multline} 
induces an isomorphism in cohomological degrees $\leq 0$. We will show that the map \eqref{e:finite product} is an isomorphism
(in all degrees). In fact, we will show that the map
\begin{multline*} 
(\on{ins}^{\leq n}\times \on{ins}^{\leq n})_! 
\left(\left(\BD_{\Ran^{\leq n}}\circ (\on{ins}^{\leq n})^!(\CF_1)\right)\boxtimes \left(\BD_{\Ran^{\leq n}}\circ (\on{ins}^{\leq n})^!(\CF_2)\right)
\right)\to \\
(\on{ins}^{\leq n}\times \on{ins}^{\leq n})_!\circ 
\BD_{\Ran^{\leq n}\times \Ran^{\leq n}}
\circ (\on{ins}^{\leq n}\times \on{ins}^{\leq n})^! (\CF_1\boxtimes \CF_2)
\end{multline*} 
is an isomorphism.

\sssec{}

By \corref{c:truncated duality}, we have: 
\begin{multline*} 
(\on{ins}^{\leq n}\times \on{ins}^{\leq n})_! 
\left(\left(\BD_{\Ran^{\leq n}}\circ (\on{ins}^{\leq n})^!(\CF_1)\right)\boxtimes \left(\BD_{\Ran^{\leq n}}\circ (\on{ins}^{\leq n})^!(\CF_2)\right)
\right) \simeq \\
\simeq \underset{\CI_1,\CI_2\in \on{Fin}^{s,\leq n}}{\on{lim}}\,
(\on{ins}_{\CI_1}\times \on{ins}_{\CI_2})_!\circ 
\left(\left(\BD_{X^{\CI_1}}\circ \on{ins}^!_{\CI_1}(\CF_1)\right)\boxtimes \left(\BD_{X^{\CI_2}}\circ \on{ins}^!_{\CI_2}(\CF_2)\right)\right),
\end{multline*}
and
\begin{multline*} 
(\on{ins}^{\leq n}\times \on{ins}^{\leq n})_!\circ 
\BD_{\Ran^{\leq n}\times \Ran^{\leq n}}
\circ (\on{ins}^{\leq n}\times \on{ins}^{\leq n})^! (\CF_1\boxtimes \CF_2) \simeq \\
\simeq \underset{\CI_1,\CI_2\in \on{Fin}^{s,\leq n}}{\on{lim}}\,
(\on{ins}_{\CI_1}\times \on{ins}_{\CI_2})_!\circ \BD_{X^{\CI_1}\times X^{\CI_2}}\circ (\on{ins}_{\CI_1}\times \on{ins}_{\CI_2})^!
(\CF_1\boxtimes \CF_2).
\end{multline*}

Hence, it is sufficient to show that for every $\CI_1,\CI_2$, the map
$$\left(\BD_{X^{\CI_1}}\circ \on{ins}^!_{\CI_1}(\CF_1)\right)\boxtimes \left(\BD_{X^{\CI_2}}\circ \on{ins}^!_{\CI_2}(\CF_2)\right)\to
\BD_{X^{\CI_1}\times X^{\CI_2}}\circ (\on{ins}_{\CI_1}\times \on{ins}_{\CI_2})^!
(\CF_1\boxtimes \CF_2)$$
is an isomorphism. 

\medskip

By assumption, $\on{ins}^!_{\CI_1}(\CF_1)$ and $\on{ins}^!_{\CI_2}(\CF_2)$ are bounded above with compact cohomologies. 
Hence, the required isomorphism follows from \lemref{l:product and Verdier sch compact}.

\newpage 

\centerline{\bf Part III: Duality via sheaves on the augmented Ran space}

\section{Pairings on the augmented vs. usual Ran space}  \label{s:pairings aug}

In this section we will perform a manipulation crucial for our derivation of the cohomological product formula 
\eqref{e:product formula prev} from non-abelian Poincar\'e duality \eqref{e:non-ab Poinc prev}; this will
be done in \secref{s:local duality}.  It is for this manipulation that we need the unital augmented version
of the Ran space, developed in Part I. 

\medskip

Say we start with two objects $\CA,\CB\in \Shv^!(\Ran)$ and we want to construct an isomorphism
\begin{equation} \label{e:isom to show prev}
\on{C}^*_c(\Ran,\CB)\to \on{C}^*_c(\Ran,\CA)^\vee.
\end{equation} 
A natural first attempt would be to identify $\CB$ with the Verdier dual of $\CA$. 
However, the object $\CA$ that we have 
in mind will be such that it will grossly violate the assumption of \thmref{t:Verdier on Ran}. So, it is unreasonable to expect that
the (compactly supported) cohomology of the Verdier dual of $\CA$ has anything to do with the dual of the 
(compactly supported) cohomology of $\CA$. In fact, in our case the Verdier dual of $\CA$ will
be zero. 

\medskip

Instead, we will find that $\CA$ is naturally of the form $$\CA\simeq \on{OblvUnit}\circ \on{OblvAug}(\CA_{\on{untl,aug}})$$ 
for $\CA_{\on{untl,aug}}\in \Shv^!(\Ran_{\on{untl,aug}})$, and similarly for $\CB$. Moreover, the objects 
$\CA_{\on{untl,aug}},\CB_{\on{untl,aug}}\in  \Shv^!(\Ran_{\on{untl,aug}})$ satisfy the assumption of \thmref{t:main}. Set
$$\CA_{\on{red}}:=\on{TakeOut}(\CA_{\on{untl,aug}}) \text{ and } \CB_{\on{red}}:=\on{TakeOut}(\CB_{\on{untl,aug}}).$$

By \corref{c:main}, we have the isomorphisms
$$\on{C}^*_c(\Ran,\CA_{\on{red}})\simeq \on{C}^*_c(\Ran,\CA) \text{ and }
\on{C}^*_c(\Ran,\CB_{\on{red}})\simeq \on{C}^*_c(\Ran,\CB).$$

\medskip

The reason for replacing $\CA$ 
by $\CA_{\on{red}}$ is that the latter object does satisfy the assumption of \thmref{t:Verdier on Ran}. So,
to establish the desired isomorphism \eqref{e:isom to show prev}, it now suffices to construct an isomorphism
$$\BD_{\Ran}(\CA_{\on{red}})\simeq \CB_{\on{red}}.$$
To do so we will first need to construct a pairing 
\begin{equation} \label{e:isom to construct prev}
\CA_{\on{red}}\boxtimes \CB_{\on{red}}\to (\on{diag}_{\Ran})_!(\omega_{\Ran}).
\end{equation} 

What we do in the present is section is explain what kind of data on $\CA$ and $\CB$
is needed to construct a pairing \eqref{e:isom to construct prev}. This will be addressed by \thmref{t:aug pairings}. 

\ssec{The notion of pairing for augmented sheaves}  \label{ss:notion of pairing}

\sssec{}

Let 
$$(\Ran_{\on{untl,aug}}\times \Ran_{\on{untl,aug}})_{\on{\on{sub,disj}}}$$ denote the following lax prestack: for $S\in \Sch$,
the category $(\Ran_{\on{untl,aug}}\times \Ran_{\on{untl,aug}})_{\on{\on{sub,disj}}}(S)$ is a full subcategory of
$(\Ran_{\on{untl,aug}}\times \Ran_{\on{untl,aug}})(S)$ that corresponds to quadruples
$$(K_1\subseteq I_1), (K_2\subseteq I_2)$$ for which $K_1$ and $I_2$ have disjoint images \emph{ and } $K_2$ and $I_1$ 
have have \emph{disjoint images} (see \secref{sss:disj images} what this means).

\medskip

Let $\wt\CF,\wt\CG$ be two objects of $\Shv^!(\Ran_{\on{untl,aug}})$. By a \emph{pairing} between them we shall mean a map
in $\Shv^!((\Ran_{\on{untl,aug}}\times \Ran_{\on{untl,aug}})_{\on{\on{sub,disj}}})$
\begin{equation} \label{e:aug pairing}
\wt\CF\boxtimes \wt\CG|_{(\Ran_{\on{untl,aug}}\times \Ran_{\on{untl,aug}})_{\on{\on{sub,disj}}}}\to 
\omega_{(\Ran_{\on{untl,aug}}\times \Ran_{\on{untl,aug}})_{\on{\on{sub,disj}}}}.
\end{equation} 

\sssec{}

Our goal of this section is to prove the following: 

\begin{thmconstr} \label{t:aug pairings}  \hfill

\smallskip

\noindent{\em(i)} For $\CF,\CG\in \Shv^!(\Ran)$, a datum of a pairing $\CF\boxtimes \CG\to (\on{diag}_{\Ran})_!(\omega_{\Ran})$
gives rise to a pairing
$$\on{AddUnit}_{\on{aug}}(\CF)\boxtimes \on{AddUnit}_{\on{aug}}(\CG)|_{(\Ran_{\on{untl,aug}}\times \Ran_{\on{untl,aug}})_{\on{\on{sub,disj}}}}\to 
\omega_{(\Ran_{\on{untl,aug}}\times \Ran_{\on{untl,aug}})_{\on{\on{sub,disj}}}}.$$

\smallskip

\noindent{\em(ii)} For $\wt\CF,\wt\CG\in \Shv^!(\Ran_{\on{untl,aug}})$, the datum of a pairing \eqref{e:aug pairing} gives rise to
a pairing
\begin{equation} \label{e:take out pairing}
\on{TakeOut}(\wt\CF)\boxtimes \on{TakeOut}(\wt\CG)\to (\on{diag}_{\Ran})_!(\omega_{\Ran}).
\end{equation} 

\smallskip

\noindent{\em(iii)} The constructions in \emph{(i)} and \emph{(ii)} are compatible under adjunction maps
$$\CF\to \on{TakeOut}\circ \on{AddUnit}_{\on{aug}}(\CF),\,\, \CG\to \on{TakeOut}\circ \on{AddUnit}_{\on{aug}}(\CG)$$
and
$$\on{AddUnit}_{\on{aug}}\circ \on{TakeOut}(\wt\CF)\to \wt\CF,\,\, 
\on{AddUnit}_{\on{aug}}\circ \on{TakeOut}(\wt\CG)\to \wt\CG.$$

\smallskip

\noindent{\em(iv)} Given a pairing \eqref{e:aug pairing} and the corresponding pairing \eqref{e:take out pairing}, 
the diagram
$$
\CD
\on{OblvUnit}\circ \on{OblvAug} (\wt\CF)\boxtimes 
\on{OblvUnit}\circ \on{OblvAug} (\wt\CG) @>>>  \omega_{\Ran\times \Ran}  \\
@A{\text{\eqref{e:nat trans 3}}\boxtimes \text{\eqref{e:nat trans 3}}}AA   @AAA   \\
\on{TakeOut}(\wt\CF) \boxtimes \on{TakeOut}(\wt\CG) @>>> (\on{diag}_{\Ran})_!(\omega_{\Ran})
\endCD
$$
commutes.

\end{thmconstr}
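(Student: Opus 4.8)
\textbf{Proof proposal for \thmref{t:aug pairings}.}

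The plan is to reduce everything to the explicit formulas already available: \propref{p:inserting the aug} describes $\on{AddUnit}_{\on{aug}}$ as a direct sum over subsets, \propref{p:out} describes $\on{TakeOut}$ as a fiber of a map into a limit over subsets, and \lemref{l:testing} lets us check all maps and all isomorphisms by restricting to $S$-points where the finite subsets of $\Maps(S,X)$ have pairwise non-intersecting images. So throughout the construction I would work with such ``disjoint'' $S$-points, where the lax prestack $(\Ran_{\on{untl,aug}}\times \Ran_{\on{untl,aug}})_{\on{sub,disj}}$ becomes transparent and the !-fibers of $\CF\boxtimes\CG$ are literally external products of !-fibers on powers of $X$.

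For part (i): given a pairing $\CF\boxtimes\CG\to (\on{diag}_{\Ran})_!(\omega_{\Ran})$, restricting to disjoint points gives, for each pair of finite sets $J_1, J_2$ with disjoint images, a map $\CF_{S,J_1}\boxtimes \CG_{S,J_2}\to \omega_S$ which is zero unless $J_1=J_2$ (because $(\on{diag}_{\Ran})_!(\omega_{\Ran})$ restricted to the disjoint locus is supported on the diagonal), and the identification of $\omega_S\boxtimes \omega_S$ with $\omega_{S\times S}$ when we sit over two disjoint copies. On a point $((K_1\subseteq I_1),(K_2\subseteq I_2))$ of the sub-disj lax prestack, $\on{AddUnit}_{\on{aug}}(\CF)_{S,K_1\subseteq I_1}\boxtimes \on{AddUnit}_{\on{aug}}(\CG)_{S,K_2\subseteq I_2}$ is $\bigoplus_{J_1\subseteq I_1-K_1,\ J_2\subseteq I_2-K_2} \CF_{S,J_1}\boxtimes \CG_{S,J_2}$; I would send this to $\omega$ via the sum, over those $(J_1,J_2)$ with $J_1=J_2$ (call it $J$), of the given pairing component. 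The disjointness hypotheses on $(K_1,I_2)$ and $(K_2,I_1)$ guarantee that $J$, being simultaneously a subset of $I_1-K_1$ and of $I_2-K_2$, is ``small enough'' that this is well defined and compatible with the morphisms in $\on{Fin}_{\on{aug}}\times \on{Fin}_{\on{aug}}$ appearing in the sub-disj structure — this compatibility is the one genuine bookkeeping point, and I would phrase it by noting that enlarging $I_1$ by points disjoint from $I_2$ only adds summands $J_1$ that contribute zero. For part (ii): dually, using \propref{p:out}, $\on{TakeOut}(\wt\CF)_{S,J}$ maps to $\wt\CF_{S,\emptyset\subset J}$, and I would pair $\on{TakeOut}(\wt\CF)_{S,J_1}\boxtimes\on{TakeOut}(\wt\CG)_{S,J_2}$ by first mapping to $\wt\CF_{S,\emptyset\subset J_1}\boxtimes\wt\CG_{S,\emptyset\subset J_2}$ and then invoking \eqref{e:aug pairing} evaluated on the pair $((\emptyset\subset J_1),(\emptyset\subset J_2))$ — which lies in the sub-disj locus precisely when $J_1$ and $J_2$ are disjoint, i.e. away from the diagonal, and one checks the resulting map factors through $(\on{diag}_{\Ran})_!(\omega_{\Ran})$ using that $\on{TakeOut}(\wt\CF)$ vanishes on the open complement of the diagonal in a suitable sense; more honestly, I would build the pairing as a map of sheaves on $\Ran\times\Ran$ and verify it lands in $(\on{diag}_{\Ran})_!(\omega_{\Ran})$ by restricting to the disjoint locus (where both sides must be checked to vanish, using condition $(**)$ of \thmref{t:main}, or rather its consequence that $\on{TakeOut}(\wt\CF)_{S,J}$ is the fiber of $\wt\CF_{S,\emptyset\subset J}\to \on{lim}_{\emptyset\neq K\subseteq J}\wt\CF_{S,K\subseteq J}$).

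For parts (iii) and (iv), which are pure compatibility statements, I would again restrict to disjoint points and trace through the explicit descriptions. The adjunction unit $\CF\to \on{TakeOut}\circ\on{AddUnit}_{\on{aug}}(\CF)$ was shown in the proof of \thmref{t:main} to be, on the disjoint point $I$, the inclusion $\CF_{S,I}\hookrightarrow \bigoplus_{\emptyset\neq J\subseteq I}\CF_{S,J}$ of the top summand; so (iii) amounts to the statement that the pairing component attached to $J=I$ in part (i)'s construction agrees with the original pairing, which is true by design. The counit $\on{AddUnit}_{\on{aug}}\circ\on{TakeOut}(\wt\CF)\to\wt\CF$ is treated symmetrically. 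For (iv), the natural transformation \eqref{e:nat trans 3} $\on{TakeOut}\to\on{OblvUnit}\circ\on{OblvAug}$ is, on a disjoint point $J$, the map from the fiber $\on{TakeOut}(\wt\CF)_{S,J}$ to $\wt\CF_{S,\emptyset\subset J}=\on{OblvUnit}\circ\on{OblvAug}(\wt\CF)_{S,J}$ (by \lemref{l:pre out}), and the square in (iv) then unwinds to the tautology that the pairing \eqref{e:take out pairing} was \emph{constructed} by precomposing the pairing induced from \eqref{e:aug pairing} on $\wt\CF_{S,\emptyset\subset -}$ with exactly this map; the only thing to check is that the composite lands in $(\on{diag}_{\Ran})_!(\omega_{\Ran})$ rather than merely in $\omega_{\Ran\times\Ran}$, which follows from the vanishing discussed in (ii).

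The main obstacle I anticipate is not any single hard computation but the homotopy-coherent bookkeeping: all four parts must be carried out functorially and compatibly across the diagram categories $\on{Fin}_{\on{aug}}$, $\on{Fin}_{\on{aug}}\times\on{Fin}_{\on{aug}}$ and their localizations, and one must verify that the various maps assemble into morphisms of sheaves on lax prestacks, not just pointwise maps. The right way to manage this, I think, is to \emph{not} work pointwise at all for the construction, but rather build the pairings at the level of the $\Ran^\to_{\on{aug}}$ and $\Ran^\leftarrow$ correspondences used to define $\on{AddUnit}_{\on{aug}}$ and $\on{TakeOut}$, pushing and pulling the original pairing through the pseudo-proper legs (\lemref{l:psi pseudo-proper aug}, \lemref{l:right adjoint base change}) and invoking base change; then \lemref{l:testing} is used only at the very end to identify the outcome with the claimed explicit formulas and to verify the compatibilities. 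The disjointness conditions cutting out $(\Ran_{\on{untl,aug}}\times\Ran_{\on{untl,aug}})_{\on{sub,disj}}$ are exactly what make the relevant fiber products in these correspondences behave well (the ``$J$ lands in $I_1-K_1$ and in $I_2-K_2$'' constraint), so I would set up those fiber products carefully first and let the pairing constructions follow formally.
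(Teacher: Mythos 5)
The construction you sketch for part (i) (via correspondences, with the pointwise formula as a sanity check, and the compatibility following from the fact that the ``new'' summands arising under a morphism in $\Ran_{\on{untl,aug}}\times\Ran_{\on{untl,aug}}$ are disjoint from the diagonal) is in spirit close to what the paper does: the paper constructs the unital pairing $\pi^!\on{AddUnit}(\CF)\boxtimes\pi^!\on{AddUnit}(\CG)\to\omega$ by pull-push along the $\Ran^\to$ correspondences and supplies the null-homotopies needed to descend to the cofiber $\on{AddUnit}_{\on{aug}}$ by observing that certain fiber products of lax prestacks are \emph{empty} over the $\on{sub,disj}$ locus. That emptiness is the rigorous form of your ``contribute zero'' observation, and you correctly flag at the end that the correspondence-level argument is the right way to manage the coherence. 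So part (i) is essentially fine in outline.

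Part (ii) has a genuine gap, on two counts. First, a factual error: since the sub-disj condition is that $K_1$ be disjoint from $I_2$ and $K_2$ disjoint from $I_1$, a point $((\emptyset\subset J_1),(\emptyset\subset J_2))$ is \emph{always} in $(\Ran_{\on{untl,aug}}\times\Ran_{\on{untl,aug}})_{\on{sub,disj}}$, regardless of whether $J_1$ and $J_2$ intersect — you have conflated this locus with the locus where $J_1$ and $J_2$ themselves are disjoint. So while your recipe does produce a map $\on{TakeOut}(\wt\CF)\boxtimes\on{TakeOut}(\wt\CG)\to\omega_{\Ran\times\Ran}$, the assertion that it is automatically supported near the diagonal is not justified by what you wrote; ``$\on{TakeOut}(\wt\CF)$ vanishes on the open complement of the diagonal'' does not type-check ($\on{TakeOut}(\wt\CF)$ is a sheaf on $\Ran$, not on $\Ran\times\Ran$), and ``both sides must be checked to vanish'' is exactly the nontrivial content, not something one gets for free from condition $(**)$. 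Second, and more structurally: you are missing the actual mechanism the paper uses to produce the factorization through $(\on{diag}_\Ran)_!(\omega_\Ran)$. The paper writes $\on{TakeOut}(\wt\CF)$ as $\on{Fib}(\phi^!\iota^!\wt\CF\to(\xi_{\on{aug}}^!)^R\psi_{\on{aug}}^!\wt\CF)$, covers the open complement of the diagonal by the two opens $(\Ran\times\Ran)_{\not\subseteq}$ and $(\Ran\times\Ran)_{\not\supseteq}$ (and their intersection), and builds the needed null-homotopies by producing maps on each open piece and gluing. The key technical ingredient making this work is \propref{p:descr of omega}: the auxiliary lax prestacks $(\Ran\times\Ran)^\sim_{\not\subseteq}$, etc., map to these opens by universally homologically contractible maps $\kappa_\subseteq$, $\kappa_\supseteq$, $\kappa_{\subseteq,\supseteq}$, which is what makes it possible to write down the maps \eqref{e:pairing subset}, \eqref{e:pairing supset}, \eqref{e:pairing supset subset} in terms of the limit-description of $(\xi_{\on{aug}}^!)^R\psi_{\on{aug}}^!\wt\CF$ from \propref{p:out}. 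None of this appears in your proposal, and the ``vanishing on the disjoint locus'' you invoke is a strictly weaker statement than what is actually needed (the disjoint locus is a proper open subset of the complement of the diagonal). Without the Zariski cover and the contractibility input, the factorization claim is unproven.

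Parts (iii) and (iv) are as you say: once (i) and (ii) are in place as correspondence-level constructions, the compatibilities follow by unwinding. But that ``unwinding'' depends on (ii) having been set up so that it is manifestly a map to $(\on{diag}_\Ran)_!(\omega_\Ran)$, not to $\omega_{\Ran\times\Ran}$ — so the gap in (ii) propagates.
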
 

Combining this theorem with \thmref{t:main}, we obtain:

\begin{cor}  \label{c:pairing on reduced}
There exists a canonical bijection between pairings $\CF\boxtimes \CG\to (\on{diag}_{\Ran})_!(\omega_{\Ran})$
and pairings \eqref{e:aug pairing} for $\wt\CF:=\on{AddUnit}_{\on{aug}}(\CF)$ and 
$\wt\CG:=\on{AddUnit}_{\on{aug}}(\CG)$.
\end{cor}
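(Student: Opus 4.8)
\textbf{Plan of proof of \thmref{t:aug pairings} and \corref{c:pairing on reduced}.}

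The strategy is to treat parts (i) and (ii) as the two substantive constructions, then obtain (iii), (iv) and the corollary by formal adjunction bookkeeping. For part (i), recall from \propref{p:inserting the aug} that at a point $(K\subseteq I)$ with $I$ consisting of maps with pairwise non-intersecting images we have $\on{AddUnit}_{\on{aug}}(\CF)_{S,K\subseteq I}\simeq \underset{\emptyset\neq J\subseteq (I-K)}\oplus \CF_{S,J}$; so the source of the pairing \eqref{e:aug pairing}, evaluated at a pair $((K_1\subseteq I_1),(K_2\subseteq I_2))$ lying in $(\Ran_{\on{untl,aug}}\times \Ran_{\on{untl,aug}})_{\on{sub,disj}}$, is a direct sum of $\CF_{S,J_1}\otimes^! \CG_{S,J_2}$ over $\emptyset\neq J_1\subseteq I_1-K_1$, $\emptyset\neq J_2\subseteq I_2-K_2$. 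The disjointness hypothesis (that $K_1$ and $I_2$, and $K_2$ and $I_1$, have disjoint images) forces each such $J_1$ and $J_2$ to have disjoint images as $S$-points of $\Ran$, hence the original pairing $\CF\boxtimes\CG\to (\on{diag}_{\Ran})_!(\omega_{\Ran})$ restricted to the locus of disjoint-image pairs \emph{lands in the zero section away from the diagonal}, i.e.\ gives a map $\CF_{S,J_1}\otimes^!\CG_{S,J_2}\to \omega_S$ precisely when $J_1=J_2$ — but disjointness plus nonemptiness makes $J_1=J_2$ impossible unless $J_1=J_2$ meet, which they do not, so in fact the right target is $\omega_S$ via the structure of $(\on{diag}_{\Ran})_!(\omega_{\Ran})$ pulled back along the ``off-diagonal'' inclusion. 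The cleaner way to phrase this: the map $(\on{diag}_{\Ran})_!(\omega_{\Ran})\to \omega_{\Ran\times\Ran}$ is an isomorphism on the \emph{disjoint} locus' complement of the diagonal is empty there; concretely, over the sub-disj locus the external product of the two $\on{AddUnit}_{\on{aug}}$ sheaves receives from $\CF\boxtimes\CG$ a canonical map to $\omega$ because the relevant $J_i$ cannot coincide. I will package this using \lemref{l:testing} to reduce checking the map to the pairwise-non-intersecting locus, and the naturality system from \corref{c:lax homology}.

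For part (ii), I will use the explicit formula \propref{p:out}: $\on{TakeOut}(\wt\CF)_{S,J}=\on{Fib}\bigl(\wt\CF_{S,\emptyset\subset J}\to \underset{\emptyset\neq K\subseteq J}{\on{lim}}\wt\CF_{S,K\subseteq J}\bigr)$. A pairing \eqref{e:aug pairing} restricted to pairs $((\emptyset\subseteq J_1),(\emptyset\subseteq J_2))$ which automatically lie in the sub-disj locus when $J_1,J_2$ have disjoint images gives compatible maps $\wt\CF_{S,\emptyset\subset J_1}\otimes^!\wt\CG_{S,\emptyset\subset J_2}\to\omega_S$, and the \on{Fib} terms map into these; one checks that on the locus where $J_1$ and $J_2$ \emph{do} meet the corresponding terms of the two \on{Fib}'s cancel, leaving exactly a map landing in $(\on{diag}_{\Ran})_!(\omega_{\Ran})$. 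This is the mirror of the cancellation already carried out in the proof of \thmref{t:main} (the computation that the unit of the adjunction is an isomorphism): there one sees $\underset{\emptyset\neq K\subseteq I-J}{\on{lim}}$ is contractible for $J\neq I$ and collapses the limit; here the same contractibility collapses the ``off-diagonal'' contributions. So I will invoke the computation in the proof of \thmref{t:main} nearly verbatim, with the two sheaves in play. As in (i), \lemref{l:testing} reduces everything to the pairwise-non-intersecting locus where the formulas are explicit direct sums and finite limits.

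Parts (iii) and (iv) are then formal: (iii) asserts that the triangle relations for the adjunction $(\on{AddUnit}_{\on{aug}},\on{TakeOut})$ intertwine the two constructions, which follows by inspecting the explicit maps on the pairwise-non-intersecting locus (the adjunction unit $\CF\to\on{TakeOut}\circ\on{AddUnit}_{\on{aug}}(\CF)$ is, by the proof of \thmref{t:main}, the inclusion of the $J=I$ summand, and the pairings restrict accordingly); (iv) is the compatibility of \eqref{e:take out pairing} with the natural transformation $\on{TakeOut}\to\on{OblvUnit}\circ\on{OblvAug}$ of \eqref{e:nat trans 3}, and it is again checked summand-by-summand using \propref{p:out} and \lemref{l:pre out}. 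Finally \corref{c:pairing on reduced} follows by combining (i)--(iii) with the full faithfulness of $\on{AddUnit}_{\on{aug}}$ (\thmref{t:main}): (i) and (ii) give maps in both directions between the set of pairings $\CF\boxtimes\CG\to(\on{diag}_{\Ran})_!(\omega_{\Ran})$ and the set of pairings \eqref{e:aug pairing} for $\wt\CF=\on{AddUnit}_{\on{aug}}(\CF)$, $\wt\CG=\on{AddUnit}_{\on{aug}}(\CG)$, and (iii) shows the two composites are the identity (using that $\on{TakeOut}\circ\on{AddUnit}_{\on{aug}}\simeq\on{Id}$).

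\textbf{Main obstacle.} The delicate point is part (ii): making precise the ``cancellation of off-diagonal terms'' so that the map assembled from the $\on{Fib}$'s genuinely factors through $(\on{diag}_{\Ran})_!(\omega_{\Ran})\subset\omega_{\Ran\times\Ran}$ rather than merely mapping to $\omega_{\Ran\times\Ran}$. Concretely one must show that the composite $\on{TakeOut}(\wt\CF)\boxtimes\on{TakeOut}(\wt\CG)\to\omega_{\Ran\times\Ran}$ is supported on the diagonal; this requires identifying the relevant summands/limit-terms indexed by pairs $(J_1,J_2)$ with $J_1\cap J_2=\emptyset$ and checking they receive the zero map — which is exactly the contractibility-of-index-category phenomenon from the proof of \thmref{t:main}, but now one has to run it coherently (homotopy-compatibly) over all of $\Ran\times\Ran$, not just at a single point, and track the lax-prestack structure of $(\Ran_{\on{untl,aug}}\times\Ran_{\on{untl,aug}})_{\on{sub,disj}}$. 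I expect the cleanest route is to reduce, via \lemref{l:testing} and base change along $\on{ins}_\CI\times\on{ins}_\CJ$, to a statement purely about finite diagrams of sheaves on products of powers of $X$, where the cancellation is the elementary linear-algebra fact that $\on{Fib}$ of a projection kills the complementary summands.
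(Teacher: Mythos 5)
Your derivation of the corollary itself is exactly the paper's argument and is correct: parts (i) and (ii) of \thmref{t:aug pairings} give maps in both directions between the two kinds of pairings, and part (iii) together with the isomorphism $\on{TakeOut}\circ\on{AddUnit}_{\on{aug}}\simeq\on{Id}$ furnished by \thmref{t:main} shows the two roundtrips are the identity.

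However, the supporting sketch you give for \thmref{t:aug pairings}(i), which you treat as the engine of the forward direction, contains a factual error. You claim that on $(\Ran_{\on{untl,aug}}\times\Ran_{\on{untl,aug}})_{\on{sub,disj}}$ the disjointness hypothesis forces $J_1\subseteq I_1-K_1$ and $J_2\subseteq I_2-K_2$ to have disjoint images. It does not: the hypothesis says only that $K_1$ is disjoint from $I_2$ and $K_2$ is disjoint from $I_1$; the subsets $I_1-K_1$ and $I_2-K_2$ can (and must be allowed to) overlap, since that overlap is precisely where the given pairing $\CF\boxtimes\CG\to(\on{diag}_{\Ran})_!(\omega_{\Ran})$, being supported on the diagonal $J_1=J_2$, actually contributes. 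If your claim were true, the constructed pairing \eqref{e:aug pairing} would vanish identically. The mechanism actually at work is the opposite one: with the pairing concentrated on $J_1=J_2=J$, the contributions with $J\cap K_1\neq\emptyset$ (respectively $J\cap K_2\neq\emptyset$) are killed because then $K_1\cap I_2\supseteq K_1\cap J_2=K_1\cap J\neq\emptyset$ contradicts the disjointness of $K_1$ from $I_2$ (respectively the symmetric condition). This is the content of the emptiness of the fiber product exhibited in the paper's null-homotopy argument for (i), and it is what allows the map on $\pi^!\circ\on{AddUnit}(\CF)\boxtimes\pi^!\circ\on{AddUnit}(\CG)$ to descend to the cofiber $\on{AddUnit}_{\on{aug}}(\CF)\boxtimes\on{AddUnit}_{\on{aug}}(\CG)$. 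Your fiber-by-fiber strategy via \lemref{l:testing} could still be made to work for organizing the computation, but only if the support argument above replaces the disjointness claim you stated.
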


From point (iv) of the theorem we obtain:

\begin{cor} \label{c:remove unit and pairing}
Given a pairing $\CF\boxtimes \CG\to (\on{diag}_{\Ran})_!(\omega_{\Ran})$, for
the corresponding pairing 
$$\wt\CF\boxtimes \wt\CG|_{(\Ran_{\on{untl,aug}}\times \Ran_{\on{untl,aug}})_{\on{\on{sub,disj}}}}\to 
\omega_{(\Ran_{\on{untl,aug}}\times \Ran_{\on{untl,aug}})_{\on{\on{sub,disj}}}}$$
with 
$$\wt\CF:=\on{AddUnit}_{\on{aug}}(\CF),\,\,\, \wt\CG:=\on{AddUnit}_{\on{aug}}(\CG),$$
the diagram
$$
\CD
\on{C}^*_c\left(\Ran,\CF'\right) \otimes 
\on{C}^*_c\left(\Ran,\CG'\right)  @>>>
\on{C}^*_c\left(\Ran\times \Ran,\omega_{\Ran\times \Ran}\right)  @>>>  \Lambda  \\
@A\text{\eqref{e:id to add}}\boxtimes \text{\eqref{e:id to add}}AA   & & @AA{\on{id}}A   \\
\on{C}^*_c\left(\Ran,\CF\right) \otimes 
\on{C}^*_c\left(\Ran,\CG\right)   @>>>   \on{C}^*_c\left(\Ran,\omega_{\Ran}\right) @>>>  \Lambda
\endCD
$$
commutes, where
$$\CF':=\on{OblvUnit}\circ \on{OblvAug} (\wt\CF),\,\, \CG':=\on{OblvUnit}\circ \on{OblvAug} (\wt\CG).$$
\end{cor}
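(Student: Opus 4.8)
The plan is to deduce \corref{c:remove unit and pairing} directly from \thmref{t:aug pairings}(iv) by applying the functor $\on{C}^*_c(\Ran\times\Ran,-)$ to the commutative square that (iv) provides, post-composing with the trace map $\on{C}^*_c(\Ran\times\Ran,\omega_{\Ran\times\Ran})\to\Lambda$, and matching the result with the two pairings on cohomology appearing in the diagram of the corollary. The only input that is not pure bookkeeping will be the compatibility of the trace map with pushforward along the (pseudo-proper) diagonal $\on{diag}_{\Ran}$.

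First I would specialize \thmref{t:aug pairings}(iv) to $\wt\CF:=\on{AddUnit}_{\on{aug}}(\CF)$, $\wt\CG:=\on{AddUnit}_{\on{aug}}(\CG)$, using the following identifications: by \thmref{t:main} the adjunction units give isomorphisms $\CF\overset{\sim}\to\on{TakeOut}(\wt\CF)$, $\CG\overset{\sim}\to\on{TakeOut}(\wt\CG)$; by \lemref{l:inserting the aug} we have $\on{OblvAug}(\wt\CF)\simeq\on{AddUnit}(\CF)$, so that $\on{OblvUnit}\circ\on{OblvAug}(\wt\CF)=\CF'$ and likewise $\CG'$; and, exactly as in the first display of the proof of \corref{c:main}, the natural transformation \eqref{e:nat trans 3} precomposed with $\on{AddUnit}_{\on{aug}}$ becomes, under these identifications, the natural transformation \eqref{e:id to add}. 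Finally, by \thmref{t:aug pairings}(iii) together with \corref{c:pairing on reduced}, the pairing \eqref{e:take out pairing} that is the bottom arrow of the square in (iv) is, under $\on{TakeOut}(\wt\CF)\simeq\CF$ and $\on{TakeOut}(\wt\CG)\simeq\CG$, precisely the given pairing $\CF\boxtimes\CG\to(\on{diag}_{\Ran})_!(\omega_{\Ran})$. Thus (iv) furnishes a commutative square in $\Shv^!(\Ran\times\Ran)$
$$
\CD
\CF'\boxtimes\CG' @>{u}>> \omega_{\Ran\times\Ran} \\
@A{\text{\eqref{e:id to add}}\boxtimes\text{\eqref{e:id to add}}}AA @A{w}AA \\
\CF\boxtimes\CG @>>> (\on{diag}_{\Ran})_!(\omega_{\Ran})
\endCD
$$
in which $w$ is the canonical (counit) map $(\on{diag}_{\Ran})_!(\omega_{\Ran})\to\omega_{\Ran\times\Ran}$ and $u$ is the pairing between $\CF'$ and $\CG'$ built in the proof of (iv) — the one whose induced pairing on cohomology is the top row of the diagram of \corref{c:remove unit and pairing}.

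Next I would recall, from \secref{sss:pseudo-proper pairing}, that the $\Lambda$-valued pairing attached to a map $\CF\boxtimes\CG\to(\on{diag}_{\Ran})_!(\omega_{\Ran})$ is obtained by applying $\on{C}^*_c(\Ran\times\Ran,-)$ to it, precomposing with the K\"unneth isomorphism $\on{C}^*_c(\Ran,\CF)\otimes\on{C}^*_c(\Ran,\CG)\simeq\on{C}^*_c(\Ran\times\Ran,\CF\boxtimes\CG)$ (coming from the symmetric monoidal structure on $\Shv^!$, \secref{sss:assump mon}, and the compatibility of the pushforward functors with it), identifying
$$\on{C}^*_c(\Ran\times\Ran,(\on{diag}_{\Ran})_!(\omega_{\Ran}))\;\simeq\;(p_{\Ran\times\Ran})_!\circ(\on{diag}_{\Ran})_!(\omega_{\Ran})\;\simeq\;(p_{\Ran})_!(\omega_{\Ran})\;=\;\on{C}_*(\Ran)$$
(legitimate since $\on{diag}_{\Ran}$ is pseudo-proper, \secref{sss:diag Ran}, so the left adjoints $(p_{\Ran\times\Ran})_!$ and $(\on{diag}_{\Ran})_!$ compose), and finally composing with the trace $\on{C}_*(\Ran)\to\Lambda$; the analogous recipe, with the middle step replaced by $\on{C}^*_c(\Ran\times\Ran,u)$ and followed directly by the trace $\on{C}^*_c(\Ran\times\Ran,\omega_{\Ran\times\Ran})\to\Lambda$, computes the top pairing in the corollary's diagram. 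Applying $\on{C}^*_c(\Ran\times\Ran,-)$ to the square above and using K\"unneth turns its left vertical arrow into the map induced by \eqref{e:id to add} in each tensor factor, so the corollary reduces to the commutativity of the resulting outer boundary after composing with traces; concretely, to the identity
$$\on{tr}_{\Ran\times\Ran}\circ(p_{\Ran\times\Ran})_!(w)\;=\;\on{tr}_{\Ran}\circ\bigl((p_{\Ran\times\Ran})_!\circ(\on{diag}_{\Ran})_!\;\overset{\sim}\to\;(p_{\Ran})_!\bigr)$$
of maps $\on{C}^*_c(\Ran\times\Ran,(\on{diag}_{\Ran})_!(\omega_{\Ran}))\to\Lambda$, which is exactly the statement that the rightmost vertical arrow in the diagram of \corref{c:remove unit and pairing} is the identity.

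This last identity is the only substantive point, and it is the standard compatibility of the trace with pushforward along a pseudo-proper map $g$ (here $g=\on{diag}_{\Ran}$): with $\omega_{\Ran}\simeq g^!(\omega_{\Ran\times\Ran})$ and $(p_{\Ran})^!\simeq g^!\circ(p_{\Ran\times\Ran})^!$, the map $w$ is the counit $g_!g^!(\omega_{\Ran\times\Ran})\to\omega_{\Ran\times\Ran}$, and the composite $(p_{\Ran})_!(p_{\Ran})^!(\Lambda)=(p_{\Ran\times\Ran})_!\,g_!\,g^!\,(p_{\Ran\times\Ran})^!(\Lambda)\to(p_{\Ran\times\Ran})_!(p_{\Ran\times\Ran})^!(\Lambda)\to\Lambda$ that defines $\on{tr}_{\Ran}$ factors through $\on{tr}_{\Ran\times\Ran}\circ(p_{\Ran\times\Ran})_!(w)$ — i.e. counits compose. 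Granting this, a straightforward chase of the square of \thmref{t:aug pairings}(iv) yields the commutativity asserted in \corref{c:remove unit and pairing}. I expect the bulk of the work to be organizational — correctly matching the pairing constructions of \secref{sss:pseudo-proper pairing} and of \thmref{t:aug pairings}(i)--(iv) against the arrows drawn in the two diagrams — with the trace/pushforward compatibility above being the sole (and routine) conceptual ingredient.
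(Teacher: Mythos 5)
Your proposal is correct and is exactly the derivation the paper intends: the paper gives no proof at all, simply stating that the corollary follows "from point (iv) of the theorem," and your write-up fills in precisely the omitted bookkeeping (identifying the arrows of the square in \thmref{t:aug pairings}(iv) with those of the corollary via \thmref{t:main}, \lemref{l:inserting the aug} and \corref{c:pairing on reduced}, then applying $\on{C}^*_c(\Ran\times\Ran,-)$) together with the one genuine ingredient, the compatibility of traces with $(\on{diag}_{\Ran})_!$, i.e.\ that counits compose.
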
 

\begin{rem}
Note that if $X$ is connected, then in \corref{c:remove unit and pairing} the left vertical arrow is an isomorphism due to \corref{c:main}
and the two horizontal arrows on the right are also isomorphisms, due to \thmref{t:Ran contr}.  
\end{rem} 

\ssec{Proof of \thmref{t:aug pairings}(i)}

\sssec{}

Recall the notations from \secref{ss:unit and aug}. Recall that the functor $\on{AddUnit}_{\on{aug}}$ is defined as 
$$\on{coFib}\left((\psi_{\on{aug}})_!\circ (\xi_{\on{aug}})^! \to \pi^! \circ \on{AddUnit}\right).$$

We shall first construct a map
\begin{equation} \label{e:aug pairing prime}
\pi^! \circ \on{AddUnit}(\CF)\boxtimes \pi^! \circ \on{AddUnit}(\CG)\to \omega_{\Ran_{\on{untl,aug}}\times \Ran_{\on{untl,aug}}},
\end{equation}
and show that its restriction to $(\Ran_{\on{untl,aug}}\times \Ran_{\on{untl,aug}})_{\on{\on{sub,disj}}}$ is equipped with:

\begin{itemize}

\item(a) a null-homotopy when precomposed with 
$$(\psi_{\on{aug}})_!\circ (\xi_{\on{aug}})^! (\CF) \boxtimes
\pi^! \circ \on{AddUnit}(\CG)\to \pi^! \circ \on{AddUnit}(\CF)\boxtimes \pi^! \circ \on{AddUnit}(\CG);$$

\item(b) a null-homotopy when precomposed with 
$$\pi^! \circ \on{AddUnit}(\CF)\boxtimes (\psi_{\on{aug}})_!\circ (\xi_{\on{aug}})^! (\CG)\to 
\pi^! \circ \on{AddUnit}(\CF)\boxtimes \pi^! \circ \on{AddUnit}(\CG);$$
 
\item(c) a datum of compatibility of the two null-homotopies when precomposed with
$$(\psi_{\on{aug}})_!\circ (\xi_{\on{aug}})^! (\CF) \boxtimes (\psi_{\on{aug}})_!\circ (\xi_{\on{aug}})^! (\CG)\to
\pi^! \circ \on{AddUnit}(\CF)\boxtimes \pi^! \circ \on{AddUnit}(\CG).$$

\end{itemize}

\sssec{}

Recall that
$$ \on{AddUnit}=\psi_!\circ \xi^!,$$
where the morphisms $\psi$ and $\xi$ were introduced in \secref{sss:xi and psi}.

\medskip

The map \eqref{e:aug pairing prime} is the pullback by means of $\pi$ of a map
\begin{equation} \label{e:unital pairing}
 \on{AddUnit}(\CF)\boxtimes \on{AddUnit}(\CG)\to \omega_{\Ran_{\on{untl}}\times \Ran_{\on{untl}}}.
\end{equation}

In its turn, the map \eqref{e:unital pairing} comes by the $(\psi\times \psi)_!,(\psi\times \psi)^!$
adjunction from a map
\begin{equation} \label{e:arrow pairing}
(\xi\times \xi)^!(\CF\boxtimes \CG) \to \omega_{\Ran^\to\times \Ran^\to}.
 \end{equation}
 
Finally, the map \eqref{e:arrow pairing} is the pullback by means of $\xi\times \xi$ of the map 
\begin{equation} \label{e:diag pairing}
\CF\boxtimes \CG \to (\on{diag}_{\Ran})_!(\omega_{\Ran})\to \omega_{\Ran\times \Ran}.
\end{equation} 

\sssec{}

Let us now calculate the composition
\begin{multline*}
(\psi_{\on{aug}})_!\circ (\xi_{\on{aug}})^! (\CF) \boxtimes
\pi^! \circ \on{AddUnit}(\CG)\to \pi^! \circ \on{AddUnit}(\CF)\boxtimes \pi^! \circ \on{AddUnit}(\CG)\to \\
\to \omega_{\Ran_{\on{untl,aug}}\times \Ran_{\on{untl,aug}}}.
\end{multline*}

It comes by pullback by means of $\on{id}_{\Ran_{\on{untl,aug}}}\times \pi$ of a map
$$(\psi_{\on{aug}})_!\circ (\xi_{\on{aug}})^! (\CF) \boxtimes  \psi_!\circ \xi^!(\CG)\to  
\omega_{\Ran_{\on{untl,aug}}\times \Ran_{\on{untl}}},$$
which in turn comes by means of the
$(\psi_{\on{aug}}\times \psi)_!,(\psi_{\on{aug}}\times \psi)^!$
adjunction from a map
$$(\xi_{\on{aug}})^! (\CF) \boxtimes \xi^!(\CG) \to \omega_{\Ran^\to_{\on{aug}}\times \Ran^\to},$$
where the latter is the pullback of \eqref{e:diag pairing} by means of the map $\xi_{\on{aug}}\times \xi$. 

\sssec{}

The open sub-prestack
$$(\Ran_{\on{untl,aug}}\times \Ran_{\on{untl,aug}})_{\on{\on{sub,disj}}}\subset \Ran_{\on{untl,aug}}\times 
\Ran_{\on{untl,aug}}$$
is contained in the preimage by means of $\on{id}_{\Ran_{\on{untl,aug}}}\times \pi$ of the open sub-prestack
$$(\Ran_{\on{untl,aug}}\times \Ran_{\on{untl}})_{\on{disj}}\subset \Ran_{\on{untl,aug}}\times \Ran_{\on{untl}},$$
defined by the condition that $K_1$ and $I_2$ have disjoint images. 

\medskip

The required null-homotopy is provided by the fact that the fiber product
$$(\Ran_{\on{untl,aug}}\times \Ran_{\on{untl}})_{\on{disj}}
\underset{\Ran_{\on{untl,aug}}\times \Ran_{\on{untl}}}\times (\Ran^\to_{\on{aug}}\times \Ran^\to)
\underset{\Ran\times \Ran}\times \Ran$$
is empty: indeed, it classifies the data of 
$$(K_1\subseteq I_1\supseteq J_1,\,J_2\subseteq I_2),$$
where $K_1\cap J_1\neq \emptyset$, $J_1=J_2$, while $K_1$ and $I_2$ have disjoint images. 

\sssec{}

The null-homotopy in point (b) is constructed similarly, and the compatibility of the two null-homotopies in (c) follows
from the construction. 

\ssec{Proof of \thmref{t:aug pairings}(ii)}

\sssec{Preparations-I}  \label{sss:opens}

Recall that the map $\on{diag}_{\Ran}$ is finitary pseudo-proper.  Hence, the complement of its scheme-theoretic image, denoted 
$$(\Ran\times \Ran)_{\neq},$$
is a well-defined open sub-prestack of $\Ran\times \Ran$ (see \lemref{l:replace lego}(b)).
. 

\medskip

Let $(\Ran\times \Ran)_{\subseteq}$ (reps., $(\Ran\times \Ran)_{\supseteq}$) be the prestack that assigns to $S\in \Sch$
the sub-groupoid of $\Maps(S,\Ran\times \Ran)$ consisting of those $I_1,I_2$ for which $I_1\subseteq I_2$
(resp., $I_1\supseteq I_2$).

\medskip

\begin{lem}
The map $(\Ran\times \Ran)_{\subseteq}\to \Ran\times \Ran$ is finitary pseudo-proper.
\end{lem}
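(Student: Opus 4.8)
The plan is to reduce the claim to the already-established fact (\propref{p:express Ran}, as used repeatedly in \secref{ss:shv on Ran}) that $\Ran$, and more generally fiber products over $\Ran$ of Cartesian powers of $X$, are pseudo-schemes built from proper schemes. First I would give an explicit colimit presentation of $(\Ran\times\Ran)_{\subseteq}$. For $S\in\Sch$, an $S$-point is a pair of finite non-empty subsets $I_1\subseteq I_2$ of $\Maps(S,X)$. As in the proof of \propref{p:express Ran}, writing $I_2$ as the image of a surjection from an abstract finite set and $I_1$ as the image of a sub-surjection, one obtains
$$(\Ran\times\Ran)_{\subseteq}\;\simeq\;\underset{(\CI_1\hookrightarrow\CI_2)\in(\on{Fin}^{s,\subseteq})^{\on{op}}}{\on{colim}}\;X^{\CI_2},$$
where $\on{Fin}^{s,\subseteq}$ is the category whose objects are inclusions $\CI_1\subseteq\CI_2$ of finite non-empty sets and whose morphisms are surjections $\CI_2\twoheadrightarrow\CI_2'$ carrying $\CI_1$ onto $\CI_1'$, and the transition maps are the corresponding diagonal embeddings $\on{diag}_\alpha\colon X^{\CI_2'}\to X^{\CI_2}$. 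This exhibits $(\Ran\times\Ran)_{\subseteq}$ as a pseudo-scheme in the sense of \secref{sss:pseudo-scheme}, with all transition maps proper (indeed closed embeddings).

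Next I would analyze the projection $f\colon(\Ran\times\Ran)_{\subseteq}\to\Ran\times\Ran$, $(I_1\subseteq I_2)\mapsto(I_1,I_2)$. To see it is pseudo-proper in the sense of \secref{sss:pseudo proper}, it suffices to base-change along an arbitrary test scheme mapping to $\Ran\times\Ran$, which by \propref{p:express Ran} reduces to base-changing along $\on{ins}_{\CI_1}\times\on{ins}_{\CI_2}\colon X^{\CI_1}\times X^{\CI_2}\to\Ran\times\Ran$ for abstract finite sets $\CI_1,\CI_2$. One computes this fiber product, exactly as in \secref{sss:diag Ran} and \secref{sss:add unit expl}, as
$$\underset{\CI_1\twoheadrightarrow\CK_1\hookrightarrow\CK_2\twoheadleftarrow\CI_2}{\on{colim}}\;X^{\CK_2},$$
the colimit running over diagrams of finite non-empty sets with $\CK_1\subseteq\CK_2$ and the indicated surjections, with morphisms the surjections on the $\CK_2$'s; the structure maps to $X^{\CI_1}\times X^{\CI_2}$ are proper since each $X^{\CK_2}\to X^{\CI_1}\times X^{\CI_2}$ is a closed embedding (a graph of a diagonal-type map). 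Moreover the indexing category here is \emph{finite} (the cardinalities of $\CK_1$ and $\CK_2$ are bounded by $|\CI_1|+|\CI_2|$), so this base change is in fact a \emph{finitary} pseudo-proper prestack over $X^{\CI_1}\times X^{\CI_2}$. Since pseudo-properness of a map of prestacks is by definition a property checked after base change to schemes, this proves $f$ is (finitary) pseudo-proper.

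The main obstacle — really the only point requiring care — is verifying the colimit descriptions above, i.e. that the naive groupoid-of-subsets description of $(\Ran\times\Ran)_{\subseteq}$ and of the fiber products genuinely agrees with the stated colimits. This is the same combinatorial bookkeeping as in the proof of \propref{p:express Ran} and \lemref{l:psi pseudo-proper}: for a set $A$ the poset of pairs of finite non-empty subsets $I_1\subseteq I_2$ of $A$ is the colimit over $(\on{Fin}^{s,\subseteq})^{\on{op}}$ of $\Maps(\CI_2,A)$, because for fixed images the relevant index category of abstract presentations has a final object. I would simply invoke this and cite the analogy to the earlier proofs rather than rewrite it. Once the presentation is in hand, \propref{p:ins is pseudo} (or directly \corref{c:pseudo-proper}) gives the conclusion, and the finiteness observation records, for later use, that $(\Ran\times\Ran)_{\subseteq}\to\Ran\times\Ran$ is even finitary pseudo-proper, parallel to the treatment of $\on{ins}^{\leq n}$ and $\on{diag}_\Ran$ in \secref{sss:diag Ran}.
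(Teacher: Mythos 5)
Your proof is correct and takes essentially the same route as the paper: compute the base change along $\on{ins}_{\CI_1}\times\on{ins}_{\CI_2}\colon X^{\CI_1}\times X^{\CI_2}\to\Ran\times\Ran$ as a colimit of $X^\CK$'s over a finite category, with the structure maps to $X^{\CI_1}\times X^{\CI_2}$ proper (closed embeddings). The only cosmetic difference is that you record the image $\CK_1\subseteq\CK_2$ of $\CI_1$ explicitly, whereas the paper just indexes by diagrams $\CI_1\to\CK\twoheadleftarrow\CI_2$ with surjections of the $\CK$'s as morphisms; the two indexing categories are canonically isomorphic via the surjection-injection factorization of $\CI_1\to\CK$.
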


\begin{proof}
For a pair of finite sets $\CI_1$ and $\CI_2$, the fiber product
$$(X^{\CI_1}\times X^{\CI_2})\underset{\Ran\times \Ran}\times (\Ran\times \Ran)_{\subseteq}$$ 
equals the colimit of $X^\CK$, taken over the category, whose objects are
$$\CI_1\to \CK\twoheadleftarrow \CI_2,$$
and morphisms are the surjections of the $\CK$'s.
\end{proof} 

Similarly, the map $(\Ran\times \Ran)_{\supseteq}\to \Ran\times \Ran$ is finitary pseudo-proper.
Let $$(\Ran\times \Ran)_{\not\subseteq}\subset \Ran\times \Ran \supset (\Ran\times \Ran)_{\not\supseteq}$$
be the complementary open sub-prestacks. 

\medskip

We have
$$(\Ran\times \Ran)_{\subseteq}\underset{\Ran\times \Ran}\times (\Ran\times \Ran)_{\supseteq}=\Ran,$$
where $\Ran$ maps to $\Ran\times \Ran$ diagonally. Hence the map
$$(\Ran\times \Ran)_{\not\subseteq}\cup (\Ran\times \Ran)_{\not\supseteq}\to (\Ran\times \Ran)_{\neq}$$
is a Zariski cover. Set
$$(\Ran\times \Ran)_{\not\subseteq,\not\supseteq}:=(\Ran\times \Ran)_{\not\subseteq}\cap (\Ran\times \Ran)_{\not\supseteq}
\subset \Ran\times \Ran.$$

\sssec{Plan of the proof}

Let us be given a pairing
\begin{equation} \label{e:aug pairing again}
\wt\CF\boxtimes \wt\CG|_{(\Ran_{\on{untl,aug}}\times \Ran_{\on{untl,aug}})_{\on{\on{sub,disj}}}}\to 
\omega_{(\Ran_{\on{untl,aug}}\times \Ran_{\on{untl,aug}})_{\on{\on{sub,disj}}}},
\end{equation}
and we wish to construct a pairing
\begin{equation} \label{e:recover pairing}
\on{TakeOut}(\wt\CF)\boxtimes \on{TakeOut}(\wt\CG)\to (\on{diag}_{\Ran})_!(\omega_{\Ran}).
\end{equation}

\medskip

Recall that
$$\on{TakeOut}(\wt\CF)=\on{Fib}\left(\phi^!\circ \iota^!(\wt\CF) \to (\xi^!_{\on{aug}})^R\circ \psi_{\on{aug}}^!(\wt\CF)\right),$$
$$\on{TakeOut}(\wt\CG)=\on{Fib}\left(\phi^!\circ \iota^!(\wt\CG) \to (\xi^!_{\on{aug}})^R\circ \psi_{\on{aug}}^!(\wt\CG)\right).$$

\medskip 

Thus, by \secref{sss:opens}, in order to construct \eqref{e:recover pairing}, it would be sufficient to supply 
the following:

\begin{itemize}

\item(a) a map 
\begin{equation} \label{e:main pairing}
\phi^!\circ \iota^!(\wt\CF) \boxtimes \phi^!\circ \iota^!(\wt\CG) \to \omega_{\Ran\times \Ran};
\end{equation} 

\item(b) a map
\begin{equation} \label{e:pairing subset}
(\xi^!_{\on{aug}})^R\circ \psi_{\on{aug}}^!(\wt\CF) \boxtimes \phi^!\circ \iota^!(\wt\CG) \to 
\omega_{(\Ran\times \Ran)_{\not\subseteq}},
\end{equation} 

\item(b') a datum of factoring of the restriction of the map \eqref{e:main pairing} to $(\Ran\times \Ran)_{\not\subseteq}$ via the map 
\eqref{e:pairing subset}.

\smallskip

\item(c) a map 
\begin{equation} \label{e:pairing supset}
\phi^!\circ \iota^!(\wt\CF) \boxtimes (\xi^!_{\on{aug}})^R\circ \psi_{\on{aug}}^!(\wt\CG)\to \omega_{(\Ran\times \Ran)_{\not\supseteq}};
\end{equation} 

\item(c') a datum of factoring of the restriction of the map \eqref{e:main pairing} to $(\Ran\times \Ran)_{\not\supseteq}$ via the map 
\eqref{e:pairing supset}.

\smallskip

\item(d) a map
\begin{equation} \label{e:pairing supset subset}
(\xi^!_{\on{aug}})^R\circ \psi_{\on{aug}}^!(\wt\CF) \boxtimes  (\xi^!_{\on{aug}})^R\circ \psi_{\on{aug}}^!(\wt\CG) \to
\omega_{(\Ran\times \Ran)_{\not\subseteq,\not\supseteq}},
\end{equation} 

\item(d') data of factoring of the restriction of the maps \eqref{e:pairing supset} and \eqref{e:pairing subset} to
$(\Ran\times \Ran)_{\not\subseteq,\not\supseteq}$ via the map \eqref{e:pairing supset subset}. 

\medskip

\item(d'') data of compatibility of the factorizations in (b') and (c') with that in (d').

\end{itemize}

\medskip

The map \eqref{e:main pairing} is obtained from the map \eqref{e:aug pairing again} by pullback by means
of the map $\iota\circ \phi$; we note that the image of this map lands in 
$(\Ran_{\on{untl,aug}}\times \Ran_{\on{untl,aug}})_{\on{\on{sub,disj}}}$.

\medskip 

The construction of the maps \eqref{e:pairing supset}, \eqref{e:pairing subset} and \eqref{e:pairing supset subset} is
based on the material explained in the next subsection. 

\sssec{Preparations-II} 

Consider now the following lax prestacks:
$$(\Ran\times \Ran)^\sim_{\not\subseteq},\,\, (\Ran\times \Ran)^\sim_{\not\supseteq},\,\, 
(\Ran\times \Ran)^\sim_{\not\subseteq,\not\supseteq}:$$

For $S\in \Sch$, the category $(\Ran\times \Ran)^\sim_{\not\subseteq}(S)$ is that of triples
$$(K_1\subseteq J_1,J_2),$$
where $K_1,J_1,J_2$ are finite non-empty subsets of $\Maps(S,X)$, and where we require that
$K_1$ and $J_2$ have disjoint images. As morphisms we allow inclusions of the $K_1$'s and isomorphisms of 
$J_1$'s and $J_2$'s. 

\medskip

Let $\kappa_{\subseteq}$ denote the forgetful map
$$(\Ran\times \Ran)^\sim_{\not\subseteq}\to (\Ran\times \Ran)_{\not\subseteq}.$$

\medskip

The category $(\Ran\times \Ran)^\sim_{\not\supseteq}(S)$ is that of triples
$$(J_1,K_2\subseteq J_2),$$
where $K_2,J_1,J_2$ are finite non-empty subsets of $\Maps(S,X)$, and where we require that
$K_2$ and $J_1$ have disjoint images. As morphisms we allow inclusions of the $K_2$'s and isomorphisms of 
$J_1$'s and $J_2$'s. 

\medskip

Let $\kappa_{\supseteq}$ denote the forgetful map
$$(\Ran\times \Ran)^\sim_{\not\supseteq}\to (\Ran\times \Ran)_{\not\supseteq}.$$

\medskip

The category $(\Ran\times \Ran)^\sim_{\not\subseteq,\not\supseteq}(S)$ is that of triples
$$(K_1\subseteq J_1,K_2\subseteq J_2),$$
where $K_1,K_2,J_1,J_2$ are finite non-empty subsets of $\Maps(S,X)$, and where we require that
$K_2$ and $J_1$ have disjoint images and $K_1$ and $J_2$ have disjoint images.
As morphisms we allow inclusions of the $K_1$'s and $K_2$'s and isomorphisms of 
$J_1$'s and $J_2$'s. 

\medskip

Let $\kappa_{\subseteq,\supseteq}$ denote the forgetful map
$$(\Ran\times \Ran)^\sim_{\not\subseteq,\not\supseteq}\to (\Ran\times \Ran)_{\not\subseteq,\not\supseteq}.$$

We will need the following assertion, which will be proved in \secref{ss:descr of omega}

\begin{prop}  \label{p:descr of omega}   
The maps
$$\kappa_{\subseteq}: (\Ran\times \Ran)^\sim_{\not\subseteq}\to (\Ran\times \Ran)_{\not\subseteq},$$
$$\kappa_{\supseteq}: (\Ran\times \Ran)^\sim_{\not\supseteq}\to (\Ran\times \Ran)_{\not\supseteq},$$
$$\kappa_{\subseteq,\supseteq}: (\Ran\times \Ran)^\sim_{\not\subseteq,\not\supseteq}\to 
(\Ran\times \Ran)_{\not\subseteq,\not\supseteq}$$
are universally homologically contractible\footnote{See \secref{ss:uhc prestack} for what this means.}.
\end{prop}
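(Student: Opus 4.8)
The plan is to prove \propref{p:descr of omega} by reducing each of the three maps to a statement about a map of \emph{prestacks} (not just lax prestacks) being universally homologically contractible, and then invoking \lemref{l:contr contr} together with a Ran-space contractibility input in the spirit of \thmref{t:Ran contr}. All three cases are formally similar, so I would treat $\kappa_\subseteq$ in detail and indicate the obvious modifications for the other two.

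\medskip

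First I would fix $S\in \Sch$ and a map $\alpha$ in $(\Ran\times\Ran)_{\not\subseteq}(S)$, and spell out the lax prestack $\on{Factor}_{\kappa_\subseteq}(\alpha)$ over $S$. Since $(\Ran\times\Ran)_{\not\subseteq}$ is an honest prestack, the morphism sets are either empty or a point, so $\on{Factor}_{\kappa_\subseteq}(\alpha)$ is really the fiber of $\kappa_\subseteq$ over the relevant $S$-point $(J_1,J_2)$ (with $J_1\not\subseteq J_2$), i.e., the prestack over $S$ that assigns to $\wt S\to S$ the groupoid of finite non-empty $K_1\subseteq J_1|_{\wt S}$ such that $K_1$ and $J_2|_{\wt S}$ have disjoint images. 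So it suffices to show: for $J_1,J_2$ fixed $S$-points of $\Ran$, the prestack parametrizing non-empty $K_1\subseteq J_1$ disjoint from $J_2$ is universally homologically contractible over $S$. By \lemref{l:contr contr} it is enough that for every $S'\in\Sch$ the \emph{groupoid} of such $K_1$ be contractible. But this groupoid is discrete (subsets of a fixed finite set), so contractibility just means it is a singleton — and it is \emph{not} a singleton in general, since there can be many non-empty subsets of $J_1$ disjoint from $J_2$. Hence \lemref{l:contr contr} does not apply directly, and this is where the actual work lies.

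\medskip

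The fix, and the main obstacle, is that one must not use value-wise contractibility but genuine \emph{homological} contractibility over $S$: the relevant prestack is, after forgetting the disjointness constraint, essentially a Ran-type object, and one should use \lemref{l:trivial homology} — i.e., show the trace map $\on{C}_*(\on{Factor}_{\kappa_\subseteq}(\alpha))\to\Lambda$ is an isomorphism — rather than pointwise contractibility. Concretely: the locus where $K_1$ and $J_2$ are disjoint is, after stratifying $X$ by $J_2$, an open condition on the Ran-type space of subsets of $J_1\sqcup(\text{points of }X\setminus J_2)$; I would present $\on{Factor}_{\kappa_\subseteq}(\alpha)$ as a colimit of (products of $S$ with) Ran spaces of the complement $X\setminus J_2$ — or, in the relative-over-$S$ setting, the "Ran space with $\geq 1$ marked point in $J_1$" — and then apply \thmref{t:Ran contr} (connectedness of $X\setminus(\text{finitely many points})$ for $X$ a curve, which holds here since we are ultimately in the curve setting of Part V, but the statement is made for general separated $X$, so I would either restrict $X$ or spell out which connectedness hypothesis is needed). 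The point $J_1\not\subseteq J_2$ guarantees the marked-point constraint is non-vacuous, so the relevant Ran-with-marked-point space is non-empty, and the universal homological contractibility of a Ran space with a fixed non-empty marked subset follows from \thmref{t:Ran contr} by the same retract argument as in the proof of \thmref{t:Ran left cofinal} (the map $I\mapsto I\cup J_1$ exhibits it as a retract of $S\times\Ran(X\setminus J_2)$).

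\medskip

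For $\kappa_\supseteq$ the situation is symmetric: $\on{Factor}_{\kappa_\supseteq}(\alpha)$ over $(J_1,J_2)$ with $J_1\not\supseteq J_2$ parametrizes non-empty $K_2\subseteq J_2$ disjoint from $J_1$, handled identically. For $\kappa_{\subseteq,\supseteq}$ over $(J_1,J_2)$ with $J_1\not\subseteq J_2$ and $J_1\not\supseteq J_2$, the fiber is the product (over $S$) of the two previous prestacks — non-empty $K_1\subseteq J_1$ disjoint from $J_2$, and non-empty $K_2\subseteq J_2$ disjoint from $J_1$ — and universal homological contractibility is preserved under such products (it can be checked via \lemref{l:trivial homology} and the Künneth property of homology, or by iterating \corref{c:lax contr}). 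The conditions $J_1\not\subseteq J_2$, $J_1\not\supseteq J_2$ are exactly what makes both factors non-empty. This completes the proof modulo the careful colimit presentation in the first case, which is the one genuinely technical point; everything else is bookkeeping with the definitions of \secref{ss:uhc lax} and the cited lemmas.
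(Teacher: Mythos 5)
Your proposal contains a fundamental misreading of the definition of $(\Ran\times\Ran)^\sim_{\not\subseteq}$ that invalidates the whole approach. You assert that the fiber of $\kappa_\subseteq$ over a fixed $S$-point $(J_1,J_2)$ of $(\Ran\times\Ran)_{\not\subseteq}$ is a \emph{discrete groupoid} (``subsets of a fixed finite set''), conclude that value-wise contractibility is hopeless because that groupoid is not a singleton, and then pivot to a Ran-space-style homological argument. But in the definition of $(\Ran\times\Ran)^\sim_{\not\subseteq}(S)$, morphisms are given by \emph{inclusions of the $K_1$'s} (together with isomorphisms of the $J_i$'s) --- that is the entire point of the $\sim$-decoration and the reason these are \emph{lax} prestacks. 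So the fiber over $(J_1,J_2)$ is the \emph{poset} of non-empty $K_1\subseteq J_1$ disjoint from $J_2$, ordered by inclusion. Posets can easily be contractible without being singletons; indeed, this poset is closed under (non-empty) unions, hence has a maximum element whenever it is non-empty, hence is contractible whenever it is non-empty. So ``many subsets'' is a red herring. The genuine obstacle --- which you do not identify --- is that the fiber poset may be \emph{empty}: the condition $J_1\not\subseteq J_2$ over a general base $S$ does not guarantee the existence of any non-empty $K_1\subseteq J_1$ with image disjoint from $J_2$. This is precisely why \lemref{l:contr contr} cannot be applied globally.

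Your proposed fix does not repair this. Presenting the fiber as a ``Ran space with marked points over $X\setminus J_2$'' does not match the object at hand: the elements $K_1$ are constrained to be subsets of the \emph{fixed} finite set $J_1$, not free collections of points of $X$, so there is no Ran structure to exploit and \thmref{t:Ran contr} is not applicable. The paper's actual argument solves the emptiness problem by a local-to-global reduction: after reducing (by the universal property of $\Shv^!$ on prestacks) to checking universal homological contractibility of the pullback of $\kappa_\subseteq$ to each $X^{\CI_1}\times X^{\CI_2}$, one covers the relevant locus by the opens $(X^{\CI_1}\times X^{\CI_2})_{\CK_1\cap\CI_2=\emptyset}$ indexed by non-empty subsets $\CK_1\subseteq\CI_1$; these are closed under intersection (giving a compatible cover) and their union is exactly the $\not\subseteq$-locus. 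Over each such open, the distinguished $K_1:=\on{image}(\CK_1)\subset I_1$ is automatically non-empty and disjoint from $I_2$, so the fiber poset is non-empty; the paper then shows the full subposet $\{K'_1\supseteq K_1\}$ is left cofinal (via the left adjoint $K'_1\mapsto K'_1\cup K_1$) and has an initial object $K'_1=K_1$, hence the fiber is value-wise contractible, and \lemref{l:contr contr} applies. You would need to replace your Ran-space reasoning with this open-cover argument for the proof to go through.
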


\sssec{Constructions of the maps} 

In order to construct the map \eqref{e:pairing supset}, by \propref{p:descr of omega} and \lemref{l:ff from uhc}, given $S\in \Sch$ and an 
$S$-point $(J_1,J_2)$ of $(\Ran\times \Ran)_{\not\subseteq}$, we need to construct a map
$$\left((\xi^!_{\on{aug}})^R\circ \psi_{\on{aug}}^!(\wt\CF) \boxtimes \phi^!\circ \iota^!(\wt\CG)\right)_{S,(J_1,J_2)}\to
\underset{\emptyset\neq K_1\subseteq J_1,K_1 \text{ is disjoint from }J_2}{\on{lim}}\, \omega_S.$$
 
 \medskip
 
We will now use the description of the object
$$(\xi^!_{\on{aug}})^R\circ \psi_{\on{aug}}^!(\wt\CF),$$
given by \propref{p:out}. Namely, we have
$$(\xi^!_{\on{aug}})^R\circ \psi_{\on{aug}}^!(\wt\CF)\simeq 
\underset{\emptyset\neq K_1\subseteq J_1}{\on{lim}}\, \wt\CF_{S,K_1\subseteq J_1}.$$

\medskip

Thus, we need to construct a compatible family of maps, for every $K_1\subseteq J_1$ such that $K_1$
and $J_2$ have disjoint images, of maps
\begin{equation} \label{e:pairing finally}
\wt\CF_{S,K_1\subseteq J_1}\overset{!}\otimes \wt\CG_{S,\emptyset \subset J_2}\to \omega_S.
\end{equation}

Note, however, that the quadruple $$(K_1\subseteq J_1),(\emptyset \subset J_2)$$ is an $S$-point of 
$(\Ran_{\on{untl,aug}}\times \Ran_{\on{untl,aug}})_{\on{\on{sub,disj}}}$, and the map \eqref{e:pairing finally}
results from \eqref{e:aug pairing again}.

\medskip

The maps in (c) and (d) are constructed similarly, and the compatibilities in (b'), (c') and (d') follow 
from the construction.  

\medskip

This completes the construction for \thmref{t:aug pairings}(ii).

\sssec{}

The compatibilities stated in \thmref{t:aug pairings}(iii) and \thmref{t:aug pairings}(iv) 
follows by unwinding the constructions. 

\ssec{Proof of \propref{p:descr of omega}}   \label{ss:descr of omega}

We will prove the assertion concerning $\kappa_{\subseteq}$,
as the other two cases are similar.

\sssec{}

It suffices to show that for any pair of finite sets $(\CI_1,\CI_2)$, the induced map
$$(X^{\CI_1}\times X^{\CI_2}) \underset{\Ran\times \Ran}\times (\Ran\times \Ran)^\sim_{\not\subseteq}\to
(X^{\CI_1}\times X^{\CI_2}) \underset{\Ran\times \Ran}\times (\Ran\times \Ran)_{\not\subseteq}$$
is universally homologically contractible.

\medskip 

For a \emph{non-empty} subset $\CK_1\subset \CI_1$, let 
$$(X^{\CI_1}\times X^{\CI_2})_{\CK_1\cap \CI_2=\emptyset} \subset (X^{\CI_1}\times X^{\CI_2})$$
be the open subscheme, corresponding to the condition that for every $k_1\in \CK_1$ and $i_2\in \CI_2$
the corresponding maps $S\rightrightarrows X$ have non-intersecting images. We have:
$$(X^{\CI_1}\times X^{\CI_2})_{\CK'_1\cap \CI_2=\emptyset}\cap (X^{\CI_1}\times X^{\CI_2})_{\CK''_1\cap \CI_2=\emptyset}=
(X^{\CI_1}\times X^{\CI_2})_{(\CK'_1\cup \CK''_1)\cap \CI_2=\emptyset},$$
and 
$$\underset{\emptyset\neq \CK_1}\cup\, (X^{\CI_1}\times X^{\CI_2})_{\CK_1\cap \CI_2=\emptyset}=
(X^{\CI_1}\times X^{\CI_2}) \underset{\Ran\times \Ran}\times (\Ran\times \Ran)_{\not\subseteq}.$$

\medskip

Hence, it suffices to show that for any $\CK_1$, the map
\begin{equation} \label{e:separate points}
(X^{\CI_1}\times X^{\CI_2})_{\CK_1\cap \CI_2=\emptyset} \underset{(\Ran\times \Ran)_{\not\subseteq}}\times
(\Ran\times \Ran)^\sim_{\not\subseteq}\to (X^{\CI_1}\times X^{\CI_2})_{\CK_1\cap \CI_2=\emptyset}
\end{equation}
is universally homologically contractible.

\sssec{}

We claim that the map \eqref{e:separate points} is value-wise contractible (see \lemref{l:contr contr}). 
Namely, we claim that for a given
$S$-point of $(X^{\CI_1}\times X^{\CI_2})_{\CK_1\cap \CI_2=\emptyset}$, the category of its lifts to an $S$-point of 
\begin{equation} \label{e:separate points above}
(X^{\CI_1}\times X^{\CI_2})_{\CK_1\cap \CI_2=\emptyset} \underset{(\Ran\times \Ran)_{\not\subseteq}}\times
(\Ran\times \Ran)^\sim_{\not\subseteq}
\end{equation}
is contractible. 

\medskip

For a given $S$-point of $(X^{\CI_1}\times X^{\CI_2})_{\CK_1\cap \CI_2=\emptyset}$ let 
$I_1,I_2\subset \Maps(S,X)$ be the images of the maps
$$\CI_1\to \Maps(S,X) \text{ and } \CI_2\to \Maps(S,X),$$
respectively. Let $K_1\subset I_1$ be the image of $\CK_1\subseteq \CI_1$.

\medskip

The category of lifts of our given $S$-point of $(X^{\CI_1}\times X^{\CI_2})_{\CK_1\cap \CI_2=\emptyset}$ to an
$S$-point of \eqref{e:separate points above} is that of finite non-empty subsets $K'_1\subseteq I_1$
such that $K'_1$ and $I'_2$ have disjoint images. 

\medskip

Note that left cofinal in this category is the full subcategory that consists of
those $K'_1$ that contain $K_1$.  Indeed, the left adjoint to the embedding is given by $K'_1\mapsto K'_1\cup K_1$.

\medskip

Hence, it is sufficient to show that the above subcategory is contractible. But it contains an initial object, namely,
one with $K'_1=K_1$. 

\section{An explicit expression for the Verdier dual}   \label{s:expl}

The contents of this section will be needed for one of the two proofs of the \emph{pointwise duality} statement,
\thmref{t:pointwise duality}. This section may be skipped on the first pass. 

\ssec{The formula}

In this subsection we will be working in the context of constructible sheaves. 

\sssec{}

Let $\CF$ be an object of $\Shv^!(\Ran)$, and consider the object $\CG:=\BD_{\Ran}(\CF)\in \Shv^!(\Ran)$.

\medskip

Fix a point $x\in X$ and consider the corresponding map $\on{pt}\to \Ran$, which we symbolically denote by $\{x\}$. 
Consider the object
$$\CG_{\{x\}}\in \Lambda\mod.$$

The goal of this subsection is to give an explicit description of $\CG_{\{x\}}$ in terms of $\wt\CF:=\on{AddUnit}_{\on{aug}}(\CF)$.

\sssec{}  \label{sss:not cont x}

Consider the following lax prestack, denoted $(\Ran_{\on{untl,aug}})_{x\notin}$. For $S\in \Sch$, the category
$(\Ran_{\on{untl,aug}})_{x\notin}(S)$ is the full subcategory of $\Ran_{\on{untl,aug}}(S)$, consisting of pairs
$K\subseteq J\neq \emptyset$ for which $K$ \emph{has a disjoint image} from $\{x\}$.  I.e., for every $k\in K$,
the image of the corresponding map $S\to X$ avoids $x\in X$. 

\medskip

Set
$$\CF^\dagger_x:=\on{C}^*_c\left((\Ran_{\on{untl,aug}})_{x\notin},\wt\CF|_{(\Ran_{\on{untl,aug}})_{x\notin}}\right).$$

The goal of this section is to prove the following assertion:

\begin{thmconstr}  \label{t:dagger}
There exists a canonical isomorphism 
$$(\CF^\dagger_x)^\vee\simeq \CG_{\{x\}}.$$
\end{thmconstr}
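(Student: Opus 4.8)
The plan is to express $\CG_{\{x\}} = (\BD_{\Ran}(\CF))_{\{x\}}$ using \corref{c:Verdier on Ran}, which gives
$$\BD_{\Ran}(\CF)\simeq \underset{\CI\in \on{Fin}^s}{\on{lim}}\, (\on{ins}_\CI)_!\circ \BD_{X^\CI}\circ (\on{ins}_\CI)^!(\CF),$$
and then to compute $(\on{ins}_\CI)_!\circ \BD_{X^\CI}\circ (\on{ins}_\CI)^!(\CF)$ evaluated at $\{x\}$ by base change. By \corref{c:pseudo-proper}, applied to the pseudo-proper map $\on{ins}_\CI:X^\CI\to \Ran$, the $!$-fiber at $\{x\}$ of $(\on{ins}_\CI)_!(\CH)$ is the $!$-pushforward to $\on{pt}$ of $\CH$ restricted to $\{x\}\underset{\Ran}\times X^\CI$. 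Here $\{x\}\underset{\Ran}\times X^\CI$ is the scheme of $\CI$-tuples whose associated subset of $X$ equals $\{x\}$, i.e. (set-theoretically) the single point, but with a nilpotent fuzz along the small diagonal; concretely it is the reduced point $\{x\}$ embedded as the small diagonal of $X^\CI$, so (using that Verdier duality on schemes commutes with $!$-pushforward along the proper map $X^\CI\to\on{pt}$, \lemref{l:Verdier duality sch}, and dualizing) one gets
$$(\CG_{\{x\}})^\vee \simeq \underset{\CI\in \on{Fin}^s}{\on{colim}}\, \on{C}^*_c\bigl(\{x\}\underset{\Ran}\times X^\CI,\ (\on{ins}_\CI)^!(\CF)|_{\{x\}\underset{\Ran}\times X^\CI}\bigr).$$
So the target $(\CF^\dagger_x)^\vee$ should match this colimit.

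Next I would identify the right-hand side of \thmref{t:dagger}. By definition $\CF^\dagger_x=\on{C}^*_c\bigl((\Ran_{\on{untl,aug}})_{x\notin},\wt\CF|_{(\Ran_{\on{untl,aug}})_{x\notin}}\bigr)$ with $\wt\CF=\on{AddUnit}_{\on{aug}}(\CF)$. The key is the explicit formula \propref{p:inserting the aug}: on an $S$-point $K\subseteq I$ with pairwise non-intersecting images, $\wt\CF_{S,K\subseteq I}=\underset{\emptyset\neq J\subseteq (I-K)}\oplus\, \CF_{S,J}$. On the lax prestack $(\Ran_{\on{untl,aug}})_{x\notin}$ we impose that $K$ avoids $x$; the condition that $\on{C}^*_c$ picks out is governed, via \lemref{l:express cohomology}, by a colimit over the index category $\Sch_{/(\Ran_{\on{untl,aug}})_{x\notin}}$. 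The strategy is to present $(\Ran_{\on{untl,aug}})_{x\notin}$ as a suitable (lax) colimit of schemes — as in \secref{sss:untl Ran via schemes} but localized at the locus avoiding $x$ — and to observe that summing the direct sum decomposition of \propref{p:inserting the aug} over all such $K\subseteq I$ reassembles exactly the colimit over $\CI\in\on{Fin}^s$ of chains $\CI\twoheadrightarrow\CK$ with a marked point sitting at $x$; the combinatorial bookkeeping is that for fixed $J$ (the ``reduced'' part) the index category of pairs $K\subseteq I$ with $I=J\sqcup K$, $K$ disjoint from $x$, is contractible, which collapses the $K$-sum and leaves precisely $\on{C}^*_c$ of the $\CF_{S,J}$ over the locus where the union of $J$ with $\{x\}$ is accounted for. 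This is the same mechanism used in the proof of \thmref{t:main} and in \propref{p:take out expl}, so I would model the argument on those.

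The cleanest route is probably conceptual rather than through explicit fibers: use \corref{c:main} together with the pairing formalism of \secref{s:pairings aug}. Namely, recall from \secref{ss:shv on Ran} that $\CG_{\{x\}}=(\on{ins}_x)^!\BD_{\Ran}(\CF)$, and by the general duality for the $!$-fiber at a point one expects $(\on{ins}_x)^!\BD_{\Ran}(\CF)\simeq \bigl(\text{compactly supported cohomology of } \CF \text{ on the ``link'' of } x\bigr)^\vee$. The ``link'' is realized geometrically as $\Ran^\to$-type data: tuples containing $x$, modulo the vacuum at $x$. One makes this precise by writing $(\on{ins}_x)^!\BD_{\Ran}(\CF) = \on{Fib}$ of the two natural maps coming from the stratification ``$x\in I$'' versus ``$x\notin I$'' — exactly the $\not\subseteq/\not\supseteq$ decomposition of $\Ran\times\Ran$ from \secref{sss:opens} specialized to one factor being $\{x\}$ — and then recognizing the resulting complex, after passing to $\wt\CF$, as the $\on{C}^*_c$ of $\wt\CF$ over $(\Ran_{\on{untl,aug}})_{x\notin}$. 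The main obstacle will be the bookkeeping in this last identification: matching the fiber-sequence description of $\BD_{\Ran}(\CF)_{\{x\}}$ (which involves $\BD_{X^\CI}$ on schemes and the limit of \corref{c:Verdier on Ran}) against the $\on{C}^*_c$ over the lax prestack $(\Ran_{\on{untl,aug}})_{x\notin}$ (which involves the direct-sum formula of \propref{p:inserting the aug} and a colimit over a genuinely lax index category). Controlling the interchange of the limit defining $\BD_{\Ran}$ with the colimit defining $\on{C}^*_c$ is exactly the delicate point, and I expect it to require the stabilization input \propref{p:stabilization}(ii), or at least a cofinality argument in the spirit of \secref{ss:take out expl}, to be carried out carefully.
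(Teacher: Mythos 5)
Your primary route has a genuine flaw. Starting from \corref{c:Verdier on Ran} expresses $\CG_{\{x\}}$ as the \emph{limit} $\underset{\CI}{\on{lim}}\,\on{ins}_x^!\circ(\on{ins}_\CI)_!\circ\BD_{X^\CI}\circ(\on{ins}_\CI)^!(\CF)$, and the step you label ``dualizing'' — turning this into a colimit for $(\CG_{\{x\}})^\vee$ — is exactly the interchange of $(-)^\vee$ with an infinite limit, which is false in general. Your proposed formula $(\CG_{\{x\}})^\vee\simeq\underset{\CI}{\on{colim}}\,\on{C}^*_c(\{x\}\underset{\Ran}\times X^\CI,(\on{ins}_\CI)^!\CF|_{\ldots})$ is actually incorrect as stated: for $\CF=\omega_{\Ran}$ we have $\BD_{\Ran}(\omega_\Ran)=0$, so $\CG_{\{x\}}=0$, but the right-hand side evaluates to $\underset{\CI}{\on{colim}}\,\omega_{\{x\}}\simeq\Lambda\neq 0$. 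You rightly flag the interchange as delicate and suggest fixing it via \propref{p:stabilization}(ii); but that proposition (and the whole stabilization machinery of Part II) requires a perverse-degree estimate on $\on{ins}_\CI^!(\CF)|_{\overset{\circ}X{}^\CI}$, whereas \thmref{t:dagger} is asserted for an \emph{arbitrary} $\CF\in\Shv^!(\Ran)$. So the stabilization route cannot rescue the argument without adding hypotheses the theorem doesn't have.

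The paper's proof takes the route you only sketch in your last paragraph, and it avoids all convergence issues. It begins with the elementary identity $\CG_{\{x\}}\simeq\bigl(\on{coFib}(\on{C}^*_c(\Ran_{\neq\{x\}},\CF)\to\on{C}^*_c(\Ran,\CF))\bigr)^\vee$, which is the prestack analog of ``$i^!\BD_Y(\CH)\simeq(i^*\CH)^\vee$ for a closed point $i$'': no estimates on $\CF$ are needed. It then recalls that $\wt\CF=\on{AddUnit}_{\on{aug}}(\CF)$ is itself defined as $\on{coFib}\bigl((\psi_{\on{aug}})_!(\xi_{\on{aug}})^!\CF\to\pi^!\psi_!\xi^!\CF\bigr)$, restricts that cofiber sequence to $(\Ran_{\on{untl,aug}})_{x\notin}$, takes $\on{C}^*_c$, and matches the two terms: $\on{C}^*_c(\Ran,\CF)\simeq\on{C}^*_c((\Ran_{\on{untl,aug}})_{x\notin},\pi^!\psi_!\xi^!\CF|_{\ldots})$ via a contractible-fibers argument for $(\Ran_{\on{untl,aug}})_{x\notin}\underset{\Ran_{\on{untl}}}\times\Ran^\to\to\Ran$, and $\on{C}^*_c(\Ran_{\neq\{x\}},\CF)\simeq\on{C}^*_c((\Ran_{\on{untl,aug}})_{x\notin},(\psi_{\on{aug}})_!(\xi_{\on{aug}})^!\CF|_{\ldots})$ via the intermediate lax prestack $\Ran^\sim_{\neq\{x\}}$, \propref{p:acyclicity} (a base-change of \propref{p:descr of omega}), and a value-wise homotopy-type equivalence for the comparison map $\mu$. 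Your instinct that the ``$x\in I$ versus $x\notin I$'' stratification is the correct lever is right — but the mechanism is a direct matching of two cofiber sequences, not a limit/colimit interchange, and it uses the homological-contractibility toolkit of \secref{s:uhc} rather than the stabilization results.
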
 

\ssec{Plan of the proof}

\sssec{} 

Let $\Ran_{\neq \{x\}}$ be the open sub-prestack of $\Ran$, which is the complement
of the image of the (finitary, pseudo-proper) map $\on{pt}\to \Ran$, corresponding to the point $x$. 

\medskip

The following results from the definitions:

\begin{lem}
We have a canonical isomorphism
$$\CG_{\{x\}}\simeq \left(\on{coFib}\left(\on{C}_c^*\left(\Ran_{\neq \{x\}},\CF\right)\to \on{C}^*_c(\Ran,\CF)\right)\right)^\vee.$$
\end{lem}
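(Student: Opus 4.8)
The statement to prove is the lemma at the very end of the excerpt:
$$
\CG_{\{x\}}\simeq \left(\on{coFib}\left(\on{C}_c^*\left(\Ran_{\neq \{x\}},\CF\right)\to \on{C}^*_c(\Ran,\CF)\right)\right)^\vee,
$$
where $\CG=\BD_{\Ran}(\CF)$. This is a purely formal unwinding of the definition of Verdier duality on $\Ran$ combined with the long exact sequence of an open-closed decomposition. First I would recall that, by the definition of $\BD_{\Ran}$ in \secref{sss:Verdier defn}, the !-fiber $\CG_{\{x\}}=(\{x\})^!\circ \BD_{\Ran}(\CF)$ corepresents the functor sending $V\in\Lambda\mod$ to the space of pairings $(\{x\})_!(V)\boxtimes \CF \to (\on{diag}_{\Ran})_!(\omega_{\Ran})$; since $\{x\}:\on{pt}\to\Ran$ is a point, $(\{x\})_!(V)$ is the object of $\Shv^!(\Ran)$ supported at $\{x\}$, so such a pairing is the same datum as a map $V\to \Maps_{\Shv^!(\Ran)}(\CF,(\on{diag}_{\Ran})_!(\omega_{\Ran})|_{\{x\}\times\Ran})$, i.e. $\CG_{\{x\}}\simeq \big(\on{C}^*_c(\Ran, i_x^!\CF)\big)^\vee$ where $i_x$ is the closed embedding $\{x\}\hookrightarrow\Ran$ — more precisely $\CG_{\{x\}}$ is the dual of the compactly-supported cohomology of the !-restriction of $\CF$ to the closed point $\{x\}$ inside $\Ran$, computed using that $(\on{diag}_{\Ran})_!(\omega_{\Ran})$ restricted to $\{x\}\times\Ran$ is the skyscraper $\omega$ at $\{x\}$.

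Next I would produce the open-closed triangle. Let $j:\Ran_{\neq\{x\}}\hookrightarrow \Ran$ be the open embedding complementary to the (finitary, pseudo-proper, hence closed in the appropriate sense) map $i_x:\on{pt}\to\Ran$. One has the standard recollement triangle
$$
j_!\, j^!(\CF)\to \CF\to i_{x\,*}\, i_x^!(\CF),
$$
valid here because $\Ran$ is a pseudo-scheme and both $j$ and $i_x$ behave well (Lemmas \ref{l:base change} and \ref{l:replace lego}, together with the explicit presentation of $\Ran$ as a colimit of the $X^\CI$). Applying the functor $\on{C}^*_c(\Ran,-)$, which is exact and commutes with $j_!$ (tautologically, since $j_!$ is a direct-image and $\on{C}^*_c(\Ran, j_!(-))\simeq \on{C}^*_c(\Ran_{\neq\{x\}},-)$) and with $i_{x\,*}$ (since $\on{C}^*_c(\Ran, i_{x\,*}(-))\simeq \on{C}^*_c(\on{pt},-)$ as $i_x$ is a closed point), yields a triangle
$$
\on{C}^*_c(\Ran_{\neq\{x\}},\CF)\to \on{C}^*_c(\Ran,\CF)\to \on{C}^*_c(\on{pt}, i_x^!\CF)=\on{C}^*_c(\on{pt}, i_x^!\CF).
$$
Hence $\on{C}^*_c(\on{pt},i_x^!\CF)\simeq \on{coFib}\big(\on{C}^*_c(\Ran_{\neq\{x\}},\CF)\to \on{C}^*_c(\Ran,\CF)\big)$.

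Finally I would combine the two computations: $\CG_{\{x\}}$ is the $\Lambda$-linear dual of $\on{C}^*_c(\on{pt},i_x^!\CF)$, which by the displayed triangle is the dual of that cofiber. This gives exactly the asserted formula. The only points requiring a little care — and the place I expect most of the (still modest) work to go — are the identifications of the three direct-image functors with compactly-supported cohomology on the relevant (pre)stacks: that $\on{C}^*_c(\Ran, j_!(-))$ agrees with $\on{C}^*_c(\Ran_{\neq\{x\}},-)$ uses that $j$ is schematic open and the explicit description of $j_!$ on the Ran space (as in \eqref{e:bj!}); that $i_{x\,*}$ pushes forward correctly uses Lemma \ref{l:replace lego}; and the identification of $\CG_{\{x\}}$ with a dual of a compactly-supported cohomology group uses \corref{c:Verdier on Ran} (the explicit limit formula for $\BD_{\Ran}$) restricted to $\{x\}$, together with the fact that $(\on{diag}_{\Ran})_!(\omega_{\Ran})$ restricted to $\{x\}\times\Ran$ is the skyscraper. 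None of these involves a genuine obstacle; the lemma is a bookkeeping step that isolates $\CG_{\{x\}}$ before the real work of \thmref{t:dagger} (relating this cofiber to $\CF^\dagger_x$ via the augmented Ran space) begins.
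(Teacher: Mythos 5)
Your overall strategy — representability of $\BD_{\Ran}$ together with base change for pseudo-proper maps, and the open-closed decomposition $\Ran_{\neq\{x\}}\overset{j}{\hookrightarrow}\Ran\overset{i_x}{\hookleftarrow}\on{pt}$ — is the right one; it is essentially how one ``unwinds the definitions'' here. However, there is a genuine $*$-versus-$!$ confusion in two places; the two errors compensate, so the lemma you ultimately arrive at is correct, but both intermediate assertions are false as written.

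Concretely, the recollement triangle beginning with $j_!\, j^! = j_!\, j^*$ has as its third term $(i_x)_!\, i_x^*\CF$ (the ordinary $*$-pullback to the closed point), not $(i_x)_*\, i_x^!\CF$ — the closed-embedding pullback you used belongs to the \emph{other} recollement triangle $(i_x)_!\, i_x^! \to \on{id} \to j_*\, j^!$. Consequently the cofiber of $\on{C}^*_c(\Ran_{\neq\{x\}},\CF)\to\on{C}^*_c(\Ran,\CF)$ is the $*$-fiber of $\CF$ at $x$, not $i_x^!\CF$. The matching slip occurs in your computation of $\CG_{\{x\}}$: from representability and base change one gets $\CG_{\{x\}}\simeq\CHom_{\Shv^!(\Ran)}(\CF,(i_x)_!\Lambda)$, and since $(i_x)_!$ is the \emph{right} adjoint of $*$-pullback (not of $i_x^!$), this equals $(i_x^*\CF)^\vee$, not $(i_x^!\CF)^\vee$. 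Both slips are visible already for $X=\BA^1$, $\CF=\Lambda_X$, $x=0$: the cofiber of cohomologies and $\CG_{\{0\}}=i_0^!\omega_X$ both equal $\Lambda$ in degree $0$, whereas $i_0^!\Lambda_X$ lies in degree $2$. If you wish to avoid introducing $i_x^*$ on the prestack $\Ran$ (the paper only uses $*$-pullback along open maps), set $\CH:=\on{coFib}(j_!j^!\CF\to\CF)$; then $j^!\CH=0$, so $\CH\simeq(i_x)_!\,i_x^!\CH$, and both sides of the lemma are $(i_x^!\CH)^\vee$ — the left-hand side because $\CHom(\CF,(i_x)_!\Lambda)\simeq\CHom(\CH,(i_x)_!\Lambda)\simeq(i_x^!\CH)^\vee$ (the first since $\CHom(j_!j^!\CF,(i_x)_!\Lambda)=0$, the second by full faithfulness of $(i_x)_!$, \lemref{l:replace lego}(a)), and the right-hand side because $\on{C}^*_c(\Ran,\CH)\simeq i_x^!\CH$.
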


\sssec{}

Recall now that $\wt\CF$ was constructed as 
$$\on{coFib}\left((\psi_{\on{aug}})_!\circ (\xi_{\on{aug}})^! (\CF) \to \pi^!\circ \psi_!\circ \xi^!(\CF)\right).$$
 
We are going to construct a commutative diagram
$$
\CD
\on{C}_c^*\left(\Ran_{\neq \{x\}},\CF\right) @>>>  
\on{C}^*_c\left((\Ran_{\on{untl,aug}})_{x\notin},
(\psi_{\on{aug}})_!\circ (\xi_{\on{aug}})^! (\CF)|_{(\Ran_{\on{untl,aug}})_{x\notin}}\right) \\
@VVV   @VVV   \\
 \on{C}^*_c(\Ran,\CF) @>>> \on{C}^*_c\left((\Ran_{\on{untl,aug}})_{x\notin},
\pi^!\circ \psi_!\circ \xi^!(\CF) |_{(\Ran_{\on{untl,aug}})_{x\notin}}\right),
\endCD
$$
with the horizontal arrows being isomorphisms. This will prove \thmref{t:dagger}. 

\sssec{}

To construct the isomorphism
$$ \on{C}^*_c(\Ran,\CF)  \simeq 
\on{C}^*_c\left((\Ran_{\on{untl,aug}})_{x\notin},\pi^!\circ \psi_!\circ \xi^!(\CF) |_{(\Ran_{\on{untl,aug}})_{x\notin}}\right)$$
we consider the Cartesian diagram
$$
\CD
(\Ran_{\on{untl,aug}})_{x\notin} \underset{ \Ran_{\on{untl}}} \times \Ran^\to @>>>  \Ran^\to  @>{\xi}>>  \Ran \\
@VVV    @VV{\psi}V    \\
(\Ran_{\on{untl,aug}})_{x\notin} @>{(K\subseteq I)\mapsto I}>>  \Ran_{\on{untl}}.
\endCD
$$

As in the proof of \propref{p:inserting the unit}, it suffices to show that the map
$$(\Ran_{\on{untl,aug}})_{x\notin} \underset{ \Ran_{\on{untl}}} \times \Ran^\to \to \Ran$$
has contractible fibers over any $S$-point of $\Ran$. 

\medskip

For such a point $J\subset \Maps(S,X)$, the category of its lifts to an $S$-point of the lax prestack 
$(\Ran_{\on{untl,aug}})_{x\notin} \underset{ \Ran_{\on{untl}}} \times \Ran^\to$ is that of 
$$(K\subseteq I\supseteq J),\quad K \text{ is disjoint from } x.$$

However, this subcategory has an initial object, namely, one with $K=\emptyset$ and
$I=J$. 

\ssec{Construction}

To construct the isomorphism 
\begin{equation} \label{e:isom away from x}
\on{C}_c^*\left(\Ran_{\neq \{x\}},\CF\right) 
\simeq \on{C}^*_c\left((\Ran_{\on{untl,aug}})_{x\notin},
(\psi_{\on{aug}})_!\circ (\xi_{\on{aug}})^! (\CF)|_{(\Ran_{\on{untl,aug}})_{x\notin}}\right) 
\end{equation} 
we proceed as follows.

\sssec{}

Consider the lax prestack, denoted $\Ran^\sim_{\neq \{x\}}$, whose category of $S$-points is that of pairs
$K\subseteq J$ such that $K\neq \emptyset$ and $K$ is disjoint from $x$. As morphisms we allow
inclusions of the $K$'s and isomorphism of the $J$'s. 

\medskip

We have the evident forgetful map $\kappa_x:\Ran^\sim_{\neq \{x\}}\to \Ran_{\neq \{x\}}$.  We will need the
following assertion: 

\begin{prop}  \label{p:acyclicity}
The map $\kappa_x$ is universally homologically contractible. 
\end{prop}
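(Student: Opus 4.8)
\textbf{Proof plan for \propref{p:acyclicity}.}
The plan is to verify universal homological contractibility of $\kappa_x$ by the same scheme-by-scheme strategy used in the proof of \propref{p:descr of omega}. First I would reduce to schemes: it suffices to show that for every finite set $\CI$, the base-changed map
$$X^\CI\underset{\Ran}\times \Ran^\sim_{\neq\{x\}}\to X^\CI\underset{\Ran}\times \Ran_{\neq\{x\}}$$
is universally homologically contractible. The target here is an open subscheme of $X^\CI$ (the locus where all the coordinates avoid $x$); more precisely, as in \propref{p:descr of omega}, for each nonempty $\CK\subseteq \CI$ let $(X^\CI)_{\CK\cap\{x\}=\emptyset}\subset X^\CI$ be the open subscheme where the coordinates indexed by $\CK$ avoid $x$. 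These opens are closed under intersection (with $\CK'\cup\CK''$) and their union is exactly $X^\CI\underset{\Ran}\times\Ran_{\neq\{x\}}$, so by the standard Zariski-cover argument it is enough to treat each $(X^\CI)_{\CK\cap\{x\}=\emptyset}$ separately.

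Next I would show that over $(X^\CI)_{\CK\cap\{x\}=\emptyset}$ the map is in fact \emph{value-wise contractible}, whence universally homologically contractible by \lemref{l:contr contr}. Fixing an $S$-point of $(X^\CI)_{\CK\cap\{x\}=\emptyset}$ with image subsets $J\subset \Maps(S,X)$ and $K_0\subset J$ the image of $\CK$ (note $K_0\neq\emptyset$ and $K_0$ is disjoint from $x$), the category of lifts to an $S$-point of $X^\CI\underset{\Ran}\times\Ran^\sim_{\neq\{x\}}$ is that of nonempty finite subsets $K\subseteq J$ disjoint from $x$, with morphisms given by inclusions. This category has the full subcategory of those $K$ containing $K_0$ as a left-cofinal subcategory—the inclusion admits the left adjoint $K\mapsto K\cup K_0$, which is well-defined precisely because $K_0$ is disjoint from $x$—and that subcategory has the initial object $K=K_0$. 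Hence the category of lifts is contractible, as desired.

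The main obstacle, such as it is, is purely bookkeeping: one must check that the description of the fiber category is correct (that a lift really is just a choice of $K$, with $J$ rigidified by the $S$-point downstairs) and that the left adjoint $K\mapsto K\cup K_0$ genuinely lands in the subcategory of subsets disjoint from $x$—this is where the defining condition on $\Ran^\sim_{\neq\{x\}}$ (that $K$, not $J$, be disjoint from $x$) is used in an essential way. Everything else is a verbatim repetition of the argument in \secref{ss:descr of omega}, so I would present it compressed, referring back to that proof for the Zariski-cover reduction and citing \lemref{l:contr contr} for the final implication.
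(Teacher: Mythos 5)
Your argument is correct, but it is longer than it needs to be: the paper's actual proof of \propref{p:acyclicity} is a one-liner. It observes that $\kappa_x$ is obtained from $\kappa_\subseteq$ by base change along the map of prestacks $\Ran_{\neq\{x\}}\to(\Ran\times\Ran)_{\not\subseteq}$, $J\mapsto (J,\{x\})$, and that universal homological contractibility over a \emph{prestack} (as opposed to a lax prestack) is tautologically stable under base change, directly from the definition in \secref{sss:contr over prestack}. Your proposal instead re-runs the proof of \propref{p:descr of omega} from scratch in this special case — reduce to $X^\CI$, cover by the opens $(X^\CI)_{\CK\cap\{x\}=\emptyset}$, show value-wise contractibility over each via the left adjoint $K\mapsto K\cup K_0$ and the initial object $K_0$. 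This is a valid alternative and essentially reproduces (in a one-variable form) the argument the paper gives once for $\kappa_\subseteq$; what you lose is only economy, since the base-change observation lets one inherit the result rather than reprove it. One small imprecision worth fixing: your opening description of the target open as ``the locus where all the coordinates avoid $x$'' is wrong — $X^\CI\underset{\Ran}\times\Ran_{\neq\{x\}}$ is the complement of the diagonal point $(x,\dots,x)$, i.e.\ the locus where \emph{not all} coordinates equal $x$ (fiberwise over $S$). Your subsequent exact description as the union of the opens $(X^\CI)_{\CK\cap\{x\}=\emptyset}$ is correct and is what the argument actually uses.
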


\begin{proof}

Recall that the map
$$\kappa_{\subseteq}: (\Ran\times \Ran)^\sim_{\not\subseteq}\to (\Ran\times \Ran)_{\not\subseteq}$$
is universally homologically contractible by \propref{p:descr of omega}.

\medskip

Now, the map $\kappa_x$ is obtained from  $\kappa_{\subseteq}$ as a base change by means of
$$\Ran_{\neq \{x\}}\to (\Ran\times \Ran)_{\not\subseteq},\quad J\mapsto (J,\{x\}).$$

\end{proof} 

\sssec{}

Note now that there is a canonically defined map
$$\Ran^\sim_{\neq \{x\}}\overset{\mu}\longrightarrow \Ran^\to_{\on{aug}}\underset{\Ran_{\on{untl,aug}}}\times
(\Ran_{\on{untl,aug}})_{x\notin},\quad (K\subseteq J)\mapsto \left((K\subseteq J\supseteq J),(K\subseteq J)\right)$$
that makes the diagram
$$
\CD
\Ran^\sim_{\neq \{x\}} @>{\mu}>>  \Ran^\to_{\on{aug}}\underset{\Ran_{\on{untl,aug}}}\times (\Ran_{\on{untl,aug}})_{x\notin} 
@>>>  (\Ran_{\on{untl,aug}})_{x\notin}  \\
@V{\kappa_x}VV    @VVV   @VVV   \\
\Ran_{\neq \{x\}} & & \Ran^\to_{\on{aug}}  @>{\psi_{\on{aug}}}>>  \Ran_{\on{untl,aug}}  \\
@VVV   @VV{\xi_{\on{aug}}}V   \\
\Ran @>{\on{id}}>> \Ran
\endCD
$$ 
commutative. 

\medskip

By \propref{p:acyclicity}, in order to construct \eqref{e:isom away from x}, it suffices to construct an isomorphism
\begin{multline}   \label{e:big iso}
\on{C}^*_c\left(\Ran^\sim_{\neq \{x\}},
\mu^!\left(\xi^!_{\on{aug}}(\CF)|_{\Ran^\to_{\on{aug}}
\underset{\Ran_{\on{untl,aug}}}\times (\Ran_{\on{untl,aug}})_{x\notin}}\right)\right)\simeq \\
\simeq 
\on{C}^*_c\left((\Ran_{\on{untl,aug}})_{x\notin},
(\psi_{\on{aug}})_!\circ (\xi_{\on{aug}})^! (\CF)|_{(\Ran_{\on{untl,aug}})_{x\notin}}\right).
\end{multline} 

\sssec{}

Note that we can  identity the right-hand side in \eqref{e:big iso} with
$$\on{C}^*_c\left(\Ran^\to_{\on{aug}}\underset{\Ran_{\on{untl,aug}}}\times (\Ran_{\on{untl,aug}})_{x\notin}, 
\xi^!_{\on{aug}}(\CF)|_{\Ran^\to_{\on{aug}}\underset{\Ran_{\on{untl,aug}}}\times (\Ran_{\on{untl,aug}})_{x\notin}}\right).$$

Now, the required isomorphism follows from the next assertion (see \lemref{l:rel hom type}):

\begin{lem}
The map $\mu$ is value-wise homotopy-type equivalence. 
\end{lem}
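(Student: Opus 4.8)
\textbf{Proof proposal for the final lemma} (``The map $\mu$ is a value-wise homotopy-type equivalence''). The plan is to fix $S\in\Sch$ and exhibit, for the functor
$$\mu(S):\Ran^\sim_{\neq\{x\}}(S)\to \Bigl(\Ran^\to_{\on{aug}}\underset{\Ran_{\on{untl,aug}}}\times (\Ran_{\on{untl,aug}})_{x\notin}\Bigr)(S),$$
either a left or right adjoint, since a functor admitting an adjoint induces a homotopy equivalence of nerves (equivalently, of enveloping groupoids), which is exactly what \secref{ss:some weaker value-wise notions} requires. First I would unwind both sides explicitly. An object of the source category is a pair $(K\subseteq J)$ of finite non-empty subsets of $\Maps(S,X)$ with $K$ disjoint from $x$; morphisms allow enlarging $K$ inside the fixed $J$ and isomorphisms of $J$. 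An object of the target is a compatible pair $\bigl((K\subseteq I\supseteq J'), (K\subseteq I)\bigr)$ with $I\neq\emptyset$, $K\cap J'\neq\emptyset$, and $K$ disjoint from $x$ — in other words the data of finite subsets $K\subseteq I\supseteq J'$ with $K$ disjoint from $x$ and $K\cap J'\neq\emptyset$; morphisms enlarge $K$ and $I$ while fixing $J'$ (up to isomorphism). The functor $\mu(S)$ sends $(K\subseteq J)\mapsto (K\subseteq J\supseteq J)$; in the unwound description it is $(K\subseteq J)\mapsto (K\subseteq I:=J\supseteq J':=J)$.

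The key step is to produce the adjoint. I expect the forgetful functor $\nu(S)$ sending $(K\subseteq I\supseteq J')\mapsto (K\subseteq J')$ (remember $K\cap J'\neq\emptyset$, so $K\subseteq I$ together with $K\cap J'\neq\emptyset$ does \emph{not} force $K\subseteq J'$ — one must be slightly careful here) to be the natural candidate; but since we need $K\subseteq J'$ as a \emph{subset} for the source category, the cleaner move is to let the adjoint send $(K\subseteq I\supseteq J')\mapsto (K\cap J'\subseteq J')$, or possibly $(K\subseteq I\supseteq J')\mapsto (K\cup J'\ \text{-ish})$. I would determine which direction works by checking the adjunction triangle: the composite $\nu\circ\mu$ should be (isomorphic to) the identity on the source, and there should be a natural transformation relating $\mu\circ\nu$ to the identity on the target that is cofinal. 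Concretely, given $(K\subseteq I\supseteq J')$, the obvious morphism in the target category to/from $\bigl(K\subseteq J'\supseteq J'\bigr)$ (shrinking $I$ down to $J'$) exists only if $K\subseteq J'$; so the correct formulation is that $\mu(S)$ is a \emph{co}reflection onto the full subcategory where $I=J'$, and that this full subcategory in turn is equivalent to the source via $(K\subseteq J'\supseteq J')\leftrightarrow(K\cap J'\subseteq J')$. Because morphisms only shrink $I$ and never touch $J'$, the inclusion $I=J'$ is terminal in each fiber over a fixed $(K,J')$, giving the needed adjunction. I would then invoke \lemref{l:rel hom type} to conclude that $\mu^!$ is fully faithful on $\Shv^!_{\on{str}}$ and compatible with $\on{C}^*_c$, which is all that the surrounding argument (the isomorphism \eqref{e:big iso}) needs.

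The main obstacle I anticipate is purely bookkeeping: getting the disjointness and non-emptiness constraints ($K$ disjoint from $x$, $K\cap J'\neq\emptyset$, $I\neq\emptyset$) to propagate correctly through the candidate adjoint so that it actually lands in the right category, and verifying that the relevant fibers are contractible (have an initial or terminal object) rather than merely nonempty. In particular one must check that replacing $I$ by $J'$ is always legal — it is, since $J'\subseteq I$ and $J'\neq\emptyset$ — and that the resulting morphism is the universal one. Once the fiberwise terminal object is identified, contractibility is immediate and the rest is formal. An alternative, if the adjoint is fussy to write down cleanly, is to factor $\mu$ as a composite of two maps, each of which is manifestly value-wise contractible in the sense of \lemref{l:contr contr} (one forgetting $I$ down to $J'$, one identifying the $I=J'$ locus with the source), but I expect the single-adjoint argument to be the shortest.
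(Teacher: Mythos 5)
Your proposal is correct and follows essentially the same route as the paper: both restrict attention to the fiber over a fixed $J\in\Ran(S)$ and exhibit the right adjoint $(K'\subseteq I\supseteq J)\mapsto (K'\cap J\subseteq J)$ to $\mu$, from which the homotopy-type equivalence follows formally. One slip in your narration worth correcting: morphisms in $\Ran^\to_{\on{aug}}$ \emph{enlarge} $I$ (they are inclusions $I\subseteq I_1$), not shrink it, so the correct picture is that $(K'\cap J\subseteq J\supseteq J)$ is terminal in the comma category $\mu\downarrow D$ for $D=(K'\subseteq I\supseteq J)$ — the canonical map being given by $J\subseteq I$ and $K'\cap J\subseteq K'$ — rather than $I=J$ being terminal in a fiber over fixed $(K,J)$; the adjoint formula you wrote down is nevertheless the right one and the Hom-set verification goes through.
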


\begin{proof}

We need to show that for any $S\in \Sch$, the functor 
$$\Ran^\sim_{\neq \{x\}}(S)\to 
\left(\Ran^\to_{\on{aug}}\underset{\Ran_{\on{untl,aug}}}\times (\Ran_{\on{untl,aug}})_{x\notin}\right)(S)$$
induces an equivalence of homotopy types. We will show that this happens after restriction to the fiber over
any given point $J\in \Maps(S,\Ran)$.

\medskip

The fiber of the left-hand side is the category of all $\emptyset\neq K\subseteq J$ so that $K$ is disjoint from $x$.
The fiber of the right-hand side is the category of
$$K'\subseteq I \supseteq J$$
with $J\cap K'\neq \emptyset$ and where $K'$ is disjoint from $x$.  The functor in question is
$$(K\subseteq J)\mapsto (K\subseteq J\supseteq J).$$

Now, this functor admits a right adjoint, namely
$$(K'\subseteq I \supseteq J)\mapsto (K'\cap J\subseteq J).$$

\end{proof} 

\newpage

\centerline{\bf Part IV: Factorization}  

\section{Verdier duality and factorization}   \label{s:factorize}

As was mentioned in the preamble to \secref{s:pairings aug}, at some point in the proof of the cohomological
product formula, we will need to show that a certain map in $\Shv^!(\Ran)$
$$\CB_{\on{red}}\to \BD_{\Ran}(\CA_{\on{red}})$$
is an isomorphism. A priori, to check that some map in $\Shv^!(\Ran)$ is an isomorphism, we need to check that
its pullback to $X^\CI$ for every finite non-empty set $\CI$ is an isomorphism.  

\medskip

What we will do in this section is show 
that there are certain pieces of structure on $\CA_{\on{red}}$ and $\CB_{\on{red}}$ that allow to consider only the case of $\CI$ being a 
one-element set. The relevant pieces of structure are that of \emph{cocommutative factorization coalgebra} and 
\emph{commutative factorization algebra}, respectively. 

\medskip

The material from this section will be used in \secref{s:pointwise} for the proof of the local duality statement, \thmref{t:local duality}. 
The prerequisites for the present section are Sects. \ref{s:prestacks} and \ref{ss:Verdier on Ran}
(specifically, \thmref{t:products}). 

\ssec{Commutative and cocommutative factorization coalgebras}  \label{ss:alg and coalg}

Following \cite{BD}, \cite{FG} or \cite{Lu2}, one can introduce the notion of \emph{factorization sheaf} on the Ran
space. The reason this notion is relevant for us is that if $\CF_1\to \CF_2$ is a homomorphism of factorization sheaves,
which induces an isomorphism when evaluated on $X^{\{1\}}$, then it is an isomorphism.

\medskip

The general notion of factorization sheaf is indispensable for many purposes, but introducing it requires a longish detour of 
homotopy-theoretic nature. For our purposes it will be sufficient to deal with \emph{commutative} or 
\emph{cocommutative} factorization algebras, and those can be introduced directly, using the structure on $\Ran$ of 
commutative semi-group. This will be done in the present subsection. 

\sssec{}  \label{sss:com alg first}

Note that the prestack $\Ran$ carries a canonically defined commutative semi-group structure,
symbolically denoted by 
$$\Ran\times \Ran \overset{\unn}\longrightarrow \Ran.$$

Thus, pullback endows the category $\Shv^!(\Ran)$ with a symmetric comonoidal structure. 
Therefeore, we can talk about commutative algebras in $\Shv^!(\Ran)$. These are
objects $\CF\in \Shv^!(\Ran)$, equipped with a commutative binary operation
$$\CF\boxtimes \CF\to \unn^!(\CF),$$
which satisfies a homotopy-coherent system of compatibilities.

\sssec{}

We have:

\begin{lem} \label{l:glue union}
For any $n$, the product map
$$\Ran^{\times n}\to \Ran$$
is finitary pseudo-proper.
\end{lem}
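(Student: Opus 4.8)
The statement to prove is \lemref{l:glue union}: for every $n$, the product (``union'') map
$$\unn^{\leq n}:\Ran^{\leq n}\times \Ran^{\leq n}\longrightarrow \Ran$$
(or, in the more natural reading suggested by the placement after \secref{sss:com alg first}, the restriction of $\unn$ to $\Ran^{\leq n}\times\Ran^{\leq n}$) is finitary pseudo-proper. The strategy is to reduce to an explicit colimit presentation over a \emph{finite} index category, exactly as in the proof that $\Ran$ has a finitary diagonal (\secref{sss:diag Ran}) and in \propref{p:express Ran}. First I would recall that $\Ran^{\leq n}\simeq \underset{\CI\in(\on{Fin}^{s,\le n})^{\on{op}}}{\on{colim}}\, X^\CI$, so that $\Ran^{\leq n}\times \Ran^{\leq n}\simeq \underset{\CI_1,\CI_2}{\on{colim}}\, X^{\CI_1}\times X^{\CI_2}$, the colimit running over the \emph{finite} category $(\on{Fin}^{s,\le n})^{\on{op}}\times (\on{Fin}^{s,\le n})^{\on{op}}$. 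Since ``finitary pseudo-proper'' is tested after base change by a scheme and is stable under finite colimits of the source, it suffices to show that for each pair $(\CI_1,\CI_2)$ the composite $X^{\CI_1}\times X^{\CI_2}\to \Ran^{\leq n}\times \Ran^{\leq n}\to \Ran$ is finitary pseudo-proper.

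Next I would identify this composite with the tautological map $\on{ins}_{\CI_1\sqcup \CI_2}:X^{\CI_1}\times X^{\CI_2}=X^{\CI_1\sqcup\CI_2}\to\Ran$, because the union of the image of an $\CI_1$-tuple and the image of an $\CI_2$-tuple is exactly the image of the corresponding $(\CI_1\sqcup\CI_2)$-tuple. So the claim becomes: the map $\on{ins}_\CJ:X^\CJ\to\Ran$ is \emph{finitary} pseudo-proper for any finite non-empty $\CJ$. Pseudo-properness of $\on{ins}_\CJ$ is already known (\propref{p:ins is pseudo}, or \secref{sss:diag Ran}); what is added here is the word ``finitary''. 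For that I would invoke the computation \eqref{e:prod over Ran}: for finite sets $\CI,\CJ$ one has
$$X^\CI\underset{\Ran}\times X^\CJ\simeq \underset{\CI\twoheadrightarrow \CK\twoheadleftarrow \CJ}{\on{colim}}\, X^\CK,$$
and the index category of diagrams $\CI\twoheadrightarrow\CK\twoheadleftarrow\CJ$ (with surjections of the $\CK$'s as morphisms) is \emph{finite}, since $|\CK|\le \min(|\CI|,|\CJ|)$ and there are only finitely many such surjections up to isomorphism; moreover each $X^\CK\to X^\CI$ is a finite morphism, in particular proper. Thus the base change of $\on{ins}_\CJ$ by any map from a scheme $S$ (which factors through some $X^\CI\to\Ran$ after a further finite colimit decomposition, using \lemref{l:testing}-style stratification or the presentation of $\Ran$) is a finite colimit of schemes proper over $S$, which is precisely finitary pseudo-properness over $S$. (Alternatively: by the equivalence $X^{\CI,\CJ}\simeq X^\CI\underset{\Ran}\times X^\CJ$ of \lemref{l:glue}, this fiber product is literally the scheme $X^{\CI,\CJ}$, which is finite and hence proper over $X^\CI$, giving the finitary condition directly on schematic points of $\Ran$.)

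Finally I would assemble the pieces: a general scheme $S$ mapping to $\Ran$ is covered, via the diagonal stratification of the $X^\CI$'s, by finitely many locally closed pieces each mapping through some $X^\CI\to\Ran$; on each such piece the fiber product with $\Ran^{\leq n}\times\Ran^{\leq n}$ (equivalently, with $X^{\CI_1}\times X^{\CI_2}$ for $|\CI_i|\le n$) is a finite colimit of schemes finite over the piece, by the previous paragraph. Since both the index set $(\CI_1,\CI_2)$ and the stratification are finite, the total fiber product is a finite colimit of schemes proper over $S$, which is the definition of finitary pseudo-proper. The one point requiring a little care — and the main (mild) obstacle — is bookkeeping the finiteness uniformly: one must check that the colimit presentation of $(X^{\CI_1}\times X^{\CI_2})\underset{\Ran}\times S$ over \emph{all} pairs $(\CI_1,\CI_2)$ with bounded cardinality and all strata genuinely yields a single finite index category, rather than merely each piece being finitary separately. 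This follows because the bound $n$ caps $|\CI_i|$, hence caps $|\CK|$ in \eqref{e:prod over Ran}, hence bounds the whole diagram; no new idea beyond \propref{p:express Ran} and the cardinality bound is needed.
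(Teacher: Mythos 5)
Your interpretation of the statement does not match the paper's, and the mismatch is fatal for the proof. Despite the unfortunate notation ``$\Ran^{\leq n}$'', the lemma is about the $n$-fold union map $\unn: \Ran\times\cdots\times\Ran\to\Ran$, not the restriction of $\unn$ to $\Ran^{\leq n}\times\Ran^{\leq n}$. This is unambiguous from the paper's own proof (which for $n=2$ computes $X^\CI\underset{\Ran}\times(\Ran\times\Ran)$, not $X^\CI\underset{\Ran}\times(\Ran^{\leq 2}\times\Ran^{\leq 2})$), and also from the immediately following use: the text concludes that ``$\unn^!$ admits a left adjoint $\unn_!$'' via \corref{c:pseudo-proper}, which requires pseudo-properness of the full $\unn:\Ran\times\Ran\to\Ran$. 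A statement restricted to $\Ran^{\leq n}\times\Ran^{\leq n}$ would not yield that conclusion.

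The substantive gap is this: your reduction gets all its finiteness from the \emph{source}, by observing that $\Ran^{\leq n}$ is a \emph{finite} colimit of $X^\CI$'s and so $\Ran^{\leq n}\times\Ran^{\leq n}$ is a finite colimit of $X^{\CI_1}\times X^{\CI_2}$'s. But $\Ran\times\Ran$ is not a finite colimit, so this device is simply unavailable for the actual lemma, and your argument does not extend. In the paper the finiteness comes from the \emph{target}: one fixes a scheme $S\to\Ran$ factoring through some $X^\CI\to\Ran$ and computes $X^\CI\underset{\Ran}\times(\Ran\times\Ran)$ directly as a colimit over diagrams $\CI\twoheadrightarrow\CK$ with a decomposition of $\CK$ as a (not necessarily disjoint) union; since $\CK$ is a quotient of the fixed $\CI$ there are only finitely many such diagrams, so the colimit is finite. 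This is the same phenomenon as in \secref{sss:diag Ran}: $\Ran\times\Ran$ is not a finitary pseudo-scheme, but a map \emph{into} $\Ran$ can still be finitary pseudo-proper because the cardinality of the image of the $\CI$-tuple bounds every set that can appear in the fiber. To repair your proof you would have to abandon the colimit decomposition of the source and instead compute the base change fiber-by-fiber as the paper does.

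Two secondary remarks. First, the statement you actually did prove (finitary pseudo-properness of $\Ran^{\leq n}\times\Ran^{\leq n}\to\Ran$) is correct, and your reduction to $\on{ins}_{\CI_1\sqcup\CI_2}:X^{\CI_1\sqcup\CI_2}\to\Ran$ together with the finite colimit presentation of $X^\CI\underset{\Ran}\times X^\CJ$ from \eqref{e:prod over Ran} is fine there; it is just not the lemma. Second, your ``alternative'' aside misquotes \lemref{l:glue}: that lemma asserts $(g_!,g^!)$ is an equivalence of sheaf categories, not that the prestack $X^\CI\underset{\Ran}\times X^\CJ$ is literally isomorphic to the scheme $X^{\CI,\CJ}$. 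The prestack $\wt{X}{}^{\CI,\CJ}$ is genuinely a nontrivial (finite) colimit of schemes; for finitary pseudo-properness you should cite the colimit presentation \eqref{e:prod over Ran} and the properness of each $X^\CK\to X^\CI$, not an identification with $X^{\CI,\CJ}$.
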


\begin{proof}

For simplicity, let us consider the case of $n=2$. The proof is parallel to \secref{sss:diag Ran}. For a finite set $\CI$
let us calculate the fiber product
$$X^\CI \underset{\Ran}\times (\Ran\times \Ran).$$ 

It equals the colimit
$$\underset{\CI\twoheadrightarrow \CK\simeq \CK_1\cup \CK_2}{\on{colim}}\, X^{\CK},$$
where the category of indices has as objects the diagrams
$$\CI\twoheadrightarrow \CK\simeq \CK_1\cup \CK_2,\quad \CK_1,\CK_2\neq \emptyset,$$
and as morphisms commutative diagrams
$$
\CD
\CI  @>>>   \CK'  @<<<  \CK'_1\sqcup \CK'_2 \\
@V{\on{id}}VV   @VVV   @VVV   \\
\CI @>>>   \CK''  @<<<  \CK''_1\sqcup \CK''_2
\endCD
$$
with surjective vertical maps.

\end{proof} 

\sssec{}

In particular, by \corref{c:pseudo-proper}, the functor $\unn^!$ admits a left adjoint, denoted $\unn_!$,
and thus the symmetric comonoidal structure on $\Shv^!(\Ran)$, gives rise by passing to left adjoints, to 
a symmetric \emph{monoidal} structure.  We shall refer to it as the \emph{convolution}
symmetric monoidal structure. 

\medskip

Tautologically, commutative algebras in $\Shv^!(\Ran)$ as defined in \secref{sss:com alg first}
are the same as commutative algebras with respect to the convolution symmetric monoidal structure. 

\sssec{}  \label{sss:factor algebras}

Let 
$$(\Ran\times \Ran)_{\on{disj}}\subset \Ran\times \Ran$$
be the open subfunctor corresponding to the condition that $I_1,I_2\subset \Maps(S,X)$ 
have disjoint images.

\medskip

Let $\CF$ be a commutative algebra in $\Shv^!(\Ran)$ (with respect to the convolution symmetric monoidal structure).
We shall say that $\CF$ is a \emph{commutative factorization algebra}
if the following condition holds: the map
$$\CF\boxtimes \CF\to \unn^!(\CF),$$
has the property that the induced map
$$\CF\boxtimes \CF|_{(\Ran\times \Ran)_{\on{disj}}}\to \unn^!(\CF)|_{(\Ran\times \Ran)_{\on{disj}}}$$
is an isomorphism. 

\medskip

The following (nearly evident) observation will be of crucial importance:

\begin{lem}  \label{l:isom on diag}
Let $\CF_1\to \CF_2$ be a homomorphism of commutative factorization algebras. Suppose that
the induced map $(\CF_1)_X\to (\CF_2)_X$ is an isomorphism in $\Shv(X)$. Then the initial map
is an isomorphism.
\end{lem}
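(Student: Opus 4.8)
The plan is to reduce the statement to the fact that a sheaf on $\Ran$ is determined by its pullbacks to all the schemes $X^\CI$, and to propagate the isomorphism from $X^{\{1\}}$ to all $X^\CI$ using the factorization structure together with the diagonal stratification. So the first step is to recall, from \secref{sss:sheaf on Ran as a colimit}, that $\Shv^!(\Ran)\simeq \underset{\CI\in \on{Fin}^s}{\on{lim}}\, \Shv(X^\CI)$, so that it suffices to show that the map $(\on{ins}_\CI)^!(\CF_1)\to (\on{ins}_\CI)^!(\CF_2)$ is an isomorphism in $\Shv(X^\CI)$ for every finite non-empty set $\CI$. By \lemref{l:testing} (using the diagonal stratification of $X^\CI$), to check that this map is an isomorphism it is enough to check it after restriction to the open locus $\overset{\circ}X{}^\CI\subset X^\CI$ of pairwise distinct points, for every $\CI$.

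The second step is the inductive core. I would argue by induction on $|\CI|$, the base case $|\CI|=1$ being the hypothesis. For $|\CI|=n\geq 2$, choose a decomposition $\CI\simeq \CI_1\sqcup \CI_2$ into two nonempty parts, so that $X^\CI\simeq X^{\CI_1}\times X^{\CI_2}$, and note that $\overset{\circ}X{}^\CI$ maps into the open locus $(\Ran\times\Ran)_{\on{disj}}$ under $\on{ins}_{\CI_1}\times\on{ins}_{\CI_2}$ composed with the relevant projection; more precisely $\overset{\circ}X{}^\CI$ is contained in the preimage of $(\Ran\times\Ran)_{\on{disj}}$ under $(\on{ins}_{\CI_1}\times\on{ins}_{\CI_2})$ followed by the open embedding, and the composite $\overset{\circ}X{}^\CI\hookrightarrow X^{\CI_1}\times X^{\CI_2}\to \Ran$ factors through $\unn$. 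Using the factorization isomorphism $\CF\boxtimes\CF|_{(\Ran\times\Ran)_{\on{disj}}}\overset{\sim}\to \unn^!(\CF)|_{(\Ran\times\Ran)_{\on{disj}}}$ for $\CF=\CF_1$ and $\CF=\CF_2$, one obtains for $i=1,2$ a canonical identification of $(\on{ins}_\CI)^!(\CF_i)|_{\overset{\circ}X{}^\CI}$ with the restriction of $(\on{ins}_{\CI_1})^!(\CF_i)\boxtimes (\on{ins}_{\CI_2})^!(\CF_i)$ to $\overset{\circ}X{}^\CI$, compatibly with the given map $\CF_1\to\CF_2$. Since $|\CI_1|,|\CI_2|<n$, the inductive hypothesis gives that $(\on{ins}_{\CI_j})^!(\CF_1)\to (\on{ins}_{\CI_j})^!(\CF_2)$ is an isomorphism on $\overset{\circ}X{}^{\CI_j}$ (indeed everywhere, but we only need it here), hence so is the external product restricted to $\overset{\circ}X{}^\CI$, hence so is $(\on{ins}_\CI)^!(\CF_1)\to(\on{ins}_\CI)^!(\CF_2)$ on $\overset{\circ}X{}^\CI$. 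Combined with \lemref{l:testing} this closes the induction and finishes the proof.

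The only genuinely delicate point is the bookkeeping in the second step: one must check that the factorization identification of the restriction to $\overset{\circ}X{}^\CI$ is functorial in $\CF$, i.e. that the square relating $(\on{ins}_\CI)^!(\CF_i)|_{\overset{\circ}X{}^\CI}$ to $\big((\on{ins}_{\CI_1})^!(\CF_i)\boxtimes(\on{ins}_{\CI_2})^!(\CF_i)\big)|_{\overset{\circ}X{}^\CI}$ commutes with the homomorphism $\CF_1\to\CF_2$; this is immediate from the fact that a homomorphism of commutative algebras is by definition compatible with the multiplication maps, which over $(\Ran\times\Ran)_{\on{disj}}$ are precisely the factorization isomorphisms. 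A small additional care is needed because the decomposition $\CI\simeq\CI_1\sqcup\CI_2$ is not canonical, but any choice works and no compatibility between different choices is required for the argument. Everything else is formal.
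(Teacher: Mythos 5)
The paper states this lemma without proof (it is flagged as ``nearly evident''), so there is no argument in the text to compare against. Your proposal supplies the expected argument and it is correct: you reduce to checking the map on each $X^\CI$, use \lemref{l:testing} (the diagonal stratification of $X^\CI$) to further reduce to the open stratum $\overset{\circ}X{}^\CI$, and then run an induction on $|\CI|$ by splitting $\CI=\CI_1\sqcup\CI_2$ and invoking the factorization isomorphism over $(\Ran\times\Ran)_{\on{disj}}$ (pulled back to $(X^{\CI_1}\times X^{\CI_2})_{\on{disj}}\supset \overset{\circ}X{}^\CI$), noting that the identification is natural in $\CF$ precisely because a homomorphism of commutative algebras commutes with the multiplication maps. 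The bookkeeping is sound, the base case $|\CI|=1$ is exactly the hypothesis, and no compatibility between different choices of decomposition $\CI_1\sqcup\CI_2$ is needed.
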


Here for $\CF\in \Shv^!(\Ran)$ we denote by $\CF_X\in \Shv(X)$ the pullback of $\CF$ under the map
$$\on{ins}_{\{1\}}:X=X^{\{1\}} \to \Ran.$$

\sssec{}

By a \emph{cocommutative coalgebra} in $\Shv^!(\Ran)$ we shall mean a cocommutative coalgebra object in $\Shv^!(\Ran)$
with respect to the convolution symmetric monoidal structure. 

\medskip

I.e., this is an object $\CG\in \Shv^!(\Ran)$ equipped with a cocommutative cobinary map
\begin{equation} \label{e:coproduct}
\CG\to \unn_!(\CG\boxtimes \CG),
\end{equation}
which satisfies a homotopy-coherent system of compatibilities.

\sssec{}

Let us now note the following:

\begin{lem}  \label{l:on disj}  \hfill

\smallskip

\noindent{\em(a)}
The restriction of the map $\unn$ to $(\Ran\times \Ran)_{\on{disj}}$ is \'etale. 

\smallskip

\noindent{\em(b)} The diagonal map
$$(\Ran\times \Ran)_{\on{disj}}\to (\Ran\times \Ran)_{\on{disj}}\underset{\Ran}\times (\Ran\times \Ran)$$
is open and closed (i.e., is the inclusion of a union of connected components). 
\end{lem}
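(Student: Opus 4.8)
\textbf{Plan of proof of \lemref{l:on disj}.}
The plan is to reduce both assertions to the analogous (and essentially tautological) statements about the schemes $X^\CI$, using the description of the relevant fiber products on $\Ran$ as colimits of such schemes. For part (a), recall that an \'etale map of prestacks is one whose base change by any scheme is \'etale; so it suffices to fix finite sets $\CI_1,\CI_2$ and analyze the map
$$(X^{\CI_1}\times X^{\CI_2})\underset{\Ran\times\Ran}\times(\Ran\times\Ran)_{\on{disj}}\to \Ran,$$
or rather its base change along $\on{ins}_\CK:X^\CK\to\Ran$ for every finite set $\CK$. First I would observe that the source prestack is simply the open locus
$$(X^{\CI_1}\times X^{\CI_2})_{\on{disj}}\subset X^{\CI_1}\times X^{\CI_2}$$
where the two tuples have disjoint images (this is an open subscheme, since disjointness is an open condition on a product of copies of the separated scheme $X$), and that the restriction of $\unn$ to this locus is the obvious map to $\Ran$ sending a pair of tuples to their union. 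Then, as in \secref{sss:diag Ran} and \lemref{l:glue union}, the fiber product of $(X^{\CI_1}\times X^{\CI_2})_{\on{disj}}$ with $X^\CK$ over $\Ran$ is a colimit over diagrams $\CI_1\sqcup\CI_2\to\CL\twoheadleftarrow\CK$, but now the disjointness constraint forces the composite $\CK\twoheadrightarrow\CL$ to induce an isomorphism on the part of $\CL$ coming from $\CI_1$-labels versus $\CI_2$-labels being genuinely separated; I would then invoke \lemref{l:replace lego}(d) (as in \lemref{l:glue}) to replace the colimit by the corresponding reduced closed subscheme of $X^\CK$, and check directly that the resulting map of schemes $W\to X^\CK$ is \'etale — in fact it is the inclusion of an open-and-closed subscheme, since on $k$-points it identifies $W$ with the locus of $\CK$-tuples that admit a (necessarily unique, up to the finite combinatorics) decomposition compatible with the prescribed disjointness pattern.

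For part (b) the strategy is the same: the diagonal
$$(\Ran\times\Ran)_{\on{disj}}\to (\Ran\times\Ran)_{\on{disj}}\underset{\Ran}\times(\Ran\times\Ran)$$
is shown to be open and closed by base-changing to schemes $X^{\CI_1}\times X^{\CI_2}$ (the disjoint locus) and $X^{\CK_1}\times X^{\CK_2}$ mapping to $\Ran\times\Ran$, and to $X^\CL$ mapping to the middle $\Ran$. Using once more the colimit presentations and \lemref{l:replace lego}(d) to pass to honest subschemes, the fiber product in the target becomes the scheme parametrizing pairs of disjoint tuples $(\underline x_1,\underline x_2)$ together with a pair of tuples $(\underline y_1,\underline y_2)$ whose union equals $\underline x_1\cup\underline x_2$ as a subset of $X$; the diagonal embeds the first datum via $\underline y_i=\underline x_i$. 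Because $\underline x_1$ and $\underline x_2$ have disjoint images, any such $(\underline y_1,\underline y_2)$ is forced, set-theoretically, to have $\underline y_1\subseteq\underline x_1\cup\underline x_2$ split according to which of the two disjoint pieces each point lies in — so the diagonal is exactly the union of those connected components of the fiber product on which the ``splitting type'' agrees with the given one. This is an open and closed condition, giving the claim.

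I expect the only real subtlety — and hence the main obstacle — to be bookkeeping: writing the fiber products over $\Ran$ (which is not a scheme) correctly as colimits indexed by the appropriate categories of surjections/maps of finite sets, and then verifying that the disjointness open condition interacts well with \lemref{l:replace lego}(d) so that one may genuinely replace these colimit-prestacks by reduced subschemes of $X^\CK$ without losing information at the level of $\Shv^!$. Once that reduction is in place, the actual \'etale-ness in (a) and the open-and-closed property in (b) are immediate from the corresponding elementary facts about powers of the separated curve $X$ and the combinatorics of partitions, so I would keep those verifications brief. In particular I would state explicitly (as the paper does elsewhere) that ``disjoint images'' cuts out an open subscheme of any product of copies of $X$, and that on such a locus the ``union'' map to a symmetric-power-type scheme is a local isomorphism onto a union of components, which is all that (a) and (b) require.
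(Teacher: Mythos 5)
Your plan for (a) has a structural gap. You propose to cover the \emph{source} $(\Ran\times\Ran)_{\on{disj}}$ by the schemes $(X^{\CI_1}\times X^{\CI_2})_{\on{disj}}$ (fixing $\CI_1,\CI_2$ first) and then check each composite $(X^{\CI_1}\times X^{\CI_2})_{\on{disj}}\to\Ran$ by base-changing along $\on{ins}_\CK$. But those composites are \emph{not} \'etale: for instance, with $|\CI_1|=|\CI_2|=1$ and $|\CK|=3$, the fiber product $X^\CK\underset{\Ran}\times(X^{\CI_1}\times X^{\CI_2})_{\on{disj}}$ maps to $X^3$ through \emph{closed} diagonal immersions, landing in the locus where at least two coordinates coincide. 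In general the cover of the source is by pseudo-proper maps of diagonal type, and \'etaleness cannot be checked locally along such a cover.

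What makes the actual argument short---and makes the appeal to Lemma~\ref{l:replace lego}(d) unnecessary---is that the disjointness condition rigidifies the relevant fiber product into an honest scheme, so there is nothing to replace. Base change $\unn|_{(\Ran\times\Ran)_{\on{disj}}}$ over the \emph{target} alone, by $\on{ins}_\CI:X^\CI\to\Ran$. An $S$-point of $X^\CI\underset{\Ran}\times(\Ran\times\Ran)_{\on{disj}}$ is a map $\phi:\CI\to\Maps(S,X)$ together with a pair $(I_1,I_2)$ of nonempty finite subsets of $\Maps(S,X)$ with non-intersecting images, whose union is the image of $\phi$. Since the images do not intersect, $I_1\cap I_2=\emptyset$, and the point determines the partition $\CI=\phi^{-1}(I_1)\sqcup\phi^{-1}(I_2)$ into two nonempty parts; this partition is preserved under any $S'\to S$ (the disjointness rules out crossings), so the fiber product is literally the coproduct $\underset{\CI=\CI_1\sqcup\CI_2}\sqcup\,(X^{\CI_1}\times X^{\CI_2})_{\on{disj}}$, a disjoint union of \emph{open} subschemes of $X^\CI$, whence the map to $X^\CI$ is \'etale. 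Note also that Lemma~\ref{l:replace lego}(d) produces an equivalence of sheaf categories $\Shv^!(-)$, not an isomorphism of prestacks, so it cannot be used to infer schematicity, \'etale-ness, or ``open and closed'' for a map---it would not rescue the source-cover strategy even if that strategy were otherwise sound. For (b) your outline is closer, but the extra base change by $X^\CL$ over the middle $\Ran$ is not needed: base changing by $X^{\CI_1}\times X^{\CI_2}\to\Ran\times\Ran$ alone, the target becomes the disjoint union (from the analysis above) indexed by decompositions of $\CI_1\sqcup\CI_2$, and the diagonal is the inclusion of the $(\CJ_1,\CJ_2)=(\CI_1,\CI_2)$ component, manifestly open and closed.
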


\begin{rem}
Let $f:Z\to Y$ be a (separated) map between schemes, and let $\overset{\circ}{Z}\subset Z$  be an open subsheme,
such that $f|_{\overset{\circ}Z}$ is \'etale.  In this case, the diagonal map 
$$\overset{\circ}{Z}\to Z\underset{Y}\times \overset{\circ}{Z}$$
is open and closed. 

\medskip

Point (b) of \lemref{l:on disj} states that the above phenomenon takes place for the map $$\unn:(\Ran\times \Ran)\to \Ran$$ and 
$(\Ran\times \Ran)_{\on{disj}}\subset (\Ran\times \Ran)$, but it requires a proof since we are dealing not with schemes
but prestacks.
\end{rem}

\begin{proof}[Proof of \lemref{l:on disj}]

To prove point (a), it suffices to show that the map
$$X^\CI \underset{\Ran}\times (\Ran\times \Ran)_{\on{disj}}\to X^\CI$$
is \'etale for any finite set $\CI$.

\medskip

However, it is easy to see that the above fiber product identifies with
$$\underset{\CI=\CI_1\sqcup \CI_2}\sqcup\,  (X^{\CI_1}\times X^{\CI_2})_{\on{disj}},$$
where 
$$(X^{\CI_1}\times X^{\CI_2})_{\on{disj}}\subset X^{\CI_1}\times X^{\CI_2}$$
is the open subset corresponding to the condition that for any $i_1\in \CI_1$ and $i_2\in \CI_2$, the
corresponding maps $S\rightrightarrows X$ have non-intersecting images.

\medskip 

For point (b) it suffices to show that for any pair of finite sets $\CI_1\times \CI_2$, the map
\begin{equation} \label{e:Ran Ran}
(X^{\CI_1}\times X^{\CI_2})\underset{\Ran\times \Ran}\times 
(\Ran\times \Ran)_{\on{disj}}\to (X^{\CI_1}\times X^{\CI_2})
\underset{\Ran\times \Ran}\times (\Ran\times \Ran) \underset{\Ran}\times (\Ran\times \Ran)_{\on{disj}}
\end{equation}
is open and closed. 

\medskip

The left-hand side in \eqref{e:Ran Ran} identifies with $(X^{\CI_1}\times X^{\CI_2})_{\on{disj}}$. The right-hand side 
is the disjoint union of the schemes $(X^{\CJ_1}\times X^{\CJ_2})_{\on{disj}}$ over the set of isomorphisms
$$\CI_1\sqcup \CI_2\simeq \CJ_1\sqcup \CJ_2,$$
where $\CJ_1$ and $\CJ_2$ are finite non-empty sets. 

\medskip

The map in \eqref{e:Ran Ran} is the inclusion of the connected component with $\CJ_1=\CI_1$ and $\CJ_2=\CI_2$.

\end{proof}

\sssec{}

From point (b) of \lemref{l:on disj}, by base change (i.e., using the fact that the map $\unn$ is pseudo-proper) we obtain:

\begin{cor}  \label{c:on disj}
For $\CG'\in \Shv^!(\Ran\times \Ran)$ there exists a canonically defined map
$$\unn^!\circ \unn_!(\CG')|_{(\Ran\times \Ran)_{\on{disj}}}\to \CG'|_{(\Ran\times \Ran)_{\on{disj}}}.$$
\end{cor}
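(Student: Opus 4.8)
\textbf{Proof plan for \corref{c:on disj}.}
The statement to prove is that for $\CG'\in \Shv^!(\Ran\times \Ran)$ there is a canonically defined map
$$\unn^!\circ \unn_!(\CG')|_{(\Ran\times \Ran)_{\on{disj}}}\to \CG'|_{(\Ran\times \Ran)_{\on{disj}}}.$$
The plan is to obtain this as an instance of the base change isomorphism for the pseudo-proper map $\unn$, combined with the open-and-closed embedding from \lemref{l:on disj}(b). First I would record the relevant Cartesian square. Let me write $\Delta$ for the diagonal map
$$(\Ran\times \Ran)_{\on{disj}}\overset{\Delta}\longrightarrow (\Ran\times \Ran)\underset{\Ran}\times (\Ran\times \Ran)_{\on{disj}},$$
and let $\on{pr}$ denote the projection of this fiber product onto the first factor $\Ran\times \Ran$ (so that $\unn\circ \on{pr}$ is the map down to $\Ran$ from the second factor, composed appropriately). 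By construction of the fiber product we have a Cartesian diagram whose top row is $\on{pr}$, whose bottom row is $\unn$, and whose vertical maps are the open embeddings $(\Ran\times \Ran)_{\on{disj}}\hookrightarrow \Ran\times \Ran$. Since $\unn$ is pseudo-proper (indeed, $\unn|_{\Ran^{\leq n}}$ is finitary pseudo-proper by \lemref{l:glue union}, and $\Ran = \on{colim}_n \Ran^{\leq n}$), \corref{c:pseudo-proper} gives the base change isomorphism
$$j^!\circ \unn_!(\CG')\simeq (\on{pr})_!\circ \jw^!(\CG'),$$
where $j$ is the open embedding into $\Ran$ of the image of $\unn$ restricted to $(\Ran\times \Ran)_{\on{disj}}$ — more precisely one pulls back along the open locus and uses that $\unn^!$ restricted to the disjoint locus computes via this Cartesian square.

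Next, the key geometric input is \lemref{l:on disj}(b): the diagonal map $\Delta$ above is the inclusion of a union of connected components. Therefore $\Delta_!$ admits not only a left adjoint but $\Delta^!\circ \Delta_! \simeq \on{id}$ canonically (restriction to a clopen subfunctor), and in particular there is a canonical \emph{retraction} map from the pushforward along $\on{pr}$ back to the sheaf on $(\Ran\times \Ran)_{\on{disj}}$: concretely, the counit
$$\Delta_!\circ \Delta^! \to \on{id}$$
of the $(\Delta_!,\Delta^!)$ adjunction, applied to $\jw^!(\CG')$, after pushing forward along $\on{pr}$ and using $\on{pr}\circ \Delta = \id$, yields
$$(\on{pr})_!\circ \jw^!(\CG') \to \jw^!(\CG')|_{(\Ran\times \Ran)_{\on{disj}}} \simeq \CG'|_{(\Ran\times \Ran)_{\on{disj}}}.$$
Composing this with the base change isomorphism of the previous paragraph produces the desired map. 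The point of \lemref{l:on disj}(b) is exactly that $\Delta$ being clopen makes the counit $\Delta_!\Delta^!\to\on{id}$ restrict to an isomorphism on the clopen piece and hence splits off a canonical direct summand, which is what allows the map to go in the stated direction.

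I would then check naturality/canonicity: both the base change isomorphism (\corref{c:pseudo-proper}) and the counit of the $(\Delta_!,\Delta^!)$ adjunction are canonical, so the composite is canonical in $\CG'$. The main obstacle — and really the only nontrivial content — is verifying that the diagram I want is genuinely Cartesian and that the map $\unn$ restricted to the relevant locus is pseudo-proper so that \corref{c:pseudo-proper} applies; but both of these are handled by the explicit computations in the proof of \lemref{l:on disj} (the identification of $X^\CI\underset{\Ran}\times(\Ran\times\Ran)_{\on{disj}}$ with $\sqcup_{\CI=\CI_1\sqcup\CI_2}(X^{\CI_1}\times X^{\CI_2})_{\on{disj}}$) and by \lemref{l:glue union}. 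So the corollary follows formally once \lemref{l:on disj} is in hand; no further estimates or new geometry are needed. I expect the write-up to be short: set up the Cartesian square, invoke base change, invoke the clopen counit, compose.
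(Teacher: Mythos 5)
Your overall strategy is the same as the paper's (whose ``proof'' is a single clause: base change for the pseudo-proper map $\unn$, plus \lemref{l:on disj}(b)). The Cartesian square is the right one, and invoking \corref{c:pseudo-proper} to identify $\unn^!\circ\unn_!(\CG')|_{(\Ran\times\Ran)_{\on{disj}}}$ with the pull--push along the fiber product is correct.

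However, there is a directional slip in the final step. Write $q_1$ and $q_2$ for the two projections of
$$(\Ran\times\Ran)_{\on{disj}}\underset{\Ran}\times(\Ran\times\Ran)$$
to $(\Ran\times\Ran)_{\on{disj}}$ and $\Ran\times\Ran$ respectively, so that base change identifies the left side of the corollary with $(q_1)_!\circ q_2^!(\CG')$, and $q_1\circ\Delta = \on{id}$, $q_2\circ\Delta = $ the open embedding $(\Ran\times\Ran)_{\on{disj}}\hookrightarrow\Ran\times\Ran$. The counit $\Delta_!\Delta^!\to\on{id}$ of the $(\Delta_!,\Delta^!)$ adjunction, applied to $q_2^!(\CG')$ and then pushed forward along $q_1$, yields a map
$$\CG'|_{(\Ran\times\Ran)_{\on{disj}}} = (q_1)_!\Delta_!\Delta^! q_2^!(\CG')\longrightarrow (q_1)_! q_2^!(\CG'),$$
which is the \emph{reverse} of what the corollary asserts. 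What is actually needed is the \emph{unit} $\on{id}\to\Delta_!\Delta^!$: for a general (say, closed) embedding $\Delta$ this map does not exist, but since by \lemref{l:on disj}(b) the map $\Delta$ is open \emph{and} closed, $\Delta^!\simeq\Delta^*$ is a two-sided adjoint of $\Delta_!\simeq\Delta_*$, so the projection $\on{id}\to\Delta_!\Delta^!$ onto the $\Delta$-summand is available. Applying this to $q_2^!(\CG')$ and pushing forward along $q_1$ gives
$$(q_1)_!q_2^!(\CG')\longrightarrow(q_1)_!\Delta_!\Delta^!q_2^!(\CG')\simeq\CG'|_{(\Ran\times\Ran)_{\on{disj}}},$$
which is the required direction. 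Your prose does acknowledge that the clopen condition is what ``allows the map to go in the stated direction,'' but the formula you write down (the counit) is the wrong one, and as written the composite would be the transpose of the corollary's map. Replace the counit with the unit (equivalently: the projection onto the direct summand carved out by the clopen $\Delta$) and the argument is correct and coincides with the paper's.
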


\sssec{}   \label{sss:factor coalgebras}
 
Given a \emph{cocommutative coalgebra} $\CG$ in $\Shv^!(\Ran)$, we shall say that it is a 
\emph{cocommutative factorization coalgebra} if the composed map
\begin{equation} \label{e:factor for co}
\unn^!(\CG)|_{(\Ran\times \Ran)_{\on{disj}}}\to \unn^!\circ \unn_!(\CG\boxtimes \CG)|_{(\Ran\times \Ran)_{\on{disj}}}\to
\CG\boxtimes \CG|_{(\Ran\times \Ran)_{\on{disj}}},
\end{equation} 
where the first arrow is induced by \eqref{e:coproduct}, and the second by \corref{c:on disj}, is an isomorphism. 

\ssec{Interaction with duality}  \label{ss:factor and Verdier}

As we shall see shortly, if $\CG$ is a cocommutative coalgebra in $\Shv^!(\Ran)$, then its Verdier dual
$\BD_{\Ran}(\CG)$ has a natural structure of commutative algebra in $\Shv^!(\Ran)$. 

\medskip

Suppose now that
$\CG$ was actually a cocommutative factorization coalgebra. Will it be true that $\BD_{\Ran}(\CG)$ is
a commutative factorization algebra?  The answer turns out to be ``yes", provided that $\CG$ satisfies
a certain connectivity hypothesis; it is here that \thmref{t:products} will play a role. 

\sssec{}  \label{sss:alg structure on dual}

Let $\CG$ be a cocommutative coalgebra in $\Shv^!(\Ran)$. We claim that $$\CF:=\BD_{\Ran}(\CG)$$
acquires a canonically defined structure of commutative algebra in $\Shv^!(\Ran)$.

\medskip

Indeed, starting from the coproduct map
$$\CG\to \unn_!(\CG\boxtimes \CG),$$
and applying the functor $\BD_{\Ran}$, we obtain a map
\begin{equation} \label{e:dual of product}
\BD_{\Ran}\circ \unn_!(\CG\boxtimes \CG)\to \BD_{\Ran}(\CG).
\end{equation} 

\medskip

Precomposing with \eqref{e:duality and direct image}, we obtain a map
$$\unn_!(\BD_{\Ran\times \Ran}(\CG\boxtimes \CG))\to \BD_{\Ran}(\CG).$$

\medskip

Finally, precomposing with \eqref{e:product and Verdier}, we obtain the desired product map
\begin{equation} \label{e:product on dual}
\unn_!\left(\BD_{\Ran}(\CG)\boxtimes \BD_{\Ran}(\CG)\right)\to \BD_{\Ran}(\CG).
\end{equation} 

The higher compatibilities for \eqref{e:product on dual} are constructed similarly. 

\sssec{}

We now claim:

\begin{prop} \label{p:Verdier factorizes}
Let $\CG\in \Shv^!(\Ran)$ be a cocommutative coalgebra, and let $\CF:=\BD_{\Ran}(\CG)$ 
be its Verdier dual commutative algebra. Assume that $\CG$ is a cocommutative factorization
algebra and: 

\begin{itemize}

\item We are working in the context of constructible sheaves; 

\item The ring of coefficients $\Lambda$ is of finite cohomological dimension;

\item $X$ is a curve;

\item The object $\CG_X\in \Shv^!(X)$ lives in (perverse) cohomological degrees $\leq -2$ and all of its cohomologies are compact.

\end{itemize}

Then $\CF$ is a commutative factorization algebra.
\end{prop}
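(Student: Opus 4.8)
The strategy is to reduce the factorization property of $\CF = \BD_{\Ran}(\CG)$ to a statement that only involves $X$ and $X\times X$, where we can use \thmref{t:products} together with the assumed connectivity and compactness hypotheses. Recall that $\CF$ is a commutative factorization algebra if and only if the product map
$$\CF\boxtimes \CF|_{(\Ran\times \Ran)_{\on{disj}}}\to \unn^!(\CF)|_{(\Ran\times \Ran)_{\on{disj}}}$$
is an isomorphism. Since $\CF$ is a commutative algebra, this in turn (by \lemref{l:isom on diag}, applied to the ``relative over the second factor'' picture, or more directly by inspecting the factorization condition) is equivalent to checking the corresponding statement after $!$-restriction along $X^{\CI_1}\times X^{\CI_2}\to \Ran\times \Ran$ for all pairs of finite non-empty sets $\CI_1,\CI_2$; and over the disjoint locus $(X^{\CI_1}\times X^{\CI_2})_{\on{disj}}$ the source and target of $\unn$ decompose as disjoint unions indexed by the ways of writing the combined index set, so everything is governed by the behaviour on $X^\CI$ for a single $\CI$. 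Thus I would first argue that it suffices to prove the factorization identity for $\CF$ after restriction to $X = X^{\{1\}}$ in each ``leg'' — i.e., it is enough to know that $\CF$ is a factorization algebra ``to first order'', which is what the connectivity hypothesis on $\CG_X$ will buy us via Verdier duality.

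\textbf{Key steps.} First I would record that, by \propref{p:Verdier on Ran} (i.e., \corref{c:Verdier on Ran}) and the compactness/boundedness hypothesis on $\CG_X$, the object $\CG$ satisfies the hypotheses of \thmref{t:products}: the assumption that $\CG_X$ lives in perverse degrees $\le -2$ with compact cohomologies, propagated through the cocommutative \emph{factorization} structure, forces $(\on{ins}_\CI)^!(\CG)|_{\overset{\circ}X{}^\CI}$ to live in perverse degrees $\le -|\CI|-2$ — indeed on the disjoint locus $\on{ins}_\CI^!(\CG)|_{\overset{\circ}X{}^\CI}\simeq \underset{i\in \CI}{\boxtimes}\, \CG_X|_{\ldots}$ by the factorization isomorphism \eqref{e:factor for co}, and an $|\CI|$-fold external product of objects in perverse degrees $\le -2$ lives in perverse degrees $\le -2|\CI|\le -|\CI|-2$; compactness of cohomology sheaves is likewise inherited. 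Second, with \thmref{t:products} available, the canonical map
$$\BD_{\Ran}(\CG)\boxtimes \BD_{\Ran}(\CG)\to \BD_{\Ran\times \Ran}(\CG\boxtimes \CG)$$
is an isomorphism, so the product map \eqref{e:product on dual} is \emph{identified} with $\unn_!$ applied to $\BD_{\Ran\times \Ran}$ of the coproduct, precomposed with the (iso, by \corref{c:finite duality} via truncation, since $\unn$ is pseudo-proper and the relevant truncated maps are finitary pseudo-proper) comparison $\unn_!\circ \BD_{\Ran\times\Ran}\simeq \BD_{\Ran}\circ \unn_!$. Concretely this means: the product map on $\CF$ is the Verdier dual of the coproduct map on $\CG$. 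Third, I would restrict the whole picture to $(\Ran\times \Ran)_{\on{disj}}$ and use \lemref{l:on disj}(a),(b): there $\unn$ is étale, so by \lemref{l:etale and dual} (working in the constructible context, as hypothesized) Verdier duality commutes with $\unn^!$, and the factorization condition for $\CF$ over the disjoint locus becomes literally the Verdier dual of the factorization condition \eqref{e:factor for co} for $\CG$ over the disjoint locus. Since the latter is an isomorphism by hypothesis, so is its Verdier dual.

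\textbf{The main obstacle.} The delicate point is matching up all the comparison maps: I must check that the product map \eqref{e:product on dual} on $\CF$, built from \eqref{e:dual of product}, \eqref{e:duality and direct image} and \eqref{e:product and Verdier}, really does get carried, after restriction to $(\Ran\times\Ran)_{\on{disj}}$ and under the identifications of steps two and three, to the Verdier dual of the composite \eqref{e:factor for co} defining the factorization structure on $\CG$ — rather than to some a priori different map. This is a diagram-chase, but a genuinely load-bearing one, because it requires the compatibility of \eqref{e:duality and direct image} with base change (\corref{c:pseudo-proper}, \corref{c:finite duality}), the compatibility of \eqref{e:product and Verdier} with \'etale pullback (the proof of \lemref{l:etale and dual}), and the compatibility of \eqref{e:etale Verdier} with the comparison map $\unn^!\circ\unn_!\to\on{id}$ of \corref{c:on disj}. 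I would organize this by first establishing, over $(\Ran\times\Ran)_{\on{disj}}$ alone, a clean commutative square expressing ``$\BD$ of coproduct $=$ product of $\BD$'s'', reducing to the schematic statements already proved in \secref{s:Verdier} pulled back along $X^{\CI_1}\times X^{\CI_2}\to\Ran\times\Ran$, where every arrow in sight is an isomorphism by the connectivity input; the passage from the schematic level back to $\Ran\times\Ran$ is then routine via \eqref{e:sheaf on Ran as a colimit}. Once that square is in hand, \propref{p:Verdier factorizes} follows formally.
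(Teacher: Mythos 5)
Your proposal follows essentially the same route as the paper: use the factorization structure on $\CG$ to propagate the connectivity and compactness estimate from $\CG_X$ to $\CG_{X^\CI}|_{\overset{\circ}X{}^\CI}$, invoke \thmref{t:products} to make the K\"unneth map for Verdier duality an isomorphism, restrict to $(\Ran\times\Ran)_{\on{disj}}$ where $\unn$ is \'etale (\lemref{l:on disj}, \lemref{l:etale and dual}), and conclude that the factorization map for $\CF$ is identified with the Verdier dual of the factorization map for $\CG$. This is exactly the paper's diagram \eqref{e:diag of duals}, and your discussion of the load-bearing diagram chase in ``the main obstacle'' is the right thing to worry about. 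However, there is a spurious claim in your second key step: you assert that the natural transformation $\unn_!\circ\BD_{\Ran\times\Ran}\to\BD_{\Ran}\circ\unn_!$ of \eqref{e:duality and direct image} is an isomorphism ``by \corref{c:finite duality} via truncation.'' This is not justified, and in fact is not true in general: $\unn$ is pseudo-proper but \emph{not} finitary pseudo-proper, so \corref{c:finite duality} does not apply, and restricting to $\Ran^{\leq n}\times\Ran^{\leq n}$ and passing to the colimit would require a nontrivial stabilization argument along the lines of \propref{p:stabilization}, which is not a formality. The paper deliberately avoids this claim: in the diagram \eqref{e:diag of duals}, the left vertical arrow (coming from \eqref{e:duality and direct image}) is \emph{not} asserted to be an isomorphism a priori; its being an isomorphism is precisely the \emph{conclusion}, deduced from the other three arrows being isomorphisms over the disjoint locus. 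Fortunately your third key step does not actually use the spurious global identification --- it works directly over $(\Ran\times\Ran)_{\on{disj}}$, where the needed comparison comes from \lemref{l:etale and dual} --- so the argument survives, but the parenthetical should be excised rather than leaned on.

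Two minor points: the inequality $-2|\CI|\le -|\CI|-2$ you write fails for $|\CI|=1$; what the hypothesis of Theorems \ref{t:Verdier on Ran} and \ref{t:products} actually requires is the asymptotic bound ``for every $k$ there is $n_k$ such that the degree is $\le -k-|\CI|$ for $|\CI|>n_k$'', which holds with $n_k=k$; this is cosmetic. Also, your opening ``Plan'' paragraph suggests reducing the factorization property of $\CF$ to a first-order statement on $X$ by invoking \lemref{l:isom on diag}, but that lemma compares two sheaves that are \emph{already known} to be commutative factorization algebras, so it cannot be used to establish the factorization property of $\CF$ in the first place; what you actually do in the key steps (restrict to the disjoint locus and dualize the factorization isomorphism for $\CG$) is the correct and complete argument.
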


The rest of this subsection is devoted to the proof of \propref{p:Verdier factorizes}. 

\sssec{}

For a finite set $\CI$ consider the corresponding object 
$$\CG_{X^\CI}:=\on{ins}_\CI^!(\CG)\in \Shv(X^\CI).$$ 
The factorization hypothesis on $\CG$ implies that the restriction of $\CG_{X^\CI}$ to 
$\overset{\circ}X{}^\CI\subset X^\CI$ is isomorphic to the restriction of $(\CG_X)^{\boxtimes \CI}$.

\medskip

Hence, the assumption that $\CG_X$ lives cohomological degrees $\leq -2$ implies that
$\CG_{X^\CI}|_{\overset{\circ}X{}^\CI}$ lives in degrees $\leq -2i$. In particular, it satisfies
the assumption of \thmref{t:Verdier on Ran}.

\medskip

Furthermore, we obtain that all of the cohomologies of $\CG_{X^\CI}|_{\overset{\circ}X{}^\CI}$ are
compact. Since we are working in the constructible category and since $X^\CI$ is stratified by 
$\overset{\circ}X{}^\CJ$ for $\CJ\twoheadleftarrow \CI$, we obtain that $\CG_{X^\CI}$ itself is bounded
above and all of its cohomologies are compact.

\medskip

Hence, by \thmref{t:products}, the map
\begin{equation} \label{e:duality and prod}
\BD_{\Ran}(\CG)\boxtimes \BD_{\Ran}(\CG)\to \BD_{\Ran\times \Ran}(\CG\boxtimes \CG)
\end{equation}
is an isomorphism. 

\sssec{}

We claim that there exists a commutative diagram
\begin{equation} \label{e:diag of duals}
\CD
\left(\BD_{\Ran}(\CG)\boxtimes \BD_{\Ran}(\CG)\right)|_{(\Ran\times \Ran)_{\on{disj}}}  @>>>  
\BD_{(\Ran\times \Ran)_{\on{disj}}}\left((\CG\boxtimes \CG)|_{(\Ran\times \Ran)_{\on{disj}}}\right)   \\
@VVV   @VVV  \\
\left(\unn^!(\BD_{\Ran}(\CG))\right)|_{(\Ran\times \Ran)_{\on{disj}}}  @>>>
\BD_{(\Ran\times \Ran)_{\on{disj}}}\left(\unn^!(\CG)|_{(\Ran\times \Ran)_{\on{disj}}}\right).
\endCD
\end{equation}

The top horizontal arrow is the composition
\begin{multline}   \label{e:top arrow}
\left(\BD_{\Ran}(\CG)\boxtimes \BD_{\Ran}(\CG)\right)|_{(\Ran\times \Ran)_{\on{disj}}}  \to \\
\to \left(\BD_{\Ran\times \Ran}(\CG\times \CG)\right)_{(\Ran\times \Ran)_{\on{disj}}}\to 
\BD_{(\Ran\times \Ran)_{\on{disj}}}\left((\CG\boxtimes \CG)|_{(\Ran\times \Ran)_{\on{disj}}}\right),
\end{multline}
where the first arrow is \eqref{e:product and Verdier}, and 
the second arrow comes from the fact that $(\Ran\times \Ran)_{\on{disj}}\hookrightarrow \Ran\times \Ran$
is an open embedding (see \secref{sss:etale and dual}). 

\medskip

The bottom arrow comes from the fact that the map $\unn|_{(\Ran\times \Ran)_{\on{disj}}}$ is \'etale
(again, by \secref{sss:etale and dual}). 

\medskip

The left vertical arrow is the restriction to $(\Ran\times \Ran)_{\on{disj}}$ of the map
$$\BD_{\Ran}(\CG)\boxtimes \BD_{\Ran}(\CG)\to \unn^!(\BD_{\Ran}(\CG)),$$
obtained by the $(\unn_!,\unn^!)$ adjunction from the map 
$$\unn_!\left(\BD_{\Ran}(\CG)\boxtimes \BD_{\Ran}(\CG)\right)\to \BD_{\Ran}(\CG)$$
of \eqref{e:product on dual}. 

\medskip

The right vertical arrow is obtained by applying the (contravariant) functor $\BD_{(\Ran\times \Ran)_{\on{disj}}}$ to
the map
$$\unn^!(\CG)|_{(\Ran\times \Ran)_{\on{disj}}}\to \CG\boxtimes \CG|_{(\Ran\times \Ran)_{\on{disj}}}$$
of \eqref{e:factor for co}.

\medskip

The fact that the above diagram is commutative follows by unwinding the definitions.

\sssec{}

We need to prove that the left vertical arrow in \eqref{e:diag of duals} is an isomorphism. We will show that
all other arrows in this diagram are isomorphisms.

\medskip

The right vertical arrow is an isomorphism due to the assumption that $\CG$ is a cocommutative factorization
algebra. 

\medskip

In the top horizontal arrow, which is given by \eqref{e:top arrow}, the first map is an isomorphism because
\eqref{e:duality and prod} is an isomorphism. 

\medskip

Thus, it remains to show that the second map in \eqref{e:top arrow} and the bottom horizontal arrow in
\eqref{e:diag of duals} are isomorphisms. However, this follows from \lemref{l:etale and dual}.

\section{Factorization for augmented sheaves}    \label{s:fact aug}

In this section we will introduce several more ingredients required for the proof of the local duality statement, 
\thmref{t:local duality}, to be used in \secref{s:pointwise}.

\medskip

Let us recall the set-up of the preamble of \secref{s:pairings aug}. We start with 
$\CA_{\on{untl,aug}},\CB_{\on{untl,aug}}\in \Shv^!(\Ran_{\on{untl,aug}})$,
and we want to establish an isomorphism $\CB_{\on{red}}\simeq\BD_{\Ran}(\CA_{\on{red}})$, where 
$$\CA_{\on{red}}:=\on{TakeOut}(\CA_{\on{untl,aug}}) \text{ and } \CB_{\on{red}}:=\on{TakeOut}(\CB_{\on{untl,aug}}).$$

\medskip

As was explained in \secref{s:pairings aug}, the map in one direction 
\begin{equation} \label{e:B to A prev}
\CB_{\on{red}}\to \BD_{\Ran}(\CA_{\on{red}}) 
\end{equation}
comes from a pairing between $\CA_{\on{untl,aug}}$ and $\CB_{\on{untl,aug}}$. What we do in this section is explain what additional
pieces of structure on $\CA_{\on{untl,aug}}$, $\CB_{\on{untl,aug}}$ and a pairing between them are needed to make \eqref{e:B to A prev} into
a homomorphism of commutative factorization algebras. 

\medskip

The prerequisite for this section are: all of Part I and Sects. \ref{s:pairings aug} and \ref{s:factorize}.

\ssec{(Co)commutative and (co)algebras in unital augmented sheaves}  

In this subsection we will explain what kind of structure on $\wt\CF:=\on{AddUnit}_{\on{aug}}(\CF)\in \Shv^!(\Ran_{\on{untl,aug}})$
corresponds to a structure on $\CF\in \Shv^!(\Ran)$ of (co)commutative (co)algebra with respect to the convolution
symmetric monoidal structure. 

\sssec{}

We consider the category $\Shv^!(\Ran_{\on{untl,aug}})$ as equipped with a symmetric monoidal structure
given by the \emph{pointwise} tensor product
$$\wt\CF,\wt\CG\mapsto \wt\CF\overset{!}\otimes \wt\CG.$$

Thus, we can speak about commutative algebras and cocommutative coalgebras in $\Shv^!(\Ran_{\on{untl,aug}})$.
We will now connect these notions to the notions of commutative algebra and cocommutative coalgebra in $\Shv^!(\Ran)$
with respect to the convolution product, introduced in \secref{ss:alg and coalg}.  This relies on the following assertion:

\begin{thmconstr} \label{t:units and tensor}
The functor 
$$\on{AddUnit}_{\on{aug}}: \Shv^!(\Ran)\to \Shv^!(\Ran_{\on{untl,aug}})$$
has a natural symmetric monoidal structure.
\end{thmconstr}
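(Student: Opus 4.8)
The plan is to produce the symmetric monoidal structure on $\on{AddUnit}_{\on{aug}}$ by factoring it through lax prestacks each of which carries a commutative semigroup structure compatible with $\unn:\Ran\times\Ran\to\Ran$, and checking that the relevant pullback and pushforward functors are (right-lax, resp.\ genuinely) symmetric monoidal. Recall $\on{AddUnit}_{\on{aug}}$ is the cofiber of the natural transformation $(\psi_{\on{aug}})_!\circ(\xi_{\on{aug}})^!\to\pi^!\circ\on{AddUnit}$, and that $\on{AddUnit}=\psi_!\circ\xi^!$; so concretely I would first equip the lax prestacks $\Ran^{\to}$, $\Ran^{\to}_{\on{aug}}$, $\Ran_{\on{untl}}$ and $\Ran_{\on{untl,aug}}$ with the evident ``union of finite subsets'' commutative semigroup structures (these are manifest on $S$-points, where union of subsets of $\Maps(S,X)$ is commutative and associative), and observe that all the structure maps $\phi,\psi,\xi,\upsilon,\pi,\iota,\psi_{\on{aug}},\xi_{\on{aug}},\pi^{\to}$ are morphisms of commutative semigroups. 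Since $\Shv^!$ is right-lax symmetric monoidal (Sect.\ \ref{sss:assump mon}) and takes products to products, each category $\Shv^!(\CY)$ for $\CY$ a commutative semigroup acquires a convolution-type structure from $\unn_\CY$: the pullback $\unn_\CY^!$ is symmetric comonoidal, and when $\unn_\CY$ is pseudo-proper (which for $\Ran$ is \lemref{l:glue union}, and which I would check similarly for the lax prestacks in play using \corref{c:lax homology}) its left adjoint $(\unn_\CY)_!$ endows $\Shv^!(\CY)$ with a symmetric monoidal ``convolution'' structure.

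Next I would assemble the monoidal structure on the three building-block functors. The pullback functor $f^!$ along a semigroup homomorphism $f$ is automatically symmetric monoidal for the convolution structures, because $f^!$ commutes with the comonoidal $\unn^!$ by functoriality of $\Shv^!$, and passing to left adjoints turns this into compatibility with $\unn_!$ (using base change, i.e.\ \corref{c:lax homology}/\corref{c:pseudo-proper}, to identify the relevant Beck--Chevalley transformations as isomorphisms). Dually, for $f$ a \emph{pseudo-proper} semigroup homomorphism, $f_!$ is symmetric monoidal for the convolution structures, again by base change applied to the square expressing compatibility of $f_!$ with $\unn_!$. Thus $\xi^!$, $(\xi_{\on{aug}})^!$, $\pi^!$ are symmetric monoidal (pullbacks), and $\psi_!$, $(\psi_{\on{aug}})_!$ are symmetric monoidal ($\psi,\psi_{\on{aug}}$ being pseudo-proper by \lemref{l:psi pseudo-proper} and \lemref{l:psi pseudo-proper aug}). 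Composing, both $(\psi_{\on{aug}})_!\circ(\xi_{\on{aug}})^!$ and $\pi^!\circ\on{AddUnit}=\pi^!\circ\psi_!\circ\xi^!$ are symmetric monoidal functors $\Shv^!(\Ran)\to\Shv^!(\Ran_{\on{untl,aug}})$, where the target carries the convolution structure coming from $\unn_{\Ran_{\on{untl,aug}}}$. I would then need to check that the natural transformation \eqref{e:nat trans 1} between them is a \emph{monoidal} natural transformation; this follows by unwinding its construction from the commutative diagram of semigroup homomorphisms in \secref{ss:unit and aug} together with base change. Since cofibers in a stable symmetric monoidal $\infty$-category where $\otimes$ is exact in each variable (which holds here: convolution $\unn_!(-\boxtimes-)$ is a composite of colimit-preserving functors) inherit the monoidal structure from a monoidal natural transformation of monoidal functors, $\on{AddUnit}_{\on{aug}}$ is symmetric monoidal.

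One point I would flag is a coherence/bookkeeping subtlety rather than a conceptual one: ``symmetric monoidal cofiber of a monoidal natural transformation'' needs the target to be \emph{stably} symmetric monoidal with exact tensor, and one must ensure the convolution tensor product $\wt\CF\otimes\wt\CG:=(\unn_{\Ran_{\on{untl,aug}}})_!(\wt\CF\boxtimes\wt\CG)$ really is exact separately in each variable --- this is fine since $\boxtimes$ is exact (it preserves colimits, being part of the symmetric monoidal structure on $\StinftyCat$-valued $\Shv^!$) and $(\unn_{\Ran_{\on{untl,aug}}})_!$ is a left adjoint. I would also remark (for later use, e.g.\ to know which monoidal structure appears on the target) that on the explicit fibers from \propref{p:inserting the aug}, the convolution product on $\Shv^!(\Ran_{\on{untl,aug}})$ restricts, via the factorization structure of the sheaves that actually arise, to the expected operation; but strictly speaking that is not needed for the bare statement of \thmref{t:units and tensor}.

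The main obstacle I expect is \emph{not} any single hard computation but rather organizing the homotopy-coherent compatibilities: one must exhibit $\on{AddUnit}_{\on{aug}}$ as symmetric monoidal on the nose, i.e.\ produce a lift of the relevant functors to $\on{CAlg}$ or to lax-monoidal functors compatibly, and the cleanest route is to work ``semigroup-theoretically'' --- realize everything as pull/push along semigroup homomorphisms between lax prestacks, so that all monoidal data is inherited functorially from the right-lax symmetric monoidal structure on $\Shv^!$ and from base change (\corref{c:lax homology}), never constructed by hand. The delicate check within that framework is that the transition functors used to form the convolution structure (the left adjoints $(\unn_\CY)_!$) behave well: this requires $\unn_\CY$ pseudo-proper for the lax prestacks $\Ran_{\on{untl}},\Ran_{\on{untl,aug}},\Ran^{\to},\Ran^{\to}_{\on{aug}}$, which I would verify by the same colimit-of-schemes argument as in \lemref{l:glue union}, computing the fiber products $X^\CI\underset{\CY}\times(\CY\times\CY)$ as colimits over diagrams of finite sets as in \propref{p:express Ran}.
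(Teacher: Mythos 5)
The key difficulty with your approach is that you put the \emph{wrong} symmetric monoidal structure on the target. In the paper, the symmetric monoidal structure on $\Shv^!(\Ran_{\on{untl,aug}})$ is the \emph{pointwise} tensor product $\wt\CF\overset{!}\otimes\wt\CG=\on{diag}_{\Ran_{\on{untl,aug}}}^!(\wt\CF\boxtimes\wt\CG)$, while the source $\Shv^!(\Ran)$ carries the union-convolution structure. So the content of \thmref{t:units and tensor} is the isomorphism
$$\on{AddUnit}_{\on{aug}}(\CF)\overset{!}\otimes\on{AddUnit}_{\on{aug}}(\CG)\simeq\on{AddUnit}_{\on{aug}}\bigl(\unn_!(\CF\boxtimes\CG)\bigr),$$
i.e.\ the functor intertwines \emph{two different} structures. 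You instead propose to equip $\Ran_{\on{untl,aug}}$ (and $\Ran^\to$, $\Ran^\to_{\on{aug}}$, $\Ran_{\on{untl}}$) with a ``union'' semigroup structure and put the resulting convolution on the target. This is a genuinely different monoidal structure (for instance, the !-fiber of $\on{diag}^!(\wt\CF\boxtimes\wt\CG)$ at $(K\subseteq I)$ is a single tensor product, whereas the !-fiber of a hypothetical $(\unn_{\Ran_{\on{untl,aug}}})_!(\wt\CF\boxtimes\wt\CG)$ is computed over the entire fiber of the union map, which is a large colimit). Proving that $\on{AddUnit}_{\on{aug}}$ is monoidal for convolution $\Rightarrow$ convolution would not yield \corref{c:structure on red} or any of the downstream statements in \secref{s:fact aug}, which all refer to (co)algebras with respect to $\overset{!}\otimes$ on $\Ran_{\on{untl,aug}}$.

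There is also an internal gap even within your framework: the claim that ``$f^!$ along a semigroup homomorphism is automatically symmetric monoidal for the convolution structures'' is not correct. What one gets for free from $\unn_{\CY_2}\circ(f\times f)=f\circ\unn_{\CY_1}$ is a Beck--Chevalley comparison map $(\unn_{\CY_1})_!\circ(f\times f)^!\to f^!\circ(\unn_{\CY_2})_!$, i.e.\ an \emph{oplax} structure, but the relevant square
$$
\CD
\CY_1\times\CY_1  @>{f\times f}>>  \CY_2\times\CY_2  \\
@V{\unn_{\CY_1}}VV    @VV{\unn_{\CY_2}}V   \\
\CY_1  @>{f}>>  \CY_2
\endCD
$$
is \emph{not} Cartesian in general (take $f=\phi:\Ran\to\Ran_{\on{untl}}$, for example), so base change in the form of \corref{c:pseudo-proper}/\corref{c:lax homology} does not apply and the comparison map need not be invertible. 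So this step cannot be dismissed as bookkeeping. The paper's route avoids all of this: it works directly with the pointwise structure on the target, constructs the binary compatibility \eqref{e:binary compatibility} by exhibiting the appropriate data on the cofiber (items (a)--(d) in Step 1), and the geometric input is an explicit isomorphism of lax prestacks such as
$$\Ran_{\on{untl}}\underset{\Ran_{\on{untl}}\times\Ran_{\on{untl}}}\times(\Ran^\to\times\Ran^\to)\simeq\Ran^\to\underset{\Ran}\times(\Ran\times\Ran),$$
which encodes precisely how the diagonal on $\Ran_{\on{untl}}$ interacts with the union on $\Ran$. If you want to rescue your plan, the right move is not to look for a semigroup structure on $\Ran_{\on{untl,aug}}$, but to understand why pullback along the diagonal of $\Ran_{\on{untl,aug}}$ corresponds, through $\on{AddUnit}_{\on{aug}}$, to pushforward along the union map of $\Ran$ --- which is exactly what the paper's Step 2 establishes via the displayed lax prestack identifications.
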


Combining with \thmref{t:main}, we obtain:

\begin{cor}  \label{c:structure on red}
For an object $\CF\in \Shv^!(\Ran)$, to specify on it a structure on it of (co)commutative (co)algebra is equivalent to a specifying
a structure of (co)commutative (co)algebra on $\on{AddUnit}_{\on{aug}}(\CF)$.
\end{cor}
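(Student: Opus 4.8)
The plan is to deduce the corollary formally from \thmref{t:units and tensor} and \thmref{t:main}, together with the standard formalism of commutative algebras in symmetric monoidal $\infty$-categories. I will spell out the argument for commutative algebras; the cocommutative coalgebra case follows by applying the same argument to the opposite categories, using that $\on{coCAlg}(\bC)=\on{CAlg}(\bC^{\on{op}})^{\on{op}}$ for a symmetric monoidal $\bC$, and that a symmetric monoidal functor $\Phi$ induces a symmetric monoidal functor $\Phi^{\on{op}}$ which is fully faithful precisely when $\Phi$ is. (This is consistent with the conventions of \secref{ss:alg and coalg}, where cocommutative coalgebras in $\Shv^!(\Ran)$ are taken with respect to the convolution structure, and cocommutative coalgebras in $\Shv^!(\Ran_{\on{untl,aug}})$ with respect to $\overset{!}\otimes$.)

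First I would assemble the inputs. By \secref{sss:com alg first}, the convolution symmetric monoidal structure on $\Shv^!(\Ran)$ is exactly the one for which a commutative algebra is the structure appearing in the statement, while $\Shv^!(\Ran_{\on{untl,aug}})$ carries the pointwise structure $\overset{!}\otimes$. By \thmref{t:units and tensor}, the functor $\on{AddUnit}_{\on{aug}}$ is symmetric monoidal for these two structures, and by \thmref{t:main} it is fully faithful. Since it is symmetric monoidal (not merely lax), $\on{AddUnit}_{\on{aug}}$ preserves tensor products and carries the unit of $(\Shv^!(\Ran),\,\text{convolution})$ to the unit of $(\Shv^!(\Ran_{\on{untl,aug}}),\overset{!}\otimes)$. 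Hence its essential image $\mathcal E\subset \Shv^!(\Ran_{\on{untl,aug}})$ is a \emph{full symmetric monoidal subcategory}: $\mathcal E$ is closed under $\overset{!}\otimes$ and contains the monoidal unit, and $\on{AddUnit}_{\on{aug}}$ factors as a symmetric monoidal equivalence $\Shv^!(\Ran)\overset{\sim}\to \mathcal E$ followed by the inclusion $\mathcal E\hookrightarrow \Shv^!(\Ran_{\on{untl,aug}})$.

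Next I would pass to commutative algebra objects. Applying $\on{CAlg}(-)$ to this factorization gives an equivalence $\on{CAlg}(\Shv^!(\Ran))\overset{\sim}\to \on{CAlg}(\mathcal E)$, and since $\mathcal E\subset \Shv^!(\Ran_{\on{untl,aug}})$ is a full symmetric monoidal subcategory, $\on{CAlg}(\mathcal E)$ is canonically identified with the full subcategory of $\on{CAlg}(\Shv^!(\Ran_{\on{untl,aug}}))$ spanned by those algebras whose underlying object lies in $\mathcal E$. Equivalently, the square
$$
\CD
\on{CAlg}(\Shv^!(\Ran))  @>>>  \on{CAlg}(\Shv^!(\Ran_{\on{untl,aug}}))  \\
@VVV    @VVV   \\
\Shv^!(\Ran)  @>{\on{AddUnit}_{\on{aug}}}>>  \Shv^!(\Ran_{\on{untl,aug}})
\endCD
$$
with the vertical arrows the forgetful functors and the top arrow induced by $\on{AddUnit}_{\on{aug}}$, is Cartesian. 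Taking the fibers of the vertical functors over a fixed $\CF\in \Shv^!(\Ran)$ and over $\on{AddUnit}_{\on{aug}}(\CF)$, we obtain an equivalence between the space of commutative algebra structures on $\CF$ relative to the convolution product and the space of commutative algebra structures on $\on{AddUnit}_{\on{aug}}(\CF)$ relative to $\overset{!}\otimes$, which is precisely the assertion of the corollary for algebras. Applying this to the opposite categories, as explained above, yields the statement for cocommutative coalgebras.

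The one point that deserves care is the implication "$\on{AddUnit}_{\on{aug}}$ fully faithful and symmetric monoidal $\Rightarrow$ the displayed square of $\on{CAlg}$'s is Cartesian": this is exactly where one uses that $\mathcal E$ is closed under $\overset{!}\otimes$ and contains the unit — both immediate consequences of $\on{AddUnit}_{\on{aug}}$ being symmetric monoidal rather than only lax. The other potential source of confusion — that the two categories in play carry \emph{different} monoidal structures (convolution versus pointwise) — is not an obstacle here, since reconciling them is precisely the content already packaged by \thmref{t:units and tensor}.
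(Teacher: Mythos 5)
Your proposal is correct and is essentially the paper's own argument: the corollary is obtained there by directly combining \thmref{t:units and tensor} (symmetric monoidality of $\on{AddUnit}_{\on{aug}}$) with \thmref{t:main} (fully faithfulness), and you have simply spelled out the standard categorical bookkeeping that makes this combination yield the equivalence of (co)algebra structures. The only cosmetic slip is your appeal to monoidal units: the convolution structure on $\Shv^!(\Ran)$ is non-unital ($\Ran$ is only a semigroup, the empty set being excluded), so one works with non-unital (co)commutative (co)algebras throughout and closure of the essential image under binary tensor products already suffices.
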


\sssec{Proof of \thmref{t:units and tensor}, Step 1}

We will construct the data of compatibility of binary operations:
\begin{equation} \label{e:binary compatibility}
\on{AddUnit}_{\on{aug}}(\CF) \overset{!}\otimes  \on{AddUnit}_{\on{aug}}(\CG)\simeq 
\on{AddUnit}_{\on{aug}}(\unn_!(\CF\boxtimes \CG)).
\end{equation}
The datum for  higher compatibility is constructed similarly. 

\medskip

Recall that
$$\on{AddUnit}_{\on{aug}}=\on{coFib}\left((\psi_{\on{aug}})_!\circ \xi_{\on{aug}}^!\to \pi^!\circ \psi_!\circ \xi^!\right).$$

\medskip

We will construct a map
\begin{equation} \label{e:comp tensor}
\on{AddUnit}_{\on{aug}}(\CF) \overset{!}\otimes  \on{AddUnit}_{\on{aug}}(\CG)\to 
\on{AddUnit}_{\on{aug}}(\unn_!(\CF\boxtimes \CG))
\end{equation} 
by constructing the following maps:

\begin{itemize}

\item(a) A map (in fact, an isomorphism) 
$$\on{diag}_{\Ran_{\on{untl,aug}}}^!\circ (\pi\times \pi)^!\circ (\psi\times \psi)_!\circ (\xi\times \xi)^!(\CF\boxtimes \CG) \to
\pi^!\circ \psi_!\circ \xi^! \circ \unn_! (\CF\boxtimes \CG);$$

\item(b) A map
$$\on{diag}_{\Ran_{\on{untl,aug}}}^!\circ (\pi\times \on{id}_{\Ran_{\on{untl,aug}}})^!\circ (\psi\times \psi_{\on{aug}})_!
\circ (\xi\times \xi_{\on{aug}})^!(\CF\boxtimes \CG)\to (\psi_{\on{aug}})_!\circ \xi_{\on{aug}}^! \circ \unn_! (\CF\boxtimes \CG);$$

\item(b') A homotopy between the composition
\begin{multline*}
\on{diag}_{\Ran_{\on{untl,aug}}}^!\circ (\pi\times \on{id}_{\Ran_{\on{untl,aug}}})^!\circ (\psi\times \psi_{\on{aug}})_!
\circ (\xi\times \xi_{\on{aug}})^!(\CF\boxtimes \CG)\to \\
\to  (\psi_{\on{aug}})_!\circ \xi_{\on{aug}}^! \circ \unn_! (\CF\boxtimes \CG) \to 
\pi^!\circ \psi_!\circ \xi^! \circ \unn_! (\CF\boxtimes \CG)
\end{multline*}
and
\begin{multline*}
\on{diag}_{\Ran_{\on{untl,aug}}}^!\circ (\pi\times \on{id}_{\Ran_{\on{untl,aug}}})^!\circ (\psi\times \psi_{\on{aug}})_!
\circ (\xi\times \xi_{\on{aug}})^!(\CF\boxtimes \CG)\to \\
\to \on{diag}_{\Ran_{\on{untl,aug}}}^!\circ (\pi\times \pi)^!\circ (\psi\times \psi)_!\circ (\xi\times \xi)^!(\CF\boxtimes \CG) \to 
\pi^!\circ \psi_!\circ \xi^! \circ \unn_! (\CF\boxtimes \CG); 
\end{multline*}


\item(c) A map
$$\on{diag}_{\Ran_{\on{untl,aug}}}^!\circ (\on{id}_{\Ran_{\on{untl,aug}}}\times \pi)^!\circ (\psi_{\on{aug}}\times \psi)_!
\circ (\xi_{\on{aug}}\times \xi)^!(\CF\boxtimes \CG)\to (\psi_{\on{aug}})_!\circ \xi_{\on{aug}}^! \circ \unn_! (\CF\boxtimes \CG);$$

\item(c') A homotopy between the composition
\begin{multline*}
\on{diag}_{\Ran_{\on{untl,aug}}}^!\circ (\on{id}_{\Ran_{\on{untl,aug}}}\times \pi)^!\circ (\psi_{\on{aug}}\times \psi)_!
\circ (\xi_{\on{aug}}\times \xi)^!(\CF\boxtimes \CG)   \to \\
\to (\psi_{\on{aug}})_!\circ \xi_{\on{aug}}^! \circ \unn_! (\CF\boxtimes \CG) 
\to \pi^!\circ \psi_!\circ \xi^! \circ \unn_! (\CF\boxtimes \CG)
\end{multline*}
and
\begin{multline*}
\on{diag}_{\Ran_{\on{untl,aug}}}^!\circ (\on{id}_{\Ran_{\on{untl,aug}}}\times \pi)^!\circ (\psi_{\on{aug}}\times \psi)_!
\circ (\xi_{\on{aug}}\times \xi)^!(\CF\boxtimes \CG)   \to \\
\to \on{diag}_{\Ran_{\on{untl,aug}}}^!\circ (\pi\times \pi)^!\circ (\psi\times \psi)_!\circ (\xi\times \xi)^!(\CF\boxtimes \CG) \to  
\pi^!\circ \psi_!\circ \xi^! \circ \unn_! (\CF\boxtimes \CG);
\end{multline*}


\item(d) A homotopy between the resulting two maps
$$\on{diag}_{\Ran_{\on{untl,aug}}}^!\circ (\pi\times \pi)^!\circ (\psi_{\on{aug}}\times \psi_{\on{aug}})_!
\circ (\xi_{\on{aug}}\times \xi_{\on{aug}})^!(\CF\boxtimes \CG) \rightrightarrows 
\pi^!\circ \psi_!\circ \xi^! \circ \unn_! (\CF\boxtimes \CG).$$

\end{itemize}

\sssec{Proof of \thmref{t:units and tensor}, Step 2}

The map in (a) is obtained as pullback by means of $\pi$ from a map
\begin{equation} \label{e:compare on unital}
\on{diag}_{\Ran_{\on{untl}}}^!\circ (\psi\times \psi)_!\circ (\xi\times \xi)^!(\CF\boxtimes \CG) \to
\psi_!\circ \xi^! \circ \unn_! (\CF\boxtimes \CG).
\end{equation} 

The latter map is obtained by base change from the isomorphism of lax prestacks 
$$\Ran_{\on{untl}}\underset{\Ran_{\on{untl}}\times \Ran_{\on{untl}}}\times (\Ran^\to\times \Ran^\to)\simeq 
\Ran^\to \underset{\Ran}\times (\Ran\times \Ran).$$

The map in (b) is obtained from a map of lax prestacks
$$\Ran_{\on{untl,aug}}\underset{\Ran_{\on{untl}}\times \Ran_{\on{untl,aug}}}\times 
(\Ran^\to \times \Ran^\to_{\on{aug}})\to 
\Ran^\to_{\on{aug}} \underset{\Ran}\times (\Ran\times \Ran),$$
and the map in (c) is obtained from a map of lax prestacks
$$\Ran_{\on{untl,aug}}\underset{\Ran_{\on{untl,aug}}\times \Ran_{\on{untl}}}\times 
(\Ran^\to_{\on{aug}}\times \Ran^\to)\to 
\Ran^\to_{\on{aug}} \underset{\Ran}\times (\Ran\times \Ran).$$

The data of homotopies in (b'), (c') and (d') follows from the construction.

\sssec{Proof of \thmref{t:units and tensor}, Step 3}

We will now show that the map \eqref{e:comp tensor} is an isomorphism. First, it is easy to see
that the left-hand side satisfies the conditions $(*)$ and $(**)$ in \thmref{t:main}. 

\medskip

Note now that the functor $\iota^!:\Shv^!(\Ran_{\on{untl,aug}})\to \Shv^!(\Ran_{\on{untl}})$ is conservative
on the full subcategory consisting of objects satisfying conditions $(*)$ and $(**)$ in \thmref{t:main}. Hence, it 
remains to show that the induced map
$$\left(\iota^!\circ \on{AddUnit}_{\on{aug}}(\CF)\right) \overset{!}\otimes  
\left(\iota^!\circ \on{AddUnit}_{\on{aug}}(\CG)\right)\to \iota^!\circ \on{AddUnit}_{\on{aug}}(\unn_!(\CF\boxtimes \CG))$$
is an isomorphism.

\medskip

However, by \lemref{l:inserting the aug}, the latter map identifies with the map \eqref{e:compare on unital},
and thus is an isomorphism. 

\ssec{Relation to factorization}

In the previous subsection we proved that for $\wt\CF:=\on{AddUnit}_{\on{aug}}(\CF)$, a structure or 
(co)commutative (co)algebra on it with respect to the pointwise tensor product is equivalent to the structure
on $\CF$ of (co)commutative (co)algebra with respect to the convolution product on $\Shv^!(\Ran)$.

\medskip

In this subsection we will address the following question: what property of the (co)commutative (co)algebra $\wt\CF$
guarantees that that $\CF$ is a (co)commutative factorization algebra. 

\sssec{}

Let $(\Ran_{\on{untl,aug}}\times \Ran_{\on{untl,aug}})_{\on{compl,disj}}$ be the following lax prestack. 

\medskip

For $S\in \Sch$,
the category $(\Ran_{\on{untl,aug}}\times \Ran_{\on{untl,aug}})_{\on{compl,disj}}(S)$ is the full subcategory of
$(\Ran_{\on{untl,aug}}\times \Ran_{\on{untl,aug}})(S)$, consisting of quadruples $(K_1\subseteq I_1),(K_2\subseteq I_2)$,
satisfying the following condition: for any $i_1\in I_1- K_1$ and $i_2\in I_2$ the corresponding two maps $S\rightrightarrows X$
have non-intersecting images, and any $i_2\in I_2- K_2$ and $i_1\in I_1$ the corresponding two maps $S\rightrightarrows X$
have non-intersecting images.

\medskip

Denote also
$$(\Ran_{\on{untl}}\times \Ran_{\on{untl}})_{\on{disj}}:=
(\Ran_{\on{untl}}\times \Ran_{\on{untl}})\underset{\Ran_{\on{untl,aug}}\times \Ran_{\on{untl,aug}}}\times
(\Ran_{\on{untl,aug}}\times \Ran_{\on{untl,aug}})_{\on{compl,disj}}.$$

Note that we have a Cartesian diagram
$$
\CD
(\Ran\times \Ran)_{\on{disj}}   @>>>   (\Ran_{\on{untl}}\times \Ran_{\on{untl}})_{\on{disj}}   \\
@VVV    @VVV    \\
\Ran\times \Ran  @>{\iota\times \iota}>>   \Ran_{\on{untl}}\times \Ran_{\on{untl}}.
\endCD
$$

\sssec{}  \label{sss:factor algebras aug}

Let $\wt\CF$ be a commutative algebra in $\Shv^!(\Ran_{\on{untl,aug}})$ (with respect to the pointwise 
tensor product, i.e., the operation $\overset{!}\otimes$). We shall say that $\wt\CF$
is a \emph{commutative factorization algebra}, if the following condition holds: 

\medskip

For any $S\in \Sch$ and an object $(I_1,I_2)\in (\Ran_{\on{untl}}\times \Ran_{\on{untl}})_{\on{disj}}(S)$, the composed map
\begin{equation}  \label{e:factor alg aug middle}
\wt\CF_{\emptyset\subset I_1}\overset{!}\otimes \wt\CF_{\emptyset\subset I_2}\to 
\wt\CF_{\emptyset\subset I_1\cup I_2}\overset{!}\otimes \wt\CF_{\emptyset\subset I_1\cup I_2}
=(\wt\CF\overset{!}\otimes \wt\CF)_{\emptyset\subset I_1\cup I_2}\to \wt\CF_{\emptyset\subset I_1\cup I_2}
\end{equation} 
and the maps
$$\wt\CF_{\emptyset\subset I_1}\to \wt\CF_{\emptyset\subset I_1\cup I_2} \text{ and }
\wt\CF_{\emptyset\subset I_2}\to \wt\CF_{\emptyset\subset I_1\cup I_2}$$
induce an isomorphism
\begin{equation} \label{e:factor alg aug}
\wt\CF_{\emptyset\subset I_1} \oplus  (\wt\CF_{\emptyset\subset I_1}\overset{!}\otimes \wt\CF_{\emptyset\subset I_2}) \oplus 
\wt\CF_{\emptyset\subset I_2}\to \wt\CF_{\emptyset\subset I_1\cup I_2}.
\end{equation} 

Note that the condition of being a \emph{commutative factorization algebra} only depends on the restriction of $\wt\CF$
to $\Ran_{\on{untl}}$ under the map $\iota:\Ran_{\on{untl}}\to \Ran_{\on{untl,aug}}$.

\begin{rem} \label{r:more factor}
Suppose in addition that $\wt\CF$ satisfies conditions $(*)$ and $(**)$ in \thmref{t:main}. 

\medskip

In this case, it is easy to see that 
for any $S$ and an $S$-point $(K_1\subseteq I_1),(K_2\subseteq I_2)$ of 
$(\Ran_{\on{untl,aug}}\times \Ran_{\on{untl,aug}})_{\on{compl,disj}}$, the composed map
$$\wt\CF_{K_1\subseteq I_1}\overset{!}\otimes \wt\CF_{K_2\subseteq I_2}\to 
\wt\CF_{K_1\cup K_2\subseteq I_1\cup I_2}\overset{!}\otimes \wt\CF_{K_1\cup K_2\subseteq I_1\cup I_2}
=(\wt\CF\overset{!}\otimes \wt\CF)_{K_1\cup K_2\subseteq I_1\cup I_2}\to \wt\CF_{K_1\cup K_2\subseteq I_1\cup I_2}$$
and the maps 
$$\wt\CF_{K_1\subseteq I_1}\to \wt\CF_{K_1\cup K_2\subseteq I_1\cup I_2} \text{ and }
\wt\CF_{K_2\subseteq I_2}\to \wt\CF_{K_1\cup K_2\subseteq I_1\cup I_2}$$
induce an isomorphism
$$\wt\CF_{K_1\subseteq I_1} \oplus  (\wt\CF_{K_1\subseteq I_1}\overset{!}\otimes \wt\CF_{K_2\subseteq I_2}) 
 \oplus \wt\CF_{K_2\subseteq I_2}\to  \wt\CF_{K_1\cup K_2\subseteq I_1\cup I_2}.$$

\end{rem}

\sssec{}  \label{sss:factor coalgebras aug}

Let $\wt\CG$ be a cocommutative coalgebra in $\Shv^!(\Ran_{\on{untl,aug}})$ (with respect to the \emph{pointwise} tensor product). 
We shall say that $\wt\CG$ is a \emph{cocommutative factorization coalgebra}, if the following conditions hold:

\begin{itemize}

\item(1) $\wt\CG$ satisfies conditions $(*)$ and $(**)$ in \thmref{t:main}; 

\item(2) For any $S\in \Sch$ and an $S$-point $(K_1\subseteq I_1),(K_2\subseteq I_2)$ of 
$$(\Ran_{\on{untl,aug}}\times \Ran_{\on{untl,aug}})_{\on{compl,disj}},$$ the composed map
\begin{multline*} 
\wt\CG_{K_1\cup K_2\subseteq I_1\cup I_2}\to (\wt\CG\overset{!}\otimes \wt\CG)_{K_1\cup K_2\subseteq I_1\cup I_2}=
\wt\CG_{K_1\cup K_2\subseteq I_1\cup I_2}\overset{!}\otimes \wt\CG_{K_1\cup K_2\subseteq I_1\cup I_2}\to \\
\to \wt\CG_{K_1\cup I_2\subseteq I_1\cup I_2}\overset{!}\otimes \wt\CG_{I_1\cup K_2\subseteq I_1\cup I_2},
\end{multline*} 
together with the maps
$$\wt\CG_{K_1\cup K_2\subseteq I_1\cup I_2}\to \wt\CG_{K_1\cup I_2\subseteq I_1\cup I_2} \text{ and }
\wt\CG_{K_1\cup K_2\subseteq I_1\cup I_2}\to \wt\CG_{I_1\cup K_2\subseteq I_1\cup I_2}$$
induce an isomorphism
\begin{equation} \label{e:aug factor cond co}
\wt\CG_{K_1\cup K_2\subseteq I_1\cup I_2} \to 
\wt\CG_{K_1\cup I_2\subseteq I_1\cup I_2} \oplus 
(\wt\CG_{K_1\cup I_2\subseteq I_1\cup I_2}\overset{!}\otimes \wt\CG_{I_1\cup K_2\subseteq I_1\cup I_2})\oplus
\wt\CG_{I_1\cup K_2\subseteq I_1\cup I_2}.
\end{equation}

\end{itemize}

\sssec{}

We are going prove the following result:

\begin{prop} \label{p:aug and factor} \hfill

\smallskip

\noindent{\em(a)} Let $\CF\in \Shv^!(\Ran)$ be a commutative algebra, and let
$\wt\CF:=\on{AddUnit}_{\on{aug}}(\CF)$ be the corresponding commutative algebra
in $\Shv^!(\Ran_{\on{untl,aug}})$. Then $\CF$ is a commutative factorization algebra
(in the sense of \secref{sss:factor algebras}) if and only if $\wt\CF$ is 
a commutative factorization algebra (in the sense of \secref{sss:factor algebras aug}). 

\smallskip 

\noindent{\em(b)} Let $\CG\in \Shv^!(\Ran)$ be a cocommutative coalgebra, and let
$\wt\CG:=\on{AddUnit}_{\on{aug}}(\CG)$ be the corresponding cocommutative coalgebra
in $\Shv^!(\Ran_{\on{untl,aug}})$. Then $\CG$ is a cocommutative factorization coalgebra
(in the sense of \secref{sss:factor coalgebras}) if and only if $\wt\CG$ is 
a cocommutative factorization coalgebra (in the sense of \secref{sss:factor coalgebras aug}). 

\end{prop}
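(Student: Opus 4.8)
\textbf{Proof proposal for \propref{p:aug and factor}.}

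The plan is to prove parts (a) and (b) in parallel, using the explicit description of $\wt\CF = \on{AddUnit}_{\on{aug}}(\CF)$ from \propref{p:inserting the aug} together with \lemref{l:testing}, which lets us check isomorphisms of sheaves on the Ran spaces only at $S$-points coming from tuples with pairwise non-intersecting images. The first step is to reduce each ``factorization'' condition to a statement about the !-fibers at such points. For part (a), by \secref{sss:factor algebras} the condition on $\CF$ says that $\CF\boxtimes \CF \to \unn^!(\CF)$ is an isomorphism over $(\Ran\times\Ran)_{\on{disj}}$, which by the description of $\unn^!$ (via the colimit presentation and base change along $\unn$, see \secref{ss:alg and coalg}) amounts to: for $S$-points $I_1, I_2$ with disjoint images, $\CF_{S,I_1}\overset{!}\otimes\CF_{S,I_2}$ maps isomorphically to the direct summand of $\CF_{S,I_1\cup I_2}$ indexed by the partition $I_1\sqcup I_2$. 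Meanwhile, by \propref{p:inserting the aug}, $\wt\CF_{S,\emptyset\subseteq I} = \underset{\emptyset\neq J\subseteq I}\oplus \CF_{S,J}$, so $(\wt\CF\overset{!}\otimes\wt\CF)_{S,\emptyset\subseteq I_1\cup I_2}$ decomposes as a double sum over nonempty $J_1, J_2 \subseteq I_1\cup I_2$, and condition \eqref{e:factor alg aug} is exactly the statement that the map $\wt\CF_{\emptyset\subseteq I_1}\oplus(\wt\CF_{\emptyset\subseteq I_1}\overset{!}\otimes\wt\CF_{\emptyset\subseteq I_2})\oplus\wt\CF_{\emptyset\subseteq I_2}\to\wt\CF_{\emptyset\subseteq I_1\cup I_2}$ is an isomorphism.

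The second step is the bookkeeping that matches the two conditions. Expanding both sides: the source of \eqref{e:factor alg aug} is $\big(\underset{\emptyset\neq J_1\subseteq I_1}\oplus \CF_{S,J_1}\big)\oplus\big(\underset{\emptyset\neq J_1\subseteq I_1,\ \emptyset\neq J_2\subseteq I_2}\oplus \CF_{S,J_1}\overset{!}\otimes\CF_{S,J_2}\big)\oplus\big(\underset{\emptyset\neq J_2\subseteq I_2}\oplus \CF_{S,J_2}\big)$, while the target is $\underset{\emptyset\neq J\subseteq I_1\cup I_2}\oplus \CF_{S,J}$. Every nonempty $J\subseteq I_1\cup I_2$ decomposes uniquely as $J = J_1\sqcup J_2$ with $J_i = J\cap I_i$ (and $J_1, J_2$ not both empty), with the three cases ``$J_2=\emptyset$'', ``$J_1=\emptyset$'', ``both nonempty'' matching the three summands of the source. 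Using the commutative algebra structure on $\CF$, the component of the map $\wt\CF\overset{!}\otimes\wt\CF\to\wt\CF$ in the summand $(J_1,J_2)\mapsto J_1\cup J_2$ with $J_1\cap I_2 = \emptyset = J_2\cap I_1$ is precisely the product map $\CF_{S,J_1}\overset{!}\otimes\CF_{S,J_2}\to\CF_{S,J_1\cup J_2}$ (this is where one uses the unital/augmented structure and \thmref{t:units and tensor} to know that $\on{AddUnit}_{\on{aug}}$ is symmetric monoidal, so the product on $\wt\CF$ really is induced by that on $\CF$). Hence \eqref{e:factor alg aug} is an isomorphism for all $(I_1,I_2)$ if and only if each such product map $\CF_{S,J_1}\overset{!}\otimes\CF_{S,J_2}\to\CF_{S,J_1\cup J_2}$ (for $J_1, J_2$ with disjoint images) is an isomorphism, which is exactly the factorization condition on $\CF$. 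Part (b) is handled by the same computation applied to the coproduct, now using \propref{p:inserting the aug} to expand $\wt\CG_{S,K\subseteq I} = \underset{\emptyset\neq J\subseteq I-K}\oplus \CG_{S,J}$ and checking that \eqref{e:aug factor cond co} for the point $(K_1\cup K_2\subseteq I_1\cup I_2)$ of $(\Ran_{\on{untl,aug}}\times\Ran_{\on{untl,aug}})_{\on{compl,disj}}$ unwinds, via the decomposition of $(I_1\cup I_2) - (K_1\cup K_2)$ as a disjoint union, into the factorization isomorphism \eqref{e:factor for co} for $\CG$; conditions $(*)$ and $(**)$ are automatic for anything in the image of $\on{AddUnit}_{\on{aug}}$ by \thmref{t:main}.

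The remaining step is to justify the reduction from the ``over $(\Ran\times\Ran)_{\on{disj}}$'' formulation to the fiberwise formulation, and to check that the various identifications of summands are compatible with the homotopy-coherent structures rather than merely with the underlying maps; this is where I expect the only real friction. Concretely, one must verify that the components of the symmetric-monoidal structure map on $\on{AddUnit}_{\on{aug}}$ constructed in the proof of \thmref{t:units and tensor} (Steps 1--3, via the maps of lax prestacks $\Ran_{\on{untl,aug}}\underset{\Ran_{\on{untl}}\times\Ran_{\on{untl}}}\times(\Ran^\to\times\Ran^\to)\simeq\Ran^\to\underset{\Ran}\times(\Ran\times\Ran)$ and its augmented variants) restrict over $(\Ran_{\on{untl}}\times\Ran_{\on{untl}})_{\on{disj}}$ to the direct-sum decomposition used above; this follows by unwinding the base-change identifications and the definition of $(X^{\CI_1}\times X^{\CI_2})\underset{\Ran\times\Ran}\times(\Ran\times\Ran)_{\on{disj}} = \underset{\CI_1\sqcup\CI_2 = \CI}\sqcup (X^{\CI_1}\times X^{\CI_2})_{\on{disj}}$ established in the proof of \lemref{l:on disj}. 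Granting this compatibility, the proof is the dictionary translation sketched above; no further input is needed.
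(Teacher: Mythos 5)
Your proof is correct and tracks the paper's own argument: the heart of both is the disjoint-union decomposition of nonempty subsets $J\subseteq I_1\cup I_2$ into those contained in $I_1$, those contained in $I_2$, and those meeting both, which the paper packages sheaf-theoretically as the decomposition of the lax prestack $(\Ran_{\on{untl}}\times \Ran_{\on{untl}})_{\on{disj}}\underset{\Ran_{\on{untl}}}\times \Ran^\to$ into three pieces (and handles the converse via conservativity of $((\psi\times\psi)_{\on{disj}})_!\circ((\xi\times\xi)_{\on{disj}})^!$, which rests on the same testing lemma you invoke). The coherence concern you flag at the end is real but is precisely what the paper's formulation at the level of $\Shv^!$ of lax prestacks resolves automatically, so your fiberwise version via \propref{p:inserting the aug} is a faithful translation of the same proof.
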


The rest of this subsection is devoted to the proof of \propref{p:aug and factor}. 

\sssec{Proof of \propref{p:aug and factor}(a)}

Let $\CF$ be a commutative algebra object in $\Shv^!(\CF)$.

\medskip

For $S\in \Sch$ and an object $(I_1,I_2)\in (\Ran_{\on{untl}}\times \Ran_{\on{untl}})_{\on{disj}}(S)$, 
the right-hand side in \eqref{e:factor alg aug} is obtained as the value on $S$ and $(I_1,I_2)$ of 
the object of $\Shv^!((\Ran_{\on{untl}}\times \Ran_{\on{untl}})_{\on{disj}})$,
given by pull-push along the Cartesian diagram
$$
\CD
(\Ran_{\on{untl}}\times \Ran_{\on{untl}})_{\on{disj}} \underset{\Ran_{\on{untl}}}\times \Ran^\to @>>>  \Ran^\to @>{\xi}>> \Ran \\
@VVV    @VV{\psi}V  \\
(\Ran_{\on{untl}}\times \Ran_{\on{untl}})_{\on{disj}}  @>{\unn_{\on{untl}}}>>   \Ran_{\on{untl}},
\endCD
$$
starting from $\CF\in \Shv^!(\Ran)$. In the above diagram, $\unn_{\on{untl}}$ denotes the naturally defined morphism
$$\Ran_{\on{untl}}\times \Ran_{\on{untl}}\to \Ran_{\on{untl}}, \quad I_1,I_2\mapsto I_1\cup I_2.$$

\medskip 

The three direct summands in the left-hand side of  \eqref{e:factor alg aug} are given, respectively, by evaluating at the same
$S$ and $(I_1,I_2)$ of $(\Ran_{\on{untl}}\times \Ran_{\on{untl}})_{\on{disj}}$ the following objects 
of $(\Ran_{\on{untl}}\times \Ran_{\on{untl}})_{\on{disj}}$:

\medskip

\noindent(1) Pull-push along the Caretsian diagram
$$
\CD
(\Ran_{\on{untl}}\times \Ran_{\on{untl}})_{\on{disj}} \underset{ \Ran_{\on{untl}}}\times \Ran^\to   @>>>  \Ran^\to @>{\xi}>> \Ran \\
@VVV    @VV{\psi}V   \\
(\Ran_{\on{untl}}\times \Ran_{\on{untl}})_{\on{disj}}  @>{\on{pr}_1}>> \Ran_{\on{untl}},
\endCD
$$
starting from $\CF\in \Shv^!(\Ran)$.

\medskip

\noindent(2) Pull-push along the diagram
\begin{equation} \label{e:middle diag}
\CD
(\Ran^\to \times \Ran^\to)_{\on{disj}}  @>{(\xi\times \xi)_{\on{disj}}}>>  (\Ran \times \Ran)_{\on{disj}}   \\
@V{(\psi\times \psi)_{\on{disj}}}VV   \\
(\Ran_{\on{untl}}\times \Ran_{\on{untl}})_{\on{disj}},
\endCD
\end{equation} 
starting from $(\CF\boxtimes \CF)|_{ (\Ran \times \Ran)_{\on{disj}}}$, 
where
$$(\Ran^\to \times \Ran^\to)_{\on{disj}} := (\Ran^\to \times \Ran^\to) \underset{(\Ran_{\on{untl}}\times \Ran_{\on{untl}})}\times 
(\Ran_{\on{untl}}\times \Ran_{\on{untl}})_{\on{disj}}.$$

\medskip

\noindent(3) Pull-push along the Caretsian diagram
$$
\CD
(\Ran_{\on{untl}}\times \Ran_{\on{untl}})_{\on{disj}} \underset{ \Ran_{\on{untl}}}\times \Ran^\to   @>>>  \Ran^\to @>{\xi}>> \Ran \\
@VVV    @VV{\psi}V   \\
(\Ran_{\on{untl}}\times \Ran_{\on{untl}})_{\on{disj}}  @>{\on{pr}_2}>> \Ran_{\on{untl}},
\endCD
$$
starting from $\CF\in \Shv^!(\Ran)$.

\medskip

Now, we note that $(\Ran_{\on{untl}}\times \Ran_{\on{untl}})_{\on{disj}} \underset{\Ran_{\on{untl}}}\times \Ran^\to$ is canonically
isomorphic to the disjoint union of
$$(\Ran_{\on{untl}}\times \Ran_{\on{untl}})_{\on{disj}} \underset{\on{pr}_1,\Ran_{\on{untl}}}\times \Ran^\to,$$
$$(\Ran^\to \times \Ran^\to)_{\on{disj}}, \text{ and }$$
$$(\Ran_{\on{untl}}\times \Ran_{\on{untl}})_{\on{disj}} \underset{\on{pr}_2,\Ran_{\on{untl}}}\times \Ran^\to.$$

\medskip

Now, the map in \eqref{e:factor alg aug} is obtained from this identification, where for the middle direct summand
we note that the composition
$$(\Ran^\to \times \Ran^\to)_{\on{disj}}\hookrightarrow 
(\Ran_{\on{untl}}\times \Ran_{\on{untl}})_{\on{disj}} \underset{\Ran_{\on{untl}}}\times \Ran^\to
\to \Ran^\to \overset{\xi}\longrightarrow \Ran$$
identifies with 
$$(\Ran^\to \times \Ran^\to)_{\on{disj}}  \to   (\Ran \times \Ran)_{\on{disj}} \overset{\unn}\longrightarrow \Ran$$
and the map \eqref{e:factor alg aug middle} is induced by the map 
$$(\CF\boxtimes \CF)|_{(\Ran \times \Ran)_{\on{disj}}}\to 
\unn^!(\CF)|_{(\Ran \times \Ran)_{\on{disj}}},$$
given by the (commutative) algebra structure. 

\medskip

Hence, if $\CF$ is a commutative factorization algebra, then the map \eqref{e:factor alg aug} is an isomorphism. 

\medskip

The converse implication follows from the fact that the functor
$$((\psi\times \psi)_{\on{disj}})_!\circ 
((\xi\times \xi)_{\on{disj}})^!:\Shv^!((\Ran\times \Ran)_{\on{disj}})\to \Shv^!((\Ran_{\on{untl}}\times \Ran_{\on{untl}})_{\on{disj}})$$
is conservative, which in turn follows from \propref{p:inserting the unit} and \lemref{l:testing}. 

\sssec{Proof of \propref{p:aug and factor}(b)}

It is easy to see that condition (1) in \secref{sss:factor coalgebras aug}
implies that condition (2) holds if and only if it holds for $K_1=K_2=\emptyset$. In this case,
\eqref{e:aug factor cond co} comes from a map of objects in $\Shv^!((\Ran_{\on{untl}}\times \Ran_{\on{untl}})_{\on{disj}})$.

\medskip

As in the proof of \propref{p:aug and factor}(a), the left-hand side in \eqref{e:aug factor cond co} is the direct sum
\begin{multline} \label{e:factor co left}
\left(\on{pr}_1^!\circ \on{AddUnit}(\CG)\right)|_{(\Ran_{\on{untl}}\times \Ran_{\on{untl}})_{\on{disj}}}\oplus \\
\oplus \left((\on{AddUnit}\boxtimes \on{AddUnit})^!\circ \unn^!(\CG)\right)|_{(\Ran_{\on{untl}}\times \Ran_{\on{untl}})_{\on{disj}}} \oplus\\
\left(\oplus \on{pr}_2^!\circ \on{AddUnit}(\CG)\right)|_{(\Ran_{\on{untl}}\times \Ran_{\on{untl}})_{\on{disj}}}.
\end{multline}

The right-hand side in \eqref{e:aug factor cond co} is the direct sum
\begin{multline}  \label{e:factor co right}
\left(\on{shift}_1^!\circ \on{AddUnit}_{\on{aug}}(\CG)\right)|_{(\Ran_{\on{untl}}\times \Ran_{\on{untl}})_{\on{disj}}} \oplus \\
\oplus 
\left(\unn_{\on{aug}}^!\circ (\on{AddUnit}_{\on{aug}}\boxtimes \on{AddUnit}_{\on{aug}})^!(\CG\boxtimes \CG)\right)
|_{(\Ran_{\on{untl}}\times \Ran_{\on{untl}})_{\on{disj}}} \oplus \\
\oplus \left(\on{shift}_2^!\circ \on{AddUnit}_{\on{aug}}(\CG)\right)|_{(\Ran_{\on{untl}}\times \Ran_{\on{untl}})_{\on{disj}}},
\end{multline}
where
$$\unn_{\on{aug}}:\Ran_{\on{untl}}\times \Ran_{\on{untl}}\to \Ran_{\on{untl,aug}}\times \Ran_{\on{untl,aug}}$$
is the map $(I_1,I_2)\mapsto (I_2\subseteq I_1\sqcup I_2,I_1\subseteq I_1\sqcup I_2)$,
and $\on{shift}_1$ and $\on{shift}_2$ are the maps
$$\Ran_{\on{untl}}\times \Ran_{\on{untl}}\to \Ran_{\on{untl,aug}},$$
given by 
$$(I_1,I_2)\mapsto (I_2\subseteq I_1\cup I_2) \text{ and } (I_1,I_2)\mapsto (I_1\subseteq I_1\cup I_2),$$
respectively. 

\medskip

Note, however, that condition (1) in \secref{sss:factor coalgebras aug}
on $\wt\CG=\on{AddUnit}_{\on{aug}}(\CG)$ implies that the map \eqref{e:aug factor cond co}
maps the first (resp., third) 
direct summand in \eqref{e:factor co left} isomorphically onto the first (reps., third) direct summand in \eqref{e:factor co right}.

\medskip

In addition, condition (1) on $\wt\CG$ implies that the middle direct summand
in \eqref{e:factor co right} receives an isomorphism from
$$\left((\on{AddUnit}\boxtimes \on{AddUnit})^!\circ (\CG\boxtimes \CG)\right)
|_{(\Ran_{\on{untl}}\times \Ran_{\on{untl}})_{\on{disj}}},$$
and in terms of this identification, the map in \eqref{e:aug factor cond co} comes from a map  
\begin{multline}  \label{e:factor co right middle}
\left((\on{AddUnit}\boxtimes \on{AddUnit})^!\circ \unn^!(\CG)\right)|_{(\Ran_{\on{untl}}\times \Ran_{\on{untl}})_{\on{disj}}}   \to \\
\to 
\left((\on{AddUnit}\boxtimes \on{AddUnit})^!\circ (\CG\boxtimes \CG)\right)|_{(\Ran_{\on{untl}}\times \Ran_{\on{untl}})_{\on{disj}}},
\end{multline}
defined as follows:

\medskip 

Consider the diagram \eqref{e:middle diag} 
and note that
\begin{multline*}
\left((\on{AddUnit}\boxtimes \on{AddUnit})^!\circ \unn^!(\CG)\right)|_{(\Ran_{\on{untl}}\times \Ran_{\on{untl}})_{\on{disj}}}  \simeq \\
\simeq  ((\psi\times \psi)_{\on{disj}})_!\circ ((\xi\times \xi)_{\on{disj}})^!\left(\unn^!(\CG)|_{(\Ran\times \Ran)_{\on{disj}}} \right) 
\end{multline*}
and
\begin{multline*}
\left((\on{AddUnit}\boxtimes \on{AddUnit})^!\circ (\CG\boxtimes \CG)\right)
|_{(\Ran_{\on{untl}}\times \Ran_{\on{untl}})_{\on{disj}}}   \simeq \\
\simeq ((\psi\times \psi)_{\on{disj}})_!\circ ((\xi\times \xi)_{\on{disj}})^!\left(\CG\boxtimes \CG|_{(\Ran\times \Ran)_{\on{disj}}} \right).
\end{multline*} 

Now, the map in \eqref{e:factor co right middle} identifies with the map
$$((\psi\times \psi)_{\on{disj}})_!\circ ((\xi\times \xi)_{\on{disj}})^!\left(\unn^!(\CG)|_{(\Ran\times \Ran)_{\on{disj}}} \right) \to
((\psi\times \psi)_{\on{disj}})_!\circ ((\xi\times \xi)_{\on{disj}})^!\left(\CG\boxtimes \CG|_{(\Ran\times \Ran)_{\on{disj}}} \right),$$
induced by \eqref{e:factor for co}. 

\medskip

Thus, if \eqref{e:factor for co} is an isomorphism (i.e., if $\CG$ is a cocommutative factorization coalgebra),
the map \eqref{e:factor co right middle} is an isomorphism and hence the map  \eqref{e:aug factor cond co} 
is an isomorphism.

\medskip

The converse implication follows as in point (a). 

\ssec{Pairings and augmentation}

Finally, in this subsection we will address the following issue. Let $\CA$ and $\CB$ be a commutative algebra and a
cocommutative coalgebra in $\Shv^!(\Ran)$, respectively. Let us be given a pairing between $\wt\CA$ and $\wt\CB$
as mere objects of $\Shv^!(\Ran_{\on{untl,aug}})$. We shall explain what structure on this pairing guarantees that the induced map
$\CB\to \BD_{\Ran}\CA$ is a homomorphism of commutative algebras.

\sssec{}  \label{sss:compat pairing}

Let $\wt\CF$ and $\wt\CG$ be two objects in $\Shv^!(\Ran_{\on{untl,aug}})$, and let us be given
a pairing 
\begin{equation} \label{e:aug pairing again again}
\wt\CF\boxtimes \wt\CG|_{(\Ran_{\on{untl,aug}}\times \Ran_{\on{untl,aug}})_{\on{compl,disj}}}\to 
\omega_{(\Ran_{\on{untl,aug}}\times \Ran_{\on{untl,aug}})_{\on{compl,disj}}}.
\end{equation} 

\medskip

Suppose now that $\wt\CF$ is endowed with a structure of commutative algebra and $\wt\CG$ is
endowed with a structure of cocommutative coalgebra (with respect to the pointwise symmetric monoidal structure). 
In this case, there is a naturally defined
notion of \emph{structure of compatibility} on \eqref{e:aug pairing again again} with the above pieces
of structure on $\wt\CF$ and $\wt\CG$.

\medskip

The initial data in a structure of compatibility is that of a homotopy between the maps
\begin{multline*} 
(\wt\CF\overset{!}\otimes \wt\CF)\boxtimes \wt\CG|_{(\Ran_{\on{untl,aug}}\times \Ran_{\on{untl,aug}})_{\on{compl,disj}}} \to
\wt\CF\boxtimes \wt\CG|_{(\Ran_{\on{untl,aug}}\times \Ran_{\on{untl,aug}})_{\on{compl,disj}}}  \to  \\
\to \omega_{(\Ran_{\on{untl,aug}}\times \Ran_{\on{untl,aug}})_{\on{compl,disj}}} 
\end{multline*} 
and
\begin{multline*} 
(\wt\CF\overset{!}\otimes \wt\CF)\boxtimes \wt\CG|_{(\Ran_{\on{untl,aug}}\times \Ran_{\on{untl,aug}})_{\on{compl,disj}}} \to 
(\wt\CF\overset{!}\otimes \wt\CF)\boxtimes 
(\wt\CG\overset{!}\otimes \wt\CG) |_{(\Ran_{\on{untl,aug}}\times \Ran_{\on{untl,aug}})_{\on{compl,disj}}} \simeq \\
\simeq \left(\wt\CF\boxtimes \wt\CG|_{(\Ran_{\on{untl,aug}}\times \Ran_{\on{untl,aug}})_{\on{compl,disj}}}\right)
\overset{!}\otimes 
\left(\wt\CF\boxtimes \wt\CG|_{(\Ran_{\on{untl,aug}}\times \Ran_{\on{untl,aug}})_{\on{compl,disj}}}\right)  \to \\
\to \omega_{(\Ran_{\on{untl,aug}}\times \Ran_{\on{untl,aug}})_{\on{compl,disj}}}  \overset{!}\otimes 
\omega_{(\Ran_{\on{untl,aug}}\times \Ran_{\on{untl,aug}})_{\on{compl,disj}}} \simeq \\
\simeq \omega_{(\Ran_{\on{untl,aug}}\times \Ran_{\on{untl,aug}})_{\on{compl,disj}}}.
\end{multline*} 

The higher compatibilities amount to a compatible family of homotopies for each surjection of finite sets $\CI_1\twoheadrightarrow \CI_2$
$$\wt\CF^{\overset{!}\otimes \CI_1}\boxtimes \wt\CG^{\overset{!}\otimes \CI_2}\rightrightarrows 
\omega_{(\Ran_{\on{untl,aug}}\times \Ran_{\on{untl,aug}})_{\on{compl,disj}}}.$$

\sssec{}   \label{sss:compat pairing red}

Let now $\CF$ and $\CG$ be two objects of $\Shv^!(\Ran)$, and let
\begin{equation} \label{e:usual pairing again again}
\CF\boxtimes \CG \to (\on{diag}_{\Ran})_!(\omega_{\Ran})
\end{equation} 
be a datum of pairing. 

\medskip

Let now $\CF$ be endowed with a structure of commutative algebra, and let $\CG$ be endowed with a structure of cocommutative coalgebra
(both with respect to the convolution symmetric monoidal structure).
In this case, there is a naturally defined notion of \emph{structure of compatibility} on \eqref{e:usual pairing again again} 
with the above pieces of structure on $\CF$ and $\CG$.

\medskip

The initial data in a structure of compatibility is that of a homotopy between the maps
$$\unn_!(\CF\boxtimes \CF)\boxtimes \CG \to \CF\boxtimes \CG \to (\on{diag}_{\Ran})_!(\omega_{\Ran})$$
and
\begin{multline*} 
\unn_!(\CF\boxtimes \CF)\boxtimes \CG \to \unn_!(\CF\boxtimes \CF)\boxtimes \unn_!(\CG\boxtimes \CG) \to \\
\to (\on{diag}_{\Ran})_!\circ \unn_!(\omega_{\Ran}\boxtimes \omega_{\Ran})\to 
  (\on{diag}_{\Ran})_!(\omega_{\Ran}).
\end{multline*} 

The higher compatibilities amount to a compatible family of homotopies for each surjection of finite sets $\CI_1\twoheadrightarrow \CI_2$
as in \secref{sss:compat pairing}.

\medskip

The following assertion results from the definition: 

\begin{lem} \label{l:pairings and alg}
For $\CF,\CG\in \Shv^!(\Ran)$ and a pairing \eqref{e:usual pairing again again}, a \emph{structure of compatibility} with a given
commutative algebra structure on $\CF$ and a given cocommutative coalgebra structure on $\CG$
is equivalent to a structure on the resulting map $\CF\to \BD_{\Ran}(\CG)$ of homomorphism
of commutative algebras in $\Shv^!(\Ran)$.
\end{lem}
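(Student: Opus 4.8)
The statement to be proved is \lemref{l:pairings and alg}: for $\CF,\CG\in \Shv^!(\Ran)$ and a pairing \eqref{e:usual pairing again again}, the data of a structure of compatibility with a commutative algebra structure on $\CF$ and a cocommutative coalgebra structure on $\CG$ is the same as the data of a commutative-algebra-homomorphism structure on the resulting map $\CF\to \BD_{\Ran}(\CG)$. The plan is to unwind both sides of the claimed equivalence in terms of the universal property defining $\BD_{\Ran}$ (see \secref{sss:Verdier defn}) and observe that they literally agree, level by level. First I would recall that a pairing $\CF\boxtimes \CG\to (\on{diag}_{\Ran})_!(\omega_{\Ran})$ is, by definition, the same as a map $\CF\to \BD_{\Ran}(\CG)$; this is the base case $n=0$ of the desired correspondence. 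The content is then in matching the higher coherence data.

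Concretely, I would proceed as follows. For the map $\CF\to \BD_{\Ran}(\CG)$ to be a homomorphism of commutative algebras in $\Shv^!(\Ran)$ (with respect to the convolution monoidal structure) means supplying, for each $n$, a homotopy making the square
$$
\CD
\unn_!(\CF^{\boxtimes n})   @>>>   \CF  \\
@VVV   @VVV   \\
\unn_!(\BD_{\Ran}(\CG)^{\boxtimes n})   @>>>   \BD_{\Ran}(\CG)
\endCD
$$
commute, together with compatibilities indexed by surjections $\CI_1\twoheadrightarrow \CI_2$, exactly as in \secref{sss:alg structure on dual} where the algebra structure on $\BD_{\Ran}(\CG)$ was built from the coalgebra structure on $\CG$ using \eqref{e:duality and direct image} and \eqref{e:product and Verdier}. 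Applying the adjunction $(\unn_!,\unn^!)$ and the defining universal property of $\BD_{\Ran}$, such a homotopy translates into precisely a homotopy between the two composite maps
$$\unn_!(\CF\boxtimes\CF)\boxtimes \CG \to (\on{diag}_{\Ran})_!(\omega_{\Ran})$$
written out in \secref{sss:compat pairing red}: one route uses the multiplication on $\CF$ and the pairing, the other uses the pairing twice together with the comultiplication on $\CG$ and the map $(\on{diag}_{\Ran})_!\circ \unn_!(\omega_{\Ran}\boxtimes\omega_{\Ran})\to (\on{diag}_{\Ran})_!(\omega_{\Ran})$. The higher compatibilities for the two structures are indexed by the same combinatorial data (surjections of finite sets), and I would check that the translation is compatible with the structure maps of these indexing diagrams, so that the two full coherent systems correspond.

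The main obstacle — really the only nontrivial point — is bookkeeping: verifying that the identifications of the individual squares assemble into an equivalence of the \emph{whole} coherent data, i.e.\ that the map constructed in \secref{sss:alg structure on dual} from the coalgebra structure, when transported through the $(\unn_!,\unn^!)$-adjunction and the representability of $\BD_{\Ran}$, yields exactly the composite appearing in \secref{sss:compat pairing red}, with matching higher homotopies. Here I would lean on the compatibility built into \eqref{e:duality and direct image} (which is functorial by construction) and on \lemref{l:Verdier duality sch}/\thmref{t:products}-type naturality of \eqref{e:product and Verdier}; since all the maps involved ($\unn_!$, $(\on{diag}_{\Ran})_!$, external product, the pairing) are part of functorial packages, the coherence is automatic once the $n=1$ case is checked. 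So the proof is essentially: ``unwind the definition of the algebra structure on $\BD_{\Ran}(\CG)$ and apply the adjunction; the result is the compatibility datum of \secref{sss:compat pairing red} by inspection.'' I would write this out at the level of the initial homotopy and remark that the higher data is handled identically.
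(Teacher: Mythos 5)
The paper gives no proof of this lemma beyond the remark that it ``results from the definition,'' and your proposal is a correct unwinding of exactly that: translate the homomorphism data on $\CF\to \BD_{\Ran}(\CG)$ through the representing property of $\BD_{\Ran}$ and the $(\unn_!,\unn^!)$-adjunction, and observe that it becomes, term by term (starting with the binary homotopy and continuing over the index category of surjections of finite sets), the compatibility datum of \secref{sss:compat pairing red}. This is the same approach the paper intends, just written out.
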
 

\sssec{}

Finally, we have the following assertion, which results by unwinding the constructions: 

\begin{lem}  \label{l:compat pairing aug}
Let $\CF$ and $\CG$ be a commutative algebra and a cocommutative coalgebra in 
$\Shv^!(\Ran)$, respectively. Let us be given a pairing 
\begin{equation} \label{e:usual pairing again again again}
\CF\boxtimes \CG \to (\on{diag}_{\Ran})_!(\omega_{\Ran})
\end{equation}
and consider the corresponding pairing
\begin{equation} \label{e:aug pairing again again again}
\wt\CF\boxtimes \wt\CG|_{(\Ran_{\on{untl,aug}}\times \Ran_{\on{untl,aug}})_{\on{compl,disj}}}\to 
\omega_{(\Ran_{\on{untl,aug}}\times \Ran_{\on{untl,aug}})_{\on{compl,disj}}},
\end{equation} 
where $\wt\CF:=\on{AddUnit}_{\on{aug}}(\CF)$, $\wt\CG:=\on{AddUnit}_{\on{aug}}(\CG)$. 

\medskip

\noindent Then a structure
of compatibility on \eqref{e:usual pairing again again again} with the 
commutative algebra structure on $\CF$ and 
and the cocommutative coalgebra structure on $\CG$ is equivalent to a structure of
compatibility on \eqref{e:aug pairing again again again} with the commutative algebra structure on $\wt\CF$ and 
and the cocommutative coalgebra structure on $\wt\CG$.
\end{lem}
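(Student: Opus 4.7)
The plan is to derive this lemma as a formal consequence of the symmetric monoidal structure on $\on{AddUnit}_{\on{aug}}$ (\thmref{t:units and tensor}) together with the pairing correspondence of \corref{c:pairing on reduced}, and the analogous correspondence at the level of $(\CF,\CG)\rightsquigarrow (\wt\CF,\wt\CG)$ for algebraic structures recorded in \corref{c:structure on red}.

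First I would unwind the two sides. By \secref{sss:compat pairing red}, a compatibility datum on \eqref{e:usual pairing again again again} amounts to a compatible family, indexed by surjections $\CI_1 \twoheadrightarrow \CI_2$ of finite sets, of homotopies between the two tautological maps
$$\unn_!(\CF^{\boxtimes \CI_1})\boxtimes \unn_!(\CG^{\boxtimes \CI_2})\rightrightarrows (\on{diag}_{\Ran})_!(\omega_{\Ran}),$$
one obtained by first multiplying $\CF$'s, the other by first comultiplying $\CG$'s. Similarly, by \secref{sss:compat pairing}, a compatibility datum on \eqref{e:aug pairing again again again} is a compatible family of homotopies between
$$\wt\CF^{\overset{!}\otimes \CI_1}\boxtimes \wt\CG^{\overset{!}\otimes \CI_2}\rightrightarrows \omega_{(\Ran_{\on{untl,aug}}\times \Ran_{\on{untl,aug}})_{\on{compl,disj}}}.$$

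Next I would invoke the symmetric monoidal structure on $\on{AddUnit}_{\on{aug}}$ to identify, for each $\CI$, the canonical isomorphisms
$\on{AddUnit}_{\on{aug}}(\unn_!(\CF^{\boxtimes \CI}))\simeq \wt\CF^{\overset{!}\otimes \CI}$ and similarly for $\CG$. Hence, iterating \corref{c:pairing on reduced} (applied with $\unn_!(\CF^{\boxtimes \CI_1})$ and $\unn_!(\CG^{\boxtimes \CI_2})$ in place of $\CF$ and $\CG$), one obtains, for each surjection $\CI_1 \twoheadrightarrow \CI_2$, a bijective correspondence between maps in the convolution family and maps in the pointwise family. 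The fact that under this correspondence the two maps being homotoped on the convolution side get matched with the two maps being homotoped on the augmented side follows from \corref{c:structure on red}, which identifies the algebra structure map $\unn_!(\CF\boxtimes \CF)\to \CF$ with $\wt\CF\overset{!}\otimes \wt\CF\to \wt\CF$, and the coalgebra coproduct map $\CG\to \unn_!(\CG\boxtimes \CG)$ with $\wt\CG\to \wt\CG\overset{!}\otimes \wt\CG$.

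There is one technical point to address: the pairing correspondence of \corref{c:pairing on reduced} is phrased over the locus $(\Ran_{\on{untl,aug}}\times \Ran_{\on{untl,aug}})_{\on{sub,disj}}$, whereas the compatibility structure is formulated over the (a priori different) locus $(\Ran_{\on{untl,aug}}\times \Ran_{\on{untl,aug}})_{\on{compl,disj}}$. This is resolved by observing that both objects $\wt\CF$ and $\wt\CG$ lie in the essential image of $\on{AddUnit}_{\on{aug}}$ and therefore satisfy conditions $(*)$ and $(**)$ of \thmref{t:main}, which via the construction in Remark \ref{r:more factor} allow to transport the relevant data between the two loci in the canonical way already used in the construction of the pairing in \thmref{t:aug pairings}. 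The expected main obstacle is precisely this bookkeeping — keeping track of how the open-substack conditions transform under iteration of the pairing construction and of tensor powers — but since everything is built from universal input and $\on{AddUnit}_{\on{aug}}$ is a symmetric monoidal equivalence onto its essential image (\thmref{t:main}), the bijection is forced at every level of coherence, which is precisely the content of the phrase ``results by unwinding the constructions.''
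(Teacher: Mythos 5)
Your proposal correctly identifies the ingredients (\thmref{t:units and tensor}, \corref{c:pairing on reduced}, \corref{c:structure on red}) that the paper's one-phrase justification (``which results by unwinding the constructions'') is implicitly invoking, and the unwinding of the two compatibility data into families of homotopies indexed by surjections $\CI_1\twoheadrightarrow\CI_2$ is accurate. Since the paper supplies no argument beyond that phrase, your sketch is genuinely more content-full than what the author wrote, and is in the intended spirit.

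The one place where the reasoning is softer than you acknowledge is precisely the $\on{sub,disj}$ vs.\ $\on{compl,disj}$ transition, which is not merely ``bookkeeping.'' The pairing supplied by \thmref{t:aug pairings}(i) lives over $(\Ran_{\on{untl,aug}}\times\Ran_{\on{untl,aug}})_{\on{sub,disj}}$, but the compatibility structure of \secref{sss:compat pairing}, and hence the entirety of \eqref{e:aug pairing again again again}, is formulated over $(\Ran_{\on{untl,aug}}\times\Ran_{\on{untl,aug}})_{\on{compl,disj}}$, and these open sub-lax-prestacks are not contained in one another (their intersection is essentially the locus $I_1\cap I_2=\emptyset$). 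In particular the null-homotopy produced in the proof of \thmref{t:aug pairings}(i) uses the condition $K_1\cap I_2=\emptyset$ and fails verbatim over $\on{compl,disj}$, so the ``corresponding pairing'' is not literally a restriction of the one in \corref{c:pairing on reduced}. What does rescue the argument is exactly what you gesture at: since $\wt\CF$ and $\wt\CG$ satisfy $(*)$ and $(**)$, the fiber $\wt\CF_{K_1\subseteq I_1}\overset{!}\otimes\wt\CG_{K_2\subseteq I_2}$ at a point of either locus is canonically identified with the fiber at $(\emptyset\subseteq I_1-K_1),(\emptyset\subseteq I_2-K_2)$ (or it vanishes if $K_i=I_i$), which lies in the common intersection of the two loci; one must then check that this pointwise extension is coherent with the morphisms of the lax prestack, and that it intertwines the pairing over $\on{sub,disj}$ given by \thmref{t:aug pairings}(i) with the pairing over $\on{compl,disj}$ implicit in \secref{sss:compat pairing}. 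Your invocation of Remark \ref{r:more factor} points in the right direction but does not supply this check; making it explicit (even just for the binary datum) would turn your sketch into a complete proof of the binary case, and then the coherent-family part does indeed follow formally from the naturality of \thmref{t:aug pairings} as a construction.
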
 

\newpage 

\centerline{\bf Part V: The cohomological product formula}

\bigskip

In this part we will be working in the context of constructible sheaves, and we will assume that the ring
of coefficients $\Lambda$ has a finite cohomological dimension. 

\medskip

After the preparations in Parts 0-IV, this part we will constitute the core of the paper--the derivation of
the cohomological product formula.  

\section{Reduction to a global duality statement}     \label{s:global duality}

In order to understand the contents of this section one only needs to know what $\Ran$ is and how to makes
sense of sheaves on prestacks. In other words, the prerequisites for the present section are Sects. \ref{s:prestacks} and \ref{ss:Ran}. 

\medskip

In this section we take $X$ to be a smooth, connected and complete curve, and let $G$ be a smooth 
fiber-wise connected group-scheme over $X$, which is simply connected at the generic point of $X$. 

\medskip

We are going to state the \emph{cohomological product formula} for the
cohomology of $\Bun_G$. It says that a naturally defined map
$$\on{C}^*_c(\Ran,\CB)\to \on{C}^*_{\on{red}}(\Bun_G)$$
is an isomorphism, where $\CB\in \Shv^!(\Ran)$ is obtained by considering the cohomology
of the classifying space of $G$.

\medskip

We will reduce the proof of the cohomological product formula to a combination of two statements.  The first statement will be the
\emph{non-abelian Poincar\'e duality} that says that a naturally defined map
$$\on{C}^*_c(\Ran,\CA)\to \on{C}^*_{\on{red}}(\Bun_G)$$
is an isomorphism, where $\CA\in \Shv^!(\Ran)$ is obtained by considering the homology of
the affine Grassmannian of $G$.  The second statement will be the \emph{global duality} statement, which says that 
the (compactly supported) cohomologies of $\CA$ and $\CB$ are related by duality.

\ssec{The cohomological product formula}   \label{ss:product formula}

Let $\Bun_G$ denote the moduli stack of $G$-bundles on $X$. We are interested in the (reduced) cohomology of $\Bun_G$
$$\on{C}^*_{\on{red}}(\Bun_G):=\left(\on{C}^{\on{red}}_*(\Bun_G)\right)^\vee.$$

\medskip

In this subsection we will state the cohomological product formula for $\on{C}^*_{\on{red}}(\Bun_G)$.

\sssec{}

First, we are going to define an object, denoted, $\CB\in \Shv^!(\Ran)$ that encodes the reduced cohomology of
$BG$. 

\medskip

For $S\in \Sch$ and an $S$-point $I$ of $\Ran$, let $D_I\subset S\times X$ be the 
corresponding Cartier divisor. We let $BG_I$ denote the Artin stack over $S$ that classifies $G$-bundles
over $D_I$.  Let $f_I:BG_I\to S$ denote the resulting forgetful map. 

\medskip

Define
$$\CB_{S,I}:=\BD_S\left(\on{Fib}
\left((f_{I})_!\circ (f_{I})^!(\Lambda_S)\to \Lambda_S\right)\right).$$

In the above formula $\Lambda_S$ is the \emph{constant sheaf} on $S$, i.e., $\Lambda_S=\BD_{S}(\omega_S)$,
where $\BD_S$ denotes Verdier duality on $S$. 

\medskip

For example, for a $k$-point $\{x_1,...,x_n\}$ of $\Ran$, the
!-fiber of $\CB$ at this point equals
$$\on{Fib}\left(\underset{i=1,...,n}\bigotimes\, \on{C}^*(BG_{x_i})\to \Lambda\right).$$

\medskip

The assignment $(S,I)\mapsto \CB_{S,I}$ forms an object of $\Shv^!(\Ran)$, which is the sought-for $\CB$.

\begin{rem}
Note that there is a minor subtlety involved in the assertion that the assignment $(S,I)\mapsto \CB_{S,I}$
is compatible under pullbacks $S'\to S$:

\medskip

For a given point $I\in \Maps(S,\Ran)$ and the resulting point $I'\in \Maps(S',\Ran)$,
we have a \emph{map}
$$S'\underset{S}\times BG_I\to BG_{I'},$$
but this map is \emph{not necessarily} an isomorphism. However, this map has contractible fibers, and hence induces
an isomorphism $\CB_{S',I'}\to \CB_{S,I}|_{S'}$. See \cite[Proposition 5.4.3]{Main Text} for more details. 

\end{rem}

\sssec{}

For every $S\in \Sch$ and $I\in \Ran(S)$ we have a map of stacks over $S$
$$\on{ev}_{S,I}:S\times \Bun_G\to BG_{I},$$ 
given by restriction.  From here we obtain a map in $\Shv(S)$
$$\Lambda_S\otimes \on{C}_*(\Bun_G)\to (f_{I})_!\circ (f_{I})^!(\Lambda_S)$$
and hence 
$$\Lambda_S\otimes \on{C}^{\on{red}}_*(\Bun_G)\to \on{Fib}((f_{I})_!\circ (f_{I})^!(\Lambda_S)\to \Lambda_S).$$

Applying $\BD_S$ we obtain a map
\begin{equation} \label{e:map over S prel}
\CB_{S,I} \to \BD_S\left( \Lambda_S\otimes \on{C}^{\on{red}}_*(\Bun_G)\right).
\end{equation} 

\medskip

Now, for any $V\in \Lambda\mod$, the map
$$\omega_S \otimes V^\vee \to \BD_S(\Lambda_S\otimes V)$$ 
is an isomorphism.

\medskip

Hence, from \eqref{e:map over S prel} we obtain a map
\begin{equation} \label{e:map over S}
\on{ev}_{S,I}:\CB_{S,I} \to \omega_S\otimes \on{C}^*_{\on{red}}(\Bun_G).
\end{equation} 

\sssec{}

The maps \eqref{e:map over S} combine into a map
$$\CB\to \omega_{\Ran}\otimes \on{C}^*_{\on{red}}(\Bun_G).$$

Finally, applying the functor $\on{C}^*_c(\Ran,-)$, we obtain a map
\begin{equation} \label{e:product formula}
\on{C}^*_c(\Ran,\CB) \to  \on{C}^*_c\left(\Ran,\omega_{\Ran}\otimes \on{C}^*_{\on{red}}(\Bun_G)\right)\simeq
\on{C}^*_c(\Ran,\omega_{\Ran}) \otimes \on{C}^*_{\on{red}}(\Bun_G) \simeq \on{C}^*_{\on{red}}(\Bun_G),
\end{equation} 
where the last isomorphism is \thmref{t:Ran contr}.

\sssec{}

The cohomological product formula says:

\begin{thm} \label{t:product formula}
The map \eqref{e:product formula}
$$\on{C}^*_c(\Ran,\CB) \to \on{C}^*_{\on{red}}(\Bun_G)$$
is an isomorphism.
\end{thm}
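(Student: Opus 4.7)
The plan is to deduce Theorem~\ref{t:product formula} from non-abelian Poincar\'e duality, which asserts that the map $\on{C}_c^*(\Ran,\CA) \to \on{C}_*(\Bun_G)$ of \eqref{e:non-ab Poinc prev} is an isomorphism, where $\CA$ is the pushforward of $\omega_{\Gr_{\Ran}}$ along $\Gr_{\Ran}\to \Ran$. Granting this (we shall not reprove it), Verdier self-duality of $\Bun_G$ reduces the theorem to exhibiting a pairing $\on{C}_c^*(\Ran,\CB)\otimes \on{C}_c^*(\Ran,\CA)\to \Lambda$ inducing an isomorphism $\on{C}_c^*(\Ran,\CB)\to \on{C}_c^*(\Ran,\CA)^\vee$ that fits into the commutative square of \secref{sss:estim}. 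So the entire content lies in constructing and proving this \emph{global duality} isomorphism.

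The first step is to realize that naively identifying $\CB$ with $\BD_{\Ran}(\CA)$ cannot work: $\CA$ fails the connectivity estimates of \thmref{t:Verdier on Ran} so badly that $\BD_{\Ran}(\CA)=0$. Instead, I would equip both sheaves with unital augmented refinements $\CA_{\on{untl,aug}}$, $\CB_{\on{untl,aug}}\in \Shv^!(\Ran_{\on{untl,aug}})$ recovering $\CA$ and $\CB$ via $\on{OblvUnit}\circ \on{OblvAug}$, and set $\CA_{\on{red}}:=\on{TakeOut}(\CA_{\on{untl,aug}})$, $\CB_{\on{red}}:=\on{TakeOut}(\CB_{\on{untl,aug}})$. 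By \corref{c:main}, passing to reduced versions does not affect compactly supported cohomology on $\Ran$, so it suffices to construct an isomorphism $\CB_{\on{red}}\simeq \BD_{\Ran}(\CA_{\on{red}})$ and check that the induced map $\on{C}_c^*(\Ran,\CB_{\on{red}})\to \on{C}_c^*(\Ran,\CA_{\on{red}})^\vee$ is an isomorphism. The latter follows from \thmref{t:Verdier on Ran} once one verifies that $\CA_{\on{red}}$ satisfies the cohomological estimate of that theorem; this is where semi-simplicity and simple-connectedness of $G$ at the generic point enter (controlling $\on{C}^{\on{red}}_*(\Gr_x)$).

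To produce the local isomorphism $\CB_{\on{red}}\simeq \BD_{\Ran}(\CA_{\on{red}})$ compatible with the global pairing, I would first use \thmref{t:aug pairings} together with \corref{c:pairing on reduced} to translate the needed pairing datum into a pairing of the augmented sheaves over the open sub-prestack $(\Ran_{\on{untl,aug}}\times \Ran_{\on{untl,aug}})_{\on{sub,disj}}$. Such a pairing will come from a geometric input that is a local avatar of the maps defining \eqref{e:product formula prev} and \eqref{e:non-ab Poinc prev}: namely, over pairs $(K_1\subseteq I_1),(K_2\subseteq I_2)$ with the disjointness conditions, restriction to the disjoint union yields a map into the dualizing sheaf. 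Compatibility with the commutative algebra structure on $\CB$ and cocommutative coalgebra structure on $\CA$ (imposed via \corref{c:structure on red} and \propref{p:aug and factor}) is then automatic from the construction, by \lemref{l:compat pairing aug}.

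Having constructed the map $\CB_{\on{red}}\to \BD_{\Ran}(\CA_{\on{red}})$, the problem becomes checking it is an isomorphism. Here factorization enters: both sides carry commutative factorization algebra structures (for the Verdier dual side one applies \propref{p:Verdier factorizes}, which requires precisely the connectivity estimates already arranged for $\CA_{\on{red}}$), so by \lemref{l:isom on diag} it suffices to verify the isomorphism after restriction to $X\subset \Ran$. A further reduction (replacing $G$ by a constant group and passing to stalks) reduces to the \emph{pointwise duality} assertion that $(\CB_{\on{red}})_{\{x\}}\to (\BD_{\Ran}(\CA_{\on{red}}))_{\{x\}}$ is an isomorphism at some chosen point $x\in X$. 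The main obstacle is this pointwise duality: although the statement is local, one proves it by globalizing back, using \thmref{t:dagger} to rewrite the right-hand side as $(\CA^\dagger_x)^\vee$ and then deducing the required identification from non-abelian Poincar\'e duality applied to an auxiliary complete curve, which is the strategy of Sects.~\ref{s:dagger} and \ref{s:P1}.
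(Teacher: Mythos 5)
Your proposal follows the paper's proof essentially step for step: the reduction via non-abelian Poincar\'e duality to a global duality statement, the passage to reduced versions $\CA_{\on{red}}$, $\CB_{\on{red}}$ via $\on{TakeOut}$ together with \corref{c:main}, the construction of the local pairing via \thmref{t:aug pairings} and \corref{c:pairing on reduced}, the verification that $\CA_{\on{red}}$ satisfies the connectivity estimate making Verdier duality behave on $\Ran$, the use of factorization and \lemref{l:isom on diag} to reduce to the restriction to $X$, and finally the globalization back to prove pointwise duality. The one technical point that you elide but the paper must track is that, because $G$ is only assumed reductive over the open $X'\subset X$, the sheaf $\CA'$ lives on $\Ran'$ rather than $\Ran$ and one works with $\CA_{\on{red}}=(\bj_{\Ran})_!(\CA'_{\on{red}})$; accordingly the relevant Verdier duality result is \thmref{t:Verdier Ran open} rather than \thmref{t:Verdier on Ran} itself, but this does not alter the strategy.
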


Our goal in the rest of Part V is to prove \thmref{t:product formula}. We emphasize that the map \eqref{e:product formula} makes
sense for any $G$.  However, the assertion of \thmref{t:product formula} only holds if the generic fiber of $G$ is simply connected. 
\footnote{For example, if the generic fiber of $G$ is semi-simple but not simply connected, then $\Bun_G$ is disconnected, and the isomorphism in 
\eqref{e:product formula} cannot hold because the left-hand side is insensitive to the isogeny class of $G$.} 

\begin{rem}
As was noted in \secref{sss:Euler}, one can informally think of $\on{C}^*_c(\Ran,\CB)$ as an ``Euler product"
\begin{equation} \label{e:Euler again}
\underset{x}\bigotimes\, \on{C}^*(BG_x).
\end{equation}

This is why we call the assertion of \thmref{t:product formula} the ``cohomological product formula": it gives an expression for the cohomology
of $\Bun_G$ as the Euler product of the cohomologies of $BG_x$.

\end{rem}

\ssec{The key input: non-abelian Poincar\'e duality}

The assertion of \thmref{t:product formula} is a local-to-global result. We will deduce it from another 
local-to-global result of a \emph{dual nature}, namely, the non-abelian Poincar\'e duality.  The precise
meaning in which the two results are dual to each other will constitute the bulk of the proof
of \thmref{t:product formula}. 

\sssec{}   \label{sss:good G}

In \cite[Lemma 7.1.1 and Proposition A.3.11]{Main Text} it was shown that we can (and from now on will) assume that $G$ has the following properties: 

\begin{itemize}

\item $G$ is semi-simple and simply connected over a non-empty open subset $X'\subset X$; 

\item The fibers of $G$ over points in $X-X'$ are cohomologically contractible.\footnote{In fact, we can arrange that each of these 
fibers of $G$ is isomorphic to a vector group.} 

\end{itemize}

In other words, if $G_1$ and $G_2$ are two group-schemes over $X$ (both smooth with connected fibers) that are isomorphic 
at the generic point of $X$, then \thmref{t:product formula} holds for $G_1$ if and only if it does for $G_2$. And for any given $G_1$ 
we can find a $G_2$ satisfying the above two conditions. 

\sssec{}

Let $\Ran'\subset \Ran$ be the Ran space of $X'$. We will now introduce an object of 
$\Shv(\Ran')$, denoted $\CA'$. It will encode the 
reduced homology of the affine Grassmannians built out of $G|_{X'}$. 

\medskip

For $S\in \Sch$ and an $S$-point $I$ of $\Ran$, let $S\times X-\on{Graph}_I\subset S\times X$ be the open subset
equal to the complement of the union of the graphs of the maps $S\to X$, $i\in I$.

\medskip

Let $\Gr_{\Ran'}$ denote the following prestack over $\Ran'$: for $S\in \Sch$ an $S$-point of $\Gr_{\Ran'}$
is a datum of $I\subset \Maps(S,X')$, a $G$-bundle $\CP_G$ on $S\times X$, and a trivialization of $\CP_G|_{S\times X-\on{Graph}_I}$. 

\medskip

Let $g:\Gr_{\Ran'}\to \Ran'$ denote the natural projection. It is well-known that this map is pseudo-proper
(because $G$ was assumed reductive over $X'$). 

\medskip

Set 
$$\CA':=\on{Fib}\left(g_!(\omega_{\Gr_{\Ran'}})\to \omega_{\Ran'}\right)\in \Shv(\Ran').$$ 

For example, for a point $\{x_1,...,x_n\}$ of $\Ran'$, the !-fiber of $\CA'$ at this point is
$$\on{Fib}\left(\underset{i=1,...,n}\bigotimes \on{C}_*(\Gr_{x_i})\to\Lambda\right).$$

\medskip

Note that we have a tautological isomorphism
\begin{multline}  \label{e:homol Gr}
\on{C}^*_c(\Ran',\CA')\simeq \on{Fib}\left(\on{C}^*_c(\Gr_{\Ran'},\omega_{\Gr_{\Ran'}})\to \on{C}^*_c(\Ran',\omega_{\Ran'})\right)\simeq \\
\simeq \on{Fib}\left(\on{C}^*_c(\Gr_{\Ran'},\omega_{\Gr_{\Ran'}})\to \Lambda\right)=\on{C}^{\on{red}}_*(\Gr_{\Ran'}).
\end{multline}

\sssec{}

We have a canonically defined map
$$\Gr_{\Ran'}\to \Bun_G.$$

According to \cite[Theorem 3.2.13]{Main Text} and the assumption on $G$ in \secref{sss:good G}, we have:

\begin{thm} \label{t:non-ab Poinc}
The map $\Gr_{\Ran'}\to \Bun_G$ is universally homologically contractible, and in particular induces an isomorphism
$\on{C}^{\on{red}}_*(\Gr_{\Ran'})\to \on{C}^{\on{red}}_*(\Bun_G)$. 
\end{thm}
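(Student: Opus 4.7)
The plan is to prove the universal homological contractibility of $f: \Gr_{\Ran'} \to \Bun_G$; the isomorphism on reduced homologies then follows at once from \corref{c:lax contr} applied to $f$ together with $\omega_{\Bun_G}$. By definition, I must show that for every scheme $S$ equipped with a map $y: S \to \Bun_G$ classifying a $G$-bundle $\CP_G$ on $S \times X$, the pullback
$$\wt{S} \,:=\, S \underset{\Bun_G}\times \Gr_{\Ran'}$$
is universally homologically contractible over $S$, i.e., the relative trace map $f_{S,!}(\omega_{\wt{S}}) \to \omega_S$ is an isomorphism. Here $\wt{S}$ parametrizes, for $S' \to S$, the data of a finite non-empty $I \subset \Maps(S', X')$ together with a trivialization $\alpha$ of $\CP_G|_{S' \times X \,-\, \on{Graph}_I}$.

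First I would analyze $\wt{S}$ via the projection $p: \wt{S} \to S \times \Ran'$ that forgets $\alpha$. The fiber over a point $(s, I)$ is, via Beauville--Laszlo, the space of sections of the $G$-torsor $\CP_{G,s}|_{X - I}$, which is either empty or a torsor over the loop group at $I$. The Drinfeld--Simpson theorem supplies the necessary surjectivity: after \'etale base change on $S$, every $G$-bundle becomes generically trivial, so for sufficiently large $I$ the fiber of $p$ is non-empty, and the union of these non-empty loci exhausts $S \times \Ran'$ in an appropriate sense.

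The heart of the argument is to transfer the contractibility of $\Ran'$ (\thmref{t:Ran contr}) to $\wt{S}$, using the semi-simple simply connected hypothesis on $G|_{X'}$ (ensured by the assumption of \secref{sss:good G}). The key local input is that for each $x \in X'$ the affine Grassmannian $\Gr_{G,x}$ has, in the factorizable sense, no nontrivial contribution to homology once the unital structure is accounted for: this lets one show, inductively in $|I|$ and exploiting the unital factorization structure on $\Gr_{\Ran'}$ together with the ``inserting the vacuum'' principle of \propref{p:ins point}, that $p_!(\omega_{\wt{S}})$ agrees with the pullback of $\omega_S$ along $S \times \Ran' \to S$. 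Pushing forward further along the projection to $S$ and invoking \thmref{t:Ran contr} in its relative form over $S$ then yields the desired isomorphism.

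The main obstacle is the non-quasi-compactness of $\Bun_G$ and the need to perform the induction on $|I|$ in a manner that is base-change stable over the moving family $\CP_G$: one cannot reduce pointwise, since the ``sufficiently large $I$'' provided by Drinfeld--Simpson depends on $s \in S$. Handling this uniformly --- which requires controlling the unital factorization structure on $\Gr_{\Ran'}$ relative to $\Bun_G$, and is the true technical core of \cite{Main Text} --- is where the delicate interplay between local factorization geometry and the global contractibility of the Ran space must be carried out.
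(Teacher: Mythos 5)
The paper does not prove \thmref{t:non-ab Poinc}: it is imported verbatim as \cite[Theorem 3.2.13]{Main Text}, and the introduction explicitly says ``we do not reprove this fact here.'' So there is no proof in the paper to compare against, and your proposal should be read as a sketch of the external argument.

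As such a sketch, your outline correctly identifies the major ingredients used in \cite{Main Text}: Beauville--Laszlo to describe the fibers of $\Gr_{\Ran'}\to \Bun_G$, the Drinfeld--Simpson theorem to get generic trivializations after \'etale base change, the contractibility of the Ran space (\thmref{t:Ran contr}), and the need for the simply connected hypothesis. These are indeed the right pillars; the same inputs (in the guise of \cite[Theorems 3.3.2 and 3.3.6]{Main Text}) resurface later in this paper's proofs of Theorems \ref{t:germs to BG} and \ref{t:from dagger to germs}, which are cousins of the non-abelian Poincar\'e duality.

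However, there is a genuine gap, and you have flagged it yourself: the ``induction on $|I|$'' that you propose is not an argument but a placeholder. The statement that ``$\Gr_{G,x}$ has no nontrivial contribution to homology once the unital structure is accounted for'' is not something that \propref{p:ins point} delivers; that proposition is about inserting the vacuum at a fixed point, not about killing the homology of the affine Grassmannian. The actual proof in \cite{Main Text} does not run by a fiberwise/pointwise induction over $S\times\Ran'$ at all --- precisely because, as you observe, the ``sufficiently large $I$'' is not uniform in $s\in S$. Rather, it passes through the prestack of \emph{rational maps} to $G$: one exhibits (after Drinfeld--Simpson) a \v{C}ech-nerve type comparison between $\Gr_{\Ran'}$ over a point of $\Bun_G$ and the prestack $\bMaps(X',G)_{\on{param}}$ of rational maps, and then the whole weight of the argument rests on the separate global theorem that $\bMaps(X',G)_{\on{param}}$ is universally homologically contractible (which is \cite[Theorem 3.3.2]{Main Text} and is where simple-connectedness and Ran-space contractibility are used). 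Your sketch would need to be replaced by that two-step reduction; as written, the central step is a gesture at the conclusion rather than a proof of it.
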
 

Combining with \eqref{e:homol Gr}, from \thmref{t:non-ab Poinc} we obtain an isomorphism
isomorphism 
\begin{equation} \label{e:non-Ab Poinc}
\on{C}^*_c(\Ran',\CA')\simeq \on{C}^{\on{red}}_*(\Bun_G).
\end{equation}

\ssec{Reduction to a duality statement on chiral homology}

In this subsection we will explain how the assertion of \thmref{t:product formula} is related to that of \thmref{t:non-ab Poinc}.

\sssec{}

We claim that there is a canonically defined pairing 
\begin{equation} \label{e:prel pairing integral}
\on{C}^*_c(\Ran',\CA') \otimes \on{C}^*_c(\Ran,\CB) \to \Lambda.
\end{equation} 

\medskip

In fact, we claim that there is a canonically defined map in $\Shv^!(\Ran'\times \Ran)$
\begin{equation} \label{e:prel pairing}
\CA'\boxtimes \CB\to \omega_{\Ran'\times \Ran},
\end{equation}
from which the map \eqref{e:prel pairing integral} is obtained by composing with the trace map
$$\on{C}^*_c(\Ran'\times \Ran,\omega_{\Ran'\times \Ran})\to \Lambda.$$

\sssec{}  \label{sss:prel pairing}

The map \eqref{e:prel pairing} is constructed as follows. 

\medskip

For $S'\in \Sch$ and $I'\subset \Maps(S',\Ran')$ denote 
$\Gr_{I'}$ denote the prestack $S'\underset{\Ran'}\times \Gr_{\Ran'}$; let $g_{I'}$ denote the resulting
map $\Gr_{I'}\to S'$.

\medskip

Let $S\in \Sch$ and $I\subset \Maps(S,\Ran)$. We need to construct a map
$$\on{Fib}\left((g_{I'})_!\circ (g_{I'})^!(\omega_{S'})\to \omega_{S'}\right) \boxtimes
\BD_S\left(\on{Fib} \left((f_{I})_!\circ (f_{I})^!(\Lambda_S)\to \Lambda_S\right)\right)\to \omega_{S'\times S},$$
which is equivalent to constructing a map
$$\on{Fib}\left((g_{I'})_!\circ (g_{I'})^!(\omega_{S'})\to \omega_{S'}\right) \boxtimes \Lambda_S\to
\omega_{S'}\boxtimes \on{Fib} \left((f_{I})_!\circ (f_{I})^!(\Lambda_S)\to \Lambda_S\right),$$
or equivalently
\begin{equation} \label{e:local map prel}
(g_{I'}\times \on{id})_!\circ (g_{I'}\times \on{id})^!(\omega_{S'}\boxtimes \Lambda_S)\to
(\on{id}\times f_{I})_!\circ (\on{id}\times f_{I})^!(\omega_{S'}\boxtimes\Lambda_S).
\end{equation} 

\medskip

Note now that we have a canonically defined map of prestacks over $S'\times S$
$$\Gr_{I'}\times S \to \Bun_G\times S'\times S \to S'\times BG_{I},$$
which induces the sought-for map in \eqref{e:local map prel}.

\sssec{}

By construction, the diagram
$$
\CD
\on{C}^*_c(\Ran',\CA')  \otimes   \on{C}^*_c(\Ran,\CB) @>>>  \Lambda   \\
@VVV    @VVV     \\
\on{C}^{\on{red}}_*(\Bun_G)  \otimes \on{C}^*_{\on{red}}(\Bun_G) @>>>  \Lambda
\endCD
$$
commutes. 

\medskip

Hence, \thmref{t:product formula} is equivalent to the following assertion:

\begin{thm} \label{t:global duality}
The map \eqref{e:prel pairing integral} defines an isomorphism
$$\on{C}^*_c(\Ran,\CB) \to \left(\on{C}^*_c(\Ran',\CA') \right)^\vee.$$
\end{thm}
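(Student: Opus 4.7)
The plan is to reduce this statement about chiral homology to a \emph{local} duality statement for sheaves on $\Ran$, using the machinery of the unital augmented Ran space developed in Part I together with the Verdier duality results of Part II. First I would extend $\CA'$ from $\Ran'$ to $\Ran$ by setting $\CA := (\bj_\Ran)_!(\CA')$, so that the pseudo-properness of $g$ combined with \lemref{l:base change} gives $\on{C}^*_c(\Ran,\CA) \simeq \on{C}^*_c(\Ran',\CA')$, and the pairing \eqref{e:prel pairing} is the restriction to $\Ran'\times \Ran$ of a pairing $\CA\boxtimes\CB\to \omega_{\Ran\times\Ran}$ on $\Ran\times \Ran$ that factors through $(\on{diag}_\Ran)_!(\omega_\Ran)$ (both $\CA$ and $\CB$ vanish when the point sets are disjoint but distinct in the obvious sense).

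Both $\CA$ and $\CB$ naturally arise as $\on{OblvUnit}\circ\on{OblvAug}$ of canonically defined sheaves $\CA_{\on{untl,aug}},\CB_{\on{untl,aug}}\in \Shv^!(\Ran_{\on{untl,aug}})$ satisfying conditions $(*)$ and $(**)$ of \thmref{t:main}; the reduced versions
$$\CA_{\on{red}} := \on{TakeOut}(\CA_{\on{untl,aug}}), \qquad \CB_{\on{red}} := \on{TakeOut}(\CB_{\on{untl,aug}})$$
have !-fibers at $\{x_1,\dots,x_n\}$ given by $\bigotimes_i \on{C}^{\on{red}}_*(\Gr_{x_i})$ and $\bigotimes_i \on{C}^*_{\on{red}}(BG_{x_i})$ respectively. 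By \corref{c:main}, passing from $\CA,\CB$ to $\CA_{\on{red}},\CB_{\on{red}}$ leaves compactly supported cohomology on $\Ran$ unchanged (up to a copy of $\Lambda$ that matches on both sides), and by \corref{c:pairing on reduced} the pairing for $\CA,\CB$ corresponds to a unital-augmented pairing on $(\Ran_{\on{untl,aug}}\times \Ran_{\on{untl,aug}})_{\on{sub,disj}}$, which via \thmref{t:aug pairings}(ii) produces a pairing $\CA_{\on{red}}\boxtimes\CB_{\on{red}}\to (\on{diag}_\Ran)_!(\omega_\Ran)$ and hence a map
\begin{equation}  \label{e:local duality plan}
\CB_{\on{red}} \longrightarrow \BD_\Ran(\CA_{\on{red}}).
\end{equation}
By \thmref{t:aug pairings}(iv) and \corref{c:remove unit and pairing}, applying $\on{C}^*_c(\Ran,-)$ to \eqref{e:local duality plan} and composing with the duality map of \eqref{e:duality and cohomology} recovers the pairing \eqref{e:prel pairing integral} (up to the matched $\Lambda$-summands).

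Granting that \eqref{e:local duality plan} is an isomorphism (the \emph{local duality} statement), the global duality follows immediately: the hypothesis that $G$ is semi-simple and simply connected over $X'$ ensures that $\on{C}^{\on{red}}_*(\Gr_x)$ is concentrated in cohomological degrees $\leq -2$ for $x\in X'$, and the contractibility of $G_x$ for $x\in X\setminus X'$ makes the contribution of such points trivial; combined with the factorization structure on $\CA_{\on{red}}$ coming from \propref{p:aug and factor}(b), this implies that $(\bj_\Ran)_!(\CA_{\on{red}}|_{\Ran'})\simeq \CA_{\on{red}}$ satisfies the cohomological estimate of \thmref{t:Verdier Ran open}. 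Hence $\on{C}^*_c(\Ran,\BD_\Ran(\CA_{\on{red}}))\to \on{C}^*_c(\Ran,\CA_{\on{red}})^\vee$ is an isomorphism, and combining with \eqref{e:local duality plan} gives the desired isomorphism $\on{C}^*_c(\Ran,\CB_{\on{red}})\to \on{C}^*_c(\Ran,\CA_{\on{red}})^\vee$, hence the statement of \thmref{t:global duality}.

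The hard part is, of course, verifying the local duality \eqref{e:local duality plan}. The strategy—to be carried out in later sections—will be to exploit factorization: by \propref{p:aug and factor}, both sides of \eqref{e:local duality plan} are naturally a cocommutative factorization coalgebra and a commutative factorization algebra respectively (the latter using \propref{p:Verdier factorizes}, whose connectivity hypothesis is satisfied by the cohomological estimate just mentioned), and by \lemref{l:compat pairing aug} the map \eqref{e:local duality plan} respects these structures. By \lemref{l:isom on diag}, it then suffices to check that the restriction to $X\subset \Ran$ of \eqref{e:local duality plan} is an isomorphism, and this can be further reduced, by a locality argument, to the pointwise assertion that $(\CB_{\on{red}})_{\{x\}}\to (\BD_\Ran(\CA_{\on{red}}))_{\{x\}}$ is an isomorphism at a single point $x$ for the split form of $G$. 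This pointwise assertion is itself local in nature but will be proved by a global bootstrapping argument that feeds \thmref{t:non-ab Poinc} back in on an auxiliary (non-complete) curve containing $x$; this is the true crux of the argument.
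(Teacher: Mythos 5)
Your high-level outline matches the paper's: pass from $\CA',\CB$ to reduced versions $\CA_{\on{red}},\CB_{\on{red}}$ via the unit-augmentation machinery of Part I, obtain a map $\CB_{\on{red}}\to\BD_\Ran(\CA_{\on{red}})$ from the geometric pairing via Part III, apply the Verdier duality theorem of Part II using the connectivity estimate coming from $G$ being semi-simple simply connected, and treat the actual local duality as a separate statement to be proved by factorization and a global bootstrap. But there is a genuine error in the way you set up the pairing, and it is not a small one.

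You assert that the pairing $\CA\boxtimes\CB\to\omega_{\Ran\times\Ran}$ ``factors through $(\on{diag}_\Ran)_!(\omega_\Ran)$'' because ``both $\CA$ and $\CB$ vanish when the point sets are disjoint but distinct.'' This is false. The geometric input is the composite $\Gr_{I'}\times S\to\Bun_G\times S'\times S\to S'\times BG_I$, and the induced pairing of !-fibers does vanish when $I'$ and $I$ have disjoint images (because the modification is trivial away from $I'$, so the composite $\Gr_{I'}\to BG_I$ is constant), but it does \emph{not} vanish when $I'$ and $I$ merely differ: any nontrivial overlap gives a nontrivial pairing. So the pairing is supported off-diagonal and does not factor through $(\on{diag}_\Ran)_!(\omega_\Ran)$. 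If your claim were correct, the entire ``taking the units out'' machinery would be unnecessary — one could read off the Verdier dual directly. The point of Part III (specifically \thmref{t:aug pairings} and \corref{c:pairing on reduced}) is precisely to convert a non-diagonal pairing on unital augmented objects defined over $(\Ran_{\on{untl,aug}}\times\Ran_{\on{untl,aug}})_{\on{sub,disj}}$ into a diagonal pairing on the reduced objects; the direction of the construction matters, and you cannot start from a diagonal pairing that you do not have. Related to this, you misapply \corref{c:pairing on reduced}: it is a bijection between pairings $\CF\boxtimes\CG\to(\on{diag}_\Ran)_!(\omega_\Ran)$ (for the \emph{reduced} sheaves) and unital augmented pairings, not between pairings $\CA\boxtimes\CB\to\omega$ and unital augmented ones.

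A second gap is that you define $\CA:=(\bj_\Ran)_!(\CA')$ on $\Ran$ and claim it arises as $\on{OblvUnit}\circ\on{OblvAug}$ of a unital augmented sheaf satisfying conditions $(*)$ and $(**)$. It does not: the !-fiber of $(\bj_\Ran)_!(\CA')$ at $\{x_1,\dots,x_n\}$ vanishes as soon as any $x_i\notin X'$, whereas $\on{OblvUnit}\circ\on{AddUnit}$ of anything with nonzero one-point fibers over $X'$ would not. (The object $(\bj_{\Ran_{\on{untl,aug}}})_!(\CA'_{\on{untl,aug}})$ is exactly the ``bad'' object in the paper's proof of \lemref{l:primes}, flagged there as \emph{not} satisfying $(*)$ and $(**)$.) The paper instead takes the reduction first: $\CA'_{\on{red}}\in\Shv^!(\Ran')$ and $\CA_{\on{red}}:=(\bj_\Ran)_!(\CA'_{\on{red}})$, and states the local duality over $\Ran'$ as $\CB'_{\on{red}}\to\BD_{\Ran'}(\CA'_{\on{red}})$. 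The deduction of the global statement then requires a nontrivial chase through $(\bj_\Ran)_!$ and $(\bj_\Ran)_*$ — using \lemref{l:j}, \propref{p:B is extended} (that $\CB_{\on{red}}\simeq(\bj_\Ran)_*(\CB'_{\on{red}})$, which relies on the cohomological contractibility of $G_x$ for $x\notin X'$), and \thmref{t:Verdier Ran open} — together with the commutative diagram of \lemref{l:primes} establishing compatibility of the two pairings. Your proposal elides this step, which is exactly where the discrepancy between $\Ran$ and $\Ran'$ has to be absorbed.
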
 

We refer to \thmref{t:global duality} as the ``global duality" statement. 

\section{The local duality statement}  \label{s:local duality}

In order to understand the contents of this section, one needs to know the contents of Part I (the unital augmented version of the Ran space), 
Part II (only Sects. \ref{ss:Verdier}, \ref{ss:Verdier pushforward}, \ref{ss:Verdier on Ran} and 
\ref{ss:open variant}) and Part III (only \secref{s:pairings aug}).

\medskip

In this section we will reduce the assertion of \thmref{t:global duality} to a \emph{local duality} statement,
namely, \thmref{t:local duality}. This theorem says that a certain map in $\Shv^!(\Ran')$ is an isomorphism. 

\medskip

More precisely, \thmref{t:local duality} says that certain two objects of $\Shv^!(\Ran')$ are related by
Verdier duality. These objects are obtained from $\CB$ and $\CA'$ by
the procedure of taking the units out, indicated in the preamble to \secref{s:pairings aug}. 

\ssec{The unital augmented and reduced versions of $\CB$}

First, we claim that the object $\CB$ is obtained as 
$$\on{OblvUnit}\circ \on{OblvAug}(\CB_{\on{untl,aug}})$$
for a canonically defined object $\CB_{\on{untl,aug}}\in \Shv^!(\Ran_{\on{untl,aug}})$, which is in turn of the form
$$\on{AddUnit}_{\on{aug}}(\CB_{\on{red}})$$ for a 
canonically defined object $\CB_{\on{red}}\in \Shv^!(\Ran)$.

\medskip

To understand the meaning of the above formulas, the reader should be familiar with the contents of \secref{s:aug}.

\sssec{}

To specify $\CB_{\on{untl,aug}}$ we need to construct a compatible family of assignments 
$$S\in \Sch,\,\,K\subseteq I\subset \Maps(S,X) \rightsquigarrow  (\CB_{\on{untl,aug}})_{S,K\subseteq I}\in \Shv(S).$$

We construct $(\CB_{\on{untl,aug}})_{S,K\subseteq I}$ in the same way as $(\CB_{\on{untl,aug}})_{S,I}$, with the difference that we replace
the stack $BG_{I}$ by $BG_{K\subset I}$, that classifies $G$-bundles on $D_{I}$, equipped with a
trivialization on $D_{K}$.

\medskip

For example, for $S=\on{pt}$ and $I$ given by an $I$-tuple of \emph{distinct} $k$-points of $X$, the !-fiber of $\CB$ at $K\subseteq I$ is
$$\on{Fib}\left(\underset{i\in I-K}\bigotimes\, \on{C}^*(BG_{x_i})\to \Lambda\right).$$

\medskip

By construction, $(\CB_{\on{untl,aug}})_{S,\emptyset\subset I}=\CB_{S,I}$, so 
$$\CB\simeq \on{OblvUnit}\circ \on{OblvAug}(\CB_{\on{untl,aug}}),$$
as desired. 

\sssec{}

We claim that $\CB_{\on{untl,aug}}$ satisfies conditions $(*)$ and $(**)$ in \thmref{t:main}. Indeed, this follows from 
the corresponding property of the assignment
$$(S,I)\mapsto (\CB_{\on{red}})_{S,K\subseteq I}.$$
Namely, for $L\subset \Maps(S,X)$ such that $I$ and $L$ have disjoint images, the restriction map
$$BG_{K\cup L\subseteq I\cup L}\to BG_{K\subseteq I}$$
is an isomorphism.

\sssec{}   \label{sss:B red}

Applying \thmref{t:main}, we obtain that there exists a canonically defined object 
$$\CB_{\on{red}}\in \Shv^!(\Ran),$$
so that 
$$\CB_{\on{untl,aug}}=\on{AddUnit}_{\on{aug}}(\CB_{\on{red}}).$$

For example, the !-fiber of $\CB_{\on{red}}$ at a point $\{x_1,...,x_n\}\in \Ran$ is
$$\underset{i=1,...,n}\bigotimes \on{C}^*_{\on{red}}(BG_{x_i}).$$ 
So the effect of replacing $\CB$ be $\CB_{\on{red}}$ consists at the level of !-fibers
of replacing the \emph{augmentation ideal in the tensor product} of $\on{C}^*(BG_{x_i})$ by
the \emph{tensor product of the augmentation ideals} in each $\on{C}^*(BG_{x_i})$, i.e., 
$$\on{Fib}\left( \underset{i=1,...,n}\bigotimes\, \on{C}^*(BG_{x_i})\to \Lambda\right) \rightsquigarrow  
\underset{i=1,...,n}\bigotimes\, \on{Fib}\left(\on{C}^*(BG_{x_i})\to \Lambda\right).$$

\medskip

Denote
$$\CB_{\on{red}}':=\CB_{\on{red}}|_{\Ran'} \text{ and } \CB':=\CB|_{\Ran'}.$$

\ssec{The unital augmented and reduced versions of $\CA$}   \label{sss:unital Gr}

Next, we claim that the object $\CA'$ is obtained as 
$$\on{OblvUnit}\circ \on{OblvAug}(\CA'_{\on{untl,aug}})$$
for a canonically defined object $\CA'_{\on{untl,aug}}\in \Shv^!(\Ran'_{\on{untl,aug}})$, which is in turn of the form
$$\on{AddUnit}_{\on{aug}}(\CA'_{\on{red}})$$ for a 
canonically defined object $\CA'_{\on{red}}\in \Shv^!(\Ran')$. 

\sssec{}

The object $\CA'_{\on{untl,aug}}$ is constructed in a manner parallel to $\CA'$, where instead of
$\Gr_{\Ran'}\to \Ran'$, we use the lax prestack 
\begin{equation} \label{e:aug Gr}
g_{\on{until,aug}}:\Gr_{\Ran'_{\on{until,aug}}}\to \Ran'_{\on{until,aug}},
\end{equation} 
constructed as follows. 

\medskip

For $S\in \Sch$, an $S$-point of $\Gr_{\Ran'_{\on{until,aug}}}$ is the category whose objects are:

\begin{itemize}

\item $K\subseteq I\subset \Maps(S,X')$;

\item a $G$-bundle $\CP_G$ on $S\times X$;

\item a trivialization $\gamma$ of $\CP_G|_{S\times X-\on{Graph}_I}$.

\end{itemize}

Given two such objects 
$$(K^1\subseteq I^1,\CP^1_G,\gamma^1) \text{ and } (K^2\subseteq I^2,\CP^2_G,\gamma^2),$$
a morphism between them is an inclusion $K_1\subseteq K_2$ and $I_1\subseteq I_2$, and an isomorphism
$$\CP^1_G|_{S\times X-\on{Graph}_{K_2}}\simeq \CP^2_G|_{S\times X-\on{Graph}_{K_2}},$$
which is compatible with the trivializations of 
$\CP^1_G|_{S\times X-\on{Graph}_{I_2}}$ and $\CP^2_G|_{S\times X-\on{Graph}_{I_2}}$, given by 
$\gamma^1|_{S\times X-\on{Graph}_{I_2}}$ and 
$\gamma^2$, respectively.

\medskip

It is easy to see that the map \eqref{e:aug Gr} is pseudo-proper, i.e., satisfies the assumption of \corref{c:pseudo-proper},
so that the functor $(g_{\on{untl,aug}})_!$ is well-behaved. 

\medskip

For example, for $S=\on{pt}$ and $I$ given by an $I$-tuple of \emph{distinct} $k$-points of $X'$, the !-fiber of $\CA'_{\on{untl,aug}}$ at $K\subseteq I$ is
$$\on{Fib}\left(\underset{i\in I-K}\bigotimes\, \on{C}_*(\Gr_{x_i})\to \Lambda\right).$$

\sssec{}

By construction,
$$\Ran' \underset{\Ran'_{\on{until,aug}}}\times \Gr_{\Ran'_{\on{until,aug}}}\simeq \Gr_{\Ran'},$$
so 
$$\CA'\simeq \on{OblvUnit}\circ \on{OblvAug}(\CA'_{\on{untl,aug}}),$$
as desired. 

\medskip

\noindent {\it Notation:} For $S\in \Sch$ and an $S$-point $K\subseteq I$ of
$\Ran_{\on{untl,aug}}$, we let $\Gr_{K\subseteq I}$ denote the prestack
$$S\underset{\Ran'_{\on{untl,aug}}}\times \Gr_{\Ran'_{\on{untl,aug}}}.$$

\begin{rem}  \label{r:Gr on open}
Note that in defining $\Gr_{\Ran'_{\on{until,aug}}}$, we can avoid referring to the complete curve $X$. Indeed,
instead of $\CP_G$ being a $G$-bundle on $X$ and $\gamma$ its trivialization over $S\times X-\on{Graph}_I$, we can let $\CP_G$
be a $G$-bundle on $X'$ and $\gamma'$ its trivialization over $S\times X'-I$. And when defining morphisms, we take isomorphisms
$$\CP^1_G|_{S\times X'-\on{Graph}_{K_2}}\simeq \CP^2_G|_{S\times X'-\on{Graph}_{K_2}}.$$
\end{rem} 

\sssec{}

We claim that $\CA'_{\on{untl,aug}}$ satisfies conditions $(*)$ and $(**)$ in \thmref{t:main}. Indeed, this follows
from the corresponding property of the lax prestack
$$\Gr_{\Ran'_{\on{untl,aug}}}\to \Ran'_{\on{untl,aug}}.$$
Namely, for $I,K,L\subset \Maps(S,X)$ such that $I$ and $L$ have disjoint images, the restriction map
$$\Gr_{K\subseteq I}\to \Gr_{K\cup L\subseteq I\cup L}$$
is an isomorphism.

\medskip

Hence, from \thmref{t:main} we obtain that there exists a canonically defined object 
$$\CA_{\on{red}}'\in \Shv^!(\Ran'),$$
so that 
$$\CA'_{\on{untl,aug}}=\on{AddUnit}_{\on{aug}}(\CA_{\on{red}}').$$

For example, the !-fiber of $\CA_{\on{red}}$ at a point $\{x_1,...,x_n\}\in \Ran'$ is
$$\underset{i=1,...,n}\bigotimes\, \on{C}_*^{\on{red}}(\Gr_{x_i}).$$

So the effect of replacing $\CA$ be $\CA_{\on{red}}$ consists at the level of !-fibers of replacing 
$$\on{Fib}\left( \underset{i=1,...,n}\bigotimes\, \on{C}_*(\Gr_{x_i})\to \Lambda\right) \rightsquigarrow  
\underset{i=1,...,n}\bigotimes\, \on{Fib}\left(\on{C}_*(\Gr_{x_i})\to \Lambda\right).$$

\sssec{}

Recall now (see \secref{ss:open variant}) that we denote by $\bj_{\Ran}$ the open embedding 
$\Ran'\to \Ran$. Recall also that the restriction functor $\bj^!$ admits both a left and a right adjoints,
denoted $(\bj_{\Ran})_!$ and $(\bj_{\Ran})_*$, respectively.

\medskip

Set
$$\CA_{\on{red}}:=(\bj_{\Ran})_!(\CA_{\on{red}}').$$

Denote also
$$\CA_{\on{untl,aug}}:=\on{AddUnit}_{\on{aug}}(\CA_{\on{red}}) \text{ and } \CA:=
\on{OblvUnit}\circ \on{OblvAug}(\CA_{\on{untl,aug}}).$$

\ssec{The unital augmented and reduced versions of the pairing}

Recall the notion of pairing between two sheaves on $\Ran_{\on{untl,aug}}$, see \secref{ss:notion of pairing}.

\medskip

We will show that the pairing \eqref{e:prel pairing} extends to a canonically defined map
\begin{equation} \label{e:aug pairing for Gr'}
\CA'_{\on{untl,aug}}\boxtimes \CB_{\on{untl,aug}} \to \omega_{(\Ran'_{\on{untl,aug}}\times \Ran_{\on{untl,aug}})_{\on{sub,disj}}},
\end{equation} 
where
$$(\Ran'_{\on{untl,aug}}\times \Ran_{\on{untl,aug}})_{\on{sub,disj}}:=
(\Ran_{\on{untl,aug}}\times \Ran_{\on{untl,aug}})_{\on{sub,disj}}\cap (\Ran'_{\on{untl,aug}}\times \Ran_{\on{untl,aug}}).$$

We will use the map \eqref{e:aug pairing for Gr'} and \thmref{t:aug pairings} to define a pairing between $\CA'_{\on{red}}$ and $\CB'_{\on{red}}$. 

\sssec{}

To construct the map \eqref{e:aug pairing for Gr'}, given $S_1\in \Sch$ with $K_1\subseteq I_1\subset \Maps(S_1,X')$ 
and $S_2\in \Sch$ with $K_2\subseteq I_2\subset \Maps(S_2,X)$ so that the resulting object of
$(\Ran_{\on{untl,aug}}\times \Ran_{\on{untl,aug}})(S_1\times S_2)$ belongs to
$$(\Ran_{\on{untl,aug}}\times \Ran_{\on{untl,aug}})_{\on{\on{sub,disj}}}(S_1\times S_2)\subset 
(\Ran_{\on{untl,aug}}\times \Ran_{\on{untl,aug}})(S_1\times S_2),$$
we need to construct a map
$$(\CA'_{\on{untl,aug}})_{S_1,K_1\subseteq I_1}\boxtimes 
(\CB_{\on{untl,aug}})_{S_2,K_2\subseteq I_2}\to \omega_{S_1\times S_2}.$$

This map is constructed as in \secref{sss:prel pairing} from the map of lax prestacks
$$\Gr_{K_1\subseteq I_1}\times S_2\to S_1\times BG_{K_2\subseteq I_2}.$$

\sssec{}

From \eqref{e:aug pairing for Gr'} we in particular obtain a pairing
\begin{equation} \label{e:aug pairing for Gr}
\CA'_{\on{untl,aug}}\boxtimes \CB'_{\on{untl,aug}} \to \omega_{(\Ran'_{\on{untl,aug}}\times \Ran'_{\on{untl,aug}})_{\on{sub,disj}}},
\end{equation} 

Applying \corref{c:pairing on reduced}, from \eqref{e:aug pairing for Gr} we obtain a pairing 
\begin{equation} \label{e:pairing on red Gr}
\CA_{\on{red}}'\boxtimes \CB_{\on{red}}'\to (\on{diag}_{\Ran'})_!(\omega_{\Ran'}).
\end{equation}

Hence, we obtain a map
\begin{equation} \label{e:local duality}
\CB_{\on{red}}'\to \BD_{\Ran'}(\CA_{\on{red}}').
\end{equation}

We claim:

\begin{thm} \label{t:local duality}
The map is \eqref{e:local duality} is an isomorphism.
\end{thm}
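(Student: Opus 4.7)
The plan is to equip both sides of \eqref{e:local duality} with compatible factorization structures and then invoke \lemref{l:isom on diag} to reduce to a pointwise assertion on the diagonal copy $X'\subset \Ran'$.

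First, I will put the structure of a cocommutative factorization coalgebra on $\CA_{\on{red}}'$ and the structure of a commutative factorization algebra on $\CB_{\on{red}}'$. The underlying (co)algebra structures on $\CA'_{\on{untl,aug}}$ and $\CB_{\on{untl,aug}}$, with respect to the pointwise symmetric monoidal structure on $\Shv^!(\Ran'_{\on{untl,aug}})$, come from the factorization isomorphisms
\[
\Gr_{K_1\cup K_2\subseteq I_1\cup I_2}\simeq \Gr_{K_1\subseteq I_1}\times \Gr_{K_2\subseteq I_2},\qquad
BG_{K_1\cup K_2\subseteq I_1\cup I_2}\simeq BG_{K_1\subseteq I_1}\times BG_{K_2\subseteq I_2},
\]
valid on $(\Ran_{\on{untl,aug}}\times \Ran_{\on{untl,aug}})_{\on{compl,disj}}$. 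By \corref{c:structure on red} these descend to (co)algebra structures on $\CA_{\on{red}}'$ and $\CB_{\on{red}}'$ with respect to convolution, and by \propref{p:aug and factor} the factorization property is preserved under $\on{TakeOut}$, yielding the desired (co)commutative factorization structures.

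Second, using the construction of \secref{sss:alg structure on dual}, the cocommutative coalgebra structure on $\CA_{\on{red}}'$ induces a commutative algebra structure on $\BD_{\Ran'}(\CA_{\on{red}}')$. I will then apply \propref{p:Verdier factorizes} to promote this to a commutative factorization algebra; the required perverse-degree and compactness hypothesis on $(\CA_{\on{red}}')_{X'}$ holds because the generic fiber of $G$ is semi-simple and simply connected (cf.\ \secref{sss:good G}), which forces the reduced homology of the affine Grassmannian of $G_x$ to be concentrated in cohomological degrees $\leq -2$ with compact cohomologies. Next, \lemref{l:compat pairing aug} applied to the augmented pairing \eqref{e:aug pairing for Gr} -- whose compatibility with the two multiplicative structures is witnessed by the factorization of the evaluation map $\Gr\times \Bun_G\to BG$ -- together with \lemref{l:pairings and alg} upgrade \eqref{e:local duality} to a homomorphism of commutative factorization algebras on $\Ran'$.

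Third, \lemref{l:isom on diag} reduces the claim to showing that the restriction to $X'\subset \Ran'$ of \eqref{e:local duality} is an isomorphism in $\Shv(X')$, and a further sheaf-theoretic argument using the diagonal stratification reduces this to the pointwise statement that for some (equivalently, any) $x\in X'$ the induced map
\[
(\CB_{\on{red}}')_{\{x\}}\longrightarrow \bigl(\BD_{\Ran'}(\CA_{\on{red}}')\bigr)_{\{x\}}
\]
is an isomorphism in $\Lambda\mod$. This pointwise duality is where the real difficulty lies: the $!$-fiber of the Verdier dual is \emph{not} the Verdier dual of the $!$-fiber, and one must describe it instead via \thmref{t:dagger} of \secref{s:expl}, namely as the dual of the compactly supported cohomology of $\CA'_{\on{untl,aug}}$ restricted to the lax prestack $(\Ran_{\on{untl,aug}})_{x\notin}$. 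Although the pointwise assertion is local in nature on $X'$, its proof will have to be global: one passes to an auxiliary complete curve and bootstraps from the non-abelian Poincar\'e duality \thmref{t:non-ab Poinc}, along the lines sketched in the introduction and carried out in Sects.\ \ref{s:pointwise}--\ref{s:P1}.
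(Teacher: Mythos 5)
Your proposal follows the paper's strategy closely — factorization structures on both sides (via Propositions~\ref{p:Gr factorizes}, \ref{p:B factor}, \ref{p:aug and factor}, \ref{p:Verdier factorizes}, and the estimate of \propref{p:estimate for extensions}), upgrading \eqref{e:local duality} to a homomorphism of commutative factorization algebras via Lemmas~\ref{l:compat pairing aug} and~\ref{l:pairings and alg}, then applying \lemref{l:isom on diag} to reduce to the diagonal $X'$, and finally proving the pointwise statement globally using the description in \thmref{t:dagger} and bootstrapping from \thmref{t:non-ab Poinc}. This is the decomposition of Sections~\ref{s:pointwise}--\ref{s:P1}.

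There is, however, a genuine gap in your step from ``isomorphism of the restriction to $X'\subset \Ran'$'' to ``isomorphism of $!$-fibers at some (equivalently any) $x\in X'$.'' You justify this with ``a further sheaf-theoretic argument using the diagonal stratification,'' but no such argument works in this generality: being an isomorphism at the $!$-fiber at a \emph{single} point does not imply being an isomorphism of sheaves on $X'$, and the passage in the other direction is what actually needs work. The paper handles this with a nontrivial two-part argument (the subsection ``Reduction to the constant group-scheme case''): first, one shows via \propref{p:rel disj} and \lemref{l:etale and dual} that the validity of the isomorphism is invariant under \'etale change of curve, reducing to the case of a constant group scheme over $\BA^1$; second, on $\BA^1$ with the constant group scheme, the $\BG_m$-equivariance of both $(\CB_{\on{red}})_X$ and $(\BD_{\Ran}(\CA_{\on{red}}))_X$ (coming from the dilation action) forces both sheaves to be constant, so that isomorphism at one point is equivalent to isomorphism everywhere. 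A second application of the \'etale-invariance then shows that the pointwise assertion at some $x$ on some curve is equivalent to the pointwise assertion at any $x$ on any curve. Without this constant-group-scheme/$\BA^1$-equivariance reduction, the inference from a single $!$-fiber to the whole map of sheaves on $X'$ does not go through; your appeal to the ``diagonal stratification'' does not supply it.
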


We refer to \thmref{t:local duality} as the ``local duality" statement.  

\begin{rem}
The assertion of \thmref{t:local duality}, as well as its proof, are valid for any $G$ that is \emph{reductive} over $X'$. I.e., we do not
the ``semi-simple simply connected" hypothesis for the validity of \thmref{t:local duality}. It is for the deduction
$$\text{Local duality}\,\, \Rightarrow \text{Global duality}$$
that such a hypothesis is needed: it will be used in showing that $\CA_{\on{red}}'$ verifies the conditions of \thmref{t:Verdier on Ran}.
\end{rem}

\ssec{Local duality implies global duality}

In this subsection we will show how \thmref{t:local duality} implies \thmref{t:global duality}. Here we will use the material from Part II
in a crucial way, specifically \thmref{t:Verdier Ran open}. 

\sssec{}

The map \eqref{e:pairing on red Gr} gives rise to a map
\begin{equation} \label{e:interm pairing loc}
\CA_{\on{red}}'\boxtimes \CB_{\on{red}}\to (\on{id}_{\Ran}\times \bj_{\Ran})_*\circ (\on{diag}_{\Ran'})_!(\omega_{\Ran'})\simeq 
(\on{Graph}_{\bj_{\Ran}})_!(\omega_{\Ran'}),
\end{equation}
and by applying $\on{C}^*_c(\Ran'\times \Ran,-)$ to a map
\begin{equation} \label{e:interm pairing}
\on{C}^*_c(\Ran',\CA_{\on{red}}')\otimes \on{C}^*_c(\Ran,\CB_{\on{red}})\to \Lambda.
\end{equation} 

\medskip

We claim:

\begin{lem}  \label{l:primes}
The diagram
$$
\CD
\on{C}^*_c(\Ran',\CA')\otimes \on{C}^*_c(\Ran,\CB) @>>>  \Lambda  \\
@A{\text{\eqref{e:id to add}}\otimes \text{\eqref{e:id to add}}}AA    @AAA   \\
\on{C}^*_c(\Ran',\CA_{\on{red}}')\otimes \on{C}^*_c(\Ran,\CB_{\on{red}})   @>>>   \Lambda   \\
\endCD
$$
commutes.
\end{lem}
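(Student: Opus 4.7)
The plan is to reduce \lemref{l:primes} to the sheaf-level compatibility provided by \thmref{t:aug pairings}(iv) (equivalently, \corref{c:remove unit and pairing}), extended to the slightly asymmetric setting where one factor lives on $\Ran'$ and the other on $\Ran$.

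First, I would observe that both pairings appearing in the diagram are obtained by composing a map in $\Shv^!(\Ran' \times \Ran)$ with the trace map $\on{C}^*_c(\Ran' \times \Ran, \omega_{\Ran' \times \Ran}) \to \Lambda$. The top pairing comes from the sheaf map \eqref{e:prel pairing}, while the bottom pairing comes from \eqref{e:interm pairing loc} post-composed with the tautological map $(\on{Graph}_{\bj_{\Ran}})_!(\omega_{\Ran'}) \to \omega_{\Ran' \times \Ran}$. Thus it suffices to verify a sheaf-level commutative diagram in $\Shv^!(\Ran' \times \Ran)$ of the form
\begin{equation*}
\CD
\CA' \boxtimes \CB  @>>>  \omega_{\Ran' \times \Ran} \\
@A{\text{\eqref{e:id to add}}\boxtimes \text{\eqref{e:id to add}}}AA   @AAA \\
\CA_{\on{red}}' \boxtimes \CB_{\on{red}}  @>>>  (\on{Graph}_{\bj_{\Ran}})_!(\omega_{\Ran'}),
\endCD
\end{equation*}
whose horizontal arrows are the two sheaf pairings described above. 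After this reduction, I would apply $\on{C}^*_c(\Ran' \times \Ran, -)$ termwise; the right-hand column yields the arrow $\Lambda \to \Lambda$ (which is the identity, since both traces on $\omega_{\Ran'}$ agree by construction).

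Second, I would recognize the two horizontal arrows as arising from a single augmented datum. Namely, both descend from the fundamental pairing
\[
\CA'_{\on{untl,aug}} \boxtimes \CB_{\on{untl,aug}} \to \omega_{(\Ran'_{\on{untl,aug}}\times \Ran_{\on{untl,aug}})_{\on{sub,disj}}}
\]
of \eqref{e:aug pairing for Gr'}: the top arrow is produced by applying $\on{OblvUnit}\circ\on{OblvAug}$ to each factor (and using that $\Ran' \to \Ran_{\on{untl,aug}}'$ and $\Ran \to \Ran_{\on{untl,aug}}$ land in the $\on{sub,disj}$ locus after pullback, so one lands in $\omega_{\Ran'\times \Ran}$), while the bottom arrow is produced by applying $\on{TakeOut}$ to each factor as in \secref{sss:compat pairing} and then pushing forward to $\Ran' \times \Ran$ via $(\on{id}_{\Ran'} \times \bj_{\Ran})_*$. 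This is precisely the construction used to produce \eqref{e:pairing on red Gr} and subsequently \eqref{e:interm pairing loc}.

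Third, the required commutativity is then a direct consequence of \thmref{t:aug pairings}(iv), applied to $\wt\CF = \CA'_{\on{untl,aug}}$ and $\wt\CG = \CB_{\on{untl,aug}}$: this part of the theorem asserts exactly that the pairing obtained by $\on{TakeOut}$-ing and then post-composing with $(\on{diag})_!(\omega) \to \omega$ agrees with the pairing obtained by applying $\on{OblvUnit}\circ \on{OblvAug}$. The only additional bookkeeping, which constitutes the main obstacle, is that \thmref{t:aug pairings} is stated symmetrically for two sheaves on the same $\Ran_{\on{untl,aug}}$, whereas here one factor lives over $\Ran'_{\on{untl,aug}}$ and the other over $\Ran_{\on{untl,aug}}$, so one must verify that the construction of \thmref{t:aug pairings}(iv) goes through verbatim for this mildly asymmetric situation (using in particular \lemref{l:base change} to commute $(\bj_{\Ran})_*$ with the relevant base changes). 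This verification is routine but notationally involved, and once complete the lemma follows.
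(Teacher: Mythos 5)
Your first two steps coincide with the paper's: you reduce the lemma to the sheaf-level commutative square \eqref{e:primes} in $\Shv^!(\Ran'\times\Ran)$, and you correctly identify that both horizontal arrows descend from the augmented pairing \eqref{e:aug pairing for Gr'}. Where you part company with the paper is the last step, which you defer as a ``routine but notationally involved'' re-derivation of \thmref{t:aug pairings}(iv) in the asymmetric setting $\Ran'_{\on{untl,aug}}\times\Ran_{\on{untl,aug}}$. The paper avoids exactly this by a short trick: it sets
$$(\CA_{\on{untl,aug}})_{\on{bad}}:=(\bj_{\Ran_{\on{untl,aug}}})_!(\CA'_{\on{untl,aug}})\in\Shv^!(\Ran_{\on{untl,aug}}),$$
transports \eqref{e:aug pairing for Gr'} by the $((\bj_{\Ran_{\on{untl,aug}}})_!,(\bj_{\Ran_{\on{untl,aug}}})^!)$ adjunction into a pairing
$$(\CA_{\on{untl,aug}})_{\on{bad}}\boxtimes\CB_{\on{untl,aug}}\to \omega_{(\Ran_{\on{untl,aug}}\times\Ran_{\on{untl,aug}})_{\on{sub,disj}}},$$
and can now apply \thmref{t:aug pairings}(ii) and (iv) \emph{verbatim} in the symmetric setting (these points of the theorem require no conditions on the two sheaves; in particular it does not matter that $(\CA_{\on{untl,aug}})_{\on{bad}}$ fails conditions $(*)$, $(**)$ of \thmref{t:main}). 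One then restricts the resulting commutative square along $\Ran'\times\Ran\hookrightarrow\Ran\times\Ran$, and recovers \eqref{e:primes} using that $(\bj_{\Ran_{\on{untl,aug}}})^!((\CA_{\on{untl,aug}})_{\on{bad}})\simeq\CA'_{\on{untl,aug}}$ together with the pointwise formula for $\on{TakeOut}$ in \propref{p:out}, which shows that $\on{TakeOut}$ and $\on{OblvUnit}\circ\on{OblvAug}$ commute with restriction to $\Ran'$. So the missing ingredient in your write-up is not a calculation you skipped, but this reindexing: pushing one leg forward by $(\bj_{\Ran_{\on{untl,aug}}})_!$ lets you quote the symmetric theorem as a black box rather than re-proving an asymmetric variant of it.
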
 

\begin{proof}

We will show that the diagram 
\begin{equation} \label{e:primes}
\CD
\CA' \boxtimes \CB  @>>>  \omega_{\Ran'\times \Ran}  \\
@A{\text{\eqref{e:id to add}}\otimes \text{\eqref{e:id to add}}}AA   @AAA    \\
\CA_{\on{red}}' \boxtimes \CB_{\on{red}}   @>{\text{\eqref{e:interm pairing loc}}}>>  (\on{Graph}_{\bj_{\Ran}})_!(\omega_{\Ran'})
\endCD
\end{equation} 
commutes. This would imply the assertion of the lemma. 

\medskip

Consider the map
$$\bj_{\Ran_{\on{untl,aug}}}:\Ran'_{\on{untl,aug}}\to \Ran_{\on{untl,aug}},$$
and consider the object 
$$(\CA_{\on{untl,aug}})_{\on{bad}}:=(\bj_{\Ran_{\on{untl,aug}}})_!(\CA'_{\on{untl,aug}})\in \Shv^!(\Ran_{\on{untl,aug}}).$$

We have
$$(\bj_{\Ran_{\on{untl,aug}}})^!((\CA_{\on{untl,aug}})_{\on{bad}})\simeq \CA'_{\on{untl,aug}}.$$

(We remark that the object $(\CA_{\on{untl,aug}})_{\on{bad}}$ does not have a clear
geometric meaning; for example it does not satisfy conditions $(*)$ and $(**)$ of \thmref{t:main};
in particular, it is \emph{not} isomorphic to $\CA_{\on{untl,aug}}$.) 

\medskip

By adjunction, the map \eqref{e:aug pairing for Gr'} gives rise to a pairing
$$(\CA_{\on{untl,aug}})_{\on{bad}}\boxtimes \CB_{\on{untl,aug}}\to 
\omega_{(\Ran_{\on{untl,aug}}\times \Ran_{\on{untl,aug}})_{\on{sub,disj}}}.$$
 
Applying \thmref{t:aug pairings}(ii and iv), we obtain a pairing
$$\on{TakeOut}((\CA_{\on{untl,aug}})_{\on{bad}})\boxtimes \CB_{\on{red}}\to (\on{diag}_{\Ran})_!(\omega_{\Ran}),$$
so that the diagram
$$
\CD
\on{OblvUnit}\circ \on{OblvAug}((\CA_{\on{untl,aug}})_{\on{bad}}) \boxtimes \CB_{\on{untl,aug}} @>>>  \omega_{\Ran\times \Ran}  \\
@A{\text{\eqref{e:id to add}}\otimes \text{\eqref{e:id to add}}}AA    @AAA    \\
\on{TakeOut}((\CA_{\on{untl,aug}})_{\on{bad}})\boxtimes \CB_{\on{red}} @>>>  (\on{diag}_{\Ran})_!(\omega_{\Ran})
\endCD
$$
commutes.

\medskip

Restricting to $\Ran'\times \Ran$, we obtain a commutative diagram that identifies with \eqref{e:primes}. 

\end{proof} 

\sssec{}

Note that the left vertical arrow in the diagram in \lemref{l:primes} is an isomorphism by \corref{c:id to add}.
Hence, in order to prove \thmref{t:global duality}, it suffices to show that the map
\begin{equation} \label{e:interm map}
\on{C}^*_c(\Ran,\CB_{\on{red}}) \to \left(\on{C}^*_c(\Ran',\CA_{\on{red}}')\right)^\vee,
\end{equation}
defined by the pairing \eqref{e:interm pairing}, is an isomorphism. 

\sssec{}

Recall that by \lemref{l:j}, we have a canonical isomorphism
$$(\bj_{\Ran})_*\circ \BD_{\Ran'}(\CA'_{\on{red}})\simeq \BD_{\Ran}\circ (\bj_{\Ran})_!(\CA_{\on{red}}').$$

\medskip

We have a commutative diagram
$$
\CD
\on{C}^*_c(\Ran,\CB_{\on{red}}) @>{\text{\eqref{e:interm map}}}>>   \left(\on{C}^*_c(\Ran',\CA_{\on{red}}')\right)^\vee    \\
@VVV    @AA{\sim}A   \\
\on{C}^*_c(\Ran,(\bj_{\Ran})_*(\CB_{\on{red}}'))   & &  \left(\on{C}^*_c(\Ran,(\bj_{\Ran})_!(\CA_{\on{red}}'))\right)^\vee  \\  
@V{\text{\eqref{e:local duality}}}VV     @AA{\text{\eqref{e:duality and cohomology}}}A   \\
\on{C}^*_c(\Ran,(\bj_{\Ran})_*\circ \BD_{\Ran'}(\CA_{\on{red}}')) @>{\sim}>> \on{C}^*_c(\Ran,\BD_{\Ran}\circ (\bj_{\Ran})_!(\CA_{\on{red}}')).
\endCD
$$

Hence, in order to prove that \eqref{e:interm map} is an isomorphism (and thus finish the proof of \thmref{t:product formula}), 
we need to show that the maps
$$\on{C}^*_c(\Ran,(\bj_{\Ran})_*(\CB_{\on{red}}'))\to \on{C}^*_c(\Ran,(\bj_{\Ran})_*\circ \BD_{\Ran'}(\CA_{\on{red}}'))$$
and
$$\on{C}^*_c(\Ran,\CB_{\on{red}}) \to \on{C}^*_c(\Ran,(\bj_{\Ran})_*(\CB_{\on{red}}'))$$
and
$$\on{C}^*_c(\Ran,\BD_{\Ran}\circ (\bj_{\Ran})_!(\CA_{\on{red}}'))\to \left(\on{C}^*_c(\Ran,(\bj_{\Ran})_!(\CA_{\on{red}}'))\right)^\vee,$$
appearing in the above commutative diagram, are isomorphisms.

\medskip

Now, \thmref{t:local duality} implies that the first of these three maps is an isomorphism. The fact that the second map is an isomorphism
follows from the next assertion (proved right below): 

\begin{prop}  \label{p:B is extended}
For $G$ satisfying the assumption of \secref{sss:good G}, the map
$$\CB_{\on{red}} \to (\bj_{\Ran})_*(\CB_{\on{red}}')$$
is an isomorphism.
\end{prop}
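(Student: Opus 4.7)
The plan is to verify the asserted isomorphism pointwise on each $X^\CI$, to reduce via the diagonal stratification to the open pairwise-disjoint stratum, and finally to a single vanishing statement on $X-X'$. First, by the limit presentation $\Shv^!(\Ran)\simeq\underset{\CI\in\on{Fin}^s}{\on{lim}}\,\Shv(X^\CI)$ of \propref{p:express Ran} together with formula \eqref{e:descr j*}, the map $\CB_{\on{red}}\to(\bj_{\Ran})_*(\CB_{\on{red}}')$ is an isomorphism if and only if for every finite non-empty set $\CI$, the sheaf $\CG_\CI:=(\on{ins}_\CI)^!(\CB_{\on{red}})\in\Shv(X^\CI)$ is $*$-extended from $X'{}^\CI$. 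Letting $\bi_\CI:X^\CI-X'{}^\CI\hookrightarrow X^\CI$ be the closed complement, this amounts to the vanishing $(\bi_\CI)^*(\CG_\CI)=0$, which I check on the diagonal stratification of $X^\CI$ whose strata are quotients of $\overset{\circ}{X}{}^\CJ$ indexed by surjections $\CI\twoheadrightarrow\CJ$. Any stratum contained in the complement must have at least one coordinate in $X-X'$, so the task reduces to showing that for every $\CJ$, the $!$-restriction $(\on{ins}_\CJ)^!(\CB_{\on{red}})|_{\overset{\circ}{X}{}^\CJ}$ is supported on $\overset{\circ}{X}{}^\CJ\cap X'{}^\CJ$.

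Second, on the pairwise disjoint stratum $\overset{\circ}{X}{}^\CJ$ the divisor $D_I$ splits as a disjoint union of graphs of the individual sections, hence $BG_{K\subseteq I}\simeq\prod_{i\in I-K}BG_{x_i}$ over $\overset{\circ}{X}{}^\CJ$. Combining this product decomposition via K\"unneth with \propref{p:inserting the aug} (which describes $\CB_{\on{untl,aug}}$ on disjoint configurations as a direct sum) and \propref{p:out} (which realizes $\CB_{\on{red}}=\on{TakeOut}(\CB_{\on{untl,aug}})$ as a total fiber), a direct Koszul-type cancellation yields the factorization identity
$$(\on{ins}_\CJ)^!(\CB_{\on{red}})\bigl|_{\overset{\circ}{X}{}^\CJ}\;\simeq\;\CB_{\on{red},1}^{\boxtimes\CJ}\bigl|_{\overset{\circ}{X}{}^\CJ},$$
where $\CB_{\on{red},1}:=(\on{ins}_{\{1\}})^!(\CB_{\on{red}})\in\Shv(X)$. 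Thus the entire problem reduces to showing that $\CB_{\on{red},1}|_{X-X'}=0$.

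Finally, the $!$-fiber of $\CB_{\on{red},1}$ at a geometric point $x$ is $\on{C}^*_{\on{red}}(BG_x)$, which vanishes for $x\in X-X'$ by the cohomological contractibility assumption of \secref{sss:good G}. The main obstacle is to upgrade this pointwise vanishing to a sheaf-level vanishing on $X-X'$: by construction, $\CB_{\on{red},1}|_{X-X'}$ is of the form $\BD_{X-X'}(\on{Fib}(f_!f^!(\Lambda_{X-X'})\to\Lambda_{X-X'}))$ for the relative classifying family $f:BG|_{X-X'}\to X-X'$, and I need the specific choice of $G$ from \secref{sss:good G} (whose fibers over $X-X'$ are, e.g., vector groups) to guarantee that the map $f_!f^!(\Lambda_{X-X'})\to\Lambda_{X-X'}$ is an isomorphism as a map of sheaves, not merely fiberwise. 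Once this family-level cohomological triviality is extracted from the construction in \cite{Main Text}, the proposition follows.
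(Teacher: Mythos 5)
Your proof follows essentially the paper's route: reduce to the diagonal $X\subset\Ran$ via the factorization property of $\CB_{\on{red}}$, then use the contractibility of the fibers of $G$ over $X-X'$. The difference is that the paper simply invokes the established fact that $\CB_{\on{red}}$ is a commutative factorization algebra (\corref{c:B factor}), together with a general lemma (stated inline in the proof) that for any factorization (co)algebra $\CF$ the map $\CF\to(\bj_{\Ran})_*(\CF|_{\Ran'})$ is an isomorphism if and only if the map $\CF_X\to\bj_*(\CF_X|_{X'})$ is, with the lemma following immediately from the description \eqref{e:descr j*} of $(\bj_{\Ran})_*$ together with the diagonal stratification. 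You re-derive the factorization identity $(\on{ins}_\CJ)^!(\CB_{\on{red}})|_{\overset{\circ}X{}^\CJ}\simeq\CB_{\on{red},1}^{\boxtimes\CJ}|_{\overset{\circ}X{}^\CJ}$ by hand via K\"unneth, \propref{p:inserting the aug} and \propref{p:out}; this is valid but duplicates work already packaged in \corref{c:B factor} (via \propref{p:B factor} and \propref{p:aug and factor}(a)).

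Your worry in the last paragraph is unfounded, and this is the only real issue with your write-up. The complement $X-X'$ is a finite set of closed points of the smooth curve $X$ over the algebraically closed field $k$, so $\Shv(X-X')$ is just a finite product of copies of $\Lambda\mod$, and the $!$-fiber of an object at a point of $X-X'$ coincides with the object itself at that point. Consequently the pointwise vanishing $(\CB_{\on{red}})_{\{x\}}=\on{C}^*_{\on{red}}(BG_x)=0$ for all $x\in X-X'$, which you correctly deduce from the cohomological contractibility assumption of \secref{sss:good G}, already \emph{is} the sheaf-level vanishing $\CB_{\on{red},1}|_{X-X'}=0$. There is no ``family-level'' statement to extract from \cite{Main Text}, and the paper passes to pointwise $!$-fibers at exactly this step without further comment, for precisely this reason. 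Once you observe this, your proof is complete.
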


Finally, the fact that the third of the above maps is an isomorphism follows from \thmref{t:Verdier Ran open} using the
next assertion (also proved right below): 

\begin{prop} \label{p:estimate for extensions}
The object $\CA_{\on{red}}'\in \Shv(\Ran')$ satisfies the cohomological estimate of \thmref{t:Verdier on Ran} 
\footnote{We repeat that this estimate says that 
every integer $k\geq 0$ there exists an integer $n_k\geq 0$, such that 
the object $\on{ins}_\CI^!(\CA_{\on{red}}')|_{\overset{\circ}X{}'{}^\CI}$ is concentrated in \emph{perverse} cohomological degrees 
$\leq -k-|\CI|$ whenever $|\CI|>n_k$.} over the curve $X'$.
\end{prop}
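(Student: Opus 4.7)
The plan is to combine the factorization property of $\CA_{\on{red}}'$ with a cell-dimension analysis of the affine Grassmannian. By the factorization structure on $\CA_{\on{red}}'$ (inherited from the Beilinson-Drinfeld realization of the affine Grassmannian, as developed in Part~IV), there is a canonical identification of sheaves on the disjoint locus,
\[
\on{ins}_\CI^!(\CA_{\on{red}}')|_{\overset{\circ}X{}'{}^\CI}\;\simeq\;(\CA_X)^{\boxtimes \CI}|_{\overset{\circ}X{}'{}^\CI},\qquad
\CA_X:=\CA_{\on{red}}'|_{X'}\in\Shv(X'),
\]
where $X'\to\Ran'$ is the canonical inclusion as a one-point subset. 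Since external tensor product of perverse sheaves is perverse with perverse degrees adding (K\"unneth, valid in the constructible context with $\Lambda$ of finite cohomological dimension), it suffices to exhibit an integer $c>1$ for which $\CA_X$ is concentrated in perverse cohomological degrees $\leq -c$; one can then take $n_k:=\lceil k/(c-1)\rceil$.

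To obtain such a bound, I compute the !-stalks of $\CA_X$. For every closed point $x\in X'$ one has $i_x^!(\CA_X)\simeq \on{C}^{\on{red}}_*(\Gr_x)$. The hypothesis that $G|_{X'}$ is semi-simple and simply connected now enters decisively: the affine Grassmannian $\Gr_x$ is topologically simply connected; equivalently, the Bruhat stratification of $\Gr_x$ has the identity as the unique $0$-dimensional cell, while every other Schubert cell has even complex dimension $\geq 2$. Consequently each affine Schubert closure is a projective variety with $H_0=\Lambda$ and $H_1=0$, and passing to the colimit yields $H^{\on{BM}}_0(\Gr_x)=\Lambda$ and $H^{\on{BM}}_1(\Gr_x)=0$. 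It follows that $\on{C}^{\on{red}}_*(\Gr_x)=\on{Fib}(\on{C}_*(\Gr_x)\to\Lambda)$ is concentrated in cohomological degrees $\leq -2$.

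Because $G|_{X'}$ is a smooth group-scheme with reductive fibers of constant type, $\CA_X$ is \'etale-locally on $X'$ a (complex of) shifted local system(s), and for such a sheaf on a smooth curve the passage from the cohomological degree of its $!$-stalks to its perverse cohomological degree is a shift by $-1$. Hence $\CA_X$ is concentrated in perverse cohomological degrees $\leq -3$, so $(\CA_X)^{\boxtimes \CI}$ lies in perverse degrees $\leq -3|\CI|$, which is $\leq -k-|\CI|$ whenever $|\CI|\geq k/2$; the proposition follows with $n_k:=\lceil k/2\rceil$. The principal technical obstacle lies in Step~1, where the factorization identification must be extracted cleanly from the Part~IV formalism; the cell-dimension count of Step~2 and the perverse-vs.-$!$-stalk translation of Step~3 are essentially elementary, provided one first confirms that $\CA_X$ is \'etale-locally a shifted complex of local systems.
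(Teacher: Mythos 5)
Your proof follows the paper's argument essentially verbatim: use the cocommutative factorization structure on $\CA_{\on{red}}'$ (\corref{c:G factorizes}) to reduce the perverse-degree estimate to the case $|\CI| = 1$, pass to an \'etale cover where $G$ is a constant group-scheme, and conclude from the fact that $\on{C}^{\on{red}}_*(\Gr_x)$ is concentrated in cohomological degrees $\leq -2$ for $G$ semi-simple simply connected. The paper treats this last fact as well-known; you supply a cell-theoretic justification, but the specific claim you make about the Bruhat stratification --- that every cell other than the base point has even complex dimension $\geq 2$ --- is not correct: the Iwahori cells of $\Gr_x$ have complex dimensions $0, 1, 2, \ldots$ (the affine simple reflection already gives a one-complex-dimensional cell). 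The argument that actually works is simpler: any paving of $\Gr_x$ by affine cells forces its homology into even \emph{real} degrees, so $H_1(\Gr_x) = 0$ automatically, while $H_0(\Gr_x) = \Lambda$ because $\Gr_x$ is connected --- and this is precisely where the simple-connectedness of $G$ enters, as the paper's parenthetical remark emphasizes. With that local correction your proof matches the paper's in both structure and substance.
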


\begin{proof}[Proof of \propref{p:B is extended}]

As we shall see in \secref{sss:B factorizes}, the object $\CB_{\on{red}}\in \Shv^!(\Ran)$ has a structure of commutative
factorization algebra.  We have the following general assertion:

\begin{lem}
Let $\CF\in  \Shv^!(\Ran)$ have a structure of commutative factorization (co)algebra. Then the map
$\CF\to (\bj_{\Ran})_*(\CF|_{\Ran'})$
is an isomorphism if and only if the map
$\CF_X\to \bj_*(\CF_X|_{X'})$
is.
\end{lem}

The proof of the lemma is immediate from the description of the functor $(\bj_{\Ran})_*$ given by 
\eqref{e:descr j*}. 

\medskip

Hence, in order to prove the proposition, it suffices to show that for any $x\in X-X'$, we have
$(\CB_{\on{red}})_{\{x\}}=0$.  Note that for a singleton set we have
$$(\CB_{\on{red}})_{\{x\}}\simeq (\on{OblvUnit}\circ \on{OblvAug}(\CB_{\on{untl,aug}}))_{\{x\}}=
\CB_{\{x\}},$$
and the latter is by definition $\on{C}^*_{\on{red}}(BG_x)$. 

\medskip

Now, the assumption that $G_x$ is contractible implies that $BG_x$ is universally homologically contractible. Hence,
$\on{C}^*_{\on{red}}(BG_x)=0$ as required.

\end{proof} 

\begin{proof}[Proof of \propref{p:estimate for extensions}]

We will show that the restriction of $\CA_{\on{red}}'$ to $\overset{\circ}X{}^n$ lives in (perverse) cohomological degrees 
$\leq -3n$. 

\medskip

As we shall see in \secref{sss:A factorizes}, the object $\CA_{\on{red}}'\in \Shv^!(\Ran')$ has a structure of cocommutative
factorization algebra. 
\footnote{We say ``factorization algebra" because we have not introduced the general notion of \emph{factorization sheaf}.
For our purposes here it is the ``factorization" and not the ``cocommutative" part that is important.} 
Hence, in order to prove the required cohomological estimate, it suffices to do so for
$n=1$. I.e., we are considering the affine Grassmannian 
$$\Gr_{X'}\overset{g_X}\longrightarrow X',$$
and we need to show that
$$\on{Fib}\left((g_{X'})_!(\omega_{\Gr_{X'}})\to \omega_{X'}\right)$$
lives in (perverse) cohomological degrees $\leq -3$.

\medskip

By passing to the \'etale cover of $X'$, we can assume that $G|_{X'}$ is a constant group-scheme. Hence, it is
enough to show that for some/any point $x\in X$, we have that $\on{C}^{\on{red}}_*(\Gr_x)$ lives in cohomological
degrees $\leq -2$.  

\medskip

However, the latter is a well-known property of the affine Grassmannian of
\emph{semi-simple simply connected} groups. (Note that if $G$ is reductive but not semi-simple simply connected, then $\Gr_x$
is \emph{disconnected} and the estimate of \propref{p:estimate for extensions} does not hold.)

\end{proof} 

\section{Reduction to a pointwise duality statement}  \label{s:pointwise}

Our goal of the rest of Part V is to prove the local duality statement, \thmref{t:local duality}.
The material in this section will use Part IV of the paper. 

\medskip 

In this section we shall take $X$
to be a smooth (but not necessarily complete) curve. We will take $G$ to be a smooth group-scheme
over $X$, whose fibers are semi-simple and simply connected. We will reduce \thmref{t:local duality} to \thmref{t:pointwise duality} 
that says that the map 
$$\CB_{\on{red}}\to \BD_{\Ran}(\CA_{\on{red}})$$
induces an isomorphism
$$(\CB_{\on{red}})_{\{x\}}\to (\BD_{\Ran}(\CA_{\on{red}}))_{\{x\}}$$
for \emph{some} curve $X$ and \emph{some} point $x\in X$. 

\ssec{The structure on $\CA$ of cocommutative factorization coalgebra}

We wish to show that the map $\CB_{\on{red}}\to \BD_{\Ran}(\CA_{\on{red}})$, given by \eqref{e:local duality} is an isomorphism.
As was explained in the preamble to \secref{ss:alg and coalg}, a convenient tool for this would be to first endow $\CA_{\on{red}}$
(resp., $\CB_{\on{red}}$) with a structure of cocommutative (reps., commutative) factorization coalgebra (resp., algebra).

\medskip

In this subsection we will define the relevant structure on $\CA_{\on{red}}$, by deducing it from the corresponding structure
on $\CA_{\on{untl,aug}}$. 

\sssec{}   \label{sss:A factorizes}

Let us note that the diagonal map
$$\Gr_{\Ran_{\on{untl,aug}}}\to \Gr_{\Ran_{\on{untl,aug}}}\underset{\Ran_{\on{untl,aug}}}\times \Gr_{\Ran_{\on{untl,aug}}}$$
defines on $\CA_{\on{untl,aug}}$ a structure of cocommutative coalgebra on $\Shv^!(\Ran_{\on{untl,aug}})$
(with respect to the pointwise symmetric monoidal structure).

\medskip

By \corref{c:structure on red} we obtain that $\CA_{\on{red}}$ acquires a structure of cocommutative coalgebra in
$\Shv^!(\Ran)$ (with respect to the convolution symmetric monoidal structure). 

\sssec{}

We have the following crucial observation:

\begin{prop} \label{p:Gr factorizes}
The cocommutative coalgebra $\CA_{\on{untl,aug}}\in \Shv^!(\Ran_{\on{untl,aug}})$ is a cocommutative factorization coalgebra
(in the sense of \secref{sss:factor coalgebras aug}).
\end{prop}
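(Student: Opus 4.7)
The plan is to reduce the factorization property of $\CA_{\on{untl,aug}}$ to the classical factorization of the affine Grassmannian, using \propref{p:aug and factor}(b) as the bridge between the two notions of factorization. Since $\CA_{\on{untl,aug}} = \on{AddUnit}_{\on{aug}}(\CA_{\on{red}})$ by construction, and since condition (1) of \secref{sss:factor coalgebras aug}, namely the $(\ast)$ and $(\ast\ast)$ of \thmref{t:main}, is automatic for any object in the essential image of $\on{AddUnit}_{\on{aug}}$, invoking \propref{p:aug and factor}(b) reduces the problem to showing that $\CA_{\on{red}} \in \Shv^!(\Ran)$ is a cocommutative factorization coalgebra in the sense of \secref{sss:factor coalgebras}. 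That is, I need only verify the isomorphism \eqref{e:factor for co} on $(\Ran \times \Ran)_{\on{disj}}$.

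The key geometric input will be the classical factorization property of the affine Grassmannian: given $S$-points $I_1, I_2$ of $\Ran$ with disjoint images, the natural gluing map
$$\Gr_{I_1} \underset{S}\times \Gr_{I_2} \to \Gr_{I_1 \cup I_2}$$
is an isomorphism. This standard fact comes from the observation that a $G$-bundle on $S \times X$ trivialized off $\on{Graph}_{I_1} \sqcup \on{Graph}_{I_2}$ is uniquely determined by independent local data at each of the two disjoint graphs (using that $G$ is reductive, so bundles are affine near the disjoint graphs). By pseudo-properness of $g \colon \Gr_{\Ran} \to \Ran$ (\corref{c:pseudo-proper}) and the K\"unneth formula, this factorization of $\Gr_{\Ran}$ translates into the required isomorphism $\unn^!(\CA) \simeq \CA \boxtimes \CA$ over $(\Ran \times \Ran)_{\on{disj}}$, where $\CA := \on{Fib}(g_!(\omega_{\Gr_{\Ran}}) \to \omega_{\Ran})$. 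Pushing this isomorphism through the functor $\on{TakeOut}$ to obtain the corresponding statement for $\CA_{\on{red}}$ is harmless, because on the disjoint locus the passage from ``fiber of a tensor product'' to ``tensor product of fibers'' is automatic.

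The main obstacle I anticipate is the bookkeeping required to check that the cocommutative coalgebra structure on $\CA_{\on{untl,aug}}$ declared at the beginning of \secref{sss:A factorizes} (which is defined via the diagonal map of $\Gr_{\Ran_{\on{untl,aug}}}$ over $\Ran_{\on{untl,aug}}$) corresponds, under \corref{c:structure on red} and \propref{p:aug and factor}(b), to the coalgebra structure on $\CA_{\on{red}}$ whose factorization is produced by the geometric isomorphism displayed above. This amounts to the observation that, on the disjoint locus, the diagonal map of $\Gr$ over $\Ran$ is the inverse of the factorization isomorphism, together with the compatibility of this identification with the symmetric monoidal structure on $\on{AddUnit}_{\on{aug}}$ supplied by \thmref{t:units and tensor}. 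Once this tautological but non-trivial compatibility is unwound, the proposition will follow.
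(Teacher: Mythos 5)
Your strategy has a genuine gap at its central step, and it is also working in the opposite direction from what the paper does.

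First, the concrete error: the claim that the Gr\"unneth factorization of $\Gr_{\Ran}$ gives $\unn^!(\CA) \simeq \CA \boxtimes \CA$ on $(\Ran\times\Ran)_{\on{disj}}$ for $\CA := \on{Fib}(g_!(\omega_{\Gr_{\Ran}}) \to \omega_{\Ran})$ is false. What the factorization of the Grassmannian plus K\"unneth actually gives on the disjoint locus is $\unn^!(g_!\omega) \simeq (g_!\omega)\boxtimes (g_!\omega)$. Since $g_!(\omega) \simeq \CA \oplus \omega_{\Ran}$ (the unit section splits off), passing to fibers produces
$$\unn^!(\CA) \simeq (\CA\boxtimes\CA) \oplus (\CA\boxtimes\omega_{\Ran}) \oplus (\omega_{\Ran}\boxtimes\CA),$$
with two cross terms. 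The object $\CA$ is \emph{not} a cocommutative factorization coalgebra, and this is precisely why the paper introduces $\CA_{\on{red}}$ at all: at the fiber level, $\CA_{\{x_1,\dots,x_n\}} = \on{Fib}(\bigotimes_i \on{C}_*(\Gr_{x_i})\to\Lambda)$ contains those cross terms, while $(\CA_{\on{red}})_{\{x_1,\dots,x_n\}} = \bigotimes_i \on{C}^{\on{red}}_*(\Gr_{x_i})$ does not.

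Second, the claimed fix, ``pushing this isomorphism through $\on{TakeOut}$ is harmless because the passage from fiber-of-a-tensor-product to tensor-product-of-fibers is automatic,'' is exactly the step that is not automatic; that passage is the entire content of the $\on{TakeOut}$ machinery (Parts I, III, and \propref{p:aug and factor}). Since the isomorphism you are starting from is itself wrong, there is nothing to push through, and even granting the correct version one would still have to carry out genuine bookkeeping, which is what \propref{p:aug and factor}(b) encodes. But \propref{p:aug and factor}(b) only translates between the two notions; it does not by itself supply the factorization for $\CA_{\on{red}}$.

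The paper avoids all of this by never going down to $\CA_{\on{red}}$. It verifies the condition of \secref{sss:factor coalgebras aug} for $\CA_{\on{untl,aug}}$ directly: for an $S$-point $(K_1\subseteq I_1),(K_2\subseteq I_2)$ of $(\Ran_{\on{untl,aug}}\times\Ran_{\on{untl,aug}})_{\on{compl,disj}}$, the map
$$\Gr_{K_1\cup K_2\subseteq I_1\cup I_2}\overset{\on{diag}}\longrightarrow
\Gr_{K_1\cup I_2\subseteq I_1\cup I_2}\times \Gr_{I_1\cup K_2\subseteq I_1\cup I_2}$$
is an isomorphism of prestacks over $S$, which is the elementary gluing statement for $G$-bundles with trivializations/identifications on disjoint loci. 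Condition \eqref{e:aug factor cond co} follows at once. The reduction to $\CA_{\on{red}}$ via \propref{p:aug and factor}(b) is then applied \emph{afterwards}, as \corref{c:G factorizes}, not in the proof of the present proposition. So the proposition has a short, direct proof that your detour through $\CA_{\on{red}}$ would at best recreate after considerably more work, and at present your argument for the $\CA_{\on{red}}$ side does not go through.
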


\begin{proof}

The assertion of the proposition follows from the corresponding property of the lax prestack $\Gr_{\Ran_{\on{untl,aug}}}$. 
Namely, we claim that for any $S\in \Sch$ and an $S$-point $(K_1\subseteq I_1),(K_2\subseteq I_2)$ of 
$$(\Ran_{\on{untl,aug}}\times \Ran_{\on{untl,aug}})_{\on{compl,disj}},$$ the map
$$\Gr_{K_1\cup K_2\subseteq I_1\cup I_2}\overset{\on{diag}}\longrightarrow 
\Gr_{K_1\cup K_2\subseteq I_1\cup I_2}\times \Gr_{K_1\cup K_2\subseteq I_1\cup I_2}\to
\Gr_{K_1\cup I_2\subseteq I_1\cup I_2}\times \Gr_{I_1\cup K_2\subseteq I_1\cup I_2}$$
is an isomorphism.

\end{proof} 

By \propref{p:aug and factor}(b), from \propref{p:Gr factorizes} we obtain:

\begin{cor} \label{c:G factorizes}
The cocommutative coalgebra $\CA_{\on{red}}\in \Shv^!(\Ran)$ admits a canonical structure of cocommutative factorization coalgebra
(in the sense of \secref{sss:factor coalgebras}). 
\end{cor}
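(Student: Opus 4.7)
\medskip

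The plan is to deduce \corref{c:G factorizes} as a direct application of two previously established results: \propref{p:Gr factorizes} and \propref{p:aug and factor}(b). The cocommutative coalgebra structure on $\CA_{\on{red}}$ is already in hand from \secref{sss:A factorizes}: the diagonal of $\Gr_{\Ran_{\on{untl,aug}}}$ over $\Ran_{\on{untl,aug}}$ endows $\CA_{\on{untl,aug}}$ with a cocommutative coalgebra structure in $\Shv^!(\Ran_{\on{untl,aug}})$ relative to the pointwise monoidal structure, and \corref{c:structure on red} (which relies on the symmetric monoidal structure of $\on{AddUnit}_{\on{aug}}$ established in \thmref{t:units and tensor}) then transports this to a cocommutative coalgebra structure on $\CA_{\on{red}}$ with respect to the convolution monoidal structure on $\Shv^!(\Ran)$.

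With this structure in place, what remains is to verify the factorization condition of \secref{sss:factor coalgebras}. For this I would simply quote \propref{p:aug and factor}(b), applied to the pair $\CG := \CA_{\on{red}}$ and $\wt\CG := \on{AddUnit}_{\on{aug}}(\CA_{\on{red}}) \simeq \CA_{\on{untl,aug}}$; the second identification holds because $\CA_{\on{untl,aug}}$ satisfies conditions $(*)$ and $(**)$ of \thmref{t:main}, as was verified in \secref{sss:unital Gr}. The proposition tells us that $\CA_{\on{red}}$ is a cocommutative factorization coalgebra in the sense of \secref{sss:factor coalgebras} if and only if $\CA_{\on{untl,aug}}$ is one in the sense of \secref{sss:factor coalgebras aug}.

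So the entire argument reduces to invoking \propref{p:Gr factorizes}. That proposition in turn was proved geometrically: for quadruples $(K_1\subseteq I_1),(K_2\subseteq I_2)$ parameterizing an $S$-point of $(\Ran_{\on{untl,aug}}\times \Ran_{\on{untl,aug}})_{\on{compl,disj}}$, the disjointness hypothesis forces the diagonal-style map
\[
\Gr_{K_1\cup K_2\subseteq I_1\cup I_2}\to \Gr_{K_1\cup I_2\subseteq I_1\cup I_2}\times \Gr_{I_1\cup K_2\subseteq I_1\cup I_2}
\]
to be an isomorphism, since a trivialization on a disjoint union of divisors is the same datum as a pair of trivializations on each.

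There is no real obstacle here: the combinatorial bookkeeping of signs and indices, namely verifying that the factorization isomorphism produced by \propref{p:aug and factor}(b) coincides with the one arising naturally from the geometry of $\Gr_{\Ran_{\on{untl,aug}}}$, is entirely absorbed into the compatibility statements already contained in \propref{p:aug and factor} and \corref{c:structure on red}. If anything is subtle, it is making sure that the cocommutative coalgebra structure on $\CA_{\on{red}}$ used to test the factorization condition is precisely the one transported through $\on{AddUnit}_{\on{aug}}$; this is automatic from how \propref{p:aug and factor}(b) is set up, since it takes as input a coalgebra $\CG$ and inspects $\on{AddUnit}_{\on{aug}}(\CG)$ as a coalgebra in the pointwise monoidal structure.
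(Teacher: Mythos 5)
Your proposal is correct and follows the paper's own argument exactly: the corollary is obtained by combining \propref{p:Gr factorizes} with \propref{p:aug and factor}(b), using the identification $\on{AddUnit}_{\on{aug}}(\CA_{\on{red}})\simeq \CA_{\on{untl,aug}}$ from \thmref{t:main}. The additional recapitulation of how the coalgebra structure is transported via \corref{c:structure on red} is consistent with the surrounding discussion in \secref{sss:A factorizes}.
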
 

\sssec{}

Consider the object $\BD_{\Ran}(\CA_{\on{red}})\in \Shv^!(\Ran)$. By \secref{sss:alg structure on dual}, we obtain that
$\BD_{\Ran}(\CA_{\on{red}})$ acquires a structure of commutative algebra in $\Shv^!(\Ran)$ (with respect to the
convolution symmetric monoidal structure).

\medskip

Applying \propref{p:Verdier factorizes}, and using \propref{p:estimate for extensions}, we obtain:

\begin{cor} \label{c:A factor}
The commutative algebra $\BD_{\Ran}(\CA_{\on{red}})$ admits a canonical structure of commutative factorization algebra
(in the sense of \secref{sss:factor algebras}). 
\end{cor}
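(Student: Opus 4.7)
The plan is to apply \propref{p:Verdier factorizes} directly with $\CG = \CA_{\on{red}}$. Three of the four hypotheses of that proposition come for free from the global setup: Part V is carried out in the context of constructible sheaves, the ring of coefficients $\Lambda$ is assumed to be of finite cohomological dimension, and in the present section $X$ is a smooth curve. The fourth input of \propref{p:Verdier factorizes}, namely that $\CG$ be a cocommutative factorization coalgebra, is precisely \corref{c:G factorizes}. Thus the only thing left to check is the cohomological/compactness hypothesis on $(\CA_{\on{red}})_X \in \Shv(X)$.

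To verify this hypothesis, I note that under the assumptions of this section $X' = X$, so unwinding the construction in \secref{sss:unital Gr} identifies $(\CA_{\on{red}})_X$ with $\on{Fib}\bigl((g_X)_!(\omega_{\Gr_X}) \to \omega_X\bigr)$, where $g_X: \Gr_X \to X$ is the relative affine Grassmannian of $G$ over $X$. The degree estimate then follows fiberwise: since $G_x$ is semi-simple and simply connected for every $x \in X$, the affine Grassmannian $\Gr_x$ is topologically connected and simply connected, so $\on{C}_*^{\on{red}}(\Gr_x)$ is concentrated in (perverse) cohomological degrees $\leq -2$. This is exactly the input already used in the proof of \propref{p:estimate for extensions}, and the family version of the statement follows by the same argument.

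For compactness of the perverse cohomology sheaves of $(\CA_{\on{red}})_X$, I would use the fact that, because $G$ is semi-simple, $\Gr_X$ is ind-proper over $X$: writing $\Gr_X \simeq \underset{a}{\on{colim}}\, Z_a$ with the $Z_a$ proper over $X$, we have $(g_X)_!(\omega_{\Gr_X}) \simeq \underset{a}{\on{colim}}\, (Z_a \to X)_!(\omega_{Z_a})$, and each term is constructible. The standard dimension estimate on strata of the affine Grassmannian implies that in any bounded range of perverse degrees this colimit stabilizes, so the perverse cohomology sheaves of $(\CA_{\on{red}})_X$ are constructible and therefore compact in the ind-completion.

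The main obstacle, as far as routine technicalities go, is this last compactness/stabilization verification; the degree bound itself is essentially formal given the topology of the affine Grassmannian of a semi-simple simply connected group. Once both properties of $(\CA_{\on{red}})_X$ are in place, \propref{p:Verdier factorizes} applies and produces a commutative factorization algebra structure on $\BD_{\Ran}(\CA_{\on{red}})$. One should finally observe that the underlying commutative algebra structure it produces agrees with the one from \secref{sss:alg structure on dual}, since the latter is exactly the structure that \propref{p:Verdier factorizes} upgrades to a factorization algebra; this gives the canonicity clause of the corollary.
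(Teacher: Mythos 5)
Your proof is correct and follows the same route as the paper: apply \propref{p:Verdier factorizes} to $\CG=\CA_{\on{red}}$, supplying the cocommutative factorization coalgebra structure from \corref{c:G factorizes} and the connectivity bound on $(\CA_{\on{red}})_X$ exactly as in the proof of \propref{p:estimate for extensions}. You in fact spell out the compactness check on the cohomology sheaves of $(\CA_{\on{red}})_X$ via ind-properness of $\Gr_X$ and stabilization, which the paper leaves implicit when it cites \propref{p:estimate for extensions} (whose statement only records the degree estimate).
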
 

\ssec{The structure on $\CB$ of commutative factorization algebra}  \label{ss:B factorizes}

In this subsection we will define a structure of commutative factorization algebra on $\CB_{\on{red}}$, by deducing it from the
corresponding structure on $\CB_{\on{untl,aug}}$. 

\sssec{}   \label{sss:B factorizes}

Next, we note that the diagonal maps
$$BG_{K\subset I}\to BG_{K\subset I}\times BG_{K\subset I}$$
define on $\CB_{\on{untl,aug}}$ a structure of commutative algebra on $\Shv^!(\Ran_{\on{untl,aug}})$
(with respect to the pointwise symmetric monoidal structure).

\medskip

Hence, by \corref{c:structure on red} we obtain that $\CB_{\on{red}}$ acquires a structure of commutative algebra in
$\Shv^!(\Ran)$ (with respect to the convolution product). 

\sssec{}

We note:

\begin{prop}  \label{p:B factor}
The commutative algebra $\CB_{\on{untl,aug}}$ is a commutative factorization algebra (in the sense of \secref{sss:factor algebras aug}). 
\end{prop}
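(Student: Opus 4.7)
My plan is to verify the factorization condition of \secref{sss:factor algebras aug} for $\CB_{\on{untl,aug}}$ by exhibiting it as a reduced K\"unneth decomposition for the relative cohomology of the stacks $BG_{K\subseteq I}$ over $S$. First, since $\CB_{\on{untl,aug}}$ was shown in \secref{sss:B red} to satisfy conditions $(*)$ and $(**)$ of \thmref{t:main}, Remark~\ref{r:more factor} reduces the factorization condition to the $K_1=K_2=\emptyset$ case: writing $\CB_J:=(\CB_{\on{untl,aug}})_{S,\emptyset\subset J}$ and $\CH_J:=(f_J)_!(f_J)^!(\Lambda_S)\simeq (f_J)_!(\omega_{BG_J/S})$, so that $\CB_J\simeq \BD_S(\CH_J^{\on{red}})$ with $\CH_J^{\on{red}}:=\on{Fib}(\CH_J\to \Lambda_S)$, it remains to show that for $S$-points $I_1,I_2\subset \Maps(S,X)$ with disjoint images, the map \eqref{e:factor alg aug}
\begin{equation*}
\CB_{I_1}\oplus (\CB_{I_1}\overset{!}\otimes \CB_{I_2})\oplus \CB_{I_2}\longrightarrow \CB_{I_1\cup I_2}
\end{equation*}
is an isomorphism.

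The geometric input is the following: since $I_1$ and $I_2$ have disjoint images, the relative Cartier divisors $D_{I_1},D_{I_2}\subset S\times X$ are disjoint, so $D_{I_1\cup I_2}=D_{I_1}\sqcup D_{I_2}$, and pullback along the two closed inclusions yields a canonical isomorphism of $S$-stacks
\begin{equation*}
BG_{I_1\cup I_2}\overset{\sim}\longrightarrow BG_{I_1}\times_S BG_{I_2},
\end{equation*}
under which the two restriction maps $BG_{I_1\cup I_2}\to BG_{I_i}$ correspond to the two projections. The relative K\"unneth formula for $!$-pushforward, which is part of our sheaf-theoretic package, then produces $\CH_{I_1\cup I_2}\simeq \CH_{I_1}\overset{!}\otimes \CH_{I_2}$. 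The trivial $G$-bundle furnishes a section $S\to BG_J$ which splits each $\CH_J\simeq \Lambda_S\oplus \CH_J^{\on{red}}$ in a manner compatible with the augmentation, so expanding the tensor product and cancelling the $\Lambda_S\overset{!}\otimes \Lambda_S$ factor yields a canonical reduced K\"unneth isomorphism
\begin{equation*}
\CH_{I_1\cup I_2}^{\on{red}}\simeq \CH_{I_1}^{\on{red}}\oplus \CH_{I_2}^{\on{red}}\oplus \bigl(\CH_{I_1}^{\on{red}}\overset{!}\otimes \CH_{I_2}^{\on{red}}\bigr),
\end{equation*}
in which the two single-tensor summands come from the $!$-images of the restriction maps and the double-tensor summand is the $!$-external product.

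Applying $\BD_S$ converts this into a decomposition $\CB_{I_1\cup I_2}\simeq \CB_{I_1}\oplus \CB_{I_2}\oplus \BD_S(\CH_{I_1}^{\on{red}}\overset{!}\otimes \CH_{I_2}^{\on{red}})$, and a direct Verdier-duality computation using $A\overset{!}\otimes B=\on{diag}_S^!(A\boxtimes B)$ together with the K\"unneth map \eqref{e:product and Verdier} identifies the third summand with $\CB_{I_1}\overset{!}\otimes \CB_{I_2}$. What remains is a compatibility check: the two single-summand inclusions are, by construction, induced by the restriction maps $BG_{I_1\cup I_2}\to BG_{I_i}$, which is precisely how the algebra structure of \secref{sss:B factorizes} yields the maps $\CB_{I_i}\to \CB_{I_1\cup I_2}$ appearing in \eqref{e:factor alg aug}; the middle summand corresponds to the cup product on the cohomology of $BG_{I_1\cup I_2}$ of two classes pulled back from $BG_{I_1}$ and $BG_{I_2}$, which under the isomorphism $BG_{I_1\cup I_2}\simeq BG_{I_1}\times_S BG_{I_2}$ is by definition the K\"unneth external product, matching the middle term in \eqref{e:factor alg aug}.

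The main obstacle is the Verdier-dual K\"unneth identification of the third summand: one needs $\BD_S$ to intertwine $\overset{!}\otimes$ on the specific objects $\CH_{I_j}^{\on{red}}$, which is not formal and requires the compactness-type hypothesis of \lemref{l:product and Verdier sch compact}. To handle this I would reduce, via a smooth atlas on $BG_J$, to a K\"unneth statement for constructible sheaves on schemes of finite type; the relevant boundedness and compactness of the cohomology sheaves of $\CH_J^{\on{red}}$ follows from the algebraicity of $BG_J$ and finite-typeness of $G/X$. Once this technical point is settled, the whole argument is essentially a categorification of the classical reduced K\"unneth identity $H^*_{\on{red}}(Y_1\times Y_2)=H^*_{\on{red}}(Y_1)\oplus H^*_{\on{red}}(Y_2)\oplus \bigl(H^*_{\on{red}}(Y_1)\otimes H^*_{\on{red}}(Y_2)\bigr)$ applied fiberwise over $S$.
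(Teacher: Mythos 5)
Your proof takes the same route as the paper's: both rest on the observation that for disjoint divisors $D_{I_1}\cap D_{I_2}=\emptyset$, restriction yields an isomorphism $BG_{I_1\cup I_2}\simeq BG_{I_1}\times_S BG_{I_2}$, after which the factorization condition reduces to a K\"unneth formula for the stacks $BG_J$; the paper simply cites \cite[Proposition A.5.19]{Main Text} for this K\"unneth formula, whereas you unpack it into a reduced K\"unneth decomposition for $\CH_J:=(f_J)_!(f_J)^!(\Lambda_S)$ followed by Verdier duality. Two small points. First, your opening appeal to Remark~\ref{r:more factor} is a no-op: the definition of commutative factorization algebra in \secref{sss:factor algebras aug} is already stated only for $K_1=K_2=\emptyset$; the remark records the \emph{converse} implication, that the restricted condition propagates to general $(K_1,K_2)$ under $(*)$ and $(**)$, which is not needed here. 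Second, and more substantively, the intermediate formula $\CH_{I_1\cup I_2}\simeq \CH_{I_1}\overset{!}\otimes\CH_{I_2}$ has the wrong tensor: the K\"unneth formula for the $!$-pushforward of $(f_J)^!(\Lambda_S)$ along the fiber product $BG_{I_1}\times_S BG_{I_2}\to S$ naturally lands in the $*$-tensor product $\otimes_{\Lambda_S}=\on{diag}_S^*(-\boxtimes-)$, whose unit is $\Lambda_S$; it is then Verdier duality (which exchanges $\on{diag}_S^*$ and $\on{diag}_S^!$, equivalently the two tensors) that converts this into the $\overset{!}\otimes$-decomposition for $\CB_J=\BD_S(\CH_J^{\on{red}})$. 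As written, carrying $\overset{!}\otimes$ through your middle step and then applying $\BD_S$ would produce $\CB_{I_1}\overset{*}\otimes\CB_{I_2}$ rather than the required $\CB_{I_1}\overset{!}\otimes\CB_{I_2}$. This is a fixable sign-of-tensor slip, not a gap in the approach, and the compatibility discussion in your last two paragraphs otherwise matches what the cited proposition supplies.
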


\begin{proof}

This follows from the corresponding property of the assignment
$$(S,K\subseteq I)\rightsquigarrow BG_{K\subset I}.$$

Namely, for $S\in \Sch$ and an $S$-point $(I_1,I_2)$ of 
$$(\Ran_{\on{untl}}\times \Ran_{\on{untl}})_{\on{disj}},$$ the map
$$BG_{x_{I_1\cup I_2}}\overset{\on{diag}}\longrightarrow BG_{x_{I_1\cup I_2}}\times BG_{x_{I_1\cup I_2}}\to
BG_{x_{I_1}}\times BG_{x_{I_2}}$$
is an isomorphism, and the fact that $BG_{x_{I}}$ are quasi-compact Artin stacks over $S$, so that the K\"unneth formula
holds (see \cite[Proposition A.5.19]{Main Text}).

\end{proof}

By \propref{p:aug and factor}(a), from \propref{p:B factor}, we obtain:

\begin{cor} \label{c:B factor}
The commutative algebra $\CB_{\on{red}}\in \Shv^!(\Ran)$ has a canonical structure of commutative factorization algebra
(in the sense of \secref{sss:factor algebras}). 
\end{cor}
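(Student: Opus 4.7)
The plan is to derive \corref{c:B factor} as a direct formal consequence of \propref{p:B factor} together with \propref{p:aug and factor}(a), with no genuinely new ingredients required. By the construction in \secref{sss:B red}, we have a canonical identification $\CB_{\on{untl,aug}} \simeq \on{AddUnit}_{\on{aug}}(\CB_{\on{red}})$, so $\CB_{\on{red}} \simeq \on{TakeOut}(\CB_{\on{untl,aug}})$. Via \thmref{t:units and tensor} and \corref{c:structure on red}, the commutative algebra structure on $\CB_{\on{untl,aug}}$ introduced in \secref{sss:B factorizes} (with respect to the pointwise tensor product, coming from the diagonal maps $BG_{K\subseteq I}\to BG_{K\subseteq I}\times BG_{K\subseteq I}$) corresponds uniquely to a commutative algebra structure on $\CB_{\on{red}}$ with respect to the convolution symmetric monoidal structure on $\Shv^!(\Ran)$.

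With these matching algebra structures in place, the equivalence asserted in \propref{p:aug and factor}(a) reads precisely as: $\CB_{\on{red}}$ is a commutative factorization algebra in the sense of \secref{sss:factor algebras} if and only if $\CB_{\on{untl,aug}}$ is a commutative factorization algebra in the sense of \secref{sss:factor algebras aug}. Since the latter has just been established in \propref{p:B factor}, the corollary follows immediately. What makes this direct application legitimate is that \propref{p:aug and factor}(a) is an honest ``if and only if,'' not merely a one-way implication from ordinary to augmented factorization.

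At the level of the corollary itself there is essentially no obstacle to overcome, because the substantive work has been isolated into \propref{p:B factor} and \propref{p:aug and factor}(a). The mild subtlety worth keeping in mind, rather than a true obstacle, is the direction of \propref{p:aug and factor}(a) that recovers factorization of $\CB_{\on{red}}$ from factorization of $\CB_{\on{untl,aug}}$: the factorization map for $\CB_{\on{red}}$ on $(\Ran\times\Ran)_{\on{disj}}$ is identified, via pull-push along $\xi\times\xi$ and $\psi\times\psi$, with the ``middle direct summand'' of the augmented factorization isomorphism on $(\Ran_{\on{untl}}\times\Ran_{\on{untl}})_{\on{disj}}$, and one uses the conservativity of $((\psi\times\psi)_{\on{disj}})_!\circ((\xi\times\xi)_{\on{disj}})^!$ to conclude. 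No additional input beyond what the excerpt has already provided is needed.
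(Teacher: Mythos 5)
Your argument is exactly the paper's: \corref{c:B factor} is deduced by applying \propref{p:aug and factor}(a) to \propref{p:B factor}, using \corref{c:structure on red} to match the algebra structures. No difference in approach.
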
 

\ssec{Compatibility of the pairing with the algebra structure}

We have endowed both $\BD_{\Ran}(\CA)$ and $\CB$ with a structure of commutative algebra in $\Shv^!(\Ran)$. In this subsection we will show
that the map $\CB\to \BD_{\Ran}(\CA)$ has a natural structure of homomorphism of commutative algebras. 

\sssec{}

Observe that the pairing 
$$\CB_{\on{untl,aug}}\boxtimes \CA_{\on{untl,aug}} \to \omega_{(\Ran_{\on{untl,aug}}\times \Ran_{\on{untl,aug}})_{\on{sub,disj}}}$$
of \eqref{e:aug pairing for Gr} has a structure of compatibility with the algebra and coalgebra
structure on $\CB_{\on{untl,aug}}$ and $\CA_{\on{untl,aug}}$ (see \secref{sss:compat pairing} for what this means). 

\medskip

Hence, by \lemref{l:compat pairing aug}, we obtain that the induced pairing 
$$\CB_{\on{red}}\boxtimes \CA_{\on{red}}\to (\on{diag}_{\Ran})_!(\omega_{\Ran})$$
of \eqref{e:pairing on red Gr} is has a structure of compatibility with the algebra and coalgebra
structure on $\CB_{\on{red}}$ and $\CA_{\on{red}}$ (see \secref{sss:compat pairing red}) for what this means). 

\medskip

Therefore, by \lemref{l:pairings and alg}, the map
$$\CB_{\on{red}}\to \BD_{\Ran}(\CA_{\on{red}}),$$
appearing in \thmref{t:local duality} has a structure of homomorphism of commutative algebras.

\sssec{}  \label{sss:reduce to diag}

Combining with Corollaries \ref{c:A factor} and \ref{c:B factor}, and \lemref{l:isom on diag} we obtain that 
in order to prove \thmref{t:local duality}, it suffices to show that the map 
\begin{equation} \label{e:assertion on diag}
(\CB_{\on{red}})_X\to (\BD_{\Ran}(\CA_{\on{red}}))_X
\end{equation} 
is an isomorphism.

\ssec{Reduction to the constant group-scheme case}

In this subsection we will exploit some properties of the assertion of \thmref{t:local duality} that embody its
locality property.

\medskip

We will show that for the validity of the isomorphism \eqref{e:assertion on diag} is insensitive to changes
of the curve or forms of $G$.

\sssec{}

Let $f:X_1\to X_2$ be an \'etale map between (not necessarily complete) curves. Let $f_{\Ran}$ denote 
the resulting map $\Ran_1\to \Ran_2$. The map $f_{\Ran}$ is not \'etale, but we shall now single out a locus
of $\Ran_1$ over which it is well-behaved. 

\medskip

Let $(X_1\times X_1)_{\on{rel.disj}}\subset (X_1\times X_1)$ be the open subset obtained by removing the
closed subset
$$X_1\underset{X_2}\times X_1-\on{diag}_{X_1},$$
(where $\on{diag}_{X_1}$ is a connected component in $X_1\underset{X_2}\times X_1$ due to the assumption that
$X_1$ be \'etale over $X_2$). 

\medskip

For a finite set $\CI$, let $(X_1)^\CI_{\on{rel.disj}}\subset (X_1)^\CI$ be the open subset
$$\underset{i_1\neq i_2}\cap\,  (X_1)^{\CI-\{i_1,i_2\}}\times (X_1\times X_1)_{\on{rel.disj}}.$$

For example $k$-points of $(X_1)^\CI_{\on{rel.disj}}$ are maps
$$\CI\to X_1(k)$$
with the following property: if for a pair of indices $i_1$ and $i_2$ we have 
$f(x_{i_1})=f(x_{i_2})$ then $x_{i_1}=x_{i_2}$. 

\medskip

Set
$$(\Ran_1)_{\on{rel.disj}}:=\underset{\CI\in (\on{Fin}^s)^{\on{op}}}{\on{colim}}\, (X_1)^\CI_{\on{rel.disj}}.$$

We claim:

\begin{prop} \label{p:rel disj}  \hfill 

\smallskip

\noindent{\em(a)}  The forgetful map $(\Ran_1)_{\on{rel.disj}}\to \Ran_1$ is an open embedding.  

\smallskip

\noindent{\em(b)} The composed map
$$(\Ran_1)_{\on{rel.disj}}\to \Ran_1 \overset{f_{\Ran}}\longrightarrow \Ran_2$$
is \'etale, and is surjective if $f$ is.

\end{prop}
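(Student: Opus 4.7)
The plan is to establish parts (a) and (b) separately by scheme-theoretic computations at each level of the presentation $\Ran_i = \underset{\CI \in (\on{Fin}^s)^{\on{op}}}{\on{colim}}\, X_i^\CI$.

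For (a), I will check that the open immersions $(X_1^\CI)_{\on{rel.disj}} \hookrightarrow X_1^\CI$ are compatible with the diagonal maps $\on{diag}_\pi \colon X_1^\CI \to X_1^\CJ$ attached to surjections $\pi \colon \CJ \twoheadrightarrow \CI$: if $(x_i)_{i \in \CI}$ is rel.disj and $j_1, j_2 \in \CJ$ satisfy $f(x_{\pi(j_1)}) = f(x_{\pi(j_2)})$, then $x_{\pi(j_1)} = x_{\pi(j_2)}$ by hypothesis, so the pulled-back $\CJ$-tuple is again rel.disj. These opens thus assemble into a morphism $(\Ran_1)_{\on{rel.disj}} \to \Ran_1$. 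To check it is an open embedding, I base-change by an arbitrary map $T \to \Ran_1$ from a scheme; it factors through some $\on{ins}_\CI \colon X_1^\CI \to \Ran_1$, and the fiber product is simply $T \underset{X_1^\CI}\times (X_1^\CI)_{\on{rel.disj}}$, an open subscheme of $T$.

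For \'etaleness in (b), note that any map $T \to \Ran_2$ classifies a finite subset $J \subset \Maps(T, X_2)$, and the tautological labeling of $J$ by itself provides a global factorization $T \to X_2^J \to \Ran_2$. Set $W_J := T \underset{X_2^J}\times X_1^J$, which is \'etale over $T$ because $f^J$ is \'etale, and let $U_J \subset W_J$ be the preimage of the open subscheme $(X_1^J)_{\on{rel.disj}} \subset X_1^J$ under the universal lift map. I plan to construct a canonical isomorphism of $T$-prestacks
$$U_J \simeq T \underset{\Ran_2}\times (\Ran_1)_{\on{rel.disj}}.$$
The forward map sends an $S$-valued lift $(\tilde{j}_k)_{k \in J}$ to the finite subset $\{\tilde{j}_k\} \subset \Maps(S, X_1)$, which is rel.disj with $f$-image equal to $J|_S$. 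The inverse sends a rel.disj $I \subset \Maps(S, X_1)$ with $f(I) = J|_S$ to the tuple $\tilde{j}_k := (f|_I)^{-1}(j_k|_S)$. Since $U_J$ is open in the \'etale $T$-scheme $W_J$, the fiber product is then \'etale over $T$.

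Surjectivity when $f$ is surjective will be checked on geometric points: any $\bar{k}$-point of $\Ran_2$ is a finite subset $J \subset X_2(\bar{k})$, and choosing preimages $\tilde{j} \in X_1(\bar{k})$ of each $j \in J$ gives a subset $\{\tilde{j}\}$ which is automatically rel.disj because distinct lifts have distinct $f$-images. The delicate point of the \'etaleness argument is the well-definedness of the inverse construction above, which requires that the rel.disj condition on $I$ (pointwise on $S$ by its definition via the colimit) imply that $f|_I \colon I \to \Maps(S, X_2)$ is globally injective as a map of sets. Indeed, if $h_1, h_2 \in I$ satisfy $f(h_1) = f(h_2)$ as maps $S \to X_2$, then at each $s \in S$ we have $f(h_1(s)) = f(h_2(s))$, so the pointwise rel.disj condition yields $h_1(s) = h_2(s)$, hence $h_1 = h_2$ globally; combined with the hypothesis $f(I) = J|_S$, this gives a bijection $f|_I \colon I \xrightarrow{\sim} J|_S$, making $(f|_I)^{-1}$ well-defined.
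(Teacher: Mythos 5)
Your proof is correct, and for part (b) it takes a somewhat more hands-on route than the paper. The paper first records (without proof) the lemma that $X_1^\CK\underset{X_1^\CI}\times (X_1)^\CI_{\on{rel.disj}}=(X_1)^\CK_{\on{rel.disj}}$ and $X_2^\CK\underset{X_2^\CI}\times (X_1)^\CI_{\on{rel.disj}}=(X_1)^\CK_{\on{rel.disj}}$ for any surjection $\CI\twoheadrightarrow \CK$, and then runs the colimit formula $\Ran_1=\underset{\CI}{\on{colim}}\, X_1^\CI$ through the fiber product $X_i^\CJ\underset{\Ran_i}\times (\Ran_1)_{\on{rel.disj}}$ to conclude both parts identify with $(X_1)^\CJ_{\on{rel.disj}}$. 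You instead construct the isomorphism $U_J\simeq T\underset{\Ran_2}\times (\Ran_1)_{\on{rel.disj}}$ directly on $S$-points, which makes the geometric content — the bijectivity of $f|_I\colon I\to J|_S$ for rel.disj $I$ — explicit rather than buried inside an unproved lemma. Both reduce to the same ingredient: the \'etale hypothesis forces $\on{diag}_{X_1}$ to be open and closed in $X_1\underset{X_2}\times X_1$.

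Two small points of rigor worth tightening. First, in part (a) your assertion that the fiber product over $\Ran_1$ ``is simply'' $T\underset{X_1^\CI}\times (X_1^\CI)_{\on{rel.disj}}$ is correct but not free: it is exactly the statement the paper derives by pushing the colimit presentation of $\Ran_1$ through the fiber product (using the compatibility with diagonals you did check). One either reproduces that colimit manipulation, or verifies the identification directly on $S$-points, noting that a tuple $(\phi^*(i))_{i\in I}$ lies in $(X_1)^I_{\on{rel.disj}}(S)$ if and only if its image subset $I|_S\subset\Maps(S,X_1)$ does (pairs with $\phi^*(i_1)=\phi^*(i_2)$ trivially satisfy the condition via the diagonal). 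Second, the argument ``$h_1(s)=h_2(s)$ at every $s$, hence $h_1=h_2$'' is not valid on non-reduced $S$; the correct statement is that $(h_1,h_2)\colon S\to X_1\times X_1$ factors through $X_1\underset{X_2}\times X_1$ (since $f(h_1)=f(h_2)$) and avoids the closed subset $X_1\underset{X_2}\times X_1-\on{diag}_{X_1}$ by the rel.disj condition, hence factors through $\on{diag}_{X_1}$, which is an open and closed subscheme of $X_1\underset{X_2}\times X_1$ because $f$ is \'etale and separated. Your pointwise reasoning identifies the correct underlying topological fact, but the scheme-theoretic conclusion needs this open-and-closedness to close the gap.
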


The proof of \propref{p:rel disj} relies on the following observation:

\begin{lem}
For a surjection of finite sets $\CI\twoheadrightarrow \CK$, we have
$$X_1^\CK\underset{X_1^\CI}\times (X_1)^\CI_{\on{rel.disj}}=(X_1)^\CK_{\on{rel.disj}},$$
and
$$X_2^\CK\underset{X_2^\CI}\times (X_1)^\CI_{\on{rel.disj}}=(X_1)^\CK_{\on{rel.disj}},$$
as open subsets of $X_1^\CK$ 
\end{lem}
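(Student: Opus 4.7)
The plan is to verify both identities by unwinding the functor-of-points definitions, with the étaleness of $f$ entering through a single key observation about the diagonal.

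First I would record the geometric input. Because $f$ is étale, hence unramified, the diagonal $\on{diag}_{X_1} \hookrightarrow X_1 \times_{X_2} X_1$ is an open (and closed) immersion. Combined with the fact that $X_2$ is separated, so that $X_1 \times_{X_2} X_1 \hookrightarrow X_1 \times X_1$ is a closed immersion, the complement $X_1 \times_{X_2} X_1 \setminus \on{diag}_{X_1}$ is closed in $X_1 \times X_1$, and by definition its complement is $(X_1 \times X_1)_{\on{rel.disj}}$. The crucial consequence is the scheme-theoretic equality
\[
(X_1 \times X_1)_{\on{rel.disj}} \cap (X_1 \times_{X_2} X_1) \;=\; \on{diag}_{X_1},
\]
i.e. once a pair of maps $\wt{S} \rightrightarrows X_1$ agrees after post-composing with $f$, the pair lies in the rel.disj locus \emph{iff} the two maps coincide.

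For the first identity, fix a surjection $\pi:\CI \twoheadrightarrow \CK$. An $\wt{S}$-point of $X_1^\CK \times_{X_1^\CI} (X_1)^\CI_{\on{rel.disj}}$ is a tuple $(\tau_k)_{k \in \CK}$ with $\tau_k:\wt{S} \to X_1$ whose image under the diagonal $(\tau_{\pi(i)})_{i \in \CI}$ lies in $(X_1)^\CI_{\on{rel.disj}}$. Testing the rel.disj condition on a pair $i_1 \ne i_2$: if $\pi(i_1) = \pi(i_2)$ the two projections coincide, so the pair lands in $\on{diag}_{X_1} \subset (X_1 \times X_1)_{\on{rel.disj}}$ automatically; if $\pi(i_1) \ne \pi(i_2)$ the condition is exactly that $(\tau_{\pi(i_1)},\tau_{\pi(i_2)})$ factors through $(X_1 \times X_1)_{\on{rel.disj}}$. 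Thus the condition on the $\CI$-tuple reduces to the rel.disj condition on the $\CK$-tuple, proving the first equality.

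For the second identity, an $\wt{S}$-point of $X_2^\CK \times_{X_2^\CI} (X_1)^\CI_{\on{rel.disj}}$ is an $\CI$-tuple $(\sigma_i)$ in $X_1$ landing in the rel.disj locus, such that $f \circ \sigma_{i_1} = f \circ \sigma_{i_2}$ whenever $\pi(i_1) = \pi(i_2)$. The latter condition means $(\sigma_{i_1}, \sigma_{i_2}):\wt{S} \to X_1 \times X_1$ factors through $X_1 \times_{X_2} X_1$; combined with the rel.disj condition and the highlighted scheme-theoretic equality above, this forces factorization through $\on{diag}_{X_1}$, i.e. $\sigma_{i_1} = \sigma_{i_2}$. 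Hence the $\CI$-tuple descends uniquely to a $\CK$-tuple $(\tau_k)$ in $X_1$, and by the first identity $(\tau_k)$ is rel.disj. Conversely any such $\CK$-tuple produces an $\CI$-tuple that is rel.disj and lies in the fiber product. This gives the second equality, matching open subschemes of $X_1^\CK$ on both sides.

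The only real point to watch is the second identity: one must not conflate the set-theoretic condition ``$f\circ\sigma_{i_1} = f\circ\sigma_{i_2}$ pointwise'' with the scheme-theoretic factorization. The étaleness of $f$ is used precisely here to upgrade the former to the latter via the clopen nature of $\on{diag}_{X_1}$ inside $X_1 \times_{X_2} X_1$; everything else is a bookkeeping exercise with surjections of finite sets.
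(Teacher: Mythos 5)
Your proof is correct, and the paper itself omits any justification of this lemma, leaving it as an unwinding exercise; your argument supplies exactly what is needed. The key observation—étaleness of $f$ makes $\on{diag}_{X_1}$ open and closed in $X_1\times_{X_2}X_1$, whence the scheme-theoretic equality $(X_1\times X_1)_{\on{rel.disj}}\cap(X_1\times_{X_2}X_1)=\on{diag}_{X_1}$—is the correct engine for both identities. Two very minor points worth making explicit: in the first identity you should note that surjectivity of $\pi$ is what guarantees every unordered pair in $\CK$ arises from one in $\CI$, so the reduction really produces the full rel.disj condition on the $\CK$-tuple; and in the second identity it is cleanest to describe the comparison morphism as the evident map $(X_1)^\CK_{\on{rel.disj}}\to X_2^\CK\underset{X_2^\CI}\times(X_1)^\CI_{\on{rel.disj}}$ (componentwise $f$ and the diagonal embedding) and then verify it is bijective on $\wt{S}$-points, which is what your étaleness argument does.
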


\begin{proof}[Proof of \propref{p:rel disj}]

To prove point (a) we need to show that for a finite non-empty set $\CJ$, the fiber product
$$X_1^\CJ\underset{\Ran_1}\times (\Ran_1)_{\on{rel.disj}}$$
is an open subscheme of $X_1^\CJ$. In fact, we will show that the above fiber product identifies with 
$(X_1)^\CJ_{\on{rel.disj}}$.

\medskip

We have:
\begin{multline*} 
X_1^\CJ\underset{\Ran_1}\times (\Ran_1)_{\on{rel.disj}}\simeq 
\underset{\CI}{\on{colim}}\, X_1^\CJ \underset{\Ran_1}\times (X_1)^\CI_{\on{rel.disj}} 
\simeq \underset{\CI}{\on{colim}}\, (X_1^\CJ \underset{\Ran_1}\times X_1^\CI) \underset{X_1^\CI}\times (X_1)^\CI_{\on{rel.disj}} \simeq \\
\simeq \underset{\CI}{\on{colim}}\, \underset{\CI\twoheadrightarrow \CK\twoheadleftarrow \CJ}{\on{colim}}\, 
X_1^\CK \underset{X_1^\CI}\times (X_1)^\CI_{\on{rel.disj}}\simeq 
\underset{\CI}{\on{colim}}\, \underset{\CI\twoheadrightarrow \CK\twoheadleftarrow \CJ}{\on{colim}}\,  (X_1)^\CK_{\on{rel.disj}}
\simeq \underset{\CK \twoheadleftarrow \CJ}{\on{colim}}\, (X_1)^\CK_{\on{rel.disj}}\simeq (X_1)^\CJ_{\on{rel.disj}}.
\end{multline*}  

\medskip

To prove point (b), we need to show that for a finite non-empty set $\CJ$, the fiber product
$$X_2^\CJ\underset{\Ran_2}\times (\Ran_1)_{\on{rel.disj}}$$
is \'etale over $X_2^\CJ$. We will show that the above fiber product identifies with $(X_1)^\CJ_{\on{rel.disj}}$.

\medskip

We have:
\begin{multline*} 
X_2^\CJ\underset{\Ran_2}\times (\Ran_1)_{\on{rel.disj}}\simeq 
\underset{\CI}{\on{colim}}\, X_2^\CJ \underset{\Ran_2}\times (X_1)^\CI_{\on{rel.disj}} 
\simeq \underset{\CI}{\on{colim}}\, (X_2^\CJ \underset{\Ran_2}\times X_2^\CI) \underset{X_2^\CI}\times (X_1)^\CI_{\on{rel.disj}} \simeq \\
\simeq \underset{\CI}{\on{colim}}\, \underset{\CI\twoheadrightarrow \CK\twoheadleftarrow \CJ}{\on{colim}}\, 
X_2^\CK \underset{X_2^\CI}\times (X_1)^\CI_{\on{rel.disj}}\simeq 
\underset{\CI}{\on{colim}}\, \underset{\CI\twoheadrightarrow \CK\twoheadleftarrow \CJ}{\on{colim}}\,  (X_1)^\CK_{\on{rel.disj}}
\simeq (X_1)^\CJ_{\on{rel.disj}}.
\end{multline*}  

\end{proof}

\sssec{}

Let $G_2$ be a group-scheme on $X_2$, and let $G_1$ be its pullback to $X_1$.

\medskip

According to Remark \ref{r:Gr on open}, we have well-defined objects $\CA_{\on{red},i}\in \Shv(\Ran_i)$, $i=1,2$.

\medskip

It follows from the definitions that we have a canonical isomorphism
$$(f_{\Ran})^!(\CA_{\on{red},2})|_{(\Ran_1)_{\on{rel.disj}}}\simeq \CA_{\on{red},1}|_{(\Ran_1)_{\on{rel.disj}}}.$$

\medskip

Note now that we also have well-defined objects $\CB_{\on{red},i}\in \Shv(\Ran_i)$, $i=1,2$, and a canonical isomorphism
$$(f_{\Ran})^!(\CB_{\on{red},2})|_{(\Ran_1)_{\on{rel.disj}}}\simeq \CB_{\on{red},1}|_{(\Ran_1)_{\on{rel.disj}}}.$$

\medskip

By \lemref{l:etale and dual} and \propref{p:rel disj}, we have canonical isomorphisms
$$\left((f_{\Ran})^!\circ \BD_{\Ran_2}(\CA_{\on{red},2})\right)|_{(\Ran_1)_{\on{rel.disj}}}\simeq
\BD_{(\Ran_1)_{\on{rel.disj}}}\left((f_{\Ran})^!(\CA_{\on{red},2})|_{(\Ran_1)_{\on{rel.disj}}}\right)$$
and
$$\BD_{\Ran_1}(\CA_{\on{red},1})|_{(\Ran_1)_{\on{rel.disj}}} \simeq 
\BD_{(\Ran_1)_{\on{rel.disj}}}(\CA_{\on{red},1}|_{(\Ran_1)_{\on{rel.disj}}}).$$

\medskip

Furthermore, by unwinding the constructions, we obtain that the diagram
$$
\CD
\CB_{\on{red},1}|_{(\Ran_1)_{\on{rel.disj}}}  @>>>  \BD_{\Ran_1}(\CA_{\on{red},1})|_{(\Ran_1)_{\on{rel.disj}}}  \\
@V{\sim}VV  @VV{\sim}V   \\
(f_{\Ran})^!(\CB_{\on{red},2})|_{(\Ran_1)_{\on{rel.disj}}} & &   \BD_{(\Ran_1)_{\on{rel.disj}}}(\CA_{\on{red},1}|_{(\Ran_1)_{\on{rel.disj}}})  \\
@VVV     @VV{\sim}V    \\
\left((f_{\Ran})^! \circ \BD_{\Ran_2}(\CA_{\on{red},2})\right)|_{(\Ran_1)_{\on{rel.disj}}} @>{\sim}>> 
\BD_{(\Ran_1)_{\on{rel.disj}}}\left((f_{\Ran})^!(\CA_{\on{red},2})|_{(\Ran_1)_{\on{rel.disj}}}\right)
\endCD
$$
commutes. 

\sssec{}  \label{sss:deductions}

Thus, we obtain that if $f:X_1\to X_2$ is \'etale and surjective, then so is the map
$$(\Ran_1)_{\on{rel.disj}}\to \Ran_2,$$
and hence the assertion of \thmref{t:local duality} for $G_1$ 
implies that for $G_2$.   

\medskip

Conversely, if the assertion of \thmref{t:local duality} holds for $G_2$, then the map
$$\CB_{\on{red},1}|_{(\Ran_1)_{\on{rel.disj}}}  \to  \BD_{\Ran_1}(\CA_{\on{red},1})|_{(\Ran_1)_{\on{rel.disj}}}$$
is an isomorphism. I.e., the assertion of \thmref{t:local duality} holds for $G_1$ \emph{over}
$(\Ran_1)_{\on{rel.disj}}$.   

\medskip

However, since $X_1\subset (\Ran_1)_{\on{rel.disj}}$ and using \secref{sss:reduce to diag}, we 
obtain that the assertion of \thmref{t:local duality} for $G_2$ implies the
assertion of \thmref{t:local duality} for $G_1$. 

\sssec{}

For a reductive group scheme $G$ on $X$,  let $X_1\to X$ be an \'etale cover 
such that $G_1:=G|_{X_1}$ is a constant group-scheme. Thus, we obtain that it is enough 
to prove \thmref{t:local duality} in the case of constant group schemes.

\medskip

Now, if $G$ is a constant group scheme on $X$, let $X'\to X$ be an open cover such that $X'$ admits an
\'etale map to $\BA^1$. Hence, we obtain that it is enough to prove \thmref{t:local duality} 
(or, equivalently, that \eqref{e:assertion on diag} is an isomorphism) in the case when $G$ is a constant
group scheme and $X=\BA^1$.

\medskip

Note that for $X=\BA^1$ and the constant group-scheme, both sides in \eqref{e:assertion on diag}
are \emph{constant}. This is due to the $\BA^1$-equivariance structure on all the objects involved.
Hence, the map \eqref{e:assertion on diag} is an isomorphism in this case if and only if the map
\begin{equation} \label{e:pointwise A1}
(\CB_{\on{red}})_{\{x\}}\to (\BD_{\Ran}(\CA_{\on{red}}))_{\{x\}}
\end{equation} 
is an isomorphism for some/any $x\in \BA^1$.

\medskip

By the same logic as in \secref{sss:deductions}, the assertion that \eqref{e:pointwise A1} if and only if
it is an isomorphism for some/any curve $X$ and some $x\in X$.

\medskip

Hence, we obtain that \thmref{t:local duality} follows from the next assertion:

\begin{thm} \label{t:pointwise duality}
The map 
$$(\CB_{\on{red}})_{\{x\}}\to (\BD_{\Ran}(\CA_{\on{red}}))_{\{x\}}$$ 
of \eqref{e:pointwise A1}
is an isomorphism for some/any $X$ and some/any $x\in X$ and the constant group-scheme $G$. 
\end{thm}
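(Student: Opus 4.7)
The plan is to combine Theorem \ref{t:dagger} (which expresses $(\BD_{\Ran}(\CA_{\on{red}}))_{\{x\}}$ as the dual of an explicit chiral-homology type expression $\CA^\dagger_{\on{red},x}$) with the global non-abelian Poincar\'e duality of Theorem \ref{t:non-ab Poinc} applied to a conveniently chosen global situation, namely $X=\BP^1$ with $x=\infty$ and constant group-scheme $G$. First I would unpack the left-hand side. Applying Proposition \ref{p:out} to the singleton $J=\{x\}$, the only non-empty subset of $J$ is $J$ itself, so
$$(\CB_{\on{red}})_{\{x\}} \simeq \on{Fib}\bigl(\CB_{\on{untl,aug},\emptyset\subset \{x\}} \to \CB_{\on{untl,aug},\{x\}\subseteq \{x\}}\bigr).$$
Since $BG_{\{x\}\subseteq\{x\}}$ parametrizes $G$-bundles at $x$ with a trivialization there, hence is a point, the second term vanishes and $(\CB_{\on{red}})_{\{x\}} \simeq \on{C}^{*}_{\on{red}}(BG_x)$. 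For the right-hand side, Theorem \ref{t:dagger} (applied to $\CF = \CA_{\on{red}}$, for which $\wt\CF = \CA_{\on{untl,aug}}$) gives $(\BD_{\Ran}(\CA_{\on{red}}))_{\{x\}} \simeq (\CA^\dagger_{\on{red},x})^\vee$, and unwinding the construction of the pairing \eqref{e:pairing on red Gr} identifies the map under study with the dual of a map
$$\psi_x \colon \CA^\dagger_{\on{red},x} \longrightarrow \on{C}^{\on{red}}_{*}(BG_x),$$
so the question reduces to showing that $\psi_x$ is an isomorphism.

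Next, by the locality/reduction arguments of Section \ref{sss:deductions}, I am free to choose the curve and the group-scheme. I take $X=\BP^1$, $x=\infty$, $X'=\BA^1$, and $G$ the constant simply connected semi-simple group-scheme. Under this choice, every $G$-bundle on $\BP^1$ is trivial, so the restriction map $\Bun_G\to BG_x$ is an equivalence of stacks. Theorem \ref{t:non-ab Poinc} then yields a natural isomorphism
$$\on{C}^{*}_c(\Ran',\CA') \simeq \on{C}^{\on{red}}_{*}(\Bun_G) \simeq \on{C}^{\on{red}}_{*}(BG_x),$$
where $\Ran'$ denotes the (non-unital) Ran space of $\BA^1$. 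The heart of the proof is then to construct a natural equivalence
$$\CA^\dagger_{\on{red},x} \simeq \on{C}^{*}_c(\Ran',\CA'),$$
intertwining $\psi_x$ with the non-abelian Poincar\'e duality map under $\Bun_G\simeq BG_x$.

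For this identification I would exploit the description of $(\Ran_{\on{untl,aug}})_{x\notin}$ as the lax prestack of pairs $(K\subseteq I)$ with $K$ disjoint from $x$ (but with $x\in I$ allowed). I would organize the comparison as a chain of equivalences: Theorem \ref{t:main} lets me pass freely between $\Shv^!(\Ran)$ and its augmented enhancement without changing global cohomology (via Corollary \ref{c:main}); Theorem \ref{t:unital chiral homology} lets me pass between the unital and non-unital Ran spaces; Proposition \ref{p:ins point} (``inserting the vacuum does not change chiral homology'') accounts precisely for the extra degree of freedom of allowing $x\in I$ in $(\Ran_{\on{untl,aug}})_{x\notin}$ as against requiring $I\subset \Maps(S,\BA^1)$; and in each step the relevant sheaf is (essentially) $\CA'$ or $\CA_{\on{untl,aug}}$, whose compatibility under these functors follows from the construction of $\CA_{\on{untl,aug}}$ in Section \ref{sss:unital Gr} (in particular, from the $x$-avoidance condition on $K$ in the definition of $\Gr_{K\subseteq I}$).

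The main obstacle will be not the existence of the isomorphism $\CA^\dagger_{\on{red},x}\simeq \on{C}^{*}_c(\Ran',\CA')$, but the verification that under it the map $\psi_x$ is carried to the non-abelian Poincar\'e duality map composed with $\Bun_G\simeq BG_x$. This compatibility is a diagram chase, but a substantial one: it requires tracking the pairing \eqref{e:aug pairing for Gr'} through the chain of Theorem \ref{t:main}, the adjunction $(\on{AddUnit}_{\on{aug}},\on{TakeOut})$, Proposition \ref{p:ins point}, and the identification of Theorem \ref{t:dagger}, and confronting the result with the definition of the map $\Gr_{\Ran'}\to \Bun_G$ that figures in the non-abelian Poincar\'e statement. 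It is precisely at this step that the semi-simple simply connected hypothesis on $G$ is used in an essential way (via $\Bun_G(\BP^1)\simeq BG$), distinguishing the pointwise duality from the purely formal manipulations of Parts I--III.
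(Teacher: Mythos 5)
Your computation $(\CB_{\on{red}})_{\{x\}}\simeq\on{C}^*_{\on{red}}(BG_x)$ is correct, and starting from \thmref{t:dagger} is indeed how the paper's first proof (\secref{s:dagger}) proceeds. But the heart of your argument contains a fatal error.

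You claim that for $X=\BP^1$ with $G$ constant, semi-simple and simply connected, every $G$-bundle on $\BP^1$ is trivial, so that the restriction map $\Bun_G\to BG_x$ is an equivalence of stacks. This is false. Already for $G=SL_2$ the bundles $\CO(n)\oplus\CO(-n)$ with $n>0$ are non-trivial, and the map $\Bun_G\to BG_x$ is not even a homology isomorphism: the Atiyah-Bott formula of the present paper gives $\on{H}^*(\Bun_{SL_2}(\BP^1))\simeq\Sym(\Lambda[-2]\oplus\Lambda[-4])$, while $\on{H}^*(BSL_2)\simeq\Sym(\Lambda[-4])$. Consequently the equivalence $(\CA_{\on{red}}^\dagger)_x\simeq\on{C}^*_c(\Ran',\CA')$ that your proof hinges on cannot exist either: together with \thmref{t:dagger}, \thmref{t:non-ab Poinc} and \thmref{t:local non-ab Poinc} it would yield $\on{C}^{\on{red}}_*(BG_x)\simeq\on{C}^{\on{red}}_*(\Bun_G)$ for $X=\BP^1$, which is false.

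What the paper does instead, in both proofs, is to insert an intermediate object between $(\CA_{\on{red}}^\dagger)_x$ (or its analogue) and $BG_x$ that is \emph{not} the global $\Bun_G$. In \secref{s:dagger}, $(\CA_{\on{red}}^\dagger)_x$ is identified with $\on{C}^{\on{red}}_*(\Gr_{(\Ran_{\on{untl,aug}})_{x\notin}})$ -- homology of a Grassmannian over the Ran space of $X$ (not of $X'$) -- and \thmref{t:local non-ab Poinc} asserts that the restriction map $\Gr_{(\Ran_{\on{untl,aug}})_{x\notin}}\to BG_x$ is universally homologically contractible; this is proved by factoring through $\Bun_G^{\on{around}_x}$, the stack of \emph{germs} of $G$-bundles near $x$, and invoking the global contractibility of mapping spaces into $G$, not an equivalence of moduli stacks. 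In \secref{s:P1}, which your setup more nearly resembles, $\Bun_G$ is replaced by $\Bun'_G$ (bundles on $\BP^1$ regarded up to isomorphism over $\BA^1$), and the nontrivial content is \thmref{t:open to point}: that $\Bun'_G\to BG_{x_0}$ with $x_0=0$ (not $\infty$) is universally homologically contractible, established via a $\BG_m$-contraction of $\bMaps((\BA^1,0),(G,1))$. Your proposal compresses exactly this nontrivial step into a false statement about $\Bun_G(\BP^1)$.
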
 

We shall refer to \thmref{t:pointwise duality} as the \emph{pointwise duality} statement. 

\section{First proof of the pointwise duality statement}  \label{s:dagger}

The goal of this section is to prove \thmref{t:pointwise duality}, and thereby finish the proof
of \thmref{t:product formula}. A prerequisite for the present section is \secref{s:expl} and the
notion for a map between lax prestacks to be \emph{universally homologically contractible} from \secref{ss:uhc lax}.

\medskip

We let $X$ be a smooth curve and $G$ a constant reductive group scheme over $X$. 

\medskip

We will essentially reproduce the proof of \cite[Theorem 7.2.10]{Main Text} in a simplified context 
of the constant group-scheme. 

\ssec{Local non-abelian Poincar\'e duality}

\sssec{}

Recall the lax prestack 
$(\Ran_{\on{untl,aug}})_{x\notin}$, see \secref{sss:not cont x}. Denote
$$\Gr_{(\Ran_{\on{untl,aug}})_{x\notin}}:=(\Ran_{\on{untl,aug}})_{x\notin}\underset{\Ran_{\on{untl,aug}}}\times \Gr_{\Ran_{\on{untl,aug}}}.$$

\medskip

Explicitly, the above lax prestack attaches to a test scheme $S$ the following category. Its objects are triples:

\begin{itemize}

\item $K\subseteq I\subset \Maps(S,X)$, where the images of the maps $S\to X$ corresponding to elements of $K$ are disjoint from $x$;

\item a $G$-bundle $\CP_G$ on $S\times X$;

\item a trivialization $\gamma$ of $\CP_G|_{S\times X-\on{Graph}_I}$.

\end{itemize}

Given two such objects 
$$(K^1\subseteq I^1,\CP^1_G,\gamma^1) \text{ and } (K^2\subseteq I^2,\CP^2_G,\gamma^2),$$
a morphism between them is an inclusion $K_1\subseteq K_2$ and $I_1\subseteq I_2$, and an isomorphism
$$\CP^1_G|_{S\times X-\on{Graph}_{K_2}}\simeq \CP^2_G|_{S\times X-\on{Graph}_{K_2}},$$
which is compatible with the trivializations of 
$\CP^1_G|_{S\times X-\on{Graph}_{I_2}}$ and $\CP^2_G|_{S\times X-\on{Graph}_{I_2}}$, given by 
$\gamma^1|_{S\times X-\on{Graph}_{I_2}}$ and 
$\gamma^2$, respectively.

\sssec{}

Note that by taking the fiber at $x\in X$, we obtain a map
\begin{equation} \label{e:from dagger to BG}
\Gr_{(\Ran_{\on{untl,aug}})_{x\notin}}\to BG_x.
\end{equation}

We will prove:

\begin{thm}   \label{t:local non-ab Poinc}
The map \eqref{e:from dagger to BG} is a universally homologically contractible\footnote{See \secref{ss:uhc prestack} for 
what it means to be universally homologically contractible for a map from a  lax prestack to a prestack.}. In particular, 
it induces an isomorphism on homology. 
\end{thm}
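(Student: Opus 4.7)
The plan is to reduce \thmref{t:local non-ab Poinc} to the global non-abelian Poincar\'e duality (\thmref{t:non-ab Poinc}). Since the statement is local in $X$, by the type of locality considerations used in \secref{sss:deductions}, I may assume that $X$ is a smooth complete curve containing $x$ with $G$ a constant reductive group-scheme. In this setting $\Bun_G$ is well-defined and \thmref{t:non-ab Poinc} asserts that $\Gr_{\Ran}\to \Bun_G$ is universally homologically contractible.

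The strategy is to factor the map of the theorem as
\begin{equation*}
\Gr_{(\Ran_{\on{untl,aug}})_{x\notin}}\;\overset{p}\longrightarrow\;\Gr_{\Ran}\;\overset{q}\longrightarrow\;\Bun_G\;\longrightarrow\; BG_x,
\end{equation*}
where the last arrow is restriction to $x$, the middle map $q$ is the usual forgetful map, and $p$ sends $(K\subseteq I,\CP_G,\gamma)\mapsto (I,\CP_G,\gamma)$. The map $q$ is universally homologically contractible by \thmref{t:non-ab Poinc}. The map $p$ ought to be universally homologically contractible as well: its fiber over an $S$-point of $\Gr_{\Ran}$ parameterizes a lax prestack whose $S'$-points are $K\subseteq I|_{S'}$ with $K$ disjoint from $x$; this category has an initial object $K=\emptyset$ using the lax structure, hence is value-wise contractible, so universal homological contractibility follows from \lemref{l:contr contr}.

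The main obstacle is the rightmost map $\Bun_G\to BG_x$, which is \emph{not} universally homologically contractible in isolation. Here the augmentation structure must be invoked in a non-trivial way: the data of $\gamma$ combined with the ability to enlarge $I$ to include $x$ (which is allowed, since only $K$ must avoid $x$) produces additional trivialization data in a formal neighborhood of $x$. Passing to the value-wise left cofinal sub-lax-prestack of $(\Ran_{\on{untl,aug}})_{x\notin}$ on which $x\in I$---cofinality being verified by an argument parallel to \propref{p:ins point}---one obtains a lift of the composite map into $\Bun_{G,x}:=\Bun_G\underset{BG_x}\times \on{pt}$, together with local trivialization data at $x$. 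By a Beauville--Laszlo style gluing, this realizes the fiber of the composition as a fiber product with the affine Grassmannian at $x$, on which the pointed version of non-abelian Poincar\'e duality (a variant of \thmref{t:non-ab Poinc} for the punctured curve $X-\{x\}$) applies.

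The hardest step will be executing the last paragraph with full care for the lax structure: verifying that the replacement of $(\Ran_{\on{untl,aug}})_{x\notin}$ by its $x\in I$ part is indeed value-wise left cofinal in the augmented setting, and that the Beauville--Laszlo identification is compatible with the universal homological contractibility statements at hand. This parallels the approach of \cite[Theorem 7.2.10]{Main Text}, and the technical machinery developed in Parts 0 and I of this paper (lax prestacks, universal homological contractibility, and cofinality) is expected to suffice.
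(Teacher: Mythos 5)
Your factorization through $\Gr_{\Ran}$ fails at the first step: the map $p\colon \Gr_{(\Ran_{\on{untl,aug}})_{x\notin}}\to\Gr_{\Ran}$, $(K\subseteq I,\CP_G,\gamma)\mapsto(I,\CP_G,\gamma)$, is not a morphism of lax prestacks. By the definition in \secref{sss:unital Gr} (inherited by $\Gr_{(\Ran_{\on{untl,aug}})_{x\notin}}$), a morphism $(K^1\subseteq I^1,\CP^1_G,\gamma^1)\to (K^2\subseteq I^2,\CP^2_G,\gamma^2)$ consists of inclusions $K_1\subseteq K_2$, $I_1\subseteq I_2$, and an isomorphism of $\CP^1_G$ with $\CP^2_G$ \emph{only over} $S\times X-\on{Graph}_{K_2}$, so the underlying bundles on $S\times X$ may genuinely differ. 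But $\Gr_{\Ran}(S)$ is a groupoid, so there is no morphism $(I_1,\CP^1_G,\gamma^1)\to(I_2,\CP^2_G,\gamma^2)$ to send such a lax morphism to. The whole point of the lax/augmented structure is precisely this ability to modify the bundle away from its germ at $x$, and $\Gr_{\Ran}$ cannot receive it. Consequently, your description of the ``fiber of $p$'' (a category of pairs $K\subseteq I$ with an initial object $K=\emptyset$) is also wrong: it neglects the variation of the bundle.

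The paper's proof replaces $\Bun_G$ by the prestack of \emph{germs} $\Bun_G^{\on{around}_x}$, whose $S$-points are $G$-bundles on $S\times X$ with morphisms being isomorphisms of bundles defined away from some finite $K\subset\Maps(S,X-x)$. This is exactly the target that can absorb the lax morphisms above, giving a well-defined map $\Gr_{(\Ran_{\on{untl,aug}})_{x\notin}}\to\Bun_G^{\on{around}_x}$ (\thmref{t:from dagger to germs}); and crucially $\Bun_G^{\on{around}_x}\to BG_x$ \emph{is} universally homologically contractible (\thmref{t:germs to BG}), unlike $\Bun_G\to BG_x$, whose failure you flag but propose to repair with a Beauville--Laszlo type gluing you do not carry out. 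The proofs of both intermediate theorems do ultimately invoke global contractibility results (\cite[Theorems~3.3.2 and~3.3.6]{Main Text}), so the ``local reduces to global'' intuition is sound; but the missing ingredient, and the essential idea you would need to discover, is the germs prestack $\Bun_G^{\on{around}_x}$ as the correct intermediate object.
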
 

One can view \thmref{t:local non-ab Poinc} as a local version of non-abelian Poincar\'e duality.

\medskip

We will now show how \thmref{t:local non-ab Poinc} implies  \thmref{t:pointwise duality}. 

\sssec{Proof of \thmref{t:pointwise duality}} 

Recall that in \thmref{t:dagger}, we constructed a canonical isomorphism
$$((\CA_{\on{red}}^\dagger)_x)^\vee\simeq (\BD_{\Ran}(\CA_{\on{red}}))_{\{x\}}.$$

Consider the resulting pairing
$$(\CA_{\on{red}}^\dagger)_x \otimes (\BD_{\Ran}(\CA_{\on{red}}))_{\{x\}}\to k.$$

Note that by construction
$$(\CA_{\on{red}}^\dagger)_x\simeq \on{C}^{\on{red}}_*(\Gr_{(\Ran_{\on{untl,aug}})_{x\notin}}).$$

\medskip

By unwinding the construction in \thmref{t:pointwise duality}, we obtain that the following diagram commutes
$$
\CD
(\CA_{\on{red}}^\dagger)_x \otimes (\BD_{\Ran}(\CA_{\on{red}}))_{\{x\}}    @>>>  k  \\
@A{\sim}AA     @AAA  \\
\on{C}^{\on{red}}_*(\Gr_{(\Ran_{\on{untl,aug}})_{x\notin}}) \otimes (\BD_{\Ran}(\CA_{\on{red}}))_{\{x\}} & & 
\on{C}^{\on{red}}_*(BG_x) \otimes \on{C}^*_{\on{red}}(BG_x)   \\
@AAA    @AA{\sim}A  \\
\on{C}^{\on{red}}_*(\Gr_{(\Ran_{\on{untl,aug}})_{x\notin}}) \otimes (\CB_{\on{red}})_{\{x\}}  @>>>
\on{C}^{\on{red}}_*(BG_x) \otimes (\CB_{\on{red}})_{\{x\}},
\endCD
$$
where the lower left vertical arrow is induced by the map of \thmref{t:pointwise duality}, and the bottom
horizontal arrow is induced by the map of \eqref{e:from dagger to BG}.

\medskip

Applying \thmref{t:dagger}, we deduce that the map
$$(\CB_{\on{red}})_{\{x\}}\to (\BD_{\Ran}(\CA_{\on{red}}))_{\{x\}}$$
is an isomorphism, as required. 

\qed 

\sssec{}

Thus, our remaining goal is to prove \thmref{t:local non-ab Poinc}.  Note that the above derivation of \thmref{t:pointwise duality} from 
\thmref{t:dagger} shows that these two theorems are logically equivalent.  In particular, since \thmref{t:pointwise duality} is a \emph{local}
statement (i.e., it makes sense for a not necessarily complete curve), we obtain that so is \thmref{t:dagger}.

\medskip

However, in order to prove \thmref{t:dagger}, we will use global methods. In particular, for the proof we will assume that $X$ is complete
(which we could have from the start). The proof of \thmref{t:dagger}
will amount to a combination of two results, namely, Theorems
\ref{t:germs to BG} and \ref{t:from dagger to germs}, both global in nature. It is reasonable to think that the two global 
phenomena used in the proof of \thmref{t:local non-ab Poinc} cancel each other out. 

\ssec{The prestack of germs of bundles}

\sssec{}

Let $\Bun_G^{\on{around}_x}$ denote the following prestack. For $S\in \Sch$, the groupoid of $S$-points of 
$\Bun_G^{\on{around}_x}$ has as objects $G$-bundles $\CP_G$ on $S\times X$, and as morphisms isomorphisms
between $G$-bundles defined on an open subset of $S\times X$ of the form $S\times X-\on{Graph}_K$ for \emph{some} finite
set $K\subset \Maps(S,X-x)$. 

\begin{rem}
The prestack $\Bun_G^{\on{around}_x}$ should be thought of as classifying germs of $G$-bundles on $X$ defined
in an (unspecified, but non-empty) neighborhood of $x$.
\end{rem}

\sssec{}

Restriction to $x\in X$ defines a map
\begin{equation} \label{e:from germs to fiber}
\Bun_G^{\on{around}_x}\to BG_x.
\end{equation} 

In \secref{ss:proof of contractibility} we will prove:

\begin{thm}   \label{t:germs to BG}
The map \eqref{e:from germs to fiber} is a universally homologically contractible. In particular, it 
induces an isomorphism on homology. 
\end{thm}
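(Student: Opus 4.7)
The plan is to identify $\Bun_G^{\on{around}_x}$, up to universal homological contractibility, with the classifying prestack $BG(\mathcal{O}_x)$ of the pro-algebraic group of formal arcs of $G$ at $x$, and then reduce Theorem \ref{t:germs to BG} to showing that the evaluation map $BG(\mathcal{O}_x) \to BG_x$ is universally homologically contractible. The factorization one has in mind is
$$\Bun_G^{\on{around}_x} \to BG(\mathcal{O}_x) \to BG_x,$$
where the first map sends a $G$-bundle $\CP_G$ on $S \times X$ to its restriction to the formal neighborhood of $\{x\} \times S$, and the second is further restriction to $\{x\} \times S$.

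For the first step, the key geometric input is a Beauville--Laszlo type identification: the open subsets of $S \times X$ of the form $S \times X - \on{Graph}_K$, with $K$ ranging over finite subsets of $\Maps(S, X-x)$, form a cofinal system of open neighborhoods of $\{x\} \times S$. Consequently, two $G$-bundles on $S \times X$ become isomorphic in $\Bun_G^{\on{around}_x}(S)$ precisely when they have isomorphic restrictions to the formal neighborhood of $\{x\} \times S$. Taking formal germs thus produces a map $\Bun_G^{\on{around}_x} \to BG(\mathcal{O}_x)$, and I would show it is an equivalence of prestacks: essential surjectivity uses that any $G$-bundle on the formal neighborhood extends, by smoothness of $G$ and approximation, to some Zariski open neighborhood of $\{x\} \times S$; full-faithfulness on morphisms is the Beauville--Laszlo identification stated above.

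For the second step, the map $BG(\mathcal{O}_x) \to BG_x$ is induced by the surjection of pro-algebraic groups $G(\mathcal{O}_x) \to G(k_x)$, whose kernel is the first congruence subgroup $G^{(1)}(\mathcal{O}_x)$, a pro-unipotent group. To show universal homological contractibility, I would check on a fiber over any $S \to BG_x$: after étale-local trivialization of the torsor (which is permitted by smoothness and connectedness of $G$), one reduces to the trivial torsor, and the fiber identifies with $BG^{(1)}(\mathcal{O}_x)$. Writing $G^{(1)}(\mathcal{O}_x)$ as the inverse limit of its finite-level unipotent quotients $G^{(1)}(\mathcal{O}_x/\mathfrak{m}^n)$, one reduces to the fact that for a smooth unipotent group $U$ the classifying prestack $BU$ is universally homologically contractible in the relevant sheaf theory, and then passes to the limit.

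The main obstacle is the careful verification of the first-step equivalence at the lax/prestack level, and especially at the level of morphisms: one must check that the equivalence relation "isomorphic after deleting some $\on{Graph}_K$" corresponds precisely, functorially in $S$, to "having isomorphic formal germs along $\{x\} \times S$", so that the comparison is an equivalence of prestacks rather than merely a bijection on isomorphism classes of objects. A secondary subtlety is setting up $BG(\mathcal{O}_x)$ as a prestack and making the pro-unipotent contractibility argument robust in our sheaf-theoretic context; this requires expressing everything as a cofiltered limit of finite-type unipotent pieces and arguing that the resulting colimits of cohomologies behave as expected. Once these points are handled, the universal homological contractibility of the composed map, and hence the isomorphism $\on{C}^{\on{red}}_*(\Bun_G^{\on{around}_x}) \simeq \on{C}^{\on{red}}_*(BG_x)$, follows.
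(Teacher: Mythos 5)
Your proposed route is genuinely different from the paper's, and unfortunately it contains a fatal gap right at the first step.

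The claim that the open subsets $S\times X - \on{Graph}_K$ (with $K\subset \Maps(S,X-x)$ finite) form a cofinal system of open neighborhoods of $\{x\}\times S$ is false: these opens are all cofinite in $X$-direction, hence ``large''; they are not cofinal among arbitrary open (let alone formal) neighborhoods of $\{x\}\times S$. The relevant germ in the definition of $\Bun_G^{\on{around}_x}$ is a \emph{Zariski} germ at $x$ along the curve, not a formal germ. Concretely, take $S=\on{pt}$ and the trivial $G$-bundle: its automorphism group inside $\Bun_G^{\on{around}_x}(k)$ is
$$\underset{K}{\on{colim}}\, G(X-K)\;=\;G(\mathcal{O}_{X,x}),$$
the group of $G$-points of the (non-complete) local ring at $x$, whereas the automorphism group inside your $BG(\mathcal{O}_x)$ is $G(\widehat{\mathcal{O}}_{X,x})$. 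The restriction-to-germs map therefore fails to be fully faithful, and the proposed identification $\Bun_G^{\on{around}_x}\simeq BG(\widehat{\mathcal{O}}_x)$ does not hold. Consequently the planned factorization through $BG(\widehat{\mathcal{O}}_x)$ and the pro-unipotent contractibility argument for $G^{(1)}(\widehat{\mathcal{O}}_x)$ do not get off the ground.

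There is also a conceptual point worth stressing: if an identification of $\Bun_G^{\on{around}_x}$ with the classifying stack of formal arcs were available, Theorem~\ref{t:germs to BG} would be a purely formal-local statement. But the paper emphasizes that this theorem is a \emph{global} one. The actual proof base-changes along $\on{pt}\to BG_x$, uses the generic-triviality result \cite[Theorem 3.3.6]{Main Text} to identify $\Bun_G^{\on{around}_x}\underset{BG_x}\times\on{pt}$ (up to \'etale sheafification) with $B$ of the mapping group-prestack $\bMaps((X;x),(G;1))^{\on{around}_x}$, and then invokes the global homological contractibility result \cite[Theorem 3.3.2]{Main Text} for that mapping space. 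Both of these are results about the full curve $X$, not about the formal disc, and the globality cannot be dispensed with.
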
 

We note that \thmref{t:germs to BG} is essentially the same as \cite[Proposition 7.3.17]{Main Text}.

\sssec{}

Note now that we also have a tautologically defined forgetful map
\begin{equation} \label{e:from dagger to germs}
\Gr_{(\Ran_{\on{untl,aug}})_{x\notin}}\to \Bun_G^{\on{around}_x}.
\end{equation}

In \secref{ss:proof from dagger to germs} we will prove:

\begin{thm} \label{t:from dagger to germs} 
The map \eqref{e:from dagger to germs} 
is universally homologically contractible. In particular, it 
induces an isomorphism on homology.
\end{thm}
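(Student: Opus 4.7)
The plan is to reduce \thmref{t:from dagger to germs} to the global non-abelian Poincar\'e duality \thmref{t:non-ab Poinc} by exploiting the Cartesian-fibration structure of $f$ and the Ran-space machinery from Part I.

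First, I would verify that $f: \Gr_{(\Ran_{\on{untl,aug}})_{x\notin}} \to \Bun_G^{\on{around}_x}$ is a value-wise Cartesian fibration. Given an arrow $\alpha: \CP_G^1 \to \CP_G^2$ in $\Bun_G^{\on{around}_x}(S)$, realized by an isomorphism over $S \times X - \on{Graph}_{K_\alpha}$, and a lift $(K_2 \subseteq I_2, \CP_G^2, \gamma^2)$ of the target, a Cartesian lift is obtained by first enlarging $K_2$ so that $K_2 \supseteq K_\alpha$ (using the unital morphism $K_2 \subseteq K_2 \cup K_\alpha$) and then transporting $\gamma^2$ via $\alpha^{-1}$. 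By \propref{p:Cart uhc}, it suffices to prove that for every $S \in \Sch$ and every $\CP_G \in \Bun_G^{\on{around}_x}(S)$, the fiber lax prestack over $S$ is universally homologically contractible.

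Second, I would analyze this fiber. Its $\wt{S}$-points are tuples $(K \subseteq I, \gamma)$, where $K \subseteq I \subset \Maps(\wt{S}, X)$ with $K$ disjoint from $x$, and $\gamma$ is a trivialization of $\CP_G|_{\wt{S} \times X}$ on $\wt{S} \times X - \on{Graph}_I$; morphisms enlarge $K$ and $I$ compatibly with $\gamma$. This is a twisted unital augmented analog, over $\CP_G$, of the Grassmannian lax prestack from \secref{sss:unital Gr}. By forgetting $\gamma$ and tracking the Cartesian structure of the map to $(\Ran_{\on{untl,aug}})_{x\notin}$, I would further reduce via \propref{p:Cart uhc} to showing universal homological contractibility of the trivialization-parametrizing fiber over a single point of $(\Ran_{\on{untl,aug}})_{x\notin}$.

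Third, the key input is the global statement \thmref{t:non-ab Poinc} applied to the constant group-scheme $G$: the map $\Gr_{\Ran} \to \Bun_G$ is universally homologically contractible. I would package this into a unital augmented variant by combining it with \corref{c:l cofinal ff} applied to the pullback $\Gr_{\Ran_{\on{untl,aug}}} \to \Gr_{\Ran}$ (whose value-wise cofinality is an immediate analog of \thmref{t:Ran left cofinal}) and with \propref{p:ins point} for absorbing the extra ``$x$'' point; this produces the universal homological contractibility of the map $\Gr_{\Ran_{\on{untl,aug}}^{x\notin}} \to \Bun_G$. Restricting along $\Bun_G \to \Bun_G^{\on{around}_x}$ (which is induced by taking germs) and comparing fibers via a Cartesian-in-the-lax-sense square yields the result.

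The main obstacle is the third step: transporting the global universal homological contractibility across the ``germ'' map $\Bun_G \to \Bun_G^{\on{around}_x}$, which is not a morphism of geometric objects that we have base-change theorems for in the usual sense. To circumvent this, I would instead argue fiberwise: for a given $\CP_G$, use Zariski-local triviality of $G$-bundles on a curve (away from finitely many points) to locally identify the fiber with a product of a torsor under $G$-valued functions on $\wt{S} \times X - \on{Graph}_I$ and the augmented Ran space of $X - x$, and then invoke \thmref{t:Ran contr} and \thmref{t:Ran left cofinal} via \corref{c:lax contr} to contract it. The delicate verification is that these contractions respect the lax morphism structure (non-invertible arrows between $(K_1 \subseteq I_1, \gamma_1)$ and $(K_2 \subseteq I_2, \gamma_2)$), so the refined notions of \secref{s:uhc}, in particular \propref{p:uhc general}, are essential rather than cosmetic.
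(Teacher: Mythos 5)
The key missing ingredient is the intermediate lax prestack $(\Bun_G^{\on{around}_x})_{\on{param}}$, which the paper inserts between the source and target: the map \eqref{e:from dagger to germs} is factored as $\Gr_{(\Ran_{\on{untl,aug}})_{x\notin}} \to (\Bun_G^{\on{around}_x})_{\on{param}} \to \Bun_G^{\on{around}_x}$, and this factorization is essential, not cosmetic. Your first step asserts that $f$ itself is a value-wise Cartesian fibration, but the construction you sketch (enlarging $K_2$ by $K_\alpha$ and transporting $\gamma^2$ along $\alpha^{-1}$) produces an arrow \emph{starting} at the given lift of the target, not ending at it, so it is not a Cartesian lift. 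Indeed, when $K_\alpha\not\subseteq K_2$, there is no morphism in $\Gr_{(\Ran_{\on{untl,aug}})_{x\notin}}(S)$ terminating at $(K_2\subseteq I_2,\CP^2_G,\gamma^2)$ whose image is the germ $\alpha$, because a morphism in the source only gives rise to an isomorphism of bundles over $S\times X-\on{Graph}_{K_2}$. The intermediate $(\Bun_G^{\on{around}_x})_{\on{param}}$ repairs exactly this by carrying along the domain of definition $K$ as data; the first arrow then becomes a genuine value-wise coCartesian fibration (the paper verifies this directly), and the second arrow is universally homologically contractible by the same "left-cofinal subcategory with an initial object" argument as \lemref{l:forget param}. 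Without this intermediate stage, the reduction via \propref{p:Cart uhc} does not get off the ground.

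Your second reduction, to the fiber over a fixed point of $(\Ran_{\on{untl,aug}})_{x\notin}$, is wrong (as you note): for fixed $I$, the space of trivializations of $\CP_G$ away from $\on{Graph}_I$ need not be nonempty, let alone contractible --- letting $I$ vary over the Ran space is the whole point. Your pivot in the final paragraph is in the right spirit but misses the actual nonlinear global input. The paper identifies the fiber lax prestack over $(K,\CP_G)$ with something \lemref{l:rel hom type}-equivalent to $S\underset{\Bun_G}\times\Gr_{\Ran_{\on{untl}}}$ (via the adjunction that unions $K$ into $I$), then uses the rational-trivialization theorem \cite[Theorem 3.3.6]{Main Text} to reduce to the trivial bundle, identifying the fiber with $S\times\bMaps(X,G)_{\on{param}}$, and finally cites \cite[Theorem 3.3.2]{Main Text} for the universal homological contractibility of $\bMaps(X,G)_{\on{param}}$. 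The results you invoke (\thmref{t:Ran contr}, \thmref{t:Ran left cofinal}, \corref{c:lax contr}) are the $G=\{1\}$ degeneration of \cite[Theorem 3.3.2]{Main Text} and are strictly weaker; they cannot by themselves contract a torsor under $G$-valued rational maps. Your intuition that a global non-abelian Poincar\'e duality input is required is correct (and \secref{s:P1} gives a variant proof of the pointwise statement that does invoke \thmref{t:non-ab Poinc} together with the cofinality machinery), but the present proposal does not assemble that reduction correctly.
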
 

We note that \thmref{t:from dagger to germs} is essentially the same as \cite[Proposition 7.3.16]{Main Text}
(in the simplified context of a constant group-scheme). 

\sssec{}

Note that the map \eqref{e:from dagger to BG} is the composition of the maps 
\eqref{e:from dagger to germs} and \eqref{e:from germs to fiber}. So, \thmref{t:local non-ab Poinc}
follows from the combination of Theorems \ref{t:germs to BG} and \ref{t:from dagger to germs}. 

\ssec{Proof of \thmref{t:germs to BG}}  \label{ss:proof of contractibility}

\sssec{}

Since the map $\on{pt}\to BG_x$ is a smooth cover, it suffices to show that the prestack
$$\Bun_G^{\on{around}_x}\underset{BG_x}\times \on{pt}$$
is universally homologically contractible.

\medskip

Let $\bMaps((X;x),(G;1))^{\on{around}_x} $ be the group-prestack whose $S$-points are maps from $S\times X$ to $G$, defined on an open 
subset of $S\times X$ of the form $S\times X-\on{Graph}_I$ 
for \emph{some} finite set $I\subset \Maps(S,X-x)$ and equal to the constant map with value $1\in G$ when restricted
to $S\times x\subset S\times X$. 

\medskip

By \cite[Theorem 3.3.6]{Main Text} (applied in the case of a constant group-scheme), 
for any $G$-bundle $\CP_G$ on $S\times X$ equipped with a trivialization along $S\times x$, there exists an 
\'etale cover of $\wt{S}\to S$ and $K\subset \Maps(\wt{S},X-x)$, such that $\CP_G|_{\wt{S}\times X-\on{Graph}_K}$ can be trivialized in a 
way compatible with the given trivialization on $\wt{S}\times x$. 

\medskip

This implies that the prestack $\Bun_G^{\on{around}_x}\underset{BG_x}\times \on{pt}$ 
identifies with the \'etale sheafification of the prestack $$B(\bMaps((X;x),(G;1))^{\on{around}_x}).$$ 
Thus, it remains to show that the prestack 
$B(\bMaps((X;x),(G;1))^{\on{around}_x})$ is universally homologically contractible.

\medskip

It is easy to see that if $H$ is a group-prestack, which is universally homologically contractible (as a mere prestack), then so is $BH$. 

\medskip

Hence, we obtain that in order to prove \thmref{t:germs to BG}, it suffices to show that 
the prestack $\bMaps((X;x),(G;1))^{\on{around}_x}$ is universally homologically contractible. 

\sssec{}   \label{sss:parameters G}

Let $\bMaps((X;x),(G;1))^{\on{around}_x}_{\on{param}}$ denote the following lax prestack. For $S\in \Sch$, the category of 
$S$-points of $\bMaps((X;x),(G;1))^{\on{around}_x}_{\on{param}}$ has as objects pairs $K \subset \Maps(S,X-x)$ and a map
$$z:S\times X-\on{Graph}_K\to G,$$
which takes value $1\in G$ over $S\times x\subset S\times X$. 
Morphisms between $(K^1,z^1)$ and $(K^2,z^2)$ are inclusions $K^1\subseteq K^2$ such that 
$$z^1|_{S\times X-\on{Graph}_{K^2}}= z^2.$$

\medskip

We have the forgetful map
\begin{equation}  \label{e:forget parameters}
\bMaps((X;x),(G;1))^{\on{around}_x}_{\on{param}}\to \bMaps((X;x),(G;1))^{\on{around}_x} .
\end{equation}

We claim:

\begin{lem}  \label{l:forget param}
The map \eqref{e:forget parameters} is universally homologically contractible.
\end{lem}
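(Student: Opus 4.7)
The plan is to apply \lemref{l:contr contr}: because the target $\bMaps((X;x),(G;1))^{\on{around}_x}$ is a prestack (its $\wt S$-points form a discrete groupoid), it will suffice to verify that for every $\wt S\in \Sch$ the functor
$$\bMaps((X;x),(G;1))^{\on{around}_x}_{\on{param}}(\wt S) \to \bMaps((X;x),(G;1))^{\on{around}_x}(\wt S)$$
has contractible fibers. Fix a representative $(K_0,z_0)$ of an $\wt S$-point $\bar z$ of the target. The fiber $\CC_{\bar z}$ has as objects pairs $(K,z)$ representing $\bar z$, and as morphisms inclusions $K^1\subseteq K^2$ with the restriction compatibility $z^1|_{\wt S\times X - \on{Graph}_{K^2}} = z^2$. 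Because a hom-set between any two objects is either empty or a singleton, $\CC_{\bar z}$ is a poset; to show it has trivial homotopy type, it therefore suffices to show that it is directed (equivalently, filtered).

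Given any two objects $(K^1,z^1)$ and $(K^2,z^2)$ of $\CC_{\bar z}$, set $K^3:=K^1\cup K^2$. Because $(K^1,z^1)$ and $(K^2,z^2)$ represent the same point of the target, there exists some $K''\supseteq K^3$ such that $z^1$ and $z^2$ agree on $\wt S\times X-\on{Graph}_{K''}$. I claim the two maps $z^1|_{\wt S\times X-\on{Graph}_{K^3}}$ and $z^2|_{\wt S\times X-\on{Graph}_{K^3}}$ into $G$ already coincide. Since $X$ is a smooth curve, each graph $\on{Graph}_i$ is a Cartier divisor in $\wt S\times X$; the only associated points of $\wt S\times X - \on{Graph}_{K^3}$ are therefore of the form $(\mathfrak p,\eta)$ for $\mathfrak p\in\on{Ass}(\wt S)$ and $\eta$ the generic point of $X$. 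These all lie in $\wt S\times X-\on{Graph}_{K''}$, so this open is schematically dense in $\wt S\times X-\on{Graph}_{K^3}$. Since $G$ is separated, the equalizer of $z^1$ and $z^2$ (a closed subscheme of $\wt S\times X-\on{Graph}_{K^3}$) contains a schematically dense open and hence equals the entire scheme. Taking $z^3$ to be the common restriction, the pair $(K^3,z^3)$ is an upper bound of $(K^1,z^1)$ and $(K^2,z^2)$ in $\CC_{\bar z}$, establishing directedness.

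The main (and only nontrivial) point is the schematic density argument, which requires tracking the associated primes of $\wt S\times X$ carefully when $\wt S$ is possibly non-reduced; once this is in place, filteredness of $\CC_{\bar z}$, hence the contractibility of its nerve, is immediate, and the lemma follows from \lemref{l:contr contr}.
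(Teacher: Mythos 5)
Your proof is correct and structurally parallel to the paper's: both reduce, via \lemref{l:contr contr}, to contractibility of the fibers of the functor on $S$-points, and both exhibit that fiber as a poset of representatives. The difference is in how the contractibility of that poset is established. The paper observes that the subcategory $\{(K',z'): K\subseteq K'\}$ is left cofinal and has the initial object $(K,z)$, hence is contractible; you instead show directly that the full fiber poset is filtered (every pair $(K^1,z^1),(K^2,z^2)$ has the upper bound obtained by restricting to $\wt S\times X-\on{Graph}_{K^1\cup K^2}$) and invoke the contractibility of the nerve of a filtered category. Both routes are valid.

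The more substantive point your write-up adds is that it surfaces a density fact the paper uses silently. The paper's description of the fiber over $(K,z)$ as ``those $(K',z')$ with $z$ and $z'$ agreeing on $\wt S\times X-\on{Graph}_{K\cup K'}$'' -- and likewise the claim that $(K,z)$ is initial in the cofinal subcategory -- tacitly assumes that if two germ-representatives agree on \emph{some} open $\wt S\times X-\on{Graph}_{K''}$, then they agree already on the intersection of their domains. Your argument justifies exactly this: each $\on{Graph}_i$ is an effective Cartier divisor in the smooth relative curve $\wt S\times X/\wt S$, so no associated point of $\wt S\times X$ lies on any $\on{Graph}_i$; consequently $\wt S\times X-\on{Graph}_{K''}$ contains all associated points of $\wt S\times X-\on{Graph}_{K^1\cup K^2}$ and is schematically dense there, and separatedness of $G$ forces the two maps to coincide. (One small simplification: you don't actually need the precise description of $\on{Ass}$ as pairs $(\mathfrak p,\eta)$; the bare fact that associated points avoid Cartier divisors already gives the schematic density.) Making this step explicit is worthwhile, since it is the only place where the geometry of the curve genuinely enters the lemma.
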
 

\begin{proof}

By \lemref{l:contr contr}, it is enough to show that for any $S$-point of $\bMaps((X;x),(G;1))^{\on{around}_x} $,
the category of its lifts to an $S$-point of $\bMaps((X;x),(G;1))^{\on{around}_x}_{\on{param}}$ is contractible.

\medskip

Let an $S$-point of $\bMaps((X;x),(G;1))^{\on{around}_x} $ be given by $(K,z:S\times X-\on{Graph}_K\to Z)$. The category
of its lifts to an $S$-point of $\bMaps((X;x),(G;1))^{\on{around}_x}_{\on{param}}$ is that of
$$(K',z':S\times X-\on{Graph}_{K'}\to Z,z|_{S\times X-\on{Graph}_{K\cup K'}}=z'|_{S\times X-\on{Graph}_{K\cup K'}}).$$

\medskip

Now, left cofinal in this category is the subcategory consisting of those $K'$ for which $K\subset K'$. Finally,
this subcategory is contractible since it has an initial object.

\end{proof}

\sssec{}

Hence, by \lemref{l:forget param}, it suffices to show that the lax prestack $\bMaps((X;x),(G;1))^{\on{around}_x}_{\on{param}}$
is universally homologically contractible. However, this is the assertion of \cite[Theorem 3.3.2]{Main Text} (applied in the case of
a constant group-scheme).    (Note that the theorem from \cite{Main Text} quoted above is a global result; this is what makes
\thmref{t:germs to BG} a global assertion.) 

\ssec{Proof of \thmref{t:from dagger to germs}}  \label{ss:proof from dagger to germs}

\sssec{}

Let $(\Bun_G^{\on{around}_x})_{\on{param}}$ be the lax prestack defined as follows. For $S\in \Sch$, the category of 
$S$-points of $(\Bun_G^{\on{around}_x})_{\on{param}}$ has as objects pairs $K \subset \Maps(S,X-x)$ and $G$-bundle
$\CP_G$ on $S\times X$.

\medskip

For two such objects $(K^1,\CP^1_G)$ and $(K^2,\CP^2_G)$, a datum of a morphism between them is an inclusion 
$K_1\subseteq K_2$ and an isomorphism $$\CP^1_G|_{S\times X-\on{Graph}_{K_2}}\simeq \CP^2_G|_{S\times X-\on{Graph}_{K_2}}.$$

The map \eqref{e:from dagger to germs} clearly factors as
\begin{equation} \label{e:from dagger to germs factor}
\Gr_{(\Ran_{\on{untl,aug}})_{x\notin}}\to (\Bun_G^{\on{around}_x})_{\on{param}} \to \Bun_G^{\on{around}_x}.
\end{equation}

We will prove that both arrows in \eqref{e:from dagger to germs factor} are universally homologically contractible.\footnote{Here we are
using the notion of being universally homologically contractible for a map between lax prestacks, see \secref{ss:uhc lax} for what this means.}
We note, however, that the fact
that the second arrow in \eqref{e:from dagger to germs factor} 
is universally homologically contractible is proved by repeating the proof of \lemref{l:forget param}.

\sssec{}

By \propref{p:Cart uhc}, in order to prove that
\begin{equation} \label{e:Gr to param}
\Gr_{(\Ran_{\on{untl,aug}})_{x\notin}}\to (\Bun_G^{\on{around}_x})_{\on{param}}
\end{equation} 
is universally homologically contractible, it suffices to show that for any $S\in \Sch$, the functor
$$\Gr_{(\Ran_{\on{untl,aug}})_{x\notin}}(S)\to (\Bun_G^{\on{around}_x})_{\on{param}}(S)$$
is a coCartesian fibration
and that for any given object of $(\Bun_G^{\on{around}_x})_{\on{param}}(S)$ the map 
$$S\underset{(\Bun_G^{\on{around}_x})_{\on{param}}}\times \Gr_{(\Ran_{\on{untl,aug}})_{x\notin}}\to S$$
is universally homologically contractible. 

\sssec{}

The coCartesian property is established as follows. For a map
$$(K^1,\CP^1_G)\to (K^2,\CP^2_G)$$
in  $(\Bun_G^{\on{around}_x})_{\on{param}}(S)$ and an object $\Gr_{(\Ran_{\on{untl,aug}})_{x\notin}}(S)$,
given by $I^1\supseteq K^1$ and a trivialization $\gamma^1$ of $\CP^1_G$ over $S\times X-\on{Graph}_{I^1}$, we produce a new
object of $(\Bun_G^{\on{around}_x})_{\on{param}}(S)$ by setting $I^2:=I^1\cup K^2$ with $\gamma^2$, given by restriction. 

\sssec{}

Let us now be given an object $(K,\CP_G)$ of  $(\Bun_G^{\on{around}_x})_{\on{param}}(S)$. The fiber of \eqref{e:Gr to param}
over it is the lax prestack over $S$ that assigns to $S'\to S$ the category of
$$\left((K'\subseteq I'\subset \Maps(S',X)),\gamma'\right),$$
where $K'$ is the image of $K$ in $\Maps(S',X)$, and 
where $\gamma'$ is a trivialization of $\CP_G|_{S'\times X}$ over $$S'\times X-\on{Graph}_{I'}.$$ 

\medskip

Consider now the lax prestack 
$$S \underset{\Bun_G} \times \Gr_{\Ran_{\on{untl}}}.$$
By definition, it assigns to $S'\to S$ the category of 
$$(I'\subset \Maps(S',X),\gamma'),$$
where $\gamma'$ is a trivialization of $\CP_G|_{S'\times X}$ over $S'\times X-\on{Graph}_{I'}$.

\medskip

We have a tautological functor
$$S\underset{(\Bun_G^{\on{around}_x})_{\on{param}}}\times \Gr_{(\Ran_{\on{untl,aug}})_{x\notin}}\to 
S \underset{\Bun_G} \times \Gr_{\Ran_{\on{untl}}},$$
which admits a left adjoint (take the union of $I'$ with $K'$). 

\medskip

Hence, by \lemref{l:rel hom type}, it is enough to show that the map 
$$S \underset{\Bun_G} \times \Gr_{\Ran_{\on{untl}}}\to S$$
is universally homologically contractible. 

\sssec{}

By \cite[Theorem 3.3.6]{Main Text} (applied in the case of a constant group-scheme),
we can assume that the given $G$-bundle on $S\times X$ admits a rational trivialization.
In this case, the fiber product $$S \underset{\Bun_G} \times \Gr_{\Ran_{\on{untl}}}$$ identifies with 
$$S\times \bMaps(X,G)_{\on{param}},$$
where $\bMaps(X,G)_{\on{param}}$ is the prestack that assigns to $S'\to S$ the groupoid of 
$$(I'\subset \Maps(S',X), \gamma':S'\times X-\on{Graph}_{I'}\to G).$$

\medskip

Thus, it remains to show that the prestack $\bMaps(X,G)_{\on{param}}$ is universally homologically contractible.
However, the latter is given by \cite[Theorem 3.3.2]{Main Text} (applied in the case of a constant group-scheme). 
(We note again that the theorem from \cite{Main Text} quoted above is a global result, making \thmref{t:from dagger to germs} 
into a global assertion). 

\section{Second proof of the pointwise duality statement}  \label{s:P1}
 
In this section we will give another proof of \thmref{t:pointwise duality}, also using global methods. One component of this proof relies
on the notion of \emph{universal homological left cofinality}, see \secref{ss:l cofinal}. 

\medskip

In this section we let $X$ be a proper curve with a marked point $x_\infty\in X$. We denote $X':=X-x_\infty$,
and we let $x_0$ be a marked point on $X'$.  In practice, we will take $X=\BP^1$ with points $\infty$ and $0$,
respectively.

\medskip

We let $G$ be a constant reductive group-scheme on $X$.

\ssec{Local non-abelian Poincar\'e duality, $\BP^1$-version}

In this subsection we will formulate a theorem, \thmref{t:P1 non-ab Poinc}, which would be a semi-global 
$\BP^1$-analog of \thmref{t:local non-ab Poinc}, and deduce from it \thmref{t:pointwise duality}.  

\sssec{}   \label{sss:shifted Gr}

Let $\unn_{x_\infty,\on{untl,aug}}$ denote the map
$$\Ran_{\on{untl,aug}}\to \Ran_{\on{untl,aug}},\quad (K\subseteq I)\mapsto ((K\cup x_\infty)\subseteq (I\cup x_\infty)).$$

Consider the map
$$\unn_{x_\infty,\on{untl,aug}}\circ \iota\circ \phi: \Ran\to \Ran_{\on{untl,aug}},\quad I\mapsto (\{x_\infty\}\subset (I\cup x_\infty)).$$

\medskip

Consider the fiber product
$$\Ran \underset{\Ran_{\on{untl,aug}}}\times \Gr_{\Ran_{\on{untl,aug}}}.$$

This is a prestack that assigns to a test scheme $S$ the category, whose objects are triples:

\begin{itemize}

\item $I\subset \Maps(S,X)$;

\item a $G$-bundle $\CP_G$ on $S\times X$;

\item a trivialization $\gamma$ of the restriction of $\CP_G$ to $S\times X-(\on{Graph}_I\cup (S\times x_\infty))$. 

\end{itemize}

Morphisms between $(I^1,\CP^1_G,\gamma^1)$ and $(I^2,\CP^2_G,\gamma^2)$ are non-empty only when $I^1=I^2$,
and in the latter case consist of isomorphisms
$$\CP^1_G|_{S\times (X-x_\infty)}\simeq \CP^2_G|_{S\times (X-x_\infty)},$$
compatible with the data of $\gamma^1$ and $\gamma^2$, respectively. 

\sssec{}

We have a naturally defined map
\begin{equation} \label{e:Gr to point}
\Ran \underset{\Ran_{\on{untl,aug}}}\times \Gr_{\Ran_{\on{untl,aug}}}\to  BG,
\end{equation}
given by restricting the bundle to $x_0\in X'$.

\medskip

We will prove:

\begin{thm} \label{t:P1 non-ab Poinc}
Let $(X,x_\infty,x_0)=(\BP^1,\infty,0)$.
Then the map \eqref{e:Gr to point} is universally homologically contractible. 
\end{thm}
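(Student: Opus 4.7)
The plan is to prove \thmref{t:P1 non-ab Poinc} by a strategy parallel to Section 9 but exploiting the global geometry of $\BP^1$, with the key twist that the roles of the "base point" $x_0$ and the distinguished point $x_\infty$ are now kept separate. Denote the source prestack by $Y := \Ran \underset{\Ran_{\on{untl,aug}}}\times \Gr_{\Ran_{\on{untl,aug}}}$.

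First, I would reduce to showing that the pullback of $Y \to BG$ along the smooth cover $\on{pt} \to BG$ is universally homologically contractible. Unwinding the definitions in \secref{sss:shifted Gr}, this fiber $Y_{\on{pt}}$ parameterizes, for $S \in \Sch$, tuples $(I, \CP_G, \gamma, \delta)$ where $I \subset \Maps(S, \BP^1)$ is finite non-empty, $\CP_G$ is a $G$-bundle on $S \times \BP^1$, $\gamma$ is a trivialization on the complement of $\on{Graph}_I \cup (S \times x_\infty)$, and $\delta$ is a trivialization of $\CP_G|_{S \times x_0}$; compatibility between $\gamma$ and $\delta$ is imposed when $x_0 \notin I$.

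Second, in the spirit of \lemref{l:forget param} and \propref{p:ins point}, I would introduce an auxiliary lax prestack $Y_{\on{pt}}^{\on{param}}$ that forces $x_0 \in I$ (parameterizing the domain of $\delta$), and show that the forgetful map $Y_{\on{pt}}^{\on{param}} \to Y_{\on{pt}}$ is universally homologically contractible. The argument is that over any $S$-point of $Y_{\on{pt}}$, the category of lifts is left cofinal to the subcategory of $I$ containing $x_0$, which has an initial object; this is a direct analogue of the contractibility used in \secref{ss:proof of contractibility} via \lemref{l:contr contr}. After this reduction, $\delta$ becomes redundant (subsumed into $\gamma$), and $Y_{\on{pt}}^{\on{param}}$ is a classifying prestack for bundles on $\BP^1$ carrying a rational trivialization on $\BA^1$ with pole locus of size parameterized by $I \supseteq \{x_0\}$, possibly augmented at $x_\infty$.

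Third, I would use (the appropriate two-point variant of) \cite[Theorem 3.3.6]{Main Text} to show that \'etale locally on $S$ every such bundle admits a rational trivialization, identifying $Y_{\on{pt}}^{\on{param}}$ with the \'etale sheafification of $B\bigl(\bMaps((\BP^1; x_0), (G; 1))^{\on{around}_{x_\infty}}_{\on{param}}\bigr)$, where the group-prestack of maps to $G$ trivial at $x_0$ is allowed to degenerate in a neighborhood of $x_\infty$ with parameterized pole locus. The universal homological contractibility of this group-prestack is then provided by the two-point version of \cite[Theorem 3.3.2]{Main Text}; since taking classifying prestacks preserves universal homological contractibility (as in \secref{ss:proof of contractibility}), this yields the desired conclusion.

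The main obstacle will be the second step: setting up the lax prestack $Y_{\on{pt}}^{\on{param}}$ correctly so that the forgetful map is literally a value-wise coCartesian fibration with contractible fibers (as required to invoke \lemref{l:cart contr}), given that the objects of $Y_{\on{pt}}$ carry rigid trivialization data rather than the looser "isomorphism on an open set" data of \secref{s:dagger}. A secondary point requiring care is verifying that the global inputs from \cite[Theorem 3.3.2]{Main Text} and \cite[Theorem 3.3.6]{Main Text}, which are formulated with a single marked point, pass to the present two-point setting on $\BP^1$---this should follow by the same line of argument applied to the pair $(x_0, x_\infty)$, since the proofs in \emph{loc. cit.} are essentially local around the marked point and the complement $\BP^1 - \{x_0, x_\infty\} \simeq \BG_m$ is again affine.
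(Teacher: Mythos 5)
Your Steps 2 and 3 do not mesh with the structure of $Y := \Ran \underset{\Ran_{\on{untl,aug}}}\times \Gr_{\Ran_{\on{untl,aug}}}$. Since the morphisms in $Y$ are isomorphisms of bundles over the \emph{fixed} open $S\times X'$, compatible with the trivialization $\gamma$ (which is defined on a dense open of $S\times X'$), every object of $Y_{\on{pt}}$ has trivial automorphism group; in other words $Y_{\on{pt}}$ is a set-valued prestack and hence \emph{cannot} be (the sheafification of) a classifying prestack $B(-)$. The identification you want in Step 3 is therefore structurally impossible. In \secref{ss:proof of contractibility} the $B(-)$ identification is made for $\Bun_G^{\on{around}_x}\underset{BG_x}\times\on{pt}$, a prestack carrying no $\gamma$-data, and one only gets there from the $\gamma$-rigidified $\Gr$-type object after a \emph{separate} uhc step, namely \thmref{t:from dagger to germs}. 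Step 2 is also problematic: enlarging $I$ to include $x_0$ \emph{shrinks} the domain of definition of $\gamma$, so the "forgetful" map $Y_{\on{pt}}^{\on{param}} \to Y_{\on{pt}}$ you appeal to does not exist in the direction you need; and your motivation is backwards --- once $x_0\in I$, the trivialization $\gamma$ says nothing at $x_0$, so $\delta$ becomes \emph{independent} data rather than being subsumed into $\gamma$.

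More fundamentally, your plan omits the global ingredient that actually drives the proof. The paper factors \eqref{e:Gr to point} as $Y \to \Bun'_G \to BG$, where $\Bun'_G$ is the prestack of $G$-bundles on $X$ with morphisms being isomorphisms over $X' = X - x_\infty$, and proves two separate statements: \thmref{t:open to point} ($\Bun'_G\to BG$ uhc, specific to $\BP^1$, via the \v{C}ech nerve of $\Bun_{G,\on{level}_{x_0}} \to \Bun'_{G,\on{level}_{x_0}}$ and $\BG_m$-contraction of $\bMaps((\BA^1,0),(G,1))$), and \thmref{t:Gr to open} ($Y\to\Bun'_G$ uhc for \emph{any} $(X,x_\infty)$). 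It is the latter step where the heavy lifting happens: after base change along $\Bun_G\to\Bun'_G$ it is reduced, via the left-cofinality machinery (\thmref{t:Ran left cofinal}, \propref{p:ins point}, \propref{p:uhlc}), to the original non-abelian Poincar\'e duality \thmref{t:non-ab Poinc} for $\Gr_{\Ran}\to\Bun_G$. Your direct fiberwise attempt never accesses this input, and the hypothetical ``two-point variant'' of \cite[Theorems 3.3.2 and 3.3.6]{Main Text} you invoke would not substitute for it: in $Y$, the two marked points play genuinely asymmetric roles (morphisms degenerate only at $x_\infty$, while $\gamma$ carries poles along $I$ and is pinned at $x_0$), and this asymmetry does not match the single-marked-point setup of \emph{loc.\ cit.}
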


We regard \thmref{t:P1 non-ab Poinc} as a semi-global version of \thmref{t:non-ab Poinc}. 

\medskip

We shall now show how \thmref{t:P1 non-ab Poinc} implies \thmref{t:pointwise duality}.

\sssec{}

Consider the object 
$$(\bj_{\Ran})_*(\CA_{\on{red}}')\in \Shv^!(\Ran(X)).$$

We have a tautologically defined map
\begin{equation}  \label{e:contr Ran}
(\BD_{\Ran'}(\CA_{\on{red}}'))_{\{x_0\}}\to \left(\on{C}_c^*(\Ran, (\bj_{\Ran})_*(\CA_{\on{red}}'))\right)^\vee.
\end{equation}

We claim:

\begin{lem}  \label{l:contr Ran}
Assume that $(X,x_\infty,x_0)=(\BP^1,\infty,0)$. Then the map \eqref{e:contr Ran} is an isomorphism.
\end{lem}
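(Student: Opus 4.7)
The plan is to combine \thmref{t:Verdier Ran open} with an analysis of how the $\BP^1$-specific geometry lets one collapse compactly supported cohomology on $\Ran(\BP^1)$ to a fiber computation at a single point. First, by \propref{p:estimate for extensions} the object $\CA_{\on{red}}'$ on $\Ran' = \Ran(\BA^1)$ satisfies the cohomological estimate required by \thmref{t:Verdier on Ran}. Combining \thmref{t:Verdier Ran open} with \lemref{l:j} then yields a global identification
\begin{equation*}
\on{C}_c^*(\Ran, (\bj_{\Ran})_*(\BD_{\Ran'}(\CA_{\on{red}}')))\;\simeq\;(\on{C}_c^*(\Ran', \CA_{\on{red}}'))^\vee.
\end{equation*}

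Second, I would show that under the $\BP^1$-specific structure the natural map $(\bj_{\Ran})_!\to(\bj_{\Ran})_*$ induces an isomorphism on compactly supported cohomology, both for $\CA_{\on{red}}'$ and for $\BD_{\Ran'}(\CA_{\on{red}}')$. The main input here is \propref{p:ins point} applied at $x=x_\infty$: the complement $\Ran\setminus\Ran'$ is cut out by insertion of $\infty$, and the universal homological left cofinality of $\unn_{x_\infty,\on{untl}}$ controls the difference between $!$-extension and $*$-extension in compactly supported cohomology. This step also uses that the sheaves in question lift through $\on{AddUnit}$ (Sects. \ref{ss:add unit} and \ref{ss:unit and aug}), so the relevant cofinality corollary of \propref{p:ins point} applies.

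Third, I would identify the global cohomology $\on{C}^*_c(\Ran,(\bj_{\Ran})_*(\BD_{\Ran'}(\CA_{\on{red}}')))$ with the fiber $(\BD_{\Ran'}(\CA_{\on{red}}'))_{\{x_0\}}$. By \propref{p:Verdier factorizes} combined with \corref{c:G factorizes} and the cohomological estimate above, $\BD_{\Ran'}(\CA_{\on{red}}')$ is a commutative factorization algebra; together with the $\BA^1$-equivariance of $\CA_{\on{red}}'$ inherited from the constancy of $G$, this forces its compactly supported cohomology on the whole $\Ran(\BP^1)$ (after $*$-pushforward) to be computed by its factor at the chosen point $x_0$. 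Assembling these identifications produces an isomorphism $(\BD_{\Ran'}(\CA_{\on{red}}'))_{\{x_0\}}\simeq(\on{C}^*_c(\Ran,(\bj_{\Ran})_*(\CA_{\on{red}}')))^\vee$, and finally one verifies that this composite agrees with the tautologically defined map \eqref{e:contr Ran} by unwinding definitions.

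The main obstacle will be the third step: matching the global compactly supported cohomology on $\Ran(\BP^1)$ of the $*$-extended commutative factorization algebra with its local fiber at a single point. This identification is exactly the ``$\BP^1$-version of non-abelian Poincar\'e duality'' motif that motivates \secref{s:P1}, and it relies delicately on the interplay between the factorization structure, the universal homological contractibility lemmas of Part I, and the cohomological estimates from \secref{s:proofs Verdier}.
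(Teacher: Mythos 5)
There is a genuine gap, and it is exactly the point your plan flags as ``the main obstacle.'' The paper's proof of \lemref{l:contr Ran} does not go through \thmref{t:Verdier Ran open}, nor the comparison of $(\bj_{\Ran})_!$ with $(\bj_{\Ran})_*$, nor the factorization-algebra structure on $\BD_{\Ran'}(\CA'_{\on{red}})$. Instead it is a direct argument: one observes that the claim makes sense for any $\BG_m$-equivariant $\CF\in\Shv^!(\Ran')$ (here $\BG_m$ acts on $\BA^1$ by dilations fixing $x_0=0$); by \corref{c:Verdier on Ran} and the colimit decomposition one reduces to $\CF=(\on{ins}'_\CI)_!(\CF_\CI)$ with $\CF_\CI$ a $\BG_m$-equivariant sheaf on $(\BA^1)^\CI$; and then the map \eqref{e:contr Ran} is identified with the dual of the contraction map $\on{C}^*((\BA^1)^\CI,\CF_\CI)\to(\CF_\CI)^*_{0^\CI}$, which is an isomorphism by the contraction principle for $\BG_m$-equivariant sheaves (\cite[Theorem C.5.3]{DrGa}). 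The $\BG_m$-contraction principle is the entire content of the lemma, and it does not appear anywhere in your proposal; the appeal to ``$\BA^1$-equivariance inherited from the constancy of $G$'' gestures at it but never uses it.

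Concretely, your first step produces an identification $\on{C}^*_c(\Ran,(\bj_{\Ran})_*\BD_{\Ran'}(\CA'_{\on{red}}))\simeq(\on{C}^*_c(\Ran',\CA'_{\on{red}}))^\vee$, which is correct but has the ``wrong'' left- and right-hand sides compared to \eqref{e:contr Ran}: you are left needing both $(\BD_{\Ran'}\CA'_{\on{red}})_{\{x_0\}}\simeq\on{C}^*_c(\Ran,(\bj_{\Ran})_*\BD_{\Ran'}(\CA'_{\on{red}}))$ and $\on{C}^*_c(\Ran',\CA'_{\on{red}})\simeq\on{C}^*_c(\Ran,(\bj_{\Ran})_*\CA'_{\on{red}})$. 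Each of these is itself a contraction-type statement (a $*$-fiber or a $!$-extension cohomology agreeing with a $*$-extension cohomology), and neither follows from the general machinery you invoke; they are false for arbitrary constructible sheaves and hold here only because of the $\BG_m$-action on $(\BP^1,\infty,0)$. Your step 2 (comparing $(\bj_{\Ran})_!$ with $(\bj_{\Ran})_*$) is thus not a digression that simplifies the problem but a restatement of it, and \propref{p:ins point} (which is about inserting a point on the \emph{unital} Ran space) is not the right tool to close it. In short: the decomposition into three steps pushes all the difficulty into two unproved sub-statements, and the missing tool — Braden-type hyperbolic localization / contraction for $\BG_m$-equivariant sheaves on affine space — is what the actual proof is built on.
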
 

\begin{proof}

We will prove the lemma for $\CA_{\on{red}}'$ replaced by any $\CF\in \Shv^!(\Ran')$, which is equivariant with respect to the action of
$\BG_m$ by dilations.

\medskip

By \corref{c:Verdier on Ran}, we can assume that
$\CF=(\on{ins}'_\CI)_!(\CF_\CI)$ for some finite set $\CI$ and $\CF_\CI\in \Shv(X'{}^\CI)$ with $\CF_\CI$ being $\BG_m$-equivariant. 

\medskip

Then it suffices to show that the map
\begin{equation}  \label{e:contr I}
(\BD_{X'{}^\CI}(\CF_\CI))_{x_0^\CI}\to \left(\on{C}_c^*(X^\CI,(\bj_{X^\CI})_*(\CF_\CI))\right)
\end{equation}
is an isomorphism, where $x_0^\CI$ denotes the corresponding point of $X^\CI$. 

\medskip

Let $(\CF_\CI)^*_{x_0^\CI}$ denote the *-fiber of $\CF_\CI$ at the point $x_0^\CI\in X'{}^\CI$. Then the map
\eqref{e:contr I} is the dual of the canonical map
$$\on{C}^*(X'{}^\CI,\CF_\CI)\to (\CF_\CI)^*_{x_0^\CI}.$$

Now, the assertion follows from the contraction principle for $\BG_m$-equivariant sheaves on $(\BA^1)^\CI$, see
\cite[Theorem C.5.3]{DrGa}. 

\end{proof} 

\sssec{}

Recall the map $\unn_{x_\infty,\on{untl,aug}}$ introduced above. It follows from \thmref{t:main} that we have a canonical isomorphism
$$\CA_{\on{red}}'\simeq \on{TakeOut}\circ \unn_{x_\infty,\on{untl,aug}}^!(\CA_{\on{untl,aug}})|_{\Ran'}.$$

From here we obtain a map
\begin{equation} \label{e:takeout point}
\on{TakeOut}\circ \unn_{x_\infty,\on{untl,aug}}^!(\CA_{\on{untl,aug}})\to (\bj_{\Ran})_*(\CA_{\on{red}}').
\end{equation}

We claim:

\begin{lem}  \label{l:takeout point}
The map \eqref{e:takeout point} is an isomorphism.
\end{lem}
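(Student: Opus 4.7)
The plan is to check the asserted isomorphism by separately analyzing its behavior on $\Ran'$ and on the complement $\Ran \setminus \Ran'$, exploiting that $(\bj_{\Ran})_*$ is characterized by its restriction together with vanishing of !-fibers off $\Ran'$. Write $\wt\CA := \unn_{x_\infty,\on{untl,aug}}^!(\CA_{\on{untl,aug}})$, so that the map to establish is $\on{TakeOut}(\wt\CA) \to (\bj_{\Ran})_*(\CA'_{\on{red}})$. Since the map is produced from the canonical identification on $\Ran'$ by $\bj_{\Ran}^! \dashv (\bj_{\Ran})_*$ adjunction, and since $\bj_{\Ran}^! \circ (\bj_{\Ran})_* \simeq \on{id}$, it will suffice to verify (i) that the identification $\on{TakeOut}(\wt\CA)|_{\Ran'} \simeq \CA'_{\on{red}}$ is the natural one, and (ii) that both sides have vanishing !-fibers at $S$-points of $\Ran$ that do not factor through $\Ran'$.

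For (i), I would first verify that $\wt\CA|_{\Ran'_{\on{untl,aug}}} \simeq \CA'_{\on{untl,aug}}$. For an $S$-point $(K\subseteq I)$ of $\Ran'_{\on{untl,aug}}$, so $I, K\subset X'$, applying \propref{p:inserting the aug} to $\CA_{\on{red}} = (\bj_{\Ran})_!(\CA'_{\on{red}})$ gives
\[
\wt\CA_{K\subseteq I} = (\CA_{\on{untl,aug}})_{(K\cup x_\infty)\subseteq (I\cup x_\infty)} = \bigoplus_{\emptyset\neq J\subseteq I-K} (\CA_{\on{red}})_J,
\]
and each such $J$ has image in $X'$, so $(\CA_{\on{red}})_J = (\CA'_{\on{red}})_J$; by \propref{p:inserting the aug} applied to $\CA'_{\on{red}}$ this equals $(\CA'_{\on{untl,aug}})_{K\subseteq I}$. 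Because the formula for $\on{TakeOut}$ in \propref{p:out} only involves values of $\wt\CA$ at $S$-points of $\Ran_{\on{untl,aug}}$ lying above the given $S$-point of $\Ran$, this identification is compatible with restricting to $\Ran'$, giving $\on{TakeOut}(\wt\CA)|_{\Ran'} \simeq \on{TakeOut}(\CA'_{\on{untl,aug}}) \simeq \CA'_{\on{red}}$ via \thmref{t:main}.

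For (ii), take an $S$-point $I$ of $\Ran$ not factoring through $\Ran'$. After stratifying $S$ we may assume some element $i_\infty \in I$ is the constant map $S \to \{x_\infty\}$, so that $I \cup x_\infty = I$ as points of $\Ran(S)$. For the right-hand side, \eqref{e:descr j*} reduces the !-fiber computation to the identity $i^! \bj_* = 0$ for the open/closed complementary pair $\bj: X' \hookrightarrow X$, $i: \{x_\infty\} \hookrightarrow X$ (applied coordinate-wise in $X^\CI$); this identity follows from the triangle $i_* i^! \to \on{id} \to \bj_* \bj^*$ together with $i^! i_* \simeq \on{id}$ and $\bj^*\bj_* \simeq \on{id}$. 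For the left-hand side, \propref{p:out} combined with $I \cup x_\infty = I$ gives
\[
\on{TakeOut}(\wt\CA)_{S,I} = \on{Fib}\Bigl( (\CA_{\on{untl,aug}})_{\{x_\infty\}\subseteq I} \to \underset{\emptyset\neq K\subseteq I}{\on{lim}} (\CA_{\on{untl,aug}})_{(K\cup x_\infty)\subseteq I}\Bigr).
\]
The full subcategory $\{K : x_\infty \in K \subseteq I\}$ of $\{\emptyset\neq K\subseteq I\}$ is right-cofinal: the under-category at any $K$ in the outer index has $K \cup \{x_\infty\}$ as initial object, hence is contractible. On this subcategory, the functor identifies with $K \mapsto (\CA_{\on{untl,aug}})_{K\subseteq I}$, which has initial object $K = \{x_\infty\}$, so the limit equals $(\CA_{\on{untl,aug}})_{\{x_\infty\}\subseteq I}$. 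Tracing through the definition of the map in \propref{p:out}, the morphism $(\emptyset\subseteq I) \to (\{x_\infty\}\subseteq I)$ in $\Ran_{\on{untl,aug}}$ becomes the identity after applying $\unn_{x_\infty,\on{untl,aug}}$, so the canonical map from the outer fiber term to the limit is the identity and the fiber vanishes.

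The main obstacle is just book-keeping: carefully verifying the right-cofinality assertion and the identification of the map to the limit as the identity, tracking how $\unn_{x_\infty,\on{untl,aug}}^!$ interacts with the formula of \propref{p:out} for $\on{TakeOut}$. The stratification argument on $S$ in (ii) is routine and can alternatively be bypassed by reducing to $k$-points via \lemref{l:testing} together with constructibility of both objects.
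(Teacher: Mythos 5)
Your two-step strategy (identify the two sides over $\Ran'$, then show both have vanishing !-fibers at points of $\Ran$ not factoring through $\Ran'$) is sound, and given how terse the paper's own proof is, this is very likely the kind of !-fiber computation it has in mind. You also correctly note in passing that the argument is purely formal and works with $\CA_{\on{red}}$ replaced by arbitrary $\CF\in\Shv^!(\Ran)$ -- which is exactly what the paper asserts.

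However, the cofinality claim in part (ii) is misstated. You assert that the full subcategory $\bD'=\{K : x_\infty\in K\subseteq I\}$ of $\bD=\{\emptyset\neq K\subseteq I\}$ is \emph{right} cofinal, and justify this by checking that the under-categories $\bD'_{K_0/}$ are contractible. But in the paper's conventions (see \secref{sss:hom th conv}) the criterion for right cofinality -- i.e.\ for restriction $\lim_\bD\to\lim_{\bD'}$ to be an isomorphism -- is contractibility of the \emph{over}-categories $\bD'_{/K_0}$; contractibility of under-categories gives \emph{left} cofinality, which is the colimit-preserving direction. And the inclusion $\bD'\hookrightarrow\bD$ genuinely fails to be right cofinal: for any $K_0\in\bD$ with $x_\infty\notin K_0$ the over-category $\bD'_{/K_0}=\{K\in\bD': K\subseteq K_0\}$ is empty. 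So as written the step does not go through.

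The conclusion is nevertheless correct, and the fix is small. The functor you are taking the limit of, $\Phi(K)=(\CA_{\on{untl,aug}})_{(K\cup x_\infty)\subseteq I}$, factors through the projection $G:\bD\to\bD'$, $K\mapsto K\cup x_\infty$. Since $G$ is left adjoint to the inclusion $\bD'\hookrightarrow\bD$ (one has $K\cup x_\infty\subseteq K'\Leftrightarrow K\subseteq K'$ for $K'\ni x_\infty$), the functor $G$ \emph{is} right cofinal: $\bD_{/K'}\simeq\bD_{/\iota(K')}$ by adjunction, which has a terminal object. Hence $\lim_\bD\Phi=\lim_\bD(\Phi|_{\bD'}\circ G)\simeq\lim_{\bD'}\Phi|_{\bD'}$, and the latter is $(\CA_{\on{untl,aug}})_{\{x_\infty\}\subseteq I}$ since $\bD'$ has initial object $\{x_\infty\}$. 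From there your identification of the resulting map as the identity goes through unchanged. (Separately, the parenthetical claim about bypassing the stratification ``by reducing to $k$-points via constructibility of both objects'' needs care: $\CA'_{\on{red}}$ is built from affine Grassmannian homology and is only ind-constructible, so fiberwise vanishing at $k$-points does not by itself detect vanishing. The stratification of $S$ you describe first is the safer route, and with the pairwise-disjoint-images reduction of \lemref{l:testing} the relevant stratification indeed has one stratum per element of $J$ plus the open complement.)
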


\begin{proof}

Follows easily from \thmref{t:main}: it is true for $\CA_{\on{red}}$ replaced by an arbitrary 
$\CF\in \Shv^!(\Ran)$, $\CA'_{\on{red}}$ by $\CF|_{\Ran'}$ and $\CA_{\on{untl,aug}}$ by 
$\wt\CF:=\on{AddUnit}_{\on{aug}}(\CF)\in \Shv^!(\Ran_{\on{untl,aug}})$. 

\end{proof} 

\sssec{}

By construction, the following diagram commutes:

$$
\CD
(\CB'_{\on{red}})_{\{x_0\}}    @>{\text{\eqref{e:pointwise A1}}}>>   \left(\BD_{\Ran'}(\CA_{\on{red}}')\right)_{\{x_0\}}  \\
@V{=}VV  @VV{\text{\eqref{e:contr Ran}}}V    \\
\on{C}^*_{\on{red}}(BG)  & &   \left(\on{C}_c^*(\Ran, (\bj_{\Ran})_*(\CA_{\on{red}}'))\right)^\vee   \\
@V{\text{\eqref{e:Gr to point}}}VV     @VV{\text{\eqref{e:takeout point}}}V    \\
\on{C}^*_{\on{red}}\left(\Ran \underset{\Ran_{\on{untl,aug}}}\times \Gr_{\Ran_{\on{untl,aug}}}\right)    & &  
\on{C}_c^*(\Ran, \on{TakeOut}\circ \unn_{x_\infty,\on{untl,aug}}^!(\CA_{\on{untl,aug}}))^\vee   \\
& &      @VV{\text{\eqref{e:nat trans 3}}}V  \\
@V{\sim}VV  
 \on{C}_c^*(\Ran, \on{OblvUnit}\circ \on{OblvAug}\circ \unn_{x_\infty,\on{untl,aug}}^!(\CA_{\on{untl,aug}}))^\vee   \\
& &    @VV{\sim}V    \\
\on{C}^{\on{red}}_*\left(\Ran \underset{\Ran_{\on{untl,aug}}}\times \Gr_{\Ran_{\on{untl,aug}}}\right)^\vee  
@>{\sim}>>
\left(\on{C}^*_c(\Ran,\phi^!\circ  \iota^! \circ \unn_{x_\infty}^!(\CA))\right)^\vee.  
\endCD
$$

We need to show that the top horizontal arrow in this diagram is an isomorphism. For this we 
can take $(X,x_\infty,x_0)=(\BP^1,\infty,0)$. We claim that in this case all the other arrows in
this diagram are isomorphisms. 

\medskip

Indeed, for $(X,x_\infty,x_0)=(\BP^1,\infty,0)$, Theorem \ref{t:P1 non-ab Poinc} says
that the second from the top left vertical arrow is an isomorphism, and \lemref{l:contr Ran}
says that the top right vertical arrow is an isomorphism. 

\medskip

The other arrows are isomorphism for any $(X,x_\infty,x_0)$, which follows from 
\lemref{l:takeout point} and \corref{c:main}.

\medskip

This proves \thmref{t:pointwise duality}.

\ssec{The stack of bundles on the punctured curve}

In this subsection we shall reduce \thmref{t:P1 non-ab Poinc} to a combination of another two theorems. These theorems are parallel to 
Theorems \ref{t:germs to BG} and \ref{t:from dagger to germs}, respectively. 

\sssec{}

Let $\Bun_G'$ be the prestack that assigns to $S\in \Sch$ the groupoid whose objects are $G$-bundles on
$S\times X$, and whose morphisms are isomorphisms of $G$-bundles on $S\times X'$.  (In other words, this is the subgroupoid
of $G$-bundles on $S\times X'$ whose objects are those $G$-bundles that can be extended to $S\times X$.) 

\medskip

Restriction to $x_0\in X'$ defines a map 
\begin{equation} \label{e:open to point}
\Bun_G'\to BG.
\end{equation}

We will prove:

\begin{thm} \label{t:open to point}
Assume that $(X,x_\infty,x_0)=(\BP^1,\infty,0)$. Then the map \eqref{e:open to point}
is universally homologically contractible. In particular, it induces an isomorphism on
homology.
\end{thm}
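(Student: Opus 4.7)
The plan is to follow the blueprint of \thmref{t:germs to BG}: reduce the universal homological contractibility (UHC) of the map of stacks to that of a certain mapping prestack, and then establish it by an explicit $\BA^1$-scaling argument that crucially uses $(X,x_\infty,x_0)=(\BP^1,\infty,0)$.

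First, since $\on{pt}\to BG$ is a smooth surjection, I would reduce to showing that the base change $\Bun_G'\underset{BG}\times\on{pt}$ is UHC. Explicitly, this prestack classifies pairs $(\CP_G,\tau)$, where $\CP_G$ is a $G$-bundle on $S\times X$ and $\tau$ is a trivialization of $\CP_G$ along $S\times\{x_0\}$, with morphisms being isomorphisms of $\CP_G|_{S\times X'}$ compatible with $\tau$. I claim this prestack is the \'etale sheafification of $B(H)$, where
$$H:=\bMaps((X';x_0),(G;1))$$
is the group-prestack of maps $S\times X'\to G$ sending $S\times\{x_0\}$ to $1\in G$. The key geometric input is that, \'etale locally on $S$, every such pair $(\CP_G,\tau)$ admits an extension of $\tau$ to a trivialization on all of $S\times X'$; this uses a Beauville--Laszlo argument that reduces the extension problem across $\BA^1=X'$ to the formal neighborhood of $x_\infty$, combined with (the constant group-scheme case of) \cite[Theorem 3.3.6]{Main Text}.

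Since $BH$ is UHC whenever $H$ is, it remains to prove the UHC of $H$. For this I would invoke the $\BA^1$-action
$$m:\BA^1\times H\to H,\qquad (\lambda,g(t))\mapsto g(\lambda t),$$
which is well-defined because $g(0)=1$ and hence $g(\lambda t)|_{t=0}=1$ for every $\lambda$. At $\lambda=1$ the map $m$ is the identity of $H$, while at $\lambda=0$ it is the constant map to the identity element; thus $m$ exhibits $H$ as $\BA^1$-contractible. Writing $H$ as a filtered colimit of its subprestacks of bounded ``degree'' (each an open subscheme of an affine space), the $\BA^1$-contractibility yields $\on{C}_*(H)\simeq\Lambda$, and hence UHC by \lemref{l:trivial homology}. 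Alternatively, one may introduce a parameter version $H_{\on{param}}$ as in \secref{ss:proof of contractibility}, verify the UHC of the forgetful map $H_{\on{param}}\to H$ exactly as in \lemref{l:forget param}, and then invoke the relevant variant of \cite[Theorem 3.3.2]{Main Text}.

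The main obstacle is the identification in the second paragraph: that, \'etale locally on $S$, a trivialization at $x_0$ extends to a trivialization on the entire affine line $S\times\BA^1$. This is substantially stronger than the ``around $x_\infty$'' extension used in \thmref{t:germs to BG}, where the trivialization only needs to extend off a finite subset $K\subset X-x_\infty$, and it essentially forces the choice $(X,x_\infty,x_0)=(\BP^1,\infty,0)$---the very same feature of $\BA^1$ that is then exploited through the scaling homotopy in the final step.
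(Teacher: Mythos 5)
Your proof follows the same basic strategy as the paper's: reduce to the universal homological contractibility of the mapping prestack $\bMaps((\BA^1,0),(G,1))$ via a Beck--Chevalley-type reduction, and then contract this prestack by scaling. However, there are two differences worth flagging, one of packaging and one of substance.

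On packaging: the paper does \emph{not} identify $\Bun'_{G,\on{level}_{x_0}}:=\Bun_G'\underset{BG}\times\on{pt}$ directly with the \'etale sheafification of $B(H)$ (where $H:=\bMaps((X';x_0),(G;1))$). Instead it passes to the \v{C}ech nerve $N^\bullet$ of the forgetful map $\Bun_{G,\on{level}_{x_0}}\to\Bun'_{G,\on{level}_{x_0}}$ (a map that is bijective on objects), and then constructs a simplicial map $(\Gr_{x_\infty})^\bullet\to N^\bullet$ which is term-wise a Zariski-locally trivial fibration with fiber $H$. The two reductions encode the same geometric content, but the \v{C}ech nerve formulation is slightly more robust: $\Bun'_{G,\on{level}_{x_0}}$ fails to be an \'etale stack (objects are bundles on $S\times\BP^1$ while morphisms are defined only over $S\times\BA^1$, so the object set does not satisfy descent), and identifying it with the sheafification of $BH$ implicitly requires passing to sheafifications and checking that homology is preserved, whereas the \v{C}ech nerve avoids this.

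The substantive gap is in your justification of the key geometric input. You propose to derive ``\'etale-locally on $S$, every $(\CP_G,\tau)$ admits a trivialization on all of $S\times\BA^1$ extending $\tau$'' from a Beauville--Laszlo argument combined with \cite[Theorem 3.3.6]{Main Text}. But that theorem only yields a trivialization off \emph{some} finite divisor $K\subset S\times(X-x_0)$ over $S$, with no control over $K$; there is nothing forcing $K$ to be concentrated at $x_\infty$. Beauville--Laszlo then simply repackages data at $x_\infty$ and does not help push a trivialization across $K\cap(S\times\BA^1)$. So the step as written does not produce a trivialization on all of $\BA^1$. What is actually needed is the assertion, implicit in the paper's claim that $(\Gr_{x_\infty})^n\to N^n$ is a \emph{Zariski}-locally trivial fibration, that for $G$ semi-simple simply connected the uniformization $\Gr_{x_\infty}\to\Bun_{G,\on{level}_{x_0}}(\BP^1)$ is Zariski-locally surjective --- a genuinely special feature of $\BP^1$ that the paper uses as a black box rather than deriving from \cite[Theorem 3.3.6]{Main Text}. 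Finally, a minor point: in your filtration of $H$ by bounded degree, each piece is a \emph{closed} (not open) subscheme of the affine space $\bMaps((\BA^1,0),(\BA^n,0))^{\leq d}$, cut out by the condition that the map factors through $G\hookrightarrow\BA^n$.
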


\sssec{}

Recall the lax prestack 
$$\Ran \underset{\Ran_{\on{untl,aug}}}\times \Gr_{\Ran_{\on{untl,aug}}},$$
see \secref{sss:shifted Gr} (in particular, its definition involved the distinguished point $x_\infty$). 

\medskip

We have a naturally defined map
\begin{equation} \label{e:Gr to open}
\Ran \underset{\Ran_{\on{untl,aug}}}\times \Gr_{\Ran_{\on{untl,aug}}}\to \Bun'_G
\end{equation}

We will prove:

\begin{thm} \label{t:Gr to open}
The map \eqref{e:Gr to open} is universally homologically contractible. In particular, it 
induces an isomorphism on homology.
\end{thm}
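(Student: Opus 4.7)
The plan is to imitate the strategy of \thmref{t:from dagger to germs}, factoring \eqref{e:Gr to open} through an intermediate prestack and verifying universal homological contractibility at each stage. Concretely, I factor as
$$\Ran \underset{\Ran_{\on{untl,aug}}}\times \Gr_{\Ran_{\on{untl,aug}}} \overset{\alpha}\longrightarrow \Ran \times \Bun_G' \overset{\beta}\longrightarrow \Bun_G',$$
with $\alpha$ forgetting the trivialization $\gamma$ and $\beta$ projecting onto the second factor. Note that $\Ran \times \Bun_G'$ is a prestack, matching source and target; this is in contrast to the genuinely lax intermediate used in \thmref{t:from dagger to germs}, which is not available here since $\Ran$ contributes only identity morphisms.

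The map $\beta$ is universally homologically contractible by base change: for any $S \to \Bun_G'$, the base change of $\beta$ is the projection $S \times \Ran \to S$, which is universally homologically contractible by \thmref{t:Ran contr}. The main work concerns $\alpha$. For an $S$-point $(I, \CP_G)$ of $\Ran \times \Bun_G'$, the fiber of $\alpha$ assigns to $S' \to S$ the groupoid of triples $(\CP'_G, \gamma', \iota)$, where $\CP'_G$ is a $G$-bundle on $S' \times X$, $\iota : \CP'_G|_{S' \times X'} \simeq \CP_G|_{S' \times X'}$ is a $\Bun_G'$-morphism, and $\gamma'$ is a trivialization of $\CP'_G$ on $S' \times X - (\on{Graph}_{I|_{S'}} \cup S' \times x_\infty)$.

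By \cite[Theorem 3.3.6]{Main Text} applied to the constant group-scheme, after an \'etale cover of $S$ we may assume that $\CP_G$ itself admits a rational trivialization on $S \times X$. Using this reference trivialization together with $\iota$, the entire fiber datum $(\CP'_G, \gamma', \iota)$ can be repackaged as a single map from an open subset of $S' \times X$ (of the complement-of-$I$-and-$x_\infty$ type) into $G$, identifying the fiber of $\alpha$ with a parametrized mapping prestack of the form $\bMaps(X - x_\infty, G)_{\on{param}}$, whose universal homological contractibility is supplied by \cite[Theorem 3.3.2]{Main Text}. The main obstacle will be the careful reorganization in this last step: specifically, one must verify that the ``extension across $x_\infty$'' degree of freedom encoded in $(\CP'_G, \iota)$---which by Beauville--Laszlo naively contributes an affine-Grassmannian-type factor---is absorbed once one passes to the rational trivialization, so that the fiber genuinely becomes a parametrized mapping space rather than a product with a Grassmannian. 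This absorption should be transparent after the gauge-theoretic rewriting, but it is the one nontrivial point where the present proof diverges from a direct transcription of the proof of \thmref{t:from dagger to germs}.
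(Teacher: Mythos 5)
Your factorization through $\Ran \times \Bun_G'$ is too coarse, and the map $\alpha$ is not universally homologically contractible. Let me first address the worry you flag at the end: the extension-across-$x_\infty$ data \emph{is} absorbed, but not because of the rational trivialization---it is absorbed by the groupoid structure of the source. Morphisms in $\Ran \underset{\Ran_{\on{untl,aug}}}\times \Gr_{\Ran_{\on{untl,aug}}}$ are by definition isomorphisms of the bundles restricted to $S\times X'$; they say nothing about $x_\infty$. Given two objects $(\CP'^1_G,\gamma',\iota)$ and $(\CP'^2_G,\gamma',\iota)$ (same $\gamma'$, same $\iota$, but $\CP'^1_G$ and $\CP'^2_G$ extending $\CP_G|_{X'}$ differently across $x_\infty$), the identity of $\CP_G|_{X'}$ is a morphism between them in the fiber groupoid. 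So after normalizing $\iota=\mathrm{id}$, the fiber of $\alpha$ over an $S$-point $(I,\CP_G)$ is value-wise equivalent to the \emph{discrete} prestack of trivializations of $\CP_G$ on $S\times X' - \on{Graph}_{I|_{S'}}$, with no Grassmannian factor.

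The real obstruction is that this fiber has $I$ \emph{fixed}, so it is a twisted form of $\bMaps(X-x_\infty-I, G)$ for the fixed finite set $I$, not the Ran-parametrized prestack $\bMaps(X-x_\infty, G)_{\on{param}}$ that \cite[Theorem 3.3.2]{Main Text} addresses. The fixed-$I$ mapping space is not homologically trivial: for $X=\BP^1$, $x_\infty=\infty$, $I=\{0\}$ and $\CP_G$ trivial, the fiber is $\bMaps(\BG_m,G) = G(\Lambda[t,t^{-1}])$, the polynomial loop group, which has the (nontrivial) homology of $G\times \Omega G$. So $\alpha$ fails to be universally homologically contractible, and the factorization does not prove the theorem. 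What the Ran parametrization buys---and why \cite[Theorem 3.3.2]{Main Text} is stated for $\bMaps(X,G)_{\on{param}}$ rather than for fixed $I$---is precisely the killing of this loop-group homology upon integration over $\Ran$. The paper's proof keeps the Ran direction alive: it base changes the \emph{target} along the surjection $\Bun_G\to\Bun'_G$, identifies the result with $\Gr_{\Ran}\underset{\Ran,\,\unn_{x_\infty}}\times\Ran\to\Bun_G$, and then uses universal homological left cofinality (Propositions \ref{p:ins point} and \ref{p:uhlc}) to compare with the uniformization map $\Gr_{\Ran}\to\Bun_G$, which is universally homologically contractible by \thmref{t:non-ab Poinc}. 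Your decomposition fixes the Ran point before the bundle, which is the opposite of what is needed.
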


Note that \thmref{t:Gr to open} is stated for any $(X,x_\infty)$, i.e., it is not linked to the $\BP^1$ situation. 

\sssec{}

Clearly, the composition of the maps \eqref{e:Gr to open} and \eqref{e:open to point} equals the map \eqref{e:Gr to point}.
Hence, Theorems \ref{t:open to point} and \ref{t:Gr to open} imply \thmref{t:P1 non-ab Poinc}.

\ssec{Proof of \thmref{t:open to point}}

\sssec{}

Let $\Bun_{G,\on{level}_{x_0}}$ be the moduli stack of $G$-bundles with structure of level $1$ at $x_0$ (i.e., $G$-bundles
equipped with a trivialization of the fiber at $x_0$), and let $\Bun'_{G,\on{level}_{x_0}}$ be the corresponding variant, i.e.,
$$\Bun'_{G,\on{level}_{x_0}}:=\Bun'_G\underset{BG}\times \on{pt}.$$

It suffices to show that $\Bun'_{G,\on{level}_{x_0}}$ is universally homologically contractible.

\medskip

Consider the simplicial object $(\Bun_{G,\on{level}_{x_0}}/\Bun'_{G,\on{level}_{x_0}})^\bullet$
of $\on{PreStk}$ equal to the \v{C}ech nerve of the map 
$$\Bun_{G,\on{level}_{x_0}}\to \Bun'_{G,\on{level}_{x_0}}.$$

It suffices to show that the map
\begin{equation} \label{e:modulo infty}
|\on{C}_*(\Bun_{G,\on{level}_{x_0}}/\Bun'_{G,\on{level}_{x_0}})^\bullet|\to \Lambda
\end{equation}
is an isomorphism. 

\sssec{}

Consider the simplicial object $(\Gr_{x_\infty})^\bullet$,of $\on{PreStk}$ given by taking the powers of the
indscheme $\Gr_{x_\infty}$.  Clearly, 
$$|\on{C}_*((\Gr_{x_\infty})^\bullet)|\to \Lambda$$
is an isomorphism.

\medskip

Thus, to prove that \eqref{e:modulo infty}, it suffices to construct a simplicial map
\begin{equation} \label{e:Gr infty}
(\Gr_{x_\infty})^\bullet\to (\Bun_{G,\on{level}_{x_0}}/\Bun'_{G,\on{level}_{x_0}})^\bullet,
\end{equation}
so that it induces a term-wise isomorphism on homology.

\sssec{}

The map \eqref{e:Gr infty} is constructed as follows: it sends an $n$-tuple of $G$-bundles
$(\CP^1,...,\CP^n)$ on $S\times X$ each equipped with a trivialization $\gamma^i$ over $S\times X'$ to
the same $n$-tuple, where the trivializations of each $\CP^i$ at $S\times x_0$ is given by 
$\gamma^i|_{S\times x_0}$, and where the identifications 
$$\CP^i|_{S\times X'}\simeq \CP^j|_{S\times X'}$$ are given by $\gamma_j\circ \gamma_i^{-1}$. 

\medskip

Now, each of the above maps 
$$(\Gr_{x_\infty})^n\to (\Bun_{G,\on{level}_{x_0}}/\Bun'_{G,\on{level}_{x_0}})^n,$$
is a fibration, locally trivial in the Zariski topology, with the typical fiber being
$$\bMaps((X',x_0),(G,1)).$$

Thus, it remains to show that the trace map
$$\on{C}_*(\bMaps((X',x_0),(G,1)))\to \Lambda$$
is an isomorphism.

\sssec{}

We shall now use the fact that $(X',x_0)$ equals $(\BA^1,0)$. In fact, we claim that for any affine scheme $Z$ with a point
$z$, the map
$$\on{C}_*(\bMaps((\BA^1,0),(Z,z)))\to \Lambda$$
is an isomorphism.

\medskip

Indeed, let us embed $Z$ into an affine space $\BA^n$, so that $z=0\in \BA^n$. We can represent 
$\bMaps((\BA^1,0),(Z,z))$ as the colimit of affine schemes each equal to
$$\bMaps((\BA^1,0),(Z,z)) \cap \bMaps((\BA^1,0),(\BA^n,0))^{\leq d},$$
where $\bMaps((\BA^1,0),(\BA^n,0))^{\leq d}$ is the vector space of polynomial maps of degree $\leq d$.

\medskip

Hence, it suffices to show that each $\bMaps((\BA^1,0),(Z,z)) \cap \bMaps((\BA^1,0),(\BA^n,0))^{\leq d}$
has a trivial homology.

\medskip

Now, the $\BG_m$-action on $\BA^1$ defines an action of $\BG_m$ on $\bMaps((\BA^1,0),(\BA^n,0))$,
which preserves each $\bMaps((\BA^1,0),(Z,z)) \cap \bMaps((\BA^1,0),(\BA^n,0))^{\leq d}$, and contracts
it to the constant map $\BA^1\to 0\in Z$. 

\ssec{Proof of \thmref{t:Gr to open}}

We emphasize again that \thmref{t:Gr to open} is stated for any $(X,x_\infty)$, i.e., it is not linked to the $(\BP^1,\infty)$
situation. 

\sssec{}

Since the map 
$$\Bun_G\to \Bun'_G$$ is surjective on isomorphism classes of $S$-points, it suffices to show that the base-changed map
$$\Bun_G \underset{\Bun'_G}\times (\Ran \underset{\Ran_{\on{untl,aug}}}\times \Gr_{\Ran_{\on{untl,aug}}})\to \Bun_G$$
is universally homologically contractible.

\medskip

Note that
$$\Bun_G \underset{\Bun'_G}\times (\Ran \underset{\Ran_{\on{untl,aug}}}\times \Gr_{\Ran_{\on{untl,aug}}})
\simeq \Gr_{\Ran}\underset{\Ran}\times \Ran,$$
where $\Ran\to \Ran$ is the map 
$$\unn_{x_\infty},\quad I\mapsto I\cup x_\infty.$$

\medskip

Thus, it remains to show that the composed map
\begin{equation} \label{e:add point uhc}
\Gr_{\Ran}\underset{\Ran}\times \Ran
\to \Gr_{\Ran}\to \Bun_G
\end{equation} 
is universally homologically contractible. 

\medskip
 
We will deduce this from the fact that the usual uniformization map 
\begin{equation} \label{e:unif}
\Gr_{\Ran}\to \Bun_G
\end{equation} 
is universally homologically contractible (the latter due to \thmref{t:non-ab Poinc}).

\sssec{}

Note that the map $\Gr_{\Ran}  \to  \Bun_G$ factors as
$$\Gr_{\Ran}  \to \Gr_{\Ran_{\on{untl}}} \to \Bun_G,$$
where
$$\Gr_{\Ran_{\on{untl}}}:=\Ran_{\on{untl}}\underset{\Ran_{\on{untl,aug}}}\times \Gr_{\Ran_{\on{untl,aug}}}$$
for $\Gr_{\Ran_{\on{untl,aug}}}$ introduced in \secref{sss:unital Gr}. I.e., we have a commutative diagram
\begin{equation} \label{e:Gr add point}
\CD
\Gr_{\Ran}\underset{\Ran}\times \Ran  @>>>  \Gr_{\Ran}  @>>>   \Bun_G    \\
@VVV    @VVV  @VV{\on{id}_S}V   \\
\Gr_{\Ran_{\on{untl}}}\underset{\Ran_{\on{untl}}}\times \Ran_{\on{untl}}   
@>>>  \Gr_{\Ran_{\on{untl}}}  @>>>   \Bun_G,
\endCD
\end{equation}
where $\Ran_{\on{untl}}\to \Ran_{\on{untl}}$ is the map
$$\unn_{x_\infty,\on{untl}},\quad I\mapsto I\cup x_\infty.$$

\sssec{}

We claim that the maps
$$\Gr_{\Ran}\to \Gr_{\Ran_{\on{untl}}}, \,\, \Gr_{\Ran_{\on{untl}}}\underset{\Ran_{\on{untl}}}\times \Ran_{\on{untl}}  \to \Gr_{\Ran_{\on{untl}}}
\text{ and } 
\Gr_{\Ran}\underset{\Ran}\times \Ran\to \Gr_{\Ran_{\on{untl}}}\underset{\Ran_{\on{untl}}}\times \Ran_{\on{untl}},$$
appearing in the diagram \eqref{e:Gr add point}, are universally homologically left cofinal.

\medskip

Indeed, consider the Cartesian square
$$
\CD
\Gr_{\Ran} @>>>  \Gr_{\Ran_{\on{untl}}}   \\
@VVV    @VVV   \\
\Ran @>{\phi}>> \Ran_{\on{untl}},
\endCD
$$
and the assertion follows from the combination of the following facts: 

\smallskip

\noindent(i) \thmref{t:Ran left cofinal} and \propref{p:ins point};

\smallskip

\noindent(ii) The map $\Gr_{\Ran_{\on{untl}}}\to \Ran_{\on{untl}}$
is a value-wise coCartesian fibration;

\smallskip

\noindent(iii) \propref{p:uhlc}. 

\sssec{}

Since $\Bun_G$ is a prestack (as opposed to a lax prestack), since the map
$$\Gr_{\Ran}\underset{\Ran}\times \Ran\to \Gr_{\Ran_{\on{untl}}}\underset{\Ran_{\on{untl}}}\times \Ran_{\on{untl}},$$
is universally homologically left cofinal, by \propref{p:l cofinal ff}, it suffices to show that the composed map 
$$\Gr_{\Ran_{\on{untl}}}\underset{\Ran_{\on{untl}}}\times \Ran_{\on{untl}}  \to   \Gr_{\Ran_{\on{untl}}} \to   \Bun_G$$
is universally homologically contractible. 

\medskip

Since 
$$\Gr_{\Ran_{\on{untl}}}\underset{\Ran_{\on{untl}}}\times \Ran_{\on{untl}}  \to   \Gr_{\Ran_{\on{untl}}}$$
is universally homologically left cofinal, it suffices to show that the map 
$$\Gr_{\Ran_{\on{untl}}}\to \Bun_G$$
is universally homologically contractible. 

\medskip

However, the latter follows from the fact that $\Gr_{\Ran}\to \Gr_{\Ran_{\on{untl}}}$
is universally homologically left cofinal and the 
fact that the map $\Gr_{\Ran}\to \Bun_G$ is universally homologically contractible. 

\newpage 

\centerline{\bf Part VI: The Atiyah-Bott formula and the numerical product formula}

\bigskip

\section{The Atiyah-Bott formula}   \label{s:AB}

In this section we let $X$ be a smooth connected complete curve. Let $G$ be a group-scheme over $X$ satisfying the assumptions 
of \secref{sss:good G}.  We will assume that our sheaf theory is such that the ring of coefficients $\Lambda$ is a field of characteristic $0$. 

\medskip

In this section we will apply \thmref{t:product formula} to deduce a much more explicit expression for the cohomology 
of $\Bun_G$, the Atiyah-Bott formula.

\medskip

The prerequisites for this section are Sects. \ref{ss:product formula} and \ref{ss:B factorizes} and \cite[Theorem 5.6.4]{Main Text}.

\ssec{Statement of the Atiyah-Bott formula}

\sssec{}

Let $G_0$ be the split form of $G$. Consider the commutative algebra in $\Lambda\mod$:
$$\on{C}^*(BG_0)=:B_0.$$

Consider the corresponding $\BZ$-graded (classical) commutative algebra $\on{H}^*(B_0)$. 
Let $M_0$ denoted the $\BZ$-graded $\Lambda$-vector space $\fm/\fm^2$, where $\fm\subset \on{H}^*(B_0)$ 
is the augmentation ideal of $B_0$ (the set of elements of strictly positive degree). 

\medskip

We shall regard $M_0$ as a semi-simple object of $\Lambda\mod$ (turning the $\BZ$-grading into a cohomological one). 

\sssec{}  \label{sss:splitting}

The (graded) vector space $M_0$ is the sum 
$$\underset{e}\oplus\, \Lambda[-2e],$$
where the $e$'s are the exponents of $G_0$.  Note that because $G_0$ was assumed semi-simple, we have $e\geq 2$. 
When $k=\ol\BF_q$ (and our sheaf theory is that of $\BQ_\ell$-adic sheaves), $M_0$ carries a canonical action of the 
geometric Frobenius, where the action on the $e$-th direct summand is given by $q^{e}$. 

\medskip

It is well-known that $\on{H}^*(B_0)$ is isomorphic to $\on{Sym}_\Lambda(M_0)$. When $k=\ol\BF_q$, this is isomorphism
can be chosen compatible with an action of the Frobenius. 

\medskip

Lifting the generators we can therefore choose an isomorphism of commutative algebras in $\Lambda\mod$
$$\on{Sym}_\Lambda(M_0)\simeq \on{C}^*(B_0).$$
For $k=\ol\BF_q$ the latter can also be chosen compatible with an action of the Frobenius. 

\sssec{}

The assignment $G_0\mapsto M_0$ is functorial with respect to automorphisms of $G_0$. 
Hence over the open curve $X'$ (see \secref{sss:good G} for the notation) we have a well-defined lisse sheaf $M\in \Shv^!(X')$,
whose !-fiber $M_x$ at $x\in X$ is identified with $M_0$ for every choice of identification of $G_x$ with $G_0$. 

\medskip

The Atiyah-Bott formula reads:

\begin{thm} \label{t:AB}  \hfill

\smallskip

\noindent{\em(a)} There exists a (non-canonical) isomorphism
$$\on{C}^*(\Bun_G)\simeq \on{Sym}_\Lambda(\on{C}^*(X',M)).$$

\smallskip

\noindent{\em(b)} When $k=\ol\BF_q$ and $X$ and $G$ are defined over $\BF_q$, the above isomorphism
can be chosen compatible with the Frobenius-equivariance structure. 

\end{thm}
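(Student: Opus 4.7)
The plan is to reduce the computation of $\on{C}^*(\Bun_G)$ to a chiral cohomology question via \thmref{t:product formula}, then exploit the factorization structure on $\CB_{\on{red}}$ to identify that chiral cohomology with an explicit symmetric algebra using \cite[Theorem 5.6.4]{Main Text}. First, by \thmref{t:product formula} together with the splitting $\on{C}^*(\Bun_G)\simeq \Lambda\oplus\on{C}^*_{\on{red}}(\Bun_G)$, we get
$$\on{C}^*(\Bun_G)\simeq \Lambda\oplus \on{C}^*_c(\Ran,\CB).$$
Applying \corref{c:main} to $\CB_{\on{untl,aug}}$ (which satisfies conditions $(*)$ and $(**)$ by the constructions of \secref{sss:B red}) replaces $\on{C}^*_c(\Ran,\CB)$ by $\on{C}^*_c(\Ran,\CB_{\on{red}})$, and then \propref{p:B is extended} identifies this with $\on{C}^*_c(\Ran',\CB'_{\on{red}})$.

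Next, by \corref{c:B factor}, $\CB'_{\on{red}}$ is a commutative factorization algebra on $\Ran'$ whose $!$-fiber at $x\in X'$ is $\on{C}^*_{\on{red}}(BG_x)$. I would choose once and for all an isomorphism $\on{C}^*(BG_0)\simeq \on{Sym}_\Lambda(M_0)$ of commutative graded algebras (possible in characteristic zero by lifting the indecomposables described in \secref{sss:splitting}), and then promote this, $\Aut(G_0)$-equivariantly in the generators, to an identification at the level of sheaves on $X'$
$$(\CB'_{\on{red}})|_{X'}\;\simeq\; \on{Sym}^{\geq 1}(M),$$
descending via the \'etale cover of $X'$ that trivializes $G$. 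The compatibility of this identification with the factorization structure on $\CB'_{\on{red}}$ then shows that $\CB'_{\on{red}}$ is the free commutative factorization algebra on $M$ (appropriately shifted) in the sense of \secref{sss:factor algebras}.

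Applying now \cite[Theorem 5.6.4]{Main Text}, which computes the chiral cohomology of a free commutative factorization algebra on a curve as a symmetric algebra on the cohomology of the generator sheaf, yields
$$\on{C}^*_c(\Ran',\CB'_{\on{red}})\;\simeq\; \on{Sym}^{\geq 1}_\Lambda\!\bigl(\on{C}^*(X',M)\bigr),$$
and combining with the $\Lambda$-summand gives the desired isomorphism of part (a). For (b), the same argument runs Frobenius-equivariantly once the splitting $\on{C}^*(BG_0)\simeq \on{Sym}_\Lambda(M_0)$ is chosen Frobenius-equivariantly. This is possible because Frobenius acts on the degree-$2e$ summand of $M_0$ by $q^e$ with $e\geq 2$, so all eigenvalues differ from $1$ and from one another across exponents; one can therefore lift generators eigenspace by eigenspace (or apply a Frobenius-averaging argument on the space of lifts).

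The main obstacle, and indeed the only nontrivial step beyond \thmref{t:product formula}, is the identification of $\CB'_{\on{red}}$ as a free commutative factorization algebra and the ensuing computation via \cite[Theorem 5.6.4]{Main Text}; the delicate points are (i) checking that the pointwise splitting of \secref{sss:splitting} can be globalized into a morphism of commutative factorization algebras (not just an isomorphism of underlying sheaves), and (ii) keeping track of the conventions so that $\on{C}^*(X',M)$, rather than $\on{C}^*_c(X',M)$, appears, which is dictated by the Verdier-dualized nature of $\CB$ and the extension-by-$*$ operation implicit in \propref{p:B is extended}.
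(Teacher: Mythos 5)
Your overall architecture matches the paper's: reduce via the cohomological product formula and \corref{c:main} to computing $\on{C}^*_c(\Ran, \CB_{\on{red}})$, identify $\CB_{\on{red}}$ as a free commutative factorization algebra, and conclude using the symmetric monoidal structure of $\on{C}^*_c(\Ran,-)$. But the two points you flag at the end as ``delicate'' are precisely the content that must be supplied, and your proposal leaves both unresolved.

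First, the assertion that the pointwise splitting $\on{C}^*(BG_0)\simeq \on{Sym}_\Lambda(M_0)$ can be promoted, ``$\Aut(G_0)$-equivariantly in the generators,'' to an isomorphism of commutative algebra objects in $\Shv(X')$ is exactly the nontrivial step. The paper proves it by passing to the finite \'etale Galois cover $\wt{X}\to X'$ trivializing $G$, with Galois group $\Gamma$: one observes that the space $\CV$ of lifts $M_0\otimes\Lambda_{\wt{X}}\to (B_0)_{\on{red}}\otimes\Lambda_{\wt{X}}$ making the diagram \eqref{e:com diag M} commute is a non-empty torsor for a $\Gamma$-module $V\in\Lambda\mod^{\leq 0}$, and since $\Gamma$ is finite and $\Lambda$ has characteristic zero, $H^1(\Gamma,V)=0$, so $\CV$ has a $\Gamma$-fixed point. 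Your ``Frobenius-averaging'' suggestion for part (b) is the right idea, but the same averaging must already be applied to $\Gamma$ to establish part (a), and you do not do this.

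Second, the intermediate claim $\on{C}^*_c(\Ran,\CB_{\on{red}})\simeq\on{C}^*_c(\Ran',\CB'_{\on{red}})$ is incorrect: by \propref{p:B is extended} one has $\CB_{\on{red}}\simeq(\bj_{\Ran})_*(\CB'_{\on{red}})$, and $\on{C}^*_c(\Ran,(\bj_{\Ran})_*(\CB'_{\on{red}}))$ unwinds to a colimit of $\on{C}^*(X'{}^\CI,-)$, not $\on{C}^*_c(X'{}^\CI,-)$, since $X^\CI$ is proper but $X'{}^\CI$ is not. The paper avoids this entirely by staying on $\Ran$ and working with the free commutative algebra on $(\on{ins}_X)_!(\bj_*(M))$, so that $\on{C}^*_c(X,\bj_*(M))=\on{C}^*(X',M)$ comes out automatically. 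Relatedly, \cite[Theorem 5.6.4]{Main Text} is not a chiral homology computation as you describe; it is the criterion that a map of commutative factorization algebras is an isomorphism iff its restriction to $X\subset\Ran$ is. The chiral homology of the free algebra is then evaluated directly via the monoidal structure of $\on{C}^*_c(\Ran,-)$, as in the proof of \propref{p:identify alg}.
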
 

\ssec{Proof of the Atiyah-Bott formula}

\sssec{}

The starting point for the proof is the isomorphism
$$\on{C}^*_{\on{red}}(\Bun_G)\simeq \on{C}^*_c(\Ran,\CB),$$
provided by \thmref{t:product formula}. So, from now on our goal will be to construct
a (non-canonical) isomorphism 
\begin{equation} \label{e:calc prod}
\on{C}^*_c(\Ran,\CB)\simeq \on{Sym}^+_\Lambda(\on{C}^*(X',M)),
\end{equation}
where $\on{Sym}^+$ denotes the augmentation ideal of $\on{Sym}$.

\sssec{}

Recall the object $\CB_{\on{red}}\in \Shv^!(\Ran)$ (see \secref{sss:B red}), and recall also that by \corref{c:main} we have a canonical isomorphism 
\begin{equation} \label{e:red B vs B}
\on{C}^*_c(\Ran,\CB)\simeq \on{C}^*_c(\Ran,\CB_{\on{red}}).
\end{equation}

\medskip

Recall also that by \secref{sss:B factorizes}, $\CB_{\on{red}}$ has a structure of \emph{commutative algebra} in the symmetric monoidal category
$\Shv^!(\Ran)$ (endowed with the convolution product). We claim:

\begin{prop}  \label{p:identify alg}
There is a (non-canonical) isomorphism between 
$\CB_{\on{red}}$ and the (non-unital) free commutative algebra on the object $(\on{ins}_X)_!(\bj_*(M))$ in 
the symmetric monoidal category $\Shv^!(\Ran)$.
\end{prop}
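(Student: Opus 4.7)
The proof will construct an algebra map from the free commutative algebra to $\CB_{\on{red}}$ and show it is an isomorphism by reducing via the factorization structure to a pointwise check.

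\textbf{Step 1: Generators from the Borel isomorphism.} Using \secref{sss:splitting}, fix an isomorphism $\on{Sym}_\Lambda(M_0)\simeq \on{H}^*(BG_0)$ of graded commutative $\Lambda$-algebras and lift it to the cochain level (Frobenius-equivariantly in the $\bar\BF_q$-setting, which is possible because Frobenius acts semisimply on $M_0$). The resulting splitting $M_0\hookrightarrow \on{C}^*_{\on{red}}(BG_0)$ of the projection to $\fm/\fm^2$ is functorial with respect to automorphisms of $G_0$, so it globalizes to a morphism $\iota_0: M\to (\CB_{\on{red}})|_{X'}$ in $\Shv^!(X')$.

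\textbf{Step 2: Upgrade to a map of commutative algebras.} By \propref{p:B is extended}, $(\CB_{\on{red}})|_X\simeq \bj_*((\CB_{\on{red}})|_{X'})$, so $\iota_0$ extends via $(\bj^!,\bj_*)$-adjunction to $\bj_*(M)\to (\CB_{\on{red}})|_X$. Since $\on{ins}_X$ is pseudo-proper, the counit of the $((\on{ins}_X)_!,(\on{ins}_X)^!)$-adjunction produces
$$\iota:(\on{ins}_X)_!(\bj_*(M))\to \CB_{\on{red}}\quad\text{in }\Shv^!(\Ran).$$
By \secref{sss:B factorizes}, $\CB_{\on{red}}$ is a commutative algebra in the convolution symmetric monoidal category $(\Shv^!(\Ran),*)$. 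The universal property of the free (non-unital) commutative algebra extends $\iota$ uniquely to a morphism of commutative algebras
$$\Phi:\on{Sym}^+_*\bigl((\on{ins}_X)_!(\bj_*(M))\bigr)\to \CB_{\on{red}}.$$

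\textbf{Step 3: Reduction to $X$ via factorization.} By \corref{c:B factor}, $\CB_{\on{red}}$ is a commutative factorization algebra. For the source, one verifies the factorization condition directly: by \lemref{l:on disj}, the convolution product restricts to the exterior product over $(\Ran\times\Ran)_{\on{disj}}$, and the isomorphism
$$\on{Sym}^+(A\oplus B)\simeq \on{Sym}^+ A\oplus \on{Sym}^+ B\oplus(\on{Sym}^+ A\otimes \on{Sym}^+ B)$$
immediately yields the factorization isomorphism \eqref{e:factor for co} (in the algebra version \secref{sss:factor algebras}). With both sides known to be commutative factorization algebras, \lemref{l:isom on diag} reduces the proof to checking that $\Phi|_X:(\on{Sym}^+_*(\cdot))|_X\to (\CB_{\on{red}})|_X$ is an isomorphism in $\Shv^!(X)$. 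Over a point $x\in X'$, the source is $\on{Sym}^+(M_x)$, the target is $\on{C}^*_{\on{red}}(BG_x)$, and $\Phi|_{\{x\}}$ is induced by the splitting chosen in Step 1, hence an isomorphism; over $x_\infty\in X-X'$, \propref{p:B is extended} and the contractibility of $G_{x_\infty}$ ensure compatibility.

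\textbf{Main obstacle.} The technically most delicate portion is Step 3: one must compute the fiber at $\{x\}\in X\subset \Ran$ of $\on{Sym}^+_*\bigl((\on{ins}_X)_!(\bj_*(M))\bigr)$ via base change along the small diagonals $X\hookrightarrow X^n$, keeping track of the shifts arising from $(\on{diag}_n)^!$ on a smooth curve. These shifts must conspire with the grading conventions (where the $e$-th summand of $M_0$ sits in degree $2e$) so that $\bigoplus_n (\on{Sym}^n_*)|_{\{x\}}$ reassembles into $\on{Sym}^+(M_x)$ in the correct degrees. Verifying this matches $\on{C}^*_{\on{red}}(BG_x)\simeq \on{Sym}^+(M_0)$, both as graded vector spaces and (in the $\bar\BF_q$-case) as Frobenius modules, is the main computation that makes the proposition work.
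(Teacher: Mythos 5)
You assert that a chain-level splitting $M_0\hookrightarrow \on{C}^*_{\on{red}}(BG_0)$ of the projection to $\fm/\fm^2$ can be chosen ``functorial with respect to automorphisms of $G_0$'' and therefore globalizes to a map $M\to (\CB_{\on{red}})|_{X'}$ over $X'$. This is precisely the non-trivial content of the proposition and is not automatic: the induced map on cohomology is canonical, but lifting the Borel isomorphism $\on{Sym}(M_0)\simeq \on{H}^*(BG_0)$ to the cochain level involves a contractible-but-nonempty space of choices, and an arbitrary choice need not be equivariant under $\Aut(G_0)$ (equivalently, under the Galois group $\Gamma$ of a trivializing cover $\wt X\to X'$). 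Your parenthetical justification (``Frobenius acts semisimply on $M_0$'') addresses only Frobenius-equivariance, not the Galois equivariance needed for descent from $\wt X$ to $X'$. The paper handles this head-on: it identifies the space $\CV$ of admissible chain-level maps as a torsor over an object $V\in\Lambda\mod^{\leq 0}$ with a $\Gamma$-action, and then produces a $\Gamma$-fixed point using $H^1(\Gamma,V)=0$, which holds because $\Gamma$ is finite and $\Lambda$ is a field of characteristic $0$. This cohomological-vanishing step is where the proposition actually lives, and it is missing from your argument.

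\textbf{On the factorization reduction.} Your Step~3 is close in spirit to the paper's, but the paper invokes the stronger equivalence result [Main Text, Theorem 5.6.4] between commutative factorization algebras on $\Ran$ and commutative algebras in $\Shv(X)$, rather than first building a map on all of $\Ran$ and then applying \lemref{l:isom on diag}. The upshot is that the paper never needs to compute $(\on{Sym}^n_*(\cdot))|_{\{x\}}$ or chase shifts coming from $(\on{diag}_n)^!$ --- all the work happens directly in $\Shv(X')$ (and then over $\wt X$), so the computation you flag as the ``main obstacle'' at the end is a non-issue in the paper's route. The genuine obstacle is the Galois-equivariance of the splitting, which your proposal elides.
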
 

In the above proposition, $\on{ins}_X$ denotes the map $X\to \Ran$ and $\bj$ is the open embedding $X'\hookrightarrow X$. 

\sssec{}

Before proving \propref{p:identify alg}, let us show that it implies the existence of a non-canonical isomorphism 
$$\on{C}^*_c(\Ran,\CB_{\on{red}})\simeq \on{Sym}^+_\Lambda(\on{C}^*(X',M))$$
(while the latter implies \eqref{e:calc prod} by \eqref{e:red B vs B}).

\medskip

Indeed, the functor
$$\CF\mapsto \on{C}^*_c(\Ran,\CF), \quad \Shv^!(\Ran)\to \Lambda\mod$$
has a natural symmetric monoidal structure.  Hence, 
$$\on{C}^*_c\left(\Ran,\on{FreeCom}_{\Shv^!(\Ran)}((\on{ins}_X)_!(\bj_*(M)))\right)\simeq 
\on{FreeCom}_{\Lambda\mod}\left(\on{C}^*_c(\Ran,(\on{ins}_X)_!(\bj_*(M)))\right),$$
while
$$\on{C}^*_c(\Ran,(\on{ins}_X)_!(\bj_*(M)))\simeq \on{C}^*_c(X,\bj_*(M))\simeq \on{C}^*(X',M)$$
and
$$\on{FreeCom}_{\Lambda\mod}(-)\simeq \on{Sym}^+_\Lambda(-).$$

\ssec{Proof of \propref{p:identify alg}}

\sssec{}

First, it is easy to see that for any $\CF\in \Shv(X)$, the commutative algebra 
$$\on{FreeCom}_{\Shv^!(\Ran)}((\on{ins}_X)_!(\CF))$$
in $\Shv^!(\Ran)$ is actually a \emph{commutative factorization algebra} (see \cite[Lemma 5.6.15]{Main Text}).

\medskip

Second, by \corref{c:B factor}, $\CB_{\on{red}}$ is also a commutative factorization algebra. 

\medskip

Therefore, by \cite[Theorem 5.6.4]{Main Text}, the existence of an isomorphism stated in the proposition is equivalent to the existence 
of an isomorphism in the category of commutative algebras in $\Shv(X)$ (with respect to the pointwise tensor product):
$$(\CB_{\on{red}})_X\simeq \on{FreeCom}_{\Shv(X)}(\bj_*(M)).$$

\sssec{}

Note that by the assumption on $G$, we have 
$$(\CB_{\on{red}})_X\simeq \bj_*((\CB'_{\on{red}})_{X'}).$$

Hence, it is enough to establish the existence of an isomorphism in the category of commutative algebras in $\Shv(X')$:

\begin{equation} \label{e:B as sym}
(\CB'_{\on{red}})_{X'} \simeq \on{FreeCom}_{\Shv(X')}(M).
\end{equation} 

\sssec{}

Let $\wt{X}\to X'$ be an \'etale Galois cover such that $G|_{\wt{X}}$ is the constant group-scheme with fiber $G_0$. 
Let $\Gamma$ denote the Galois group of $\wt{X}$ over $X'$. 

\medskip

We have:
\begin{equation} \label{e:B upstairs}
(\CB'_{\on{red}})_{X'}|_{\wt{X}}\simeq (B_0)_{\on{red}}\otimes \omega_{\wt{X}} \text{ and }
M|_{\wt{X}} \simeq M_0\otimes \omega_{\wt{X}}.
\end{equation} 

Thus, the datum of an isomorphism \eqref{e:B as sym} is equivalent to that of a $\Gamma$-equivariant isomorphism
$$(B_0)_{\on{red}}\otimes \omega_{\wt{X}} \simeq \on{Sym}^+_\Lambda(M_0)\otimes \omega_{\wt{X}},$$
where the $\Gamma$-equivariant structure on each side comes from the isomorphisms in \eqref{e:B upstairs}.

\sssec{}

The datum of a $\Gamma$-equivariant map of commutative algebras
\begin{equation} \label{e:B upstairs bis}
\on{Sym}^+_\Lambda(M_0)\otimes \omega_{\wt{X}}\to (B_0)_{\on{red}}\otimes \omega_{\wt{X}}
\end{equation}
is equivalent to that of a $\Gamma$-equivariant map in $\Shv(\wt{X})$:
\begin{equation} \label{e:from M}
M_0\otimes \Lambda_{\wt{X}}\to (B_0)_{\on{red}}\otimes \Lambda_{\wt{X}}
\end{equation}

We will find a map in \eqref{e:from M} so that at the level of cohomology sheaves, it induces a map
that fits into a commutative diagram
\begin{equation} \label{e:com diag M}
\CD
\on{H}^*(M_0) \otimes \Lambda_{\wt{X}} @>>> \on{H}^*((B_0)_{\on{red}})  \otimes \Lambda_{\wt{X}}   \\
@V{\sim}VV     @VV{\sim}V    \\ 
M_0  \otimes \Lambda_{\wt{X}}    @<<<  \fm \otimes \Lambda_{\wt{X}},  
\endCD
\end{equation} 
where we recall that $M_0$ was defined as $\fm/\fm^2$, where $\fm$ is the augmentation ideal in $\on{H}^*(B_0)$.

\medskip

The map in \eqref{e:B upstairs bis}, corresponding to a map in \eqref{e:from M} with the property of making
\eqref{e:com diag M} commutative, is automatically an isomorphism. 

\sssec{}

Consider the space $\CV$ of \emph{all}, i.e., not necessarily $\Gamma$-equivariant, (resp., in the situation (b) of
\thmref{t:AB}, Frobenius-equivariant) maps in \eqref{e:from M} that make the diagram \eqref{e:com diag M} 
commute. 

\medskip

The space $\CV$ is non-empty since $(B_0)_{\on{red}}$ is non-canonically (resp., in the situation (b) of \thmref{t:AB}, Frobenius-equivariantly) 
isomorphic to $\on{Sym}^+_\Lambda(M_0)$, see \secref{sss:splitting}.  

\medskip

The $\Gamma$-equivariant structure on both sides of \eqref{e:from M} defines an action of $\Gamma$
on $\CV$, and our task it show that $\CV$ contains a $\Gamma$-fixed point. 

\sssec{}

Note, however, that $\CV$ is naturally a torsor for a canonically defined object $V\in \Lambda\mod^{\leq 0}$. Furthermore,
$V$ is also endowed with an action of $\Gamma$, compatible with the action of $V$ on $\CV$. 

\medskip

Now, the required assertion follows from the fact that $H^1(\Gamma,V)=0$ (the latter because $\Gamma$ is a finite group and
$\Lambda$ is a field of characteristic $0$). 

\section{The numerical product formula}  \label{s:num}

The only prerequisite for this section is the statement of \thmref{t:AB}. We will give a derivation of the numerical product formula
\eqref{e:num prod prev} from the cohomological product formula \eqref{e:product formula prev},
slightly different from the one in \cite[Sect. 6]{Main Text}.

\ssec{The setting}

In this section we take the ground field $k$ to be $\ol\BF_q$, but we assume that $X$ and $G$ come by extension of scalars from
$X^0$ and $G^0$, respectively, defined over $\BF_q$. We take our sheaf theory to be that of $\BQ_\ell$-adic sheaves. 
We fix an embedding 
$$\tau:\BQ_\ell\hookrightarrow \BC.$$

\sssec{}

On the one hand, we consider the cohomology
$$\on{H}^*(\Bun_G),$$
where each $\on{H}^*(\Bun_G)$ is endowed with an (invertible) action of the geometric Frobenius, denoted $\on{Frob}$.  

\medskip

On the other hand, for each closed point $x$ of $X^0$, we consider the finite group $G(k_x)$ (i.e., the group of rational points of the fiber $G_x$
of $G^0$ over the residue $k_x$ field at $x$), and the number 
$$\frac{|k_x|^{\dim(G_x)}}{|G(k_x)|}\in \BR\subset \BC.$$

\sssec{}

The numerical product formula says:

\begin{thm} \label{t:num prod} \hfill

\smallskip

\noindent{\em(a)} Each cohomology $\on{H}^i(\Bun_G)$ is finite-dimensional and the sum of complex numbers
$$\underset{i}\Sigma\, (-1)^i\cdot \tau\left(\on{Tr}(\on{Frob}^{-1},\on{H}^i(\Bun_G))\right)$$
converges absolutely. 

\smallskip

\noindent{\em(b)} The product (of real numbers) 
$$\underset{x\in |X^0|}\Pi\, \frac{|k_x|^{\dim(G_x)}}{|G(k_x)|}$$
converges absolutely. 

\smallskip

\noindent{\em(c)} The expressions in (a) and (b) are equal to one another (as complex numbers). 

\end{thm}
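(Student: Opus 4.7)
My plan is to deduce Theorem \ref{t:num prod} directly from the Atiyah-Bott formula (Theorem \ref{t:AB}) by standard manipulations with Frobenius traces and L-functions. Since $\Lambda$ has characteristic zero, the Frobenius-equivariant isomorphism of Theorem \ref{t:AB} passes to cohomology, giving $\on{H}^*(\Bun_G) \simeq \on{Sym}(\on{H}^*(X', M))$ as graded super-commutative $\Lambda$-algebras with compatible Frobenius action. The main computational tool is then the super-plethystic identity
$$\sum_j (-1)^j \on{tr}\bigl(\on{Frob}^{-1}\mid \on{H}^j(\on{Sym}(V))\bigr) = \prod_i \det\bigl(1 - \on{Frob}^{-1}\mid \on{H}^i(V)\bigr)^{(-1)^{i+1}},$$
valid whenever the $\tau$-absolute values of all $\on{Frob}^{-1}$-eigenvalues on $\on{H}^*(V)$ are strictly less than $1$. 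One proves this by decomposing $V$ into a sum of shifted cohomology pieces and noting that $\on{Sym}$ of an even (resp.\ odd) cohomological shift contributes the factor $\det(1 - \on{Frob}^{-1})^{-1}$ (resp.\ $\det(1 - \on{Frob}^{-1})$). Applied with $V = \on{C}^*(X', M)$ this yields an explicit expression for the left-hand side of Theorem \ref{t:num prod}(c). The eigenvalue hypothesis is verified using Deligne's weight bounds and the key constraint $e \geq 2$ coming from the semi-simplicity of $G_0$: every eigenvalue of $\on{Frob}^{-1}$ on $\on{H}^i(X', M_e)$ has $\tau$-absolute value at most $q^{-(2e+i)/2} \leq q^{-2} < 1$. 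The same bound shows that each $\on{H}^j(\Bun_G)$ is finite-dimensional and that the sum in part (a) converges absolutely.

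The next stage is to recognize the resulting product as an Euler product over closed points. Combining Poincar\'e duality for the smooth curve $X'$ with the Grothendieck trace formula and the L-function factorization yields
$$\prod_i \det\bigl(1 - \on{Frob}^{-1}\mid \on{H}^i(X', M)\bigr)^{(-1)^{i+1}} = \prod_{x \in |X'^0|} \det\bigl(1 - \on{Frob}_x^{-1}\mid M_x\bigr)^{-1},$$
where the Tate twists from Poincar\'e duality cancel exactly against the weights built into the definition of $M$. Absolute convergence of the Euler product follows from $e \geq 2$ together with the elementary estimate $\#\{x \in |X'^0| : \deg x = n\} \leq q^n/n$, proving part (b). For $x \in |X^0| \setminus |X'^0|$, the fact that $G_x$ is cohomologically contractible (\secref{sss:good G}) forces $|G(k_x)| = |k_x|^{\dim G_x}$, so the corresponding local factor equals $1$ and the Euler product may be freely extended to all of $|X^0|$ without change.

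The final ingredient is a local identification at each closed point $x \in |X'^0|$ of $\det(1 - \on{Frob}_x^{-1}|M_x)^{-1}$ with $|k_x|^{\dim G_x}/|G(k_x)|$. By construction, $M_x$ has a basis over $\overline{k_x}$ indexed by the exponents $e_1, \dots, e_r$ of $G_x$, on which $\on{Frob}_x$ acts by eigenvalues $|k_x|^{e_i}$ combined with the outer-automorphism permutation coming from the $k_x$-form of $G_x$. Steinberg's formula for $|G(\BF_q)|$ of a connected reductive group over a finite field, suitably twisted for non-split forms, then yields $|G(k_x)| = |k_x|^{\dim G_x}\, \det(1 - \on{Frob}_x^{-1}|M_x)$, which is exactly what is needed. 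Assembling the three stages proves (c) and completes the proof.

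I anticipate the main obstacle to be the Tate-twist bookkeeping in the second stage: ensuring that the factor of $q$ arising from Poincar\'e duality on the open curve $X'$ is cancelled exactly by the weights of $M$, so that the product over cohomology groups assembles into the Euler product at $T = 1$ rather than at some nearby specialization. Steinberg's formula in the non-split case also requires some standard but careful case analysis. Once both normalizations are fixed, the remaining argument is formal.
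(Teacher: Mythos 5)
Your strategy is sound and the final answer agrees with the paper's, but the two proofs differ in one genuinely interesting place: the identification of the local Euler factor. You invoke Steinberg's formula for $|G(k_x)|$ (including its twisted form at non-split points), whereas the paper obtains the local factor directly by applying the Grothendieck--Lefschetz trace formula to the \emph{stack} $BG_x$: since $\frac{1}{|G(k_x)|}$ is the stacky point count of $BG_x$ and $\dim BG_x = -\dim G_x$, Verdier duality yields $\frac{|k_x|^{\dim G_x}}{|G(k_x)|} = \sum_i (-1)^i\on{Tr}(\on{Frob}_x^{-1},\on{H}^i(BG_x))$, and then $\on{H}^*(BG_x)\simeq\on{Sym}(M_x)$ is used to assemble the product. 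This sidesteps Steinberg's formula entirely and means there is no case analysis for non-split forms to do. The paper also avoids passing through your super-plethystic determinant identity: it keeps the expressions in terms of $\on{Sym}^n$-traces and introduces an auxiliary variable $t$, proving $\sum_n\on{Tr}(\on{Frob}^{-1},\on{Sym}^n(\on{H}^*(X',M)))t^n = \prod_x\sum_m\on{Tr}(\on{Frob}_x^{-1},\on{Sym}^m(M_x))t^{m\deg x}$ as an identity of formal series by Grothendieck--Lefschetz for $X'$, and then specializes at $t=1$ (justified by absolute convergence for $|t|<2$). This is the same L-function content as your Stage 2 but the bookkeeping is cleaner.

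On the obstacle you flag: the Tate twist does cancel, but the mechanism is worth spelling out. The paper works throughout in the $!$-pullback formalism, so $M_x$ means $i_x^!(M)=i_x^*(M)[-2](-1)$. This built-in $(-1)$ twist is exactly what cancels the $(1)$ twist coming from Poincar\'e duality on the smooth curve $X'$; the resulting Lefschetz-type identity $\sum_{x\in X'(\BF_q)}\on{Tr}(\on{Frob}_x^{-1},M_x)=\sum_j(-1)^j\on{Tr}(\on{Frob}^{-1},\on{H}^j(X',M))$ then holds without any extraneous powers of $q$. If you instead compute the local factor with the $*$-fiber you will be off by $q_x^{-1}$ per point, which is why your instinct that there was something to check was correct.
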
 

The goal of this section is to prove \thmref{t:num prod}.

\begin{rem}
In the course of the proof we will show that $\tau\left(\on{Tr}(\on{Frob}^{-1},\on{C}^*_c(\Ran,\CB))\right)$ equals 
$$\underset{x\in |X^0|}\Pi\, \tau\left(\on{Tr}(\on{Frob}_x^{-1},\on{H}^*(BG_x))\right).$$

\medskip

As was noted in \secref{sss:Euler}, this justifies why we want to think of $\on{C}^*_c(\Ran,\CB)$ as the Euler product
\eqref{e:Euler again}.
\end{rem}

\ssec{Proof of the numerical product formula}

\sssec{}

First, it is not difficult to see that the validity of the assertion of \thmref{t:num prod} only depends on the generic fiber of $G$. Hence, 
we can (and will) assume that $G$ satisfies the assumptions of \secref{sss:good G}, where the the open subset $X'\subset X$ is 
also defined over $\BF_q$.  

\medskip

Under this assumption, we can apply \thmref{t:AB} and rewrite $\on{H}^*(\Bun_G)$ as the cohomology of $\on{Sym}_{\BQ_\ell}(\on{C}^*(X',M))$,
which is the same as 
$$\on{Sym}_{\BQ_\ell}(\on{H}^*(X',M)).$$  
Note now that the finite-dimensionality assertion in point (a) of the theorem readily follows. 

\sssec{}

For every closed point $x$ of $X^0$, the  
$\tau$-weights of $\on{Frob}^{-1}_x$ on the !-fiber $M_x$ of $M$ at $x$ are $\leq -4$ (see \secref{sss:splitting}). 
Hence, by \cite{Del}, the $\tau$-weights of $\on{Frob}^{-1}$
on $\on{H}^*(X',M)$ are $\leq -2$. I.e., if we view the pair $(\on{Frob}^{-1},\on{H}^i(X',M))$  
as a complex vector space equipped with an endomorphism via $\tau$, and if $\lambda$ is an eigenvalue, then $|\lambda|<2$. 

\medskip

This implies the convergence assertion in point (a) of the theorem. 

\medskip

Consider the formal series
\begin{equation} \label{e:ser1}
\underset{n\geq 0}\Sigma\, \tau\left(\on{Tr}\left(\on{Frob}^{-1},\on{Sym}^n_{\BQ_\ell}(\on{H}^*(X',M))\right)\right)\cdot t^n.
\end{equation} 

We obtain that this series converges absolutely for $|t|< 2$ and its value at $t=1$ equals the sum  in point (a) of the theorem. 

\begin{rem}
We obtain that the sum in point (a) of the theorem equals
$$\tau\left(\frac{1}{\on{det}(1-\on{Frob}^{-1},\on{H}^*(X',M'))}\right).$$
\end{rem}

\sssec{}

Next, note that for a closed point $x$ of $X^0$
the quantity $\frac{1}{|G(k_x)|}$ equals the number of $k_x$-points of the stack $BG_x$. Hence, applying the Grothendieck-Lefschetz
formula for the stack $BG_x$, we obtain
$$\frac{|k_x|^{\dim(G_x)}}{|G(k_x)|}=\underset{i}\Sigma\, (-1)^i\cdot \tau\left(\on{Tr}(\on{Frob}_x^{-1},\on{H}^i(BG_x))\right).$$

\medskip

We have an isomorphism
$$\on{H}^*(BG_x)\simeq 
\begin{cases} 
&\on{Sym}_{\BQ_\ell}(M_x) \text{ for } x\in X' \\
&\BQ_\ell \text{ for } x\notin X'.
\end{cases}$$

Consider the formal series
\begin{equation} \label{e:ser2}
\underset{x\in |X^0|,x\in X'}\Pi\, \left(\underset{n\geq 0}\Sigma\, \tau\left(\on{Tr}(\on{Frob}_x^{-1},\on{Sym}^n_{\BQ_\ell}(M_x))\right)
\cdot t^{n\cdot\deg(x)}\right).
\end{equation} 

The Grothendieck-Lefschetz formula for $X'$ implies that the series \eqref{e:ser1} equals the series \eqref{e:ser2}. In particular, we obtain
that the series \eqref{e:ser2} converges absolutely for $|t|<2$ and its value at $t=1$ equals the sum in point (a) of the theorem. 

\medskip

The absolute convergence of \eqref{e:ser2} at $t=1$ is equivalent to the statement of point (b) of the theorem, and the product in 
point (b) of the theorem equals the value of \eqref{e:ser2} at $t=1$. This implies the equality in point (c) of the theorem. 

\qed

\newpage


\begin{thebibliography}{999}

\bibitem[Main Text]{Main Text} D.~Gaitsgory and J.~Lurie, {\it Weil's conjecture for function fields},  \hfill \newline 
available from http://www.math.harvard.edu/$\sim$lurie. 

\bibitem[AB]{AB} M.~Atiyah and R.~Bott, {\it The Yang-Mills equations over Riemann surfaces}, Philos. Trans. Roy. Soc.
London Ser. A {\bf 308} (1983), no. 1505, pp. 523--615.

\bibitem[BD]{BD} A.~Beilinson and V.~Drinfeld, {\it Chiral algebras}, American Mathematical Society Colloquium Publications
{\bf 51}, American Mathematical Society, Providence, RI, 2004.

\bibitem[Del]{Del} P.~Deligne, {\it La conjecture de Weil-II}, Publications Math\'ematiques de lÕIHES {\bf 52} (1980) pp. 137--252.

\bibitem[DrGa]{DrGa} V.~Drinfeld and D.~Gaitsgory, 
{\em Compact generation of the category of D-modules on the stack of G-bundles on a curve}, arXiv:1112.2402.

\bibitem[FG]{FG} J.~Francis and D.~Gaitsgory, {\it Chiral Koszul duality}, Selecta Math. (N.S.) {\bf 18} (2012), pp. 27--87.

\bibitem[GL:DG]{GL:DG} D.~Gaitsgory, {\it Notes on Geometric Langlands, Generalities on DG categories}, \hfill \newline 
available from http://www.math.harvard.edu/$\sim$gaitsgde/GL/. 

\bibitem[Lu1]{Lu1} J.~Lurie, {\it Higher Topos Theory}, Princeton University Press, 2009.

\bibitem[Lu2]{Lu2} J.~Lurie, {\it Higher Algebra}, available from http://www.math.harvard.edu/$\sim$lurie. 

\end{thebibliography}
\end{document}